\newtheorem{thm}{Theorem}[section]
\newtheorem{cor}[thm]{Corollary}
\newtheorem{lem}[thm]{Lemma}
\newtheorem{prop}[thm]{Proposition}
\theoremstyle{definition}
\newtheorem{defn}[thm]{Definition}
\theoremstyle{remark}
\newtheorem{rem}[thm]{Remark}
\numberwithin{equation}{section}
\numberwithin{thm}{section}
\newcommand{\Z}{{\mathbb{Z}}}
\newcommand{\C}{{\mathbb{C}}}
\newcommand{\R}{{\mathbb{R}}}
\newcommand{\N}{{\mathbb{N}}}
\newcommand{\HH}{{\mathbb{H}}}
\newcommand{\OO}{{\mathcal O}}
\DeclareMathOperator{\diam}{diam}
\DeclareMathOperator{\dist}{dist}
\DeclareMathOperator{\supp}{supp}
\DeclareMathOperator{\Tr}{Tr}
\let\Re=\undefined\DeclareMathOperator{\Re}{Re}
\let\Im=\undefined\DeclareMathOperator{\Im}{Im}
\DeclareMathOperator*{\wlim}{w-lim}
\DeclareMathOperator*{\tlim}{\textnormal{l}\widetilde{\textnormal{\i m}}}
\newcommand{\lng}{\langle}
\newcommand{\rng}{\rangle}
\newcommand{\eps}{{\varepsilon}}
\newcommand{\Id}{\mathrm{Id}}
\newcommand{\propagateomega}{{e^{it\Delta_{\Omega}}}}
\newcommand{\lsm}{\lesssim}
\newcommand{\ld}{\Delta_{\Omega}}
\newcommand{\po}{P^{\Omega}}
\newcommand{\lpo}{L^p(\Omega)}
\newcommand{\hd}{\dot H^1_D(\Omega)}
\newcommand{\hr}{\dot H^1(\R^3)}
\newcommand{\lt}{L_{t,x}^{10}(\R\times\Omega)}
\newcommand{\ltr}{L_{t,x}^{10}(\R\times\R^3)}
\newcommand{\tz}{2^{\Z}}
\newcommand{\prd}{e^{it\ld}}
\newcommand{\prr}{e^{it\Delta}}
\newcommand{\prdn}{e^{it_n\ld}}
\newcommand{\lon}{\Delta_{\Omega_n}}
\newcommand{\on}{\Omega_n}
\newcommand{\pnno}{P_{N_n}^{\Omega}}
\newcommand{\est}{\eps^4+t^2}
\newcommand{\xp}{x^{\perp}}
\newcommand{\logeps}{\log(\tfrac 1\eps)}
\newcommand{\llogeps}{\log\log(\tfrac 1\eps)}
\newcommand{\tildphi}{{\tilde\phi}}
\newcommand{\qtq}[1]{\quad\text{#1}\quad}
\newcounter{smalllist}
\newenvironment{SL}{\begin{list}{{\rm(\roman{smalllist})\hss}}{%
\setlength{\topsep}{0mm}\setlength{\parsep}{0mm}\setlength{\itemsep}{0mm}%
\setlength{\labelwidth}{2.0em}\setlength{\itemindent}{2.5em}\setlength{\leftmargin}{0em}\usecounter{smalllist}%
}}{\end{list}}
\newenvironment{CI}{\begin{list}{{\ $\bullet$\ }}{%
\setlength{\topsep}{0mm}\setlength{\parsep}{0mm}\setlength{\itemsep}{0mm}%
\setlength{\labelwidth}{1.5em}\setlength{\itemindent}{0em}\setlength{\leftmargin}{1.5em}%
\setlength{\labelsep}{0mm} }}{\end{list}}
\title[Energy-critical NLS outside a convex obstacle]
{Quintic NLS in the exterior of a strictly convex obstacle}
\author[R.Killip]{Rowan Killip}
\address{Department of Mathematics, UCLA}
\email{killip@math.ucla.edu}
\author[M.Visan]{Monica Visan}
\address{Department of Mathematics, UCLA}
\email{visan@math.ucla.edu}
\author[X. Zhang]{Xiaoyi Zhang}
\address{Department of Mathematics, University of Iowa, and Chinese Academy of Science, Beijing}
\email{zh.xiaoyi@gmail.com}
{\normalsize }
\begin{document}

\begin{abstract}
We consider the defocusing energy-critical nonlinear Schr\"odinger equation in the exterior of a smooth compact strictly convex obstacle in three dimensions.  For the initial-value problem with Dirichlet boundary condition we prove global well-posedness and scattering for all initial data in the energy space.
\end{abstract}

\maketitle

\tableofcontents

\section{Introduction}\label{S:Introduction}

We consider the defocusing energy-critical NLS in the exterior domain $\Omega$ of a smooth compact strictly convex obstacle in $\R^3$
with Dirichlet boundary conditions:
\begin{align}\label{nls}
\begin{cases}
i u_t+\Delta u=|u|^4 u, \\
u(0,x)=u_0(x),\\
u(t,x)|_{x\in \partial\Omega}=0.
\end{cases}
\end{align}
Here $u:\R\times\Omega\to\C$ and the initial data $u_0(x)$ will only be required to belong to the energy space,
which we will describe shortly.

The proper interpretation of the \emph{linear} Schr\"odinger equation with such boundary conditions was an early difficulty in mathematical quantum mechanics, but is now well understood.  Let us first whisk through these matters very quickly; see \cite{Kato:pert,RS1,RS2} for further information.

We write $-\Delta_\Omega$ for the Dirichlet Laplacian on $\Omega$.  This is the unique self-adjoint operator acting on $L^2(\Omega)$ associated with the closed quadratic form
$$
Q: H^1_0(\Omega) \to [0,\infty) \qtq{via} Q(f):=\int_\Omega \overline{\nabla f(x)} \cdot \nabla f(x) \,dx.
$$
The operator $-\Delta_\Omega$ is unbounded and positive semi-definite.  All functions of this operator will be interpreted via the Hilbert-space functional calculus.  In particular, $e^{it\Delta_\Omega}$ is unitary and provides the fundamental solution to the linear Schr\"odinger equation $i u_t+\Delta_\Omega u=0$, even when the naive notion of the boundary condition $u(t,x)|_{x\in\partial\Omega}=0$ no longer makes sense.

We now define the natural family of homogeneous Sobolev spaces associated to the operator $-\Delta_\Omega$ via the functional calculus:

\begin{defn}[Sobolev spaces]
For $s\geq 0$ and $1<p<\infty$, let $\dot H_D^{s,p}(\Omega)$ denote the completion of $C^\infty_c(\Omega)$ with respect to the norm
$$
\| f \|_{\dot H_D^{s,p}(\Omega)} := \| (-\Delta_\Omega)^{s/2} f \|_{L^p(\Omega)}.
$$
Omission of the index $p$ indicates $p=2$.
\end{defn}

For $p=2$ and $s=1$, this coincides exactly with the definition of $\dot H^1_0(\Omega)$.  For other values of parameters, the definition of $\dot H^{s,p}_D(\Omega)$ deviates quite sharply from the classical definitions of Sobolev spaces on domains, such as $\dot H^{s,p}(\Omega)$, $\dot H^{s,p}_0(\Omega)$, and the Lions--Magenes spaces $\dot H^{s,p}_{00}(\Omega)$.  Recall that all of these spaces are defined via the Laplacian in the whole space and its fractional powers.

For bounded domains $\OO\subseteq\R^d$, the relation of $\dot H^{s,p}_D(\OO)$ to the classical Sobolev spaces has been thoroughly investigated.  See, for instance, the review \cite{Seeley:ICM} and the references therein.   The case of exterior domains is much less understood; moreover, new subtleties appear.  For example, for bounded domains $\dot H^{1,p}_D(\OO)$ is equivalent to the completion of $C^\infty_c(\OO)$ in the space $\dot H^{1,p}(\R^d)$.  However, this is no longer true in the case of exterior domains; indeed, it was observed in \cite{LSZ} that this equivalence fails for $p>3$ in the exterior of the unit ball in $\R^3$, even in the case of spherically symmetric functions.

As the reader will quickly appreciate, little can be said about the problem \eqref{nls} without some fairly thorough understanding of the mapping properties of functions of $-\Delta_\Omega$ and of the Sobolev spaces $\dot H_D^{s,p}(\Omega)$, in particular.  The analogue of the Mikhlin multiplier theorem is known for this operator and it is possible to develop a Littlewood--Paley theory on this basis; see \cite{IvanPlanch:square,KVZ:HA} for further discussion.  To obtain nonlinear estimates, such as product and chain rules in $\dot H_D^{s,p}(\Omega)$, we use the main result of \cite{KVZ:HA}, which we record as Theorem~\ref{T:Sob equiv} below.  By proving an equivalence between $\dot H_D^{s,p}(\Omega)$ and the classical Sobolev spaces (for a restricted range of exponents), Theorem~\ref{T:Sob equiv} allows us to import such nonlinear estimates directly from the Euclidean setting.

After this slight detour, let us return to the question of the proper interpretation of a solution to \eqref{nls} and the energy space.  For the linear Schr\"odinger equation with Dirichlet boundary conditions, the energy space is the domain of the quadratic form associated to the Dirichlet Laplacian, namely, $\dot H^1_D(\Omega)$.  For the nonlinear problem \eqref{nls}, the energy space is again $\dot H^1_D(\Omega)$ and the energy functional is given by
\begin{align}\label{energy}
E(u(t)):=\int_{\Omega} \tfrac12 |\nabla u(t,x)|^2 + \tfrac16 |u(t,x)|^6\, dx.
\end{align}
Note that the second summand here, which is known as the potential energy, does not alter the energy space by virtue of Sobolev embedding, more precisely, the embedding $\dot H^1_D(\Omega)\hookrightarrow L^6(\Omega)$.

The PDE \eqref{nls} is the natural Hamiltonian flow associated with the energy functional \eqref{energy}.  Correspondingly, one would expect this energy to be conserved by the flow.  This is indeed the case, provided we restrict ourselves to a proper notion of solution.

\begin{defn}[Solution]\label{D:solution}
Let $I$ be a time interval containing the origin.  A function $u: I \times \Omega \to \C$ is called a (strong) \emph{solution}
to \eqref{nls} if it lies in the class $C_t(I'; \dot H^1_D(\Omega)) \cap L_t^{5}L_x^{30}(I'\times\Omega)$ for every compact subinterval $I'\subseteq I$
and it satisfies the Duhamel formula
\begin{equation}\label{E:duhamel}
u(t) = e^{it\Delta_\Omega} u_0  - i \int_0^t e^{i(t-s)\Delta_\Omega} |u(s)|^4 u(s)\, ds,
\end{equation}
for all $t \in I$.
\end{defn}

For brevity we will sometimes refer to such functions as solutions to $\text{NLS}_\Omega$.  It is not difficult to verify that strong solutions conserve energy.

We now have sufficient preliminaries to state the main result of this paper.

\begin{thm}\label{T:main}
Let $u_0\in \dot H^1_D(\Omega)$. Then there exists a unique strong solution $u$ to \eqref{nls} which is global in time and satisfies
\begin{align}\label{E:T:main}
\iint_{\R\times\Omega} |u(t,x)|^{10} \,dx\, dt\le C(E(u)).
\end{align}
Moreover, $u$ scatters in both time directions, that is, there exist asymptotic states $u_\pm\in\dot H^1_D(\Omega)$ such that
\begin{align*}
\|u(t) - e^{it\Delta_\Omega}u_\pm\|_{\dot H^1_D(\Omega)}\to 0 \qtq{as} t\to\pm\infty.
\end{align*}
\end{thm}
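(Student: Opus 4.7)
The plan is to carry out the Kenig--Merle concentration-compactness and rigidity scheme in this exterior geometry. The four ingredients are: a local theory with a stability lemma for $\text{NLS}_\Omega$; a linear profile decomposition for bounded sequences in $\dot H^1_D(\Omega)$; the extraction of a minimal-kinetic-energy almost periodic critical element assuming the theorem fails; and a Morawetz-type monotonicity formula that rules the critical element out by exploiting strict convexity of $\partial\Omega$. For the local theory one combines the known Strichartz estimates for $e^{it\Delta_\Omega}$ on exteriors of strictly convex obstacles (Ivanovici; Ivanovici--Planchon; Blair--Smith--Sogge) with the Sobolev-space equivalence recorded as Theorem~\ref{T:Sob equiv} to obtain product and chain rules in $\dot H^{s,p}_D(\Omega)$. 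Small-data global well-posedness, scattering, and a perturbation/stability lemma for $L_{t,x}^{10}$--close approximate solutions then follow by standard contraction arguments.

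The heart of the argument is a linear profile decomposition for $\{e^{it\Delta_\Omega}\phi_n\}$ with $\|\phi_n\|_{\dot H^1_D(\Omega)}$ bounded. Profiles must be sorted by their concentration scale $\lambda_n\in(0,1]$ (scales $\lambda_n\to\infty$ being excluded by the obstacle) and center $x_n\in\Omega$. Three regimes arise: (i) $\lambda_n\equiv 1$ and $x_n$ at bounded distance from $\partial\Omega$, producing genuine $\Omega$--profiles evolved by $e^{it\Delta_\Omega}$; (ii) $\lambda_n\to 0$ with $\lambda_n^{-1}\dist(x_n,\partial\Omega)\to\infty$, in which case the rescaled domains exhaust $\R^3$ and the profiles evolve under the Euclidean propagator $e^{it\Delta_{\R^3}}$; (iii) $\lambda_n\to 0$ with $\lambda_n^{-1}\dist(x_n,\partial\Omega)$ remaining bounded, where the rescaled domains exhaust a halfspace and profiles evolve by the Dirichlet flow there. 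For each regime I would prove a nonlinear embedding: the evolution of a single concentrating profile is genuinely approximated by an $\text{NLS}_\Omega$ solution of finite scattering norm. This appeals to the known global theory of the defocusing energy-critical NLS on $\R^3$ (Colliander--Keel--Staffilani--Takaoka--Tao) and to an analogous halfspace theory to be established in parallel, together with a careful extension of Euclidean and halfspace solutions back onto $\Omega$ and the stability lemma.

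Should the theorem fail, a Palais--Smale argument applied to the profile decomposition produces a minimal-energy blowup solution $u$ whose orbit is precompact in $\dot H^1_D(\Omega)$ modulo the admissible symmetries. Almost-periodicity constrains $u$ to one of a short list of scenarios: a soliton-like orbit at unit scale staying at bounded distance from $\partial\Omega$; a self-similar-type orbit rescaling toward a point of $\partial\Omega$; or an orbit escaping to spatial infinity at unit scale. Each is excluded by a Morawetz inequality based on the weight $a(x)=|x-x_0|$ with $x_0$ in the interior of the obstacle: following Planchon--Vega, strict convexity of $\partial\Omega$ forces the boundary term arising from $-\int \partial_\nu a\,|\partial_\nu u|^2\,d\sigma$ to have favorable sign, yielding a global bound on $\iint |u|^6/|x-x_0|\,dx\,dt$ that, together with the orbit's compactness and positive energy, produces a contradiction.

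The principal obstacle, as in all Kenig--Merle arguments on non-Euclidean backgrounds, is the profile decomposition together with the nonlinear embedding of concentrating profiles into genuine $\text{NLS}_\Omega$ flows. Making rigorous the interplay between $e^{it\Delta_\Omega}$ and rescalings that zoom onto a microscopic neighborhood of $\Omega$ requires Mikhlin-type multiplier theorems and a Littlewood--Paley calculus for $-\Delta_\Omega$, along with sharp convergence of the rescaled Dirichlet propagators to their Euclidean or halfspace counterparts on the appropriate time and spatial scales. A second delicate point is the rigidity for solutions concentrating at a boundary point, where the virial argument must be combined with finite-propagation-speed--type controls to prevent energy from flowing toward the concentration point.
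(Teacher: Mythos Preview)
Your overall Kenig--Merle scheme is the right one, and your Morawetz weight $a(x)=|x-x_0|$ with $x_0$ inside the obstacle is exactly what the paper uses. However, there are two genuine structural gaps.

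First, your profile decomposition is incomplete. You assert that scales $\lambda_n\to\infty$ are ``excluded by the obstacle'', but this is false: a low-frequency bump in $\dot H^1_D(\Omega)$ spread over a region of diameter $\lambda_n\gg 1$ is a perfectly admissible profile, and under rescaling the obstacle shrinks to a point so the limiting geometry is again $\R^3$. The paper treats this as a separate case (its Case~2), and the nonlinear embedding there is not routine: the naive cutoff $\chi_n w_n$ of the $\R^3$ solution fails to be an approximate $\text{NLS}_\Omega$ solution, and one must add a correction term $z_n$ absorbing a non-resonant source created by the obstacle. Missing this case leaves your inverse-Strichartz/profile extraction genuinely incomplete.

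Second, and more seriously, you mis-state the output of Palais--Smale. There are \emph{no} admissible scaling or translation symmetries of $\text{NLS}_\Omega$: these change the domain. Consequently the minimal blowup solution has orbit precompact in $\dot H^1_D(\Omega)$ \emph{without} modding out by anything. Your list of scenarios (escaping to infinity, self-similar collapse onto $\partial\Omega$) never arises: those are precisely Cases~2--4 of the profile decomposition, and they are eliminated at the Palais--Smale stage, not at rigidity, because the embedding theorems combined with the $\R^3$ and halfspace global theories force such profiles to have finite scattering size. The single surviving profile must conform to Case~1 with $t_n^1\equiv 0$, so the critical element lives at unit scale at bounded distance from $\partial\Omega$, and one gets $\int_{|x|\le R}|u(t)|^6\,dx\gtrsim 1$ uniformly in $t$ for some fixed $R$. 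The rigidity is then a two-line application of the truncated Morawetz estimate; your worry about ``finite-propagation-speed-type controls'' for boundary-concentrating solutions is addressing a scenario that has already been excluded. Relatedly, the halfspace theory does not need to be ``established in parallel'': it follows immediately from the $\R^3$ result by odd reflection across $\partial\HH$.

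Finally, you underestimate where the real work lies. The phrase ``sharp convergence of the rescaled Dirichlet propagators to their Euclidean or halfspace counterparts'' hides the main technical contribution of the paper. No dispersive estimate is known for $e^{it\Delta_\Omega}$, so one cannot simply port the Euclidean argument; the paper constructs explicit infinite-time Gaussian-beam parametrices (including a geometric-optics reflection off the convex boundary) to prove this convergence in Strichartz norms. This is the step that actually uses strict convexity in an essential way.
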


There is much to be said in order to give a proper context for this result.  In particular, we would like to discuss the defocusing NLS in $\R^3$ with general power nonlinearity:
\begin{equation}\label{GNLS}
i u_t+\Delta u=|u|^p u.
\end{equation}
A key indicator for the local behaviour of solutions to this equation is the scaling symmetry
\begin{align}\label{GNLSrescale}
u(t,x)\mapsto u^\lambda(t,x):=\lambda^{\frac2p} u(\lambda^2 t, \lambda x) \qtq{for any} \lambda>0,
\end{align}
which leaves the class of solutions to \eqref{GNLS} invariant.   Notice that when $p=4$ this rescaling
also preserves the energy associated with \eqref{GNLS}, namely,
$$
E(u(t)) = \int_{\R^3} \tfrac12|\nabla u(t,x)|^2+\tfrac1{p+2}|u(t,x)|^{p+2}\,dx.
$$
For this reason, the quintic NLS in three spatial dimensions is termed energy-critical.  The energy is the \emph{highest regularity} conservation law that is known for NLS; this has major consequences for the local and global theories for this equation when $p\geq 4$.  When $p>4$, the equation is ill-posed in the energy space; see \cite{CCT}.  For $p=4$, which is the focus of this paper, well-posedness in the energy space is delicate, as will be discussed below.

For $0\leq p<4$, the equation is called energy-subcritical.  Indeed, the energy strongly suppresses the short-scale behaviour of solutions, as can be read-off from its transformation under the rescaling \eqref{GNLSrescale}:
$$
E(u^\lambda) = \lambda^{\frac4p - 1} E(u).
$$
Accordingly, it is not very difficult to prove local well-posedness for initial data in $H^1(\R^3)$.   This follows by contraction mapping in Strichartz spaces and yields a local existence time that depends on the $H^1_x$ norm of the initial data.  Using the conservation of mass (= $L^2_x$-norm) and energy, global well-posedness follows immediately by iteration.  Notice that this procedure gives almost no information about the long-time behaviour of the solution.

The argument just described does not extend to $p=4$.  In this case, the local existence time cannot depend solely on the energy, which is a scale-invariant quantity. Nevertheless, a different form of local well-posedness was proved by Cazenave and Weissler \cite{cw0,cw1}, in which the local existence time depends upon the \emph{profile} of the initial data, rather than solely on its norm.  Therefore, the iteration procedure described above cannot be used to deduce global existence.  In fact, as the energy is the highest regularity conservation law that is known, global existence is non-trivial \emph{even} for Schwartz initial data.  In \cite{cw0, cw1}, the time of existence is shown to be positive via the monotone convergence theorem; on the basis of subsequent developments, we now understand that this time is determined by the spread of energy on the Fourier side.  In the case of the \emph{focusing} equation, the existence time obtained in these arguments is not fictitious; there are solutions with a fixed energy that blow up arbitrarily quickly.

The Cazenave--Weissler arguments also yield global well-posedness and scattering for initial data with \emph{small} energy, for both the focusing and defocusing equations.  Indeed, in this regime the nonlinearity can be treated perturbatively.    

The first key breakthrough for the treatment of the large-data energy-critical NLS was the paper \cite{borg:scatter}, which proved global well-posedness and scattering for spherically symmetric solutions in $\R^3$ and $\R^4$.  This paper introduced the induction on energy argument, which has subsequently become extremely influential in the treatment of dispersive equations at the critical regularity.  We will also be using this argument, so we postpone a further description until later.  The induction on energy method was further advanced by Colliander, Keel, Staffilani, Takaoka, and Tao in their proof \cite{CKSTT:gwp} of global well-posedness and scattering for the quintic NLS in $\R^3$, for all initial data in the energy space.  This result, which is the direct analogue of Theorem~\ref{T:main} for NLS in the whole space, will play a key role in the analysis of this paper.  Let us state it explicitly:

\begin{thm}[\cite{CKSTT:gwp}]\label{T:gopher}
Let $u_0\in \dot H^1(\R^3)$. Then there exists a unique strong solution $u$ to the quintic NLS in $\R^3$ which is global in time and satisfies
\begin{align*}
\iint_{\R\times\R^3} |u(t,x)|^{10} \,dx\, dt\le C(E(u)).
\end{align*}
Moreover, $u$ scatters in both time directions, that is, there exist asymptotic states $u_\pm\in\dot H^1(\R^3)$ such that
\begin{align*}
\|u(t) - e^{it\Delta_{\R^3}}u_\pm\|_{\dot H^1(\R^3)} \to 0 \qtq{as} t\to\pm\infty.
\end{align*}
\end{thm}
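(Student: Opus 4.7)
The plan is to follow the induction-on-energy strategy introduced by Bourgain \cite{borg:scatter} and carried out for the quintic NLS on $\R^3$ by the authors of \cite{CKSTT:gwp}. Strichartz estimates on $\R^3$, together with a fixed-point argument for the Duhamel operator in $L_t^{10}L_x^{10}\cap L_t^\infty\hr$, yield the Cazenave--Weissler local theory \cite{cw0,cw1}: for any $u_0\in\hr$ there is a unique solution on an interval whose length depends on the \emph{profile} (not the norm alone) of $u_0$, small-energy data produces global solutions that scatter, and a stability statement allows one to extend any solution with small $\ltr$ norm on a subinterval. Set
\begin{equation*}
M(E_0):=\sup\bigl\{\|u\|_{\ltr}\ :\ u\text{ a maximal solution of \eqref{GNLS} with }p=4\text{ and }E(u)\le E_0\bigr\};
\end{equation*}
small data gives $M(E_0)<\infty$ for some $E_0>0$, and I aim to prove $M(E_0)<\infty$ for all finite $E_0$. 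Arguing by contradiction, let $E_{\mathrm{crit}}$ be the infimum of those $E_0$ for which $M(E_0)=\infty$. Using the scaling and spacetime translation symmetries of the whole-space equation together with a concentration-compactness / linear profile decomposition argument, one extracts a minimal blow-up solution $u$ with $E(u)=E_{\mathrm{crit}}$ and $\|u\|_{\ltr}=\infty$ whose orbit in $\hr$ is precompact modulo these symmetries.

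Minimality forces considerable additional structure on $u$. If at some time $u(t)$ could be split into two pieces of comparable energies whose frequency (or spatial) supports were widely separated, orthogonality of Strichartz estimates would let the two pieces evolve almost independently; since each would then have energy strictly less than $E_{\mathrm{crit}}$, both would have finite spacetime norms, contradicting $\|u\|_{\ltr}=\infty$. A quantitative version of this dichotomy, refined by the stability theory, produces a frequency scale $N(t)$ such that $u(t)$ is essentially concentrated, at each time, on frequencies of size $\sim N(t)$ and in a ball of radius $\sim N(t)^{-1}$. The central new ingredient of \cite{CKSTT:gwp} is then a \emph{frequency-localized} interaction Morawetz estimate applied to $u_{\mathrm{hi}}:=P_{\ge N_*}u$, with $N_*$ chosen far below the blow-up scale; schematically,
\begin{equation*}
\int_I\!\!\int_{\R^3}\!\!\int_{\R^3}\frac{|u_{\mathrm{hi}}(t,x)|^2\,|u_{\mathrm{hi}}(t,y)|^2}{|x-y|}\,dx\,dy\,dt\lesssim N_*^{-1}E(u)^2+(\text{error}).
\end{equation*}
Because the frequency-localization established above forces $u_{\mathrm{hi}}\approx u$, this bound, combined with the local theory, converts into an a priori estimate on $\|u\|_{\ltr}$ depending only on $E_{\mathrm{crit}}$, contradicting $\|u\|_{\ltr}=\infty$. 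Once $\|u\|_{\ltr}\le C(E(u))$ is known unconditionally, scattering in $\hr$ follows from the standard Duhamel-tail argument applied to $u_\pm:=\lim_{t\to\pm\infty}e^{-it\Delta}u(t)$.

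The main obstacle is the derivation of the frequency-localized Morawetz bound itself. In contrast to the classical interaction Morawetz identity for the full solution, the commutator $[P_{\ge N_*},\,|u|^4u]$ produces error terms that are not sign-definite, and mass can leak between low and high frequencies under the nonlinear flow. Bounding these errors in terms of the Morawetz quantity itself requires a delicate bootstrap, quasi-conservation of mass for the frequency-truncated solution, and careful Littlewood--Paley manipulations — this is the technical heart of \cite{CKSTT:gwp} and is where the bulk of the work would lie.
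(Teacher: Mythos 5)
The theorem in question is not proved in this paper; it is imported as a black box from \cite{CKSTT:gwp}, and your task was really to summarize that external argument. Your summary captures the large-scale shape — induction on energy, reduction to a frequency scale $N(t)$, and the frequency-localized interaction Morawetz estimate with its attendant commutator errors and high-frequency mass quasi-conservation — but it conflates the original \cite{CKSTT:gwp} argument with the later Kenig--Merle refinement. In \cite{CKSTT:gwp} there is \emph{no} extraction of a single minimal blowup solution and no linear profile decomposition; the authors work throughout with an ``almost minimal, almost blowup'' sequence $u_n$ of genuine solutions, and derive contradictions for this sequence directly. The step you describe — concentration compactness producing a precompact minimal-energy solution — is precisely what Kenig--Merle introduced and what \cite{KV:gopher} used to revisit the theorem; the present paper explicitly flags this as a ``point of departure'' from \cite{borg:scatter,CKSTT:gwp}. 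So you have described a correct proof of the statement, but one closer to \cite{KV:gopher} than to the cited source.

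Two further CKSTT-specific ingredients that your sketch omits but which the paper highlights as essential: (i) spatial localization of the solution around a moving center $x(t)$ at scale $N(t)^{-1}$, whose proof requires the dispersive estimate $\|e^{it\Delta}\|_{L^1\to L^\infty}\lesssim |t|^{-3/2}$; and (ii) the double Duhamel trick, used to control one of the error terms in the frequency-localized interaction Morawetz inequality, which again relies crucially on the dispersive estimate. These are not incidental: the absence of a dispersive estimate outside a convex obstacle is exactly why this paper cannot run the CKSTT machinery directly in $\Omega$ and must instead develop Section~\ref{S:Linear flow convergence} as a substitute, while treating Theorem~\ref{T:gopher} (and its halfspace analogue) as given.
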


We will also be employing the induction on energy argument, but in the style pioneered by Kenig and Merle \cite{KenigMerle}.   The main result of this paper of Kenig and Merle was the proof of global well-posedness and scattering for the focusing energy-critical equation and data smaller than the soliton threshold.  This result was for spherically symmetric data and dimensions $3\leq d \leq 5$; currently, the analogous result for general data is only known in dimensions five and higher \cite{Berbec}.  The proof of Theorem~\ref{T:gopher} was revisited within this framework in \cite{KV:gopher}, which also incorporates innovations of Dodson \cite{Dodson:3+}.

Let us now turn our attention to the problem of NLS on exterior domains.  This is a very popular and challenging family of problems.  While we will discuss many contributions below, to get a proper sense of the effort expended in this direction one should also consult the many references therein.  In the Euclidean setting, the problem is invariant under space translations; this means that one may employ the full power of harmonic analytic tools.  Indeed, much of the recent surge of progress in the analysis of dispersive equations is based on the incorporation of this powerful technology.  Working on exterior domains breaks space translation invariance and so many of the tools that one could rely on in the Euclidean setting.  The companion paper \cite{KVZ:HA} allows us to transfer many basic harmonic analytic results from the Euclidean setting to that of exterior domains.  Many more subtle results, particularly related to the long-time behaviour of the propagator, require a completely new analysis; we will discuss examples of this below.

Working on exterior domains also destroys the scaling symmetry.  Due to the presence of a boundary, suitable scaling and space translations lead to the study of NLS in \emph{different} geometries.  While equations with broken symmetries have been analyzed before, the boundary causes the geometric changes in this paper to be of a more severe nature than those treated previously.  An additional new difficulty is that we must proceed without a dispersive estimate, which is currently unknown in this setting.  

Before we delve into the difficulties of the energy-critical problem in exterior domains, let us first discuss the energy-subcritical case.  The principal difficulty in this case has been to obtain Strichartz estimates (cf. Theorem~\ref{T:Strichartz}).  The first results in this direction hold equally well in interior and exterior domains.  There is a strong parallel between compact manifolds and interior domains, so we will also include some works focused on that case.

For both compact manifolds and bounded domains, one cannot expect estimates of the same form as for the Euclidean space.  Finiteness of the volume means that there can be no long-time dispersion of wave packets; there is simply nowhere for them to disperse to.  Indeed, in the case of the torus $\R^d/\Z^d$, solutions to the linear Schr\"odinger equation are periodic in time.  Because of this, all Strichartz estimates must be local in time.  Further, due to the existence of conjugate points for the geodesic flow, high frequency waves can reconcentrate; moreover, they can do so arbitrarily quickly.  Correspondingly, Strichartz estimates in the finite domain/compact manifold setting lose derivatives relative to the Euclidean case.  Nevertheless, the resulting Strichartz estimates are still strong enough to prove local (and so global) well-posedness, at least for a range of energy-subcritical nonlinearity exponents $p$.   See the papers \cite{BFHM:Polygon,BSS:PAMS,BSS:schrodinger,borg:torus,BGT:compact} and references therein for further information.

For exterior domains, the obstructions just identified no longer apply, at least in the case of non-trapping obstacles (we do not wish to discuss resonator cavities, or similar geometries).  Thus one may reasonably expect all Strichartz estimates to hold, just as in the Euclidean case.  There are many positive results in this direction, as will be discussed below; however, the full answer remains unknown, even for the exterior of a convex obstacle (for which there are no conjugate points).

In the Euclidean case, the explicit form of the propagator guarantees the following dispersive estimate:
\begin{equation}\label{E:EuclidDisp}
\| e^{it\Delta_{\R^d}} f \|_{L^\infty(\R^d)} \lesssim |t|^{-\frac d2} \| f\|_{L^1(\R^d)}, \qtq{for all} t\neq0.
\end{equation}
This and the unitary of the propagator on $L^2(\R^d)$ are all that is required to obtain all known Strichartz estimates.  For the basic estimates the argument is elementary; see, for example, \cite{gv:strichartz}.  The endpoint cases and exotic retarded estimates are more delicate; see \cite{Foschi,KeelTao, Vilela}.

It is currently unknown whether or not the dispersive estimate holds outside a convex obstacle, indeed, even for the exterior of a sphere.  The only positive result in this direction belongs to Li, Smith, and Zhang, \cite{LSZ}, who prove the dispersive estimate for spherically symmetric functions in the exterior of a sphere in $\R^3$.  Relying on this dispersive estimate and employing an argument of Bourgain \cite{borg:scatter} and Tao \cite{tao:radial}, these authors proved Theorem~\ref{T:main} for spherically symmetric initial data when $\Omega$ is the exterior of a sphere in $\R^3$.  

In due course, we will explain how the lack of a dispersive estimate outside convex obstacles is one of the major hurdles we needed to overcome in order to prove Theorem~\ref{T:main}.  Note that the dispersive estimate will not hold outside a generic non-trapping obstacle, since concave portions of the boundary can act as mirrors and refocus wave packets.

Even though the question of dispersive estimates outside convex obstacles is open, global in time Strichartz estimates are known to hold.  Indeed, in \cite{Ivanovici:Strichartz}, Ivanovici proves all classical Strichartz estimates except the endpoint cases.  Her result will be crucial in what follows and is reproduced below as Theorem~\ref{T:Strichartz}.  We also draw the reader's attention to the related papers \cite{Anton08,BSS:schrodinger,BGT04,HassellTaoWunsch,PlanchVega,RobZuily,StaffTataru,Tataru:Strichartz}, as well as the references therein.

The key input for the proof of Strichartz estimates in exterior domains is the local smoothing estimate; one variant is given as Lemma~\ref{L:local smoothing} below.  In the Euclidean setting, this result can be proved via harmonic analysis methods (cf. \cite{ConsSaut,Sjolin87,Vega88}).  For the exterior of a convex obstacle, the usual approach is the method of positive commutators, which connects it to both Kato smoothing (cf. \cite[\S XIII.7]{RS4}) and the Morawetz identity; this is the argument used to prove Lemma~\ref{L:local smoothing} here.  Local smoothing is also known to hold in the exterior of a non-trapping obstacle; see \cite{BGT04}.

The local smoothing estimate guarantees that wave packets only spend a bounded amount of time next to the obstacle.  This fact together with the fact that Strichartz estimates hold in the whole space can be used to reduce the problem of proving Strichartz inequalities to the local behaviour near the obstacle, locally in time.  Using this argument, Strichartz estimates have been proved for merely non-trapping obstacles; for further discussion see \cite{BSS:schrodinger, BGT04, IvanPlanch:IHP, PlanchVega, StaffTataru}.

While both local smoothing and Strichartz estimates guarantee that wave packets can only concentrate for a bounded amount of time, they do not guarantee that this period of time is one contiguous interval.  In the context of a large-data nonlinear problem, this is a severe handicap when compared to the dispersive estimate:  Once a wave packet begins to disperse, the nonlinear effects are reduced and the evolution is dominated by the linear part of the equation.  If this evolution causes the wave packet to refocus, then nonlinear effects will become strong again.  These nonlinear effects are very hard to control and one must fear the possibility that when the wave packet final breaks up again we find ourselves back at the beginning of the scenario we have just been describing.  Such an infinite loop is inconsistent with scattering and global spacetime bounds.  In Section~\ref{S:Linear flow convergence} we will prove a new kind of convergence result that plays the role of a dispersive estimate in precluding such periodic behaviour.

The next order of business is to describe what direct information the existing Strichartz estimates give us toward the proof of Theorem~\ref{T:main}.  This is how we shall begin the

\subsection{Outline of the proof}

For small initial data, the nonlinearity can be treated perturbatively, provided one has the right linear estimates, of course!  In this way, both \cite{Ivanovici:Strichartz} and \cite{BSS:schrodinger} use the Strichartz inequalities they prove to obtain small energy global well-posedness and scattering for $\text{NLS}_\Omega$.  Actually, there is one additional difficulty that we have glossed over here, namely, estimating the derivative of the nonlinearity.  Notice that in order to commute with the free propagator, the derivative in question must be the square root of the Dirichlet Laplacian (rather than simply the gradient).  In \cite{BSS:schrodinger} an $L^4_t L^\infty_x$ Strichartz inequality is proved, which allows the authors to use the equivalence of $\dot H^1_0$ and $\dot H^1_D$.  In \cite{IvanPlanch:square} a Littlewood--Paley theory is developed, which allows the use of Besov space arguments (cf. \cite{IvanPlanch:IHP}).  Indeed, the paper \cite{IvanPlanch:IHP} of Ivanovici and Planchon goes further, proving small data global well-posedness in the exterior of non-trapping obstacles.

The main result of \cite{KVZ:HA}, which is repeated as Theorem~\ref{T:Sob equiv} below, allows us to transfer the existing local well-posedness arguments directly from the Euclidean case.  Actually, a little care is required to ensure all exponents used lie within the regime where norms are equivalent; nevertheless, this can be done as documented in \cite{KVZ:HA}.  Indeed, this paper shows that our problem enjoys a strong form of continuous dependence, known under the rubric `stability theory';  see Theorem~\ref{T:stability}.  Colloquially, this says that every function that almost solves \eqref{nls} and has bounded spacetime norm lies very close to an actual solution to \eqref{nls}.  This is an essential ingredient in any induction on energy argument.

All the results just discussed are perturbative, in particular, they are blind to the sign of the nonlinearity.  As blowup can occur for the focusing problem, any large-data global theory must incorporate some deeply nonlinear ingredient which captures the dynamical effects of the sign of the nonlinearity.  At present, the only candidates for this role are the identities of Morawetz/virial type and their multi-particle (or interaction) counterparts.

Historically, the Morawetz identity was first introduced for the linear wave equation and soon found application in proving energy decay in exterior domain problems and in the study of the nonlinear wave equation; see \cite{Morawetz75}.  As noticed first by Struwe, this type of tool also provides the key non-concentration result to prove global well-posedness for the energy-critical wave equation in Euclidean spaces.  See the book \cite{ShatahStruwe} for further discussion and complete references.  More recently, this result (plus scattering) has been shown to hold outside convex obstacles \cite{SS10} and (without scattering) in interior domains \cite{BLP08}.  In both instances, the Morawetz identity provides the crucial non-concentration result.

There is a significant difference between the Morawetz identities for the nonlinear wave equation and the nonlinear Schr\"odinger equation, which explains why the solution of the well-posedness problem for the energy-critical NLS did not follow closely on the heels of that for the wave equation: \emph{scaling}.  In the wave equation case, the Morawetz identity has energy-critical scaling.  This ensures that the right-hand side of the inequality can be controlled in terms of the energy alone; it also underscores why it can be used to guarantee non-concentration of solutions.

The basic Morawetz inequality for solutions $u$ to the defocusing quintic NLS in $\R^3$ (see \cite{LinStrauss}) reads as follows:
$$
\frac{d\ }{dt} \int_{\R^3} \frac{x}{|x|} \cdot  2 \Im\bigl\{ \bar u(t,x) \nabla u(t,x)\bigr\} \,dx \geq \int _{\R^3} \frac{8|u|^6}{3|x|}\,dx.
$$
The utility of this inequality is best seen by integrating both sides over some time interval~$I$; together with Cauchy--Schwarz, this leads directly to
\begin{equation}\label{GNLSmor}
\int_I \int _{\R^3} \frac{|u|^6}{|x|}\,dx \,dt \lesssim \| u \|_{L^\infty_t L^2_x (I\times\R^3)} \| \nabla u \|_{L^\infty_t L^2_x (I\times\R^3)}.
\end{equation}
Obviously the right-hand side cannot be controlled solely by the energy; indeed, the inequality has the scaling of $\dot H^{1/2}$.  Nevertheless, the right-hand side can be controlled by the conservation of both mass and energy; this was one of the key ingredients in the proof of scattering for the inter-critical problem (i.e. $\frac43<p<4$) in \cite{GinibreVelo}.  However, at both the mass-critical endpoint $p=\frac43$ and energy-critical endpoint $p=4$, solutions can undergo dramatic changes of scale without causing the mass or energy to diverge.  In particular, by simply rescaling an energy-critical solution as in \eqref{GNLSrescale} one may make the mass as small as one wishes.

Our comments so far have concentrated on RHS\eqref{GNLSmor}, but these concerns apply equally well to LHS\eqref{GNLSmor}.  Ultimately, the Morawetz identity together with mass and energy conservation are each consistent with a solution that blows up by focusing \emph{part} of its energy at a point, even at the origin.  A scenario where \emph{all} of the energy focuses at a single point would not be consistent with the conservation of mass.

The key innovation of Bourgain \cite{borg:scatter} was the induction on energy procedure, which allowed him to reduce the analysis of general solutions to $\text{NLS}_{\R^3}$ to those which have a clear intrinsic characteristic length scale (at least for the middle third of their evolution).  This length scale is time dependent.  In this paper we write $N(t)$ for the reciprocal of this length, which represents the characteristic frequency scale of the solution.  The fact that the solution lives at a single scale precludes the scenario described in the previous paragraph.  By using suitably truncated versions of the Morawetz identity (cf. Lemma~\ref{L:morawetz} below) and the mass conservation law, Bourgain succeeded in proving not only global well-posedness for the defocusing energy-critical NLS in $\R^3$, but also global $L^{10}_{t,x}$ spacetime bounds for the solution.

As noted earlier, the paper \cite{borg:scatter} treated the case of spherically symmetric solutions only.  The general case was treated in \cite{CKSTT:gwp}, which also dramatically advanced the induction on energy method, including reducing treatment of the problem to the study of solutions that not only live at a single scale $1/N(t)$, but are even well localized in space around a single point $x(t)$.  The dispersive estimate is needed to prove this strong form of localization.  Another key ingredient in \cite{CKSTT:gwp} was the newly introduced interaction Morawetz identity; see \cite{CKSTT:interact}.  As documented in \cite{CKSTT:gwp}, there are major hurdles to be overcome in frequency localizing this identity in the three dimensional setting.  In particular, the double Duhamel trick is needed to handle one of the error terms.  This relies \emph{crucially} on the dispersive estimate; thus, we are unable to employ the interaction Morawetz identity as a tool with which to tackle our Theorem~\ref{T:main}.

In four or more spatial dimensions, strong spatial localization is not needed to employ the interaction Morawetz identity.  This was first observed in \cite{RV, thesis:art}.  Building upon this, Dodson \cite{Dodson:obstacle} has shown how the interaction Morawetz identity can be applied to the energy-critical problem in the exterior of a convex obstacle in four dimensions.  He relies solely on frequency localization; one of the key tools that makes this possible is the long-time Strichartz estimates developed by him in the mass-critical Euclidean setting \cite{Dodson:3+} and adapted to the energy-critical setting in \cite{Visan:IMRN}.  For the three dimensional problem, these innovations do not suffice to obviate the need for a dispersive estimate, even in the Euclidean setting; see \cite{KV:gopher}.  

The variant of the induction on energy technique that we will use in this paper was introduced by Kenig and Merle in \cite{KenigMerle}.  This new approach has significantly streamlined the induction on energy paradigm; in particular, it has made it modular by completely separating the induction on energy portion from the rest of the argument.  It has also sparked a rapid and fruitful development of the method, which has now been applied successfully to numerous diverse PDE problems, including wave maps and the Navier--Stokes system.

Before we can discuss the new difficulties associated with implementing the induction on energy method to prove Theorem~\ref{T:main}, we must first explain what it is.  We will do so rather quickly; readers not already familiar with this technique, may benefit from the introduction to the subject given in the lecture notes \cite{ClayNotes}.  The argument is by contradiction.

Suppose Theorem~\ref{T:main} were to fail, which is to say that there is no function $C:[0,\infty)\to[0,\infty)$ so that \eqref{E:T:main} holds.  Then there must be some sequence of solutions $u_n:I_n\times\Omega\to\C$ so that $E(u_n)$ is bounded, but $S_{I_n}(u_n)$ diverges.  Here we introduce the notation
\begin{align*}
S_I(u):=\iint_{I\times\Omega}|u(t,x)|^{10} dx\, dt,
\end{align*}
which is known as the \emph{scattering size} of $u$ on the time interval $I$.

By passing to a subsequence, we may assume that $E(u_n)$ converges.  Moreover, without loss of generality, we may assume that the limit $E_c$ is the smallest number that can arise as a limit of $E(u_n)$ for solutions with $S_{I_n}(u_n)$ diverging.  This number is known as the \emph{critical energy}. It has the following equivalent interpretation:  If
\begin{align*}
L(E):=\sup\{S_I(u) : \, u:I\times\Omega\to \C\mbox{ such that } E(u)\le E\},
\end{align*}
where the supremum is taken over all solutions $u$ to \eqref{nls} defined on some spacetime slab $I\times\Omega$ and having energy $E(u)\le E$, then
\begin{align}\label{E:induct hyp}
L(E)<\infty \qtq{for} E<E_c \quad \qtq{and} \quad L(E)=\infty \qtq{for} E\ge E_c.
\end{align}
(The fact that we can write $E\geq E_c$ here rather than merely $E>E_c$ relies on the stability result Theorem~\ref{T:stability}.) This plays the role of the inductive hypothesis; it says that Theorem~\ref{T:main} is true for energies less than $E_c$.  The argument is called induction on energy precisely because this is then used (via an extensive argument) to show that $L(E_c)$ is finite and so obtain the sought-after contradiction.

Note that by the small-data theory mentioned earlier, we know that $E_c>0$.  Indeed, in the small-data regime, one obtains very good quantitative bounds on $S_\R(u)$.  As one might expect given the perturbative nature of the argument, the bounds are comparable to those for the linear flow; see \eqref{SbyE}. 

One would like to pass to the limit of the sequence of solutions $u_n$ to exhibit a solution $u_\infty$ that has energy $E_c$ and infinite scattering size.  Notice that by virtue of \eqref{E:induct hyp}, such a function would be a \emph{minimal energy blowup solution}.   This is a point of departure of the Kenig--Merle approach from \cite{borg:scatter,CKSTT:gwp}, which worked with merely almost minimal almost blowup solutions, in essence, the sequence $u_n$.

Proving the existence of such a minimal energy blowup solution will be the key difficulty in this paper; even in the Euclidean setting it is highly non-trivial.  In the Euclidean setting, existence was first proved by Keraani \cite{keraani-l2} for the (particularly difficult) mass-critical NLS; see also \cite{BegoutVargas,CarlesKeraani}.  Existence of a minimal blowup solution for the Euclidean energy-critical problem was proved by Kenig--Merle \cite{KenigMerle} (see also \cite{BahouriGerard,keraani-h1} for some ingredients), who were also the first to realize the value of this result for well-posedness arguments.

Let us first describe how the construction of minimal blowup solutions proceeds in the Euclidean setting.  We will then discuss the difficulties encountered on exterior domains and how we overcome these.  As $\text{NLS}_{\R^3}$ has the \emph{non-compact} symmetries of rescaling and spacetime translations, we cannot expect any subsequence of the sequence $u_n$ of almost minimal almost blowup solutions to converge.  This is a well-known dilemma in the calculus of variations and lead to the development of \emph{concentration compactness}.  In its original form, concentration compactness presents us with three possibilities: a subsequence converges after applying symmetry operations (the desired \emph{compactness} outcome); a subsequence splits into one or more bubbles (this is called \emph{dichotomy}); or the sequence is completely devoid of concentration (this is called \emph{vanishing}).  

The vanishing scenario is easily precluded.  If the solutions $u_n$  concentrate at no point in spacetime (at any scale), then we expect the nonlinear effects to be weak and so expect spacetime bounds to follow from perturbation theory and the Strichartz inequality (which provides spacetime bounds for linear solutions).  As uniform spacetime bounds for the solutions $u_n$ would contradict how these were chosen in the first place, this rules out the vanishing scenario.  Actually, this discussion is slightly too naive; one needs to show that failure to concentrate actually guarantees that the linear solution has small spacetime bounds, which then allows us to treat the nonlinearity perturbatively.    

The tool that allows us to complete the argument just described is an inverse Strichartz inequality (cf. Proposition~\ref{P:inverse Strichartz}), which says that linear flows can only have non-trivial spacetime norm if they contain at least one bubble of concentration.  Applying this result inductively to the functions $e^{it\Delta_{\R^3}}u_n(0)$, one finds all the bubbles of concentration in a subsequence of these linear solutions together with a remainder term.  This is expressed in the form of a \emph{linear profile decomposition} (cf. Theorem~\ref{T:LPD}).  Two regions of concentration are determined to be separate bubbles if their relative characteristic length scales diverge as $n\to\infty$, or if their spatial/temporal separation diverges relative to their characteristic scale; see~\eqref{E:LP5}.

If there is only one bubble and no remainder term, then (after a little untangling) we find ourselves in the desired compactness regime, namely, that after applying symmetry operations to $u_n(0)$ we obtain a subsequence that converges strongly in $\dot H^1(\R^3)$.  Moreover this limit gives initial data for the needed minimal blowup solution (cf. Theorem~\ref{T:mmbs}).  But what if we find ourselves in the unwanted dichotomy scenario where there is more than one bubble?  This is where the inductive hypothesis comes to the rescue, as we will now explain.
  
To each profile in the linear profile decomposition, we associate a nonlinear profile, which is a solution to $\text{NLS}_{\R^3}$.  For bubbles of concentration that overlap time $t=0$, these are simply the nonlinear solutions with initial data given by the bubble.  For bubbles of concentration that are temporally well separated from $t=0$, they are nonlinear solutions that have matching long-time behaviour (i.e. matching scattering state).  If there is more than one bubble (or a single bubble but non-zero remainder), all bubbles have energy strictly less than $E_c$.  (Note that energies are additive due to the strong separation of distinct profiles.)  But then by the inductive hypothesis \eqref{E:induct hyp}, each one of the nonlinear profiles will be global in time and obey spacetime bounds.  Adding the nonlinear profiles together (and incorporating the linear flow of the remainder term) we obtain an approximate solution to $\text{NLS}_{\R^3}$ with finite global spacetime bounds.  The fact that the sum of the nonlinear profiles is an approximate solution relies on the separation property of the profiles (this is, after all, a \emph{nonlinear} problem).  Thus by perturbation theory, for $n$ sufficiently large there is a true solution to $\text{NLS}_{\R^3}$ with initial data $u_n(0)$ and bounded global spacetime norms.  This contradicts the criterion by which $u_n$ were chosen in the first place and so precludes the dichotomy scenario.  

This completes the discussion of how one proves the existence of minimal energy blowup solutions for the energy-critical problem in the Euclidean setting.  The argument gives slightly more, something we call (by analogy with the calculus of variations) a \emph{Palais--Smale condition} (cf. Proposition~\ref{P:PS}). This says the following: Given an optimizing sequence of solutions for the scattering size with the energy converging to $E_c$, this sequence has a convergent subsequence (modulo the symmetries of the problem).  Note that by the definition of $E_c$, such optimizing sequences have diverging scattering size.

Recall that one of the key discoveries of \cite{borg:scatter,CKSTT:gwp} was that it was only necessary to consider solutions that have a well-defined (time-dependent) location and characteristic length scale.  Mere existence of minimal blowup solutions is not sufficient; they need to have this additional property in order to overcome the intrinsic limitations of non-scale-invariant conservation/monotonicity laws.

Fortunately, this additional property follows neatly from the Palais--Smale condition.  If $u(t)$ is a minimal energy blowup solution and $t_n$ is a sequence of times, then $u_n(t)=u(t+t_n)$ is a sequence to which we may apply the Palais--Smale result.  Thus, applying symmetry operations to $u(t_n)$ one may find a subsequence that is convergent in $\dot H^1(\R^3)$.  This is precisely the statement that the solution $u$ is \emph{almost periodic}, which is to say, the orbit is cocompact modulo spatial translations and rescaling.  This compactness guarantees that the orbit is tight in both the physical and Fourier variables (uniformly in time).

Let us now turn to the problem on exterior domains.  Adapting the concentration compactness argument to this setting will cause us a great deal of trouble.  Naturally, NLS in the exterior domain $\Omega$ does not enjoy scaling or translation invariance.  Nevertheless, both the linear and nonlinear profile decompositions must acknowledge the possibility of solutions living at any scale and in any possible location.  It is important to realize that in certain limiting cases, these profiles obey \emph{different} equations.  Here are the three main examples:
\begin{CI}
\item Solutions with a characteristic scale much larger than that of the obstacle evolve as if in $\R^3$.
\item Solutions very far from the obstacle (relative to there own characteristic scale) also evolve as if in $\R^3$.
\item Very narrowly concentrated solutions lying very close to the obstacle evolve as if in a halfspace.
\end{CI}
This is both an essential idea that we will develop in what follows and extremely naive.  In each of the three scenarios just described, there are serious omissions from this superficial picture, as we will discuss below.   

Nevertheless, the Palais--Smale condition we obtain in this paper (see Proposition~\ref{P:PS}) is so strong, that it proves the existence of minimal counterexamples in the following form:

\begin{thm}[Minimal counterexamples]\label{T:mincrim}
\hskip 0em plus 1em Suppose Theorem \ref{T:main} failed. Then there exist a critical energy\/ $0<E_c<\infty$ and a global solution $u$ to \eqref{nls} with
$E(u)=E_c$, infinite scattering size both in the future and in the past
$$
S_{\ge 0}(u)=S_{\le 0}(u)=\infty,
$$
and whose orbit $\{ u(t):\, t\in \R\}$ is precompact in $\dot H_D^1(\Omega)$.
\end{thm}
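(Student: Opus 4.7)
The plan is to deduce Theorem~\ref{T:mincrim} directly from the Palais--Smale condition (Proposition~\ref{P:PS}), which is the technical heart of the argument; the extraction below is the standard Kenig--Merle reduction and is essentially formal once Proposition~\ref{P:PS} is granted.

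By the assumed failure of Theorem~\ref{T:main} and the characterization \eqref{E:induct hyp} of $E_c$, I would first select strong solutions $u_n : I_n \times \Omega \to \C$ with $E(u_n) \to E_c$ and $S_{I_n}(u_n) \to \infty$. Using time translation invariance of \eqref{nls}, pick $\tau_n \in I_n$ with
\begin{equation*}
S_{I_n \cap [\tau_n, \infty)}(u_n) = S_{I_n \cap (-\infty, \tau_n]}(u_n) = \tfrac12 S_{I_n}(u_n),
\end{equation*}
and replace $u_n$ by $u_n(\cdot + \tau_n)$, so that both the forward and backward scattering sizes measured from $t=0$ diverge. Applying Proposition~\ref{P:PS} to $\{u_n(0)\}$, extract a subsequence converging in $\dot H^1_D(\Omega)$ to some $\phi$, and let $u$ be the maximal-lifespan strong solution with $u(0) = \phi$. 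Continuity of the energy on $\dot H^1_D(\Omega)$ gives $E(u) = E_c$, while the stability theorem (Theorem~\ref{T:stability}) applied on each compact subinterval of the lifespan of $u$ transfers the divergent scattering sizes of $u_n$ to $u$, forcing $S_{\geq 0}(u) = S_{\leq 0}(u) = \infty$. To rule out finite-time breakdown, I would apply Proposition~\ref{P:PS} a second time to the sequence $u(\cdot + s_n)$ with $s_n$ approaching a hypothesized blowup time; the resulting strong $\dot H^1_D$-limit together with local well-posedness and Theorem~\ref{T:stability} contradicts the blowup, establishing that $u$ is global.

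Precompactness of the orbit is then immediate. Given any sequence $\{t_n\}\subset\R$, each $v_n := u(\cdot + t_n)$ is a global solution with energy $E_c$ and $S_{\geq 0}(v_n) = S_{\leq 0}(v_n) = \infty$, so a third application of Proposition~\ref{P:PS} furnishes a subsequence along which $v_n(0) = u(t_n)$ converges in $\dot H^1_D(\Omega)$. The only genuine difficulty is the Palais--Smale proposition itself: on the exterior domain $\Omega$ there are no scaling or translation symmetries, and the limiting profile objects may live in the fundamentally different geometries indicated in the introduction (free $\R^3$ at large scale or far from the obstacle, and a halfspace for profiles concentrating at $\partial\Omega$). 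Handling these requires the full strength of the linear and nonlinear profile decompositions together with the embedding of Euclidean and halfspace solutions into $\text{NLS}_\Omega$, and that is where the substantive work lies.
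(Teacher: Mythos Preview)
Your proposal is correct and follows essentially the same route as the paper's proof (given there as Theorem~\ref{T:mmbs}): extract a bi-directionally blowing-up sequence, split the scattering size at $\tau_n$, apply Proposition~\ref{P:PS} to obtain $\phi$, use Theorem~\ref{T:stability} to transfer the infinite scattering sizes to the maximal solution $u$ with $u(0)=\phi$, and then invoke Proposition~\ref{P:PS} twice more to obtain globality and orbit precompactness. The only cosmetic difference is that the paper establishes precompactness on the maximal interval first and globality second, whereas you reverse this order; both orderings work since the hypotheses of Proposition~\ref{P:PS} are met either way.
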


As evidence of the strength of this theorem, we note that it allows us to complete the proof of Theorem~\ref{T:main} very quickly indeed (see the last half-page of this paper).

Induction on energy has been adapted to scenarios with broken symmetries before and we would like to give a brief discussion of some of these works.  Our efforts here diverge from these works in the difficulty of connecting the limiting cases to the original model.  The lack of a dispersive estimate is a particular facet of this.

In \cite{KVZ:quadpot}, the authors proved global well-posedness and scattering for the energy-critical NLS with confining or repelling quadratic potentials.  The argument was modelled on that of Bourgain \cite{borg:scatter} and Tao \cite{tao:radial}, and correspondingly considered only spherically symmetric data.  Radiality helps by taming the lack of translation invariance; the key issue was to handle the broken scaling symmetry.  This problem has dispersive estimates, albeit only for short times in the confining (i.e. harmonic oscillator) case.

In \cite{LSZ}, the Bourgain--Tao style of argument is adapted to spherically symmetric data in the exterior of a sphere in $\R^3$.  A key part of their argument is to prove that a dispersive estimate holds in this setting.

The paper \cite{KKSV:gKdV} considers the mass-critical generalized Korteweg--de Vries equation, using the concentration compactness variant of induction on energy.  This paper proves a minimal counterexample theorem in the style of Theorem~\ref{T:mincrim}.  Dispersive estimates hold; the main obstruction was to overcome the broken Galilei invariance.  In the limit of highly oscillatory solutions (at a fixed scale) the gKdV equation is shown to resemble a \emph{different} equation, namely, the mass-critical NLS.  This means that both the linear and nonlinear profile decompositions contain profiles that are embeddings of solutions to the linear/nonlinear Schr\"odinger equations, carefully embedded to mimic solutions to Airy/gKdV.

An analogous scenario arrises in the treatment of the cubic Klein--Gordon equation in two spatial dimensions, \cite{KSV:2DKG}.  Dispersive estimates hold for this problem.  Here the scaling symmetry is broken and strongly non-relativistic profiles evolve according to the mass-critical Schr\"odinger equation, which also breaks the Lorentz symmetry.  Linear and nonlinear profile decompositions that incorporate Lorentz boosts were one of the novelties of this work.

In the last two examples, the broken symmetries have led to dramatic changes in the equation, though the geometry has remained the same (all of Euclidean space).  Next, we describe some instances where the geometry changes, but the equation is essentially the same.

The paper \cite{IPS:H3} treats the energy-critical NLS on three-dimensional hyperbolic space.  Theorem~\ref{T:gopher} is used to treat highly concentrated profiles, which are embedded in hyperbolic space using the strongly Euclidean structure at small scales.  Some helpful ingredients in hyperbolic space are the mass gap for the Laplacian and its very strong dispersive and Morawetz estimates.

More recently, dramatic progress has been made on the energy-critical problem on the three dimensional flat torus.  Global well-posedness for small data was proved in \cite{HerrTataruTz:torus} and the large-data problem was treated in \cite{IonPaus}.  (See also \cite{HaniPaus,Herr:Zoll,HerrTataruTz:mixed,IonPaus1} for results in related geometries.)  While the manifold in question may be perfectly flat, the presence of closed geodesics and corresponding paucity of Strichartz estimates made this a very challenging problem.  The large data problem was treated via induction on energy, using the result for Euclidean space (i.e. Theorem~\ref{T:gopher}) as a black box to control highly concentrated profiles.  The local-in-time frequency localized dispersive estimate proved by Bourgain \cite{borg:torus} plays a key role in ensuring the decoupling of profiles.   

While the methods employed in the many papers we have discussed so far inform our work here, they do not suffice for the treatment of Theorem~\ref{T:main}.  Indeed, even the form of perturbation theory needed here spawned the separate paper \cite{KVZ:HA}.  Moreover, in this paper we encounter not only changes in geometry, but also changes in the equation; after all, the Dirichlet Laplacian on exterior domains is very different from the Laplacian on $\R^3$.

We have emphasized the dispersive estimate because it has been an essential ingredient in the concentration compactness variant of induction on energy; it is the tool that guarantees that profiles contain a single bubble of concentration and so underwrites the decoupling of different profiles.  Up to now, no one has succeeded in doing this without the aid of a dispersive-type estimate.  Moreover, as emphasized earlier, the dispersive estimate plays a seemly irreplaceable role in the treatment of the energy-critical problem in $\R^3$.  Thus, we are confronted with the problem of finding and then proving a suitable substitute for the dispersive estimate.  One of the key messages of this paper is the manner in which this issue is handled, in particular, that the weakened form of dispersive estimate we prove, namely Theorem~\ref{T:LF}, is strong enough to complete the construction of minimal blowup solutions.  The result we prove is too strong to hold outside merely non-trapping obstacles; convexity plays an essential role here.

Section~\ref{S:Linear flow convergence} is devoted entirely to the proof of Theorem~\ref{T:LF}.
Three different methods are used depending on the exact geometric setting, but in all cases, the key result is an \emph{infinite-time} parametrix that captures the action of $e^{it\Delta_\Omega}$ up to a \emph{vanishing fraction} of the mass/energy.  Both this level of accuracy and the fact that it holds for all time are essential features for the rest of the argument.

The most difficult regime in the proof of Theorem~\ref{T:LF} is when the initial data is highly concentrated, say at scale $\eps$, at a distance $\delta$
from the obstacle with $\eps\lesssim \delta\lesssim 1$.  To treat this regime, we subdivide into two cases: $\eps\lesssim \delta\lesssim\eps^{\frac67}$ and $\eps^{\frac67}\lesssim\delta\lesssim 1$, which are called Cases~(iv) and~(v), respectively.

In Case~(iv), the initial data sees the obstacle as a (possibly retreating) halfspace.  To handle this case, we first approximate the initial data by a linear combination of Gaussian wave packets (with characteristic scale $\eps$).  Next we use the halfspace evolution of these wave packets (for which there is an exact formula) to approximate their linear evolution in $\Omega$.  As the halfspace evolution does not match the Dirichlet boundary condition, we have to introduce a correction term $w$.  Moreover, we have to choose the parameters in the definition of $w$ carefully, so that the resulting error terms can be controlled for the full range of $\delta$.

In Case~(v), the obstacle is far from the initial data relative to the data's own scale, but close relative to the scale of the obstacle.  We decompose the initial data into a linear combination of Gaussian wave packets, whose characteristic scale $\sigma$ is chosen carefully to allow refection off the obstacle to be treated by means of geometric optics.  In particular, $\sigma$ is chosen so that the wave packets do not disperse prior to their collision with the obstacle, but do disperse shortly thereafter.  We divide these wave packets into three categories: those that miss the obstacle, those that are near-grazing, and those that collide non-tangentially with the obstacle.  Wave packets in the last category are the most difficult to treat.  For these, we build a Gaussian parametrix for the reflected wave.  To achieve the needed degree of accuracy, this parametrix must be very precisely constructed; in particular, it must be matched to the principal curvatures of the obstacle at the collision point.  This parametrix does not match the Dirichlet boundary condition perfectly, and it is essential to wring the last drops of cancellation from this construction in order to ensure that it is not overwhelmed by the resulting errors.  Further, the term $w$ that we introduce to match the boundary condition is carefully chosen so that it is non-resonant; note the additional phase factor in the definition of $w^{(3)}$.  This is needed so that the error terms are manageable.  

An example of how the results of Section~\ref{S:Linear flow convergence} play a role can be seen in the case of profiles that are highly concentrated at a bounded distance from the obstacle.  These live far from the obstacle relative to their own scale, and so we may attempt to approximate them by solutions to  $\text{NLS}_{\R^3}$ whose existence is guaranteed by Theorem~\ref{T:gopher}.  Such solutions scatter and so eventually dissolve into outward propagating radiation.  However, the obstacle blocks a positive fraction of directions and so a non-trivial fraction of the energy of the wave packet will reflect off the obstacle.  Theorem~\ref{T:LF3} guarantees that this reflected energy will not refocus.  Only with this additional input can we truly say that such profiles behave as if in Euclidean space.

Now consider the case when the profile is much larger than the obstacle.  In this case the equivalence of the linear flows follows from Theorem~\ref{T:LF1}.  However, the argument does not carry over to the nonlinear case.  Embedding the nonlinear profiles requires a special argument; one of the error terms is simply not small.  Nevertheless, we are able to control it by proving that it is non-resonant; see Step~2 in the proof of Theorem~\ref{T:embed2}.

The third limiting scenario identified above was when the profile concentrates very close to the obstacle.  In this regime the limiting geometry is the halfspace $\HH$.  Note that spacetime bounds for $\text{NLS}_\HH$ follow from Theorem~\ref{T:gopher} by considering solutions that are odd under reflection in $\partial\HH$.  The linear flow is treated in Theorem~\ref{T:LF2} and the embedding of nonlinear profiles is the subject of Theorem~\ref{T:embed4}.  Note that in this regime, the spacetime region where the evolution is highly nonlinear coincides with the region of collision with the boundary.  In the far-field regime, the finite size of the obstacle affects the radiation pattern; thus it is essential to patch the halfspace linear evolution together with that in $\Omega$.

Our discussion so far has emphasized how to connect the free propagator in the limiting geometries with that in $\Omega$.  The complexity of energy-critical arguments is such that we also need to understand the relations between other spectral multipliers, such as Littlewood--Paley projectors and fractional powers.  This is the subject of Section~\ref{S:Domain Convergence}.

After much toil, we show that nonlinear profiles arising from all limiting geometries obey spacetime bounds, which plays an analogous role to the induction on energy hypothesis.  Thus, when the nonlinear profile decomposition is applied to a Palais--Smale sequence, we can show that there can be only one profile and it cannot belong to either of the limiting geometries $\R^3$ or $\HH$; it must live at approximately unit scale and at approximately unit distance from the obstacle.  This is how we obtain Theorem~\ref{T:mincrim}.  The proof of this theorem occupies most of Section~\ref{S:Proof}.  The last part of that section deduces Theorem~\ref{T:main} from this result.

To close this introduction, let us quickly recount the contents of this paper by order of presentation.

Section~\ref{S:Preliminaries} mostly reviews existing material that is needed for the analysis:  equivalence of Sobolev spaces and the product rule for the Dirichlet Laplacian; Littlewood--Paley theory and Bernstein inequalities; Strichartz estimates; local and stability theories for $\text{NLS}_\Omega$; persistence of regularity for solutions of NLS that obey spacetime bounds (this is important for the embedding of profiles); the Bourgain-style Morawetz identity; and local smoothing.

Section~\ref{S:Domain Convergence} proves results related to the convergence of functions of the Dirichlet Laplacian as the underlying domains converge.  Convergence of Green's functions at negative energies is proved via direct analysis making use of the maximum principle.  This is extended to complex energies via analytic continuation and the Phragmen--Lindel\"of principle.  General functions of the operator are represented in terms of the resolvent via the Helffer--Sj\"ostrand formula.

Section~\ref{S:Linear flow convergence}  analyses the behaviour of the linear propagator under domain convergence.  In all cases, high-accuracy infinite-time parametrices are constructed.  When the geometry guarantees that a vanishing fraction of the wave actually hits the obstacle, a simple truncation argument is used (Theorem~\ref{T:LF1}).  For disturbances close to the obstacle, we base our approximation off the exact solution of the halfspace linear problem with Gaussian initial data; see Theorem~\ref{T:LF2}.  For highly concentrated wave packets a bounded distance from the obstacle, we build a parametrix based on a Gaussian beam technique; see Theorem~\ref{T:LF3}.   The fact that Gaussian beams are exact linear solutions in Euclidean space prevents the accumulation of errors at large times.

Section~\ref{S:LPD} first proves refined and inverse Strichartz inequalities (Lemma~\ref{lm:refs} and Proposition~\ref{P:inverse Strichartz}).  These show that linear evolutions with non-trivial spacetime norms must contain a bubble of concentration.  This is then used to obtain the linear profile decomposition, Theorem~\ref{T:LPD}.  The middle part of this section contains additional results related to the convergence of domains, which combine the tools from Sections~\ref{S:Domain Convergence} and~\ref{S:Linear flow convergence}.

Section~\ref{S:Nonlinear Embedding} shows how nonlinear solutions in the limiting geometries can be embedded in $\Omega$.  As nonlinear solutions in the limiting geometries admit global spacetime bounds (this is how Theorem~\ref{T:gopher} enters our analysis), we deduce that solutions to $\text{NLS}_\Omega$ whose characteristic length scale and location conform closely to one of these limiting cases inherit these spacetime bounds.  These solutions to $\text{NLS}_\Omega$ appear again as nonlinear profiles in Section~\ref{S:Proof}.

Section~\ref{S:Proof} contains the proofs of the Palais--Smale condition (Proposition~\ref{P:PS}), as well as the existence and almost periodicity of minimal blowup solutions (Theorem~\ref{T:mmbs}).  Because of all the ground work laid in the previous sections, the nonlinear profile decomposition, decoupling, and induction on energy arguments all run very smoothly.  This section closes with the proof of Theorem~\ref{T:main}; the needed contradiction is obtained by combining the space-localized Morawetz identity introduced in Lemma~\ref{L:morawetz} with the almost periodicity of minimal blowup solutions.    

\subsection*{Acknowledgements}
R. K. was supported by NSF grant DMS-1001531.  M. V. was supported by the Sloan Foundation and NSF grants DMS-0901166 and DMS-1161396.
X. Z. was supported by the Sloan Foundation.

\section{Preliminaries}\label{S:Preliminaries}

\subsection{Some notation}
We write $X \lesssim Y$ or $Y \gtrsim X$ to indicate $X \leq CY$ for some absolute constant $C>0$, which may change from line
to line.  When the implicit constant depends on additional quantities, this will be indicated with subscripts.  We use $O(Y)$ to
denote any quantity $X$ such that $|X| \lesssim Y$.  We use the notation $X \sim Y$ whenever $X \lesssim Y \lesssim X$.  We write $o(1)$
to indicate a quantity that converges to zero.

Throughout this paper, $\Omega$ will denote the exterior domain of a smooth compact strictly convex obstacle in $\R^3$.
Without loss of generality, we assume that $0\in \Omega^c$.  We use $\diam:=\diam(\Omega^c)$ to denote the diameter of the obstacle
and $d(x):=\dist(x,\Omega^c)$ to denote the distance of a point $x\in\R^3$ to the obstacle.

In order to prove decoupling of profiles in $L^p$ spaces (when $p\neq 2$) in Section~\ref{S:LPD}, we will make use of the
following refinement of Fatou's Lemma, due to Br\'ezis and Lieb:

\begin{lem}[Refined Fatou, \cite{BrezisLieb}]\label{lm:rf}
Let $0<p<\infty$. Suppose $\{f_n\}\subseteq L^p(\R^d)$ with $\limsup\|f_n\|_{L^p}<\infty$. If $f_n\to f$ almost everywhere, then
\begin{align*}
\int_{\R^d}\Bigl||f_n|^p-|f_n-f|^p-|f|^p \Bigr| \,dx\to 0.
\end{align*}
In particular, $\|f_n\|_{L^p}^p-\|f_n-f\|_{L^p}^p \to \|f\|_{L^p}^p$.
\end{lem}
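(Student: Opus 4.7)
The plan is to establish this via a pointwise near-additivity inequality for $|\,\cdot\,|^p$ combined with the dominated convergence theorem, in the classical Br\'ezis--Lieb style. The main lemma underlying the argument is: for every $\eps>0$ there exists $C_\eps>0$ such that
\begin{equation*}
\bigl||a+b|^p - |a|^p - |b|^p\bigr| \;\le\; \eps\,|a|^p + C_\eps\,|b|^p \qtq{for all} a,b\in\C.
\end{equation*}
I would prove this by a compactness/scaling argument: divide by $|a|^p$ and set $z=b/a$; continuity of $|1+z|^p-1-|z|^p$ and its $|z|^p$-type growth at infinity make the ratio $\bigl||1+z|^p-1-|z|^p\bigr|/(\eps+|z|^p)$ bounded on $\C$ for any fixed $\eps>0$, with bound determined by $\eps$.

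With this tool in hand I would set $a=f_n-f$, $b=f$, giving the pointwise bound
\begin{equation*}
\bigl||f_n|^p - |f_n-f|^p - |f|^p\bigr| \;\le\; \eps\,|f_n-f|^p + C_\eps\,|f|^p.
\end{equation*}
Next I would define
\begin{equation*}
W_{n,\eps}(x) := \Bigl(\bigl||f_n|^p - |f_n-f|^p - |f|^p\bigr| - \eps\,|f_n-f|^p\Bigr)_+ ,
\end{equation*}
so that $0\le W_{n,\eps}\le C_\eps |f|^p$. Fatou applied to $|f_n|^p$ shows $\|f\|_{L^p}\le \liminf\|f_n\|_{L^p}<\infty$, so the dominating function $C_\eps|f|^p$ lies in $L^1$. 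Since $f_n\to f$ a.e.\ forces $W_{n,\eps}\to 0$ a.e., dominated convergence yields $\int W_{n,\eps}\,dx\to 0$ as $n\to\infty$, for each fixed $\eps$.

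To conclude, I would combine the definition of $W_{n,\eps}$ with the triangle inequality $\|f_n-f\|_{L^p}\le \|f_n\|_{L^p}+\|f\|_{L^p}$, which is uniformly bounded in $n$ by hypothesis. Thus
\begin{equation*}
\int_{\R^d}\bigl||f_n|^p-|f_n-f|^p-|f|^p\bigr|\,dx \;\le\; \int_{\R^d} W_{n,\eps}\,dx + \eps\,\|f_n-f\|_{L^p}^p.
\end{equation*}
Sending $n\to\infty$ kills the first term, leaving $O(\eps)$; then $\eps\to 0$ gives the claim. The ``in particular'' statement follows by noting that $\bigl|\,\|f_n\|_{L^p}^p - \|f_n-f\|_{L^p}^p - \|f\|_{L^p}^p\bigr|$ is bounded above by the integral we have just shown tends to zero. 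The only step requiring genuine care is the pointwise inequality; everything else is bookkeeping with Fatou and dominated convergence.
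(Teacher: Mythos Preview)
Your argument is correct and is precisely the classical Br\'ezis--Lieb proof; the paper does not supply its own proof of this lemma but simply quotes it from \cite{BrezisLieb}, so there is nothing to compare against. One small remark: your invocation of the triangle inequality $\|f_n-f\|_{L^p}\le\|f_n\|_{L^p}+\|f\|_{L^p}$ is only literally valid for $p\ge1$; for $0<p<1$ you should instead use the pointwise subadditivity $|f_n-f|^p\le|f_n|^p+|f|^p$, which gives the needed uniform bound on $\|f_n-f\|_{L^p}^p$ directly.
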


As described in the introduction, we need adaptations of a wide variety of harmonic analysis tools to the setting
of exterior domains.   Most of these were discussed in our paper \cite{KVZ:HA}.  One of the key inputs for that
paper is the following (essentially sharp) estimate for the heat kernel:

\begin{thm}[Heat kernel bounds, \cite{qizhang}]\label{T:heat}
Let $\Omega$ denote the exterior of a smooth compact convex obstacle in $\R^d$ for $d\geq 3$. Then there exists $c>0$ such that
\begin{align*}
|e^{t\ld}(x,y)|\lsm \Bigr(\frac{d(x)}{\sqrt t\wedge \diam}\wedge 1\Bigr)\Bigl(\frac{d(y)}{\sqrt t\wedge \diam}\wedge 1\Bigr) e^{-\frac{c|x-y|^2}t} t^{-\frac d 2},
 \end{align*}
uniformly in $x, y\in \Omega$ and $t\geq 0$; recall that $A\wedge B = \min\{A,B\}$.  Moreover, the reverse inequality holds after suitable modification
of $c$ and the implicit constant.
\end{thm}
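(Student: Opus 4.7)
The plan is to derive both bounds from a Gaussian base estimate together with quantitative Dirichlet boundary refinements, extracting the vanishing factors in $x$ and $y$ separately and then combining them via the semigroup identity $e^{t\Delta_\Omega}(x,y) = \int_\Omega e^{(t/2)\Delta_\Omega}(x,z) e^{(t/2)\Delta_\Omega}(z,y)\,dz$, so that the two factors are produced independently rather than double-counted. The bare Gaussian upper bound
$$
0 \leq e^{t\Delta_\Omega}(x,y) \leq e^{t\Delta_{\R^d}}(x,y) \lesssim t^{-d/2} e^{-|x-y|^2/(4t)}
$$
is immediate from domain monotonicity, that is, the parabolic maximum principle applied to the difference of the two heat kernels.

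To extract the boundary factor in $x$, I would view $u(t,x):= e^{t\Delta_\Omega}(x,y)$ as a positive caloric function on $\Omega$ with zero Dirichlet data on $\partial\Omega$, and construct an explicit parabolic supersolution vanishing linearly on $\partial\Omega$ at the asserted rate. Two regimes appear. When $\sqrt{t} \leq \diam$, strict convexity lets me use a local half-space comparison: at each boundary point $x_0$ the obstacle lies on one side of its supporting hyperplane $H_{x_0}$, so $d(x) \leq \dist(x,H_{x_0})$ locally and the explicit Dirichlet half-space heat kernel, which by the reflection principle carries exactly the factor $\dist(x,H_{x_0})/\sqrt{t}\wedge 1$, serves as a barrier on scale $\sqrt{t}$. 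When $\sqrt{t} \geq \diam$, I would instead use the Martin function at infinity on $\Omega$, namely the unique bounded positive harmonic function $h$ on $\Omega$ with $h|_{\partial\Omega}=0$ and $h(\infty)=1$; strict convexity yields $h(x) \sim d(x)/\diam \wedge 1$, and since $h$ is stationary under the heat flow it serves as a time-independent barrier encoding the stabilized long-time survival probability. This is where $d \geq 3$ enters, since such a bounded harmonic $h$ exists only in the transient regime. A parabolic boundary Harnack principle then converts these two barriers into the claimed pointwise upper bound, and exchanging $x$ and $y$ yields the second factor.

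The matching lower bound comes from the classical parabolic Harnack inequality on $\Omega$ applied along a chain of balls connecting $x$ and $y$, which gives the base Gaussian lower bound once $d(x), d(y) \gtrsim \sqrt{t}\wedge\diam$. The linear vanishing factor near $\partial\Omega$ is then restored via the boundary Harnack principle from below, with subsolutions built from the same supporting-hyperplane and Martin-function constructions used for the upper bound.

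The hard part will be achieving uniformity of the constants across the transitional regime $\sqrt{t} \sim \diam$, where neither the local half-space picture nor the global Martin-function picture is cleanly dominant and where the two length scales in the bound must be seamlessly interpolated. Strict convexity is essential here: it ensures that the supporting-hyperplane comparison is valid at the correct spatial scale at every boundary point, and it gives the boundary regularity of $h$ needed to pin down its linear vanishing rate. Convexity is also what rules out any angular refocusing correction to the plain Gaussian factor $e^{-c|x-y|^2/t}$, which is precisely why a matching two-sided bound of exactly the stated form can be recovered.
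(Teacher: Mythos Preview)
The paper does not prove this theorem at all; it is quoted from Q.~S.~Zhang's paper \cite{qizhang} and used as a black box input for the harmonic analysis developed in \cite{KVZ:HA}. So there is no ``paper's own proof'' to compare against.

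Your outline is a reasonable sketch of how such two-sided Gaussian bounds with boundary vanishing factors are typically obtained, and the ingredients you name (domain monotonicity for the base upper bound, half-space barriers near the boundary, the Martin function for long times, parabolic boundary Harnack, and chaining for the lower bound) are indeed the standard ones. One caution: you repeatedly invoke \emph{strict} convexity, but the theorem as stated requires only convexity, and Zhang's result holds in that generality. Your supporting-hyperplane barriers and the Martin function $h$ with $h(x)\sim d(x)/\diam \wedge 1$ work for any smooth convex body; strict convexity is not needed for the heat kernel bounds (it enters elsewhere in the paper, for the Strichartz estimates and the parametrix constructions of Section~4). Also, your claim that convexity ``rules out any angular refocusing correction'' is a red herring here: the upper Gaussian bound $e^{-c|x-y|^2/t}$ holds on any domain by the maximum principle, and the matching lower bound follows from Harnack chaining along geodesics in $\Omega$, which for a convex obstacle are simply straight lines or two-segment paths of length comparable to $|x-y|$.
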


The most important result from \cite{KVZ:HA} for our applications here is the following, which identifies Sobolev spaces defined with respect
to the Dirichlet Laplacian with those defined via the usual Fourier multipliers.  Note that the restrictions on the regularity $s$ are necessary,
as demonstrated by the counterexamples discussed in~\cite{KVZ:HA}.

\begin{thm}[Equivalence of Sobolev spaces, \cite{KVZ:HA}]\label{T:Sob equiv}
Let $d\geq 3$ and let $\Omega$ denote the complement of a compact convex body $\Omega^c\subset\R^d$ with smooth boundary.  Let $1<p<\infty$.  If $0\leq s<\min\{1+\frac1p,\frac dp\}$ then
\begin{equation}\label{E:equiv norms}
\bigl\| (-\Delta_{\R^d})^{s/2} f \bigl\|_{L^p}  \sim_{d,p,s} \bigl\| (-\Delta_\Omega)^{s/2} f \bigr\|_{L^p}  \qtq{for all} f\in C^\infty_c(\Omega).
\end{equation}
\end{thm}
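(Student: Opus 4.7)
The plan is to reduce the comparison of the two fractional-power operators to a comparison of their heat semigroups, using the sharp pointwise Gaussian bounds of Theorem~\ref{T:heat} as the fundamental input. By the subordination formula, for $0 < s < 2$ and $X \in \{ \R^d, \Omega \}$,
\begin{equation*}
(-\Delta_X)^{s/2} f = c_s \int_0^\infty t^{-s/2 - 1} \bigl( f - e^{t\Delta_X} f \bigr) \, dt,
\end{equation*}
so the equivalence \eqref{E:equiv norms} reduces to boundedness on $L^p(\Omega)$ of
\begin{equation*}
T_s f(x) := \int_0^\infty t^{-s/2 - 1} \bigl( e^{t \Delta_\Omega} f(x) - e^{t \Delta_{\R^d}} f(x) \bigr) \, dt,
\end{equation*}
with $f \in C^\infty_c(\Omega)$ extended by zero to $\R^d$, and with operator norm dominated by that of $(-\Delta_{\R^d})^{s/2}$. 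A preliminary density step shows it suffices to treat test functions.

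Next, I would invoke Theorem~\ref{T:heat} and the maximum principle, which together give that on $\Omega \times \Omega$ the heat kernels $p_t^\Omega$ and $p_t^{\R^d}$ satisfy
\begin{equation*}
0 \le p_t^{\R^d}(x - y) - p_t^\Omega(x,y) \lesssim p_t^{\R^d}(x - y) \cdot \Phi_t(x,y),
\end{equation*}
where the bounded factor $\Phi_t$ vanishes when both $d(x)$ and $d(y)$ are much larger than $\sqrt{t} \wedge \diam$; reading off Theorem~\ref{T:heat} one has the pointwise bound $\Phi_t \lesssim (\sqrt t - d(x))_+/\sqrt t + (\sqrt t - d(y))_+/\sqrt t$ when $\sqrt t \le \diam$. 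Inserting this into $T_s$ and rebalancing the $t^{-s/2 - 1}$ weight, each boundary factor trades powers of $\sqrt t$ for negative powers of $d$, so $T_s f(x)$ is pointwise controlled by a $d(x)^{-\sigma}$-weighted Riesz-type potential of $f$ for some $\sigma \in (0, 1]$ depending on $s$. A Hardy inequality of the form $\| f/d^{\sigma} \|_{L^p(\Omega)} \lesssim \| (-\Delta_\Omega)^{\sigma/2} f \|_{L^p(\Omega)}$, valid on exterior convex domains for $0 \le \sigma < 1 + 1/p$, then absorbs the distance weight; combined with the Mikhlin calculus for $-\Delta_\Omega$ already discussed in the introduction, this closes the $L^p$ estimate.

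The hard part will be making the Hardy accounting sharp so that the resulting range is exactly $s < \min\{ 1 + 1/p, d/p \}$. The threshold $s = 1 + 1/p$ is precisely where the Dirichlet boundary condition ceases to be detectable in an $L^p$ trace sense, and beyond which $C^\infty_c(\Omega)$ is no longer dense in the natural restriction of $\dot H^{s,p}(\R^d)$; the counterexamples mentioned in the introduction for spherically symmetric data outside a ball in $\R^3$ with $p > 3$ show this range cannot be extended. The secondary threshold $s < d/p$ reflects the natural scaling requirement for the fractional integral $(-\Delta)^{-s/2}$ to map $L^p$ to a nontrivial target, so that both sides of \eqref{E:equiv norms} define genuine homogeneous Banach norms on the completion of $C^\infty_c(\Omega)$. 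A lower-order but delicate technical point is the justification of all integral manipulations above as honest operator identities between the two completions; for this one leans on the two-sided pointwise bounds of Theorem~\ref{T:heat} together with a careful resolvent and multiplier calculus, as carried out in detail in \cite{KVZ:HA}.
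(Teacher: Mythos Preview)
This theorem is not proved in the present paper; it is quoted from the companion paper \cite{KVZ:HA} and used as a black box throughout.  There is therefore no ``paper's own proof'' to compare your proposal against.  Your sketch is a plausible outline of the kind of argument one expects --- reducing to heat kernel differences via subordination, exploiting Theorem~\ref{T:heat}, and closing with a Hardy inequality adapted to the boundary --- and indeed you yourself defer the details to \cite{KVZ:HA} at the end.  Whether your sketch matches the actual argument in that paper, and whether the Hardy accounting you describe really yields the sharp range $s<\min\{1+\tfrac1p,\tfrac dp\}$, cannot be assessed from the present paper alone.
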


This result allows us to transfer several key results directly from the Euclidean setting, provided we respect the restrictions on $s$ and $p$.  This includes such basic facts as the $L^p$-Leibnitz (or product) rule for first derivatives.  Indeed, the product rule for the operator
$(-\Delta_\Omega)^{1/2}$ is non-trivial; there is certainly no pointwise product rule for this operator.

We also need to consider derivatives of non-integer order.  The $L^p$-product rule for fractional derivatives in Euclidean spaces was
first proved by Christ and Weinstein \cite{ChW:fractional chain rule}.  Combining their result with Theorem~\ref{T:Sob equiv}
yields the following:

\begin{lem}[Fractional product rule]\label{lm:product}
For all $f, g\in C_c^{\infty}(\Omega)$, we have
\begin{align}\label{fp}
\| (-\Delta_\Omega)^{\frac s2}(fg)\|_{L^p} \lsm \| (-\Delta_\Omega)^{\frac s2} f\|_{L^{p_1}}\|g\|_{L^{p_2}}+
\|f\|_{L^{q_1}}\| (-\Delta_\Omega)^{\frac s2} g\|_{L^{q_2}}
\end{align}
with the exponents satisfying $1<p, p_1, q_2<\infty$, $1<p_2,q_1\le \infty$,
\begin{align*}
\tfrac1p=\tfrac1{p_1}+\tfrac1{p_2}=\tfrac1{q_1}+\tfrac1{q_2}, \qtq{and} 0<s<\min\bigl\{ 1+\tfrac1{p_1}, 1+\tfrac1{q_2},\tfrac3{p_1},\tfrac3{q_2} \bigr\}.
\end{align*}
\end{lem}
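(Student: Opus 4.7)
The plan is to reduce everything to the Euclidean statement of Christ--Weinstein via Theorem~\ref{T:Sob equiv}, applied three times: once on the left-hand side to replace $(-\Delta_\Omega)^{s/2}(fg)$ by $(-\Delta_{\R^3})^{s/2}(fg)$, and twice on the right to transfer the norms back from $\R^3$ to $\Omega$. Since $f,g\in C^\infty_c(\Omega)$ we also have $fg\in C^\infty_c(\Omega)$, so the equivalence of Sobolev norms can be invoked on each factor without any mollification.

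Concretely, the first step is to verify that the exponent $s$ is admissible for the Euclidean/Dirichlet equivalence on the left-hand side. We need $0\le s<\min\{1+\tfrac1p,\tfrac3p\}$. From $\tfrac1p=\tfrac1{p_1}+\tfrac1{p_2}$ we have $p\le p_1$ and so $\tfrac3{p_1}\le\tfrac3p$ and $1+\tfrac1{p_1}\le 1+\tfrac1p$; the hypothesis $s<\min\{1+\tfrac1{p_1},\tfrac3{p_1}\}$ therefore implies $s<\min\{1+\tfrac1p,\tfrac3p\}$, so Theorem~\ref{T:Sob equiv} applies and yields
\[
\|(-\Delta_\Omega)^{s/2}(fg)\|_{L^p}\sim\|(-\Delta_{\R^3})^{s/2}(fg)\|_{L^p}.
\]
In the second step I invoke the classical fractional product rule of Christ and Weinstein on $\R^3$:
\[
\|(-\Delta_{\R^3})^{s/2}(fg)\|_{L^p}\lesssim \|(-\Delta_{\R^3})^{s/2}f\|_{L^{p_1}}\|g\|_{L^{p_2}}+\|f\|_{L^{q_1}}\|(-\Delta_{\R^3})^{s/2}g\|_{L^{q_2}},
\]
which is valid under the stated H\"older and positivity conditions on $s,p_i,q_i$ (and which accommodates the endpoint values $p_2=\infty$ or $q_1=\infty$).

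In the third step I transfer the two derivative factors on the right back to Dirichlet norms. The admissibility conditions $0\le s<\min\{1+\tfrac1{p_1},\tfrac3{p_1}\}$ and $0\le s<\min\{1+\tfrac1{q_2},\tfrac3{q_2}\}$ are precisely what Theorem~\ref{T:Sob equiv} requires for $f$ and $g$ respectively, and these are built into the hypotheses of the lemma. Applying that theorem to each of $f,g\in C^\infty_c(\Omega)$ completes the chain of inequalities.

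There is no substantive analytic obstacle here; the whole argument is a verification that the exponent constraints in the hypothesis are tight enough to permit three successive applications of the Dirichlet/Euclidean equivalence. The only nontrivial point to track is the reduction of the constraint on $s$ for $fg$ to the constraints on $s$ for $f$ and for $g$, which is the H\"older-type comparison carried out in the first paragraph above. Once that is verified the statement follows immediately by concatenating the three inequalities.
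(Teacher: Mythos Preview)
Your proof is correct and follows exactly the approach the paper indicates: the paper does not give a detailed proof, merely stating that combining the Christ--Weinstein Euclidean product rule with Theorem~\ref{T:Sob equiv} yields the lemma. Your writeup makes explicit the three applications of Theorem~\ref{T:Sob equiv} and verifies the exponent constraints, which is precisely the content of that one-line justification.
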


\subsection{Littlewood--Paley theory on exterior domains}

Fix $\phi:[0,\infty)\to[0,1]$  a smooth non-negative function obeying
\begin{align*}
\phi(\lambda)=1 \qtq{for} 0\le\lambda\le 1 \qtq{and} \phi(\lambda)=0\qtq{for} \lambda\ge 2.
\end{align*}
For each dyadic number $N\in 2^\Z$, we then define
\begin{align*}
\phi_N(\lambda):=\phi(\lambda/N) \qtq{and} \psi_N(\lambda):=\phi_N(\lambda)-\phi_{N/2}(\lambda);
\end{align*}
notice that $\{\psi_N(\lambda)\}_{N\in \tz} $ forms a partition of unity for $(0,\infty)$.

With these functions in place, we can now introduce the Littlewood--Paley projections adapted to the Dirichlet Laplacian on $\Omega$
and defined via the functional calculus for self-adjoint operators:
\begin{align*}
\po_{\le N} :=\phi_N\bigl(\sqrt{-\Delta_\Omega}\,\bigr), \quad \po_N :=\psi_N(\sqrt{-\Delta_\Omega}\,\bigr), \qtq{and} \po_{>N} :=I-\po_{\le N}.
\end{align*}
For brevity we will often write $f_N := \po_N f$ and similarly for the other projections.

We will write $P_N^{\R^3}$, and so forth, to represent the analogous operators associated to the usual Laplacian in the full Euclidean space.
We will also need the analogous operators on the halfspace $\HH=\{x\in\R^3 : x \cdot e_3 >0\}$ where $e_3=(0,0,1)$, which we denote by $P^{\HH}_N$, and so forth.

Just like their Euclidean counterparts, these Littlewood--Paley projections obey Bernstein estimates.  Indeed, these follow quickly
from heat kernel bounds and the analogue of the Mikhlin multiplier theorem for the Dirichlet Laplacian.  See \cite{KVZ:HA} for further details.

\begin{lem}[Bernstein estimates]
Let $1<p<q\le \infty$ and $-\infty<s<\infty$. Then for any $f\in C_c^{\infty}(\Omega)$, we have
\begin{align*}
\|\po_{\le N} f \|_{\lpo}+\|\po_N f\|_{\lpo}&\lsm \|f\|_{\lpo},\\
\|\po_{\le N} f\|_{L^q(\Omega)}+\|\po_N f\|_{L^q(\Omega)}&\lsm N^{d(\frac 1p-\frac1q)}\|f\|_{L^p(\Omega)},\\
N^s\|\po_N f\|_{\lpo}&\sim \|(-\ld)^{\frac s2}\po_N f\|_{\lpo}.
\end{align*}
\end{lem}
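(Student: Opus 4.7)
The plan is to derive all three bounds from two black-box inputs, both established in \cite{KVZ:HA}: the Mikhlin-type multiplier theorem for $-\ld$ and the Gaussian heat kernel bound of Theorem~\ref{T:heat}. Given these, the argument closely parallels the Euclidean proof. For the first estimate, I would observe that the scalar symbols $\phi_N$ and $\psi_N$ obey the Mikhlin-type bounds $|\lambda^k\partial_\lambda^k \phi_N(\lambda)|+|\lambda^k\partial_\lambda^k\psi_N(\lambda)|\lsm_k 1$ uniformly in $N\in\tz$, so the multiplier theorem applied to $-\ld$ yields uniform $\lpo$-boundedness of $\po_{\le N}=\phi_N(\sqrt{-\ld})$ and $\po_N=\psi_N(\sqrt{-\ld})$ for every $1<p<\infty$.

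For the second estimate, I would first extract from Theorem~\ref{T:heat} (via Young's convolution inequality applied to the pointwise Gaussian kernel bound) the heat-semigroup estimate $\|e^{t\ld}f\|_{L^q(\Omega)}\lsm t^{-\frac{d}{2}(\frac{1}{p}-\frac{1}{q})}\|f\|_{\lpo}$ for $1\le p\le q\le\infty$ and $t>0$. I would then factor
$$
\po_N = \widetilde{\psi}_N(\sqrt{-\ld})\circ e^{N^{-2}\ld}, \qquad \widetilde{\psi}_N(\lambda) := \psi_N(\lambda)\, e^{\lambda^2/N^2},
$$
and similarly for $\po_{\le N}$. Since $\supp\psi_N\subset[N/2,2N]$, the factor $e^{\lambda^2/N^2}$ is uniformly bounded on the support, so $\widetilde{\psi}_N$ remains a Mikhlin symbol uniformly in $N$; hence the leading factor is bounded on $L^q(\Omega)$ uniformly, while the heat kernel factor at time $t=N^{-2}$ supplies the gain $N^{d(\frac{1}{p}-\frac{1}{q})}$.

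For the third estimate, I would apply the Mikhlin theorem to the auxiliary symbols $m_N^{\pm}(\lambda):=N^{\pm s}\lambda^{\mp s}\widetilde{\psi}(\lambda/N)$, where $\widetilde{\psi}\in C_c^{\infty}(0,\infty)$ equals one on $[1/2,2]$. Both families are Mikhlin symbols uniformly in $N$, and combining this with the identity $\po_N=\widetilde{\psi}(\sqrt{-\ld}/N)\po_N$ yields the two-sided comparison
$$
\|(-\ld)^{s/2}\po_N f\|_{\lpo}\lsm N^s\|\po_N f\|_{\lpo}\lsm \|(-\ld)^{s/2}\po_N f\|_{\lpo}.
$$
The hard part is not these manipulations but the availability of the Mikhlin multiplier theorem for $-\ld$; once that deeply nontrivial analytic input from \cite{KVZ:HA} is granted, the three bounds reduce to routine transcriptions of their Euclidean counterparts.
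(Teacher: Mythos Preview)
Your proposal is correct and matches the paper's approach exactly: the paper does not give a detailed proof but simply states that the estimates ``follow quickly from heat kernel bounds and the analogue of the Mikhlin multiplier theorem for the Dirichlet Laplacian,'' referring to \cite{KVZ:HA} for details, and your argument is precisely the fleshing-out of that sketch. One small remark: in the second estimate with $q=\infty$, the Mikhlin theorem is unavailable on $L^\infty$, so you should commute the two factors (they are both functions of $-\ld$) and apply Mikhlin on $L^p$ first, then the heat smoothing $L^p\to L^\infty$; this is a trivial reordering.
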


A deeper application of the multiplier theorem for the Dirichlet Laplacian is the proof of the square function inequalities.
Both are discussed in \cite{IvanPlanch:square}, as well as \cite{KVZ:HA}, and further references can be found therein.

\begin{lem}[Square function estimate]\label{sq}
Fix $1<p<\infty$.  For all $f\in C_c^{\infty}(\Omega)$,
\begin{align*}
\|f\|_{L^p(\Omega)}\sim \Bigl\|\Bigl(\sum_{N\in \tz}|\po_{N} f|^2\Bigr)^{\frac12}\Bigr\|_{L^p(\Omega)}.
\end{align*}
\end{lem}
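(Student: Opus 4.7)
The plan is to deduce the square function estimate from the Mikhlin--H\"ormander-type spectral multiplier theorem for $-\ld$ (established in \cite{KVZ:HA,IvanPlanch:square}) via the classical Khintchine randomization argument; the only non-trivial input is the multiplier theorem itself, which I would use as a black box.

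First I would apply Khintchine's inequality pointwise in $x$ and integrate to obtain
\begin{align*}
\Bigl\|\Bigl(\sum_{N\in\tz}|\po_N f|^2\Bigr)^{\tfrac12}\Bigr\|_{\lpo} \sim \Bigl(\mathbb{E}_\eps \Bigl\|\sum_{N\in\tz} \eps_N \po_N f\Bigr\|_{\lpo}^p\Bigr)^{\tfrac1p},
\end{align*}
where $\{\eps_N\}_{N\in\tz}$ are independent Rademacher random variables. For any fixed sign sequence $\eps$, the symbol $m_\eps(\lambda):=\sum_{N} \eps_N \psi_N(\lambda)$ is smooth on $(0,\infty)$ and, since at most two of the bumps $\psi_N$ overlap at any given $\lambda$ and each $\lambda^k\psi_N^{(k)}$ is bounded uniformly in $N$, satisfies $|\lambda^k \partial_\lambda^k m_\eps(\lambda)|\le C_k$ with constants independent of $\eps$. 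The Mikhlin-type multiplier theorem for $\sqrt{-\ld}$ therefore yields
\begin{align*}
\Bigl\|\sum_{N\in\tz} \eps_N \po_N f\Bigr\|_{\lpo} \lsm \|f\|_{\lpo}
\end{align*}
uniformly in $\eps$, which combined with the previous display gives the ``$\lsm$'' direction of the lemma.

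For the reverse inequality I would argue by duality. Choose a smooth bump $\tilde\psi$ adapted to $[\tfrac14,4]$ and equal to one on $[\tfrac12,2]$, and set $\tilde P_N^\Omega := \tilde\psi(\sqrt{-\ld}/N)$, so that $\po_N = \po_N \tilde P_N^\Omega$. For any $g\in C_c^\infty(\Omega)$,
\begin{align*}
|\lng f,g\rng| = \Bigl|\sum_{N\in\tz} \lng\po_N f,\tilde P_N^\Omega g\rng\Bigr| \le \Bigl\|\Bigl(\sum_N |\po_N f|^2\Bigr)^{\tfrac12}\Bigr\|_{\lpo} \Bigl\|\Bigl(\sum_N |\tilde P_N^\Omega g|^2\Bigr)^{\tfrac12}\Bigr\|_{L^{p'}(\Omega)}
\end{align*}
by Cauchy--Schwarz in $N$ and H\"older's inequality in $x$. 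The symbols $\tilde\psi(\cdot/N)$ obey exactly the same Mikhlin-type bounds as the $\psi_N$, so applying the ``$\lsm$'' direction (already proved) in $L^{p'}(\Omega)$ controls the last factor by $\|g\|_{L^{p'}(\Omega)}$; taking the supremum over such $g$ produces the matching lower bound $\|f\|_{\lpo}\lsm \|(\sum_N |\po_N f|^2)^{1/2}\|_{\lpo}$.

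The main obstacle in a self-contained treatment would be to verify the Mikhlin--H\"ormander multiplier theorem for the Dirichlet Laplacian on $\Omega$, which ultimately rests on the heat kernel estimates of Theorem~\ref{T:heat} and the associated off-diagonal kernel bounds; this is exactly where the exterior-domain geometry enters in a genuinely nontrivial way. Given that theorem, the randomization-plus-duality scheme above presents no further essential difficulty.
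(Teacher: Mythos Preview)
Your proposal is correct and follows exactly the approach the paper points to: the paper does not prove this lemma itself but attributes it to \cite{IvanPlanch:square,KVZ:HA}, noting that it is ``a deeper application of the multiplier theorem for the Dirichlet Laplacian,'' and the Khintchine randomization plus duality argument you outline is precisely the standard route from the multiplier theorem to the square function estimate used there.
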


Implicit in this lemma is the fact that each $f$ coincides with $\sum f_N$ in $L^p(\Omega)$ sense for $1<p<\infty$.  This relies on the
fact that $0$ is not an eigenvalue of $-\Delta_\Omega$, as follows from Lemma~\ref{L:local smoothing}.

\subsection{Strichartz estimates and the local theory}
As the endpoint Strichartz inequality is not known for exterior domains, some care needs to be taken when defining the natural Strichartz spaces.
For any time interval $I$, we define
\begin{align*}
S^0(I)&:=L_t^{\infty} L_x^2(I\times\Omega)\cap L_t^{2+\eps}L_x^{\frac{6(2+\eps)}{2+3\eps}}(I\times\Omega)\\
\dot S^1(I) &:= \{u:I\times\Omega\to \C :\, (-\Delta_\Omega)^{1/2}u\in S^0(I)\}.
\end{align*}
By interpolation,
\begin{align}\label{Sspaces}
\|u\|_{L_t^q L_x^r(I\times\Omega)}\leq \|u\|_{S^0(I)} \qtq{for all} \tfrac2q+\tfrac3r=\tfrac32 \qtq{with} 2+\eps\leq q\leq \infty.
\end{align}
Here $\eps>0$ is chosen sufficiently small so that all Strichartz pairs of exponents used in this paper are covered.  For example, combining
\eqref{Sspaces} with Sobolev embedding and the equivalence of Sobolev spaces Theorem~\ref{T:Sob equiv}, we obtain the following lemma.

\begin{lem}[Sample spaces]
We have
\begin{align*}
\|u\|_{L_t^\infty \dot H^1_D} &+ \|(-\Delta_{\Omega})^{\frac12}u\|_{L_t^{10} L_x^{\frac{30}{13}}} + \|(-\Delta_{\Omega})^{\frac12}u\|_{L_t^5L_x^{\frac{30}{11}}} + \|(-\Delta_{\Omega})^{\frac 12}u\|_{L_{t,x}^{\frac{10}3}}\\
& + \|(-\Delta_{\Omega})^{\frac 12}u\|_{L_t^{\frac83} L_x^4} +\|u\|_{L_t^\infty L_x^6}+\|u\|_{L^{10}_{t,x}}+\|u\|_{L_t^5 L_x^{30}}\lsm \|u\|_{\dot S^1(I)},
\end{align*}
where all spacetime norms are over $I\times \Omega$.
\end{lem}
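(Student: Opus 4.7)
The plan is to reduce everything to the Strichartz space embedding \eqref{Sspaces} applied to $v := (-\Delta_\Omega)^{1/2} u$, combined with the equivalence of Sobolev spaces (Theorem~\ref{T:Sob equiv}) and Sobolev embedding. First I would note that the four norms in which a factor of $(-\Delta_\Omega)^{1/2}$ is visible correspond to Strichartz-admissible pairs: one checks directly that
\[
\tfrac{2}{10}+\tfrac{3\cdot 13}{30}=\tfrac{2}{5}+\tfrac{3\cdot 11}{30}=\tfrac{2\cdot 3}{10}+\tfrac{3\cdot 3}{10}=\tfrac{2\cdot 3}{8}+\tfrac{3}{4}=\tfrac{3}{2},
\]
and each of the time exponents $10,5,\tfrac{10}{3},\tfrac{8}{3}$ is $\geq 2+\eps$ (for $\eps$ small). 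Hence by \eqref{Sspaces} with $v$ in place of $u$, each of these four norms is controlled by $\|v\|_{S^0(I)}=\|u\|_{\dot S^1(I)}$.

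Next I would dispose of the two endpoint-like norms that do not carry a $(-\Delta_\Omega)^{1/2}$. The bound $\|u\|_{L_t^\infty \dot H^1_D}\le\|u\|_{\dot S^1(I)}$ is immediate from the very definitions of $\dot H^1_D$, $S^0$, and $\dot S^1$, since $L_t^\infty L_x^2$ is one of the two factors in $S^0$. For $\|u\|_{L_t^\infty L_x^6}$, I would use Sobolev embedding in the form $\|f\|_{L^6(\Omega)}\lesssim \|(-\Delta_{\R^3})^{1/2} f\|_{L^2}\sim \|(-\Delta_\Omega)^{1/2} f\|_{L^2}$, where the equivalence comes from Theorem~\ref{T:Sob equiv} (valid here because $s=1$ and $p=2$ satisfy $1<\min\{1+\tfrac12,\tfrac32\}$); integrating in time then gives the desired estimate.

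Finally, for the two pure $L^q_t L^r_x$ norms on $u$, I would combine the Sobolev embedding with one of the Strichartz bounds already obtained. Specifically, since $\tfrac{13}{30}-\tfrac{1}{3}=\tfrac{1}{10}$ and $\tfrac{11}{30}-\tfrac{1}{3}=\tfrac{1}{30}$, Theorem~\ref{T:Sob equiv} together with the Euclidean Sobolev embedding yields
\[
\|u(t)\|_{L^{10}_x}\lesssim \|(-\Delta_\Omega)^{1/2} u(t)\|_{L^{30/13}_x}, \qquad \|u(t)\|_{L^{30}_x}\lesssim \|(-\Delta_\Omega)^{1/2} u(t)\|_{L^{30/11}_x};
\]
here the range restriction in Theorem~\ref{T:Sob equiv} is satisfied since $p\in\{30/13,30/11\}$ gives $\min\{1+\tfrac1p,\tfrac3p\}>1$. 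Taking $L^{10}_t$ and $L^5_t$ respectively and applying the already-established bounds on the two Strichartz norms $\|(-\Delta_\Omega)^{1/2}u\|_{L_t^{10}L_x^{30/13}}$ and $\|(-\Delta_\Omega)^{1/2}u\|_{L_t^5 L_x^{30/11}}$ completes the argument.

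The only thing to verify with any care is the admissibility of each exponent pair in Theorem~\ref{T:Sob equiv}; there is no genuine obstacle since all the $p$'s that appear are comfortably in the range $(1,\infty)$ with $\min\{1+\tfrac1p,\tfrac3p\}>1$. There are no Littlewood--Paley or endpoint Strichartz issues to contend with, so the proof is a short book-keeping exercise.
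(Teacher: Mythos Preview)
Your proof is correct and follows exactly the approach the paper indicates: the paper does not give a separate proof of this lemma, merely stating (in the sentence preceding it) that it follows by combining \eqref{Sspaces} with Sobolev embedding and Theorem~\ref{T:Sob equiv}. You have filled in precisely those details, verifying the admissibility of each exponent pair and the range conditions for Theorem~\ref{T:Sob equiv}.
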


We define $N^0(I)$ to be the dual Strichartz space and
$$
\dot N^1(I):=\{F:I\times\Omega\to \C:\, (-\Delta_\Omega)^{1/2} F\in N^0(I)\}.
$$

For the case of exterior domains, Strichartz estimates were proved by Ivanovici \cite{Ivanovici:Strichartz}; see also \cite{BSS:schrodinger}.  These estimates form an essential foundation for all the analysis carried out in this papaer. 

\begin{thm}[Strichartz estimates]\label{T:Strichartz}
Let $I$ be a time interval and let $\Omega$ be the exterior of a smooth compact strictly convex obstacle in $\R^3$. Then
the solution $u$ to the forced Schr\"odinger equation $i u_t + \Delta_\Omega u = F$ satisfies the estimate
\begin{align*}
\|u\|_{S^0(I)}\lsm \|u(t_0)\|_{L^2(\Omega)}+\|F\|_{N^0(I)}
\end{align*}
for any $t_0\in I$.  In particular, as $(-\Delta_\Omega)^{1/2}$ commutes with the free propagator $e^{it\Delta_\Omega}$,
\begin{align*}
\|u\|_{\dot S^1(I)}\lsm \|u(t_0)\|_{\dot H^1_D(\Omega)}+\|F\|_{\dot N^1(I)}
\end{align*}
for any $t_0\in I$.
\end{thm}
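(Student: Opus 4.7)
The plan is to deduce both estimates from Ivanovici's homogeneous Strichartz bounds \cite{Ivanovici:Strichartz} for the free propagator $e^{it\Delta_\Omega}$ at admissible pairs $(q,r)$ satisfying $\tfrac2q+\tfrac3r=\tfrac32$ with $q>2$, combined with duality and the Christ--Kiselev lemma. Ivanovici's estimates are the deep input; the remaining work is essentially packaging.

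First, unitarity of $e^{it\Delta_\Omega}$ on $L^2(\Omega)$ gives the trivial bound $\|e^{it\Delta_\Omega}f\|_{L^\infty_tL^2_x}=\|f\|_{L^2(\Omega)}$, while Ivanovici's result supplies $\|e^{it\Delta_\Omega}f\|_{L^{2+\eps}_tL^{6(2+\eps)/(2+3\eps)}_x(\R\times\Omega)}\lesssim \|f\|_{L^2(\Omega)}$. Together these yield the homogeneous statement $\|e^{it\Delta_\Omega}f\|_{S^0(\R)}\lesssim \|f\|_{L^2(\Omega)}$, and dualizing produces $\bigl\|\int_\R e^{-is\Delta_\Omega}F(s)\,ds\bigr\|_{L^2(\Omega)}\lesssim \|F\|_{N^0(\R)}$.

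Second, I would combine these two bounds via $TT^\ast$ to obtain the symmetric retarded estimate $\bigl\|\int_\R e^{i(t-s)\Delta_\Omega}F(s)\,ds\bigr\|_{S^0(\R)}\lesssim \|F\|_{N^0(\R)}$, and then invoke the Christ--Kiselev lemma to pass from the symmetric operator to the truly retarded Duhamel integral $\int_{t_0}^t e^{i(t-s)\Delta_\Omega}F(s)\,ds$. Christ--Kiselev applies precisely because every admissible exponent $q$ appearing in $S^0(I)$ satisfies $q>2>q'$, so that the indicator $1_{s<t}$ induces a bounded operator between the relevant Bochner spaces. Localization to an arbitrary interval $I$ with arbitrary $t_0\in I$ is then a matter of extending $F$ by zero outside $I$. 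The second displayed inequality of the theorem follows by applying the first to $(-\Delta_\Omega)^{1/2}u$ in place of $u$, since this operator commutes with $e^{it\Delta_\Omega}$.

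The principal obstacle, which is subsumed into the citation of Ivanovici, is the absence of a dispersive estimate outside a convex obstacle: one cannot simply interpolate $L^1\to L^\infty$ decay against $L^2$ conservation as in the Euclidean derivation of Keel--Tao. Ivanovici's proof instead combines the local smoothing estimate (in the spirit of Lemma~\ref{L:local smoothing}), which controls the time wave packets spend near $\partial\Omega$, with a microlocal parametrix construction that exploits strict convexity to patch local estimates near the obstacle with global Euclidean Strichartz estimates away from it. The endpoint $(q,r)=(2,6)$ remains open in this setting, which is exactly why $S^0(I)$ is defined with $q=2+\eps$ rather than $q=2$; this accommodates the Christ--Kiselev step automatically and accounts for the otherwise unnatural-looking exponents in the definition of $S^0(I)$.
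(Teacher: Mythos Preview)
Your proposal is correct; the paper itself does not prove this theorem but simply cites it from Ivanovici \cite{Ivanovici:Strichartz} (with a nod to \cite{BSS:schrodinger}), so your outline actually supplies more detail than the paper does. The reduction from Ivanovici's homogeneous bounds to the inhomogeneous estimate via $TT^*$, duality, and Christ--Kiselev is exactly the standard packaging, and your remarks about the endpoint and the role of strict convexity are apt.
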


When $\Omega$ is the whole Euclidean space $\R^3$, we may take $\eps=0$ in the definition of Strichartz spaces; indeed, for the linear propagator
$e^{it\Delta_{\R^3}}$, Strichartz estimates for the endpoint pair of exponents $(q,r)=(2,6)$ were proved by Keel and Tao \cite{KeelTao}.   Embedding functions on the halfspace $\HH$ as functions on $\R^3$ that are odd under reflection in $\partial\HH$, we immediately see that the whole range of Strichartz estimates, including the endpoint, also hold for the free propagator $e^{it\Delta_{\HH}}$.

The local theory for \eqref{nls} is built on contraction mapping arguments combined with Theorem~\ref{T:Strichartz} and the equivalence of Sobolev spaces Theorem~\ref{T:Sob equiv}.  We record below a stability result for \eqref{nls}, which is essential in extracting a minimal counterexample to Theorem~\ref{T:main}.  Its predecessor in the Euclidean case can be found in \cite{CKSTT:gwp}; for versions in higher dimensional Euclidean spaces see \cite{ClayNotes, RV, TaoVisan}.

\begin{thm}[Stability for $\text{NLS}_{\Omega}$, \cite{KVZ:HA}]\label{T:stability}
Let $\Omega$ be the exterior of a smooth compact strictly convex obstacle in $\R^3$.  Let $I$ a compact time interval and let $\tilde u$ be an approximate solution to \eqref{nls} on $I\times \Omega$ in the sense that
$$
i\tilde u_t + \Delta_\Omega \tilde u = |\tilde u|^4\tilde u + e
$$
for some function $e$.  Assume that
\begin{align*}
\|\tilde u\|_{L_t^\infty \dot H_D^1(I\times \Omega)}\le E \qtq{and} \|\tilde u\|_{L_{t,x}^{10}(I\times \Omega)} \le L
\end{align*}
for some positive constants $E$ and $L$.  Let $t_0 \in I$ and let $u_0\in \dot H_D^1(\Omega)$ satisfy
\begin{align*}
\|u_0-\tilde u(t_0)\|_{\dot H_D^1}\le E'
\end{align*}
for some positive constant $E'$.  Assume also the smallness condition
\begin{align}\label{E:stab small}
\bigl\|\sqrt{-\Delta_\Omega}\; e^{i(t-t_0)\Delta_\Omega}\bigl[u(t_0)-\tilde u_0\bigr]\bigr\|_{L_t^{10}L_x^{\frac{30}{13}}(I\times \Omega)}
+\bigl\|\sqrt{-\Delta_\Omega}\; e\bigr\|_{N^0(I)}&\le\eps
\end{align}
for some $0<\eps<\eps_1=\eps_1(E,E',L)$.  Then, there exists a unique strong solution $u:I\times\Omega\mapsto \C$ to \eqref{nls} with initial data $u_0$ at time $t=t_0$ satisfying
\begin{align*}
\|u-\tilde u\|_{L_{t,x}^{10}(I\times \Omega)} &\leq C(E,E',L)\eps\\
\bigl\|\sqrt{-\Delta_\Omega}\;  (u-\tilde u)\bigr\|_{S^0(I\times\Omega)} &\leq C(E,E',L)\, E'\\
\bigl\|\sqrt{-\Delta_\Omega}\;  u\bigr\|_{S^0(I\times\Omega)} &\leq C(E,E',L).
\end{align*}
\end{thm}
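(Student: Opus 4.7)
The plan is to follow the standard perturbative framework for critical dispersive equations, adapted to the exterior domain setting via the harmonic analytic tools developed in \cite{KVZ:HA}. Writing $w:=u-\tilde u$ and $F(z):=|z|^4z$, the object is to bound $w$ in $\dot S^1(I)$ assuming the smallness conditions, since $w$ satisfies
$$
i w_t + \Delta_\Omega w = \bigl[F(w+\tilde u)-F(\tilde u)\bigr] - e.
$$
Applying Theorem~\ref{T:Strichartz} to $(-\Delta_\Omega)^{1/2} w$ and using the smallness hypothesis \eqref{E:stab small} reduces matters to estimating the nonlinear term in $\dot N^1$.

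First, I would partition $I$ into $J=J(L,\eta)$ consecutive subintervals $I_j$ on each of which $\|\tilde u\|_{L^{10}_{t,x}(I_j\times\Omega)}\le\eta$, where $\eta=\eta(E,E',L)$ is a small parameter to be fixed. On each $I_j$, the pointwise bound $|F(w+\tilde u)-F(\tilde u)|\lesssim (|\tilde u|^4+|w|^4)|w|$ together with analogous estimates for the derivatives of $F$ (viewed as a function of $z$ and $\bar z$) allow an application of the fractional product rule of Lemma~\ref{lm:product}, the equivalence of Sobolev norms Theorem~\ref{T:Sob equiv}, and the Euclidean fractional chain rule of Christ--Weinstein. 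Using Hölder in compatible Strichartz pairs from the sample-spaces lemma (for instance $L^{10}_{t,x}$ paired with $L^{10}_tL^{30/13}_x$ for the half-derivative term), one obtains
$$
\bigl\|(-\Delta_\Omega)^{1/2}[F(w+\tilde u)-F(\tilde u)]\bigr\|_{\dot N^1(I_j)} \lesssim \bigl(\eta^4+\|w\|_{L^{10}_{t,x}(I_j)}^4\bigr)\,\|w\|_{\dot S^1(I_j)} + \eta^3\,\|w\|_{\dot S^1(I_j)}\,\|\tilde u\|_{\dot S^1(I_j)}.
$$
Combined with the Strichartz inequality, this yields a closed bootstrap on $\|w\|_{\dot S^1(I_j)}$, giving
$$
\|w\|_{\dot S^1(I_j)} \le C\bigl(\|w(t_j)\|_{\dot H^1_D}+\eps_j\bigr),
$$
where $\eps_j$ bounds the inhomogeneous term propagated onto $I_j$, provided $\eta$ is small relative to $E$.

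Second, I would iterate across the subintervals: the bound for $w(t_{j+1})$ in $\dot H^1_D$ becomes the initial difference on $I_{j+1}$, and a standard Duhamel/Strichartz calculation shows that the propagated linear error on $I_{j+1}$ is bounded by $C$ times the previous one plus $C\eps$. After $J$ iterations the constants grow by at most $C(E,L)^J$, which is still finite, so choosing $\eps_1=\eps_1(E,E',L)$ small enough closes the estimate and yields the three asserted bounds. The existence and uniqueness of the genuine strong solution $u$ on all of $I$ follow from local well-posedness (run forward from $t_0$) together with this a priori bound, which prevents the $L^{10}_{t,x}$-norm from blowing up.

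The main obstacle, and the reason this is not a trivial transcription of the Euclidean argument, is propagating the fractional derivative $(-\Delta_\Omega)^{1/2}$ through the nonlinearity: unlike $\nabla$, this operator admits no pointwise chain rule, so every appearance of it in a nonlinear estimate must be routed through Theorem~\ref{T:Sob equiv} (to replace it by $(-\Delta_{\R^3})^{1/2}$ on test functions) and Lemma~\ref{lm:product}. This forces careful bookkeeping of the Hölder exponents so that every Sobolev exponent $s=1$ stays inside the admissible range $0\le s<\min\{1+\tfrac1p,\tfrac3p\}$ of Theorem~\ref{T:Sob equiv}, which is precisely why the paper records the particular sample Strichartz spaces in the preceding lemma. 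A secondary but harmless nuisance is the loss of the endpoint $(2,6)$ Strichartz pair, which is absorbed into the slightly subcritical exponents in the definition of $S^0(I)$.
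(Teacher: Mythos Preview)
The paper does not prove this theorem; it is quoted from the companion paper \cite{KVZ:HA}, with the remark that its Euclidean predecessor is in \cite{CKSTT:gwp} and higher-dimensional versions in \cite{ClayNotes, RV, TaoVisan}. Your sketch is the standard perturbative argument---partition $I$ by the $L^{10}_{t,x}$ size of $\tilde u$, bootstrap on each piece via Strichartz and the product rule, then iterate---and is exactly what the cited proof does; you also correctly identify the one genuinely new ingredient, namely that $(-\Delta_\Omega)^{1/2}$ acting on the nonlinearity must be handled through Theorem~\ref{T:Sob equiv} and Lemma~\ref{lm:product} with the H\"older exponents kept inside the admissible range $0\le s<\min\{1+\tfrac1p,\tfrac3p\}$. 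One minor slip: in your displayed nonlinear estimate the target norm should be $N^0(I_j)$, not $\dot N^1(I_j)$, since $(-\Delta_\Omega)^{1/2}$ has already been applied.
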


There is an analogue of this theorem for $\Omega$ an exterior domain in $\R^d$ with $d=4,5,6$; see \cite{KVZ:HA}.  For dimensions $d\geq 7$, this is an open question.  The proof of the stability result in $\R^d$ with $d\geq 7$ relies on fractional chain rules for H\"older continuous functions and `exotic' Strichartz estimates; see \cite{ClayNotes,TaoVisan}.  The equivalence of Sobolev spaces Theorem~\ref{T:main} guarantees that the fractional chain rule can be imported directly from the Euclidean setting.  However, the `exotic' Strichartz estimates are derived from the dispersive estimate \eqref{E:EuclidDisp} and it is not known whether they hold in exterior domains.

Applying Theorem~\ref{T:stability} with $\tilde u\equiv0$, we recover the standard local well-posedness theory for \eqref{nls}.  Indeed, for an arbitrary (large) initial data $u_0\in \dot H^1_D(\Omega)$, the existence of some small time interval $I$ on which the smallness hypothesis
\eqref{E:stab small} holds is guaranteed by the monotone convergence theorem combined with Theorem~\ref{T:Strichartz}.  Moreover, if the initial data $u_0$ has small norm in $\dot H^1_D(\Omega)$ (that is, $E'$ is small), then Theorem~\ref{T:Strichartz} yields \eqref{E:stab small} with $I=\R$.
Therefore, both local well-posedness for large data and global well-posedness for small data follow from Theorem~\ref{T:stability}.  These special
cases of Theorem~\ref{T:stability} have appeared before, \cite{BSS:schrodinger, IvanPlanch:IHP}; induction on energy, however, requires the full strength of
Theorem~\ref{T:stability}.

In Section~\ref{S:Nonlinear Embedding}, we will embed solutions to NLS in various limiting geometries back inside $\Omega$.  To embed solutions to $\text{NLS}_{\R^3}$ in $\Omega$, we will make use of the following persistence of regularity result for this equation:

\begin{lem}[Persistence of regularity for $\text{NLS}_{\R^3}$, \cite{CKSTT:gwp}]\label{lm:persistencer3} Fix $s\ge 0$ and let $I$ be a compact time interval and $u:I\times\R^3\to \C$ be a solution to $\text{NLS}_{\R^3}$ satisfying
\begin{align*}
E(u)\leq E<\infty \qtq{and} \|u\|_{L_{t,x}^{10}(I\times\R^3)}\leq L<\infty.
\end{align*}
If $u(t_0)\in \dot H^s(\R^3)$ for some $t_0\in I$, then
\begin{align*}
\|(-\Delta_{\R^3})^{\frac s2}u\|_{S^0(I)}\leq C(E,L) \|u(t_0)\|_{\dot H^s(\R^3)}.
\end{align*}
\end{lem}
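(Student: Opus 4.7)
\medskip

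\noindent\textbf{Proof proposal.} The plan is a standard Strichartz bootstrap built on the fractional chain rule, driven by the small-$L^{10}_{t,x}$ decomposition of the interval $I$. Using the hypothesis $\|u\|_{L^{10}_{t,x}(I\times\R^3)}\leq L$, I would first partition $I$ into $J\lesssim_{L,\eta} 1$ consecutive subintervals $I_1,\dots,I_J$ so that
\[
\|u\|_{L^{10}_{t,x}(I_j\times\R^3)}\leq \eta \qtq{for each} j,
\]
where $\eta>0$ is a small absolute constant to be chosen. On the first subinterval $I_1$ (say containing $t_0$) apply $(-\Delta)^{s/2}$ to the Duhamel formula and invoke the Euclidean Strichartz inequality (endpoint included) to obtain
\[
\|(-\Delta)^{s/2} u\|_{S^0(I_1)} \lesssim \|(-\Delta)^{s/2} u(t_0)\|_{L^2_x} + \bigl\|(-\Delta)^{s/2}(|u|^4 u)\bigr\|_{L^2_t L^{6/5}_x(I_1\times\R^3)}.
\]

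Next I would estimate the nonlinear term with the Christ--Weinstein fractional product/chain rule. Writing $|u|^4 u$ as a product of four copies of $u$ and one copy carrying the derivative, one has
\[
\bigl\|(-\Delta)^{s/2}(|u|^4 u)\bigr\|_{L^2_t L^{6/5}_x} \lesssim \|u\|_{L^{10}_{t,x}}^{4}\,\|(-\Delta)^{s/2} u\|_{L^{10}_t L^{30/13}_x},
\]
the exponents being chosen so that both $(10,10)$ and $(10,30/13)$ are admissible Strichartz pairs and the H\"older sums balance at $(2,6/5)$. Since $L^{10}_t L^{30/13}_x$ is one of the spaces controlled by $\dot S^1$-type Strichartz (which is what $\|(-\Delta)^{s/2}u\|_{S^0}$ essentially reduces to at the level of $(-\Delta)^{s/2} u$), we conclude
\[
\|(-\Delta)^{s/2} u\|_{S^0(I_1)} \lesssim \|(-\Delta)^{s/2} u(t_0)\|_{L^2_x} + \eta^4\, \|(-\Delta)^{s/2} u\|_{S^0(I_1)}.
\]
Choosing $\eta$ small enough that $C\eta^4\leq \tfrac12$, I absorb the last term into the left-hand side and obtain the bound $\|(-\Delta)^{s/2} u\|_{S^0(I_1)}\leq 2C\|(-\Delta)^{s/2} u(t_0)\|_{L^2_x}$.

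The remaining step is to iterate: on $I_2$ I repeat the argument using the bound at the endpoint of $I_1$ as the new initial data (which is controlled by the previous step via the $L^\infty_t L^2_x$ component of the $S^0$-norm), and so on through $I_J$. After $J=J(L)$ iterations, I get
\[
\|(-\Delta)^{s/2} u\|_{S^0(I)} \leq 2^{J(L)}\, C\,\|(-\Delta)^{s/2} u(t_0)\|_{L^2_x},
\]
which is the desired bound with $C(E,L)$ depending only on $L$ (and harmlessly on $E$, which can be used to guarantee that the Strichartz estimates for the free flow starting from the subinterval endpoints are available at the required level).

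The only delicate point worth flagging is the range of $s$ in the fractional chain rule: the Christ--Weinstein inequality applies for all $s\geq 0$ in the Euclidean setting with the exponent choice above, so no restriction is lost. The main obstacle, such as it is, is simply bookkeeping for the iteration constant $2^{J(L)}$; there is no deeper difficulty, which is why this result is genuinely a black box that can be cited from \cite{CKSTT:gwp}.
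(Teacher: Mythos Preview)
Your argument is correct and is exactly the standard proof of this persistence-of-regularity lemma. Note that the paper does not actually prove this statement: it is quoted verbatim as a black box from \cite{CKSTT:gwp}, so there is nothing to compare against beyond confirming that what you wrote matches the well-known argument --- which it does (interval decomposition by small $L^{10}_{t,x}$ norm, Duhamel plus Strichartz with the $(2,6/5)$ dual pair, fractional Leibniz to extract four factors in $L^{10}_{t,x}$ and one in $L^{10}_t L^{30/13}_x$, absorb, iterate).
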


We will also need a persistence of regularity result for $\text{NLS}_{\HH}$.  This follows by embedding solutions on the halfspace as solutions on $\R^3$ that are odd under reflection in $\partial\HH$.  In particular, one may regard $-\Delta_\HH$ as the restriction of $-\Delta_{\R^3}$ to odd functions.  For example, one can see this equivalence in the exact formula for the heat kernel in $\HH$.

\begin{lem} [Persistence of regularity for $\text{NLS}_{\HH}$]\label{lm:persistenceh} Fix $s\ge 0$ and let $I$ be a compact time interval and
$u:I\times\HH\to \C$ be a solution to $\text{NLS}_{\HH}$ satisfying
\begin{align*}
E(u)\leq E<\infty \qtq{and} \|u\|_{L_{t,x}^{10}(I\times\HH)}\leq L<\infty.
\end{align*}
If $u(t_0)\in \dot H^s_D(\HH)$ for some $t_0\in I$, then
\begin{align*}
\|(-\Delta_{\HH})^{\frac s2}u\|_{S^0(I)}\leq C(E,L) \|u(t_0)\|_{\dot H^s_D(\HH)}.
\end{align*}
\end{lem}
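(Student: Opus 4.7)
The plan is to reduce the halfspace statement to the Euclidean statement, Lemma~\ref{lm:persistencer3}, by means of the odd reflection identification mentioned just before the lemma. Concretely, for any function $f$ on $\HH$ let $\tilde f$ denote its extension to $\R^3$ which is odd under reflection in $\partial\HH$. The key structural fact is that $-\Delta_{\R^3}$ acting on the closed subspace of odd functions in $L^2(\R^3)$ is unitarily equivalent (via $f\mapsto 2^{-1/2}\tilde f$) to $-\Delta_\HH$; this can be read off from the explicit halfspace heat kernel, which is the restriction to $\HH\times\HH$ of the difference $e^{t\Delta_{\R^3}}(x,y)-e^{t\Delta_{\R^3}}(x,y^\ast)$, where $y^\ast$ is the reflection of $y$. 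Consequently, by the functional calculus, for any Borel function $m$,
\begin{equation*}
\widetilde{m(-\Delta_\HH)f}=m(-\Delta_{\R^3})\tilde f,
\end{equation*}
and in particular $\|(-\Delta_\HH)^{s/2}f\|_{L^p(\HH)}=2^{-1/p}\|(-\Delta_{\R^3})^{s/2}\tilde f\|_{L^p(\R^3)}$ for all $1<p<\infty$ and all $s\ge 0$ for which either side is finite.

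With this in hand, I would argue as follows. Let $\tilde u(t,x):=\tilde{u(t)}(x)$ be the pointwise odd reflection of the given solution $u$. Since the nonlinearity satisfies $|-z|^4(-z)=-|z|^4z$, the odd extension of $|u|^4u$ equals $|\tilde u|^4\tilde u$, and since $\tilde u$ remains odd in $x_3$ for all $t$, it satisfies the Dirichlet condition on $\partial\HH$ automatically; thus $\tilde u$ solves $i\tilde u_t+\Delta_{\R^3}\tilde u=|\tilde u|^4\tilde u$ on $I\times\R^3$ (as may also be seen by writing the Duhamel formula and applying the heat-kernel identity above). The hypotheses transfer directly:
\begin{equation*}
E_{\R^3}(\tilde u)=2E(u)\le 2E,\qquad \|\tilde u\|_{L^{10}_{t,x}(I\times\R^3)}=2^{1/10}\|u\|_{L^{10}_{t,x}(I\times\HH)}\le 2^{1/10}L,
\end{equation*}
and $\|\tilde u(t_0)\|_{\dot H^s(\R^3)}=2^{1/2}\|u(t_0)\|_{\dot H^s_D(\HH)}$.

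Now I apply Lemma~\ref{lm:persistencer3} to $\tilde u$ on $I\times\R^3$ to conclude
\begin{equation*}
\|(-\Delta_{\R^3})^{s/2}\tilde u\|_{S^0(I)}\le C(E,L)\,\|\tilde u(t_0)\|_{\dot H^s(\R^3)}.
\end{equation*}
Since $(-\Delta_{\R^3})^{s/2}\tilde u$ is odd in $x_3$ for every $t$, its restriction to $\HH$ equals $(-\Delta_\HH)^{s/2}u$ (by the functional-calculus identity above applied at each fixed time), and the Strichartz norms controlling $S^0$ translate with a factor $2^{-1/r}$ for the spatial $L^r$ norm. Thus the desired bound
\begin{equation*}
\|(-\Delta_\HH)^{s/2}u\|_{S^0(I)}\le C(E,L)\,\|u(t_0)\|_{\dot H^s_D(\HH)}
\end{equation*}
follows, with a possibly enlarged constant.

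The only nontrivial point is verifying that the odd reflection truly intertwines the functional calculi of $-\Delta_\HH$ and $-\Delta_{\R^3}$ for non-integer fractional powers; this is what replaces any appeal to an equivalence-of-Sobolev-spaces theorem on the halfspace. Once that identification is in place, the proof is essentially a bookkeeping exercise, and no new analytic estimate beyond Lemma~\ref{lm:persistencer3} is required.
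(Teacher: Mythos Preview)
Your argument is correct and is precisely the approach the paper takes: it states just before the lemma that the result follows by embedding halfspace solutions as odd-under-reflection solutions on $\R^3$, regarding $-\Delta_\HH$ as the restriction of $-\Delta_{\R^3}$ to odd functions (as witnessed by the explicit halfspace heat kernel), and then invoking Lemma~\ref{lm:persistencer3}. Your write-up simply supplies the details of this reduction.
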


\subsection{Morawetz and local smoothing}
We preclude the minimal counterexample to Theorem~\ref{T:main} in Section~\ref{S:Proof} with the use of the following one-particle Morawetz inequality; cf. \cite{borg:scatter, LinStrauss}.

\begin{lem}[Morawetz inequality]\label{L:morawetz}
Let $I$ be a time interval and let $u$ be a solution to \eqref{nls} on $I$.  Then for any $A\ge 1$ with $A|I|^{1/2}\geq \diam(\Omega^c)$ we have
\begin{align}\label{mora}
\int_I\int_{|x|\le A|I|^{\frac 12}, x\in \Omega}\frac{|u(t,x)|^6}{|x|} \,dx\,dt\lsm A|I|^{\frac 12},
\end{align}
where the implicit constant depends only on the energy of $u$.
\end{lem}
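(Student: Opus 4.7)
The plan is to adapt the classical one-particle Morawetz identity (cf.\ Lin--Strauss, Bourgain) to the exterior domain setting, using a weight truncated at the spatial scale $R := A|I|^{1/2}$. I choose a smooth non-negative radial convex function $\phi : \R^3 \to \R$ with $\phi(x) = |x|$ on $\{|x| \leq R\}$, $\|\nabla\phi\|_\infty \lsm 1$ uniformly in $R$, and $\nabla\phi$ supported in $\{|x| \leq 2R\}$, and define
\[
M(t) := 2\Im \int_\Omega \overline{u(t,x)}\, \nabla\phi(x) \cdot \nabla u(t,x)\, dx.
\]
Combining Cauchy--Schwarz with H\"older and the Sobolev embedding $\hd \hookrightarrow L^6(\Omega)$ applied on the ball $B_{2R}$ supporting $\nabla\phi$, and using that $|B_{2R}|^{1/3}\sim R$ together with conservation of energy, yields the a priori bound $|M(t)| \lsm R\,\|u\|_{L^6}\|\nabla u\|_{L^2} \lsm R$ uniformly in $t\in I$, with implicit constant depending only on $E(u)$.

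The core step is to compute $\tfrac{d}{dt}M(t)$ using \eqref{nls} and the Dirichlet condition $u|_{\partial\Omega}=0$. The standard integration by parts produces
\[
\tfrac{dM}{dt} = \int_\Omega 4\,\phi_{jk}\,\Re(\partial_j u\,\partial_k\bar u)\,dx + \tfrac{4}{3}\!\int_\Omega \Delta\phi\cdot |u|^6\,dx - \tfrac{1}{2}\!\int_\Omega \Delta^2\phi\cdot |u|^2\,dx - \int_{\partial\Omega}|\partial_n u|^2 (n\cdot\nabla\phi)\,d\sigma,
\]
with $n$ the outward unit normal to $\Omega$. Convexity of $\phi$ makes the Hessian bulk term non-negative, and $\Delta\phi\ge 0$ makes the main bulk term non-negative. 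The hypothesis $R\geq \diam(\Omega^c)$ places $\Omega^c$ inside $B_R$, so $\nabla\phi = x/|x|$ on $\partial\Omega$; applying the support hyperplane theorem to the convex body $\Omega^c$ at the interior point $0$ gives $x\cdot n \geq 0$ on $\partial\Omega$, so the boundary integrand is non-negative and the boundary term contributes with the favorable sign.

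Integrating over $I$ and discarding the non-negative Hessian bulk and boundary contributions leaves
\[
\int_I\!\int_\Omega \Delta\phi\cdot |u|^6\,dx\,dt \lsm \sup_{t\in I}|M(t)| + \int_I\!\int_\Omega |u|^2\,|\Delta^2\phi|\,dx\,dt.
\]
Since $\Delta\phi(x)=2/|x|$ on $\Omega\cap\{|x|\leq R\}$, the left-hand side dominates the quantity appearing in \eqref{mora}. For the error term, $\Delta^2\phi$ is supported in the transition shell $R\lsm |x|\lsm R$ (of volume $\sim R^3$) with $|\Delta^2\phi|\lsm R^{-3}$; H\"older together with Sobolev gives $\int|u|^2|\Delta^2\phi|\,dx \lsm R^{-1}$ uniformly in $t$, and integrating in time produces $E(u)|I|/R \lsm A|I|^{1/2}$ using $A\geq 1$. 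Combined with the a priori bound $\sup_I|M|\lsm R = A|I|^{1/2}$, this yields \eqref{mora}.

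The main obstacle is securing the favorable sign of the surface term on $\partial\Omega$; this uses in an essential way that $\Omega^c$ is convex and that $0$ lies inside $\Omega^c$, so that the Morawetz weight $x/|x|$ is compatible with the star-shaped geometry of $\Omega$ from the origin. A secondary delicacy is that at the energy-critical regularity we have no control on $\|u\|_{L^2}$; the a priori bound on $M(t)$ is therefore calibrated to the finite scale $R = A|I|^{1/2}$ via Sobolev embedding on the support of $\nabla\phi$, which is precisely why the estimate is localized to $|x|\le A|I|^{1/2}$.
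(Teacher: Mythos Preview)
Your approach is essentially the paper's: truncate the Morawetz weight at scale $R=A|I|^{1/2}$, bound the Morawetz action via H\"older/Sobolev on $B_{2R}$ (avoiding any $L^2$ control of $u$), and use convexity of $\Omega^c$ together with $0\in\Omega^c$ for the favorable sign of the boundary term.

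There is, however, a genuine gap: the weight $\phi$ you describe does not exist. A radial function equal to $|x|$ on $\{|x|\le R\}$ with $\nabla\phi$ supported in $\{|x|\le 2R\}$ (hence constant outside) cannot be convex: writing $\phi=f(|x|)$, the derivative $f'$ must drop from $1$ to $0$ on $[R,2R]$, forcing $f''<0$ there. For the same reason $\phi$ is not globally subharmonic either: since $(r^2 f')'=r^2\Delta\phi$ and $r^2 f'$ drops from $R^2$ to $0$ across $[R,2R]$, one has $\Delta\phi<0$ somewhere in the shell. So you cannot discard the Hessian bulk term globally, nor keep $\int\Delta\phi\,|u|^6$ entirely on the left without error.

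The fix is exactly what the paper does: drop the convexity claim and control the transition-shell errors directly. On $R\le|x|\le 2R$ one has $|\partial_j\partial_k\phi|\lsm R^{-1}$ and $|\Delta\phi|\lsm R^{-1}$, yielding errors $\lsm R^{-1}\|\nabla u\|_{L^2}^2$ and $\lsm R^{-1}\|u\|_{L^6}^6$ per unit time---the same $|I|/R$ order as the $\Delta^2\phi$ error you already handle. The final bound $\int_I\int_{|x|\le R}|u|^6/|x|\,dx\,dt\lsm R+|I|/R$ and the choice $R=A|I|^{1/2}$ are then unchanged.
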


\begin{proof}
Let $\phi(x)$ be a smooth radial bump function such that $\phi(x)=1$ for $|x|\le 1$ and $\phi(x)=0$ for $|x|>2$.  Let $R\geq \diam(\Omega^c)$ and define
$a(x):=|x|\phi\bigl(\frac x R\bigr)$.  Then for $|x|\le R$ we have
\begin{align}\label{cd1}
\partial_j\partial_k a(x) \text{ is positive definite,} \quad \nabla a(x)=\frac x{|x|}, \qtq{and} \Delta \Delta a(x)<0,
\end{align}
while for $|x|>R$ we have the following rough estimates:
\begin{align}\label{cd2}
|\partial_k a(x)|\lsm 1, \quad |\partial_j\partial_k a(x)|\lsm \frac 1R,\qtq{and} |\Delta\Delta a(x)|\lsm \frac 1{R^3}.
\end{align}

To continue, we use the local momentum conservation law
\begin{align}\label{lmc}
\partial_t \Im(\bar u \partial_k u)=-2\partial_j \Re(\partial_k u\partial_j\bar u)+\frac 12\partial_k\Delta(|u|^2)-\frac 23\partial_k(|u|^6).
\end{align}
Multiplying both sides by $\partial_k a$ and integrating over $\Omega$ we obtain
\begin{align}
\partial_t \Im\int_\Omega \bar u \partial_k u \partial_k a \,dx
&=-2 \Re\int_\Omega\partial_j(\partial_k u\partial_j \bar u)\partial_k a\,dx\notag\\
&\quad+\frac 12\int_\Omega\partial_k\Delta(|u|^2)\partial_k a \,dx-\frac 23\int_\Omega\partial_k(|u|^6)\partial_k a \,dx.\label{17}
\end{align}
The desired estimate \eqref{mora} will follow from an application of the fundamental theorem of calculus combined with an upper bound on $\text{LHS}\eqref{17}$ and a lower bound on $\text{RHS}\eqref{17}$.

The desired upper bound follows immediately from H\"older followed by Sobolev embedding:
\begin{align}\label{upper bound}
\Im\int_\Omega \bar u \partial_k u \partial_k a dx
\lsm \|u\|_{L^6(\Omega)} \|\nabla u\|_{L^2(\Omega)} \|\nabla a\|_{L^3(\Omega)}\lsm R\|\nabla u\|_{L^2(\Omega)}^2.
\end{align}

Next we seek a lower bound on $\text{RHS}\eqref{17}$.  From the divergence theorem, we obtain
\begin{align*}
-2\Re\int_\Omega\partial_j(\partial_k u\partial_j \bar u)\partial_k a\,dx
&=-2\Re\int_\Omega\partial_j(\partial_k u\partial_j \bar u\partial_k a) \,dx+ 2\Re\int_\Omega\partial_k u\partial_j\bar u\partial_j\partial_k a \,dx\\
&=2\Re\int_{\partial\Omega}\partial_ku\partial_k  a\partial_j \bar u\vec n_j d\sigma(x)+2\Re\int_{|x|\le R}\partial_k u\partial_j\bar u\partial_j\partial_k a\,dx\\
&\qquad + 2\int_{|x|\ge R}\partial_k u\partial_j\bar u\partial_j\partial_k a \,dx,
\end{align*}
where $\vec n$ denotes the outer normal to $\Omega^c$.  We write
\begin{align*}
\partial_j \bar u\vec n_j=\nabla \bar u\cdot\vec n=\bar u_n.
\end{align*}
Moreover, from the Dirichlet boundary condition, the tangential
derivative of $u$ vanishes on the boundary; thus,
\begin{align*}
\nabla u=(\nabla u\cdot {\vec n})\vec n=u_n\vec n \qtq{and}  \partial_k u\partial_k a=u_n a_n.
\end{align*}
Using this, \eqref{cd1}, and \eqref{cd2} we obtain
\begin{align*}
-2\Re\int_\Omega\partial_j(\partial_k u\partial_j\bar u)\partial_k a\,dx
&\ge 2\int_{\partial\Omega} a_n|u_n|^2 d\sigma(x)+2\int_{|x|\ge R}\partial_k u\partial_j \bar u\partial_j\partial_k a \,dx\\
&\ge 2\int_{\partial\Omega} a_n|u_n|^2 d\sigma(x)-\frac CR\|\nabla u\|_{L^2(\Omega)}^2.
\end{align*}

Similarly, we can estimate the second term on $\text{LHS}\eqref{17}$ as follows:
\begin{align*}
\frac 12\int_\Omega \partial_k\Delta(|u|^2)\partial_k a \,dx
&=\frac 12\int_\Omega\partial_k\bigl[\Delta(|u|^2)\partial_k a\bigr] \,dx-\frac12\int_{\Omega}\Delta(|u|^2)\Delta a \,dx\\
&=-\frac 12\int_{\partial\Omega}\Delta(|u|^2)\partial_k a\vec n_k d\sigma(x)-\frac 12\int_{\Omega}|u|^2\Delta\Delta a \,dx\\
&=-\int_{\partial\Omega}|\nabla u|^2 a_n d\sigma(x)-\frac12\int_{|x|\le R}|u|^2 \Delta\Delta a \,dx\\
&\quad -\frac 12\int_{|x|\geq R}|u|^2 \Delta\Delta a \,dx\\
&\ge-\int_{\partial\Omega}|u_n|^2 a_n d\sigma(x)-\frac CR \|\nabla u\|_{L^2(\Omega)}^2;
\end{align*}
to obtain the last inequality we have used \eqref{cd1}, \eqref{cd2}, H\"older, and Sobolev embedding.

Finally, to estimate the third term on $\text{LHS}\eqref{17}$
we use \eqref{cd1} and \eqref{cd2}:
\begin{align*}
-\frac 23\int_{\Omega}\partial_k(|u|^6)\partial_k a \,dx
&=\frac23\int_\Omega|u|^6 \Delta a \,dx=\frac 43\int_{|x|\le R}\frac{|u|^6}{|x|} \,dx-\frac CR\|u\|_{L^6(\Omega)}^6.
\end{align*}
Collecting all these bounds and using the fact that $a_n\geq 0$ on $\partial \Omega$, we obtain
\begin{align}
\text{LHS}\eqref{17}\gtrsim \int_{|x|\le R}\frac{|u|^6}{|x|} \,dx - R^{-1} \bigl[ \|\nabla u\|_{L^2(\Omega)}^2 +\|u\|_{L^6(\Omega)}^6 \bigr].\label{lower bound}
\end{align}

Integrating \eqref{17} over $I$ and using \eqref{upper bound} and \eqref{lower bound} we derive
\begin{align*}
\int_I\int_{|x|\le R, x\in \Omega}\frac{|u|^6}{|x|} \,dx\,dt\lsm R+\frac{|I|}{R}.
\end{align*}
Taking $R=A|I|^{\frac 12}$ yields \eqref{mora}. This completes the proof of the lemma.
\end{proof}

We record next a local smoothing result.  While the local smoothing estimate does guarantee local energy decay, it falls short of fulfilling the role of a dispersive estimate.  In particular, local smoothing does not preclude the possibility that energy refocuses finitely many times.  Indeed, it is known to hold in merely non-trapping geometries.  Nevertheless, it does play a key role in the proof of the Strichartz estimates.  The version we need requires uniformity under translations and dilations; this necessitates some mild modifications of the usual argument.

\begin{lem}[Local smoothing]\label{L:local smoothing}
Let $u=e^{it\Delta_\Omega} u_0$.  Then
\begin{align*}
\int_\R \int_\Omega  |\nabla u(t,x)|^2  \bigl\langle R^{-1} (x-z)\bigr\rangle^{-3} \,dx\,dt \lesssim R \| u_0 \|_{L^2(\Omega)} \|\nabla u_0 \|_{L^2(\Omega)},
\end{align*}
uniformly for $z\in \R^3$ and $R>0$.
\end{lem}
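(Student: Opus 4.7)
The plan is the positive-commutator / Morawetz method with the translated-and-rescaled multiplier
\[
a(x) := \sqrt{R^2 + |x-z|^2}.
\]
One computes $|\nabla a|\leq 1$, $\nabla^2 a \succeq R^{-1}\langle(x-z)/R\rangle^{-3}\, I$, and (crucially in three dimensions) $\Delta^2 a = -15\,R^{-3}\langle(x-z)/R\rangle^{-7} \leq 0$. Setting $M_a(t) := \int_\Omega 2\Im\bigl(\bar u\,\nabla a\cdot\nabla u\bigr)\,dx$ and combining the local momentum conservation law for $u_t = i\Delta u$ with integration by parts and the Dirichlet condition $u|_{\partial\Omega}=0$ (which gives $\nabla u = u_n\vec n$ and $\Delta|u|^2 = 2|u_n|^2$ on $\partial\Omega$), one obtains the Morawetz identity
\[
\tfrac{d}{dt}M_a = 4\int_\Omega (\nabla^2 a)(\nabla u,\overline{\nabla u})\,dx + \int_\Omega|u|^2(-\Delta^2 a)\,dx + 2\int_{\partial\Omega}|u_n|^2 a_n\,d\sigma,
\]
where $\vec n$ is the outer unit normal to $\Omega^c$ and $a_n = \vec n\cdot\nabla a$. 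Mass and energy conservation for the linear flow, together with $|\nabla a|\leq 1$, immediately bound $\sup_t|M_a(t)|\lesssim\|u_0\|_{L^2}\|\nabla u_0\|_{L^2}$.

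I would then run this identity twice. First, with $z$ a fixed interior point of $\Omega^c$ and $R$ comparable to $\diam(\Omega^c)$: strict convexity of the obstacle forces $a_n \gtrsim 1$ uniformly on $\partial\Omega$, so all three right-hand-side contributions are non-negative and integration in $t\in\R$ produces the trace smoothing estimate
\[
\int_\R\int_{\partial\Omega}|u_n|^2\,d\sigma\,dt \;\lesssim\; \|u_0\|_{L^2}\|\nabla u_0\|_{L^2}. \qquad(\ast)
\]
Second, for arbitrary $z \in \R^3$ and $R > 0$: the Hessian and $(-\Delta^2 a)$ contributions remain non-negative, while $|a_n|\leq|\nabla a|\leq 1$ allows $(\ast)$ to absorb the boundary term after it is moved to the other side. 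This yields
\[
\int_\R\int_\Omega (\nabla^2 a)(\nabla u,\overline{\nabla u})\,dx\,dt \;\lesssim\; \|u_0\|_{L^2}\|\nabla u_0\|_{L^2},
\]
and invoking the Hessian lower bound and multiplying through by $R$ delivers the claim.

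The main obstacle is precisely the sign of the boundary term for general $z$: the standard Morawetz proof on convex obstacles uses $z \in \Omega^c$ to guarantee $a_n \geq 0$ on $\partial\Omega$, but the uniformity required here admits $z \in \Omega$, where $a_n$ may be negative on portions of $\partial\Omega$ facing away from $z$. The trace estimate $(\ast)$---itself a by-product of the same Morawetz identity once centred inside the obstacle---supplies exactly the scale-invariant control needed to absorb this boundary contribution with the correct $R$-dependence on the right-hand side.
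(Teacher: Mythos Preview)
Your proof is correct. The paper takes a slightly different but closely related route: rather than running the Morawetz identity twice, it uses a single combined multiplier
\[
a(x) = C|x| + R\langle R^{-1}(x-z)\rangle
\]
with the origin inside $\Omega^c$ and $C$ chosen via the geometric bound $\bigl|\tfrac{R^{-1}(x-z)}{\langle R^{-1}(x-z)\rangle}\cdot\vec n\bigr| \leq C\,\tfrac{x}{|x|}\cdot\vec n$ on $\partial\Omega$. This forces $\nabla a\cdot\vec n \geq 0$ on the boundary outright, so the boundary term is non-negative in a single pass and no separate trace estimate is needed. Your two-pass approach is equally valid and arguably more modular: the intermediate trace bound $(\ast)$ is a useful standalone estimate, and the mechanism of absorbing a bad-sign boundary term via a previously established trace inequality is a general pattern. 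The paper's version is more economical (one multiplier, one computation) but at the cost of the somewhat ad hoc choice of $C$ and the combined weight; your version makes the role of convexity and the scale-invariance of the trace estimate more explicit.
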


\begin{proof}
We adapt the proof of local smoothing using the Morawetz identity
from the Euclidean setting.  For the level of generality needed
here, we need to combine two Morawetz identities: one adapted to the
obstacle and a second adapted to the $R$ ball around $z$.

Recall that the origin is an interior point of $\Omega^c$.  Given $x\in\partial\Omega$, let $\vec n(x)$ denote the outward normal to the obstacle at this point.  As $\Omega^c$ is convex, there is a constant $C>0$ independent of $z$ so that
\begin{align}\label{E:ls geom}
 \bigl| \tfrac{R^{-1}(x-z)}{\langle R^{-1}(x-z)\rangle} \cdot \vec n(x) \bigr| \leq C \tfrac{x}{|x|}\cdot \vec n(x) \qtq{for all} x\in\partial\Omega.
\end{align}
Indeed, the right-hand side is bounded away from zero uniformly for $x\in\partial\Omega$, while the set of vectors $\frac{R^{-1}(x-z)}{\langle R^{-1}(x-z)\rangle}$ is compact.

For $C>0$ as above, let
$$
F(t) := \int_\Omega \Im( \bar u \nabla u) \cdot \nabla a\, dx \qtq{with} a(x) := C |x| + R \langle R^{-1} (x-z) \bigr\rangle.
$$
After integrating by parts several times (cf. Lemma~\ref{L:morawetz}) and using that
$$
-\Delta\Delta a(x) \geq 0 \qtq{and} \partial_j\partial_k a(x) \geq \tfrac{1}{R} \langle R^{-1} (x-z) \bigr\rangle^{-3} \delta_{jk}
\quad \text{(as symmetric matrices)}
$$
one obtains
$$
\partial_t F(t) \geq 2 \int_\Omega \frac{|\nabla u(t,x)|^2  \,dx}{R \langle R^{-1} (x-z) \rangle^{3}}
 + \int_{\partial\Omega} |\nabla u(t,x)|^2 \bigl[ \nabla a(x)\cdot \vec n(x) \bigr] \,d\sigma(x).
$$
Moreover, by \eqref{E:ls geom} the integral over $\partial\Omega$ is positive since
$$
\nabla a(x)\cdot \vec n(x) = \bigl[C \tfrac{x}{|x|} + \tfrac{R^{-1}(x-z)}{\langle R^{-1}(x-z)\rangle} \bigr]\cdot \vec n(x) \geq 0
	\qtq{for} x\in\partial\Omega.
$$

Noting that $|F(t)|\leq (C+1) \| u(t) \|_{L^2(\Omega)} \| \nabla u(t) \|_{L^2(\Omega)}$, the lemma now follows by applying the
fundamental theorem of calculus.
\end{proof}

The remainder term in the linear profile decomposition Theorem~\ref{T:LPD} goes to zero in $L^{10}_{t,x}$; however, in order
to prove the approximate solution property (cf. Claim~3 in the proof of Proposition~\ref{P:PS}), we need to show
smallness in Strichartz spaces with one derivative. This is achieved via local smoothing (cf. Lemma~3.7 from
\cite{keraani-h1}); the uniformity in Lemma~\ref{L:local smoothing} is essential for this application.

\begin{cor}\label{C:Keraani3.7}
Given $w_0\in \dot H^1_D(\Omega)$,
$$
\| \nabla e^{it\Delta_\Omega} w_0 \|_{L^{\frac52}_{t,x}([\tau-T,\tau+T]\times\{|x-z|\leq R\})} \lesssim
     T^{\frac{31}{180}} R^{\frac7{45}} \| e^{it\Delta_\Omega} w_0 \|_{L^{10}_{t,x}(\R\times\Omega)}^{\frac1{18}}
     	\| w_0 \|_{\dot H^1_D(\Omega)}^{\frac{17}{18}},
$$
uniformly in $w_0$ and the parameters $R,T > 0$, $\tau\in\R$, and  $z\in\R^3$.
\end{cor}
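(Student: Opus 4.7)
I will follow the strategy of Keraani \cite[Lemma~3.7]{keraani-h1}, combining Lemma~\ref{L:local smoothing} with Strichartz estimates and the Littlewood--Paley decomposition.  Set $u = e^{it\Delta_\Omega}w_0$, $u_N = \po_N u$, $E_N := \|\po_N w_0\|_{\hd}$, $E := \|w_0\|_{\hd}$, $A := \|u\|_{\lt}$, and $B := [\tau-T,\tau+T]\times\{|x-z|\le R\}$.

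For each dyadic $N\in\tz$ I would establish two bounds on $\|\nabla u_N\|_{L^{5/2}_{t,x}(B)}$.  The first is a Strichartz--H\"older estimate: for an admissible pair $(s,r)$ with $s,r\ge 5/2$, H\"older on the bounded set $B$ followed by Theorem~\ref{T:Strichartz} (and Theorem~\ref{T:Sob equiv} to identify $\nabla$ with $(-\ld)^{1/2}$) gives
\[
\|\nabla u_N\|_{L^{5/2}_{t,x}(B)} \le T^{2/5-1/s}R^{6/5-3/r}\|\nabla u_N\|_{L^s_tL^r_x}\lesssim T^{2/5-1/s}R^{6/5-3/r}\,E_N.
\]
Choosing the admissible pair $(s,r)=(180/41,\,135/47)$, which satisfies $2/s+3/r=3/2$ and both $s,r\ge 5/2$, this specialises to $\|\nabla u_N\|_{L^{5/2}_{t,x}(B)}\lesssim T^{31/180}R^{7/45}E_N$.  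The second per-dyadic bound comes from H\"older on $B$ and Bernstein applied to the frequency-localised function $\nabla u_N$:
\[
\|\nabla u_N\|_{L^{5/2}_{t,x}(B)}\le |B|^{3/10}\|\nabla u_N\|_{\lt}\lesssim T^{3/10}R^{9/10}\,N\,A.
\]
The local smoothing estimate (Lemma~\ref{L:local smoothing}), with its uniformity in $z$ and $R$, enters by providing an independent control $\|\nabla u_N\|_{L^2_{t,x}(\R\times\{|x-z|\le R\})}\lesssim R^{1/2}N^{-1/2}E_N$, which I can interpolate with $\|\nabla u_N\|_{L^\infty_t L^2_x}\le E_N$ and with the $\lt$-bound to produce a two-parameter family of per-dyadic inequalities lying on the same scaling line as the two displayed bounds above.

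Next, I would take a geometric mean of the Strichartz--H\"older bound (weight $17/18$) with the $L^{10}$-Bernstein bound (weight $1/18$) to introduce the factor $E_N^{17/18}A^{1/18}$ and sum in $N$.  Direct summation fails because a leftover factor $N^{1/18}$ emerges; I would handle this by a Bourgain-style threshold argument, splitting the dyadic sum at an optimal $N_0=N_0(T,R,E,A)$, applying the $L^{10}$-based bound for $N\le N_0$ and the Strichartz--H\"older bound for $N>N_0$.  On the high-frequency side I would use Minkowski's inequality in $\ell^2$ (valid since $5/2\ge 2$) together with the square-function identity (Lemma~\ref{sq}) and $\sum_N E_N^2\le E^2$ to convert $\ell^2$-summation over dyadic scales into $E$; on the low-frequency side a geometric-series bound dominated by $N_0$ times the $L^{10}$-Bernstein estimate delivers the $A$-dependence.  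Balancing the two contributions in $N_0$ converts the dichotomy into the product $E^{17/18}A^{1/18}$ with the desired exponents of $T$ and $R$.

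The main obstacle is the bookkeeping in this final balancing step: a na\"ive geometric mean between the two per-dyadic bounds produces $T$- and $R$-exponents that overshoot the target because the $L^{10}$-Bernstein bound carries the large factor $T^{3/10}R^{9/10}$; the threshold optimisation is precisely what absorbs this discrepancy into the $N_0$-dependent factor and then into a small power of $A/E$, so choosing $N_0$ optimally (and verifying that the resulting exponents collapse to $T^{31/180}R^{7/45}$) is where care is required.  The uniformity in $z$ and $R$ in Lemma~\ref{L:local smoothing} is essential because $B$ is an arbitrary translate of a ball of arbitrary radius.
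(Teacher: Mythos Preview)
Your threshold argument has a gap: the high-frequency Strichartz--H\"older bound $\|\nabla u_N\|_{L^{5/2}(B)}\lesssim T^{31/180}R^{7/45}E_N$ carries no negative power of $N$, so summing over $N>N_0$ via the square function gives at best $T^{31/180}R^{7/45}E$, independently of $N_0$. With the low-frequency side bounded by $T^{3/10}R^{9/10}N_0 A$, ``balancing'' then simply means sending $N_0\to0$, which yields $T^{31/180}R^{7/45}E$ with no factor of $A$ at all --- and the whole point of the corollary is that small factor of $A^{1/18}$. A threshold argument only works when the high-frequency side genuinely decays in $N_0$; that decay must come from local smoothing, which you mention but relegate to a side remark rather than use in the main dichotomy.

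The paper assembles the same ingredients differently. It first applies H\"older to split
\[
\|\nabla u\|_{L^{5/2}(B)}\lesssim\|\nabla u\|_{L^2(B)}^{1/3}\|\nabla u\|_{L^{20/7}([-T,T]\times\Omega)}^{2/3},
\]
so the frequency splitting takes place entirely in $L^2_{t,x}(B)$, where local smoothing applies directly. For the $L^2$ factor, high frequencies satisfy $\|\nabla P^\Omega_{\ge N}u\|_{L^2(B)}\lesssim R^{1/2}N^{-1/2}E$ from Lemma~\ref{L:local smoothing} plus Bernstein, while low frequencies obey $\|\nabla P^\Omega_{<N}u\|_{L^2(B)}\lesssim T^{2/5}R^{9/20}N^{1/4}A^{1/4}E^{3/4}$ via H\"older, Bernstein, and interpolation between $L^{10}_{t,x}$ and $L^{10}_tL^{30/13}_x$. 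Now the high side decays in $N$, optimisation gives $\|\nabla u\|_{L^2(B)}\lesssim T^{4/15}R^{7/15}A^{1/6}E^{5/6}$, and combining with $\|\nabla u\|_{L^{20/7}}\lesssim T^{1/8}E$ from Strichartz produces the stated exponents. The fix for your scheme is to put the local-smoothing bound on the high-frequency side of the dichotomy in place of the Strichartz--H\"older bound, or simply to pass to $L^2$ first as the paper does.
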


\begin{proof}
Replacing $w_0$ by $e^{i\tau\Delta_\Omega} w_0$, we see that it suffices to treat the case $\tau=0$.

By H\"older's inequality,
\begin{align*}
\| \nabla e^{it\Delta_\Omega} & w_0
\|_{L^{\frac52}_{t,x}([-T,T]\times\{|x-z|\leq R\})} \\
&\lesssim \| \nabla e^{it\Delta_\Omega} w_0
\|_{L^2_{t,x}([-T,T]\times\{|x-z|\leq R\})}^{\frac13}
\|\nabla e^{it\Delta_\Omega} w_0
\|_{L^{\frac{20}7}_{t,x}([-T,T]\times\Omega)}^{\frac23}.
\end{align*}
We will estimate the two factors on the right-hand side separately. By the H\"older and Strichartz inequalities, as well as the equivalence
of Sobolev spaces, we estimate
\begin{align*}
\|\nabla e^{it\Delta_\Omega} w_0 \|_{L^{\frac{20}7}_{t,x}([-T,T]\times\Omega)}
&\lesssim T^{\frac 18} \| (-\Delta_\Omega)^{\frac12}
e^{it\Delta_\Omega} w_0 \|_{L^{\frac{40}9}_t L^{\frac{20}7}_x}
\lesssim T^{\frac 18} \|w_0\|_{\dot H^1_D(\Omega)} .
\end{align*}
In this way, the proof of the corollary reduces to showing
\begin{align}\label{E:LS1022}
\| \nabla e^{it\Delta_\Omega} w_0
\|_{L^2_{t,x}([-T,T]\times\{|x-z|\leq R\})}
\lsm T^{\frac4{15}} R^{\frac7{15}} \| e^{it\Delta_\Omega} w_0 \|_{L^{10}_{t,x}}^{\frac16} \| w_0 \|_{\dot H^1_D(\Omega)}^{\frac56}.
\end{align}

Given $N>0$, using the H\"older, Bernstein, and Strichartz inequalities, as well as the equivalence of Sobolev spaces, we have
\begin{align*}
\bigl\| \nabla e^{it\Delta_\Omega} P_{< N}^\Omega & w_0 \bigr\|_{L^2_{t,x}([-T,T]\times\{|x-z|\leq R\})} \\
&\lesssim T^{\frac25} R^{\frac9{20}} \bigl\| \nabla e^{it\Delta_\Omega} P_{< N}^\Omega w_0 \bigr\|_{L^{10}_tL^{\frac{20}7}_x} \\
&\lsm T^{\frac25} R^{\frac9{20}} N^{\frac14} \| (-\Delta_\Omega)^{\frac38} e^{it\Delta_\Omega} P_{< N}^\Omega w_0 \|_{L^{10}_t L^{\frac{20}7}_x} \\
&\lesssim T^{\frac25} R^{\frac9{20}} N^{\frac14} \| e^{it\Delta_\Omega} w_0 \|_{L^{10}_{t,x}}^{\frac14}
	\| (-\Delta_\Omega)^{\frac12} e^{it\Delta_\Omega} w_0 \|_{L^{10}_t L^{\frac{30}{13}}_x}^{\frac34} \\
&\lesssim T^{\frac25} R^{\frac9{20}} N^{\frac14} \| e^{it\Delta_\Omega} w_0 \|_{L^{10}_{t,x}}^{\frac14} \| w_0 \|_{\dot H^1_D(\Omega)}^{\frac34}.
\end{align*}
We estimate the high frequencies using Lemma~\ref{L:local smoothing} and the Bernstein inequality:
\begin{align*}
\bigl\| \nabla e^{it\Delta_\Omega} P_{\geq N}^\Omega w_0 \bigr\|_{L^2_{t,x}([-T,T]\times\{|x-z|\leq R\})} ^2
&\lesssim R \| P_{\geq N}^\Omega w_0 \|_{L^2_x}  \| \nabla P_{\geq N}^\Omega w_0 \|_{L^2_x} \\
&\lesssim  R N^{-1} \| w_0 \|_{\dot H^1_D(\Omega)}^2.
\end{align*}
The estimate \eqref{E:LS1022} now follows by optimizing in the choice of $N$.
\end{proof}

\section{Convergence of domains}\label{S:Domain Convergence}

The region $\Omega$ is not invariant under scaling or translation; indeed, under suitable choices of such operations,
the obstacle may shrink to a point, march off to infinity, or even expand to fill a halfspace.  The objective of this
section is to prove some rudimentary statements about the behaviour of functions of the Dirichlet Laplacian under
such circumstances.  In the next section, we address the much more subtle question of the convergence of propagators
in Strichartz spaces.

We begin by defining a notion of convergence of domains that is general enough to cover the scenarios discussed in this
paper, without being so general as to make the arguments unnecessarily complicated.  Throughout, we write
$$
G_\OO(x,y;z) := (-\Delta_\OO - z)^{-1}(x,y)
$$
for the Green's function of the Dirichlet Laplacian in a general open set $\OO$.  This function is symmetric under
the interchange of $x$ and $y$.

\begin{defn}\label{D:converg}
Given a sequence $\OO_n$ of open subsets of $\R^3$ we define
$$
\tlim \OO_n := \{ x\in \R^3 :\, \liminf_{n\to\infty} \dist(x,\OO_n^c) > 0\}.
$$
Writing $\tilde\OO=\tlim \OO_n$, we say $\OO_n\to\OO$ if the following two conditions hold: $\OO\triangle\tilde\OO$ is a finite set and
\begin{align}\label{cr2}
G_{\OO_n}(x,y;z)\to G_{\OO}(x,y;z)
\end{align}
for all $z\in (-2,-1)$, all $x\in \tilde\OO$, and uniformly for $y$ in compact subsets of $\tilde\OO\setminus \{x\}$.
\end{defn}

The arguments that follow adapt immediately to allow the symmetric difference $\OO\triangle\tilde\OO$ to be a set
of vanishing Minkowski $1$-content, rather than being merely finite.  The role of this hypothesis is to guarantee that this
set is removable for $\dot H^1_D(\OO)$; see Lemma~\ref{L:dense} below.  We restrict $z$ to the interval $(-2,-1)$
in \eqref{cr2} for simplicity and because it allows us to invoke the maximum principle when checking this
hypothesis.  Nevertheless, this implies convergence for all $z\in\C\setminus[0,\infty)$, as we will show in Lemma~\ref{lm:allz}.

\begin{lem}\label{L:dense} If $\OO_n\to \OO$, then $C^\infty_c(\tilde\OO)$ is dense in $\dot H^1_D(\OO)$.
\end{lem}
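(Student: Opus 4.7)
The plan is to exploit the fact that $\OO\triangle\tilde\OO$ is finite, together with the standard observation that single points in $\R^3$ have vanishing $\dot H^1$-capacity. First I would verify that $\tilde\OO$ is open: if $x\in\tilde\OO$ with $\liminf\dist(x,\OO_n^c)=2\delta$, then any $y\in B_\delta(x)$ obeys $\liminf\dist(y,\OO_n^c)\geq\delta>0$, so $B_\delta(x)\subseteq\tilde\OO$. Write $F_1:=\OO\setminus\tilde\OO$ and $F_2:=\tilde\OO\setminus\OO$; both are finite by hypothesis. Openness of $\tilde\OO$ combined with the finiteness of $F_2$ shows that each $q\in F_2$ is an isolated point of $\OO^c$: for $\delta$ small, $B_\delta(q)\subset\tilde\OO$ and $B_\delta(q)\cap(\OO\triangle\tilde\OO)=\{q\}$, whence $B_\delta(q)\setminus\{q\}\subset\OO$.

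The main tool is the familiar removable-point cutoff. Pick $\eta\in C^\infty([0,\infty))$ with $\eta=1$ on $[0,1]$ and $\eta=0$ on $[2,\infty)$, and set $\eta_{p,\eps}(x):=\eta(|x-p|/\eps)$. Because $|\nabla\eta_{p,\eps}|\lesssim \eps^{-1}$ is supported on an annulus of volume $\lesssim \eps^3$, one has $\|\nabla\eta_{p,\eps}\|_{L^2(\R^3)}\lesssim \sqrt{\eps}$. For any $\phi\in C^\infty_c(\R^3)$, the Leibniz rule then yields
$$
\|\nabla[(1-\eta_{p,\eps})\phi-\phi]\|_{L^2}\lesssim \|\phi\|_{\infty}\sqrt{\eps}+\|\nabla\phi\|_{\infty}\eps^{3/2}\longrightarrow 0.
$$

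With these preparations, the proof splits into two short steps. First, to justify the implicit inclusion $C^\infty_c(\tilde\OO)\subseteq \dot H^1_D(\OO)$: for $\psi\in C^\infty_c(\tilde\OO)$, set $\psi_\eps:=\psi\prod_{q\in F_2}(1-\eta_{q,\eps})$; for $\eps$ small this lies in $C^\infty_c(\OO\cap\tilde\OO)\subseteq C^\infty_c(\OO)$ and converges to $\psi$ in $\dot H^1$, so $\psi\in\dot H^1_D(\OO)$. Second, for density, take $\phi\in C^\infty_c(\OO)$ (these span a dense subspace by definition) and form $\phi_\eps:=\phi\prod_{p\in F_1}(1-\eta_{p,\eps})$; its compact support lies in $\OO\setminus F_1\subseteq\tilde\OO$, giving $\phi_\eps\in C^\infty_c(\tilde\OO)$, and $\phi_\eps\to\phi$ in $\dot H^1$. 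No substantial obstacle arises here: morally the lemma reduces to the fact that finite (indeed, vanishing Minkowski $1$-content) subsets of $\R^3$ are removable for $\dot H^1$, and the hypothesis on $\OO\triangle\tilde\OO$ is calibrated exactly so that this capacity argument applies.
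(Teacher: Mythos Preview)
Your proof is correct and follows essentially the same approach as the paper: both excise the finitely many points of $\OO\triangle\tilde\OO$ via smooth cutoffs $\eta(|x-p|/\eps)$ and use the $\sqrt{\eps}$ decay of $\|\nabla\eta_{p,\eps}\|_{L^2}$ to show these excisions are harmless in $\dot H^1$. The paper is slightly more terse---it cuts around all points of $\OO\triangle\tilde\OO$ simultaneously and does not separately verify the inclusion $C^\infty_c(\tilde\OO)\subseteq\dot H^1_D(\OO)$---while you split into $F_1$ and $F_2$ and check that inclusion explicitly; but the underlying capacity argument is identical.
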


\begin{proof}
By definition, $C^\infty_c(\OO)$ is dense in $\dot H^1_D(\OO)$.  Given $f\in C^\infty_c(\OO)$ and $\eps>0$ define
$
f_\eps(x) := f(x) \prod_{k=1}^m \theta\bigl(\tfrac{x-x_k}{\eps}\bigr)
$
where $\{x_k\}_{k=1}^m$ enumerates $\OO\triangle\tilde\OO$ and $\theta:\R^3\to[0,1]$ is a smooth function that vanishes when $|x|\leq1$
and equals one when $|x|\geq2$.

Then $f_\eps \in C^\infty_c(\tilde\OO)\cap C^\infty_c(\OO)$ and
$$
\| f - f_\eps \|_{\dot H^1(\R^3)} \lesssim \sqrt{m\eps^3} \; \|\nabla f\|_{L^\infty} + \sqrt{m\eps}\; \|f\|_{L^\infty}.
$$
As $\eps$ can be chosen arbitrarily small, the proof is complete.
\end{proof}

In what follows, we will need some crude bounds on the Green's function that hold uniformly for the rescaled domains we consider.
While several existing methods could be used to obtain more precise results (cf. \cite{Hislop}), we prefer to give a simple
argument that gives satisfactory bounds and for which the needed uniformity is manifest.

\begin{lem}\label{L:G bnds}
For all open sets $\OO\subseteq\R^3$ and $z\in\C\setminus[0,\infty)$,
\begin{align}\label{moron}
\bigl|G_\OO(x,y;z)\bigr| \lesssim  \frac{|z|^2}{(\Im z)^2} e^{-\frac12\Re \sqrt{-z}|x-y|}  \Bigl(\frac1{|x-y|} + \sqrt{\Im z}\Bigr).
\end{align}
Moreover, if\/ $\Re z\leq 0$, then
\begin{align}\label{moron'}
\bigl|G_\OO(x,y;z)\bigr| \lesssim e^{-\frac12\Re \sqrt{-z}|x-y|}  \Bigl(\frac1{|x-y|} + \sqrt{\Im z}\Bigr).
\end{align}
\end{lem}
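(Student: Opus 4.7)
Both bounds generalise the exact formula for the free resolvent kernel, $G_{\R^3}(x,y;z) = \frac{e^{-\sqrt{-z}|x-y|}}{4\pi|x-y|}$, whose modulus $\frac{e^{-\Re\sqrt{-z}|x-y|}}{4\pi|x-y|}$ already satisfies both \eqref{moron} and \eqref{moron'}. The plan is to control $G_\OO$ by $G_{\R^3}$, first on the negative real axis via the heat semigroup, and then on the full cut plane via a perturbative resolvent identity anchored at a point with $\Re z_0 \le 0$.

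For $z < 0$ real I would write $G_\OO(x,y;z) = \int_0^\infty e^{tz}\, p_t^\OO(x,y)\,dt$ and use the pointwise domination $0 \le p_t^\OO(x,y) \le (4\pi t)^{-3/2} e^{-|x-y|^2/(4t)}$ (a consequence of the maximum principle) to obtain $0 \le G_\OO(x,y;z) \le G_{\R^3}(x,y;z)$. To reach arbitrary $z \in \C \setminus [0,\infty)$, I would anchor at $z_0 := -|z|$ and iterate the second-resolvent identity,
\[
(-\ld -z)^{-1} = (-\ld -z_0)^{-1} + (z-z_0)(-\ld -z_0)^{-2} + (z-z_0)^2 (-\ld -z_0)^{-1}(-\ld -z)^{-1}(-\ld -z_0)^{-1}.
\]
The first two terms have kernels pointwise bounded by $G_{\R^3}(\cdot\,;z_0)$ and its self-convolution $\frac{e^{-\sqrt{|z|}|x-y|}}{8\pi\sqrt{|z|}}$, both of which satisfy the target bound (the latter by a short explicit computation). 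The third kernel can be written, \emph{without} any pointwise evaluation of $(-\ld - z)^{-1}$, as
\[
(z-z_0)^2 \bigl\langle G_\OO(x,\cdot\,;z_0),\, (-\ld -z)^{-1} G_\OO(\cdot\,,y;z_0)\bigr\rangle_{L^2}.
\]
Cauchy--Schwarz combined with $\|(-\ld-z)^{-1}\|_{L^2 \to L^2} \le 1/|\Im z|$ and the explicit $L^2$-norm of $G_\OO(\cdot\,;z_0)$ (dominated by the free kernel, hence of size $|z|^{-1/4}$) then produces the prefactor $|z|^2/(\Im z)^2$. To inject the exponential factor $e^{-\frac12\Re\sqrt{-z}|x-y|}$ I would apply a Combes--Thomas conjugation of the inner resolvent by $e^{\alpha\hat v\cdot \xi}$ with $\hat v := (x-y)/|x-y|$: a short Neumann-series calculation using the gradient bound $\|\nabla(-\ld-z)^{-1}\|_{L^2 \to L^2} \lesssim |z|^{1/2}/|\Im z|$ shows that for $\alpha \lesssim |\Im z|/|z|^{1/2}$ the conjugated operator still has $L^2\to L^2$ norm $\le 2/|\Im z|$, and the choice $\alpha = \tfrac12\Re\sqrt{-z}$ converts the weighted $L^2$ pairing into the desired exponential decay; the subsidiary $\sqrt{\Im z}$ summand accounts for the short-distance saturation of the weighted Gaussian integrals at scale $|z|^{-1/2}$.

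The main obstacle will be balancing the two competing constraints on the Combes--Thomas parameter $\alpha$: the Neumann-series invertibility forces $\alpha \lesssim |\Im z|/|z|^{1/2}$, while matching the natural decay rate of the free kernel asks for $\alpha \sim \Re\sqrt{-z}$. It is precisely this tension that produces the factor $\tfrac12$ in the exponent of \eqref{moron} and that prevents the prefactor $|z|^2/(\Im z)^2$ from being improved along this line of attack. Some additional care is needed to keep the weighted integrals of $G_\OO(\cdot\,;z_0)$ against $e^{\pm\alpha\hat v\cdot\,\cdot}$ convergent (one needs $\alpha < \sqrt{|z_0|} = |z|^{1/2}$, which is automatic for $\alpha = \tfrac12\Re\sqrt{-z}$), and to verify that when $\Re z\le 0$ the same argument, now with $z_0 = \Re z$, does not pick up the extra factor $|z|^2/(\Im z)^2$, yielding the sharper bound \eqref{moron'}.
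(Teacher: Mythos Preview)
Your overall strategy --- resolvent identity anchored at $z_0 = -|z|$ combined with a Combes--Thomas exponential-weight argument for the inner resolvent --- is a genuinely different route from the paper's, which instead uses the Phragm\'en--Lindel\"of principle. The paper first establishes the crude bound $|G_\OO(x,y;z)| \lesssim \frac{1}{|x-y|} + \frac{|z|^{3/2}}{|\Im z|}$ via essentially the same resolvent identity you write down (Cauchy--Schwarz only, no weights), then combines this with the heat-kernel bound along the line $\Im z = \eps$ and applies Phragm\'en--Lindel\"of to the subharmonic function $\log|G_\OO(x,y;z)/(z+i\eps)^2|$ against the harmonic majorant $-|x-y|\Re\sqrt{i\eps - z} + \text{const}$. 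The factor $\tfrac12$ in the exponent arises at the final step $\eps = \tfrac12\Im z$ via the elementary inequality $\Re\sqrt{-u - \tfrac i2 v} \ge \tfrac12\Re\sqrt{-u-iv}$.

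There is, however, a real gap in your Combes--Thomas step. You correctly identify the Neumann-series constraint $\alpha \lesssim |\Im z|/|z|^{1/2}$, but the choice $\alpha = \tfrac12\Re\sqrt{-z}$ does \emph{not} satisfy it when $|\Im z| \ll |z|$ with $\Re z < 0$: for $z = -1 + i\delta$ with $\delta \to 0^+$ one has $\Re\sqrt{-z} \to 1$ while $|\Im z|/|z|^{1/2} \to 0$, so no constant multiple of $\Re\sqrt{-z}$ fits the constraint. The tension you flag is genuine, but it is not resolved by the factor $\tfrac12$. What actually rescues this regime is that when $|\Im z| \ll |z|$ and $\Re z \le 0$ one has $-\Re z \sim |z|$, and the direct heat-kernel bound $|G_\OO| \le \frac{e^{-\sqrt{-\Re z}|x-y|}}{4\pi|x-y|}$ already decays at rate $\sqrt{-\Re z} \gtrsim \Re\sqrt{-z}$ --- no Combes--Thomas needed. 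Your argument can therefore be completed by splitting into (i) $|\Im z| \ge c|z|$, where your weighted estimate works as written, and (ii) $|\Im z| < c|z|$ with $\Re z \le 0$, where the heat-kernel bound suffices directly; for $\Re z > 0$ a short computation shows $\Re\sqrt{-z}\cdot|z|^{1/2} \le |\Im z|/\sqrt{2}$, so case (i) already covers it. The same issue undermines your proposed derivation of \eqref{moron'} via the anchor $z_0 = \Re z$: when $\Re z$ is near zero, $\|G_\OO(\cdot\,;z_0)\|_{L^2} \sim |\Re z|^{-1/4}$ blows up and the outer factors no longer absorb the weight $e^{\pm\alpha\hat v\cdot\,\cdot}$ with $\alpha = \tfrac12\Re\sqrt{-z}$. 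The paper sidesteps this by deducing \eqref{moron'} from \eqref{moron} in the strip $-|\Im z| \le \Re z \le 0$ (where $|z|^2/(\Im z)^2 \le 2$) and from the heat-kernel bound directly in the sector $\Re z < -|\Im z|$.
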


\begin{proof}
By the parabolic maximum principle, $0\leq e^{t\Delta_{\OO}}(x,y) \leq e^{t\Delta_{\R^3}}(x,y)$.  Thus,
$$
|G_{\OO} (x,y;z)| = \bigg| \int_0^\infty e^{tz + t\Delta_{\OO}}(x,y) \,dt \biggr| \leq \int_0^\infty e^{t\Re(z) + t\Delta_{\R^3}}(x,y) \,dt
	= G_{\R^3} (x,y;\Re z)
$$
for all $\Re z \leq 0$.  Using the explicit formula for the Green's function in $\R^3$, we deduce
\begin{equation}\label{Go bound}
| G_{\OO} (x,y;z) | \leq \frac{e^{-\sqrt{-\Re z}|x-y|}}{4\pi|x-y|} \qtq{whenever} \Re z\leq0.
\end{equation}
(When $z\in(-\infty,0]$ this follows more simply from the elliptic maximum principle.)

Note that the inequality \eqref{Go bound} implies \eqref{moron'} in the sector $\Re z < -|\Im z|$. Indeed, in this region we have
$\Re \sqrt{-z} \leq \sqrt{|z|} \leq 2^{\frac14} \sqrt{-\Re z}$.  In the remaining cases of \eqref{moron'}, namely, $-|\Im z|\leq \Re z\leq0$, we have
$1\leq\frac{|z|^2}{(\Im z)^2}\leq 2$ and so in this case \eqref{moron'} follows from \eqref{moron}.  Thus, it remains to establish \eqref{moron}.

To obtain the result for general $z\in\C\setminus[0,\infty)$, we combine \eqref{Go bound} with a crude bound elsewhere and the Phragm\'en--Lindel\"of principle.

From \eqref{Go bound} and duality, we have
$$
\bigl\| (-\Delta_{\OO}+|z|)^{-1} \bigr\|_{L^1\to L^2} = \bigl\| (-\Delta_{\OO}+|z|)^{-1} \bigr\|_{L^2\to L^\infty} \lesssim |z|^{-1/4}.
$$
Combining this information with the identity
$$
(-\Delta_{\OO}-z)^{-1} = (-\Delta_{\OO}+|z|)^{-1} +
	(-\Delta_{\OO}+|z|)^{-1}\biggl[\frac{(z+|z|)(-\Delta_{\OO}+|z|)}{-\Delta_{\OO}-z}\biggr](-\Delta_{\OO}+|z|)^{-1}
$$
and elementary estimations of the $L^2$-norm of the operator in square brackets, we deduce that
\begin{equation}\label{Go bound'}
|G_{\OO} (x,y;z)| \lesssim \frac{1}{|x-y|} + \frac{|z|^{3/2}}{|\Im z|} \qtq{for all} z\in\C\setminus[0,\infty).
\end{equation}

Using \eqref{Go bound} when $\Re z \leq 0$ and \eqref{Go bound'} when $\Re z >0$, we see that for given $\eps>0$ we have
\begin{equation}\label{log Go}
\log \biggl|\frac{G_{\OO} (x,y;z)}{(z+i\eps)^2}\biggr|
	\leq -|x-y|\Re\bigl(\sqrt{i\eps-z}\bigr) + \log\bigl(\tfrac1{|x-y|}+\sqrt\eps\bigr) + 2\log\bigl(\tfrac1\eps\bigr) + C
\end{equation}
for a universal constant $C$ and all $z$ with $\Im z =\eps$.  By the Phragm\'en--Lindel\"of principle, this inequality extends to the
entire halfspace $\Im z \geq \eps$.  Indeed, LHS\eqref{log Go} is subharmonic and converges to $-\infty$ at infinity, while RHS\eqref{log Go}
is harmonic and grows sublinearly.  To obtain the lemma at a fixed $z$ in the upper halfplane we apply \eqref{log Go} with $\eps=\frac12\Im z$
and use the elementary inequality
$$
\Re \sqrt{-u-\smash[b]{\tfrac i2 v}}  \geq \tfrac12 \Re \sqrt{-u-iv} \qtq{for all} u\in\R \qtq{and} v>0.
$$
The result for the lower halfplane follows by complex conjugation symmetry.
\end{proof}

\begin{lem}\label{lm:allz} If $\OO_n\to \OO$, then \eqref{cr2} holds uniformly for $z$ in compact subsets of $\C\setminus[0,\infty)$,
$x\in \tilde\OO$, and $y$ in compact subsets of $\tilde\OO\setminus \{x\}$.
\end{lem}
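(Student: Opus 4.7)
The plan is to combine the pointwise hypothesis of \eqref{cr2} with the uniform bounds of Lemma~\ref{L:G bnds} via a normal families argument, and then upgrade to joint uniformity in $(y,z)$ through equicontinuity coming from elliptic regularity.

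Fix $x\in\tilde\OO$ and $y\in\tilde\OO\setminus\{x\}$. By the very definition of $\tilde\OO$, both points lie in $\OO_n$ for all $n$ large, so $z\mapsto G_{\OO_n}(x,y;z)$ is well-defined and holomorphic on $\C\setminus[0,\infty)$. Estimate \eqref{moron} shows this sequence of holomorphic functions is uniformly bounded on compact subsets of $\C\setminus[0,\infty)$; here the compactness of the parameter region (together with \eqref{moron'} for $\Re z\leq 0$) controls the factor $|z|^2/(\Im z)^2$. Since \eqref{cr2} provides pointwise convergence to $G_\OO(x,y;z)$ on the set $(-2,-1)$ (which has an accumulation point in $\C\setminus[0,\infty)$), Vitali's convergence theorem---equivalently, Montel's theorem applied to an arbitrary subsequence together with the identity theorem to pin down the limit---promotes this to locally uniform convergence in $z\in\C\setminus[0,\infty)$.

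Next, I would upgrade to joint uniformity over $y\in K$ and $z\in L$, with $K\subset\tilde\OO\setminus\{x\}$ and $L\subset\C\setminus[0,\infty)$ arbitrary compact sets, by contradiction. If uniform convergence failed, then after passing to subsequences one could find $\delta>0$ and sequences $y_n\to y^*\in K$, $z_n\to z^*\in L$ with $|G_{\OO_n}(x,y_n;z_n)-G_\OO(x,y_n;z_n)|\geq\delta$. Two equicontinuity statements, uniform in $n$, defeat this:
\begin{SL}
\item In $z$: by Cauchy's integral formula around a contour in $\C\setminus[0,\infty)$ enclosing a neighborhood of $L$, the derivative $\partial_z G_{\OO_n}(x,y;z)$ is bounded uniformly in $n$ and in $y\in K$ for $z\in L$, using the sup bounds from Lemma~\ref{L:G bnds}.
\item In $y$: pick $r>0$ with $\overline{B(y^*,2r)}\subset\tilde\OO\setminus\{x\}$, so that $B(y^*,2r)\subset\OO_n$ and avoids $x$ for all $n$ large. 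Since $(-\Delta_y - z)G_{\OO_n}(x,y;z)=0$ on this ball, standard interior elliptic estimates for $-\Delta-z$ bound $|\nabla_y G_{\OO_n}(x,y;z)|$ on $B(y^*,r)$ in terms of the $L^\infty$ norm of $G_{\OO_n}(x,\cdot;z)$ over $B(y^*,2r)$; that norm is uniformly controlled by \eqref{moron}.
\end{SL}
A triangle inequality inserting the intermediate value $G_{\OO_n}(x,y^*;z^*)$ and its limit $G_\OO(x,y^*;z^*)$ then forces $|G_{\OO_n}(x,y_n;z_n)-G_\OO(x,y_n;z_n)|\to 0$, because the middle term vanishes by the first step, while the outer two terms vanish by the equicontinuity just established; this contradicts the choice of $\delta$.

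The main obstacle I anticipate is securing the $y$-equicontinuity uniformly across the varying domains $\OO_n$. This is precisely what the definition of $\tilde\OO$ is designed to buy: any $y^*\in\tilde\OO$ sits at positive distance from $\OO_n^c$ for all large $n$, so a fixed ball around $y^*$ is contained in every $\OO_n$ eventually, and the argument uses only interior---rather than boundary---elliptic regularity, with implicit constants independent of the geometry of $\partial\OO_n$.
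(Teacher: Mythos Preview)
Your argument is correct, and the core device---normal families for the holomorphic $z$-dependence, powered by the uniform bounds of Lemma~\ref{L:G bnds}---is exactly what the paper uses.  The difference lies in how the $y$-uniformity is handled.  You first freeze $y$, run Vitali/Montel to get locally uniform convergence in $z$, and then separately manufacture equicontinuity in $y$ via interior elliptic gradient estimates.  The paper instead folds the varying $y$ into the normal-families step from the outset: assuming failure, it passes to $y_n\to y_\infty$ and applies Montel directly to $f_n(z):=G_{\OO_n}(x,y_n;z)$.  The limit along any subsequence is identified on $(-2,-1)$ as $G_\OO(x,y_\infty;\cdot)$ using the uniform-in-$y$ convergence already built into the hypothesis \eqref{cr2} (together with the continuity of $G_\OO$ in $y$), whence the full sequence converges and one has the contradiction.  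This bypasses any appeal to elliptic regularity.  Your route is a bit longer but has the virtue of making the $y$-equicontinuity explicit and quantitative; the paper's route is shorter because it exploits that the hypothesis already delivers $y$-uniformity on the interval $(-2,-1)$, so no additional PDE input is required.
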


\begin{proof} We argue by contradiction. Suppose not.  Then there exist an $x\in \tilde\OO$ and a sequence $y_n\to y_\infty\in \tilde \OO\setminus \{x\}$ so that
\begin{align*}
f_n(z):=G_{\OO_n}(x, y_n, z)
\end{align*}
does not converge uniformly to $G_\OO(x,y_\infty;z)$ on some compact subset of $\C\setminus[0,\infty)$.

By Lemma~\ref{L:G bnds}, we see that $\{f_n\}$ are a normal family and so, after passing to a subsequence, converge
uniformly on compact sets to some $f(z)$. As $G_{\OO_n}(x, y_n;z)\to G_\OO(x,y_\infty;z)$ whenever $z\in (-2,-1)$,
the limit must be $f(z)=G_\OO(x,y_\infty;z)$.  This shows that it was unnecessary to pass to a subsequence, thus providing the sought-after contradiction.
\end{proof}

Given sequences of scaling and translation parameters $N_n\in 2^\Z$ and $x_n\in\Omega$, we wish to consider the domains $N_n(\Omega-\{x_n\})$.
Writing $d(x_n):=\dist(x_n,\Omega^c)$ and passing to a subsequence, we identify four specific scenarios:

\begin{CI}
\item Case 1: $N_n\equiv N_\infty$ and $x_n\to x_\infty\in \Omega$.  Here we set $\Omega_n:=\Omega$.

\item Case 2: $N_n\to 0$ and $-N_n x_n \to x_\infty\in\R^3$. Here $\Omega_n:=N_n(\Omega-\{x_n\})$.

\item Case 3: $N_nd(x_n)\to \infty$.  Here $\Omega_n:=N_n(\Omega-\{x_n\})$.

\item\mbox{Case 4: }$N_n\to\infty$ and $N_n d(x_n)\to d_\infty>0$.  Here $\Omega_n:=N_nR_n^{-1}(\Omega-\{x_n^*\})$, where $x_n^*\in\partial\Omega$ and $R_n\in SO(3)$ are chosen so that $d(x_n)=|x_n-x_n^*|$ and $R_n e_3 = \frac{x_n-x_n^*}{|x_n-x_n^*|}$.
\end{CI}
The seemingly missing possibility, namely, $N_n \gtrsim 1$ and $N_n d(x_n)\to 0$ will be precluded in the proof of Proposition~\ref{P:inverse Strichartz}.

In Case~1, the domain modifications are so tame as to not require further analysis, as is reflected by the choice of $\Omega_n$.  The definition of
$\Omega_n$ in Case~4 incorporates additional translations and rotations to normalize the limiting halfspace to be
$$
\HH := \{ x\in\R^3 : e_3 \cdot x >0 \} \qtq{where} e_3:=(0,0,1).
$$
In Cases~2 and~3, the limiting domain is $\R^3$, as we now show.

\begin{prop}\label{P:convdomain}
In Cases~2 and~3, $\Omega_n\to \R^3;$ in Case 4, $\Omega_n\to \HH$.
\end{prop}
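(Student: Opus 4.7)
The plan is to verify, in each case, the two conditions of Definition~\ref{D:converg}: first to identify $\tilde\Omega := \tlim\Omega_n$ and check that its symmetric difference with the target is finite, and then to establish the Green's function convergence \eqref{cr2}. In Case~2, $\Omega_n^c \subset B(-N_nx_n, N_n\diam(\Omega^c))$, whose center tends to $x_\infty$ and radius tends to $0$; hence $\tilde\Omega = \R^3\setminus\{x_\infty\}$. In Case~3, $\dist(0,\Omega_n^c) = N_nd(x_n)\to\infty$ forces $\tilde\Omega = \R^3$. In Case~4, strict convexity places $\Omega_n^c \subset \{x_3\le 0\}$ with $\partial\Omega_n^c$ tangent to $\partial\HH$ at the origin and curving quadratically away; scaling by $N_n\to\infty$ gives $\dist(x,\Omega_n^c)\ge x_3$ for $x\in\HH$ while $\dist(x,\Omega_n^c)\to 0$ off $\HH$, so $\tilde\Omega=\HH$. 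The symmetric difference with the target is in each case empty or a single point.

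For the Green's function convergence in Cases~2 and~3, fix $z\in(-2,-1)$ and $y\in\tilde\Omega$; for $n$ large we may write
\begin{equation*}
G_{\Omega_n}(x,y;z) = G_{\R^3}(x,y;z) - v_n(x),
\end{equation*}
where $v_n$ is $(-\Delta-z)$-harmonic on $\Omega_n$, equals $G_{\R^3}(\cdot,y;z)$ on $\partial\Omega_n$, and decays at infinity. Enclosing $\Omega_n^c$ in a ball $B(c_n,R_n)$, the maximum principle (applicable since $z<0$) compares $v_n$ with the explicit Yukawa potential $M_n\frac{R_n}{|x-c_n|}e^{-\sqrt{-z}(|x-c_n|-R_n)}$, where $M_n := \sup_{w\in\partial\Omega_n}|G_{\R^3}(w,y;z)|$. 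In Case~2 we have $R_n\to 0$ while $M_n$ is uniformly bounded (since $y\ne x_\infty$), giving $v_n\to 0$ locally uniformly on $\R^3\setminus\{x_\infty\}$. In Case~3, $M_n\to 0$ directly, as $\partial\Omega_n$ recedes to infinity and $G_{\R^3}(\cdot,y;z)$ decays exponentially, while the Yukawa factor is bounded by $1$.

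Case~4 is the more delicate one and we approach it by compactness. Lemma~\ref{L:G bnds} gives a uniform bound on $\{G_{\Omega_n}(\cdot,y;z)\}$ on compact subsets of $\HH\setminus\{y\}$; combined with interior elliptic regularity, an Arzel\`a--Ascoli argument extracts a subsequential limit $f$ satisfying $(-\Delta-z)f = \delta_y$ on $\HH$. To identify $f$ with $G_\HH(\cdot,y;z)$ it suffices to show $f$ vanishes on $\partial\HH$, which we achieve via a barrier argument: strict convexity ensures, via the second-order Taylor expansion at $x_n^*$, that every point of $\partial\HH$ lies within $O(N_n^{-1})$ of $\partial\Omega_n$, so the Dirichlet condition on $\partial\Omega_n$ combined with local regularity forces $G_{\Omega_n}(x,y;z)\to 0$ as $x\to\partial\HH$ uniformly in $n$. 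Uniqueness of the Dirichlet Green's function on $\HH$ identifies $f$ and promotes subsequential to full convergence. The principal obstacle is precisely this Case~4 boundary estimate, where one must carefully interface $\partial\Omega_n$ with $\partial\HH$; strict convexity is essential both here and for establishing $\tilde\Omega=\HH$.
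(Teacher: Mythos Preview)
Your argument in Case~3 has a genuine gap. The claim that ``the Yukawa factor is bounded by $1$'' requires $|x-c_n|\ge R_n$, i.e., that the fixed point $x$ lies outside the enclosing ball. But Case~3 allows $N_n\to\infty$ with $d(x_n)$ small (say $d(x_n)\to 0$ so slowly that still $N_nd(x_n)\to\infty$); then $\Omega_n^c$ has diameter $N_n\diam(\Omega^c)\to\infty$, forcing $R_n\to\infty$, while $\dist(x,\Omega_n^c)\sim N_nd(x_n)$ can be much smaller than $R_n$. For a concrete failure, take $\Omega^c$ an elongated ellipsoid with long semi-axis $10$ and short semi-axes $1$, and $x_n$ at distance $0.01$ from the nearest (short-axis) boundary point: then $R_n\sim 10N_n$ while $|x-c_n|\sim N_n$, so the Yukawa factor is $\sim e^{9\sqrt{-z}\,N_n}\to\infty$ and overwhelms the decay of $M_n$. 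No choice of center $c_n$ rescues this: a single point-source barrier cannot efficiently dominate boundary data spread over a surface of diameter much larger than its distance to $x$. The paper handles this regime by comparing instead with a halfspace $\HH_n$ whose boundary bisects the segment from $0$ to the nearest point of $\partial\Omega_n$; the image-charge formula then gives $G_{\Omega_n}\ge G_{\HH_n}\to G_{\R^3}$ directly.

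Your Case~4 strategy (compactness plus boundary identification) is a legitimate alternative to the paper's explicit two-sided barrier $G_\HH\le G_{\Omega_n}\le G_\HH+CN_n^{-\eps}e^{-\sqrt{-z}x_3}$, but the boundary step is underspecified. The assertion that every point of $\partial\HH$ lies within $O(N_n^{-1})$ of $\partial\Omega_n$ is false for points far from the origin (the distance is $\sim N_n^{-1}|x^\perp|^2$ near $0$, and $\partial\Omega_n$ is a closed surface of diameter $\sim N_n$). More substantively, concluding $f|_{\partial\HH}=0$ from $G_{\Omega_n}|_{\partial\Omega_n}=0$ requires a uniform-in-$n$ boundary modulus of continuity for $G_{\Omega_n}$; this is obtainable via a local barrier exploiting the uniform exterior ball condition (radius $\sim N_n\to\infty$), but you should state and prove it rather than invoke ``local regularity''. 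The paper's explicit upper barrier sidesteps this entirely by verifying the inequality on $\partial\Omega_n$, splitting into $|x|\ge N_n^\eps$ (exponential decay) and $|x|\le N_n^\eps$ (where $|x_3|\lesssim N_n^{2\eps-1}$ by curvature).
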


\begin{proof} In Case 2, we have $\tlim \Omega_n = \R^3\setminus\{x_\infty\}$. It remains to show convergence of the Green's functions.

Let $C_0>0$ be a constant to be chosen later. We will show that for $z\in(-2,-1)$ and $n$ sufficiently large,
\begin{align}\label{G to R lb}
G_{\Omega_n}(x,y;z)\ge G_{\R^3}(x,y;z)-C_0N_nG_{\R^3}(x,-x_nN_n;z)
\end{align}
for $x\in \R^3\setminus\{ x_\infty\}$ fixed and $y$ in any compact subset $K$ of $\R^3\setminus\{x,x_\infty\}$. Indeed, for $n$ large enough we
have $x\in \Omega_n$ and $K\subseteq\Omega_n$. Also, for $x_0\in \partial\Omega_n$ we have $|x_0+x_n N_n|\le \diam(\Omega^c)N_n$.  Thus,
for $z\in(-2,-1)$ we estimate
\begin{align*}
G_{\R^3}(x_0,y;z)-C_0N_nG_{\R^3}(x_0,-x_nN_n;z)&=\frac{e^{-\sqrt{-z}|x_0-y|}}{4\pi|x_0-y|}
-C_0N_n\frac{e^{-\sqrt{-z}|x_0+x_nN_n|}}{4\pi|x_0+x_nN_n|}\\
&\le \frac{e^{-\sqrt{-z}|x_0-y|}}{4\pi|x_0-y|}-C_0\frac{e^{-\diam\sqrt{-z}N_n}}{4\pi \diam}<0,
\end{align*}
provided $C_0>\sup_{y\in K}\frac {\diam}{|x_0-y|}$ and $n$ is sufficiently large. Thus \eqref{G to R lb} follows from the maximum principle.

The maximum principle also implies $G_{\R^3}(x,y;z)\ge G_{\Omega_n}(x,y;z) \geq 0$. Combining this with \eqref{G to R lb},
we obtain
\begin{align*}
G_{\R^3}(x,y;z)-C_0N_nG_{\R^3}(x,-x_nN_n;z)\le G_{\Omega_n}(x,y;z)\le G_{\R^3}(x,y;z)
\end{align*}
for $n$ sufficiently large. As $N_n\to0$ and $-x_nN_n\to x_\infty$, this proves the claim in Case~2.

Next we consider Case 3. From the condition $N_nd(x_n)\to\infty$ it follows easily that $\tlim\Omega_n=\R^3$.
It remains to show the convergence of the Green's functions. By the maximum principle, $G_{\R^3}(x,y;z)\ge G_{\Omega_n}(x,y;z)$; thus, it suffices to prove a suitable lower bound.  To this end, let $\HH_n$ denote the halfspace containing $0$ for which $\partial\HH_n$ is the hyperplane
perpendicularly bisecting the line segment from $0$ to the nearest point on $\partial\Omega_n$. Note that $\dist(0,\partial\HH_n)\to\infty$
as $n\to\infty$. Given $x\in\R^3$ and a compact set $K\subset\R^3\setminus\{x\}$, the maximum principle guarantees that
\begin{align*}
G_{\Omega_n}(x,y;z)\ge G_{\HH_n}(x,y;z) \qtq{for all} y\in K \qtq{and} z\in(-2,-1),
\end{align*}
as long as $n$ is large enough that $x\in \HH_n$ and $K\subset \HH_n$.  Now
$$
G_{\HH_n}(x,y;z)=G_{\R^3}(x,y;z)-G_{\R^3}(x,y_n;z),
$$
where $y_n$ is the reflection of $y$ across $\partial\HH_n$.  Thus,
\begin{align*}
G_{\HH_n}(x,y;z)=\frac{e^{-\sqrt{-z}|x-y|}}{4\pi|x-y|} - \frac{e^{-\sqrt{-z}|x-y_n|}}{4\pi|x-y_n|}\to G_{\R^3}(x,y;z) \qtq{as} n\to \infty.
\end{align*}
This completes the treatment of Case 3.

It remains to prove the convergence in Case 4, where $\Omega_n=N_nR_n^{-1}(\Omega-\{x_n^*\})$,
$N_n\to\infty$, and $N_n d(x_n)\to d_\infty>0$.  It is elementary to see that $\tlim \Omega_n = \HH$; in particular, $\HH\subset \Omega_n$ for all $n$.
We need to verify that
\begin{align*}
G_{\Omega_n}(x,y;z)\to G_{\HH}(x,y;z)\qtq{for} z\in (-2,-1), \quad x\in \HH,
\end{align*}
and uniformly for $y$ in a compact set $K\subset\HH\setminus\{x\}$. By the maximum principle, $G_{\Omega_n}(x,y;z)\ge G_\HH(x,y;z)$.
On the other hand, we will show that
\begin{align}\label{s21}
G_{\Omega_n}(x,y;z)\le G_\HH(x,y;z)+ C{N_n^{-\eps}}e^{-\sqrt{-z}x_3},
\end{align}
for any $0<\eps<\frac 13$ and a large constant $C$ depending on $K$. As $N_n\to\infty$, these two bounds together
immediately imply the convergence of the Green's functions.

We now prove the upper bound \eqref{s21}. From the maximum principle it suffices to show that this holds just for $x\in \partial\Omega_n$,
which amounts to
\begin{align}\label{s22}
 |G_{\HH}(x,y;z)| \le C {N_n^{-\eps}}e^{-\sqrt{-z}x_3} \quad \text{for all } z\in (-2,-1), \ x\in \partial\Omega_n,\text{ and } y\in K. \!\!
\end{align}
Note that $G_{\HH}$ is negative for such $x$.  Recall also that
\begin{align*}
G_{\HH}(x,y;z)=\frac 1{4\pi}\biggl(\frac1{|x-y|}e^{-\sqrt{-z}|x-y|}-\frac 1{|x-\bar y|}e^{-\sqrt{-z}|x-\bar y|}\biggr),
\end{align*}
where $\bar y=(y^{\perp},-y_3)$ denotes the reflection of $y$ across $\partial\HH$.

If $x\in\partial\Omega_n$ with $|x|\ge N_n^\eps$ then we have $|x-y|\sim|x-\bar y|\gtrsim N_n^\eps$ for $n$ large and so
\begin{align*}
|G_{\HH}(x,y;z)|\le CN_n^{-\eps}e^{-\sqrt{-z}x_3},
\end{align*}
provided we choose $C \gtrsim \sup_{y\in K} \exp\{\sqrt{2}\,|y_3|\}$.

Now suppose $x\in\partial\Omega_n$ with $|x|\le N_n^{\eps}$.  As the curvature of $\partial\Omega_n$ is $O(N_n^{-1})$, for such
points we have $|x_3| \lesssim N_n^{2\eps-1}$.  Correspondingly,
$$
0 \leq |x-y| - |x-\bar y| = \frac{|x-y|^2 - |x-\bar y|^2}{|x-y|+|x-\bar y|} = \frac{4|x_3||y_3|}{|x-y|+|x-\bar y|} \lesssim_K N_n^{2\eps-1}.
$$
Thus, by the Lipschitz character of $r\mapsto e^{-\sqrt{-z}r}/r$ on compact subsets of $(0,\infty)$,
$$
|G_{\HH}(x,y;z)| \lesssim_K N_n^{2\eps-1}.
$$
On the other hand, since $|x_3| \lesssim N_n^{2\eps-1}\to 0$ as $n\to\infty$,
$$
{N_n^{-\eps}}e^{-\sqrt{-z}x_3} \gtrsim N_n^{-\eps}.
$$
As $0<\eps<\frac13$, this completes the justification of \eqref{s22} and so the proof of the lemma in Case~4.
\end{proof}

We conclude this section with two results we will need in Sections~\ref{S:LPD} and~\ref{S:Nonlinear Embedding}.

\begin{prop}\label{P:converg}
Assume $\Omega_n\to \Omega_\infty$ in the sense of Definition~\ref{D:converg} and let $\Theta\in C_c^{\infty}((0,\infty))$. Then
\begin{align}\label{E:P converg1}
\|[\Theta(-\lon)-\Theta(-\Delta_{\Omega_\infty})]\delta_y\|_{\dot H^{-1}(\R^3)} \to 0
\end{align}
uniformly for $y$ in compact subsets of $\,\tlim \Omega_n$.  Moreover, for any fixed $t\in \R$ and $h\in C_c^{\infty}(\tlim \Omega_n)$, we have
\begin{align}\label{E:P converg2}
\lim_{n\to\infty}\|e^{it\lon}h-e^{it\Delta_{\Omega_\infty}} h\|_{\dot H^{-1}(\R^3)}=0.
\end{align}
\end{prop}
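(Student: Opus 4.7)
\emph{Plan.} My approach for both parts is to represent $\Theta(-\ld)$ via the Helffer--Sj\"ostrand formula, reducing matters to the resolvent convergence of Lemma~\ref{lm:allz}. Choose an almost analytic extension $\tilde\Theta$ of $\Theta$ that is compactly supported in $\C$ and satisfies $|\bar\partial\tilde\Theta(\zeta)|\lesssim_N|\Im\zeta|^N$ for every $N\ge 0$; then
\begin{align*}
\Theta(-\ld) = \tfrac{1}{\pi}\int_\C \bar\partial\tilde\Theta(\zeta)\,(-\ld-\zeta)^{-1}\,dA(\zeta).
\end{align*}

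For~\eqref{E:P converg1}, apply this formula to $\delta_y$ to represent $[\Theta(-\lon)-\Theta(-\Delta_{\Omega_\infty})]\delta_y$ as an integral over $\zeta$ of Green's function differences. Taking the $\dot H^{-1}(\R^3)$ norm in $x$, applying Minkowski, and using the Hardy--Littlewood--Sobolev embedding $L^{6/5}(\R^3)\hookrightarrow\dot H^{-1}(\R^3)$, the problem reduces to bounding the $L^{6/5}$-norm of each Green's function difference. The majorant supplied by Lemma~\ref{L:G bnds} grows only polynomially in $|\Im\zeta|^{-1}$ and is thus integrable against $|\bar\partial\tilde\Theta(\zeta)|$; combined with the pointwise convergence afforded by Lemma~\ref{lm:allz}, dominated convergence concludes the argument. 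Uniformity in $y$ on compact subsets of $\tlim\Omega_n$ is inherited from the uniform version of Lemma~\ref{lm:allz}.

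For~\eqref{E:P converg2}, since the symbol $e^{-it\mu}$ is not compactly supported in $\mu$, I combine~\eqref{E:P converg1} with a uniform frequency truncation. Given $\eps>0$, I will produce $\chi\in C_c^\infty((0,\infty))$ such that
\begin{align*}
\|(1-\chi)(-\ld)h\|_{\dot H^{-1}(\R^3)}+\|e^{it\ld}(1-\chi)(-\ld)h\|_{\dot H^{-1}(\R^3)} < \eps
\end{align*}
uniformly for $\Omega\in\{\Omega_n\}_{n\ge n_0}\cup\{\Omega_\infty\}$. Because $\supp h$ lies in the interior of $\Omega$, we have $(-\ld)^kh=(-\Delta_{\R^3})^kh$ classically, giving uniform control on the high-frequency tail via Bernstein; for the low-frequency tail, the subordination identity $(-\ld)^{-s/2}=c_s\int_0^\infty t^{s/2-1}e^{t\ld}\,dt$ together with the heat kernel bounds of Theorem~\ref{T:heat} yields a uniform $L^{6/5}(\R^3)$ bound on $(-\ld)^{-s/2}h$ for some $s>0$. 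With such $\chi$ in hand, I apply~\eqref{E:P converg1} to $\Theta(\mu)=e^{-it\mu}\chi(\mu)\in C_c^\infty((0,\infty))$; writing $h(x)=\int h(y)\delta_y(x)\,dy$ and invoking Minkowski then sends the middle-frequency piece to zero uniformly for $y\in\supp h$.

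The main obstacle will be the uniform $\dot H^{-1}(\R^3)$ smallness of the truncation error after applying $e^{it\ld}$, since this norm involves the Euclidean rather than the Dirichlet Laplacian and is not preserved by the Schr\"odinger propagator. I expect to navigate this by combining the $L^2$-unitarity of $e^{it\ld}$ with the Littlewood--Paley square function on $\Omega$ (Lemma~\ref{sq}), the regularity of $h$, and the frequency-dependent spatial localization afforded by the heat kernel bounds of Theorem~\ref{T:heat}.
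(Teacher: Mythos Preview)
Your treatment of \eqref{E:P converg1} is essentially identical to the paper's: both use the Helffer--Sj\"ostrand representation, the resolvent bounds of Lemma~\ref{L:G bnds} as a dominating function, Lemma~\ref{lm:allz} for pointwise convergence, and the embedding $L^{6/5}(\R^3)\hookrightarrow\dot H^{-1}(\R^3)$.

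For \eqref{E:P converg2} your overall strategy also matches the paper's --- frequency-truncate, then apply \eqref{E:P converg1} to the compactly supported symbol $e^{-it\mu}\chi(\mu)$ --- but you work harder than necessary on the tail terms.  You propose to build \emph{uniform-in-$n$} smallness of $(1-\chi)(-\Delta_{\Omega_n})h$ in $\dot H^{-1}(\R^3)$ from scratch, using regularity of $h$ for high frequencies and a subordination/heat-kernel argument for low frequencies.  The paper instead bootstraps: it fixes $\Theta$ so that $\|(1-\Theta(-\Delta_{\Omega_\infty}))h\|_{\dot H^{-1}}<\eps$ using only $L^{6/5}$ Littlewood--Paley convergence on the \emph{single} limiting domain $\Omega_\infty$, then applies \eqref{E:P converg1} (already proved) with this same $\Theta$ to transfer the bound to $\Omega_n$, yielding $\limsup_n\|(1-\Theta(-\Delta_{\Omega_n}))h\|_{\dot H^{-1}}\le\eps$ with no further work.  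This eliminates the need for your uniform heat-kernel and subordination estimates entirely.

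Your ``main obstacle'' --- that $e^{it\Delta_\Omega}$ need not be bounded on $\dot H^{-1}(\R^3)$, so smallness of the tail before propagation does not immediately give smallness after --- is a genuine issue, and the paper's proof is terse at exactly this point (it simply asserts that the argument ``reduces to'' the middle-frequency piece).  Your proposed resolution via square functions and heat-kernel localization is too vague to assess.  Note, however, that the paper's bootstrap at least reduces the problem to the \emph{fixed} domain $\Omega_\infty$; one does not need the propagated-tail bound uniformly in $n$.
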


\begin{proof}
By the Helffer-Sj\"ostrand formula (cf. \cite[p. 172]{HelfferSjostrand}), we may write
\begin{align*}
\Theta(-\Delta_{\OO})(x,y)=\int_{\C} G_{\OO}(x,y;z)\rho_\Theta(z) \, d{\hbox{Area}},
\end{align*}
where $\rho_\theta\in C_c^\infty(\C)$ with $|\rho_{\Theta}(z)|\lsm |\Im z|^{20}$.  Note that by Lemma~\ref{L:G bnds}
this integral is absolutely convergent; moreover, we obtain the following bounds:
\begin{align}\label{s18}
|\Theta(-\Delta_{\OO})(x,y)|\lsm |x-y|^{-1}\langle x-y\rangle^{-10},
\end{align}
uniformly for any domain $\OO$.

As $\Omega_n\to \Omega_\infty$, applying dominated convergence in the Helffer-Sj\"ostrand formula also guarantees that
$\Theta(-\Delta_{\Omega_n})(x,y)\to \Theta(-\Delta_{\Omega_\infty})(x,y)$ for each
$x\in \tilde\Omega_\infty:=\tlim \Omega_n$ fixed and uniformly for $y$ in
compact subsets of $\tilde\Omega_\infty\setminus\{x\}$.  Combining this with \eqref{s18} and applying
the dominated convergence theorem again yields
\begin{align*}
\|\Theta(-\Delta_{\Omega_n})\delta_y-\Theta(-\Delta_{\Omega_\infty})\delta_y\|_{L_x^{\frac
65}}\to 0,
\end{align*}
which proves \eqref{E:P converg1} since by Sobolev embedding $L_x^{6/5}(\R^3)\subseteq \dot H^{-1}_x(\R^3)$.

We turn now to \eqref{E:P converg2}. From the $L^{6/5}_x$-convergence of Littlewood--Paley expansions (cf. \cite[\S4]{KVZ:HA}),
we see that given $\eps>0$ and $h\in C_c^{\infty}(\tlim \Omega_n)$, there is a smooth function $\Theta:(0,\infty)\to[0,1]$
of compact support so that
$$
\| [1 - \Theta(-\Delta_{\Omega_\infty})] h \|_{\dot H^{-1}(\R^3)} \leq \eps.
$$
Combining this with \eqref{E:P converg1} we deduce that
$$
\limsup_{n\to\infty} \| [1 - \Theta(-\Delta_{\Omega_n})] h \|_{\dot H^{-1}(\R^3)} \leq \eps.
$$
In this way, the proof of \eqref{E:P converg2} reduces to showing
$$
\lim_{n\to\infty}\bigl\|e^{it\lon}\Theta(-\Delta_{\Omega_n}) h-e^{it\Delta_\infty} \Theta(-\Delta_{\Omega_\infty}) h\bigr\|_{\dot H^{-1}(\R^3)}=0,
$$
which follows immediately from \eqref{E:P converg1}.
\end{proof}

\begin{lem}[Convergence of $\dot H^1_D$ spaces]\label{L:n3}
Let $\Omega_n\to \Omega_\infty$ in the sense of Definition~\ref{D:converg}.
Then we have
\begin{align}\label{n4}
\|(-\Delta_{\Omega_n})^{\frac 12} f-(-\Delta_{\Omega_\infty})^{\frac 12} f\|_{L^2(\R^3)}\to 0 \qtq{for all} f\in C^\infty_c(\tlim\Omega_n).
\end{align}
\end{lem}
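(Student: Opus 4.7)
The plan is to reduce matters to the $L^2$-convergence of resolvents, which is controlled by the Green function convergence from Lemma~\ref{lm:allz}. First, by the density Lemma~\ref{L:dense} together with the $\dot H^1_D \to L^2$ boundedness of $(-\Delta_{\OO})^{1/2}$, it suffices to prove \eqref{n4} for $f$ additionally lying in $C^\infty_c(\Omega_\infty)$; for such $f$, compactness of $\supp f\subset\tlim \Omega_n$ also forces $f\in C^\infty_c(\Omega_n)$ for all $n$ sufficiently large. For these $f$, the Dirichlet Laplacians all act as the classical distributional Laplacian on $\R^3$, so we may set $g := -\Delta f \in L^2(\R^3)$, independent of $\OO\in\{\Omega_n,\Omega_\infty\}$.

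Next, using the spectral identity $\sqrt\lambda=\tfrac1\pi\int_0^\infty\tfrac{\lambda}{\lambda+s}\tfrac{ds}{\sqrt s}$ and the functional calculus, write
\[
(-\Delta_{\OO})^{1/2} f \;=\; \tfrac1\pi \int_0^\infty (-\Delta_{\OO}+s)^{-1} g \,\tfrac{ds}{\sqrt s}
\]
as a Bochner integral in $L^2$. To exploit this representation, one needs an $s$-integrable majorant of $\|(-\Delta_{\OO}+s)^{-1} g\|_{L^2}/\sqrt s$ uniform in $\OO$. For large $s$ the trivial bound $\|(-\Delta_{\OO}+s)^{-1}\|_{L^2\to L^2}\leq s^{-1}$ controls the integrand by $\|g\|_{L^2}\, s^{-3/2}$. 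For small $s$ this bound diverges; the crucial observation is that $g$ lies in the range of a \emph{fixed} operator applied to a fixed $L^2$ function, so the algebraic identity
\[
(-\Delta_{\OO}+s)^{-1} g \;=\; (-\Delta_{\OO}+s)^{-1}(-\Delta_{\OO})f \;=\; f - s(-\Delta_{\OO}+s)^{-1} f
\]
combined with $\|(-\Delta_{\OO}+s)^{-1}\|_{L^2\to L^2}\leq s^{-1}$ yields $\|(-\Delta_{\OO}+s)^{-1} g\|_{L^2}\leq 2\|f\|_{L^2}$, bounding the integrand by $2\|f\|_{L^2}/\sqrt s$ near $s=0$. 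These two estimates provide the required $\OO$-uniform integrable majorant on $(0,\infty)$.

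Finally, fix $s>0$ and show $\|(-\Delta_{\Omega_n}+s)^{-1} g - (-\Delta_{\Omega_\infty}+s)^{-1} g\|_{L^2}\to 0$. The pointwise Green function convergence of Lemma~\ref{lm:allz}, the bound $|G_{\OO}(x,y;-s)|\leq G_{\R^3}(x,y;-s)$ (which comes from the parabolic maximum principle used in the proof of Lemma~\ref{L:G bnds}), and dominated convergence in the $y$-integral against $g$ together yield $(-\Delta_{\Omega_n}+s)^{-1} g(x)\to (-\Delta_{\Omega_\infty}+s)^{-1} g(x)$ for a.e.\ $x$. A further application of dominated convergence in $x$---with majorant $(-\Delta_{\R^3}+s)^{-1}|g|\in L^2(\R^3)$---upgrades this to $L^2$-convergence. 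One last application of dominated convergence in $s$, using the majorant from the preceding paragraph and the pointwise-in-$s$ convergence just established, then gives \eqref{n4}. The main subtle point in the whole argument is the behavior of the integrand near $s=0$, which is handled by the algebraic identity above; everything else is repeated application of dominated convergence.
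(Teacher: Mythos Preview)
Your argument is correct and takes a genuinely different route from the paper's. The paper's proof is much shorter: since $f\in C^\infty_c(\Omega_n)$ for large $n$, one has the \emph{equality} of norms $\|(-\Delta_{\Omega_n})^{1/2}f\|_{L^2}=\|\nabla f\|_{L^2}=\|(-\Delta_{\Omega_\infty})^{1/2}f\|_{L^2}$, so strong $L^2$-convergence follows once one knows weak convergence. The latter is obtained by approximating $\lambda\mapsto\sqrt\lambda/(1+\lambda)$ uniformly by $\Theta_\eps\in C_c^\infty((0,\infty))$ and invoking Proposition~\ref{P:converg}. Your Balakrishnan-type integral representation together with dominated convergence in $s$ is more hands-on and bypasses Proposition~\ref{P:converg} entirely; the algebraic identity $(-\Delta_\OO+s)^{-1}g=f-s(-\Delta_\OO+s)^{-1}f$ giving the uniform small-$s$ majorant is exactly the right observation. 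The paper's Hilbert-space trick is slicker; your approach is more self-contained.

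One small point in your resolvent-convergence step: Lemma~\ref{lm:allz} only gives $G_{\Omega_n}(x,y;-s)\to G_{\Omega_\infty}(x,y;-s)$ for $x\in\tilde\Omega=\tlim\Omega_n$, so your claim of pointwise convergence ``for a.e.\ $x$'' needs a separate argument when $(\tilde\Omega)^c$ has positive measure, as in Case~4 where it is the closed halfspace $\{x_3\le0\}$. There one must observe that for each fixed $x$ with $x_3<0$ one eventually has $x\in\Omega_n^c$ (since the rescaled convex obstacle expands to fill the lower halfspace), whence $(-\Delta_{\Omega_n}+s)^{-1}g(x)=0$ for large $n$. This is easy for the specific $\Omega_n$ arising in the paper but is not an immediate consequence of Definition~\ref{D:converg} alone.
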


\begin{proof}
By the definition of $\tlim\Omega_n$, any $f\in C_c^{\infty}(\tlim \Omega_n)$ obeys $\supp(f)\subseteq\Omega_n$
for $n$ sufficiently large $n$ and for such $n$ we have
\begin{align}\label{normequal}
\| (-\Delta_{\Omega_n})^{\frac 12} f \|_{L^2(\R^3)} =\|\nabla f\|_{L^2(\R^3)} = \| (-\Delta_{\Omega_\infty})^{\frac 12} f \|_{L^2(\R^3)}.
\end{align}

Given $\eps>0$, there exists $\Theta_\eps\in C_c^\infty((0,\infty))$ such that
\begin{align*}
\sup_{\lambda\in[0,\infty)}\ \biggl|\frac{\sqrt\lambda}{1+\lambda}-\Theta_\eps(\lambda) \biggr| <\eps.
\end{align*}
Thus for any $g\in C_c^{\infty}(\R^3)$, we have
\begin{align*}
\langle g, (-\Delta_{\Omega_n})^{\frac 12} f\rangle &=\bigl\langle g, \frac{(-\Delta_{\Omega_n})^{\frac12}}{1-\Delta_{\Omega_n}}(1-\Delta) f\bigr\rangle
	=\langle g, \Theta_\eps(-\Delta_{\Omega_n})(1-\Delta)f\rangle +O(\eps).
\end{align*}
Using Proposition \ref{P:converg} and the same reasoning, we obtain
\begin{align*}
\lim_{n\to\infty} \langle g, \Theta_\eps(-\Delta_{\Omega_n})&(1-\Delta) f\rangle
	 = \langle g, \Theta_\eps(-\Delta_{\Omega_\infty})(1-\Delta)f\rangle
	 = \langle g, (-\Delta_{\Omega_\infty})^{\frac 12} f\rangle + O(\eps).
\end{align*}
Putting these two equalities together and using the fact that $\eps>0$ was arbitrary, we deduce that
$(-\Delta_{\Omega_n})^{\frac 12} f \rightharpoonup (-\Delta_{\Omega_\infty})^{\frac 12} f$ weakly in $L^2(\R^3)$.
Combining this with \eqref{normequal} gives strong convergence in $L^2(\R^3)$ and so proves the lemma.
\end{proof}

\section{Convergence of linear flows}\label{S:Linear flow convergence}

In this section we prove convergence of free propagators in Strichartz spaces, as we rescale and translate the domain $\Omega$ by parameters $N_n\in 2^\Z$ and $x_n\in \Omega$ conforming to one of the following three scenarios:
\begin{equation}\label{scenarios}
\left\{ \ \begin{aligned}
&\text{(i) $N_n\to 0$ and $-N_n x_n \to x_\infty\in \R^3$}\\
&\text{(ii) $N_n d(x_n)\to \infty$}\\
&\text{(iii) $N_n\to\infty$ and $N_n d(x_n) \to d_\infty>0$.}
\end{aligned}\right.
\end{equation}
Here we use the shorthand $d(x_n)=\dist(x_n, \Omega^c)$.  Notice that these are Cases~2--4 discussed in Section~\ref{S:Domain Convergence}.  We will not discuss Case~1 of Section~\ref{S:Domain Convergence} here; there is no change in geometry in Case~1, which renders the results of this section self-evident.

As seen in Section~\ref{S:Domain Convergence}, the  limiting geometry in the first and second scenarios is the whole space $\R^3$, while in the third scenario the limiting geometry is the halfspace $\HH$ (after a suitable normalization).  More precisely, in the first and second scenarios writing
$\Omega_n=N_n(\Omega-\{x_n\})$, Proposition~\ref{P:convdomain} gives $\Omega_n\to \R^3$.  In the third scenario, we define
$\Omega_n=N_nR_n^{-1}(\Omega-\{x_n^*\})$, where $x_n^*\in\partial\Omega$ and $R_n\in SO(3)$ are chosen so that $d(x_n)=|x_n-x_n^*|$ and $R_n e_3 = \frac{x_n-x_n^*}{|x_n-x_n^*|}$; in this scenario, Proposition~\ref{P:convdomain} gives $\Omega_n\to \HH=\{x\in\R^3:x\cdot e_3>0\}$.

The main result in this section is the following:

\begin{thm}[Convergence of linear flows in Strichartz spaces]\label{T:LF}\hskip 0em plus 1em
Let $\Omega_n$ be as above and let $\Omega_\infty$ be such that
$\Omega_n\to \Omega_\infty$. Then
\begin{align*}
\lim_{n\to \infty}\| e^{it\lon} \psi-e^{it\Delta_{\Omega_\infty}}\psi\|_{L_t^q L_x^r(\R\times\R^3)}=0
\end{align*}
for all $\psi\in C_c^{\infty}(\tlim \Omega_n)$ and all pairs $(q,r)$ satisfying $\frac2q+\frac3r=\frac32$ with $2<q<\infty$ and $2<r<6$.
\end{thm}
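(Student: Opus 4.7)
My plan is a case analysis along the three scenarios in \eqref{scenarios}. As a preliminary reduction, Theorem~\ref{T:Strichartz} gives Strichartz bounds for $e^{it\Delta_{\Omega_n}}$ with constants independent of $n$, so $\{e^{it\Delta_{\Omega_n}}\psi\}$ is uniformly bounded in every non-endpoint Strichartz space. By log-convexity of $L^q_tL^r_x$ norms along the admissible Strichartz line, it suffices to prove convergence in a single admissible pair in the interior of the stated range; interpolation with uniform bounds on either side then delivers the full range $2<q<\infty$, $2<r<6$.

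The soft part of the argument handles compact time intervals. Since $\psi\in C^\infty_c(\tlim\Omega_n)$, for $n$ large the support of $\psi$ sits inside $\Omega_n$, and Theorem~\ref{T:Sob equiv} together with conservation of the $\dot H^1_D$ norm gives uniform boundedness of $\{e^{it\Delta_{\Omega_n}}\psi\}$ in $\dot H^1(\R^3)$. Combining this uniform $\dot H^1$ bound with the $\dot H^{-1}(\R^3)$-convergence from Proposition~\ref{P:converg} and interpolating yields $\|e^{it\Delta_{\Omega_n}}\psi-e^{it\Delta_{\Omega_\infty}}\psi\|_{\dot H^s(\R^3)}\to 0$ for every $-1<s<1$ and each $t\in\R$; Sobolev embedding then upgrades this to pointwise-in-$t$ convergence in $L^r_x$ for every $2<r<6$, with a pointwise bound uniform in $n$ and $t$ depending only on $\|\psi\|_{L^2}$ and $\|\nabla\psi\|_{L^2}$. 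Dominated convergence then gives the $L^q_tL^r_x$-convergence on every compact time interval.

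The heart of the theorem is then to control the tails $\{|t|>T\}$ as $T\to\infty$ uniformly in $n$. The $\Omega_\infty$-contribution is harmless because its $L^q_tL^r_x$-norm is finite. For the $\Omega_n$-contribution I would build an infinite-time parametrix in each scenario, approximating $e^{it\Delta_{\Omega_n}}\psi$ up to an error that is small in the dual Strichartz space $N^0$; Theorem~\ref{T:Strichartz} then converts this into smallness in $L^q_tL^r_x$. In \emph{scenarios (i) and (ii)}, where $\Omega_n^c$ either retreats to a point or escapes to infinity and the limit domain is $\R^3$, I would take $v_n := \chi_n\,e^{it\Delta_{\R^3}}\psi$ with $\chi_n$ a smooth cutoff equal to one outside a vanishing neighborhood of $\Omega_n^c$ and vanishing on $\Omega_n^c$. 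The commutator forcing $[\Delta,\chi_n]\,e^{it\Delta_{\R^3}}\psi$ is supported on a set of vanishing measure and is negligible in $N^0$, so by Theorem~\ref{T:Strichartz} the Duhamel difference $e^{it\Delta_{\Omega_n}}\psi-v_n$ is small in every Strichartz norm; finally, $v_n\to e^{it\Delta_{\R^3}}\psi=e^{it\Delta_{\Omega_\infty}}\psi$ in $L^q_tL^r_x$ by bounded convergence.

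\emph{Scenario (iii)} is the main obstacle. Here $\Omega_n\to\HH$ after normalization by $N_n$ and $R_n$, and one cannot merely cut off since the data lives at the boundary's scale. My plan is to use the explicit halfspace propagator as the principal parametrix: decompose $\psi$ into Gaussian wave packets at the natural scale, evolve each by the exact $e^{it\Delta_{\HH}}$ via its closed-form image formula, and then add a correction $w_n$ localized near $\partial\Omega_n$ whose role is to restore the Dirichlet boundary condition on the actual curved obstacle rather than on the model hyperplane. Because $\partial\Omega_n$ has second fundamental form of order $1/N_n$ on the relevant region, the curvature correction is formally small but accumulates over infinite time; the hard part will be to choose $w_n$, in particular by equipping it with an additional non-resonant phase factor, so as to extract enough cancellation that the resulting error remains small in Strichartz space uniformly in $T$ and $n$. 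Once this parametrix is in place, the Strichartz convergence to $e^{it\Delta_{\HH}}\psi = e^{it\Delta_{\Omega_\infty}}\psi$ follows, and together with scenarios (i) and (ii) this completes the proof.
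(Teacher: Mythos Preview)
Your overall architecture (interpolation to one admissible pair; build an all-time parametrix in each geometric regime; feed the forcing error through Theorem~\ref{T:Strichartz}) matches the paper. The soft compact-time argument via Proposition~\ref{P:converg} is fine but not really the point; the paper does not split into compact intervals plus tails, because the parametrix already works on all of $\R$.

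There is, however, a genuine gap in your treatment of scenario~(ii). Your claim that the commutator $[\Delta,\chi_n]\,e^{it\Delta_{\R^3}}\psi$ is ``supported on a set of vanishing measure and is negligible in $N^0$'' is only correct in the sub-case $N_n\to 0$ (shrinking obstacle). When $d(x_n)\to\infty$ with $N_n\gtrsim 1$ the support does \emph{not} vanish; the paper salvages this case (Theorem~\ref{T:LF1}) via the virial/dispersive decay of $e^{it\Delta_{\R^3}}\psi$ at the location of the obstacle, not via smallness of the support. More seriously, scenario~(ii) also contains the regime $N_n\to\infty$ with $d(x_n)\lesssim 1$: in rescaled coordinates the obstacle then has diameter $\sim N_n\to\infty$ and sits at distance $N_n d(x_n)$, so it subtends a nonvanishing solid angle and a positive fraction of the outgoing wave \emph{hits} it. The cutoff parametrix $\chi_n e^{it\Delta_{\R^3}}\psi$ completely misses the reflected wave, and the commutator error is not small in any dual Strichartz norm. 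The paper handles this regime by a further subdivision into Cases~(iv) and~(v): Case~(iv), with $N_n d(x_n)\le N_n^{1/7}$, uses the halfspace parametrix you describe for scenario~(iii); Case~(v), with $N_n^{1/7}\le N_n d(x_n)\lesssim N_n$, is the hardest part of the whole proof and requires a full geometric-optics construction --- Gaussian beam decomposition of $\psi$, classification of packets into missing/near-grazing/entering, and for the entering packets an explicit reflected Gaussian $v_n$ whose covariance matrix $\Sigma$ is matched to the principal curvatures of $\partial\Omega$ at the collision point (Lemmas~\ref{L:matrix} and~\ref{L:uv match}). The non-resonant phase factor you allude to belongs to this Case~(v) construction (the term $w_n^{(3)}$), not to the halfspace scenario. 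So your plan covers scenarios~(i) and~(iii) and the easy half of~(ii), but omits the regime that accounts for most of the work in Section~\ref{S:Linear flow convergence}.
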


In this paper we are considering an energy-critical problem and so need an analogue of this theorem with the corresponding scaling.  To this end, we prove the following corollary, which will be used to obtain a linear profile decomposition for the propagator $e^{it\Delta_\Omega}$ in the following section.

\begin{cor}[Convergence of linear flows in $L_{t,x}^{10}$]\label{C:LF} Let $\Omega_n$ be as above and let $\Omega_\infty$ be such that
$\Omega_n\to \Omega_\infty$. Then
\begin{align*}
\lim_{n\to \infty}\| e^{it\lon} \psi-e^{it\Delta_{\Omega_\infty}}\psi\|_{L_{t,x}^{10}(\R\times\R^3)}=0
\end{align*}
for all $\psi\in C_c^{\infty}(\tlim \Omega_n)$.
\end{cor}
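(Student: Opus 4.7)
The strategy is to leverage Theorem~\ref{T:LF} together with Sobolev embedding to handle the change in scaling: $L^{10}_{t,x}$ is the energy-critical ($\dot H^1$-scaling) Strichartz space, whereas Theorem~\ref{T:LF} provides convergence in $L^2$-scaling Strichartz spaces only. The bridge is the embedding $\dot W^{1,30/13}(\R^3)\hookrightarrow L^{10}(\R^3)$, which after taking the $L^{10}_t$-norm yields
\begin{align*}
\|u_n\|_{L^{10}_{t,x}(\R\times\R^3)}\lesssim \|\nabla u_n\|_{L^{10}_t L^{30/13}_x(\R\times\R^3)},
\end{align*}
where $u_n:=e^{it\Delta_{\Omega_n}}\psi-e^{it\Delta_{\Omega_\infty}}\psi$, extended by zero outside the respective domains. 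Crucially, $(10,30/13)$ is an $L^2$-admissible Strichartz pair (since $\tfrac{2}{10}+\tfrac{3\cdot 13}{30}=\tfrac{3}{2}$) lying in the range $2<q<\infty$, $2<r<6$ of Theorem~\ref{T:LF}.

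To analyse $\|\nabla u_n\|_{L^{10}_t L^{30/13}_x}$, I set $\phi_n:=(-\Delta_{\Omega_n})^{1/2}\psi$ for $n$ large (so $\psi\in C_c^\infty(\Omega_n)$) and $\phi_\infty:=(-\Delta_{\Omega_\infty})^{1/2}\psi$; Lemma~\ref{L:n3} gives $\phi_n\to\phi_\infty$ in $L^2(\R^3)$. Theorem~\ref{T:Sob equiv} applies at $(s,p)=(1,30/13)$ (since $1<\tfrac{13}{10}=\min\{1+\tfrac{1}{p},\tfrac{3}{p}\}$), yielding $\|\nabla f\|_{L^{30/13}(\R^3)}\sim \|(-\Delta_\Omega)^{1/2}f\|_{L^{30/13}(\Omega)}$ for $f\in C_c^\infty(\Omega)$. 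Combined with the commutation of $(-\Delta_\Omega)^{1/2}$ with $e^{it\Delta_\Omega}$, this identifies the gradient-Strichartz norm of $e^{it\Delta_{\Omega_n}}\psi$ with the $L^2$-admissible Strichartz norm of $g_n:=e^{it\Delta_{\Omega_n}}\phi_n$ (and analogously for $\Omega_\infty$). The core convergence
\begin{align*}
\|g_n-g_\infty\|_{L^{10}_t L^{30/13}_x(\R\times\R^3)}\to 0
\end{align*}
is obtained by splitting the difference as $e^{it\Delta_{\Omega_n}}(\phi_n-\phi_\infty)+[e^{it\Delta_{\Omega_n}}\phi_\infty-e^{it\Delta_{\Omega_\infty}}\phi_\infty]$. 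The first piece is estimated by Strichartz for the admissible pair $(10,30/13)$ together with Lemma~\ref{L:n3}, while the second is treated by approximating $\phi_\infty\in L^2(\R^3)$ by some $\tilde\phi\in C_c^\infty(\tlim\Omega_n)$ (density of $C_c^\infty$ in $L^2$), invoking Theorem~\ref{T:LF} on $\tilde\phi$, and absorbing the $L^2$-approximation error via Strichartz.

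The main technical obstacle is that Theorem~\ref{T:Sob equiv} delivers only a norm equivalence between $\nabla f$ and $(-\Delta_\Omega)^{1/2}f$, not an operator identity; moreover, $u_n$ itself lies in the Dirichlet Sobolev space of neither $\Omega_n$ nor $\Omega_\infty$, so the equivalence cannot be applied to $u_n$ directly. The resolution is to apply Theorem~\ref{T:Sob equiv} separately to $e^{it\Delta_{\Omega_n}}\psi$ and to $e^{it\Delta_{\Omega_\infty}}\psi$, and then to combine the two equivalences with the convergence of $g_n-g_\infty$ by means of the triangle inequality, thereby deducing $\|\nabla u_n\|_{L^{10}_t L^{30/13}_x}\to 0$ and hence the corollary.
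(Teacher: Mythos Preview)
Your argument has a genuine gap in the final step. You correctly observe that Theorem~\ref{T:Sob equiv} gives only a \emph{norm equivalence}
\[
\|\nabla f\|_{L^{30/13}} \sim \|(-\Delta_{\Omega_n})^{1/2} f\|_{L^{30/13}}
\]
for $f$ in the Dirichlet space of $\Omega_n$ (and similarly for $\Omega_\infty$), and you correctly establish that $\|g_n-g_\infty\|_{L^{10}_t L^{30/13}_x}\to 0$. But these two ingredients cannot be combined by the triangle inequality to yield $\|\nabla u_n\|_{L^{10}_t L^{30/13}_x}\to 0$. The norm equivalences are mediated by \emph{different} operators on the two summands: writing $R_n=\nabla(-\Delta_{\Omega_n})^{-1/2}$ and $R_\infty=\nabla(-\Delta_{\Omega_\infty})^{-1/2}$ for the respective Riesz transforms, you have $\nabla u_n=R_n g_n - R_\infty g_\infty$, and the uniform $L^{30/13}$-boundedness of $R_n$ controls $R_n(g_n-g_\infty)$ but says nothing about $(R_n-R_\infty)g_\infty$. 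Proving convergence of the Riesz transforms $R_n\to R_\infty$ in $L^{30/13}$ operator norm (or even strongly on a suitable class) is not supplied by any result in the paper and is a nontrivial statement in its own right---comparable in difficulty to what you are trying to prove.

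The paper's proof sidesteps this issue entirely. Instead of passing through a derivative, it interpolates via H\"older between the $L^{10/3}_{t,x}$ norm of $u_n$ (which is an $L^2$-admissible Strichartz norm, handled directly by Theorem~\ref{T:LF}) and the $L^\infty_{t,x}$ norm, the latter being bounded uniformly in $n$ by $\|(1-\Delta_{\Omega_n})\psi\|_{L^2}+\|(1-\Delta_{\Omega_\infty})\psi\|_{L^2}\lesssim_\psi 1$ via $H^2\hookrightarrow L^\infty$. This requires no comparison of operators across varying domains.
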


\begin{proof}
By H\"older's inequality,
\begin{align*}
\| e^{it\lon} \psi-e^{it\Delta_{\Omega_\infty}}\psi\|_{L_{t,x}^{10}}
&\lesssim \| e^{it\lon} \psi-e^{it\Delta_{\Omega_\infty}}\psi\|_{L_{t,x}^{10/3}}^{1/3} \| e^{it\lon} \psi-e^{it\Delta_{\Omega_\infty}}\psi\|_{L_{t,x}^{\infty}}^{2/3}.
\end{align*}
The corollary then follows from Theorem~\ref{T:LF} and the following consequence of Sobolev embedding:
$$
\| e^{it\lon} \psi\|_{L_{t,x}^{\infty}} + \| e^{it\Delta_{\Omega_\infty}}\psi\|_{L_{t,x}^{\infty}} \lesssim \|  (1-\Delta_{\Omega_n}) \psi \|_{L_t^{\infty} L_x^2} + \|  (1-\Delta_{\Omega_\infty}) \psi \|_{L_t^{\infty} L_x^2} \lesssim_\psi 1.
$$
Note that the implicit constant here does not depend on $n$, because the domains obey the interior cone condition with uniform constants.
\end{proof}

The proof of Theorem~\ref{T:LF} will occupy the remainder of this lengthy section.  We will consider three different regimes of behaviour for $N_n$ and $x_n$.  These do not exactly correspond to the three scenarios above, but rather are dictated by the method of proof.  The first such case is when $N_n\to 0$ or $d(x_n)\to \infty$.  The limiting geometry in this case is the whole of $\R^3$.

\begin{thm} \label{T:LF1} Let $\Omega_n=N_n(\Omega-\{x_n\})$ and assume that $N_n\to 0$ or $d(x_n)\to\infty$.  Then
\begin{align*}
\lim_{n\to\infty}\|e^{it\Delta_{\Omega_n}}\psi-e^{it\Delta_{\R^3}}\psi\|_{L_t^q L_x^r(\R\times\R^3)}=0
\end{align*}
for all $\psi\in C_c^{\infty}(\tlim \Omega_n)$  and all pairs $(q,r)$ satisfying $\frac2q+\frac3r=\frac32$ with $2<q<\infty$ and $2<r<6$.
\end{thm}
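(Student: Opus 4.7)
The plan is to prove the theorem by a spatial truncation and Duhamel argument, approximating $e^{it\Delta_{\R^3}}\psi$ by a function that vanishes near $\Omega_n^c$ and comparing this approximation with $e^{it\Delta_{\Omega_n}}\psi$ using the uniform Strichartz inequality of Theorem~\ref{T:Strichartz}.

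Fix $\psi\in C_c^{\infty}(\tlim \Omega_n)$, so that $\dist(\supp\psi,\Omega_n^c)$ is bounded below uniformly in $n$ for all $n$ large. I choose a smooth cutoff $\chi_n\colon \R^3\to[0,1]$ with $\chi_n\equiv 1$ on $\supp\psi$ and $\chi_n\equiv 0$ on an open set containing $\Omega_n^c$, adapted to the two cases: when $N_n\to 0$, the obstacle $\Omega_n^c$ is contained in a ball of radius $N_n\diam$ around $-N_nx_n\to x_\infty$ and I take $\chi_n$ to vanish in a small neighborhood of $x_\infty$; when $d(x_n)\to\infty$, the obstacle recedes to infinity and I take $\chi_n$ to be $1$ on a large fixed ball (containing $\supp\psi$) and $0$ in a small neighborhood of $\Omega_n^c$. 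Then $\tilde u_n(t,x):=\chi_n(x)\,e^{it\Delta_{\R^3}}\psi(x)$ lies in the domain of $\Delta_{\Omega_n}$, satisfies $\tilde u_n(0)=\psi$, and obeys
\begin{align*}
(i\partial_t+\Delta_{\Omega_n})\tilde u_n=[\Delta,\chi_n]\,e^{it\Delta_{\R^3}}\psi.
\end{align*}
Duhamel's formula therefore gives
\begin{align*}
e^{it\Delta_{\Omega_n}}\psi-e^{it\Delta_{\R^3}}\psi = (\chi_n-1)\,e^{it\Delta_{\R^3}}\psi + i\int_0^t e^{i(t-s)\Delta_{\Omega_n}}[\Delta,\chi_n]\,e^{is\Delta_{\R^3}}\psi\,ds.
\end{align*}

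Taking the $L_t^qL_x^r$ norm and applying the uniform Strichartz bound of Theorem~\ref{T:Strichartz} to the Duhamel integral (with, for instance, the dual pair $L^{10/7}_{t,x}$, dual to the non-endpoint admissible pair $L^{10/3}_{t,x}$), the theorem reduces to establishing
\begin{align*}
\|(\chi_n-1)e^{it\Delta_{\R^3}}\psi\|_{L_t^qL_x^r(\R\times\R^3)}\to 0 \qtq{and} \|[\Delta,\chi_n]e^{is\Delta_{\R^3}}\psi\|_{L^{10/7}_{t,x}(\R\times\R^3)}\to 0.
\end{align*}
The main inputs for both are the dispersive bound $\|e^{is\Delta_{\R^3}}\psi\|_{L^\infty}\lesssim \langle s\rangle^{-3/2}\|\psi\|_{L^1}$ (and the analogous bound for $\nabla e^{is\Delta_{\R^3}}\psi$, since $\nabla\psi\in L^1$) together with non-stationary phase: as $\hat\psi$ is Schwartz, there exists $M>0$ with $|e^{is\Delta_{\R^3}}\psi(x)|\lesssim_{N}(1+|x|+|s|)^{-N}$ whenever $|x|\geq M(1+|s|)$. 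Splitting the $s$-integral at $|s|\sim L_n/(2M)$, with $L_n:=\dist(0,\Omega_n^c)$, one uses non-stationary phase for small $|s|$ (when the outgoing wave has not yet reached the obstacle) and dispersion for large $|s|$, combined with the explicit support, gradient and Laplacian bounds for $\chi_n$, to close the estimate.

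The step I expect to be the main obstacle is the regime in which the obstacle simultaneously grows in size and recedes to infinity, i.e., $N_n\to\infty$ together with $d(x_n)\to\infty$. In this sub-case the surface area $(N_n\diam)^2$ of $\Omega_n^c$ is not vanishing, so a naive cutoff with a large transition region produces a commutator error whose crude $L^{10/7}_{t,x}$ estimate fails to tend to zero. The remedy I envisage is to localize $1-\chi_n$ to a tight neighborhood of $\Omega_n^c$ itself (of thickness comparable to the obstacle size $N_n\diam$) rather than on a large coordinate ball, so that the support of $[\Delta,\chi_n]$ has area only $\sim(N_n\diam)^2$; the required smallness then stems from the geometric fact emphasized in the introduction that the obstacle subtends a vanishing solid angle $(\diam/d(x_n))^2\to 0$ at the origin, ensuring that only a vanishing fraction of the wave actually hits it.
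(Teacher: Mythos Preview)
Your approach---cutoff plus Duhamel plus the uniform Strichartz inequality on $\Omega_n$---is exactly the paper's strategy, and your identification of the difficult sub-case $N_n\to\infty$ with $d(x_n)\to\infty$ is correct. Your choice of cutoff (a tight collar of thickness $\sim N_n\diam$ around $\Omega_n^c$) also coincides with the paper's.

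There is, however, a concrete gap: committing to the dual norm $L^{10/7}_{t,x}$ does not close in that hard sub-case. With the collar cutoff one has $\|\nabla\chi_n\|_{L^{10/7}_x}\sim N_n^{11/10}$, so for the gradient half of the commutator and $|s|\gtrsim L_n=N_nd(x_n)$ the dispersive bound yields
\[
\bigl\|\nabla\chi_n\cdot\nabla e^{is\Delta}\psi\bigr\|_{L^{10/7}_{t,x}(\{|s|\gtrsim L_n\})}
\ \sim\ N_n^{11/10}L_n^{-4/5}=N_n^{3/10}d(x_n)^{-4/5},
\]
which diverges whenever $N_n$ grows faster than $d(x_n)^{8/3}$. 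Your solid-angle heuristic is the right intuition but does not rescue this particular norm. The fix is simply to place the Duhamel forcing in $L^1_tL^2_x$ instead (as the paper does for $N_n\gtrsim 1$): then $\|\nabla\chi_n\|_{L^\infty}\lesssim N_n^{-1}$ and H\"older on the collar gives
\[
\int_{L_n}^\infty \|\nabla\chi_n\cdot\nabla e^{is\Delta}\psi\|_{L^2_x}\,ds
\ \lesssim\ N_n^{-1}\cdot N_n^{3/2}\int_{L_n}^\infty |s|^{-3/2}\,ds
\ \sim\ d(x_n)^{-1/2}\to 0.
\]
Your non-stationary-phase bound handles the short-time piece $|s|\lesssim L_n$ in either dual norm with room to spare; it is a clean alternative to the virial identity $\|x\,e^{it\Delta}\psi\|_{L^2}\lesssim\langle t\rangle$ that the paper invokes for this regime.
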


\begin{proof}
By interpolation and the Strichartz inequality, it suffices to prove convergence in the symmetric Strichartz space $q=r=\frac{10}3$.  To ease notation, we will simply write $-\Delta$ for $-\Delta_{\R^3}$.

Let $\Theta$ be a smooth radial cutoff such that
\begin{align*}
\Theta(x)=\begin{cases}0, &|x|\le \frac 14\\1, &|x|\ge \frac12\end{cases}
\end{align*}
and let $\chi_n(x):=\Theta\bigl(\frac{\dist(x,\Omega_n^c)}{\diam(\Omega_n^c)}\bigr)$.

Note that if $N_n\to 0$ then $\diam(\Omega_n^c)\to 0$ and so $\supp(1-\chi_n)$ is a collapsing neighbourhood of the point $-N_n x_n$.

On the other hand, if $d(x_n)\to\infty$ then we have $\frac{\dist(0,\Omega_n^c)}{\diam (\Omega_n^c)}\to\infty$.  As for $x\in \supp(1-\chi_n)$ we have
$\dist(x,\Omega_n^c)\le \frac 12\diam(\Omega_n^c)$, this gives
\begin{align*}
|x|\ge\dist(0,\Omega_n^c)-\dist(x,\Omega_n^c)\ge\dist(0,\Omega_n^c)-\tfrac 12\diam(\Omega_n^c)\to\infty \qtq{as} n\to \infty.
\end{align*}

Now fix $\psi\in C_c^{\infty}(\tlim \Omega_n)$.  From the considerations above, for $n$ sufficiently large we have $\supp \psi\subseteq\{x\in \R^3:\, \chi_n(x)=1\}$.
Moreover, if $N_n\to 0$ or $d(x_n)\to\infty$, the monotone convergence theorem together with the Strichartz inequality give
\begin{align*}
\lim_{n\to\infty}\bigl\|(1-\chi_n)e^{it\Delta}\psi\bigr\|_{L_{t,x}^{\frac{10}3} (\R\times\R^3)}=0.
\end{align*}
We are thus left to estimate $e^{it\Delta_{\Omega_n}}\psi-\chi_n e^{it\Delta}\psi$.  From the Duhamel formula,
\begin{align*}
e^{it\Delta_{\Omega_n}}\psi=\chi_n e^{it\Delta}\psi+i\int_0^t e^{i(t-s)\Delta_{\Omega_n}}[\Delta, \chi_n] e^{is\Delta}\psi \,ds.
\end{align*}
Using the Strichartz inequality, we thus obtain
\begin{align}\label{1:43}
\|e^{it\Delta_{\Omega_n}}\psi-\chi_n e^{it\Delta}\psi\|_{L_{t,x}^{\frac{10}3}(\R\times\R^3)}
\lsm\bigl\|[\Delta,\chi_n] e^{it\Delta}\psi\bigr\|_{(L_{t,x}^{\frac{10}7}+L_t^1L_x^2)(\R\times\R^3)}.
\end{align}

To estimate the right-hand side of \eqref{1:43}, we discuss separately the cases: $(1)$ $N_n\to 0$ and $(2)$ $d(x_n)\to \infty$ with $N_n\gtrsim 1$.  In the first case, we estimate
\begin{align*}
\|&[\Delta, \chi_n]\prr\psi\bigr\|_{L_{t,x}^{\frac{10}7}}
\lsm \Bigl[\|\Delta\chi_n\|_{L_x^{\frac{10}7}}+\|\nabla \chi\|_{L_x^{\frac{10}7}}\Bigr]
	\Bigl[\|\prr\psi\|_{L_t^{\frac{10}7}L_x^\infty} + \|\prr\nabla\psi\|_{L_t^{\frac{10}7}L_x^\infty} \Bigr]\\
&\lsm\Bigl[\diam(\Omega_n^c)^{-2}+\diam(\Omega_n^c)^{-1}\Bigr]\diam(\Omega_n^c)^{\frac{21}{10}}
	\Bigl[\|\prr\psi\|_{L_t^{\frac{10}7}L_x^\infty} + \|\prr\nabla\psi\|_{L_t^{\frac{10}7}L_x^\infty} \Bigr] \\
&\lsm\Bigl[N_n^{\frac 1{10}}+N_n^{\frac{11}{10}}\Bigr] 
	\Bigl[\|\prr\psi\|_{L_t^{\frac{10}7}L_x^\infty} + \|\prr\nabla\psi\|_{L_t^{\frac{10}7}L_x^\infty} \Bigr].
\end{align*}
From the dispersive estimate and Sobolev embedding,
\begin{align}\label{E:4disp}
\|\prr\psi\|_{L_x^{\infty}}\lsm \lng t\rng^{-\frac32}\bigl[\|\psi\|_{L_x^1}+\|\psi\|_{H^2_x}\bigr]\lsm_\psi\lng t\rng^{-\frac 32},
\end{align}
and similarly with $\psi$ replaced by $\nabla\psi$.  Thus we obtain
\begin{align*}
\lim_{n\to\infty}\|[\Delta,\chi_n] \prr \psi\|_{L_{t,x}^{\frac{10}7}(\R\times\R^3)}=0.
\end{align*}

Consider now the case $N_n\gtrsim 1$ and $d(x_n)\to\infty$.  Then
\begin{align*}
\|[\Delta,\chi_n]\prr\psi\|_{L_t^1L_x^2}
&\lsm \bigl[\|\Delta\chi_n\|_{L_x^{\infty}}+\|\nabla\chi_n\|_{L_x^{\infty}}\bigr]\|\prr\lng\nabla\rng \psi\|_{L_t^1L_x^2(\dist(x,\Omega_n^c)\sim N_n)}\\
&\lsm \bigl[N_n^{-2}+N_n^{-1}\bigr]\|\prr\lng\nabla\rng\psi\|_{L_t^1L_x^2(\dist(x,\Omega_n^c)\sim N_n)}.
\end{align*}
Using H\"older's inequality and \eqref{E:4disp}, we obtain
\begin{align*}
\|\prr\lng\nabla\rng\psi\|_{L_x^2(\dist(x,\Omega_n^c)\sim N_n)}
&\lsm N_n^{\frac 32}\|\prr\lng\nabla\rng\psi\|_{L_x^\infty}
\lsm_{\psi} N_n^{\frac 32}\lng t\rng^{-\frac 32}.
\end{align*}
On the other hand, from the virial identity,
\begin{align*}
\|x\prr\lng\nabla\rng\psi\|_{L_x^2}\lsm_\psi\lng t\rng
\end{align*}
and so,
\begin{align*}
\|\prr\lng\nabla\rng\psi\|_{L_x^2(\dist(x,\Omega_n^c)\sim N_n)}
&\lsm\frac 1{\dist(0,\Omega_n^c)}\|x\prr\lng \nabla\rng\psi\|_{L_x^2}\lsm_\psi\frac{\lng t\rng}{N_nd(x_n)}.
\end{align*}
Collecting these estimates we obtain
\begin{align*}
\|\prr\lng\nabla\rng\psi\|_{L_t^1L_x^2(\dist(x,\Omega_n^c)\sim N_n)}
&\lsm_\psi\int_0^{\infty}\min\biggl\{\frac{N_n^{\frac 32}}{\lng t\rng^{\frac 32}}, \ \frac{\lng t\rng}{N_nd(x_n)}\biggr\}\\
&\lsm _\psi N_nd(x_n)^{-\frac 15}+\min\bigl\{N_n^{\frac 32}, N_n^{-1}d(x_n)^{-1}\bigr\}
\end{align*}
and so,
\begin{align*}
\|[\Delta,\chi_n]\prr\psi\|_{L_t^1L_x^2}\lsm_\psi d(x_n)^{-\frac 15}+N_n^{-2}d(x_n)^{-1}\to 0 \qtq{as} n\to \infty.
\end{align*}

This completes the proof of the theorem.
\end{proof}

Theorem~\ref{T:LF1} settles Theorem~\ref{T:LF} for $N_n$ and $x_n$ conforming to scenario (i) in \eqref{scenarios}, as well as part of scenario (ii).  The missing part of the second scenario is $N_n d(x_n)\to \infty$ with $N_n\to \infty$  and $d(x_n)\lesssim 1$.  Of course, we also have to prove Theorem~\ref{T:LF} for $N_n$ and $x_n$ conforming to scenario (iii), namely, $N_n d(x_n) \to d_\infty>0$ and  $N_n\to\infty$.  We will cover these remaining cases in two parts:
\begin{SL}\addtocounter{smalllist}{3}
\item $N_n\to \infty$  and $1\lesssim N_nd(x_n) \leq N_n^{1/7}$
\item $N_n\to \infty$  and $N_n^{1/7}\leq N_nd(x_n) \lesssim N_n$.
\end{SL}
Note that in case (iv) the obstacle $\Omega_n^c$ grows in diameter much faster than its distance to the origin.  As seen from the origin, the obstacle is turning into a (possibly retreating) halfspace.  By comparison, case (v) includes the possibility that the obstacle grows at a rate comparable to its  distance to the origin.

The two cases will receive different treatments.  In Case~(iv), we use a parametrix construction adapted to the halfspace evolution.  We also prove that when the halfspace is retreating, the halfspace propagators converge to $e^{it\Delta_{\R^3}}$; see Proposition~\ref{P:HtoR}.  In Case~(v), the parametrix construction will be inspired by geometric optics considerations and will require a very fine analysis.

We now turn to the details of the proof of Theorem~\ref{T:LF} in Case~(iv).

\subsection{Case (iv)}  After rescaling we find ourselves in the setting shown schematically in Figure~\ref{F:case4} below, with
$\eps = N_n^{-1}$.  This restores the obstacle to its original size.  We further rotate and translate the problem
so that the origin lies on the boundary of the obstacle, the outward normal is $e_3$ at this point, and the wave packet $\psi$ is centered
around the point $\delta e_3$.  Abusing notation, we will write $\Omega$ for this new rotated/translated domain.  As before, we write
$\HH=\{(x_1, x_2,x_3)\in\R^3:\, x_3>0\}$; by construction, $\partial\HH$ is the tangent plane to $\Omega^c$ at the origin.  Throughout this subsection, we write $x^{\perp}:=(x_1,x_2)$; also $\bar x:=(x_1,x_2,-x_3)$ denotes the reflection of $x$ in $\partial\HH$.

\begin{figure}[ht]
\begin{center}
\setlength{\unitlength}{1mm}
\begin{picture}(95,40)(-45,-20)
\put(0,-20){\line(0,1){40}}
\put(30, 0){\circle*{4}}                                            
\put(36.5,0){\vector(-1,0){4}}\put(37,-1){$\supp(\psi_{\eps,\delta})$}            
\put(30,-4){\vector(-1,0){ 2}}\put(30,-4){\vector(1,0){ 2}}\put(30,-8){\hbox to 0mm{\hss$\sim\eps$\hss}}       
\put(15,+4){\vector(-1,0){14}}\put(15,+4){\vector(1,0){15}}\put(15,+6){\hbox to 0mm{\hss$\delta$\hss}}     
\qbezier(0,0)(0,10)(-5,20)\qbezier(0,0)(0,-10)(-5,-20)                                                     
\put(-25,10){\vector(2,1){20}}\put(-25,10){\vector(-2,-1){20}}\put(-27,12){\hbox to 0mm{\hss$\sim 1$\hss}} 
\put(+4.5,-18){\vector(-1,0){4}} \put(+5,-19){$\partial \HH$}       
\put(-8.7,-18){\vector(1,0){4}} \put(-13.8,-19){$\partial \Omega$}  
\end{picture}
\vspace*{-2ex}
\end{center}
\caption{Depiction of Case~(iv); here $\eps\leq\delta\leq\eps^{6/7}$ and $\eps\to 0$.}\label{F:case4}
\end{figure}

This subsection will primarily be devoted to the proof of the following

\begin{thm}\label{T:LF2}  Fix $\psi\in C_c^{\infty}(\HH - \{e_3\})$ and let
\begin{align*}
\psi_{\eps,\delta}(x):=\eps^{-\frac 32}\psi\biggl(\frac{ x-\delta e_3}\eps\biggr).
\end{align*}
Then for any pair $(q,r)$ satisfying $\frac2q+\frac3r=\frac32$ with $2<q<\infty$ and $2<r<6$ we have
\begin{align}\label{cas4}
\|e^{it\Delta_{\Omega(\eps)}}\psi_{\eps,\delta}-e^{it\Delta_{\HH}}\psi_{\eps,\delta}\|_{L_t^q L_x^r(\R\times\R^3)} \to 0
\end{align}
as $\eps\to0$ with any $\delta=\delta(\eps)$ obeying $\eps\leq\delta\leq\eps^{6/7}$.  Here $\Omega(\eps)$ is any family of affine images (i.e. rotations and translations) of $\Omega$ for which $\HH\subseteq\Omega(\eps)$ and $\partial\HH$ is the tangent plane to $\Omega(\eps)$ at the origin. 
\end{thm}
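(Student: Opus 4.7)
The strategy is to build an infinite-time parametrix for $e^{it\Delta_{\Omega(\eps)}}\psi_{\eps,\delta}$ modelled on the exactly solvable halfspace problem, and then close the argument via Strichartz. By interpolation with Theorem~\ref{T:Strichartz} it suffices to verify \eqref{cas4} at a single admissible pair, say $(q,r) = (10/3, 10/3)$. A first reduction, via a finite Gaussian decomposition of $\psi$ in $L^2$ followed by rescaling, reduces the theorem to the case where $\psi_{\eps,\delta}$ is a single Gaussian wave packet at scale $\eps$, centered near $\delta e_3$, with momentum of size $O(1/\eps)$.

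For such Gaussian data, the method of images gives the halfspace evolution explicitly:
\[
v(t,x) := e^{it\Delta_{\HH}} \psi_{\eps,\delta}(x) = (e^{it\Delta_{\R^3}} \psi_{\eps,\delta})(x) - (e^{it\Delta_{\R^3}} \psi_{\eps,\delta})(\bar x) =: v^+(t,x) + v^-(t,x),
\]
with both pieces evaluated in closed form since free Schr\"odinger propagation preserves Gaussianity. I would then define a parametrix $\tilde u := v + w$, where $w$ is a correction designed to restore the Dirichlet condition on $\partial\Omega(\eps)$ in place of the planar condition on $\partial\HH$ that $v$ already satisfies. The natural ansatz is $w(t,x) = -\chi(x)\, e^{i\Phi(x,t)}\,\tilde v^-(t,x)$, where $\tilde v^-$ is a Gaussian beam propagating away from $\partial\Omega(\eps)$ constructed to match the trace of $v^+$ on $\partial\Omega(\eps)$, $\Phi$ is a phase modulation encoding the reflection law adapted to the true obstacle rather than its tangent plane, and $\chi$ is a spatial cutoff localizing to a neighbourhood of the reflection region. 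The parameters defining $\chi$ and $\Phi$ must be tuned with care; see below.

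The proof then reduces to verifying three properties: (a) $\tilde u(0) - \psi_{\eps,\delta} \to 0$ in $L^2$, which is guaranteed by $v(0) = \psi_{\eps,\delta}$ together with a smallness estimate on $w(0)$; (b) $\tilde u|_{\partial\Omega(\eps)} \to 0$ in the appropriate trace norm; and (c) the forcing $e := (i\partial_t + \Delta)\tilde u$ is small in $N^0(\R \times \Omega(\eps))$. Given these, Strichartz and the Duhamel formula yield $\| e^{it\Delta_{\Omega(\eps)}} \psi_{\eps,\delta} - \tilde u \|_{L^{10/3}_{t,x}} \to 0$; passing from $\tilde u$ to $v$ uses that $\|w\|_{L^{10/3}_{t,x}} \to 0$, which follows from the smallness of $\supp\chi$ and the dispersion of $\tilde v^-$ after time $\sim \eps\delta$.

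The main obstacle is the construction of $w$ and the delicate bookkeeping needed to show that all error terms vanish uniformly over the full range $\eps \le \delta \le \eps^{6/7}$. For $\delta$ near $\eps$ the wave packet essentially straddles the boundary, so the precise shape of the reflected beam is critical; for $\delta$ near $\eps^{6/7}$ the reflection occurs at distance $\delta$ from the origin, where $\partial\Omega(\eps)$ and $\partial\HH$ differ by $O(\delta^2)$, and the curvature-induced phase error must remain negligible over an interaction time $\sim \eps\delta$. The exponent $6/7$ is precisely the threshold at which the curvature correction in the parametrix ($\sim \delta^2/\eps$ phase error accumulated over time $\eps\delta$) is still small: indeed $\delta^2/\eps \cdot (\eps\delta)/\eps^2 = \delta^3/\eps^2 \ll 1$ iff $\delta \ll \eps^{2/3}$, and the sharper scale $\eps^{6/7}$ emerges from balancing this against the other error contributions (initial-data mismatch, cutoff error, and the error incurred by replacing the true geodesic reflection by the planar one). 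Beyond $\delta \sim \eps^{6/7}$ this planar-mirror parametrix breaks down and one must switch to the Gaussian beam construction of Case~(v).
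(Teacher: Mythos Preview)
Your high-level framework is correct --- reduce to $(10/3,10/3)$, approximate by Gaussians, use the method of images for the halfspace evolution, and add a correction $w$ to fix the boundary mismatch on $\partial\Omega(\eps)$ --- but your proposed construction of $w$ conflates Case~(iv) with Case~(v). The Gaussians in the paper's decomposition (Lemma~\ref{lm:exp4}) carry no phase factor; they are pure real Gaussians $(2\pi\eps^2)^{-3/4}e^{-|x-\delta y|^2/4\eps^2}$ centered near $\delta e_3$, and they disperse isotropically under the free flow. There is no incoming direction, hence no ``reflection law adapted to the true obstacle'' to encode in a phase $\Phi$, and no reflected Gaussian beam $\tilde v^{-}$ to build. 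That machinery belongs to Case~(v), where the wave packets do carry momentum $|\xi|\sim 1/\eps$ and genuine geometric-optics reflection off the curved boundary is required.

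The paper's correction is much simpler. With $v(t,x)=[u(t,x)-u(t,\bar x)]\times(\text{cutoffs})$ denoting the truncated halfspace solution, one sets $w(t,x)=[u(t,x_*)-u(t,\bar x_*)]\times(\text{cutoffs})$, where $x_*\in\partial\Omega$ is the point with $x_*^\perp=x^\perp$. By construction $v=w$ on $\partial\Omega$ exactly, so the remainder satisfies $(i\partial_t+\Delta_\Omega)r=(i\partial_t+\Delta)(v-w)$ with Dirichlet data, and Strichartz closes the argument once one verifies $\|w\|_{L^{10/3}_{t,x}}\lesssim\delta\eps^{-1/5}$ and $\|(i\partial_t+\Delta)v\|_{L^1_tL^2_x}+\|(i\partial_t+\Delta)w\|_{L^1_tL^2_x}\lesssim\delta\eps^{-3/4}+\delta^3\eps^{-2}$. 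These are $o(1)$ throughout $\eps\le\delta\le\eps^{6/7}$; indeed neither saturates at $\delta=\eps^{6/7}$ (they give $\eps^{3/28}$ and $\eps^{4/7}$), so your heuristic derivation of the $6/7$ threshold is also off --- that exponent is a convenient interface with Case~(v) rather than a constraint forced by Case~(iv) alone.
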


Theorem~\ref{T:LF2} gives Theorem~\ref{T:LF} for $N_n$ and $x_n$ conforming to scenario (iii) in \eqref{scenarios}.  Indeed, one applies Theorem~\ref{T:LF2} to the function $\tilde\psi(x)=\psi(x+e_3)$ with $\delta=\eps=N_n^{-1}$ and $\Omega(\eps)=R_n^{-1}(\Omega-\{x_n^*\})$.

With the aid of Proposition~\ref{P:HtoR} below, Theorem~\ref{T:LF2} also implies Theorem~\ref{T:LF} for $N_n$ and $x_n$ conforming to scenario (ii) with the additional restriction that $N_n^{1/7}\geq N_nd(x_n) \to \infty$.  In this case, we apply Theorem~\ref{T:LF2} to the function $\tilde\psi(x)=\psi_\infty(x-\rho e_3)$ with $\rho=\sup\{|x|:x\in\supp(\psi)\}$, $\eps=N_n^{-1}$, $\delta=d(x_n)-\eps\rho$, $\Omega(\eps)=R_n^{-1}(\Omega-\{x_n^*\})$, and $\psi_\infty$ being any subsequential limit of $\psi\circ R_n$.  As $\psi\circ R_n\to\psi_\infty$ in $L^2$ sense, the Strichartz inequality controls the resulting errors. 

\begin{prop}\label{P:HtoR}  Fix $\psi\in C_c^{\infty}(\HH - \{e_3\})$ and let
\begin{align*}
\psi_{\eps,\delta}(x):=\eps^{-\frac 32}\psi\biggl(\frac{ x-\delta e_3}\eps\biggr).
\end{align*}
Then for any pair $(q,r)$ satisfying $\frac2q+\frac3r=\frac32$ with $2<q<\infty$ and $2<r<6$ we have
\begin{align*}
\|e^{it\Delta_{\HH}}\psi_{\eps,\delta}-e^{it\Delta_{\R^3}}\psi_{\eps,\delta}\|_{L_t^q L_x^r(\R\times\R^3)} \to 0
\end{align*}
as $\eps\to0$ with any $\delta=\delta(\eps)$ obeying $\frac{\delta}{\eps}\to \infty$.
\end{prop}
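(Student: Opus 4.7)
The plan is to use the method of images to reduce matters to showing that the free Schr\"odinger evolution of a fixed $L^2$ profile concentrates a vanishing fraction of its Strichartz mass in a receding halfspace.

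Since $\delta/\eps\to\infty$, for $\eps$ small enough $\psi_{\eps,\delta}$ is compactly supported inside $\HH$. The Dirichlet method of images (odd reflection across $\partial\HH$) then gives, for $x\in\HH$,
\[
e^{it\Delta_\HH}\psi_{\eps,\delta}(x)=e^{it\Delta_{\R^3}}\psi_{\eps,\delta}(x)-e^{it\Delta_{\R^3}}\tilde\psi_{\eps,\delta}(x),
\]
where $\tilde\psi_{\eps,\delta}(x):=\psi_{\eps,\delta}(\bar x)$ is a wave packet of identical shape concentrated at $-\delta e_3$. Extending $e^{it\Delta_\HH}\psi_{\eps,\delta}$ by zero to $\HH^c$ (the convention consistent with Dirichlet boundary data), the quantity to be estimated is pointwise dominated by $\mathbf 1_\HH|e^{it\Delta_{\R^3}}\tilde\psi_{\eps,\delta}|+\mathbf 1_{\HH^c}|e^{it\Delta_{\R^3}}\psi_{\eps,\delta}|$.

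The next step is to rescale away $\eps$ and $\delta$. Setting $v(s,y):=e^{is\Delta}\psi(y)$, and using that reflection across $\partial\HH$ commutes with $\Delta_{\R^3}$, the changes of variable $y=\eps^{-1}(x\mp\delta e_3)$, $s=t/\eps^2$, combined with the admissibility relation $\tfrac{2}{q}+\tfrac{3}{r}=\tfrac{3}{2}$, yield the matching identities
\[
\|e^{it\Delta_{\R^3}}\psi_{\eps,\delta}\|_{L^q_tL^r_x(\R\times\HH^c)}=\|v\|_{L^q_sL^r_y(\R\times\{y_3<-\delta/\eps\})}=\|e^{it\Delta_{\R^3}}\tilde\psi_{\eps,\delta}\|_{L^q_tL^r_x(\R\times\HH)}.
\]
This is the heart of the argument: the hypothesis $\delta/\eps\to\infty$ has been converted into a bare tail statement about the free evolution of a single, fixed, compactly supported profile.

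To close, one invokes the standard Euclidean Strichartz inequality, which gives $v\in L^q_sL^r_y(\R\times\R^3)$ with norm controlled by $\|\psi\|_{L^2}$; since $\mathbf 1_{\{y_3<-\delta/\eps\}}\to 0$ pointwise as $\delta/\eps\to\infty$, dominated convergence completes the proof. No serious obstacle arises here; the only mildly delicate point is bookkeeping the scaling exponents, which vanish precisely because $(q,r)$ is Strichartz-admissible, so the absence of the $L^2_tL^6_x$ endpoint plays no role.
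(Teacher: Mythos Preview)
Your proof is correct. Both you and the paper begin with the method of images and the $L^2$-critical rescaling to reduce matters to showing that the free evolution of a fixed profile deposits a vanishing amount of Strichartz mass in a receding halfspace. The paper then closes by an explicit time-splitting at scale $T=\sqrt{\delta/\eps}$, combining the $L^1\to L^\infty$ dispersive estimate for large times with a virial-type bound for short times. Your closing argument is more elementary: since $v=e^{is\Delta}\psi$ lies in $L^q_sL^r_y(\R\times\R^3)$ by Strichartz, monotone (or dominated) convergence for the indicator $\mathbf 1_{\{y_3<-\delta/\eps\}}$ immediately gives the tail decay. This avoids any quantitative estimate and handles all admissible $(q,r)$ at once, whereas the paper first interpolates down to the symmetric pair $q=r=\tfrac{10}{3}$. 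The paper's approach, on the other hand, yields an explicit rate in $\delta/\eps$, which your soft argument does not.
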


\begin{proof}
We will prove the proposition in the special case $q=r=\frac{10}3$.  The result for general exponents follows from
the Strichartz inequality and interpolation, or by a simple modification of the arguments that follow.

Using the exact formulas for the propagator in $\R^3$ and $\HH$ and rescaling reduces the question to
\begin{align}\label{E:H2R1}
\|e^{it\Delta_{\R^3}} \tilde\psi_{\eps,\delta}\|_{L^{\frac{10}3}_{t,x}(\R\times\HH)} \to 0 \qtq{where}
	\tilde\psi_{\eps,\delta}(y) = \psi( \bar y - \tfrac\delta\eps e_3 ).
\end{align}
Notice that $\tilde\psi_{\eps,\delta}$ is supported deeply inside the complementary halfspace $\R^3\setminus\HH$.

For large values of $t$ we estimate as follows:  Combining the $L^1_x\to L^\infty_x$ dispersive estimate with mass conservation gives
$$
\|e^{it\Delta_{\R^3}} \tilde\psi_{\eps,\delta}\|_{L^{\frac{10}3}_{x}(\R^3)}
	\lesssim |t|^{-3/5} \|\tilde\psi_{\eps,\delta}\|_{L^1_x}^{\frac25} \|\tilde\psi_{\eps,\delta}\|_{L^2_x}^{\frac35} \lesssim_\psi |t|^{-3/5}.
$$
We use this bound when $|t| \geq T:=\sqrt{\delta/\eps}$ to obtain
\begin{align*}
\|e^{it\Delta_{\R^3}} \tilde\psi_{\eps,\delta}\|_{L^{\frac{10}3}_{t,x}(\{|t|\geq T\}\times\HH)}
&\lesssim_\psi \Bigl( \int_{T}^\infty t^{-2}\,dt\Bigr)^{\frac{3}{10}} \to 0 \qtq{as} \eps\to 0.
\end{align*}

For $|t|\leq T$, we use the virial estimate
$$
\bigl\| \bigl( y + \tfrac{\delta}{\eps} e_3) e^{it\Delta_{\R^3}} \tilde\psi_{\eps,\delta} \bigr\|_{L^2_{x}(\R^3)}^2
	\lesssim \bigl\| \bigl( y + \tfrac{\delta}{\eps} e_3) \tilde\psi_{\eps,\delta} \bigr\|_{L^2_{x}(\R^3)}^2
 + t^2 \bigl\| \nabla \tilde\psi_{\eps,\delta} \bigr\|_{L^2_{x}(\R^3)}^2  \lesssim_\psi \tfrac{\delta}{\eps}.
$$
This together with the H\"older and Strichartz inequalities gives
\begin{align*}
\|e^{it\Delta_{\R^3}} \tilde\psi_{\eps,\delta}\|_{L^{\frac{10}3}_{t,x}(\{|t|\leq T\}\times\HH)}
&\lesssim \|e^{it\Delta_{\R^3}} \tilde\psi_{\eps,\delta}\|_{L^\infty_t L^2_x(\{|t|\leq T\}\times\HH)}^{\frac25}
		\|e^{it\Delta_{\R^3}} \tilde\psi_{\eps,\delta}\|_{L^2_t L^6_x(\R\times\HH)}^{\frac35} \\
&\lesssim \bigl(\tfrac{\eps}{\delta}\bigr)^{\frac25}\bigl\| \bigl( y + \tfrac{\delta}{\eps} e_3) e^{it\Delta_{\R^3}} \tilde\psi_{\eps,\delta} \bigr\|_{L^\infty_t L^2_x(\{|t|\leq T\}\times\HH)}^{\frac25} \|\psi\|_{L_x^2}^{\frac35}\\
&\lsm_\psi \bigl(\tfrac\eps\delta\bigr)^{\frac15} \to 0 \qtq{as} \eps\to 0.
\end{align*}
This completes the proof of the proposition.
\end{proof}

We begin the proof of Theorem~\ref{T:LF2} by showing that we can approximate $\psi_{\eps,\delta}$ by Gaussians.

\begin{lem}[Approximation by Gaussians] \label{lm:exp4}
Let $\psi\in C_c^{\infty}(\HH-\{e_3\})$. Then for any $\eta>0$, $0<\eps\leq 1$, and $\delta\geq\eps$ there exist $N>0$, points
$\{y^{(n)}\}_{n=1}^N \subset \HH$, and constants $\{c_n\}_{n=1}^N\subset \C$ such that
\begin{align*}
\biggl\|\psi_{\eps,\delta}(x)-\sum_{n=1}^N c_n (2\pi\eps^2)^{-\frac34}\Bigl[\exp\bigl\{-\tfrac {|x-\delta y^{(n)}|^2}{4\eps^2}\bigr\}
-\exp\bigl\{-\tfrac {|x-\delta\bar{ y}^{(n)}|^2}{4\eps^2}\bigr\}\Bigr]\biggr\|_{L_x^2(\HH)}<\eta.
\end{align*}
Here, $\bar y^{(n)}$ denotes the reflection of $y^{(n)}$ in $\partial\HH$.  Moreover, we may ensure that
$$
\sum_n |c_n| \lesssim_\eta 1 \qtq{and} \sup_n |y^{(n)}-e_3| \lesssim_\eta \eps\delta^{-1},
$$
uniformly in $\eps$ and $\delta$.
\end{lem}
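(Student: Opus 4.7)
\emph{The plan.} I would first rescale to unit scale by setting $z = (x - \delta e_3)/\eps$ and $\mu := \delta/\eps \geq 1$. Writing $\phi(u) := (2\pi)^{-3/4} e^{-|u|^2/4}$, we have $\psi_{\eps,\delta}(x) = \eps^{-3/2}\psi(z)$, the Gaussians $\phi_{y^{(n)}}$ and $\phi_{\bar y^{(n)}}$ become $\eps^{-3/2}\phi(z - \zeta^{(n)})$ and $\eps^{-3/2}\phi(z - R_\mu \zeta^{(n)})$ where $\zeta^{(n)} = \delta(y^{(n)} - e_3)/\eps$ and $R_\mu$ denotes reflection in $\{z_3 = -\mu\}$, the half-space $\HH$ becomes $\HH_\mu := \{z_3 > -\mu\}$, and the $L^2(\HH,dx)$ norm becomes the $L^2(\HH_\mu, dz)$ norm. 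The constraint $|y^{(n)}-e_3|\lesssim_\eta\eps/\delta$ is equivalent to $|\zeta^{(n)}|\lesssim_\eta 1$. Thus the lemma reduces to: given $\psi\in C_c^\infty(\{z_3>-1\})$ and $\eta>0$, find $N,c_n,\zeta^{(n)}$ with $\zeta^{(n)}\in\HH_\mu$, $|\zeta^{(n)}|\lesssim_\eta 1$, and $\sum|c_n|\lesssim_\eta 1$ such that
\[
\bigl\|\psi - \sum_{n=1}^N c_n\bigl[\phi(\cdot - \zeta^{(n)}) - \phi(\cdot - R_\mu\zeta^{(n)})\bigr]\bigr\|_{L^2(\HH_\mu)} < \eta,
\]
uniformly for $\mu\geq 1$. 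Note $\HH_1\subseteq\HH_\mu$, so $\psi$ lies in each $L^2(\HH_\mu)$.

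\emph{Density of Gaussian translates.} The key analytic input I would establish is the totality of Gaussian translates in $L^2(\R^3)$. This follows from Hahn--Banach: if $f\in L^2(\R^3)$ satisfies $\langle f,\phi(\cdot-\zeta)\rangle=0$ for all $\zeta\in\R^3$, then $f\ast\phi\equiv 0$, so on the Fourier side $\hat f\hat\phi\equiv 0$; since $\hat\phi$ is a nonvanishing Gaussian, $f\equiv 0$. Passing to odd reflection across $\{z_3=-\mu\}$ immediately yields totality of the antisymmetric family $\{\phi(\cdot-\zeta)-\phi(\cdot-R_\mu\zeta):\zeta\in\HH_\mu\}$ in $L^2(\HH_\mu)$, which gives existence of an approximation with finite $N$ for each $\mu$.

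\emph{Uniformity in $\mu$.} I would then obtain the quantitative bounds in two regimes. For $\mu$ large, namely $\mu\geq R_0(\psi,\eta)$ for a suitable threshold, I would approximate $\psi$ directly in $L^2(\R^3)$ by $\sum c_n\phi(\cdot-\zeta^{(n)})$ with $\zeta^{(n)}$ in a fixed bounded set around $\mathrm{supp}(\psi)$, so $|\zeta^{(n)}|\leq R(\psi)$ and $\sum|c_n|\leq C(\psi,\eta)$. A change of variables then gives
\[
\bigl\|\phi(\cdot - R_\mu\zeta^{(n)})\bigr\|_{L^2(\HH_\mu)}^2 = \int_{u_3>\mu+\zeta^{(n)}_3}\phi^2(u)\,du \;\lesssim\; e^{-(\mu-R)^2/2},
\]
so once $R_0$ is chosen so that $C(\psi,\eta)\,e^{-(R_0-R)/4}<\eta/2$, the reflected Gaussians contribute a negligible error. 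For $\mu$ in the bounded range $[1,R_0]$, the totality result gives for each $\mu$ an approximation; by a compactness argument on the compact interval $[1,R_0]$ (patching finitely many $\mu_j$-approximations via continuity of the reflection operator in $\mu$) one gets uniform control on $N,\sum|c_n|$ and $|\zeta^{(n)}|$ over this range. The overall constants are the maximum of those in the two regimes.

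\emph{Main obstacle.} The hardest case is $\mu\sim 1$, i.e., $\delta\sim\eps$, where the wave packet $\psi_{\eps,\delta}$ sits within distance $\eps$ of $\partial\HH$. Here the reflected Gaussian centers $R_\mu\zeta^{(n)}$ lie at bounded distance from $\mathrm{supp}(\psi)$, so $\|\phi(\cdot-R_\mu\zeta^{(n)})\|_{L^2(\HH_\mu)}$ is only $O(1)$ rather than exponentially small, and one cannot treat the two summands separately. The antisymmetric structure must be fully engaged, and the uniform bounds then rely on the compactness argument above rather than on a clean separation of positive and reflected parts.
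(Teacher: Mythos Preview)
Your approach is sound and would yield a correct proof, but it takes an unnecessary detour.  The paper's argument is considerably shorter because it exploits the following observation, which you essentially already make for large $\mu$ but do not push to its conclusion: the single $L^2(\R^3)$ approximation
\[
\Bigl\|\psi - \sum_{n=1}^N \tilde c_n\,\phi(\cdot - z^{(n)})\Bigr\|_{L^2(\R^3)} < \tfrac\eta2,
\]
chosen once depending only on $\psi$ and $\eta$, suffices for \emph{every} $\mu\geq 1$.  Indeed, applying the isometry $R_\mu$ and using that $\phi$ is radial, one also has $\|\psi\circ R_\mu - \sum\tilde c_n\phi(\cdot - R_\mu z^{(n)})\|_{L^2(\R^3)}<\eta/2$; subtracting and restricting to $\HH_\mu$ (where $\psi\circ R_\mu\equiv 0$ since $\supp\psi\subset\{z_3>-1\}\subset\HH_\mu$) gives the antisymmetric approximation with error $<\eta$.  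The constraint $\zeta^{(n)}\in\HH_\mu$ is then enforced by the sign-flip $\zeta^{(n)}\leftrightarrow R_\mu\zeta^{(n)}$, $c_n\leftrightarrow -c_n$, which preserves $|\zeta^{(n)}|\lesssim_\eta 1$ since only those $\zeta^{(n)}$ with $\zeta^{(n)}_3\leq -\mu\leq -1$ get flipped.  So uniformity in $\mu$ is free, and no case split or compactness argument is needed.

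Your two-regime split does work: the large-$\mu$ regime is exactly the argument above, and for $\mu\in[1,R_0]$ the error $E(\mu)=\|\psi-\sum c_n G_\mu^{(n)}\|_{L^2(\HH_\mu)}$ with $c_n,\zeta^{(n)}$ fixed is indeed continuous in $\mu$ (by dominated convergence, since both the integrand and the domain vary continuously), so patching finitely many $\mu_j$-approximations is legitimate.  But this machinery is superfluous once one sees that the odd-reflection step you invoke for totality already carries the quantitative uniformity.  The ``main obstacle'' you identify at $\mu\sim 1$ dissolves: the reflected Gaussians need not be small on $\HH_\mu$, but the antisymmetrized sum approximates the odd extension of $\psi$ on all of $\R^3$, and restriction to $\HH_\mu$ only decreases the error.
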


\begin{proof}
Wiener showed that linear combinations of translates of a fixed function in $L^2(\R^d)$ are dense in this space if and only if
the Fourier transform of this function is a.e. non-vanishing.  (Note that his Tauberian theorem is the analogous statement for $L^1$.)
In this way, we see that we can choose vectors $z^{(n)}\in \R^3$ and numbers $\tilde c_n$ so that
\begin{align*}
\biggl\| \psi(x) - \sum_{n=1}^N \tilde c_n (2\pi)^{-\frac 34} e^{-\frac{|x-z^{(n)}|^2}4}\biggr\|_{L_x^2(\R^3)}<\tfrac12 \eta.
\end{align*}

Rescaling, translating, and combining with the reflected formula, we deduce immediately that
\begin{align*}
\biggl\| \psi_{\eps,\delta}(x) - \psi_{\eps,\delta}(\bar x) - \sum_{n=1}^N c_n (2\pi\eps^2)^{-\frac34} \Bigl[
		e^{-\frac {|x-\delta y^{(n)}|^2}{4\eps^2}} - e^{-\frac {|x-\delta\bar{ y}^{(n)}|^2}{4\eps^2}}\Bigr]
		\biggr\|_{L_x^2(\R^3)}<\eta,
\end{align*}
where $y^{(n)} = \eps\delta^{-1} z^{(n)} + e_3$ and $c_n=\tilde c_n$ when $y^{(n)}\in \HH$; otherwise
we set $\bar y^{(n)} = \eps\delta^{-1} z^{(n)} + e_3$ and $c_n= - \tilde c_n$, which ensures $y^{(n)} \in \HH$.

As $\psi_{\delta,\eps}(x)$ is supported wholely inside $\HH$, so $\psi_{\delta,\eps}(\bar x)$ vanishes there.  Thus the lemma now follows.
\end{proof}

By interpolation and the Strichartz inequality, it suffices to prove Theorem~\ref{T:LF2} for the symmetric Strichartz pair $q=r=\frac{10}3$.  Also, to ease notation, we simply write $\Omega$ for $\Omega(\eps)$ in what follows.
 
Combining Lemma~\ref{lm:exp4} with the Strichartz inequality for both propagators $e^{it\Delta_{\Omega}}$ and $e^{it\Delta_{\HH}}$, we obtain
\begin{align*}
&\|e^{it\Delta_{\Omega}}\psi_{\eps, \delta}-e^{it\Delta_{\HH}}\psi_{\eps,\delta}\|_{L_{t,x}^{\frac{10}3}(\R\times\Omega)}\\
&\leq \sum_{n=1}^N |c_n|\Bigl\|\bigl[e^{it\Delta_{\Omega}}\chi_{\HH}-e^{it\Delta_{\HH}}\chi_{\HH}\bigr]
		(2\pi\eps^2)^{-\frac34}\bigl[e^{-\frac{|x-\delta y^{(n)}|^2}{4\eps^2}}-e^{-\frac{|x-\delta\bar y^{(n)}|^2}{4\eps^2}}\bigr]\Bigl\|_{L_{t,x}^{\frac{10}3}		 (\R\times\Omega)}\\
&\qquad+C\biggl\|\psi_{\eps,\delta} -\sum_{n=1}^N c_n(2\pi\eps^2)^{-\frac34}\bigl[e^{-\frac{|x-\delta y^{(n)}|^2}{4\eps^2}}-e^{-\frac{|x-\delta \bar y^  		 {(n)}|^2}{4\eps^2}}\bigr]\biggr\|_{L^2(\HH)}.
\end{align*}
Therefore, Theorem~\ref{T:LF2} is reduced to showing
\begin{align}\label{fr}
\Bigl\|\bigl[e^{it\Delta_{\Omega}}\chi_{\HH}-e^{it\Delta_{\HH}}\chi_{\HH}\bigr](2\pi\eps^2)^{-\frac34}\bigl[e^{-\frac {|x-\delta y|^2}{4\eps^2}}
-e^{-\frac {|x-\delta\bar{y}|^2}{4\eps^2}}\bigr]\Bigr\|_{L_{t,x}^{\frac{10}3}(\R\times\Omega)}=o(1)
\end{align}
as $\eps\to0$ with any $\delta=\delta(\eps)$ obeying $\eps\leq\delta\leq\eps^{6/7}$, and $y$ as in Lemma~\ref{lm:exp4}.

Next we show that we can further simplify our task to considering only $y\in\HH$ of the form $y=(0,0,y_3)$ in the estimate \eqref{fr}.
Given $y\in\HH$ with $|y-e_3|\lesssim\eps\delta^{-1}$ that is not of this form, let $\HH_y$ denote the halfspace containing $\delta y$ with $\partial\HH_y$ being the tangent plane to $\partial\Omega$ at the point nearest $\delta y$.  Moreover, let $\delta\tilde y$ be the reflection of $\delta y$ in $\partial\HH_y$.  Elementary geometric considerations show that the angle between $\partial\HH$ and $\partial\HH_y$ is $O(\eps)$.  Correspondingly, $|\delta\tilde y - \delta\bar y|\lesssim\delta\eps$ and so
\begin{align}\label{619}
\eps^{-\frac 32}\bigl\|e^{-\frac {|x-\delta \bar y|^2}{4\eps^2}}-e^{-\frac{|x-\delta\tilde y|^2}{4\eps^2}}\bigr\|_{L^2(\R^3)}\to 0 \qtq{as} \eps \to 0.
\end{align}
As we will explain, \eqref{fr} (and so Theorem~\ref{T:LF2}) follows by combining \eqref{619} with the Strichartz inequality and Proposition~\ref{P:LF2} below.  Indeed, the only missing ingredient is the observation that
$$
\eps^{-\frac32} \Bigl\| e^{it\Delta_{\HH}}\chi_{\HH} \bigl[e^{-\frac {|x-\delta y|^2}{4\eps^2}} -e^{-\frac {|x-\delta\bar{y}|^2}{4\eps^2}}\bigr]
- e^{it\Delta_{\HH_y}}\chi_{\HH_y}\bigl[e^{-\frac {|x-\delta y|^2}{4\eps^2}} -e^{-\frac {|x-\delta\tilde{y}|^2}{4\eps^2}}\bigr] \Bigr\|_{L_{t,x}^{\frac{10}3}(\R\times\R^3)}
$$
is $o(1)$ as $\eps\to0$, which follows from \eqref{619} and the exact formula for the propagator in halfspaces. 

Therefore, it remains to justify the following proposition, whose proof will occupy the remainder of this subsection.

\begin{prop} \label{P:LF2} We have
\begin{align}\label{fn4}
\Bigl\|\bigl[e^{it\Delta_{\Omega}}\chi_{\HH}-e^{it\Delta_{\HH}}\chi_{\HH}\bigr](2\pi\eps^2)^{-\frac34}\bigl[e^{-\frac {|x-\delta y|^2}{4\eps^2}}
-e^{-\frac {|x-\delta\bar{y}|^2}{4\eps^2}}\bigr]\Bigr\|_{L_{t,x}^{\frac{10}3}(\R\times\Omega)}=o(1)
\end{align}
as $\eps\to0$ with any $\delta=\delta(\eps)$ obeying $\eps\leq\delta\leq\eps^{6/7}$, uniformly for $y=(0,0,y_3)$ and $y_3$ in a compact subset of $(0,\infty)$.
\end{prop}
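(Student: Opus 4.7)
The plan is to build an approximation $\tilde U := U + w$ to $\prd G$, where $G(x) := (2\pi\eps^2)^{-3/4}\bigl[e^{-|x-\delta y|^2/(4\eps^2)} - e^{-|x-\delta\bar y|^2/(4\eps^2)}\bigr]$ is the symmetrized Gaussian, $U := e^{it\Delta_\HH} G$ is its halfspace evolution, and $w$ is a carefully chosen boundary corrector that restores the Dirichlet condition on $\partial\Omega$. Because $G$ is antisymmetric under reflection across $\partial\HH$, the function $U$ extends to all of $\R^3$ as a free Schr\"odinger evolution, given explicitly as the difference of two complex Gaussians centered at $\delta y$ and $\delta\bar y$ with time-dependent widths of order $(\eps^2+it)^{1/2}$. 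In particular $(i\partial_t+\Delta)U = 0$ pointwise and $U$ vanishes on $\partial\HH$ for all $t$; however, $U$ generally fails to vanish on $\partial\Omega\setminus\partial\HH$, and the job of $w$ is to compensate this mismatch while itself being small.

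Near the origin, $\partial\Omega$ is the graph $x_3 = -\rho(\xp)$ with $\rho(0)=0$, $\nabla\rho(0)=0$, and $\mathrm{Hess}\,\rho(0)$ positive definite of order one; thus $\partial\Omega$ sags below $\partial\HH$ by $O(|\xp|^2)$. The natural template for $w$ is an image-Gaussian obtained by reflecting $U$ across the local tangent plane to $\partial\Omega$ at the point of $\partial\Omega$ closest to the packet center, multiplied by a phase factor that absorbs the leading Taylor coefficients of $\rho$, and truncated by a cutoff $\chi_R(\xp)$ at a transverse scale $R = R(\eps,\delta)$ to be optimized. I would arrange $w(0) \equiv 0$ and $(U+w)|_{\partial\Omega} \equiv 0$ (the latter to sufficient precision), so that the difference $\prd G - \tilde U|_\Omega$ solves the linear Schr\"odinger equation on $\Omega$ with zero initial data, zero Dirichlet data, and forcing $-(i\partial_t+\Delta)w$. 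Theorem~\ref{T:Strichartz} then yields
\begin{align*}
\|\prd G - \tilde U\|_{L^{10/3}_{t,x}(\R\times\Omega)} \lsm \|(i\partial_t+\Delta)w\|_{N^0(\R\times\Omega)},
\end{align*}
and combining with a direct bound on $\|w\|_{L^{10/3}_{t,x}(\R\times\Omega)}$ reduces \eqref{fn4} to showing that both quantities are $o(1)$ as $\eps\to 0$, uniformly for $\delta \in [\eps,\eps^{6/7}]$ and $y_3$ in a compact subset of $(0,\infty)$.

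The main obstacle is controlling the residual boundary term $(U+w)|_{\partial\Omega}$, the interior defect $(i\partial_t+\Delta)w$, and the size of $w$ itself through a single choice of cutoff scale and phase. The hypothesis $\eps \leq \delta \leq \eps^{6/7}$ signals a genuine optimization: as $\delta$ approaches $\eps$ the packet sits essentially on the obstacle and sees the full curvature from the outset, while as $\delta$ grows toward $\eps^{6/7}$ the ansatz must remain valid over the long dispersion time needed for the packet to interact with $\partial\Omega$, and the cumulative interior defect grows. The natural scale is $R \sim \sqrt{\delta}$, so that the quadratic boundary approximation holds on $|\xp|\leq R$ and the tail of $U$ beyond that is tiny; the precise phase of the image Gaussian is dictated by the leading Taylor coefficients of $\rho$, and convexity ($\mathrm{Hess}\,\rho > 0$) ensures that this phase is non-resonant. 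The exponent $6/7$ should emerge from balancing the outside-tail error of the cutoff against the commutator error $[\Delta,\chi_R]$ acting on the dispersing Gaussian envelope. Performing this parameter balance -- and extracting the last drops of cancellation from the phase -- is the technical heart of the argument, and it is precisely the delicacy alluded to in the introduction's sketch of Case~(iv).
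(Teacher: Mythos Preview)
Your overall strategy --- approximate $e^{it\Delta_\Omega}G$ by the halfspace evolution plus a boundary corrector, then control the error via Duhamel and Strichartz --- is the right one, but your template for the corrector $w$ is off, and this is where the proposal would stall. The key observation you are missing is that the halfspace evolution $U(t,x) = u(t,x) - u(t,\bar x)$ is \emph{already small on $\partial\Omega$}, without any further reflection or phase correction. It vanishes on $\partial\HH$ by antisymmetry, and $\partial\Omega$ deviates from $\partial\HH$ only quadratically: for $x_* \in \partial\Omega$ with $|x_*^\perp|$ small one has $|x_{*3}| \lesssim |x_*^\perp|^2$, and the mean value theorem gives $|U(t,x_*)| \lesssim \delta(\est)^{-1/2}|x_*^\perp|^2 \cdot |u(t,x_*)|$. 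This extra factor of $\delta|x_*^\perp|^2$ is the engine driving every estimate. The paper's $w$ is accordingly nothing more than this small boundary trace, extended a distance $\sim\eps$ into $\Omega$ by cutoffs: $w(t,x) = U(t,x_*) \cdot [\text{cutoffs}]$. There is no new image Gaussian, no phase factor, and no non-resonance mechanism; those ideas belong to Case~(v) (Theorem~\ref{T:LF3}), where the data carries large momentum and genuinely collides with the obstacle. Here the Gaussian has zero momentum and simply disperses in place, so geometric-optics reflection is the wrong picture.

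A second gap: you propose to leave $U$ uncut and place cutoffs only on $w$, but then $(U+w)|_{\partial\Omega}$ cannot vanish for $|\xp| > R$, and this boundary residual is not handled by the Duhamel argument you outline. The paper first truncates $U$ to a function $v$ that vanishes on $\partial\Omega$ outside $|\xp| \lesssim \sqrt\eps$ (Figure~\ref{F:v}, Lemma~\ref{L:v matters}), at the cost of an interior defect $(i\partial_t+\Delta)v$ that must be separately estimated (Lemma~\ref{L:ve}); only then does $w$ (Figure~\ref{F:w}) make $v - w$ vanish \emph{exactly} on $\partial\Omega$. The transverse scale is $\sqrt\eps$, not $\sqrt\delta$, and the final bounds (Lemmas~\ref{L:we}--\ref{L:we1}) are $\delta\eps^{-1/5}$, $\delta\eps^{-3/4}$, and $\delta^3\eps^{-2}$, all of which are $o(1)$ under $\delta \leq \eps^{6/7}$.
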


\begin{proof}
To prove \eqref{fn4}, we will build a parametrix for the evolution in $\Omega$ and show that this differs little from the evolution in $\HH$, for which we have an exact formula:
\begin{align*}
&(2\pi\eps^2)^{-\frac 34} e^{it\Delta_{\HH}}\chi_{\HH}\bigl[e^{-\frac{|x-\delta y|^2}{4\eps^2}}-e^{-\frac {|x-\delta\bar y|^2}{4\eps^2}}\bigr]
=(2\pi)^{-\frac 34}\bigl(\tfrac{\eps}{\eps^2+it}\bigr)^{\frac 32} \bigl[e^{-\frac {|x-\delta  y|^2}{4(\eps^2+it)}}
-e^{-\frac {|x-\delta\bar y|^2}{4(\eps^2+it)}}\bigr],
\end{align*}
for all $t\in \R$ and $x\in \HH$.  We write
\begin{align*}
u(t,x):=(2\pi)^{-\frac 34}\biggl(\frac \eps{\eps^2+it}\biggr)^{\frac 32}e^{-\frac{|x-\delta y|^2}{4(\eps^2+it)}},
\end{align*}
and so for all $t\in\R$ and $x\in\HH$,
\begin{align*}
(2\pi\eps^2)^{-\frac 34} e^{it\Delta_{\HH}}\chi_{\HH}\bigl[e^{-\frac{|x-\delta y|^2}{4\eps^2}}-e^{-\frac {|x-\delta\bar y|^2}{4\eps^2}}\bigr]
=u(t,x)-u(t,\bar x).
\end{align*}

We start by showing that a part of the halfspace evolution does not contribute to the $L_{t,x}^{10/3}$ norm.  Let $\phi:[0, \infty)\to \R$ and
$\theta:\R\to \R$ be smooth functions such that
\begin{align*}
\phi(r)=\begin{cases} 0, & 0\le r\le \frac 12\\ 1, & r\geq1\end{cases} \quad \qtq{and}\quad
\theta(r)=\begin{cases} 1, & r\le 0 \\ 0, &r\geq 1. \end{cases}
\end{align*}
We define
\begin{align*}
v(t,x):=\bigl[u(t,x)-u(t, \bar x)\bigr]\Bigl[1-\phi\Bigl(\frac{x_1^2+x_2^2}\eps\Bigr)\theta\Bigl(\frac{x_3}\eps\Bigr)\Bigr]\chi_{\{x_3\ge-\frac 12\}}.
\end{align*}
We will prove that $v$ is a good approximation for the halfspace evolution.

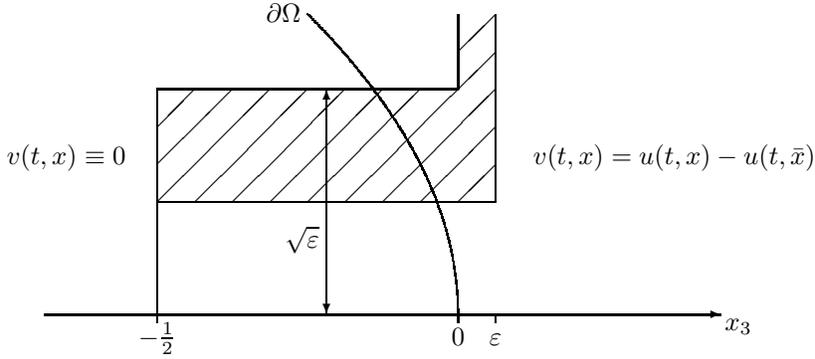
\begin{figure}[ht]
\begin{center}
\setlength{\unitlength}{1mm}
\begin{picture}(115,45)(-65,-4)
\put(-55,0){\vector(1,0){90}}\put(35.5,-2){$x_3$}  
\qbezier(0,0)(0,20)(-20,40)\put(-25.5,39){$\partial\Omega$} 
\put(-40,-1){\line(0,1){31}}\put(-40,-4){\hbox to 0mm{\hss$-\tfrac12$\hss}}  
\put(  0,-1){\line(0,1){1}}\put(0,-4){\hbox to 0mm{\hss$0$\hss}}
\put(  5,-1){\line(0,1){1}}\put(5,-4){\hbox to 0mm{\hss$\eps$\hss}}
\put(5,15){\line(0,1){25}}\put(0,30){\line(0,1){10}}        
\put(-40,30){\line(1,0){40}}\put(-40,15){\line(1,0){45}}    
\put(-60,20){$v(t,x)\equiv 0$} \put(10,20){$v(t,x)=u(t,x)-u(t,\bar x)$} 
\put(-17.5,15){\vector(0,1){15}}\put(-17.5,15){\vector(0,-1){15}}\put(-23,9){\hbox{$\sqrt{\eps}$}}
\put(-40,25){\line(1,1){ 5}}\put(-40,20){\line(1,1){10}} 
\put(-40,15){\line(1,1){15}}\put(-35,15){\line(1,1){15}} 
\put(-30,15){\line(1,1){15}}\put(-25,15){\line(1,1){15}}
\put(-20,15){\line(1,1){15}}\put(-15,15){\line(1,1){20}}
\put(-10,15){\line(1,1){15}}\put(- 5,15){\line(1,1){10}}\put(
0,15){\line(1,1){05}} 
\put(  0,35){\line(1,1){ 5}} 
\end{picture}
\vspace*{-2ex}
\end{center}
\caption{The role of the cutoffs defining $v(t,x)$.  The cutoff
function takes values between $0$ and $1$ in the shaded region. We
depict only one half of a cross-section; one obtains the full 3D
figure by rotating about the~$x_3$-axis.}\label{F:v}
\end{figure}
\begin{lem} \label{L:v matters}
We have
\begin{align*}
\|u(t,x)-u(t,\bar x)-v(t,x)\|_{L_{t,x}^{\frac{10}3}(\R\times\HH)}=o(1)
\end{align*}
as $\eps\to0$ with any $\delta=\delta(\eps)$ obeying $\eps\leq\delta\leq\eps^{6/7}$, uniformly for $y=(0,0,y_3)$ and $y_3$ in a compact subset of $(0,\infty)$.
\end{lem}

\begin{proof}
By the definition of $v$, we have to prove
\begin{align*}
\biggl\|\bigl[u(t,x)-u(t,\bar x)\bigr]\phi\biggl(\frac{x_1^2+x_2^2}\eps\biggr)\theta\biggl(\frac{x_3}\eps\biggr)\biggr\|_ {L_{t,x}^{\frac{10}3}(\R\times\HH)}=o(1)  \qtq{as} \eps\to 0,
\end{align*}
which, considering the supports of $\phi$ and $\theta$, amounts to showing
\begin{align}\label{pts}
&\|u(t,x)\|_{L_{t,x}^{\frac{10}3}(|x^{\perp}|\ge \sqrt{\eps/2},\ 0\le x_3\le \eps)}+\| u(t,\bar x)\|_{L_{t,x}^{\frac{10}3}(|x^{\perp}|\ge \sqrt{\eps/2},\ 0\le
x_3\le \eps)}=o(1).
\end{align}
We only prove \eqref{pts} for $u(t,x)$ with $t\in[0,\infty)$; the proof for negative times and for $u(t,\bar x)$ is similar.

Let $T:=\eps^2\log(\frac 1\eps)$.  We will consider separately the short time contribution $[0,T]$ and the long time contribution $[T, \infty)$.
The intuition is that for short times the wave packet does not reach the cutoff, while for large times the wave packet has already disintegrated.
Thus, we do not need to take advantage of the cancelation between $u(t,x)$ and $u(t,\bar x)$.

We start with the long time contribution.  A simple change of variables yields
\begin{align*}
\int_T^\infty\int_{\R^3} |u(t,x)|^{\frac{10}3} \,dx\,dt
&\lsm\int_T^\infty\int_{\R^3}\biggl(\frac{\eps^2}{\eps^4+t^2}\biggr)^{\frac52} e^{-\frac{5\eps^2|x-\delta y|^2}{6(\eps^4+t^2)}} \,dx \,dt\\
&\lsm \int_T^\infty \biggl(\frac{\eps^2}{\eps^4+t^2}\biggr)^{\frac52-\frac 32} \,dt\lsm\eps^2\int_T^\infty t^{-2} \,dt\lsm \log^{-1}(\tfrac 1\eps).
\end{align*}
For short times, we estimate
\begin{align*}
\int_0^T\int_{|x^{\perp}|\ge\sqrt{\eps/2},0\le x_3\le\eps}|u(t,x)|^{\frac{10}3} \,dx\,dt
&\lsm \eps\int_0^T \biggl(\frac{\eps^2}{\eps^4+t^2}\biggr)^{\frac52}\int_{\sqrt{\eps/2}}^\infty e^{-\frac{5\eps^2r^2}{6(\eps^4+t^2)}}r \,dr\,dt\\
&\lsm \eps\int_0^T \biggl(\frac{\eps^2}{\eps^4+t^2}\biggr)^{\frac52-1}e^{-\frac{5\eps^3}{12(\eps^4+t^2)}} \,dt\\
&\lsm \eps e^{-\frac{5 \eps^3}{24 \eps^4\log^2(\frac1\eps)}}\int_0^T \biggl(\frac{\eps^2}{\eps^4+t^2}\biggr)^{\frac 32} \,dt\\
&\le \eps^{100}.
\end{align*}
This completes to the proof of the lemma.
\end{proof}

In view of Lemma~\ref{L:v matters}, Proposition~\ref{P:LF2} reduces to showing
\begin{align}\label{compl}
\Bigl\| (2\pi\eps^2)^{-\frac 34} e^{it\Delta_\Omega}\chi_{\HH}\bigl[e^{-\frac{|x-\delta y|^2}{4\eps^2}}-e^{-\frac{|x-\delta\bar y|^2}{4\eps^2}}\bigr]-v(t,x)\Bigl\|_{L_{t,x}^{\frac{10}3}(\R\times\Omega)}=o(1).
\end{align}
To achieve this, we write
\begin{align*}
(2\pi\eps^2)^{-\frac 34}e^{it\Delta_\Omega}\chi_{\HH}\bigl[e^{-\frac{|x-\delta y|^2}{4\eps^2}}-e^{-\frac{|x-\delta\bar y|^2}{4\eps^2}}\bigr]
=v(t,x)-w(t,x)-r(t,x),
\end{align*}
where $w$ is essentially $v$ evaluated on the boundary of $\Omega$ and $r(t,x)$ is the remainder term.  More precisely,
\begin{align*}
w(t,x):=\bigl[u(t,x_*)-u(t,\bar x_*)\bigr]\biggl[1-\phi\biggl(\frac{x_1^2+x_2^2}\eps\biggr)\theta\biggl(\frac{x_3}\eps\biggr)\biggr]
		\theta\biggl(\frac{\dist(x,\Omega^c)}\eps\biggr)\chi_{\{x_3\ge -\frac 12\}},
\end{align*}
where $x_*$ denotes the point on $\partial\Omega$ such that $x_*^{\perp}=x^\perp$ and $\bar x_*$ denotes the reflection of $x_*$ in $\partial\HH$.  Note that for $x\in\partial\Omega$, we have $w(t,x)=v(t,x)$. Thus, on $\R\times\Omega$ the remainder $r(t,x)$ satisfies
\begin{align*}
(i\partial_t+\Delta_\Omega )r=(i\partial_t+\Delta )(v-w).
\end{align*}
Therefore, by the Strichartz inequality, \eqref{compl} will follow from
\begin{align}\label{E:case4 estimates}
\|w\|_{L_{t,x}^{\frac{10}3}(\R\times\Omega)} + \|(i\partial_t+\Delta)v\|_{L_t^1L_x^2(\R\times\Omega)}
		+ \|(i\partial_t+\Delta)w\|_{L_t^1L_x^2(\R\times\Omega)}=o(1).
\end{align}

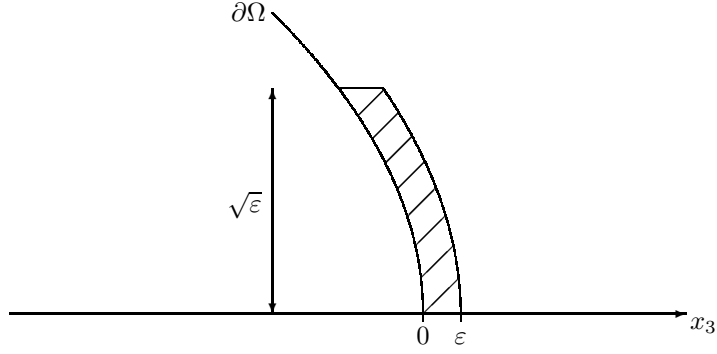
\begin{figure}[ht]
\begin{center}
\setlength{\unitlength}{1mm}
\begin{picture}(115,45)(-65,-4)
\put(-55,0){\vector(1,0){90}}\put(35.5,-2){$x_3$}  
\put(  0,-1){\line(0,1){1}}\put(0,-4){\hbox to 0mm{\hss$0$\hss}}
\put(  5,-1){\line(0,1){1}}\put(5,-4){\hbox to 0mm{\hss$\eps$\hss}}
\qbezier(0,0)(0,20)(-20,40)\put(-25.5,39){$\partial\Omega$} 
\qbezier(5,0)(5,14.680)(-5.314,30)
\put(-20,15){\vector(0,1){15}}\put(-20,15){\vector(0,-1){15}}\put(-26,13.5){\hbox{$\sqrt{\eps}$}}
\put(-11.25,30){\line(1,0){5.936}} 
\put(0.000000,0.000000){\line(1,1){4.733750}}
\put(-0.278640,4.721360){\line(1,1){4.314602}}
\put(-1.010205,8.989795){\line(1,1){4.022505}}
\put(-2.084974,12.915026){\line(1,1){3.816245}}
\put(-3.431457,16.568542){\line(1,1){3.671747}}
\put(-5.000000,20.000000){\line(1,1){3.573782}}
\put(-6.754447,23.245553){\line(1,1){3.52}}
\put(-8.667504,26.332496){\line(1,1){3.52}}
\end{picture}
\vspace*{-2ex}
\end{center}
\caption{The shaded area indicates the support of $w(t,x)$.  As in Figure~\ref{F:v} we depict only one half of a cross-section.}\label{F:w}
\end{figure}

To prove \eqref{E:case4 estimates}, we will make repeated use of the following

\begin{lem}
For $\alpha\geq 0$,
\begin{align}
\int_{|x^\perp|\le \sqrt \eps}|x^\perp|^{\alpha}e^{-\frac{\eps^2|x^\perp|^2}{2(\eps^4+t^2)}} \,dx^\perp
&\lsm \min\biggl\{\biggl(\frac{\eps^4+t^2}{\eps^2}\biggr)^{\frac{\alpha+2}2},\eps^{\frac{\alpha+2}2}\biggr\}\label{estsmall}\\
\int_{|\xp|\ge \sqrt{\eps/2}}e^{-\frac{\eps^2|\xp|^2}{2(\est)}}|\xp|^\alpha \,d\xp
&\lsm\biggl(\frac{\est}{\eps^2}\biggr)^{\frac{\alpha+2}2}\min\biggl\{1,\biggl(\frac{\est}{\eps^3}\biggr)^{20}\biggr\}.\label{estbig}
\end{align}
In particular, for $\alpha\geq0$, $\beta>\frac12$, and $\gamma=\min\{3-4\beta+\frac\alpha2,2-3\beta+\frac\alpha4\}$,
\begin{align}\label{512}
\int_0^{\infty}(\eps^4+t^2)^{-\beta}\biggl(\int_{|x^\perp|\le\sqrt{\eps}}|x^\perp|^{\alpha}e^{-\frac{\eps^2|x^\perp|^2}{2(\eps^2+t^2)}}\,d x^\perp\biggr)^{\frac 12} \,dt\lsm \eps^\gamma.
\end{align}
Moreover, for $\frac12<\beta<10$,
\begin{align} \label{514}
\int_0^\infty(\eps^4+t^2)^{-\beta}\min\biggl\{1, \biggl(\frac {\eps^4+t^2}{\eps^3}\biggr)^{10}\biggr\}\,dt\lsm \eps^{\frac32-3\beta}.
\end{align}
\end{lem}

\begin{proof}
Passing to polar coordinates, we estimate
\begin{align*}
\text{LHS}\eqref{estsmall}=\int_0^{\sqrt\eps}e^{-\frac{\eps^2r^2}{2(\eps^4+t^2)}} r^{\alpha+1} \,dr
&\lsm \biggl(\frac{\eps^4+t^2}{\eps^2}\biggr)^{\frac{\alpha+2}2}\int_0^{\frac{\eps^{\frac32}}{\sqrt{\eps^4+t^2}}} e^{-\frac {\rho^2}2} \rho^{\alpha+1} \,d\rho\\
&\lsm \biggl(\frac{\eps^4+t^2}{\eps^2}\biggr)^{\frac{\alpha+2}2}\min\biggl\{1,\biggl(\frac{\eps^{\frac32}}{\sqrt{\eps^4+t^2}}\biggr)^{\alpha+2}\biggr\},
\end{align*}
which settles \eqref{estsmall}.  The proof of \eqref{estbig} follows along similar lines.

Using \eqref{estsmall}, we estimate
\begin{align*}
\text{LHS}\eqref{512}
&\lsm \int_0^\infty(\eps^4+t^2)^{-\beta}\min\biggl\{\biggl(\frac{\eps^4+t^2}{\eps^2}\biggr)^{\frac{\alpha+2}4},\,\eps^{\frac{\alpha+2}4}\biggr\} \,dt\\
&\lsm \int_0^{\eps^{\frac 32}}(\eps^4+t^2)^{-\beta+\frac{\alpha+2}4}\eps^{-\frac{\alpha+2}2} \,dt
	+\int_{\eps^{\frac 32}}^\infty(\eps^4+t^2)^{-\beta}\eps^{\frac{\alpha+2}4} \,dt\\
&\lsm \eps^{-\frac{\alpha+2}2}\int_0^{\eps^2} \eps^{-4\beta+\alpha+2}\,dt + \eps^{-\frac{\alpha+2}2}\int_{\eps^2}^{\eps^{\frac32}}t^{-2\beta+\frac{\alpha+2}2}\, dt +\eps^{\frac{\alpha+2}4}\eps^{\frac 32(1-2\beta)}\\
&\lsm \eps^{\frac{\alpha+2}2} \eps^{2-4\beta}+ \eps^{-\frac{\alpha+2}2}\eps^{\frac 32(-2\beta+\frac{\alpha+2}2+1)}+\eps^{\frac{\alpha+2}4}\eps^{\frac 32(1-2\beta)}\\
&\lsm \eps^{3-4\beta+\frac\alpha2} +\eps^{2-3\beta+\frac\alpha4}.
\end{align*}
To establish \eqref{514} one argues as for \eqref{512}; we omit the details.
\end{proof}

We are now ready to prove \eqref{E:case4 estimates}, which will complete the proof of Proposition~\ref{P:LF2}.  We will estimate each of the three summands appearing on the left-hand side of \eqref{E:case4 estimates}.  We start with the first one.

\begin{lem}[Estimate for $w$]\label{L:we}
We have
\begin{align}\label{we}
\|w\|_{L_{t,x}^{\frac{10}3}(\R\times\Omega)}\lsm \delta\eps^{-\frac15}.
\end{align}
\end{lem}

\begin{proof}
We first obtain a pointwise bound for $w$.  Note that on the support of $w$,
\begin{align*}
|x^\perp|\le \eps^{\frac 12}, \quad |x_3|\lsm \eps, \qtq{and} |x_{*3}|\lsm |x^\perp|^2\lsm \eps,
\end{align*}
where the last two estimates follow from the finite curvature assumption.  Here we use the notation $x_{*3}:=x_*\cdot e_3$.  Thus, using the fact that
$|\bar x_* -\delta y|=|x_*+\delta y|$ and the mean value theorem, on the support of $w$ we have
\begin{align}\label{dif}
\biggl| e^{-\frac{|x_*-\delta y|^2}{4(\eps^2+it)}}-e^{-\frac{|\bar x_*-\delta y|^2}{4(\eps^2+it)}}\biggr|
&=\biggl| e^{-\frac{|x^\perp|^2}{4(\eps^2+it)}}\biggl(e^{-\frac{|x_{*3}-\delta y_3|^2}{4(\eps^2+it)}}-e^{-\frac{|x_{*3}+\delta y_3|^2}{4(\eps^2+it)}}\biggr)\biggr|\notag\\
&\lsm e^{-\frac{\eps^2|x^{\perp}|^2}{4(\eps^4+t^2)}}\frac\delta{\sqrt{\eps^4+t^2}}|x_{*3}|\notag\\
&\lsm \delta {(\eps^4+t^2)}^{-\frac12} |x^\perp|^2 e^{-\frac{\eps^2|x^\perp|^2}{4(\eps^4+t^2)}}.
\end{align}
Therefore,
\begin{align}\label{ptw}
|w(t,x)|\le |u(t,x_*)-u(t,\bar x_*)|\lsm \delta\eps^{\frac32}(\eps^4+t^2)^{-\frac 54}  |x^\perp|^2e^{-\frac{\eps^2|x^\perp|^2}{4(\eps^4+t^2)}}.
\end{align}
To control the $L_{t,x}^{\frac {10}3}$ norm of $w$ we use \eqref{ptw} together with \eqref{estsmall}, as follows:
\begin{align*}
\int_\R\int_\Omega|w(t,x)|^{\frac{10}3} \,dx\,dt
&\lsm \delta^{\frac{10}3}\eps^{-\frac73}\int _0^{\infty}\biggl(\frac{\eps^2}{\eps^4+t^2}\biggr)^{\frac{25}6} \!\! \int_{|x^\perp|\le\eps^{\frac 12}} e^{-\frac{5\eps^2|x^\perp|^2}{6(\eps^4+t^2)}}|x^\perp|^{\frac {20}3} \,dx^\perp dt\\
&\lesssim \delta^{\frac{10}3}\eps^{-\frac73}\int_0^{\infty}\biggl(\frac{\eps^2}{\eps^4+t^2}\biggr)^{\frac{25}6}\min\biggl\{\biggl(\frac{\eps^4+t^2}{\eps^2}\biggr)^{\frac{13}3},\eps^{\frac{13}3}\biggr\}\,dt\\
&\lsm \delta^{\frac{10}3}\eps^{-\frac 83}\int_0^{\eps^{\frac 32}}(\eps^4+t^2)^{\frac 16}\,dt+\delta^{\frac{10}3}\eps^{\frac{31}3}\int_{\eps^{\frac 32}}^\infty(\eps^4+t^2)^{-\frac{25}6}\,dt\\
&\lesssim \delta^{\frac{10}3}\eps^{-\frac 23}.
\end{align*}
This completes the proof of the lemma.
\end{proof}

\begin{lem}[Estimate for $(i\partial_t+\Delta)v$]\label{L:ve}
We have
\begin{align}\label{ve}
\|(i\partial_t+\Delta)v\|_{L_t^1L_x^2(\R\times\Omega)}\lesssim \delta \eps^{-\frac34}.
\end{align}
\end{lem}

\begin{proof}
Using the definition of $v(t,x)$, we compute
\begin{align}
(i\partial_t+\Delta)v(t,x)
&=(i\partial_t+\Delta)\Bigl\{\bigl[u(t,x)-u(t,\bar x)\bigr]\bigl[1-\phi\bigl(\tfrac{x_1^2+x_2^2}\eps\bigr)\theta\bigl(\tfrac{x_3}\eps\bigr)\bigr]\chi_{\{x_3\ge -\frac 12\}}\Bigr\}\notag\\
&=\bigl[u(t,x)-u(t,\bar x)\bigr]\Delta\Bigl\{\bigl[1-\phi\bigl(\tfrac{x_1^2+x_2^2}\eps\bigr)\theta\bigl(\tfrac{x_3}\eps\bigr)\bigr]\chi_{\{x_3\ge-\frac 12\}}\Bigr\}\label{1v}\\
&\quad+2\nabla\bigl[u(t,x)-u(t,\bar x)\bigr]\cdot \nabla\Bigl\{\bigl[1-\phi\bigl(\tfrac{x_1^2+x_2^2}\eps\bigr)\theta\bigl(\tfrac{x_3}\eps\bigr)\bigr]\chi_{\{x_3\ge -\frac 12\}}\Bigr\}.\label{2v}
\end{align}

We first consider the contribution of \eqref{1v}.  A direct analysis yields that for $x\in \Omega$ in the support of \eqref{1v},
\begin{align*}
|x_3|\lsm \eps, \quad |x^{\perp}|\ge \sqrt{\eps/2}, \qtq{and} \Bigl|\Delta\Bigl\{\bigl[1-\phi\bigl(\tfrac{x_1^2+x_2^2}\eps\bigr)\theta\bigl(\tfrac{x_3}\eps\bigr)\bigr]\chi_{\{x_3\ge -\frac 12\}}\Bigr\}\Bigr|\lsm \eps^{-2}.
\end{align*}
Thus, by the mean value theorem,
\begin{align}
\biggl| e^{-\frac{|x-\delta y|^2}{4(\eps^2+it)}}-e^{-\frac{|\bar x-\delta y|^2}{4(\eps^2+it)}}\biggr|
&\lsm  e^{-\frac{\eps^2|x^\perp|^2}{4(\eps^4+t^2)} }\frac{\delta|x_3|}{\sqrt{\eps^4+t^2}}
\lsm \delta\eps(\eps^4+t^2)^{-\frac12}e^{-\frac{\eps^2|x^\perp|^2}{4(\eps^4+t^2)}}.\label{7c}
\end{align}
This yields the pointwise bound
\begin{align}
\eqref{1v}\lsm \eps^{-2}|u(t,x)-u(t,\bar x)|\lsm \delta\eps^{\frac 12}(\eps^4+t^2)^{-\frac54}e^{-\frac{\eps^2|x^\perp|^2}{4(\eps^4+t^2)}}.\label{p1v}
\end{align}
Using \eqref{p1v} followed by \eqref{estbig} (with $\alpha=0$) and \eqref{514} (with $\beta=\frac 34$), we obtain
\begin{align*}
\|\eqref{1v}\|_{L_t^1 L_x^2(\R\times\Omega)}
& \lsm \eps^{\frac12}\delta\eps^{\frac 12} \int_0^{\infty}(\eps^4+t^2)^{-\frac 54}\biggl(\int_{|x^\perp|\ge \sqrt{\eps/2}}e^{-\frac{\eps^2|x^\perp|^2}{2(\eps^4+t^2)}}\,dx^\perp\biggr)^{\frac 12} \,dt\\
&\lsm \delta\int_0^{\infty}(\eps^4+t^2)^{-\frac 54+\frac12}\min\biggl\{1,\biggl(\frac{\eps^4+t^2}{\eps^3}\biggr)^{10}\biggr\}\,dt\\
&\lsm \delta\eps^{-\frac 34}.
\end{align*}

We now consider the contribution of \eqref{2v}.  For $x\in \Omega$ in the support of \eqref{2v}, we have
\begin{align*}
|x_3|\lesssim \eps, \quad |\xp|\ge \sqrt{\eps/2}, \qtq{and}
	\Bigl|\nabla\Bigl\{\bigl[1-\phi\bigl(\tfrac{x_1^2+x_2^2}\eps\bigr)\theta\bigl(\tfrac{x_3}\eps\bigr)\bigr]\chi_{\{x_3\ge -\frac 12\}}\Bigr\}\Bigr|\lsm \eps^{-1}.
\end{align*}
Using that $|x-\delta \bar y|=|\bar x-\delta y|$, we compute
\begin{align*}
\nabla \biggl(e^{-\frac{|x-\delta y|^2}{4(\eps^2+it)}}-e^{-\frac{|\bar x- \delta y|^2}{4(\eps^2+it)}}\biggr)
&=-\frac{x-\delta y}{2(\eps^2+it)}e^{-\frac{|x-\delta y|^2}{4(\eps^2+it)}}+ \frac{x-\delta\bar y}{2(\eps^2+it)}e^{-\frac{|x-\delta\bar y|^2}{4(\eps^2+it)}}\\
&=-\frac x{2(\eps^2+it)}\biggl(e^{-\frac{|x-\delta y|^2}{4(\eps^2+it)}}-e^{-\frac{|x-\delta \bar y|^2}{4(\eps^2+it)}}\biggr)\\
&\quad+ \frac{\delta y_3 e_3}{2(\eps^2+it)}\biggl(e^{-\frac{|x-\delta y|^2}{4(\eps^2+it)}}+e^{-\frac{|x-\delta \bar y|^2}{4(\eps^2+it)}}\biggr).
\end{align*}
Thus, for $x\in \Omega$ in the support of \eqref{2v} we have
\begin{align*}
\bigl|\nabla\bigl[ & u(t,x)-u(t,\bar x)\bigr]\bigr|\\
&\lsm\biggl(\frac{\eps^2}{\est}\biggr)^{\frac34}\biggl\{\frac{|x|}{\sqrt{\est}}\frac{\eps\delta}{\sqrt{\est}}e^{-\frac{\eps^2|\xp|^2}{4(\est)}}+\frac{\delta}{\sqrt{\est}}e^{-\frac{\eps^2|\xp|^2}{4(\est)}}\biggr\} \\
&\lsm\Bigl\{\eps^{\frac 72}\delta(\est)^{-\frac 74}+\eps^{\frac 52}\delta(\est)^{-\frac 74}|\xp|+\eps^{\frac 32}\delta(\est)^{-\frac 54}\Bigr\}e^{-\frac{\eps^2|\xp|^2}{4(\est)}}\\
&\lsm \Bigl\{\eps^{\frac 32}\delta(\est)^{-\frac 54}+\eps^{\frac52}\delta(\est)^{-\frac74}|\xp|\Bigr\}e^{-\frac{\eps^2|\xp|^2}{4(\est)}},
\end{align*}
which yields the pointwise bound
\begin{align*}
|\eqref{2v}|\lsm \Bigl\{\eps^{\frac 12}\delta(\est)^{-\frac54}+\eps^{\frac 32}\delta(\est)^{-\frac74}|\xp|\Bigr\}e^{-\frac{\eps^2|\xp|^2}{4(\est)}}.
\end{align*}
Using \eqref{estbig} followed by \eqref{514}, we estimate the contribution of \eqref{2v} as follows:
\begin{align*}
\|\eqref{2v}\|_{L_t^1L_x^2(\R\times\Omega)}
&\lsm \eps^{\frac12}\eps^{\frac 12}\delta\int_0^\infty(\est)^{-\frac 54}\biggl(\int_{|\xp|\ge\sqrt{\eps/2}}e^{-\frac{\eps^2|\xp|^2}{2(\est)}} \,d\xp\biggr)^{\frac12}\,dt\\
&\quad+\eps^{\frac 12}\eps^{\frac32}\delta\int_0^\infty(\est)^{-\frac74}\biggl(\int_{|\xp|\ge\sqrt{\eps/2}}|\xp|^2e^{-\frac{\eps^2|\xp|^2}{2(\est)}} \,d\xp\biggr)^{\frac 12}\,dt\\
&\lsm \delta\int_0^{\infty}(\est)^{-\frac 34}\min\biggl\{1,\biggl(\frac{\est}{\eps^3}\biggr)^{10}\biggr\} \,dt\\
&\lsm \delta\eps^{-\frac 34}.
\end{align*}
This completes the proof of the lemma.
\end{proof}

\begin{lem}[Estimate for $(i\partial_t+\Delta)w$]\label{L:we1}
We have
\begin{align}\label{we1}
\|(i\partial_t+\Delta)w\|_{L_t^1L_x^2(\R\times\Omega)}\lesssim \delta \eps^{-\frac34}  + \delta^3\eps^{-2}.
\end{align}
\end{lem}

\begin{proof}
We compute
\begin{align}
(i\partial_t + \Delta)w\!\!&\notag\\
&=\Bigl\{(i\partial_t+\Delta)\bigl[u(t,x_*)-u(t,\bar x_*)\bigr]\label{w1}\\
&\quad+2\nabla\bigl[u(t,x_*)-u(t,\bar x_*)\bigr]\cdot \nabla\label{w2}\\
&\quad+\bigl[u(t,x_*)-u(t,\bar x_*)\bigr]\ \Delta \Bigr\}\bigl[1-\phi\bigl(\tfrac{x_1^2+x_2^2}\eps\bigr)\theta\bigl(\tfrac{x_3}\eps\bigr)\bigr]\theta\bigl(\tfrac{\dist(x,\Omega^c)}{\eps}\bigr)\chi_{\{x_3\ge-\frac 12\}}\label{w3}.
\end{align}

We first consider the contribution of \eqref{w3}.  Using \eqref{ptw}, we obtain the pointwise bound
\begin{align*}
|\eqref{w3}|\lsm \delta\eps^{-\frac 12}(\est)^{-\frac 54}|\xp|^2 e^{-\frac{\eps^2|\xp|^2}{4(\est)}}.
\end{align*}
Thus using \eqref{512} and the fact that $|x_3|\lesssim \eps$ for $x\in\supp w$, we obtain
\begin{align*}
\|\eqref{w3}\|_{L_t^1L_x^2(\R\times\Omega)}
&\lsm \delta\int_0^\infty(\est)^{-\frac 54} \biggl(\int_{|\xp|\le\sqrt \eps}|\xp|^4e^{-\frac{\eps^2|\xp|^2}{2(\est)}} \,d\xp\biggr)^{\frac 12} \,dt
\lesssim\delta\eps^{-\frac 34}.
\end{align*}

Next we consider the contribution of \eqref{w2}.  As $\frac{\partial x_*}{\partial x_3}=0$, $\nabla[u(t,x_*)-u(t,\bar x_*)]$ has no component in the $e_3$ direction.  For the remaining directions we have
\begin{equation*}
\begin{aligned}
\nabla_{\perp}\bigl[u(t,x_*)-u(t,\bar x_*)\bigr]
&= \tfrac{-\xp}{2(\eps^2+it)}\bigl[u(t,x_*)-u(t,\bar x_*)\bigr] \\
&\quad 	- (\nabla_\perp x_{*3}) \bigl[\tfrac{x_{*3}-\delta y_3}{2(\eps^2+it)} u(t,x_*)- \tfrac{x_{*3}+\delta y_3}{2(\eps^2+it)} u(t,\bar x_*)\bigr].
\end{aligned}
\end{equation*}
Using \eqref{ptw}, $|\nabla_\perp x_{*3}|\lsm |\xp|$, and $|x_{*3}|\lsm \eps$, we deduce
\begin{align*}
\bigl|\nabla_{\perp}\bigl[u(t,x_*)-u(t,\bar x_*)\bigr]\bigr|
&\lsm \bigl[ \delta\eps^{\frac32}(\est)^{-\frac74}|\xp|^3 + \delta\eps^{\frac32}(\est)^{-\frac54}|\xp| \bigr] e^{-\frac{\eps^2|\xp|^2}{4(\est)}}.
\end{align*}
This gives the pointwise bound
\begin{align*}
|\eqref{w2}|&\lsm \bigl[\delta\eps^{\frac 12} (\est)^{-\frac74}|\xp|^3 + \delta\eps^{\frac 12} (\est)^{-\frac54}|\xp| \bigr]e^{-\frac{\eps^2|\xp|^2}{4(\est)}}.
\end{align*}
Using \eqref{512}, we thus obtain
\begin{align*}
\|\eqref{w2}\|_{L_t^1L_x^2(\R\times\Omega)}
&\lsm \eps\delta\int_0^\infty (\est)^{-\frac74}\biggl(\int_{|\xp|\le\sqrt\eps}|\xp|^6e^{-\frac{\eps^2|\xp|^2}{2(\est)}} \,d\xp\biggl)^{\frac 12} \,dt\\
&\quad + \eps\delta\int_0^\infty (\est)^{-\frac54}\biggl(\int_{|\xp|\le\sqrt\eps}|\xp|^2e^{-\frac{\eps^2|\xp|^2}{2(\est)}} \,d\xp\biggl)^{\frac 12} \,dt\\
&\lesssim \delta\eps^{-\frac34} + \delta\eps^{-\frac14}\lsm \delta\eps^{-\frac34}.
\end{align*}

Lastly, we consider \eqref{w1}.  We begin with the contribution coming from the term $\partial_t\bigl[u(t,x_*)-u(t,\bar x_*)\bigr]$, which we denote by $\eqref{w1}_{\partial_t}$.  We start by deriving a pointwise bound on this term.  A straightforward computation using \eqref{dif} yields
\begin{align*}
&\bigl|\partial_t\bigl[u(t,x_*)-u(t,\bar x_*)\bigr]\bigr|\\
&\lsm \frac{\eps^{\frac32}}{(\est)^{\frac 54}}\Bigl| e^{-\frac{|x_*-\delta y|^2}{4(\eps^2+it)}}-e^{-\frac{|\bar x_*-\delta y|^2}{4(\eps^2+it)}}\Bigr|\\
&\quad +\biggl(\frac {\eps^2}{\est}\biggr)^{\frac 34}\frac 1{\est}\biggl||x_*-\delta y|^2 e^{-\frac{|x_*-\delta y|^2}{4(\eps^2+it)}}-|\bar x_*-\delta y|^2 e^{-\frac{|\bar x_*-\delta y|^2}{4(\eps^2+it)}}\biggr|\\
&\lsm\bigl[\eps^{\frac 32}(\est)^{-\frac54}+\eps^{\frac 32}(\est)^{-\frac74}|\xp|^2\bigl]\Bigl|e^{-\frac{|x_*-\delta
y|^2}{4(\eps^2+it)}}-e^{-\frac{|\bar x_*-\delta y|^2}{4(\eps^2+it)}}\Bigr|\\
&\quad +\eps^{\frac 32}(\est)^{-\frac 74}e^{-\frac{\eps^2|\xp|^2}{4(\est)}}\Bigl||x_{*3}-\delta y_3|^2e^{-\frac{|x_{*3}-\delta y_3|^2}{4(\eps^2+it)}}-|x_{*3}+\delta y_3|^2 e^{-\frac{|x_{*3}+\delta y_3|^2}{4(\eps^2+it)}}\Bigr|\\
&\lsm \bigl[\eps^{\frac 32}(\est)^{-\frac 54}+\eps^{\frac32}(\est)^{-\frac 74}|\xp|^2\bigr]e^{-\frac{\eps^2|\xp|^2}{4(\est)}}\delta(\est)^{-\frac12}|\xp|^2\\
&\quad +\eps^{\frac 32}(\est)^{-\frac 74}e^{-\frac{\eps^2|\xp|^2}{4(\est)}}\bigl[\delta|x_{*3}|+(|x_{*3}|^2+\delta^2)\delta (\est)^{-\frac12}|x^\perp|^2\bigr]\\
&\lsm e^{-\frac{\eps^2|\xp|^2}{4(\est)}}\Bigl[\delta\eps^{\frac32}(\est)^{-\frac 74}|\xp|^2+
	\delta\eps^{\frac 32}(\est)^{-\frac94}|\xp|^4+\eps^{\frac 32}\delta^3(\est)^{-\frac94}|\xp|^2\Bigr],
\end{align*}
where in order to obtain the third inequality we have used the identity $2(ab-cd)=(a-c)(b+d)+(a+c)(b-d)$.  Using \eqref{512} as before, we obtain
\begin{align*}
\|\eqref{w1}_{\partial_t}\|_{L_t^1L_x^2(\R\times\Omega)}
&\lsm \delta\eps^{-\frac 14} + \delta\eps^{-\frac 34} + \delta^3\eps^{-2}\lsm \delta\eps^{-\frac 34} + \delta^3\eps^{-2}.
\end{align*}

We now turn to the Laplacian term in \eqref{w1}, which we denote by $\eqref{w1}_\Delta$.  For a generic function $f:\R^3\to\C$,
\begin{equation}\label{bits&pieces}
\Delta f(x_*) = [\Delta_\perp f + 2 (\nabla_\perp x_{*3})\cdot (\nabla_\perp \partial_3 f) + (\Delta_\perp x_{*3})(\partial_3 f)
	+ |\nabla_\perp x_{*3}|^2(\partial_3^2 f) ](x_*). 
\end{equation}
Using this formula with $f(x) := u(t,x) - u(t,\bar x)$, we first derive a pointwise bound on $\eqref{w1}_\Delta$.

A direct computation gives
\begin{align*}
(\Delta_{\perp}f)(x_*)=\biggl[-\frac{1}{\eps^2+it} + \frac{|\xp|^2}{4(\eps^2+it)^2}\biggr] \bigl[u(t,x_*)-u(t,\bar x_*)\bigr].
\end{align*}
Therefore, using \eqref{ptw} we obtain the pointwise bound
\begin{align*}
\bigl|(\Delta_{\perp}f)(x_*)\bigr|
&\lsm \bigl[\delta\eps^{\frac 32}(\est)^{-\frac74}|\xp|^2+\delta\eps^{\frac 32}(\est)^{-\frac 94}|\xp|^4\bigr] e^{-\frac{\eps^2|\xp|^2}{4(\est)}}.
\end{align*}

Next, we combine $|\nabla_\perp x_{*3}|\lesssim |x^\perp|$ with
$$
(\nabla_\perp \partial_3 f)(x_*) = \frac{(x_{*3}-\delta y_3)x^\perp}{4(\eps^2+it)^2} u(t,x_*)
	- \frac{(x_{*3}+\delta y_3)x^\perp}{4(\eps^2+it)^2} u(t,\bar x_*),
$$
$|x_{*3}|\lesssim |x^\perp|^2$, \eqref{ptw}, and the crude bound
\begin{equation}\label{u size}
|u(t,x_*)|+|u(t,\bar x_*)| \lesssim \eps^{\frac32}(\est)^{-\frac34} e^{-\frac{\eps^2|\xp|^2}{4(\est)}}
\end{equation}
to obtain
\begin{align*}
\bigl|(\nabla_\perp x_{*3})\cdot (\nabla_\perp \partial_3 f)\bigr|
&\lesssim \bigl[\delta\eps^{\frac32}(\est)^{-\frac94}|x^\perp|^6 + \delta\eps^{\frac32}(\est)^{-\frac74}|x^\perp|^2\bigr]e^{-\frac{\eps^2|\xp|^2}{4(\est)}}. 
\end{align*}

Next we use $|\Delta_\perp x_{*3}|\lesssim 1$ and $|x_{*3}\pm\delta y|\lesssim \delta$ together with elementary computations to find
$$
\bigl|(\Delta_\perp x_{*3})(\partial_3 f)\bigr| \lesssim \delta\eps^{\frac32}(\est)^{-\frac54} e^{-\frac{\eps^2|\xp|^2}{4(\est)}}.
$$

Toward our last pointwise bound, we compute
$$
(\partial_3^2 f)(x_*) = \biggl[\frac{-1}{2(\eps^2+it)}+\frac{|x_{*3}-\delta y_3|^2}{4(\eps^2+it)^2}\biggr] u(t,x_*)
	- \biggl[\frac{-1}{2(\eps^2+it)}+\frac{|x_{*3}+\delta y_3|^2}{4(\eps^2+it)^2}\biggr] u(t,\bar x_*),
$$
Combining this with \eqref{ptw}, \eqref{u size}, $|\nabla_\perp x_{*3}|\lesssim |x^\perp|$, and $|x_{*3}|\lesssim |x^\perp|^2\lesssim\eps\lsm\delta$ yields
\begin{align*}
&\bigl||\nabla_\perp x_{*3}|^2(\partial_3^2 f) (x_*)\bigr|\\
&\lesssim \bigl[\delta\eps^{\frac32}(\est)^{-\frac74}|x^\perp|^4 + \delta^3\eps^{\frac32}(\est)^{-\frac94}|x^\perp|^4
	+ \delta\eps^{\frac32}(\est)^{-\frac74}|x^\perp|^2\bigr]e^{-\frac{\eps^2|\xp|^2}{4(\est)}}.
\end{align*}

We now put together all the pieces from \eqref{bits&pieces}.  Using $|x^\perp|^2 \lesssim \delta \lesssim 1$ so as to keep only the largest terms, we obtain
$$
\bigl| \Delta f (x_*) \bigr| \lesssim \bigl[
\delta\eps^{\frac32}(\est)^{-\frac54} + \delta\eps^{\frac32}(\est)^{-\frac74}|x^\perp|^2 + \delta\eps^{\frac32}(\est)^{-\frac94}|x^\perp|^4\bigr]
e^{-\frac{\eps^2|\xp|^2}{4(\est)}}.
$$
Using \eqref{512} as before, we thus obtain
\begin{align*}
\|\eqref{w1}_\Delta\|_{L_t^1L_x^2(\R\times\Omega)}\lsm \delta+\delta\eps^{-\frac 14}+\delta\eps^{-\frac 34}\lsm \delta\eps^{-\frac 34}.
\end{align*}

This completes the proof of the lemma.
\end{proof}

Collecting Lemmas~\ref{L:we}, \ref{L:ve}, and \ref{L:we1} and recalling that $\eps\leq \delta\leq \eps^{6/7}$, we derive \eqref{E:case4 estimates}.
This in turn yields \eqref{compl}, which combined with Lemma~\ref{L:v matters} proves Proposition~\ref{P:LF2}.
\end{proof}

This completes the proof of Theorem~\ref{T:LF2} and so the discussion of Case~(iv).

\subsection{Case (v)} In this case we have $N_n\to \infty$  and $N_n^{-6/7}\leq d(x_n) \lesssim 1$.  As in Case~(iv), we rescale so that the
obstacle is restored to its original (unit) size.  Correspondingly, the initial data has characteristic scale $\eps:= N_n^{-1}\to 0$ and is supported within a distance $O(\eps)$ of the origin, which is at a distance $\delta:=d(x_n)$ from the obstacle.  A schematic representation is given in Figure~\ref{F:case5}.  There and below,
\begin{equation}\label{E:psi eps defn}
\psi_\eps(x) :=  \eps^{-3/2} \psi\bigl(\tfrac x{\eps}\bigr).
\end{equation}

In this way, the treatment of Case~(v) reduces to the following assertion:

\begin{thm}\label{T:LF3}  Fix $\psi\in C^\infty_c(\R^3)$ and let $\psi_\eps$ be as in \eqref{E:psi eps defn}.  Then for any pair $(q,r)$ satisfying
$\frac2q+\frac3r=\frac32$ with $2<q<\infty$ and $2<r<6$, we have
\begin{align}\label{main}
\|e^{it\Delta_{\Omega(\eps)}}\psi_\eps-e^{it\Delta}\psi_\eps\|_{L_t^q L_x^r(\R\times\R^3)}\to 0 \qtq{as} \eps\to 0,
\end{align}
for any $\eps$-dependent family of domains $\Omega(\eps)$ that are affine images of $\Omega$
with the property that $\delta:=\dist(0,\Omega(\eps)^c) \geq \eps^{6/7}$ and $\delta\lesssim 1$.
\end{thm}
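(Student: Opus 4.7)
The plan is to construct an infinite-time parametrix for $e^{it\Delta_{\Omega(\eps)}}\psi_\eps$ that matches $e^{it\Delta}\psi_\eps$ modulo a vanishing error in Strichartz spaces. By interpolation with Theorem~\ref{T:Strichartz}, it suffices to handle the symmetric exponent pair $q=r=\tfrac{10}{3}$. The first step will be to approximate $\psi_\eps$ by a finite linear combination of Gaussian wave packets centered within $O(\eps)$ of the origin and with momenta $|\xi|\lesssim 1/\eps$. The characteristic spatial width $\sigma$ of these packets must be chosen delicately: the classical travel time to the obstacle is roughly $\eps\delta$, so requiring negligible dispersion before collision forces $\sigma^2 \gtrsim \eps\delta$, while the demand that the packets spread shortly after reflection and sample only a small neighborhood of the collision point forces $\sigma$ to be not much larger. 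A tuning like $\sigma \sim (\eps\delta)^{1/2}$ (with possible logarithmic adjustments) fits both requirements, and the hypothesis $\delta \geq \eps^{6/7}$ keeps $\sigma$ safely sub-unit.

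Next I would classify each Gaussian packet according to its classical trajectory $x_0 + 2t\xi$. If the trajectory avoids the obstacle by much more than $\sigma$, the packet evolves essentially freely under both $e^{it\Delta_{\Omega(\eps)}}$ and $e^{it\Delta}$, giving matched contributions. If the trajectory is tangential or near-grazing (within $\lesssim \sigma$ of $\partial\Omega(\eps)$), then by strict convexity the angular sector of relevant momenta shrinks as $\eps\to 0$, and the total $L^2$ mass of such packets tends to zero; by Strichartz their contribution to both evolutions is $o(1)$. The substantive case is non-tangential reflection, for which I would build a parametrix equal to the free $\R^3$ Gaussian evolution plus a reflected Gaussian wave packet. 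The center and momentum of the reflected packet are prescribed by the classical reflection law; crucially, its complex quadratic phase must be matched to the principal curvatures of $\partial\Omega(\eps)$ at the collision point so that the parametrix nearly vanishes on the boundary to leading order.

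To fix the residual boundary mismatch I would add a term $w$ supported in a thin layer along $\partial\Omega(\eps)$ near the collision. Its key feature is an additional oscillatory phase (analogous to the $w^{(3)}$ advertised in the introduction) chosen so that $(i\partial_t + \Delta) w$ is non-resonant: the large phase derivatives cancel one another at leading order rather than accumulating in time. The error estimate then has two contributions, $(i\partial_t + \Delta)$ applied to the Gaussian parametrix and to $w$, each of which must be controlled in $L_t^1 L_x^2$. Summing the resulting bounds over the finitely many reflecting packets and invoking the Strichartz inequality gives the desired smallness of $e^{it\Delta_{\Omega(\eps)}}\psi_\eps - (\text{parametrix})$ in $L_{t,x}^{10/3}$. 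Combining with Theorem~\ref{T:LF1}, applied to reconcile $e^{it\Delta}\psi_\eps$ with the sum over free Gaussian packets on $\R^3$, would then yield \eqref{main}.

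The main obstacle is clearly the reflecting-packet construction. Infinite-time accuracy demands that the Gaussian beam be, to leading order, an exact solution of the linearized geometric-optics transport along the reflected ray, forcing a very precise choice of the quadratic phase matrix in terms of the second fundamental form of $\partial\Omega(\eps)$. The non-resonant phase in $w$ is equally essential: a naive boundary corrector would produce an error of order $1$ when integrated over time, whereas the built-in cancellation reduces it to a small power of $\eps$. Threading the interplay of $\eps$, $\delta$, and $\sigma$ through these estimates simultaneously, uniformly over the entire range $\eps^{6/7} \leq \delta \lesssim 1$, is where the computation is genuinely delicate; the lower bound $\delta \geq \eps^{6/7}$ is precisely the threshold at which this Gaussian-beam approach remains viable, dovetailing with the halfspace parametrix from Theorem~\ref{T:LF2} that handled $\eps \lesssim \delta \lesssim \eps^{6/7}$.
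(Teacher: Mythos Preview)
Your proposal is correct and follows essentially the same route as the paper: reduce to $L^{10/3}_{t,x}$, decompose $\psi_\eps$ into Gaussian packets of width $\sigma\sim(\eps\delta)^{1/2}$ (with logarithmic corrections), classify rays as missing/near-grazing/entering, and for entering rays build a reflected Gaussian whose quadratic phase is matched to the principal curvatures, together with a phase-shifted boundary corrector $w^{(3)}$. Two minor points: the final comparison with $e^{it\Delta}\psi_\eps$ requires only the $L^2$ wave-packet approximation plus Strichartz, not Theorem~\ref{T:LF1} (which addresses a different regime); and the number of entering packets grows with $\eps^{-1}$, so their summation needs an almost-disjointness argument rather than a crude triangle inequality over ``finitely many'' terms.
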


\begin{figure}[ht]
\begin{center}
\setlength{\unitlength}{1mm}
\begin{picture}(95,40)(-45,-20)
\put(30, 0){\circle*{4}}                                            
\put(36.5,0){\vector(-1,0){4}}\put(37,-1){$\supp(\psi_\eps)$}            
\put(30,-4){\vector(-1,0){ 2}}\put(30,-4){\vector(1,0){ 2}}\put(30,-8){\hbox to 0mm{\hss$\sim\eps$\hss}}       
\put(15,+4){\vector(-1,0){14}}\put(15,+4){\vector(1,0){15}}\put(15,+6){\hbox to 0mm{\hss$\delta$\hss}}     
\qbezier(0,0)(0,10)(-5,20)\qbezier(0,0)(0,-10)(-5,-20)                                                     
\put(-25,10){\vector(2,1){20}}\put(-25,10){\vector(-2,-1){20}}\put(-27,12){\hbox to 0mm{\hss$\sim 1$\hss}} 
\end{picture}
\vspace*{-2ex}
\end{center}
\caption{Depiction of Case~(v); here $\eps^{6/7}\leq \delta\lesssim1 $ and $\eps\to 0$.}\label{F:case5}
\end{figure}

We now begin the proof of Theorem~\ref{T:LF3}.  By interpolation and the Strichartz inequality, it suffices to treat the case $q=r=\frac{10}3$.  By time-reversal symmetry, it suffices to consider positive times only, which is what we will do below.  To ease notation, we write $\Omega$ for $\Omega(\eps)$ for the remainder of this subsection.

The first step in the proof is to write $\psi_\eps$ as a superposition of Gaussian wave packets; we will then investigate the evolution of the individual wave packets.  The basic decomposition is given by the following lemma.  The parameter $\sigma$ denotes the initial width of the Gaussian wave packets. It is chosen large enough so that the wave packets hold together until they collide with the obstacle.  This ensures that they reflect in an almost particle-like manner and allows us to treat the reflected wave in the realm of geometric optics.  Indeed, the particle-like regime lasts for time $\sim\sigma^2$, while the velocity of the wave packet is $2\xi=\tfrac{2n}{L}$, which is $\sim\eps^{-1}$ for the dominant terms, up to double logarithmic factors (cf. \eqref{auto}).  As the obstacle is $\delta$ away from the origin, it takes the dominant wave packets time $\sim\delta\eps$ to reach the obstacle (up to $\log\log(\frac1\eps)$ factors), which is much smaller than $\sigma^{2}=\delta\eps\log^2(\frac1\eps)$.  Moreover, $\sigma$ is chosen small enough that the individual wave packets disperse shortly after this collision.  In addition to the main geometric parameters $\eps$ and $\delta$, we also need two degrees of small parameters; these are $[\log(\frac1\eps)]^{-1}$ and $[\log\log(\frac1\eps)]^{-1}$.

\begin{lem}[Wave packet decomposition] \label{decomposition}
Fix $\psi\in C_c^{\infty}(\R^3)$ and let $0<\eps\ll 1$,
$$
\sigma:=\sqrt{\eps\delta}\log(\tfrac 1{\eps}) \qquad\text{and}\qquad L:=\sigma \log\log(\tfrac 1{\eps}).
$$
Then there exist coefficients $\{c_n^{\eps}\}_{n\in\Z^3}$ so that
\begin{equation}\label{expand}
\biggl\|\psi_\eps(x)-\sum_{n\in\Z^3}c_n^{\eps}(2\pi\sigma^2)^{-\frac34}\exp\Bigl\{-\frac {|x|^2}{4\sigma^2}+in\cdot\frac
xL\Bigr\}\biggr\|_{L^2(\R^3)}=o(1)
\end{equation}
as $\eps\to 0$.  Moreover,
\begin{equation}\label{bdforc}
|c_n^{\eps}|\lesssim_{k,\psi} \frac {(\sigma\eps)^{\frac 32}}{L^3}\min\biggl\{1,\Bigl(\frac L{\eps|n|}\Bigr)^k\biggr\}  \qtq{for all} k\in \N
\end{equation}
and \eqref{expand} remains true if the summation is only taken over those $n$ belonging to
\begin{align}\label{auto}
\mathcal S:=\biggl\{n\in \Z^3: \,\frac 1{\eps\log\log(\frac 1{\eps})}\leq \frac {|n|}L \leq \frac {\log\log(\frac1{\eps})}{\eps}\biggr\}.
\end{align}
\end{lem}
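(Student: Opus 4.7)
The plan is to obtain $c_n^\eps$ as Fourier coefficients on a torus of side $2\pi L$ and then use the rapid Gaussian decay of $g(x):=(2\pi\sigma^2)^{-3/4}e^{-|x|^2/(4\sigma^2)}$ to control the error incurred by periodization. Set
\[
F_\eps(x):=\psi_\eps(x)\,(2\pi\sigma^2)^{3/4}e^{|x|^2/(4\sigma^2)},\qquad c_n^\eps:=\frac{1}{(2\pi L)^3}\int_{\R^3}F_\eps(x)\,e^{-in\cdot x/L}\,dx.
\]
Since $\supp F_\eps\subset \supp\psi_\eps\subset\{|x|\lesssim_\psi\eps\}$ and $\eps\ll L$ for $\eps$ small (using $\delta\geq\eps^{6/7}$), this support sits well inside the fundamental cube $Q:=[-\pi L,\pi L]^3$. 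By standard Fourier series, $F_\eps^{\text{per}}(x):=\sum_n c_n^\eps e^{in\cdot x/L}$ equals $F_\eps$ on $Q$, so $\sum_n c_n^\eps g_n(x)=g(x)F_\eps^{\text{per}}(x)$ agrees with $\psi_\eps(x)$ on $Q$.

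For the coefficient bound \eqref{bdforc}, change variables $y=x/\eps$:
\[
c_n^\eps=\frac{(2\pi\sigma^2)^{3/4}\eps^{3/2}}{(2\pi L)^3}\int_{\R^3}\psi(y)\,e^{\eps^2|y|^2/(4\sigma^2)}\,e^{-i(n\eps/L)\cdot y}\,dy.
\]
Since $\eps^2/\sigma^2=[\delta\log^2(\tfrac1\eps)]^{-1}\to 0$, the function $y\mapsto \psi(y)e^{\eps^2|y|^2/(4\sigma^2)}$ is a smooth compactly supported function, bounded uniformly in $\eps$ together with all its derivatives. Its Fourier transform at $n\eps/L$ therefore decays faster than any power of $|n\eps/L|$, yielding $|c_n^\eps|\lesssim_{k,\psi}(\sigma\eps)^{3/2}L^{-3}\min\{1,(L/(\eps|n|))^k\}$.

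The error in \eqref{expand} decomposes as the piece outside $Q$ plus the tails restricted by \eqref{auto}. Outside $Q$, since $F_\eps^{\text{per}}$ is a union of shifted copies of $F_\eps$ placed at the lattice $2\pi L\Z^3$, we have
\[
\Bigl\|\sum_n c_n^\eps g_n\Bigr\|_{L^2(\R^3\setminus Q)}^{2}\le\sum_{k\neq 0}\int_{\R^3}|g(x)|^2|F_\eps(x-2\pi L k)|^2\,dx\lesssim\|\psi\|_{L^2}^{2}\sum_{k\neq 0}e^{-c|k|^2L^2/\sigma^2},
\]
which vanishes as $\eps\to 0$ because $L^2/\sigma^2=[\log\log(\tfrac1\eps)]^{2}\to\infty$. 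For the restriction to $\mathcal S$, use $\|\sum_{n\in A}c_n^\eps g_n\|_{L^2(\R^3)}^{2}\lesssim\|g\|_{L^\infty}^{2}(2\pi L)^3\sum_{n\in A}|c_n^\eps|^2$ (Parseval on $Q$ plus a controlled periodic tail as above). For the low-frequency tail $|n|<L/(\eps\log\log(\tfrac1\eps))$, $|c_n^\eps|^2\lesssim(\sigma\eps)^3/L^6$ and the number of such $n$ is $O((L/(\eps\log\log))^3)$, which produces $O([\log\log(\tfrac1\eps)]^{-3})$. For the high-frequency tail $|n|>L\log\log(\tfrac1\eps)/\eps$, apply the decay bound with $k$ large to obtain $O([\log\log(\tfrac1\eps)]^{3-2k})$.

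The main obstacle is keeping the two-scale hierarchy honest: $L$ must be much larger than $\eps$ (so that $\supp\psi_\eps$ embeds cleanly into $Q$ and yields the Schwartz-decay estimate on $c_n^\eps$), yet much smaller than $\sigma$ (so that periodization errors are killed by the Gaussian $g$). The choices $\sigma=\sqrt{\eps\delta}\log(\tfrac1\eps)$ and $L=\sigma\log\log(\tfrac1\eps)$ are precisely calibrated for this, and one must verify throughout that the polynomial losses from volume factors like $(L/\eps)^3$ are defeated by the double-logarithmic margins $L/\sigma$ and $L/\eps$.
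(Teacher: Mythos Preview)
Your approach is essentially the same as the paper's: define $c_n^\eps$ as Fourier coefficients of $F_\eps$ on the torus of side $2\pi L$, verify \eqref{expand} via periodization and Gaussian decay, and obtain \eqref{bdforc} by integration by parts. The one genuine difference is in the tail estimate for the restriction to $\mathcal S$: the paper computes the Gram matrix $\langle\gamma_n,\gamma_m\rangle=e^{-\sigma^2|n-m|^2/(2L^2)}$ and bounds $\bigl\|\sum_{n\in A}c_n^\eps\gamma_n\bigr\|_{L^2}^2$ directly as a double sum, whereas you factor out $\|g\|_{L^\infty}^2\sim\sigma^{-3}$ and apply Parseval on $Q$. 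Both give the same $[\log\log(\tfrac1\eps)]^{-3}$ bound; your route is arguably cleaner, while the paper's Gram-matrix identity is reused later in the section.

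One slip in your closing paragraph: you write that $L$ must be ``much smaller than $\sigma$'' for the periodization error to be killed, but in fact $L=\sigma\log\log(\tfrac1\eps)>\sigma$, and it is precisely $L/\sigma\to\infty$ that makes $e^{-cL^2/\sigma^2}$ small. You use this correctly in your actual estimate, so the error is only in the commentary, but it is worth correcting.
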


\begin{proof}
For $n\in\Z^3$, let
$$
\gamma_n(x):=(2\pi\sigma^2)^{-\frac 34}  \exp\bigl\{-\tfrac{|x|^2}{4\sigma^2}+in\cdot\tfrac xL\bigr\}.
$$
Note that $\|\gamma_n\|_{L^2(\R^3)}=1$. We define
$$
c_n^{\eps}:=(2\pi L)^{-3}\int_{[-\pi L,\pi L]^3}\psi_\eps(x)(2\pi\sigma^2)^{\frac 34} \exp\bigl\{\tfrac{|x|^2}{4\sigma^2}-in\cdot\tfrac xL\bigr\} \, dx.
$$
Then by the convergence of Fourier series we have
\begin{equation}\label{eq1}
\psi_\eps(x)=\sum_{n\in \Z^3}c_n^{\eps} \gamma_n(x) \quad\text{for all}\quad x\in[-\pi L,\pi L]^3.
\end{equation}
Taking $\eps$ sufficiently small, we can guarantee that $\supp \psi_\eps\subseteq[-\frac{\pi L}2, \frac{\pi L}2]^3$.  Thus,
to establish \eqref{expand} we need to show that outside the cube $[-\pi L,\pi L]^3$, the series only contributes a small error.
Indeed, let $k\in \Z^3\setminus \{ 0 \}$ and $Q_k:=2\pi kL+[-\pi L,\pi L]^3$; using the periodicity of Fourier series, we obtain
$$
\Bigl\|\sum_{n\in \Z^3}c_n^{\eps}\gamma_n\Bigr\|_{L^2(Q_k)}^2=\int_{[-\pi L,\pi L]^3}|\psi_\eps(x)|^2\exp\bigl\{\tfrac{|x|^2}{2\sigma^2} - \tfrac{|x+2\pi kL|^2}{2\sigma^2}\bigr\} \, dx.
$$
As on the support of $\psi_\eps$ we have $|x|\le \tfrac12 \pi L\leq \tfrac 14 |2\pi kL|$, we get
$$
\Bigl\|\sum_{n\in \Z^3} c_n^{\eps}\gamma_n\Bigr\|_{L^2(Q_k)}^2
\lesssim \|\psi\|_{L^2(\R^3)}^2 \exp\bigl\{ -\tfrac{\pi^2 k^2L^2}{2\sigma^2} \bigr\}.
$$
Summing in $k$ and using \eqref{eq1}, we obtain
\begin{align*}
\Bigl\|\psi_\eps-\sum_{n\in \Z^3}c_n^{\eps}\gamma_n\Bigr\|_{L^2(\R^3)}
&\lesssim \sum_{k\in \Z^3\setminus\{0\}} \Bigl\|\sum_{n\in \Z^3}c_n^{\eps}\gamma_n\Bigr\|_{L^2(Q_k)}\\
&\lesssim \|\psi\|_{L^2(\R^3)}\sum_{k\in\Z^3\setminus \{0\}}\exp \bigl\{ -\tfrac {\pi^2 k^2L^2}{4\sigma^2} \bigr\}\\
&\lesssim_{\psi} e^{-\frac {\pi^2 L^2}{4\sigma^2}}=o(1) \qtq{as} \eps\to 0.
\end{align*}
This proves \eqref{expand}.

Next we prove the upper bound \eqref{bdforc}.  From the definition of $c_n^{\eps}$, we immediately obtain
\begin{align*}
|c_n^{\eps}|&\lesssim \frac {\sigma^{\frac32}}{L^3}\bigl\|\psi_\eps(x)e^{\frac{|x|^2}{4\sigma^2}}\bigr\|_{L^1(\R^3)} \lesssim\frac{(\sigma\eps)^{\frac 32}}{L^3}.
\end{align*}
To derive the other upper bound, we use integration by parts.  Let $\mathbb D:= i\frac {Ln}{|n|^2}\cdot\nabla$; note that
$\mathbb D^k e^{-in\frac xL}=e^{-in\frac xL}$.  The adjoint of $\mathbb D$ is given by $\mathbb D^t=-i\nabla\cdot\frac{Ln}{|n|^2}$.
We thus obtain
\begin{align*}
|c_n^{\eps}|
&=(2\pi L)^{-3}\biggl|\int_{\R^3} \mathbb D^k e^{-in\frac xL}\psi_\eps(x)(2\pi\sigma^2)^{\frac 34} e^{\frac{|x|^2}{4\sigma^2}}dx\biggr|\\
&=(2\pi L)^{-3}\biggl|\int_{\R^3} e^{-in\frac xL}(\mathbb D^t)^k\Bigl[ \eps^{-\frac 32}\psi\Bigl(\frac x\eps\Bigr)(2\pi\sigma^2)^{\frac 34}e^{\frac{|x|^2}{4\sigma^2}}\Bigr]\,dx\biggr|\\
&\lesssim L^{-3}\Bigl(\frac L{|n|}\Bigr)^k\Bigl(\frac {\sigma}{\eps}\Bigr)^{\frac 32}\sum_{|\alpha|\leq k}\Bigl\|\partial^\alpha\Bigl[\psi\Bigl(\frac x{\eps}\Bigr)e^{\frac {|x|^2}{4\sigma^2}}\Bigr]\Bigr\|_{L^1(\R^3)}\\
&\lesssim_{k,\psi}L^{-3}\Bigl(\frac L{|n|}\Bigr)^k\Bigl(\frac {\sigma}{\eps}\Bigr)^{\frac 32}\eps^{3-k}\\
&\lesssim_{k,\psi}\frac {(\eps\sigma)^{\frac 32}}{L^3}\Bigl(\frac L{\eps|n|}\Bigr)^k.
\end{align*}
This proves \eqref{bdforc}.

To derive the last claim, we first note that
\begin{equation}\label{E:gamma inner prod}
\int_{\R^3} \gamma_n(x)\overline{\gamma_m(x)}\,dx=e^{-\frac{\sigma^2}{2L^2}|n-m|^2}.
\end{equation}
Now fix $N\in \N$.  For $n\le N$, we use the first upper bound for $c_n^{\eps}$ to estimate
\begin{align*}
\Bigl\| \sum_{|n|\le N}c_n^{\eps}\gamma_n \Bigr\|_{L^2(\R^3)}^2
&\lesssim_{\psi}\frac {(\sigma\eps)^3}{L^6}\sum_{|n|,|m|\le N} e^{-\frac{\sigma^2}{2L^2}|n-m|^2}
\lesssim_{\psi}\frac {(\sigma\eps)^3}{L^6}N^3\Bigl(\frac L{\sigma}\Bigr)^3 \lesssim_\psi \Bigl(\frac {\eps N}L\Bigr)^3.
\end{align*}
For $n\geq N$, we use the second upper bound for $c_n^{\eps}$ (with $k=3$) to estimate
\begin{align*}
\biggl\|\sum_{|n|\ge N}c_n^{\eps}\gamma_n \biggr\|_{L^2}^2
&\lesssim_{\psi}\frac {(\sigma\eps)^3}{L^6}\Bigl(\frac L{\eps}\Bigr)^6
    \sum_{|n|\ge |m|\ge N} \frac 1{|n|^3} \frac 1{|m|^3} e^{-\frac {\sigma^2}{2L^2}|n-m|^2}\\
&\lesssim_{\psi} \Bigl(\frac\sigma{\eps}\Bigr)^3\sum_{|n|\ge|m|\ge N}\frac 1{|m|^6} e^{-\frac {\sigma^2}{2L^2}|n-m|^2}\\
&\lesssim_\psi \Bigl(\frac {\sigma}{\eps}\Bigr)^3\Bigl(\frac L{\sigma}\Bigr)^3 \sum_{|m|\ge N}\frac 1{|m|^6}\\
&\lesssim_\psi \Bigl(\frac L{\eps N}\Bigr)^3.
\end{align*}
Thus,
\begin{align}\label{error}
\biggl\|&\sum_{|n|\le \frac L{\eps\log\log(\frac 1{\eps})}}c_n^{\eps}\gamma_n\biggr\|_{L^2_x}^2+\biggl\|\sum_{|n|\ge  {\frac L\eps\log\log(\frac 1{\eps})}}c_n^{\eps}\gamma_n\biggr\|_{L^2_x}^2\lesssim_{\psi}[\log\log(\tfrac 1{\eps})]^{-3}=o(1)
\end{align}
as $\eps\to 0$.  This completes the proof of Lemma~\ref{decomposition}.
\end{proof}

Combining the Strichartz inequality with Lemma~\ref{decomposition}, proving Theorem~\ref{T:LF3} reduces to showing
\begin{align*}
\Bigl\|\sum_{n\in \mathcal S} c_n^{\eps}\bigl[\propagateomega(1_\Omega\gamma_n)-e^{it\Delta_{\R^3}}\gamma_n\bigr]\Bigr\|_{L_{t,x}^{\frac {10}3}(\R\times\R^3)}=o(1) \qtq{as}\eps\to 0.
\end{align*}
Recall that the linear Schr\"odinger evolution of a Gaussian wave packet in the whole space has a simple explicit expression:
\begin{align*}
u_n(t,x):=[e^{it\Delta_{\R^3}}\gamma_n](x)=\frac 1{(2\pi)^{\frac34}}\biggl(\frac {\sigma}{\sigma^2+it}\biggr)^{\frac32}\exp\biggl\{ix\cdot \xi_n-it|\xi_n|^2-\frac {|x-2\xi_nt|^2}{4(\sigma^2+it)}\biggr\},
\end{align*}
where $\xi_n:=\frac nL$.

\begin{defn}[Missing, near-grazing, and entering rays] \label{D:MEG}
Fix $n\in \mathcal S$. We say $u_n$ \emph{misses the obstacle} if
\begin{align*}
\dist(2t\xi_n, \Omega^c)\ge \frac{|2t\xi_n|}{[\llogeps]^4} \qtq{for all} t\geq 0.
\end{align*}
Let $$\mathcal M=\{n\in \mathcal S : u_n\mbox{ misses the obstacle}\}.$$

If the ray $2t\xi_n$ intersects the obstacle, let $t_c\geq0$ and $x_c=2t_c\xi_n\in \partial\Omega$ denote the time and location of
first incidence, respectively.  We say $u_n$ \emph{enters the obstacle} if in addition
$$
\frac{|\xi_n\cdot\nu|}{|\xi_n|} \geq [\llogeps]^{-4},
$$
where $\nu$ denotes the unit normal to the obstacle at the point $x_c$. Let
\begin{align*}
\mathcal E=\{n\in \mathcal S : u_n\mbox{ enters the obstacle}\}.
\end{align*}

Finally, we say $u_n$ is \emph{near-grazing} if it neither misses nor enters the obstacle. Let
\begin{align*}
\mathcal G=\{n\in \mathcal S : u_n \mbox{ is near-grazing}\}.
\end{align*}
\end{defn}

We first control the contribution of the near-grazing directions.

\begin{lem}[Counting $\mathcal G$] \label{L:counting G} The set of near-grazing directions constitutes a
vanishing fraction of the total directions. More precisely,
$$
 \# \mathcal G \lesssim [\llogeps]^{-4} \# \mathcal S \lesssim \Bigl(\frac L\eps\Bigr)^3 [\llogeps]^{-1}.
$$
\end{lem}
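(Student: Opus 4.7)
The bound $\#\mathcal{S}\lesssim(L/\eps)^3[\llogeps]^3$ follows by counting lattice points of $\Z^3$ in the spherical annulus $L/(\eps\llogeps)\le|n|\le L\llogeps/\eps$ that defines $\mathcal{S}$; combined with the first inequality, it gives the second. Set $\eta:=[\llogeps]^{-4}$. The real content is therefore to prove $\#\mathcal{G}\lesssim\eta\,\#\mathcal{S}$, which I would do in two stages: first show the near-grazing condition is direction-determined and that lattice directions in $\mathcal{S}$ are equidistributed on $S^2$; then bound the solid angle of the near-grazing set by $\eta$.

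Membership in $\mathcal{M}$, $\mathcal{E}$, $\mathcal{G}$ is a function only of $\hat\xi_n:=n/|n|$, not of $|n|$: the geometric ray $\{t\hat\xi_n:t\ge 0\}$, its first-hit point with $\Omega^c$, the outward normal there, and the ratio $\dist(2t\xi_n,\Omega^c)/|2t\xi_n|$ all depend on $\hat\xi_n$ alone. A standard lattice-point count in spherical cones then shows that $\{\hat\xi_n:n\in\mathcal{S}\}$ is nearly equidistributed on $S^2$: for any measurable $A\subset S^2$ with $|A|_{S^2}\gtrsim(\eps\llogeps/L)^2$, one has $\#\{n\in\mathcal{S}:\hat\xi_n\in A\}\lesssim|A|_{S^2}\cdot\#\mathcal{S}$. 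The threshold $(\eps\llogeps/L)^2\sim\eps/(\delta\log^2(\tfrac{1}{\eps}))\le\eps^{1/7}/\log^2(\tfrac{1}{\eps})$ (using $\delta\ge\eps^{6/7}$) is comfortably smaller than $\eta$, so it suffices to show that the near-grazing set $\mathcal{B}\subset S^2$ satisfies $|\mathcal{B}|_{S^2}\lesssim\eta$.

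Let $\mathcal{C}\subset S^2$ denote the shadow of $\Omega^c$ from the origin. Convexity of $\Omega^c$ makes $\mathcal{C}$ a geodesically convex spherical region whose boundary $\partial\mathcal{C}$ (the horizon, consisting of rays tangent to $\partial\Omega^c$) has spherical length $\le 2\pi$. Decompose $\mathcal{B}=\mathcal{B}_{\mathrm{out}}\cup\mathcal{B}_{\mathrm{in}}$ according to whether $\hat\xi$ lies outside or inside $\mathcal{C}$. For $\mathcal{B}_{\mathrm{out}}$: a direct geometric computation using convexity of $\Omega^c$ shows that $\inf_t\dist(2t\xi,\Omega^c)/|2t\xi|$ equals, to leading order, the sine of the angular distance from $\hat\xi$ to $\partial\mathcal{C}$; hence $\mathcal{B}_{\mathrm{out}}$ sits in an angular tubular neighborhood of $\partial\mathcal{C}$ of width $\lesssim\eta$, yielding $|\mathcal{B}_{\mathrm{out}}|_{S^2}\lesssim\eta$. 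For $\mathcal{B}_{\mathrm{in}}$: the first-intersection map $\hat\xi\mapsto q(\hat\xi)$ is a diffeomorphism from $\mathcal{C}^{\circ}$ onto the visible portion of $\partial\Omega^c$, with Jacobian $|\hat\xi\cdot\nu(q)|/|q|^2$, so
$$|\mathcal{B}_{\mathrm{in}}|_{S^2}=\int_{q\ \text{visible},\,|\hat q\cdot\nu(q)|<\eta}\frac{|\hat q\cdot\nu(q)|}{|q|^2}\,dA_q,$$
and strict convexity of $\Omega^c$ together with bounded principal curvatures let one bound this integral by $C\eta$, uniformly in $\delta\in[\eps^{6/7},1]$.

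The principal technical obstacle is this last estimate in the regime $\delta\sim\eps^{6/7}$, where the origin is close to $\Omega^c$ relative to the obstacle's own size. A naive linear parametrization of $|\hat q\cdot\nu|$ near the horizon gives bounds that degrade like $\eta^2/\sqrt\delta$, which is useless when $\delta\ll\eta^2$. The resolution is to track the full non-linear growth of $|\hat q\cdot\nu|$ across the (small) visible cap: in this regime the level set $\{|\hat q\cdot\nu|=\eta\}$ lies very close to the closest obstacle point rather than at linear displacement $\sim\eta$ from the horizon, and exploiting this prevents any small-$\delta$ blowup. Once $|\mathcal{B}|_{S^2}\lesssim\eta$ is in place, equidistribution delivers the asserted counting bound.
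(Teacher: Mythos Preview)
Your overall framework is correct and matches the paper's: reduce to an area estimate on $S^2$, observe that the horizon $\partial\mathcal{C}$ has spherical length $O(1)$, and show the near-grazing set lies in an $\eta$-tubular neighborhood of $\partial\mathcal{C}$.  Your treatment of $\mathcal{B}_{\mathrm{out}}$ is essentially the paper's argument for rays that miss the obstacle.

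For $\mathcal{B}_{\mathrm{in}}$, however, you take a more complicated route via the Jacobian integral over the visible portion of $\partial\Omega^c$, and this manufactures a technical obstacle in the regime $\delta\ll\eta^2$ that you only sketch a resolution for.  The paper sidesteps this entirely with a single planar-geometry observation: if a ray from the origin strikes the convex obstacle making angle $\phi$ with the tangent plane at the collision point, and $\theta$ is the angle between this ray and the nearest grazing ray, then convexity forces $\phi\ge\theta$.  (The tangent line at the collision point separates the origin from the obstacle, so in the plane containing the ray and the grazing ray, the grazing direction is trapped between the incident ray and that tangent line.)  Thus the entering near-grazing condition $\sin\phi<\eta$ already gives $\theta\lesssim\eta$, placing $\mathcal{B}_{\mathrm{in}}$ in the same $\eta$-neighborhood of $\partial\mathcal{C}$ as $\mathcal{B}_{\mathrm{out}}$, uniformly in $\delta$ and with no integration over $\partial\Omega^c$ at all.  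Your change-of-variables approach can be made to work, but the convexity inequality $\phi\ge\theta$ is both simpler and immune to the small-$\delta$ degeneration you worried about.
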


\begin{proof}
We claim that the near-grazing directions are contained in a
neighbourhood of width $O( [\log\log(\frac1\eps)]^{-4} )$ around the
set of grazing rays, that is, rays that are tangent to
$\partial\Omega$. We will first verify this claim and then explain
how the lemma follows. The objects of interest are depicted in
Figure~\ref{Fig.NG}.  Two rays are show, one which collides with the
obstacle and another that does not.  The horizontal line
represents the nearest grazing ray.  The origin, from which the
rays emanate, is marked $O$.

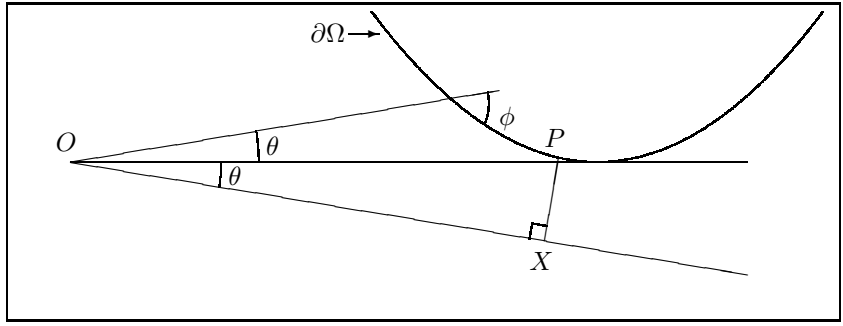
\begin{figure}[ht]
\begin{center}
\fbox{ \setlength{\unitlength}{1mm}
\begin{picture}(105,40)(-75,-20)
\put(-72,1.5){$O$}
\put(-70,0){\line(1,0){90}}   
\put(-70,0){\line(6,1){57}}   
\qbezier(-14.568,9.322)(-14.157,6.856)(-15,5)\put(-13,5){$\phi$} 
\put(-70,0){\line(6,-1){90}}  
\qbezier(-50,0)(-50,-1.6)(-50.272,-3.287)\put(-49,-3){$\theta$}
\qbezier(-45,0)(-45,+2)(-45.340,4.110)\put(-44,1){$\theta$}
\put(-7,-10.5){\line(1,6){1.85}}\put(-9,-14.3){$X$}\put(-7,2){$P$}
\qbezier(-9,-10.167)(-8.834,-9.167)(-8.667,-8.167)  
\qbezier(-8.667,-8.167)(-7.667,-8.334)(-6.667,-8.5) 
\qbezier(0,0)(15,0)(30,20)\qbezier(0,0)(-15,0)(-30,20)           
\put(-33,17){\vector(1,0){4}} \put(-38.1,16){$\partial \Omega$}  
\end{picture}
} \vspace*{-2ex}
\end{center}
\caption{Near-grazing rays.} \label{Fig.NG}
\end{figure}

For rays that collide with the obstacle, the condition to be near-grazing is that $\sin(\phi) \leq [\log\log(\frac1\eps)]^{-4}$.
Here $\phi$ is the angle between the ray and the tangent plane to the obstacle at the point of collision. Convexity of the obstacle
guarantees that $\phi \geq \theta$.  From this we deduce $\theta\lesssim [\log\log(\frac1\eps)]^{-4}$, in accordance with the claim made above.

Let us now consider rays that do not collide with the obstacle.  We recall that to be near-grazing in this case there must be some time $t>0$ so that
$X=2\xi t$ is within a distance $2|\xi|t[\log\log(\frac1\eps)]^{-4}$ of a point $P$ on the obstacle.  Then
$\theta\leq\tan\theta \leq \frac{|XP|}{|OX|} \leq [\log\log(\frac1\eps)]^{-4}$. This finishes the proof of the claim.

The set of directions corresponding to grazing rays is a smooth
curve whose length is uniformly bounded in terms of
the geometry of $\Omega$ alone. Moreover, we have shown that all
near-grazing directions lie within a neighbourhood of this curve of
thickness $O( [\log\log(\frac1\eps)]^{-4} )$.  Noting that the
directions $\{\frac n{|n|} : n\in\mathcal S\}$ are uniformly
distributed on the sphere and much more tightly packed than the width of this neighbourhood,
the lemma follows from a simple area estimate.
\end{proof}

\begin{prop}[The near-grazing contribution]\label{P:ng} We have
$$
\Bigl\|\sum_{n\in \mathcal G} c_n^{\eps}\bigl[\propagateomega(1_\Omega\gamma_n)-u_n\bigr]\Bigr\|_{L_{t,x}^{\frac {10}3}(\R\times\R^3)}=o(1) \qtq{as}\eps\to 0.
$$
\end{prop}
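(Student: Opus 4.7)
The plan is to avoid any attempt to compare $\propagateomega(1_\Omega\gamma_n)$ with $u_n$ packet-by-packet — geometric optics genuinely breaks down in the near-grazing regime. Instead we bound the two sums separately in $L^{10/3}_{t,x}$ via Strichartz, which reduces everything to controlling the $L^2(\R^3)$ norm of the coherent sum $\sum_{n\in\mathcal G} c_n^\eps \gamma_n$; this in turn is small because Lemma~\ref{L:counting G} tells us $\mathcal G$ is a vanishing fraction of $\mathcal S$.

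First, by the triangle inequality and Strichartz (Theorem~\ref{T:Strichartz} on $\Omega$, extended by zero to $\R^3$, and the Euclidean Strichartz inequality on $\R^3$),
\begin{align*}
\Bigl\|\sum_{n\in \mathcal G} c_n^{\eps}\bigl[\propagateomega(1_\Omega\gamma_n)-u_n\bigr]\Bigr\|_{L_{t,x}^{10/3}(\R\times\R^3)}
&\le \Bigl\|\propagateomega\Bigl(1_\Omega\sum_{n\in\mathcal G} c_n^\eps \gamma_n\Bigr)\Bigr\|_{L_{t,x}^{10/3}(\R\times\Omega)} \\
&\quad+ \Bigl\|e^{it\Delta_{\R^3}}\sum_{n\in\mathcal G} c_n^\eps \gamma_n\Bigr\|_{L_{t,x}^{10/3}(\R\times\R^3)} \\
&\lesssim \Bigl\|\sum_{n\in\mathcal G} c_n^\eps \gamma_n\Bigr\|_{L^2(\R^3)}.
\end{align*}

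Next, expanding the square and using the Gaussian inner product formula \eqref{E:gamma inner prod}, together with the coefficient bound $|c_n^\eps|\lesssim (\sigma\eps)^{3/2}/L^3$ from \eqref{bdforc},
\begin{align*}
\Bigl\|\sum_{n\in\mathcal G} c_n^\eps \gamma_n\Bigr\|_{L^2}^2
= \sum_{n,m\in\mathcal G} c_n^\eps \overline{c_m^\eps}\, e^{-\frac{\sigma^2}{2L^2}|n-m|^2}
\lesssim \frac{(\sigma\eps)^3}{L^6}\sum_{n\in\mathcal G}\sum_{m\in\Z^3} e^{-\frac{\sigma^2}{2L^2}|n-m|^2}.
\end{align*}
The inner sum over $m$ is a Gaussian sum that contributes $O((L/\sigma)^3)$ uniformly in $n$, giving
\begin{align*}
\Bigl\|\sum_{n\in\mathcal G} c_n^\eps \gamma_n\Bigr\|_{L^2}^2 \lesssim \frac{\eps^3}{L^3}\,\#\mathcal G.
\end{align*}
Applying Lemma~\ref{L:counting G} in the form $\#\mathcal G \lesssim (L/\eps)^3[\llogeps]^{-1}$, this is $O([\llogeps]^{-1}) = o(1)$ as $\eps\to 0$, which completes the argument.

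The only real subtlety is that the same reasoning must work for the $\Omega$-propagated piece; fortunately, the inequality $|\int_\Omega \gamma_n\overline{\gamma_m}\,dx|\leq |\int_{\R^3}\gamma_n\overline{\gamma_m}\,dx|$ (which holds since $\gamma_n\overline{\gamma_m}$ is not sign-definite, but the Cauchy–Schwarz/Gaussian integral gives the right bound after majorizing by $e^{-\sigma^2|n-m|^2/2L^2}\cdot \|1_\Omega\|_\infty$) makes the bound robust — the bound on the $L^2(\R^3)$ norm of $\sum c_n^\eps\gamma_n$ already dominates $\|1_\Omega \sum c_n^\eps\gamma_n\|_{L^2}$. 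Thus no new obstacle appears, and the contribution of the near-grazing set is negligible precisely because we chose the exponents in the definition of $\mathcal G$ so that the area-estimate loss $[\llogeps]^{-4}$ (square-rooted to $[\llogeps]^{-2}$ on the $L^2$ scale) is more than enough to win.
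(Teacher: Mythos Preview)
Your proof is correct and follows essentially the same route as the paper: reduce via Strichartz (on both $\Omega$ and $\R^3$) to bounding $\bigl\|\sum_{n\in\mathcal G} c_n^\eps \gamma_n\bigr\|_{L^2(\R^3)}$, expand using the Gaussian inner product \eqref{E:gamma inner prod} and the coefficient bound \eqref{bdforc}, sum the Gaussian to pick up $(L/\sigma)^3$, and invoke Lemma~\ref{L:counting G} to get $O\bigl([\llogeps]^{-1}\bigr)$. Your closing paragraph is a little muddled---the inequality $\|1_\Omega f\|_{L^2}\le\|f\|_{L^2(\R^3)}$ is immediate and needs no discussion of the sign of $\gamma_n\overline{\gamma_m}$, and the $L^2$ gain is $[\llogeps]^{-1/2}$ rather than $[\llogeps]^{-2}$---but these are cosmetic and do not affect the argument.
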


\begin{proof}
From the Strichartz inequality, it suffices to prove
\begin{align*}
\Bigl\| \sum_{n\in \mathcal G} c_n^{\eps}\gamma_n \Bigr\|_{L^2(\R^3)}=o(1) \qtq{as} \eps\to 0.
\end{align*}
Using \eqref{bdforc}, \eqref{E:gamma inner prod}, and Lemma~\ref{L:counting G}, we estimate
\begin{align*}
\Bigl\|\sum_{n\in \mathcal G} c_n^{\eps} \gamma_n\Bigr\|_{L^2(\R^3)}^2
&\lesssim \sum_{n,m\in \mathcal G}\frac {(\sigma \eps)^3}{L^6} e^{-\frac {\sigma^2}{2L^2}|n-m|^2}
\lesssim\sum_{n\in \mathcal G} \frac {(\sigma\eps)^3}{L^6}\Bigl(\frac L{\sigma}\Bigr)^3\lesssim [\log\log(\tfrac 1{\eps})]^{-1},
\end{align*}
which converges to $0$ as $\eps\to 0$.
\end{proof}

We now consider the contribution of rays that miss the obstacle in the sense of Definition~\ref{D:MEG}.

\begin{prop}[Contribution of rays that miss the obstacle]\label{P:missing}
Assume $n\in \mathcal M$. Then
\begin{equation}\label{432}
\|e^{it\Delta_{\Omega}}(1_{\Omega}\gamma_n)-u_n\|_{L_{t,x}^{\frac {10}3}(\R\times \R^3)}\lesssim \eps^{100}
\end{equation}
for sufficiently small $\eps$. Furthermore, we have
\begin{align}\label{249}
\Bigl\|\sum_{n\in \mathcal M} c_n^\eps \bigl[e^{it\Delta_{\Omega}}(1_\Omega\gamma_n)-u_n\bigr]\Bigr\|_{L_{t,x}^{\frac{10}3} (\R\times\R^3)}=o(1) \qtq{as} \eps\to 0.
\end{align}
\end{prop}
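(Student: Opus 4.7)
The strategy is to build a parametrix for $e^{it\Delta_\Omega}(1_\Omega\gamma_n)$ by smoothly cutting off $u_n$ away from the obstacle, and then to show that every resulting error is superpolynomially small in $\eps$. Fix a smooth $\eps$-dependent cutoff $\phi\in C^\infty(\R^3;[0,1])$ with $\phi\equiv 0$ on $\{\dist(x,\Omega^c)\le c\}$, $\phi\equiv 1$ on $\{\dist(x,\Omega^c)\ge 2c\}$, and $|\nabla^k\phi|\lsm c^{-k}$ for $k=1,2$, where $c:=\delta[\llogeps]^{-8}$. Set $v_n:=\phi\,u_n$. Then $v_n$ vanishes on $\partial\Omega$, and $(i\partial_t+\Delta)v_n=[\Delta,\phi]u_n$ is supported in the thin shell $\{c\le\dist(x,\Omega^c)\le 2c\}$.

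The crux is a uniform pointwise bound: for every $n\in\mathcal{M}$, $t\ge 0$, and $x$ lying either in $\Omega^c$ or in the above shell,
\begin{align*}
|x-2t\xi_n|\ \gtrsim\ \frac{\max(\delta,\,2t|\xi_n|)}{[\llogeps]^4}.
\end{align*}
This combines three facts: for small $t$, $|x|\ge\delta-2c\gtrsim\delta$ gives $|x-2t\xi_n|\ge|x|-2t|\xi_n|$; for moderate $t$, the defining property of $\mathcal{M}$ gives $\dist(2t\xi_n,\Omega^c)\ge 2t|\xi_n|[\llogeps]^{-4}$; for large $t$ with $2t|\xi_n|\gg\diam(\Omega^c)$, $|x-2t\xi_n|\ge 2t|\xi_n|-|x|\ge t|\xi_n|$. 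Feeding this into the explicit formula for $u_n$ and running through the time regimes $t\le\sigma^2$, $\sigma^2\le t$ with $2t|\xi_n|\le\delta$, and $\sigma^2\le t$ with $2t|\xi_n|\ge\delta$, the Gaussian exponent in $u_n$ is in every case bounded by $-c\eps^{-1/7}[\logeps]^2[\llogeps]^{-8}$, using the identities
\begin{align*}
\frac{\delta^2}{\sigma^2}=\frac{\delta}{\eps[\logeps]^2},\qquad \sigma^2|\xi_n|^2\sim\frac{\delta[\logeps]^2}{\eps},
\end{align*}
together with $\delta\ge\eps^{6/7}$ and $|\xi_n|\sim\eps^{-1}$ (modulo $\llogeps$ factors arising from $n\in\mathcal{S}$). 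A parallel bound for $|\nabla u_n|$ costs one extra factor $|\xi_n|\lsm\eps^{-1}[\llogeps]$, which is easily absorbed. In summary, $|u_n|+c|\nabla u_n|\le\eps^N\,\sigma^{3/2}(\sigma^4+t^2)^{-3/4}$ on the relevant set, for any fixed $N$, once $\eps$ is small.

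With this bound in hand, assembly is routine. Strichartz and the Duhamel identity applied to $v_n$ give
\begin{align*}
\|e^{it\Delta_\Omega}v_n(0)-v_n\|_{L^{10/3}_{t,x}(\R\times\Omega)}\ \lsm\ \bigl\|[\Delta,\phi]u_n\bigr\|_{L^{10/7}_{t,x}+L^1_tL^2_x}\ \lsm\ \eps^{200},
\end{align*}
where the polynomial loss from $|\nabla^k\phi|\lsm c^{-k}=\eps^{-O(1)}$ and from integrating $\sigma^{3/2}(\sigma^4+t^2)^{-3/4}$ in $t$ is swallowed by $\eps^N$. The same reasoning yields
\begin{align*}
\|v_n(0)-1_\Omega\gamma_n\|_{L^2}+\|(1-\phi)u_n\|_{L^{10/3}_{t,x}(\R\times\Omega)}+\|u_n\|_{L^{10/3}_{t,x}(\R\times\Omega^c)}\ \lsm\ \eps^{200}.
\end{align*}
Applying Strichartz to $e^{it\Delta_\Omega}(1_\Omega\gamma_n-v_n(0))$ and using the triangle inequality then gives \eqref{432}. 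For \eqref{249}, the triangle inequality in $L^{10/3}_{t,x}$ gives
\begin{align*}
\Bigl\|\sum_{n\in\mathcal{M}}c_n^\eps\bigl[e^{it\Delta_\Omega}(1_\Omega\gamma_n)-u_n\bigr]\Bigr\|_{L^{10/3}_{t,x}}\le\eps^{100}\sum_{n\in\mathcal{M}}|c_n^\eps|,
\end{align*}
and from \eqref{bdforc}, splitting at $|n|\sim L/\eps$, one gets $\sum_n|c_n^\eps|\lsm(\sigma/\eps)^{3/2}\lsm\eps^{-3/28}[\logeps]^{3/2}$, so the right-hand side is $O(\eps^{95})\to 0$.

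The main obstacle is the uniform-in-$t$ geometric bound on $|x-2t\xi_n|$: the $\mathcal{M}$-condition is vacuous at $t=0$, the bound $|x|\ge\delta$ ceases to dominate once the packet has advanced a distance of order $\delta$, and in the transition regime $t\sim\delta/|\xi_n|\sim\delta\eps$ neither is tight on its own. The restriction $\delta\ge\eps^{6/7}$ enters precisely here: at the critical time where the packet passes nearest the obstacle, the Gaussian spread $\sqrt{(\sigma^4+t^2)/\sigma^2}$ must remain much smaller than the clearance $\sim\delta/[\llogeps]^4$, and this is exactly what the inequality $\delta\ge\eps^{6/7}$, combined with the choice $\sigma=\sqrt{\eps\delta}[\logeps]$, secures.
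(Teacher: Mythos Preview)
Your approach is correct and essentially the same as the paper's: introduce a smooth cutoff near the obstacle, use Duhamel to compare $e^{it\Delta_\Omega}(1_\Omega\gamma_n)$ with the truncated free evolution, and exploit the $\mathcal{M}$-condition together with $\dist(0,\Omega^c)=\delta$ to show the Gaussian is superpolynomially small on the support of the commutator. The paper uses cutoff scale $\delta[\logeps]^{-1}$ rather than your $\delta[\llogeps]^{-8}$, and organizes the short-time/long-time splitting slightly differently (deriving the bounds \eqref{1001}--\eqref{1002} on $\dist(2t\xi_n,\supp(1-\chi))$ rather than on $|x-2t\xi_n|$ directly), but the mechanism is identical.

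One numerical slip to fix: in your final step you correctly bound $\sum_n|c_n^\eps|\lsm(\sigma/\eps)^{3/2}$, but then assert $(\sigma/\eps)^{3/2}\lsm\eps^{-3/28}[\logeps]^{3/2}$. Since $\sigma/\eps=\sqrt{\delta/\eps}\,\logeps$ and you need an \emph{upper} bound, you must use $\delta\lsm 1$ (not the lower bound $\delta\ge\eps^{6/7}$), which gives $(\sigma/\eps)^{3/2}\lsm\eps^{-3/4}[\logeps]^{3/2}$. The conclusion $\eps^{100}\cdot\eps^{-3/4}\to 0$ is unaffected. Similarly, your claimed uniform exponent $-c\eps^{-1/7}[\logeps]^2[\llogeps]^{-8}$ has the $[\logeps]^2$ on the wrong side in the short-time regime $t\le\sigma^2$ (there one gets $\delta^2/\sigma^2=\delta/(\eps[\logeps]^2)$, with the logarithm in the denominator); this still diverges, so the superpolynomial decay stands.
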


\begin{proof} We first notice that \eqref{249} is an immediate consequence of \eqref{432}. Indeed, using the upper bound \eqref{bdforc} for
$c_n^\eps$, we estimate
\begin{align*}
\Bigl\|\sum_{n\in \mathcal M} c_n^\eps\bigl[e^{it\Delta_\Omega}(1_\Omega\gamma_n)-u_n\bigr]\Bigr\|_{L_{t,x}^{\frac{10}3}(\R\times\R^3)}
&\lsm\sum_{|n|\le \frac L\eps\llogeps}\frac{(\sigma\eps)^{\frac32}}{L^3}\eps^{100}\\
&\lsm (\sigma\eps)^{\frac 32}\eps^{97}[\llogeps]^3=o(1).
\end{align*}
We are thus left to prove \eqref{432}.

As $u_n$ misses the obstacle, we have
\begin{align}\label{845}
\dist(2t\xi_n,\Omega^c)\ge \tfrac 12\delta[\llogeps]^{-4} \qtq{for all} t\geq 0.
\end{align}
Indeed, when $|2t\xi_n|<\frac \delta 2$, the triangle inequality gives $\dist(2t\xi_n,\Omega^c)\ge \frac \delta 2$; when
$|2t\xi_n|\geq\frac \delta 2$, this bound follows immediately from Definition~\ref{D:MEG}.

Now let $\chi$ be a smooth cutoff that vanishes on the obstacle and equals $1$ when
\begin{align*}
\dist(x,\Omega^c)\ge \delta \log^{-1}(\tfrac 1\eps).
\end{align*}
This cutoff can be chosen to also obey the following:
\begin{equation}\label{533}
|\nabla \chi|\lesssim \delta^{-1}\logeps, \quad |\Delta \chi|\lesssim \delta^{-2}\log^2(\tfrac 1\eps),\quad |\supp(\Delta\chi)|\lesssim \delta\log^{-1}(\tfrac 1\eps).
\end{equation}
From \eqref{845} and the triangle inequality, we obtain
\begin{align}\label{1001}
\dist(2t\xi_n, \supp(1-\chi))&\ge\dist(2t\xi_n,\Omega^c)-\delta\log^{-1}(\tfrac 1\eps)\notag\\
&\ge \tfrac12 \delta[\llogeps]^{-4}-\delta\log^{-1}(\tfrac 1\eps)\ge \tfrac14\delta[\llogeps]^{-4}.
\end{align}
Moreover, when $|t|\ge \sigma^2$, we observe that
\begin{align}\label{1002}
\dist(2t\xi_n, \supp(1-\chi))&\ge \dist(2t\xi_n,\Omega^c)-\delta\log^{-1}(\tfrac 1\eps)\notag\\
&\ge \frac{|2t\xi_n|}{[\llogeps]^4}-\frac\delta{\logeps}\ge\frac{|t\xi_n|}{[\llogeps]^4}.
\end{align}
Here we have used the fact that $\delta\ll |2t\xi_n|$ for $t\ge\sigma^2$.

With these preliminaries out of the way, we are ready to begin proving \eqref{432}.  By the triangle inequality,
\begin{align}
\text{LHS}\eqref{432}\le\|e^{it\Delta_{\Omega}}(1_{\Omega}\gamma_n)-\chi u_n\|_{L_{t,x}^{\frac{10}3}(\R\times\R^3)}+\|\chi
u_n-u_n\|_{L_{t,x}^{\frac{10}3}(\R\times\R^3)}.\label{E:M}
\end{align}
We begin with the first term on the right-hand side of \eqref{E:M}.  Using the Duhamel formula, we write
\begin{align*}
e^{it\Delta_{\Omega}}(1_{\Omega}\gamma_n)-\chi u_n
&=e^{it\Delta_{\Omega}}(1_{\Omega}\gamma_n)-e^{it\Delta_{\Omega}}(\chi\gamma_n)+e^{it\Delta_{\Omega}}(\chi \gamma_n)-\chi u_n\\
&=e^{it\Delta_{\Omega}}[(1_{\Omega}-\chi)\gamma_n]+i\int_0^te^{i(t-s)\Delta_{\Omega}}\bigl[\Delta\chi u_n+2\nabla \chi \cdot \nabla u_n\bigr]\,ds.
\end{align*}
Similarly, for the second term on the right-hand side of \eqref{E:M} we have
\begin{align*}
(1-\chi)u_n&=e^{it\Delta}(1-\chi)\gamma_n+i\int_0^te^{i(t-s)\Delta}\bigl[\Delta \chi u_n+2\nabla \chi \cdot \nabla u_n\bigr](s)\,ds.
\end{align*}
Thus, using the Strichartz inequality we obtain
\begin{align}
\text{LHS}\eqref{432}
&\lesssim\|(1-\chi)\gamma_n\|_{L^2(\R^3)}+\|\Delta \chi u_n\|_{L^1_tL_x^2(\R\times \R^3)} +\|\nabla \chi \cdot \nabla u_n\|_{L_t^1L_x^2(\R\times \R^3)}.\label{530}
\end{align}
The first term on the right-hand side of \eqref{530} can be easily controlled:
\begin{align*}
\|(1-\chi)\gamma_n\|_{L^2(\R^3)}^2
&\lesssim \sigma^{-3}\int_{\supp(1-\chi)}e^{-\frac {|x|^2}{2\sigma^2}}dx \\
&\lesssim \sigma^{-3}\sigma^3 \exp\Bigl\{-\frac {\dist^2(0,\supp(1-\chi))}{4 \sigma^2}\Bigr\}\\
&\lsm \exp\Bigl\{-\frac{\delta^2}{8\eps\delta\log^2(\tfrac1\eps)}\Bigr\}\le\eps^{200}.
\end{align*}

To estimate the remaining terms on the right-hand side of \eqref{530}, we first observe that
\begin{align*}
|\nabla u_n|\lesssim |\xi_n||u_n|+\frac {|x-2\xi_n t|}{\sqrt{\sigma^4+t^2}}|u_n|
&\lesssim \bigl[|\xi_n |+\sigma^{-1}\bigr]\biggl(\frac{\sigma^2}{\sigma^4+t^2}\biggr)^{\frac 34}e^{-\frac{{\sigma^2}|x-2\xi_n t|^2}{8(\sigma^4+t^2)}}.
\end{align*}
As $\sigma^{-1}\leq |\xi_n|\le \frac {\log\log(\frac 1{\eps})}{\eps}$, we obtain
\begin{align}\label{1244}
|u_n|+ |\nabla u_n| \lesssim \frac{\llogeps}{\eps}\biggl(\frac{\sigma^2}{\sigma^4+t^2}\biggr)^{\frac 34}e^{-\frac{{\sigma^2}|x-2\xi_n t|^2}{8(\sigma^4+t^2)}}.
\end{align}
To estimate the contribution of these terms, we discuss short and long times separately.  For $0\leq t\le \sigma^2$, we use \eqref{1001} to estimate
\begin{align*}
\|u_n&\|_{L_t^1L_x^2(t\le \sigma^2, \ x\in \supp(1-\chi))} + \|\nabla u_n\|_{L_t^1L_x^2(t\le \sigma^2, \ x\in \supp(1-\chi))}\\
&\lsm \frac{\llogeps}{\eps} \sigma^2\sup_{0\leq t\le\sigma^2}\biggl(\frac{\sigma^2}{\sigma^4+t^2}\biggr)^{\frac 34}\biggl\|\exp\Bigl\{-\frac{\sigma^2|x-2t\xi_n|^2}{8(\sigma^4+t^2)}\Bigr\}\biggr\|_{L_x^2(\supp(1-\chi))}\\
&\lsm\frac{\llogeps}{\eps} \sigma^2 \sup_{0\leq t\le\sigma^2}\biggl\|\exp\Bigl\{-\frac{\sigma^2|x-2t\xi_n|^2}{16(\sigma^4+t^2)}\Bigr\}\biggr\|_{L_x^{\infty}
(\supp(1-\chi))}\\
&\lsm \frac{\llogeps}{\eps}\sigma^2 \exp\biggl\{-\frac\delta{\eps\log^3(\tfrac 1\eps)}\biggr\}\\
&\le\eps^{110}.
\end{align*}
For $|t|>\sigma^2$, we use \eqref{533} and \eqref{1002} to obtain
\begin{align*}
\|u_n&\|_{L_t^1L_x^2(t>\sigma^2, \ x\in \supp(1-\chi))} + \|\nabla u_n\|_{L_t^1L_x^2(t> \sigma^2, \ x\in \supp(1-\chi))}\\
&\lsm\bigl[\delta\log^{-1}(\tfrac 1\eps)\bigr]^{\frac 12}\bigl\||u_n|+|\nabla u_n|\bigr\|_{L_t^1L_x^{\infty}(t>\sigma^2, \ x\in\supp(1- \chi))}\\
&\lsm \frac{\delta^{\frac 12}\sigma^{\frac 32}\llogeps}{\eps\log^{\frac 12}(\tfrac 1\eps)}\biggl\|t^{-\frac 32}\exp\Bigl\{-\frac{\sigma^2\dist^2(2t\xi_n,
\supp(1-\chi))}{8(\sigma^4+t^2)}\Bigr\}\biggr\|_{L_t^1(t>\sigma^2)}\\
&\lsm \frac{\delta^{\frac 12}\sigma^{\frac 12}\llogeps}{\eps\log^{\frac 12}(\tfrac 1\eps)} \exp\Bigl\{-\frac\delta\eps\Bigr\}\\
&\le \eps^{110}.
\end{align*}
Putting these two pieces together, we find
\begin{align*}
\|\Delta \chi u_n\|_{L_t^1L_x^2(\R\times\R^3)}+\|\nabla \chi \cdot \nabla u_n\|_{L_t^1L_x^2(\R\times \R^3)}
\lsm\delta^{-2}\log^2(\tfrac 1\eps)\eps^{110}\le \eps^{100}.
\end{align*}
This completes the proof of Proposition~\ref{P:missing}.
\end{proof}

In order to complete the proof of Theorem~\ref{T:LF3}, we need to estimate the contribution from the Gaussian wave packets $\gamma_n$ that
collide non-tangentially with the obstacle, that is, for $n\in\mathcal E$.  This part of the argument is far more subtle than the treatment of
$n\in \mathcal G$ or $n\in \mathcal M$.  Naturally, the entering wave packets reflect off the obstacle and we will need to build a careful parametrix to capture this reflection.  Moreover, the convexity of the obstacle enters in a crucial way --- it ensures that the reflected waves do not refocus.

The treatment of the entering rays will occupy the remainder of this subsection.  We begin with the simplest part of the analysis, namely,
the short time contribution.  Here, short times means well before the wave packets have reached the obstacle.  The estimate applies equally
well to all wave packets, irrespective of whether $n\in\mathcal E$ or not.

\begin{prop}[The contribution of short times]\label{P:short times}
Let $T:=\frac{\eps\delta}{10\llogeps}$.  Then
\begin{align}\label{st}
\sum_{n\in \mathcal S} |c_n^\eps|\bigl\|e^{it\Delta_{\Omega}}(1_\Omega\gamma_n)-u_n\bigr\|_{L_{t,x}^{\frac{10}3} ([0,T]\times\R^3)}=o(1)
\qtq{as} \eps\to 0.
\end{align}
\end{prop}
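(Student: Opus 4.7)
The plan is to adapt the cutoff-plus-Duhamel strategy used in Proposition~\ref{P:missing}, exploiting the fact that on the very short interval $[0,T]$ no wave packet has had time to reach the obstacle. The key geometric observation is that for $n\in\mathcal S$ and $t\in[0,T]$, since $|\xi_n|\leq\tfrac{\llogeps}{\eps}$, the wave packet center satisfies
\begin{align*}
|2t\xi_n|\leq 2T\cdot\tfrac{\llogeps}{\eps}=\tfrac{\delta}{5},
\end{align*}
and because the origin lies at distance $\delta$ from $\Omega^c$, the center of $u_n$ stays at distance at least $\tfrac{4\delta}{5}$ from the obstacle throughout this interval. Moreover $T\ll\sigma^2=\eps\delta\log^2(\tfrac1\eps)$, so the effective width of $u_n$ remains $\sim\sigma$, which is much smaller than $\delta$ whenever $\delta\geq\eps^{6/7}$.

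Given this, I would introduce a smooth cutoff $\chi$ that vanishes on a neighbourhood of $\Omega^c$, equals one on $\{\dist(x,\Omega^c)\geq \delta/2\}$, and obeys $|\nabla\chi|\lesssim\delta^{-1}$, $|\Delta\chi|\lesssim\delta^{-2}$. On the supports of $1-\chi$ and $\nabla\chi$, the distance to the traveling center $2t\xi_n$ is at least $3\delta/10$; plugging this into \eqref{1244} yields a pointwise bound for $|u_n|+|\nabla u_n|$ on these sets of the form
\begin{align*}
\tfrac{\llogeps}{\eps}\,\sigma^{-3/2}\exp\Bigl\{-\tfrac{c\delta^2}{\sigma^2}\Bigr\}=\tfrac{\llogeps}{\eps}\,\sigma^{-3/2}\exp\Bigl\{-\tfrac{c\delta}{\eps\log^2(\tfrac1\eps)}\Bigr\}.
\end{align*}
The hypothesis $\delta\geq\eps^{6/7}$ forces $\delta/(\eps\log^2(\tfrac1\eps))\to\infty$, so this quantity is super-polynomially small in $\eps$.

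Next I would use the Duhamel decomposition, exactly as in Proposition~\ref{P:missing},
\begin{align*}
e^{it\ld}(1_\Omega\gamma_n)-u_n=e^{it\ld}\bigl[(1_\Omega-\chi)\gamma_n\bigr]+i\!\int_0^t\!\!e^{i(t-s)\ld}\bigl[(\Delta\chi)u_n+2\nabla\chi\cdot\nabla u_n\bigr]\,ds-(1-\chi)u_n,
\end{align*}
and apply the Strichartz inequality on $[0,T]$ to bound the $L^{10/3}_{t,x}$ norm by $\|(1-\chi)\gamma_n\|_{L^2}$, $\|[\Delta,\chi]u_n\|_{L^1_tL^2_x([0,T]\times\R^3)}$, and $\|(1-\chi)u_n\|_{L^{10/3}_{t,x}([0,T]\times\R^3)}$. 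Each of these three quantities is controlled by the super-polynomially small pointwise bound above, combined with the $O(\delta)$ thickness of $\supp(\nabla\chi)$, the $O(\delta^{-2})$ growth of $\Delta\chi$, and the trivial integration in time over the interval of length $T$. The upshot is an individual estimate of the form $\|e^{it\ld}(1_\Omega\gamma_n)-u_n\|_{L^{10/3}_{t,x}([0,T]\times\R^3)}\lesssim\eps^{100}$.

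The final step is to sum in $n$. The upper bound $|c_n^\eps|\lesssim(\sigma\eps)^{3/2}/L^3$ together with $\#\mathcal S\lesssim(L/\eps)^3[\llogeps]^3$ gives $\sum_{n\in\mathcal S}|c_n^\eps|\lesssim\eps^{-O(1)}$, which is dwarfed by the super-polynomial decay of each term. The main obstacle is purely bookkeeping: one must verify that the single Gaussian gain $\exp\{-c\delta/(\eps\log^2(\tfrac1\eps))\}$ dominates the combined polynomial losses coming from $\sum|c_n^\eps|$, from $\Delta\chi$, and from the factor $|\xi_n|\lesssim\llogeps/\eps$ in $\nabla u_n$; this is precisely where the hypothesis $\delta\geq\eps^{6/7}$ enters the argument.
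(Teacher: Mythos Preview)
Your proposal is correct and follows essentially the same route as the paper: introduce a cutoff $\chi$ at scale $\sim\delta$ around the obstacle (the paper uses the threshold $\delta/10$ where you use $\delta/2$), observe that $|2t\xi_n|\le\delta/5$ keeps the wave packet center well separated from $\supp(1-\chi)$, then apply the Duhamel/Strichartz argument of Proposition~\ref{P:missing} together with the pointwise bound \eqref{1244} to get $\|e^{it\ld}(1_\Omega\gamma_n)-u_n\|_{L^{10/3}_{t,x}([0,T]\times\R^3)}\le\eps^{100}$ and sum using \eqref{bdforc}.
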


\begin{proof}
Let $\chi $ be a smooth cutoff that vanishes on the obstacle and equals $1$ when $\dist(x,\Omega^c)>\frac \delta{10}$.  This cutoff can be chosen to also satisfy
\begin{align}\label{deta}
|\nabla \chi|\lsm \delta^{-1} \qtq{and} |\Delta \chi|\lsm \delta^{-2}.
\end{align}
Moreover, for $t\in[0,T]$ we have
\begin{align*}
|2t\xi_n |\le 2\frac{\eps\delta}{10\llogeps}\cdot\frac{\llogeps}{\eps}=\frac15 \delta
\end{align*}
and so
\begin{align}\label{615}
\dist(2t\xi_n, \supp(1-\chi))\ge \tfrac12 \delta \qtq{for all} t\in[0, T].
\end{align}

The proof of this proposition is almost identical to that of Proposition~\ref{P:missing}, with the roles of \eqref{1001} and \eqref{1002} being played by \eqref{615}.  Indeed, using the Duhamel formula and the Strichartz inequality as in the proof of Proposition~\ref{P:missing},
\begin{align*}
\|e^{it\Delta_{\Omega}}(&1_{\Omega}\gamma_n)-u_n\|_{L_{t,x}^{\frac{10}3}([0,T]\times\R^3)}\\
&\le\|e^{it\Delta_{\Omega}}(1_{\Omega}\gamma_n)-\chi u_n\|_{L_{t,x}^{\frac{10}3}([0,T]\times\R^3)}+\|\chi u_n-u_n\|_{L_{t,x}^{\frac{10}3}([0,T]\times\R^3)}\\
&\lesssim \|(1-\chi)\gamma_n\|_{L^2(\R^3)}+\|\Delta \chi u_n\|_{L^1_tL_x^2([0,T]\times\R^3)} +\|\nabla \chi \cdot \nabla u_n\|_{L_t^1L_x^2([0,T]\times\R^3)}.
\end{align*}
The first term is estimated straightforwardly
\begin{align*}
\|(1-\chi)\gamma_n\|_{L^2(\R^3)}^2
&\lsm\sigma^{-3}\int_{\supp(1-\chi)} e^{-\frac{|x|^2}{2\sigma^2}} \,dx
\lsm e^{-\frac{\dist^2(0,\supp(1-\chi))}{4\sigma^2}}
\lsm e^{-\frac{\delta^2}{16\sigma^2}}\le \eps^{200}.
\end{align*}
For the remaining two terms, we use \eqref{1244} and \eqref{615} to estimate
\begin{align*}
\|u_n\|_{L_t^1L_x^2([0,T]\times\supp(1-\chi))} & + \|\nabla u_n\|_{L_t^1L_x^2([0,T]\times\supp(1-\chi))}\\
&\lsm\delta\sup_{t\in[0,T]} \biggl(\frac{\sigma^2}{\sigma^4+t^2}\biggr)^{\frac34}\Bigl\|e^{-\frac{\sigma^2|x-2\xi_n t|^2}{8(\sigma^4+t^2)}}\Bigr\|_{L_x^2(\supp(1-\chi))}\\
&\lsm \delta \sup_{t\in[0,T]}\Bigl\|e^{-\frac{\sigma^2|x-2\xi_nt|^2}{16(\sigma^4+t^2)}}\Bigr\|_{L_x^{\infty}(\supp(1-\chi))}\\
&\lsm \delta \sup_{t\in[0,T]} \exp\Bigl\{-\frac{\sigma^2\dist^2(2t\xi_n,\supp(1-\chi))}{32\sigma^4}\Bigr\} \\
&\lsm \delta e^{-\frac{\delta^2}{128\sigma^2}}\le \eps^{110}.
\end{align*}
This implies
\begin{align*}
\|\Delta \chi u_n\|_{L^1_tL_x^2([0,T]\times \R^3)} +\|\nabla \chi \cdot \nabla u_n\|_{L_t^1L_x^2([0,T]\times \R^3)}
	\lsm \delta^{-2}\eps^{110}\le \eps^{100}.
\end{align*}

Collecting these estimates and using \eqref{bdforc} we obtain
\begin{align*}
\text{LHS}\eqref{st}\lesssim \sum_{n\in \mathcal S} \frac{(\sigma\eps)^{\frac32}}{L^3} \eps^{100}=o(1) \qtq{as} \eps\to 0.
\end{align*}
This finishes the proof of Proposition~\ref{P:short times}.
\end{proof}

Now take $n\in \mathcal E$, which means that the wave packet $u_n(t,x)$ enters the obstacle.  We write $t_c$ for the first time of intersection and $x_c=2t_c\xi_n$ for the location of this collision.  Naturally both $t_c$ and $x_c$ depend on $n$; however, as most of the analysis will focus on one wave packet at a time, we suppress this dependence in the notation.

We approximate the wave generated by $u_n$ reflecting off $\partial\Omega$ by a Gaussian wave packet $v_n$ (or more accurately
by $-v_n$ since the Dirichlet boundary condition inverts the profile), which we define as follows:
\begin{align}\label{forv}
v_n(t,x):=&\Bigl(\frac {\sigma^2}{2\pi}\Bigr)^{\frac 34}\frac {(\det\Sigma)^{\frac 12}}{(\sigma^2+it_c)^{\frac 32}} [\det(\Sigma+i(t-t_c))]^{-\frac12}
\exp\Bigl\{i(x-x_c)\eta-it|\eta|^2\notag\\
&\qquad\qquad\qquad\qquad+ix_c\cdot \xi-\tfrac14(x-x(t))^T(\Sigma+i(t-t_c))^{-1}(x-x(t))\Bigr\},
\end{align}
where for simplicity we write $\xi=\xi_n$.  The parameters $\eta$, which represents the momentum of the reflected wave packet, and $\Sigma$, which gives its covariance structure, will be specified shortly.  Correspondingly, $x(t):=x_c+2\eta(t-t_c)$ represents the center of the reflected wave packet.

We define an orthonormal frame $(\vec\tau,\vec \gamma,\vec \nu)$ at the point $x_c\in\partial\Omega$, where $\vec \tau,\vec \gamma$ are
two tangent vectors to $\partial \Omega$ in the directions of the principal curvatures $\frac 1{R_1},\ \frac 1{R_2}$ and $\vec \nu$ is the
unit outward normal to the obstacle. Note that the obstacle being strictly convex amounts to $1\lesssim R_1,R_2<\infty$. Without loss
of generality, we may  assume $R_1\le R_2$.

With this frame, we define $\eta:=\xi-2(\xi\cdot\vec\nu)\vec\nu$ as the reflection of $\xi$, in accordance with the basic law of reflection,
namely, the angle of incidence equals the angle of reflection. In this frame, $\Sigma^{-1}$ is defined as follows:
\begin{align}\label{E:Sigma defn}
\Sigma^{-1}=\frac 1{\sigma^2+it_c}\Id+iB,
\end{align}
where
\begin{align*}
B=\begin{pmatrix}
\frac {4\xi_3}{R_1} & 0 &\frac {4\xi_1}{R_1}\\
0 &\frac {4\xi_3}{R_2} &\frac {4\xi_2}{R_2}\\
\frac {4\xi_1}{R_1} &\frac {4\xi_2}{R_2}  &\frac{4\xi_1^2}{R_1\xi_3}+\frac {4\xi_2^2}{R_2\xi_3}
\end{pmatrix}
\end{align*}
and
\begin{align*}
\eta_1:=\eta\cdot\vec\tau&=\xi\cdot\vec\tau=:\xi_1 \\
\eta_2:=\eta\cdot \vec\gamma&=\xi\cdot \vec\gamma=:\xi_2\\
\eta_3:=\eta\cdot\vec\nu=-\xi\cdot\vec\nu&=:-\xi_3=\tfrac12 |\xi-\eta|.
\end{align*}
The matrix $B$ encodes the additional spreading of the reflected wave packet induced by the curvature of the obstacle; incorporating this subtle effect is essential for the analysis that follows.  The structure of the matrix $B$ captures the basic rule of mirror manufacture:  the radius of curvature equals twice the focal length.  

\begin{lem}[Bounds for collision times and locations]\label{L:xc}
For rays that enter the obstacle, we have
$$
\xi_3 < 0, \quad |\xi_3| \geq |\xi| [\llogeps]^{-4}, \qtq{and} \delta \leq |x_c| \lesssim \delta[\log\log(\tfrac1\eps)]^8.
$$
In particular, $\delta|\xi|^{-1}\leq 2t_c\lesssim \delta|\xi|^{-1}[\llogeps]^8$.
\end{lem}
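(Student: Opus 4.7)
The plan is to deduce all four claims from elementary Euclidean geometry, using only the definition of $\mathcal{E}$, the strict convexity of $\Omega^c$, and the fact that $x_c = 2t_c\xi$ lies on the ray emanating from the origin.

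First I would dispose of the two easy bounds. The estimate $|\xi_3| \geq |\xi|[\llogeps]^{-4}$ is simply a restatement of the second half of the defining condition for $n\in\mathcal E$ in Definition~\ref{D:MEG}. The lower bound $|x_c| \geq \delta$ follows immediately from $x_c \in \partial\Omega \subset \Omega^c$ together with $\delta = \dist(0,\Omega^c)$.

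Next, for the sign of $\xi_3$, I would observe that for small $s > 0$, the point $x_c - s\xi$ lies on the ray strictly before the first collision time $t_c$, hence lies in $\Omega$. Strict convexity of $\Omega^c$ with smooth boundary gives the local expansion $\Omega^c \subset \{x : (x - x_c)\cdot \vec\nu \leq -C|x-x_c|^2\}$ near $x_c$, so for small $s$ the point $x_c - s\xi$ must satisfy $(x_c - s\xi - x_c)\cdot\vec\nu = -s\,\xi_3 > 0$, which forces $\xi_3 < 0$. (More generally, the same argument shows the entire ray segment $\{2t\xi : 0\leq t<t_c\}$ lies on the side $\{(x-x_c)\cdot\vec\nu > 0\}$.)

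Finally, for the upper bound $|x_c| \lesssim \delta [\llogeps]^8$, I would use the global supporting hyperplane property furnished by convexity: the tangent plane to $\partial\Omega$ at $x_c$ separates the origin from $\Omega^c$, i.e.\ $\Omega^c \subset \{x : (x-x_c)\cdot\vec\nu \leq 0\}$. The distance from the origin to this halfspace equals $|x_c\cdot\vec\nu|$ (with the origin on the opposite side, by the previous paragraph), and must therefore be dominated by $\delta = \dist(0,\Omega^c)$. Writing $x_c = 2t_c\xi$, this reads
\[
\delta \;\geq\; |x_c\cdot\vec\nu| \;=\; 2t_c|\xi_3| \;\geq\; 2t_c|\xi|\,[\llogeps]^{-4} \;=\; |x_c|\,[\llogeps]^{-4},
\]
so $|x_c| \leq \delta [\llogeps]^{4}$, which is stronger than what is claimed. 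The ``in particular'' statement then follows instantly from $|x_c| = 2t_c|\xi|$, combined with the already-established lower bound $|x_c|\geq\delta$.

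There is no real obstacle here; the only step worth pausing over is the determination that the origin and $\Omega^c$ lie on opposite sides of the tangent plane at $x_c$, which is why I would do the sign analysis of $\xi_3$ before the geometric comparison. Once that is in hand, the bound on $|x_c|$ is a one-line consequence of convexity.
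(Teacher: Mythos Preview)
Your proof is correct, and for the upper bound on $|x_c|$ it is cleaner than the paper's. The paper handles that bound by splitting into two regimes according to the size of $\delta$: when $\delta\gtrsim[\llogeps]^{-8}$ it simply uses $|x_c|\leq\delta+\diam(\Omega^c)\lesssim 1$, and when $\delta$ is small it approximates $\partial\Omega$ from within by a paraboloid, intersects with the ray, and solves the resulting quadratic. Your supporting-hyperplane argument avoids the case split entirely: from $\Omega^c\subset\{(x-x_c)\cdot\vec\nu\leq 0\}$ and $0$ on the opposite side you get $\delta=\dist(0,\Omega^c)\geq -x_c\cdot\vec\nu=2t_c|\xi_3|$, whence $|x_c|=2t_c|\xi|\leq\delta\,|\xi|/|\xi_3|\leq\delta[\llogeps]^4$. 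This is shorter and in fact yields the sharper exponent $4$ rather than $8$.

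One small logical slip: from the inclusion $\Omega^c\subset\{(x-x_c)\cdot\vec\nu\leq -C|x-x_c|^2\}$ and $x_c-s\xi\notin\Omega^c$ you cannot conclude $x_c-s\xi$ lies outside the larger set (the implication runs the wrong way). The fix is immediate: write $\partial\Omega$ locally as a graph over the tangent plane, so that $x_c-s\xi\in\Omega$ reads $-s\xi_3>-s^2Q(P_T\xi)+O(s^3)$; dividing by $s$ and letting $s\to 0^+$ gives $\xi_3\leq 0$, and the strict inequality then follows from $|\xi_3|>0$ (the entering condition). The paper itself dispatches this point with a single sentence (``the ray approaches the obstacle from without''), so your treatment is already more explicit than what is being replaced.
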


\begin{proof}
The first inequality simply expresses the fact that the ray approaches the obstacle from without.  The second inequality is an exact repetition of
$n\in \mathcal E$ as given in Definition~\ref{D:MEG}.  The lower bound on $|x_c|$ follows directly from the fact that $\delta=\dist(0,\Omega^c)$.

The proof of the upper bound on $|x_c|$ divides into two cases.  When $\delta\gtrsim[\log\log(\tfrac1\eps)]^{-8}$, the result follows from
$|x_c|\leq \dist(0,\Omega^c) + \diam(\Omega^c) \lesssim1$.

It remains to consider the case when $\delta\leq \tfrac{1}{8C} [\log\log(\tfrac1\eps)]^{-8}$ for some fixed large $C=C(\Omega)$. By
approximating $\partial\Omega$ from within by a paraboloid, this case reduces to the analysis of the following system of equations:
\begin{align*}
y &= m x  \quad \text{and} \quad y = Cx^2 + \delta \quad \text{with} \quad m\geq [\log\log(\tfrac1\eps)]^{-4}.
\end{align*}
The first equation represents the ray, whose slope is restricted by that permitted for an entering ray. (Note that the convexity of the obstacle implies that the angle between the ray and $\partial\Omega$ is larger than the angle between the ray and the axis $y=0$.)  Using the quadratic formula, we see that the solution obeys
$$
|x_c|\leq \sqrt{ x^2 + y^2 } = \frac{2\delta \sqrt{1+m^2}}{m + \sqrt{m^2 - 4C\delta}} \sim \frac{\delta\sqrt{1+m^2}}{m},
$$
where we used the restriction on $\delta$ in the last step.
\end{proof}

\begin{lem}[Reflected waves diverge]\label{L:diverging rays}
For $j=1,2$, let $x^{(j)}(t)$ denote the broken ray beginning at the origin, moving with velocity $2\xi^{(j)}$ and reflecting off the convex body $\Omega^c$.  Then
$$
| x^{(1)}(t) - x^{(2)}(t) | \geq 2| \xi^{(1)} - \xi^{(2)} | \, t
$$
whenever $t \geq \max\{t_c^{(1)},t_c^{(2)}\}$, that is, greater than the larger collision time.
\end{lem}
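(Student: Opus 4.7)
I would prove this by a direct computation based on two features of strictly convex obstacles: the tangent plane at each collision point supports the obstacle, and the reflection law $\eta^{(j)}=\xi^{(j)}-2(\xi^{(j)}\cdot\nu^{(j)})\nu^{(j)}$ (where $\nu^{(j)}$ is the outward normal at $x_c^{(j)}$) is an isometry at each individual point. Without loss of generality assume $t_c^{(1)}\le t_c^{(2)}=:t_*$. For $t\ge t_*$ both rays have reflected, so $x^{(j)}(t)=2\xi^{(j)}t_c^{(j)}+2\eta^{(j)}(t-t_c^{(j)})$, and the function
\[
Q(t):=|x^{(1)}(t)-x^{(2)}(t)|^{2}-4|\xi^{(1)}-\xi^{(2)}|^{2}t^{2}
\]
is a quadratic in $t$. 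The plan is to verify three non-negativities: $Q(t_*)\ge 0$, $\dot Q(t_*)\ge 0$, and $\ddot Q(t)\ge 0$; these force $Q(t)\ge Q(t_*)\ge 0$ on $[t_*,\infty)$ and give the lemma after taking square roots.

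The essential input is convexity. Writing $a_j=\xi^{(j)}\!\cdot\!\nu^{(j)}<0$, $b_{12}=\xi^{(1)}\!\cdot\!\nu^{(2)}$, $b_{21}=\xi^{(2)}\!\cdot\!\nu^{(1)}$ and $c=\nu^{(1)}\!\cdot\!\nu^{(2)}$, the supporting-hyperplane property applied at each $x_c^{(j)}$ to the other collision point gives $\nu^{(j)}\!\cdot\!(x_c^{(j')}-x_c^{(j)})\le 0$; substituting $x_c^{(j)}=2\xi^{(j)}t_c^{(j)}$ yields the two key constraints
\[
|a_1|\,t_c^{(1)}\le |b_{21}|\,t_c^{(2)},\qquad |a_2|\,t_c^{(2)}\le |b_{12}|\,t_c^{(1)},
\]
so in particular $b_{12},b_{21}<0$ (both rays ``see'' the other collision point on the inner side) and $|b_{12}b_{21}|\ge|a_1a_2|$. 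An elementary expansion then gives
\[
Q(t_*)=16\,a_1\,(t_c^{(1)}-t_*)\bigl(a_1 t_c^{(1)}-b_{21}t_*\bigr),
\]
and each of the three factors has a determined sign (the last one being $|b_{21}|t_*-|a_1|t_c^{(1)}\ge 0$ by the first constraint), yielding $Q(t_*)\ge 0$. For the second derivative, expanding gives $\ddot Q/8=|\eta^{(1)}-\eta^{(2)}|^{2}-|\xi^{(1)}-\xi^{(2)}|^{2}=4(a_1b_{21}+a_2b_{12}-2a_1a_2 c)$, and AM--GM together with $|b_{12}b_{21}|\ge|a_1a_2|$ bounds $a_1b_{21}+a_2b_{12}\ge 2\sqrt{a_1a_2\,b_{12}b_{21}}\ge 2a_1a_2$; combining with $|c|\le 1$ and $a_1a_2>0$ gives $\ddot Q\ge 0$.

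The hardest step will be the non-negativity of $\dot Q(t_*)$. A direct differentiation of $Q$ and substitution of $x^{(1)}(t_*)-x^{(2)}(t_*)=4a_1t_c^{(1)}\nu^{(1)}+2(\eta^{(1)}-\xi^{(2)})t_*$ produces, after simplification and writing $A=|a_1|$, $B=|a_2|$, $\beta=|b_{21}|$, $\gamma=|b_{12}|$, $\tau=t_c^{(1)}$, $T=t_*$,
\[
\tfrac{1}{16}\dot Q(t_*)=-A^{2}\tau-B^{2}T+A\beta(2T-\tau)+B\gamma T+2ABc(\tau-T).
\]
Using the convexity constraints $\beta\ge A\tau/T$ and $\gamma\ge BT/\tau$ to replace $A\beta$ and $B\gamma$ by their lower bounds, this simplifies telescopically to
\[
\tfrac{1}{16}\dot Q(t_*)\ge (T-\tau)\bigl[A^{2}\tau/T+B^{2}T/\tau-2ABc\bigr]\ge 2AB(T-\tau)(1-c)\ge 0,
\]
where the last inequality is a single application of AM--GM together with $c\le 1$. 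With all three of $Q(t_*),\dot Q(t_*),\ddot Q\ge 0$ in hand, the quadratic $Q$ is nondecreasing on $[t_*,\infty)$ starting from a non-negative value, which yields the claim.
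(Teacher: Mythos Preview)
Your proof is correct and takes a genuinely different route from the paper's.

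The paper proceeds geometrically: it first establishes the two-dimensional case by an elementary planar argument (reflecting the outgoing rays across the chord joining the collision points and using convexity to see that the continuations of the incoming rays are trapped between the reflected outgoing rays), and then reduces the three-dimensional case to two dimensions by projecting onto the plane spanned by the two normals $\nu^{(1)},\nu^{(2)}$. The key observation there is that the reflection law and the convexity condition \eqref{Convex position} survive orthogonal projection onto that plane, while the complementary component of the motion is purely linear.

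Your approach is a direct quadratic computation in the ambient dimension: you write $Q(t)=|x^{(1)}(t)-x^{(2)}(t)|^{2}-4|\xi^{(1)}-\xi^{(2)}|^{2}t^{2}$ and verify $Q(t_*)\ge 0$, $\dot Q(t_*)\ge 0$, $\ddot Q\ge 0$ by explicit expansion, feeding in exactly the same supporting-hyperplane inequalities $\nu^{(j)}\cdot(x_c^{(j')}-x_c^{(j)})\le 0$ that the paper uses. The algebra checks out line by line; in particular the factorisation $Q(t_*)=16a_1(t_c^{(1)}-t_*)(a_1t_c^{(1)}-b_{21}t_*)$ and the telescoping estimate for $\dot Q(t_*)$ are correct. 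Your method avoids the dimension reduction and the attendant case analysis (parallel $\xi^{(j)}$, linear dependence of normals), and works verbatim in any dimension. The paper's argument, on the other hand, gives a clearer geometric picture of \emph{why} reflection off a convex body increases separation.
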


\begin{proof}
In the two dimensional case, this result follows from elementary planar geometry.  A particularly simple argument is to reflect the
outgoing rays across the line joining the two collision points.  By convexity, the continuations of the incoming rays will both lie
between the reflected outgoing rays. Note that the geometry involved is dictated solely by the two tangent lines at the collision points, not by the
shape of the convex body in between.

We note that given two vectors $v^{(j)}\in\R^2$ and two points $y^{(j)}\in\R^2$, there is a convex curve passing through these points
and having these vectors as outward normals at these points if and only if
\begin{equation}\label{Convex position}
v^{(1)} \cdot \bigl(y^{(1)}-y^{(2)}\bigr) \geq 0\quad\text{and}\quad v^{(2)} \cdot \bigl(y^{(2)}-y^{(1)}\bigr) \geq0.
\end{equation}
Indeed, by convexity, $\Omega^c\subseteq\{x:\, (x-y^{(j)})\cdot v^{(j)}\leq 0\}$ for $j=1,2$.

We will use this two dimensional case as a stepping stone to treat three dimensions.  (The argument carries over to higher dimensions
also.)  If $\xi^{(1)}$ and $\xi^{(2)}$ are parallel, then the analysis is one-dimensional and totally elementary.  In what
follows, we assume that these vectors are not parallel.

Let $\nu^{(1)}$ and $\nu^{(2)}$ denote the unit outward normals to $\partial\Omega$ at the collision points.  These are linearly
independent.  We write $P$ for the orthogonal projection into the plane that they span and $Q=\Id-P$ for the complementary projection.
By the law of reflection, $Q [ x^{(j)}(t) ] = Q [2\xi^{(j)} t]$ and the broken rays $P [ x^{(j)}(t) ]$ make equal angles of incidence
and reflection with the projected normals $P[\nu^{(j)}]=\nu^{(j)}$ at the projected collision points.  We now apply the two-dimensional
result.  To do this, we need to see that the projected collision points and the projected normals obey the chord/normal condition \eqref{Convex position};
this follows immediately from the convexity of the original obstacle.

Using the two-dimensional result, we get
\begin{align*}
\bigl| x^{(1)}(t) - x^{(2)}(t) \bigr|^2 &= \bigl| P[x^{(1)}(t)] - P[x^{(2)}(t)] \bigr|^2 + \bigl| Q[x^{(1)}(t)] - Q[x^{(2)}(t)]\bigr|^2\\
&\geq 4 \bigl| P [ \xi^{(1)} ] - P [ \xi^{(2)} ] \bigr|^2 t^2 +4\bigl| Q [ \xi^{(1)} ] - Q [ \xi^{(2)} ] \bigr|^2 t^2 \\
&= 4 | \xi^{(1)}- \xi^{(2)} |^2 t^2,
\end{align*}
which proves the lemma.
\end{proof}

Next we investigate in more detail the properties of the matrix $\Sigma$.

\begin{lem}[Bounds for the covariance matrix]\label{L:matrix}
Let $n\in \mathcal E$.  Then
\begin{align}
\Re \vec v^{\;\!T}(\Sigma+i(t-t_c))^{-1}\vec v&\ge\frac{\sigma^2}{[\llogeps]^{25}[\sigma^4+\log^4(\tfrac 1\eps)t^2]} |\vec v|^2\label{sig41}\\
\|(\Sigma+i(t-t_c))^{-1}\|_{\max}&\leq\frac{\log^{5}(\frac1\eps)}{\sqrt{\sigma^4+t^2}}\label{sig42}\\
|\det( \Id +i(t-t_c)\Sigma^{-1})|^{-\frac 12}&\leq \log^{\frac52}(\tfrac1{\eps})\biggl(\frac{\sigma^4+t_c^2}{\sigma^4+t^2}\biggr)^{\frac34}\label{sig3}
\end{align}
for all $t\geq 0$ and $\vec v\in \R^3$.  If in addition $|t-t_c|\le 4\frac{\sigma\logeps}{|\xi|}$, then
\begin{align}
\|(\Sigma+i(t-t_c))^{-1}-\Sigma^{-1}\|_{HS}&\le\eps^{-\frac 12}\delta^{-\frac 32}\log^3(\tfrac 1\eps)\label{sig1}\\
\bigl|1-\det(\Id+i(t-t_c)\Sigma^{-1})^{-\frac12}\bigr|&\le\eps^{\frac12}\delta^{-\frac12}\log^3(\tfrac 1\eps)\label{sig2}.
\end{align}
Here $\|\cdot\|_{HS}$ denotes the Hilbert--Schmidt norm: for a matrix $A=(a_{ij})$, this is given by $\|A\|_{HS}=(\sum_{i,j}|a_{ij}|^2)^{\frac 12}$.
Also, $\|A\|_{\max}$ denotes the operator norm of $A$.
\end{lem}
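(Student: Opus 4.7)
The plan is to simultaneously diagonalize $B$ (which is real symmetric) together with $\Sigma^{-1}$, thereby reducing all five inequalities to scalar bounds indexed by the three eigenvalues of $B$. The factorization $B = (4/\xi_3)C^{T}DC$, with $D = \mathrm{diag}(1/R_1,1/R_2)$ and $C$ the $2\times 3$ matrix whose rows are $(\xi_3,0,\xi_1)$ and $(0,\xi_3,\xi_2)$, shows that $\mathrm{rank}(B)\leq 2$, that $\ker B = \mathbb R\eta$ (since $C\eta=0$), and that the two non-zero eigenvalues $\mu_2,\mu_3$ of $B$ are negative with $|\mu_j|\in[c|\xi_3|,\,C|\xi|^2/|\xi_3|]\subseteq [c\eps^{-1}[\llogeps]^{-5},\,C\eps^{-1}[\llogeps]^{5}]$; here I invoke $|\xi_3|\geq |\xi|[\llogeps]^{-4}$ from $n\in\mathcal E$ and $|\xi|\lesssim\eps^{-1}\llogeps$ from $n\in\mathcal S$. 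In an orthonormal eigenbasis of $B$, both $\Sigma^{-1}$ and (by commutativity) $(\Sigma+i(t-t_c)\Id)^{-1}$ are diagonal with respective entries
\begin{equation*}
A_j := (\sigma^2+it_c)^{-1} + i\mu_j =: a+ic_j, \qquad A_j/(1+isA_j), \qquad s:=t-t_c,
\end{equation*}
where $a := \sigma^2/(\sigma^4+t_c^2)$ and $c_j := -t_c/(\sigma^4+t_c^2) + \mu_j$; each of the five bounds thereby becomes a scalar estimate to be proved uniformly in $j$.

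Next I would collect several auxiliary bounds. Lemma~\ref{L:xc} together with $\sigma^2 = \eps\delta\log^2(\tfrac1\eps)$ gives $t_c^2\lesssim \sigma^4[\llogeps]^{18}/\log^4(\tfrac1\eps)\ll \sigma^4$, whence $\sigma^4+t_c^2\sim\sigma^4$, $a\sim\sigma^{-2}$, and $|b|\leq 1/(2\sigma^2)$ by AM--GM. Hence $|A_j|^2\leq C(\sigma^{-4}+\mu_j^2)$ and $t_c^2\mu_j^2\leq C[\llogeps]^{24}$, while the non-positivity of $b$ and $\mu_j$ forces $c_j\leq 0$ and thus $(1-sc_j)\geq 1$ for all $s\geq 0$. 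In the short-time regime $|s|\leq 4\sigma\logeps/|\xi|$ relevant to \eqref{sig1} and \eqref{sig2}, the hypothesis $\delta\geq\eps^{6/7}$ yields $|s||A_j|\leq C\sqrt{\eps/\delta}\llogeps + C\sqrt{\eps\delta}\log^2(\tfrac1\eps)[\llogeps]^6 = o(1)$, so $|1+isA_j|\sim 1$ uniformly. The matrix identity
\begin{equation*}
(\Sigma+is\Id)^{-1}-\Sigma^{-1} = -is\,\Sigma^{-2}(\Id+is\Sigma^{-1})^{-1}
\end{equation*}
reduces \eqref{sig1} to $|s|\bigl(\textstyle\sum_j|A_j|^4\bigr)^{1/2}\lesssim \eps^{-1/2}\delta^{-3/2}\log^3(\tfrac1\eps)$, which follows at once from the preceding bounds. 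For \eqref{sig2}, expanding $\log\det(\Id+isA) = is\,\mathrm{Tr}(A) + O(|s|^2\|A\|_{HS}^2)$, combining with $|1-e^{-z/2}|\leq |z|e^{|z|/2}$, and estimating $|s\,\mathrm{Tr}(A)|\lesssim \eps^{1/2}\delta^{-1/2}\log^3(\tfrac1\eps)$ in the same manner gives the bound.

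For the all-time estimates \eqref{sig41}, \eqref{sig42}, \eqref{sig3}, the direction $j=1$ (with $\mu_1=0$) is immediate: $A_1/(1+isA_1) = 1/(\sigma^2+it)$, so $\Re(\cdot)=\sigma^2/(\sigma^4+t^2)$, $|{\cdot}|=1/\sqrt{\sigma^4+t^2}$, and $|1+isA_1|^{-1/2} = [(\sigma^4+t_c^2)/(\sigma^4+t^2)]^{1/4}$, accounting for one of the three $1/4$-powers on the right-hand side of \eqref{sig3}. For $j=2,3$, starting from $|1+isA_j|^2 = (1-sc_j)^2+s^2a^2$, the upper bound $|1+isA_j|^2\leq 2+2s^2|A_j|^2\leq C[\llogeps]^{24}\bigl(1+\log^4(\tfrac1\eps)t^2/\sigma^4\bigr)$, obtained by splitting $s^2\leq 2t^2+2t_c^2$ and using the preceding bounds, settles \eqref{sig41} via $\Re A_j/(1+isA_j)=a/|1+isA_j|^2$. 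For \eqref{sig42}, the absolute minimum $\min_s|1+isA_j|^2 = a^2/|A_j|^2$ yields the worst-case bound $|A_j/(1+isA_j)|^2\leq |A_j|^4/a^2\lesssim \log^8(\tfrac1\eps)[\llogeps]^{20}/\sigma^4$; this minimum is attained at $s_\star=c_j/|A_j|^2\in[-1/|c_j|,0)$, so either $s_\star\in[-t_c,0)$ (whence $t_\star\in[0,t_c]$) or the minimum on $s\geq -t_c$ is attained at $s=-t_c$ (whence $t=0$), and in both subcases $\sigma^4+t^2\sim\sigma^4$, yielding \eqref{sig42}.

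The main obstacle is \eqref{sig3}: having extracted the exact $j=1$ contribution, one must establish
\begin{equation*}
\prod_{j=2,3}|1+isA_j|^2 \geq \log^{-10}(\tfrac1\eps)\bigl[(\sigma^4+t^2)/(\sigma^4+t_c^2)\bigr]^{2}.
\end{equation*}
This I would prove by case analysis on the magnitude of $s$ versus $t_c$. For $s\geq 2t_c$ (which, since $s\geq -t_c$ is automatic, is the only way to have $|s|\geq 2t_c$), the bound $|1+isA_j|^2\geq s^2a^2\gtrsim t^2/\sigma^4$ combined with the comparison $(\sigma^4+t^2)/(\sigma^4+t_c^2)\lesssim 1+t^2/\sigma^4$ makes the inequality essentially trivial. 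For $|s|\leq 2t_c$, one has $t\in[0,3t_c]$ and hence $\sigma^4+t^2\sim\sigma^4\sim\sigma^4+t_c^2$, reducing the requirement to the absolute bound $\prod_{j=2,3}|1+isA_j|^2\geq \prod_{j=2,3}(a^2/|A_j|^2)\gtrsim \log^{-8}(\tfrac1\eps)[\llogeps]^{-20}$, which dominates $\log^{-10}(\tfrac1\eps)$ for $\eps$ sufficiently small. Crucially, convexity of the obstacle is what guarantees $\mu_j\leq 0$ (since $R_1,R_2>0$) and hence $c_j\leq 0$: this precludes any focal point in the forward direction $s\geq 0$ and confines the hypothetical minimum of $|1+isA_j|^2$ to the backward region $s\in[-t_c,0]$, where $\sigma^4+t^2$ remains comparable to $\sigma^4+t_c^2$.
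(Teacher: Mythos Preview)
Your approach is essentially the same as the paper's: diagonalize the real symmetric matrix $B$, observe that $\eta$ spans its kernel and that the two nonzero eigenvalues are negative with magnitudes in $[c|\xi_3|,C|\xi|^2/|\xi_3|]$, and then reduce all five inequalities to scalar bounds on the diagonal entries $A_j/(1+isA_j)$. Your treatment of \eqref{sig1}, \eqref{sig2}, \eqref{sig41}, and \eqref{sig3} is correct and tracks the paper closely (the paper obtains the eigenvalue bounds from the trace/product formulas rather than the $C^TDC$ factorization, and proves \eqref{sig3} via a single algebraic lower bound on the denominator rather than a case split, but these are minor differences).

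There is, however, a gap in your argument for \eqref{sig42}. You show that $\max_{t\ge 0}|A_j/(1+isA_j)|^2 \le |A_j|^4/a^2$ and that this maximum is attained at some $t_\star\in[0,t_c]$ where $\sigma^4+t_\star^2\sim\sigma^4$; this verifies \eqref{sig42} \emph{at} $t_\star$ but not for all $t\ge 0$. The inequality $\max L\le\max R$ does not give $L(t)\le R(t)$ pointwise, and for $t\gg\sigma^2$ the right-hand side $\log^5(\tfrac1\eps)/\sqrt{\sigma^4+t^2}$ is much smaller than $\log^5(\tfrac1\eps)/\sigma^2$. The missing regime is easy: for $s\ge 0$ one has $(1-sc_j)\ge 1$ (since $c_j\le 0$), hence $|1+isA_j|^2\ge 1+s^2a^2\gtrsim(\sigma^4+t^2)/\sigma^4$ once $t\ge 2t_c$, and then $|A_j/(1+isA_j)|^2\lesssim\sigma^4|A_j|^2/(\sigma^4+t^2)\lesssim\log^5(\tfrac1\eps)/(\sigma^4+t^2)$; for $t_c\le t\le 2t_c$ one has $\sigma^4+t^2\sim\sigma^4$ and $|1+isA_j|^2\ge 1$, which gives an even better bound. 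You already carry out exactly this large-$s$ argument in your treatment of \eqref{sig3}; it just needs to be invoked for \eqref{sig42} as well. The paper avoids the case split by proving directly that $\sigma^4[1-\mu_j s]^2+[t-\mu_j t_c s]^2\gtrsim(\sigma^4+t^2)\log^{-4}(\tfrac1\eps)[\llogeps]^{-26}$ via the identity $p^2+q^2=\tfrac12(p+q)^2+\tfrac12(p-q)^2$ applied to $p=1-\mu_js$, $q=t/t_c-\mu_js$.
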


\begin{proof} We first prove \eqref{sig1}. Using Lemma~\ref{L:xc}, we get
\begin{align}\label{sig}
\|\Sigma^{-1}\|_{HS}\leq \frac{\sqrt{3}}{|\sigma^2+it_c|}+\|B\|_{HS}
\le \sqrt{3}\sigma^{-2}+\frac {4\sqrt{10}|\xi|^2}{ R_1|\xi_3|}
&\lesssim\sigma^{-2}+|\xi|[\log\log(\tfrac 1{\eps})]^4 \notag\\
&\lesssim \eps^{-1}\delta^{-1}[\log\log(\tfrac 1{\eps})]^5.
\end{align}
Thus, for $|t-t_c|\le 4\frac{\sigma\logeps}{|\xi|}$ we obtain
\begin{align}\label{306}
\|(t-t_c)\Sigma^{-1}\|_{HS}&\lsm \eps^{\frac12}\delta^{-\frac12}\log^2(\tfrac 1\eps)[\llogeps]^6\ll1.
\end{align}
Combining this with the resolvent formula
\begin{align*}
(\Sigma+i(t-t_c))^{-1}-\Sigma^{-1}&=-i(t-t_c)\Sigma^{-1}(\Sigma+i(t-t_c))^{-1}\\
&=-i(t-t_c)\Sigma^{-2}(\Id+i(t-t_c)\Sigma^{-1})^{-1}
\end{align*}
and using \eqref{306}, we estimate
\begin{align*}
\|(\Sigma+i(t-t_c))^{-1}-\Sigma^{-1}\|_{HS}&\le
|t-t_c|\|\Sigma^{-1}\|_{HS}^2\|(\Id+i(t-t_c)\Sigma^{-1})^{-1}\|_{HS}\\
&\lesssim \frac{4\sigma\logeps}{|\xi|}\eps^{-2}\delta^{-2}[\llogeps]^{10}\\
&\lsm \eps^{-\frac 12}\delta^{-\frac 32}\log^2(\tfrac1\eps)[\llogeps]^{11}\\
&\le \eps^{-\frac 12}\delta^{-\frac 32}\log^3(\tfrac 1\eps).
\end{align*}
This settles \eqref{sig1}.  The estimate \eqref{sig2} follows from \eqref{306} and the fact that the determinant function is Lipschitz on a small neighborhood of the identity.

We now turn to the remaining estimates; the key is to understand the real symmetric matrix $B$.  A direct computation gives
\begin{align*}
\det(\lambda \Id-B)=\lambda\biggl[\lambda^2-4\biggl(\frac{\xi_1^2+\xi_3^2}{R_1\xi_3}+\frac{\xi_2^2+\xi_3^2}{R_2\xi_3}\biggr)\lambda
+\frac{16|\xi|^2}{R_1R_2}\biggr].
\end{align*}
Hence one eigenvalue is $0$ and it is easy to check that $\eta$ is the corresponding eigenvector.  We write $-\infty<\lambda_2\le\lambda_1<0$ for the remaining eigenvalues. Moreover, as
\begin{align*}
\lambda_1\lambda_2=\frac{16|\xi|^2}{R_1R_2}\qtq{and}
|\lambda_1|+|\lambda_2|=4\Bigl(\frac{\xi_1^2+\xi_3^2}{R_1|\xi_3|}+\frac{\xi_2^2+\xi_3^2}{R_2|\xi_3|}\Bigr),
\end{align*}
using Lemma~\ref{L:xc} we get
\begin{align*}
[\llogeps]^{-4}|\xi|\lsm |\lambda_1|\le |\lambda_2|\lsm\frac{|\xi|^2}{|\xi_3|}\lsm |\xi|[\llogeps]^4.
\end{align*}
In particular,
\begin{align}\label{B norm}
\| B \|_{\max} \lesssim |\xi|[\llogeps]^4 \lesssim \eps^{-1} [\llogeps]^5.
\end{align}

The orthonormal eigenbasis for $B$ is also an eigenbasis for $\Sigma^{-1}$ with eigenvalues
\begin{align*}
\frac 1{\sigma^2+it_c}, \quad \frac 1{\sigma^2+it_c}+i\lambda_1,\qtq{and} \frac1{\sigma^2+it_c}+i\lambda_2.
\end{align*}
In this basis, $(\Sigma+i(t-t_c))^{-1}$ is diagonal with diagonal entries
\begin{align*}
\frac 1{\sigma^2+it}, \ \Bigl[\Bigl(\frac1{\sigma^2+it_c}+i\lambda_1\Bigr)^{-1}+i(t-t_c)\Bigr]^{-1},\ %
\Bigl[\Bigl(\frac1{\sigma^2+it_c}+i\lambda_2\Bigr)^{-1}+i(t-t_c)\Bigr]^{-1}.
\end{align*}

An exact computation gives
\begin{align*}
\Re \Bigl[\Bigl(\frac1{\sigma^2+it_c}+i\lambda_j\Bigr)^{-1}+i(t-t_c)\Bigr]^{-1}=\frac{\sigma^2}{\sigma^4[1-\lambda_j(t-t_c)]^2 +[t-\lambda_jt_c(t-t_c)]^2}.
\end{align*}
Using $\delta\lesssim 1$, the upper bound for $t_c$ given by Lemma~\ref{L:xc}, and the upper bound for $\lambda_j$ obtained above, we get
\begin{align*}
|\lambda_j t_c|&\lsm |\xi|[\llogeps]^4\frac{\delta[\llogeps]^8}{|\xi|}\lsm [\llogeps]^{12}\\
\lambda_j^2 t_c^4&\lsm |\xi|^2[\llogeps]^8\frac{\delta^4[\llogeps]^{32}}{|\xi|^4}\lsm\delta^4\eps^2[\llogeps]^{42}\le \sigma^4\\
\sigma^4\lambda_j^2&\lsm \eps^2\delta^2\log^4(\tfrac 1\eps)|\xi|^2[\llogeps]^8 \lsm  \log^4(\tfrac1\eps)[\llogeps]^{10}.
\end{align*}
Therefore,
\begin{align*}
\sigma^4[1-\lambda_j(t-t_c)]^2 &+[t-\lambda_jt_c(t-t_c)]^2\\
&\lsm \sigma^4(1+\lambda_j^2t^2+\lambda_j^2t_c^2)+t^2+\lambda_j^2t_c^2t^2+\lambda_j^2t_c^4\\
&\lsm\sigma^4(1+\lambda_j^2t_c^2)+\lambda_j^2t_c^4+t^2(1+\lambda_j^2t_c^2+\sigma^4\lambda_j^2)\\
&\lsm \sigma^4[\llogeps]^{24}+t^2\log^4(\tfrac1\eps)[\llogeps]^{10}\\
&\le [\llogeps]^{25}[\sigma^4+\log^4(\tfrac 1\eps) t^2].
\end{align*}
Thus,
\begin{align*}
\Re \Bigl[\Bigl(\frac 1{\sigma^2+it_c}+i\lambda_j\Bigr)^{-1}+i(t-t_c)\Bigr]^{-1}\ge\frac{ \sigma^2}{[\llogeps]^{25}[\sigma^4+t^2\log^4(\frac 1\eps)]}.
\end{align*}
As $\Re \frac 1{\sigma^2+it}$ admits the same lower bound, we derive \eqref{sig41}.

We now turn to \eqref{sig42}.  Our analysis is based on the identity
\begin{align*}
\biggl|\Bigl[\Bigl(\frac1{\sigma^2+it_c}+i\lambda_j\Bigr)^{-1}+i(t-t_c)\Bigr]^{-1}\biggr|^2
&=\biggl|\frac{1-\lambda_jt_c+i\lambda_j\sigma^2}{\sigma^2[1-\lambda_j(t-t_c)]+i[t-\lambda_jt_c(t-t_c)]}\biggr|^2\\
&=\frac{(1-\lambda_jt_c)^2+(\lambda_j\sigma^2)^2}{\sigma^4[1-\lambda_j(t-t_c)]^2+[t-\lambda_jt_c(t-t_c)]^2}.
\end{align*}
We have
\begin{align*}
(1-\lambda_jt_c)^2+(\lambda_j\sigma^2)^2\lesssim 1+ [\llogeps]^{24} + \log^4(\tfrac1\eps)[\llogeps]^{10} \leq \log^5(\tfrac1\eps).
\end{align*}
To estimate the denominator we use Lemma~\ref{L:xc} to see that $t_c\ll \sigma^2$ and so
\begin{align}\label{823}
\sigma^4[1-\lambda_j(t-t_c)]^2+[t-\lambda_jt_c(t-t_c)]^2
&\geq t_c^2\Bigl\{[1-\lambda_j(t-t_c)]^2 + \bigl[ \tfrac{t}{t_c} -\lambda_j(t-t_c)\bigr]^2\Bigr\}\notag\\
&=2\bigl[\tfrac{t+t_c}2-\lambda_j t_c (t-t_c)\bigr]^2 + \tfrac12(t-t_c)^2\notag\\
&\gtrsim [\llogeps]^{-24} (t+t_c)^2\notag\\
&\gtrsim \frac{\sigma^4+t^2}{\log^4(\frac1{\eps}) [\llogeps]^{26}},
\end{align}
where we have used the bound $|\lambda_j t_c|\lsm [\llogeps]^{12}$ to derive the penultimate inequality.  Combining these bounds we obtain
\begin{align*}
\biggl|\Bigl[\Bigl(\frac1{\sigma^2+it_c}+i\lambda_j\Bigr)^{-1}+i(t-t_c)\Bigr]^{-1}\biggr|^2
&\lesssim \frac{\log^9(\frac1\eps)[\llogeps]^{26}}{\sigma^4+t^2} \le \frac{\log^{10}(\frac1{\eps})}{\sigma^4+t^2}.
\end{align*}
As $(\Sigma+i(t-t_c))^{-1}$ is orthogonally diagonalizable, this bound on its eigenvalues yields \eqref{sig42}.

Finally, we compute
\begin{align*}
|\det(\Id+i(t-t_c)\Sigma^{-1})|
&=\biggl|\Bigl(1+\frac{i(t-t_c)}{\sigma^2+it_c}\Bigr)\prod_{j=1,2}\Bigl[1+i(t-t_c)\Bigl(\frac 1{\sigma^2+it_c}+i\lambda_j\Bigr)\Bigr]\biggr|\\
&=\biggl|\frac{\sigma^2+it}{(\sigma^2+it_c)^3}\prod_{j=1,2}\Bigl\{\sigma^2[1-\lambda_j(t-t_c)]+i[t-\lambda_jt_c(t-t_c)]\Bigr\}\biggr|.
\end{align*}
Using \eqref{823} we obtain
\begin{align*}
&|\det(\Id+i(t-t_c)\Sigma^{-1})|^{-1}\\
&\quad\le \frac{(\sigma^4+t_c^2)^{\frac 32}}{(\sigma^4+t^2)^{\frac12}}
	\prod_{j=1,2}\Bigl\{\sigma^4[1-\lambda_j(t-t_c)]^2+[t-\lambda_jt_c(t-t_c)]^2\Bigr\}^{-\frac12}\\
&\quad\leq \log^5(\tfrac1{\eps})\biggl(\frac{\sigma^4+t_c^2}{\sigma^4+t^2}\biggr)^{\frac32}.
\end{align*}
This completes the proof of the lemma.
\end{proof}

Using this lemma, we will see that the reflected wave $v_n$ agrees with $u_n$ to high order on $\partial\Omega$, at least for $x$ near $x_c^{(n)}$ and $t$ near $t_c^{(n)}$; compare \eqref{A} with $|u_n(t_c^{(n)},x_c^{(n)})|\sim \sigma^{-3/2}$. Indeed, requiring this level of agreement can be used to derive the matrix $B$ given above.  Without this level of accuracy we would not be able to show that the contribution of entering rays is $o(1)$ as $\eps\to0$.

\begin{lem} \label{L:uv match} Fix $n\in \mathcal E$.  For each $x\in \Omega$, let $x_*=x_*(x)$ denote the nearest point to $x$ in $\partial\Omega$.
Let
$$
A_n(t,x):=\exp\{it|\xi_n|^2-i\xi_n\cdot(x_*-x_c^{(n)})\}\bigl[u_n(t,x_*)-v_n(t,x_*)\bigr].
$$
Then for each $(t,x)\in\R\times\Omega$ such that $|x_*-x_c^{(n)}|\le\sigma \log(\frac 1{\eps})$ and $|t-t_c^{(n)}|\le \frac {4\sigma\log(\frac 1{\eps})}{|\xi_n|}$ we have
\begin{align}
|A_n(t,x)|&\lesssim\eps^{-\frac 14}\delta^{-\frac 54}\log^{12}(\tfrac 1{\eps}) \label{A}\\
|\nabla A_n(t,x)|&\lesssim \eps^{-\frac 34}\delta^{-\frac 74}\log^{12}(\tfrac 1{\eps}) \label{deriv A}\\
|\partial_t A_n(t,x)| + |\Delta A_n(t,x)|&\lesssim \eps^{-\frac 74}\delta^{-\frac 74}\log^9(\tfrac 1{\eps}) \label{laplace A}.
\end{align}
\end{lem}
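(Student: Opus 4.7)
The plan is to compute $A_n$ explicitly from the definitions of $u_n$ and $v_n$ and extract the cancellations built into the choice of $\eta$, $\Sigma$, and $B$, then estimate the residues using Lemma~\ref{L:matrix}. Using $|\eta|^2=|\xi_n|^2$ and writing $s:=t-t_c$, $z:=x_*-x_c$, one obtains
\begin{align*}
A_n(t,x)=\tfrac{e^{ix_c\cdot\xi_n}}{(2\pi)^{3/4}}\bigl(\tfrac{\sigma}{\sigma^2+it_c}\bigr)^{3/2}\bigl[M_u(s)\,e^{\Phi_u(s,z)}-M_v(s)\,e^{\Phi_v(s,z)}\bigr],
\end{align*}
where $M_u(s):=((\sigma^2+it_c)/(\sigma^2+it))^{3/2}$, $M_v(s):=[\det(\Id+is\Sigma^{-1})]^{-1/2}$, $\Phi_u(s,z):=-|z-2\xi_n s|^2/(4(\sigma^2+it))$, and $\Phi_v(s,z):=iz\cdot(\eta-\xi_n)-\tfrac14(z-2\eta s)^T(\Sigma+is)^{-1}(z-2\eta s)$. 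Since $t_c\ll\sigma^2$ by Lemma~\ref{L:xc}, the prefactor has modulus $\sim\sigma^{-3/2}$.

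The heart of the argument is a cancellation on $\partial\Omega$. Parameterize $x_*\in\partial\Omega$ near $x_c$ in the principal-curvature frame $(\vec\tau,\vec\gamma,\vec\nu)$ by tangential coordinates $(\tau,\gamma)$; convexity and finite curvature give $z=\tau\vec\tau+\gamma\vec\gamma+h\vec\nu$ with $h=-\tau^2/(2R_1)-\gamma^2/(2R_2)+O(|z|^3)$. At $s=0$, using $\Sigma^{-1}=(\sigma^2+it_c)^{-1}\Id+iB$ and $\eta-\xi_n=-2\xi_{n,3}\vec\nu$, the difference becomes $\Phi_v-\Phi_u=-2i\xi_{n,3}h-\tfrac i4 z^TBz$. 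The leading $(\tau,\gamma)$-quadratic piece of $-\tfrac i4 z^TBz$, read off from the $(1,1)$ and $(2,2)$ entries $4\xi_{n,3}/R_j$ of $B$, is $-i\xi_{n,3}(\tau^2/R_1+\gamma^2/R_2)$, which precisely cancels the leading piece $+i\xi_{n,3}(\tau^2/R_1+\gamma^2/R_2)$ coming from $-2i\xi_{n,3}h$. This is the defining geometric content of $B$: the curvature-induced quadratic phase on the boundary is exactly cancelled, so the reflected wave matches the incoming one to high order on $\partial\Omega$. The residue is $O(|\xi_n||z|^3)$ from the off-diagonal $(1,3),(2,3)$ entries of $B$ and higher-order corrections to $h$, plus $O(|\xi_n|[\llogeps]^4|z|^4)$ from the $(3,3)$ entry combined with $h^2$ (using $|\xi_{n,3}|\geq|\xi_n|[\llogeps]^{-4}$ for $n\in\mathcal E$).

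For $s\neq 0$ with $|s|\leq 4\sigma\logeps/|\xi_n|$, Lemma~\ref{L:matrix}\eqref{sig1}--\eqref{sig2} control $\|(\Sigma+is)^{-1}-\Sigma^{-1}\|$ and $|M_v(s)-1|$; a direct computation handles $|M_u(s)-1|$. A crucial secondary cancellation comes from the fact that $B\eta=0$ ($\eta$ is the null eigenvector of $B$ by construction), which kills the leading $\eta$-coefficient of $\partial_s\Phi_v$ and reduces the linear-in-$s$ mismatch between $\Phi_v$ and $\Phi_u$ to a term proportional to the normal coordinate $h=O(|z|^2)$. Combined with the $s=0$ analysis and using $|e^{\Phi_v}|\leq 1$ from Lemma~\ref{L:matrix}\eqref{sig41}, the bracket $M_u e^{\Phi_u}-M_v e^{\Phi_v}$ is controlled by $\eps^{1/2}\delta^{-1/2}\log^{O(1)}(\tfrac1\eps)$ (dominated by the $|M_v-1|$ contribution from \eqref{sig2}). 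Multiplying by the $\sigma^{-3/2}\sim(\eps\delta)^{-3/4}\log^{-3/2}(\tfrac1\eps)$ prefactor yields \eqref{A}. The main obstacle here is the careful bookkeeping of the many sub-leading terms and verifying that their combined log power is absorbed into the stated $\log^{12}(\tfrac1\eps)$.

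For \eqref{deriv A} and \eqref{laplace A}, I would differentiate the explicit expression. The chain rule gives
\begin{align*}
\nabla A_n=(\nabla x_*)^T\bigl\{i(\xi_n-\eta)v_n^*-\tfrac{x_*-2\xi_n t}{2(\sigma^2+it)}u_n^*+\tfrac12(\Sigma+is)^{-1}(x_*-x(t))v_n^*\bigr\},
\end{align*}
where $u_n^*,v_n^*$ denote $u_n,v_n$ multiplied by the phase factor in $A_n$. The leading $i\xi_n$-contributions from the carrier $e^{ix_*\cdot\xi_n}$ of $u_n$ cancel by construction against the $-i\xi_n$ from the phase $e^{-i\xi_n\cdot(x_*-x_c)}$ in $A_n$; the surviving $i(\xi_n-\eta)=2i\xi_{n,3}\vec\nu$ is killed at $\partial\Omega$ because $(\nabla x_*)^T\vec\nu=0$ there (since $\nabla x_*$ equals the tangent-plane projection at the boundary). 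The remaining envelope-gradient difference is controlled using $\tfrac12\Sigma^{-1}-\tfrac{1}{2(\sigma^2+it_c)}\Id=\tfrac i2 B$ together with Lemma~\ref{L:matrix}\eqref{sig42}; one obtains a bound of order $\sigma^{-1}\log^{O(1)}(\tfrac1\eps)\,|A_n|\sim\eps^{-3/4}\delta^{-7/4}\log^{12}(\tfrac1\eps)$, yielding \eqref{deriv A}. For $\partial_t A_n$ and $\Delta A_n$ the second derivative contributes an extra factor of $|\xi_n|/\sigma\sim\eps^{-3/2}\delta^{-1/2}\log^{-1}\llogeps$ (from second-order envelope terms, or from mixed envelope/carrier terms where the second derivative can reach across the $(\nabla x_*)^T\vec\nu$ cancellation), producing \eqref{laplace A}. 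Throughout these derivative estimates, the underlying mechanism is the same matching identified for $A_n$ itself.
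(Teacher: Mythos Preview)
Your approach is essentially the same as the paper's: write $A_n$ explicitly, identify the key phase cancellation on $\partial\Omega$ (the paper's \eqref{B cancel}), use Lemma~\ref{L:matrix} to pass between $\Sigma^{-1}$ and $(\Sigma+is)^{-1}$, and exploit $B\eta=0$.  The paper organizes things slightly differently---it proves the $\partial_t A_n$ bound first and then recovers $|A_n|$ by integrating from $t_c$ (where the amplitude factors $M_u,M_v$ both equal~$1$)---but the substance is the same.

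There is, however, a genuine gap in your treatment of $\nabla A_n$.  You write that the term $i(\xi_n-\eta)v_n^*=2i\xi_{n,3}\vec\nu\, v_n^*$ is ``killed at $\partial\Omega$ because $(\nabla x_*)^T\vec\nu=0$.''  This is not correct: $\vec\nu$ here is the normal at $x_c$, whereas $(\nabla x_*)^T$ annihilates the normal at $x_*$.  The discrepancy is first order in $y:=x_*-x_c$: in the principal-curvature frame, the tangential (at $x_*$) component of $\xi-\eta$ is $-(2\xi_3 y_1/R_1,\,2\xi_3 y_2/R_2,\,0)+O(|\xi||y|^2)$, which has size $\sim|\xi|\sigma\log(\tfrac1\eps)$ and, multiplied by $|v_n|\sim\sigma^{-3/2}$, is far too large for \eqref{deriv A} when $\delta\sim 1$.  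The correct cancellation is not ``$\xi-\eta$ is normal'' but rather that the \emph{combination} $\xi-\eta+\tfrac12 B(x_*-x_c)$ has small tangential part: the leading tangential piece of $\tfrac12 B(x_*-x_c)$ is exactly $+(2\xi_3 y_1/R_1,\,2\xi_3 y_2/R_2,\,0)$, cancelling the above.  This is the same curvature-matching mechanism as in your $s=0$ phase analysis, appearing a second time at the level of first derivatives (the paper records it as \eqref{E:T cancel}).  So you should not separate the $i(\xi-\eta)$ term from the $\tfrac12(\Sigma+is)^{-1}(x_*-x(t))$ term; extract $\tfrac i2 B(x_*-x_c)$ from the latter and pair it with the former before projecting tangentially.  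A similar regrouping is needed for the second-order terms in $\Delta A_n$.
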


\begin{proof}
Throughout the proof, we will suppress the dependence on $n\in\mathcal E$; indeed, all estimates will be uniform in $n$.  Let
\begin{align*}
F(t,x) &:= \biggl( \frac{\sigma^2+it_c}{\sigma^2+it}\biggr)^{\frac32} e^{-\frac{|x-2\xi t|^2}{4(\sigma^2+it)} } - \det (1+i(t-t_c)\Sigma^{-1})^{-\frac12}  e^{\Phi(t,x)}
\end{align*}
with
\begin{align*}
\Phi(t,x) &:= i(x-x_c)(\eta-\xi) -\tfrac14(x-x(t))^T(\Sigma +i(t-t_c))^{-1}(x-x(t)),
\end{align*}
so that
\begin{equation}\label{AfromF}
A(t,x) = \Bigl(\frac {\sigma^2}{2\pi}\Bigr)^{\frac 34}(\sigma^2+it_c)^{-\frac 32} e^{ix_c \xi} F(t,x_*).
\end{equation}

We further decompose
\begin{align*}
F(t,x)=F_1(t,x)+F_2(t,x)+F_3(t,x),
\end{align*}
where
\begin{align*}
F_1(t,x)&:= \biggl[\biggl(\frac{\sigma^2+it_c}{\sigma^2+it}\biggr)^{\frac32} -1\biggr]e^{-\frac{|x-2\xi t|^2}{4(\sigma^2+it)} }\\
F_2(t,x)&:=\bigl[1-\det(1+i(t-t_c)\Sigma^{-1})^{-\frac 12}\bigr] e^{-\frac {|x-2\xi t|^2}{4(\sigma^2+it)}}\\
F_3(t,x)&:= \det (1+i(t-t_c)\Sigma^{-1})^{-\frac12}\Bigl\{e^{-\frac{|x-2\xi t|^2}{4(\sigma^2+it)}}-e^{\Phi(t,x)}\Bigr\}.
\end{align*}

We begin by estimating the time derivative of $F$ on $\partial\Omega$.  We will make repeated use of the following bounds:
\begin{align}\label{E:bounds1}
t\sim t_c \ll \sigma^2 \qtq{and} |x-2\xi t| + |x-x(t)|\lesssim\sigma\log(\tfrac1{\eps}),
\end{align}
for all $|x-x_c|\le\sigma \log(\frac 1{\eps})$ and $|t-t_c|\le \frac {4\sigma\log(\frac 1{\eps})}{|\xi|}$.  Moreover, from \eqref{sig1}, \eqref{sig2}, and \eqref{sig},
we obtain
\begin{align}\label{E:bounds2}
\bigl|\partial_t\det(1+i(t-t_c)\Sigma^{-1})^{-\frac 12}\bigr| &=\tfrac12 \bigl|\det(1+i(t-t_c)\Sigma^{-1})^{-\frac 12}\bigr| \bigl| \Tr (\Sigma+i(t-t_c))^{-1}\bigr|\notag\\
&\lesssim \|(\Sigma+i(t-t_c))^{-1}\|_{HS}\lesssim \eps^{-1}\delta^{-1}[\llogeps]^5.
\end{align}
Lastly, as $\xi-\eta$ is normal to $\partial\Omega$ at $x_c$, we see that
\begin{align}\label{E:bounds3}
|(\xi-\eta)\cdot(x-x_c)| \lesssim  |\xi| \, |x-x_c|^2  \lesssim \delta \log^5(\tfrac1\eps),
\end{align}
for  all $x\in \partial\Omega$ with $|x-x_c|\lesssim \sigma\logeps$.

A straightforward computation using \eqref{E:bounds1} gives
\begin{align*}
|\partial_tF_1(t,x)|&\lesssim \sigma^{-2} + |t-t_c|\sigma^{-2}\bigl[\sigma^{-2}|\xi||x-2\xi t| + \sigma^{-4}|x-2\xi t|^2\bigr]\lesssim \eps^{-1}\delta^{-1}.
\end{align*}
Using also \eqref{sig2} and \eqref{E:bounds2} we obtain
\begin{align*}
|\partial_tF_2(t,x)|&\lesssim \eps^{-1}\delta^{-1}[\llogeps]^5 + \eps^{\frac12}\delta^{-\frac12} \log^3(\tfrac1\eps)\bigl[\sigma^{-2}|\xi||x-2\xi t| + \sigma^{-4}|x-2\xi t|^2\bigr]\\
&\lesssim \eps^{-1}\delta^{-1} \log^{4}(\tfrac1\eps).
\end{align*}
As $|\partial_t A| \lesssim \sigma^{-3/2} |\partial_t F|$, the contributions of $\partial_t F_1$ and $\partial_t F_2$ are consistent with \eqref{laplace A}.

We now turn to $F_3$.  In view of \eqref{sig41}, \eqref{sig2}, and \eqref{E:bounds2},
\begin{align*}
&|\partial_t F_3(t,x)|
\lesssim\eps^{-1}\delta^{-1}[\llogeps]^5+\biggl|\Bigl[\frac{\xi(x-2\xi t)}{\sigma^2+it}+\frac{i|x-2\xi t|^2}{4(\sigma^2+it)^2}\Bigr]e^{-\frac{|x-2\xi t|^2}{4(\sigma^2+it)}}\\
&- \Bigl[ \eta^T(\Sigma + i(t-t_c))^{-1}(x-x(t)) +\tfrac i4(x-x(t))^T(\Sigma + i(t-t_c))^{-2}(x-x(t)) \Bigr] e^{\Phi(t,x)}\biggr|.
\end{align*}
To simplify this expression we use the following estimates
\begin{align*}
\biggl| \frac{\xi(x-2\xi t)}{\sigma^2+it}+\frac{i|x-2\xi t|^2}{4(\sigma^2+it)^2} - \frac{\xi(x-2\xi t)}{\sigma^2+it_c} - \frac{i|x-2\xi t|^2}{4(\sigma^2+it_c)^2} \biggr|
	&\lesssim \eps^{-1}\delta^{-1} \\
\bigl| \eta^T \bigl[ (\Sigma +i(t-t_c))^{-1} - \Sigma^{-1} \bigr] (x-x(t)) \bigr| &\lesssim \eps^{-1}\delta^{-1} \log^6(\tfrac1\eps) \\
\bigl|  (x-x(t))^T\bigl[ (\Sigma +i(t-t_c))^{-2} - \Sigma^{-2} \bigr] (x-x(t)) \bigr| &\lesssim \eps^{-\frac12}\delta^{-\frac32} \log^8(\tfrac1\eps)  \\
\biggl| \frac{|x-2\xi t|^2}{4(\sigma^2+it)} - \frac{|x-2\xi t|^2}{4(\sigma^2+it_c)} \biggr| &\lesssim \eps^{\frac12} \delta^{-\frac12} \log^3(\tfrac1\eps) \\
\bigl| (x-x(t))^T\bigl[ (\Sigma +i(t-t_c))^{-1} - \Sigma^{-1} \bigr] (x-x(t)) \bigr| &\lesssim \eps^{\frac12} \delta^{-\frac12} \log^7(\tfrac1\eps),
\end{align*}
which follow from \eqref{sig1}, \eqref{sig}, and \eqref{E:bounds1}.  Combining these estimates with the fact that $z\mapsto e^{z}$ is $1$-Lipschitz
on the region $\Re z <0$ yields
\begin{align*}
&|\partial_t F_3(t,x)|\lesssim\eps^{-1}\delta^{-1} \log^{10}(\tfrac1\eps) \\
&\quad+\biggl| \frac{\xi(x-2\xi t)}{\sigma^2+it_c}+\frac{i|x-2\xi t|^2}{4(\sigma^2+it_c)^2} - \eta^T \Sigma^{-1}(x-x(t)) - \tfrac i4(x-x(t))^T\Sigma^{-2}(x-x(t)) \biggr| \\
&\quad+ \eps^{-\frac32}\delta^{-\frac12} \logeps \biggl|\frac{|x-2\xi t|^2}{4(\sigma^2+it_c)} + i(x-x_c)(\eta-\xi)-\tfrac14(x-x(t))^T \Sigma^{-1}(x-x(t)) \biggr|.
\end{align*}
As $|\partial_t A| \lesssim \sigma^{-3/2} |\partial_t F|$, the first term on the right-hand side is consistent with \eqref{laplace A}.  Thus to complete our analysis of $ |\partial_t F|$, it remains only to bound the second and third lines in the display above.  Recalling \eqref{E:Sigma defn}, the fact that $\eta$ belongs to the kernel of the symmetric matrix $B$, and $x(t)=x_c+2\eta(t-t_c)$, we can simplify these expressions considerably.  First, we have
\begin{align*}
\Bigl| \tfrac{\xi(x-2\xi t)}{\sigma^2+it_c} & +\tfrac{i|x-2\xi t|^2}{4(\sigma^2+it_c)^2} - \eta^T \Sigma^{-1}(x-x(t)) - \tfrac i4(x-x(t))^T\Sigma^{-2}(x-x(t)) \Bigr| \\
={}& \Bigl|\tfrac{(\xi-\eta)(x-x_c)}{\sigma^2+it_c} - i \tfrac{(t-t_c)(\xi-\eta)(x-x_c)}{(\sigma^2+it_c)^2} +  \tfrac{(x-x_c)^T B (x-x_c)}{2(\sigma^2+it_c)}
	+ i\tfrac{(x-x_c)^T B^2(x-x_c)}{4} \Bigr| \\
\lesssim{}& \eps^{-1} \log^5(\tfrac1\eps),
\end{align*}
where we used \eqref{E:bounds1}, \eqref{E:bounds3}, and \eqref{B norm} to obtain the inequality.  This shows that the second line in the estimate on $\partial_t F_3$ is acceptable for \eqref{laplace A}.

For the last line of our estimate on $\partial_t F_3$ above, we use the same tools to obtain
\begin{align*}
\eps^{-\frac32}\delta^{-\frac12} &\logeps\Bigl|\tfrac{|x-2\xi t|^2}{4(\sigma^2+it_c)} + i(x-x_c)(\eta-\xi)-\tfrac14(x-x(t))^T \Sigma^{-1}(x-x(t)) \Bigr| \\
={}&  \eps^{-\frac32}\delta^{-\frac12} \logeps\Bigl| \tfrac{(t-t_c)(\xi-\eta)(x-x_c)}{\sigma^2+it_c} + i(x-x_c)(\xi-\eta) + \tfrac{i}4 (x-x_c)^T B (x-x_c) \Bigr| \\
\lesssim {}& \eps^{-1} \log^7(\tfrac1\eps) + \eps^{-\frac32}\delta^{-\frac12} \logeps\Bigl| (x-x_c)(\xi-\eta) + \tfrac14 (x-x_c)^T B (x-x_c) \Bigr|.
\end{align*}
The first summand is acceptable.  To bound the second summand, we need to delve deeper.

Using the orthonormal frame introduced earlier, we write
\begin{align}\label{y1}
y_1:=(x-x_c)\cdot \vec{\tau},\quad y_2:=(x-x_c) \cdot \vec{\gamma},\qtq{and} y_3:=(x-x_c) \cdot \vec{\nu}.
\end{align}
Then
\begin{align}\label{xi3y3}
(x-x_c)\cdot(\xi-\eta)=2\xi_3(x-x_c)\cdot{\vec{\nu}}=2\xi_3y_3.
\end{align}
For $x\in\partial \Omega$ near $x_c$, we have
\begin{align}\label{y3}
y_3=-\frac {y_1^2}{2R_1}-\frac {y_2^2}{2R_2}+O(|y_1|^3+|y_2|^3).
\end{align}
On the other hand, for any $z\in \R^3$ a direct computation shows that
$$
\frac 14 z^TBz=\frac 1{\xi_3R_1}(\xi_3z_1+\xi_1z_3)^2+\frac1{\xi_3R_2}(\xi_3z_2+\xi_2z_3)^2.
$$
Applying this to
\begin{align*}
z=x-x(t)&=x-x_c-2\eta(t-t_c)
=\begin{pmatrix}
y_1-2\xi_1(t-t_c)\\
y_2-2\xi_2(t-t_c)\\
2\xi_3(t-t_c)\\
\end{pmatrix} + \begin{pmatrix} 0\\ 0\\ y_3\\ \end{pmatrix}
\end{align*}
and noting that $|y_3|\lsm |y_1|^2+|y_2|^2\lesssim \sigma^2\log^2(\frac 1\eps)$, we get
\begin{align*}
\frac 14(x-x(t))^T B(x-x(t))
&=\frac{(y_1\xi_3+y_3\xi_1)^2}{\xi_3R_1}+\frac{(y_2\xi_3+y_3\xi_2)^2}{\xi_3R_2}\\
&=\xi_3\frac {y_1^2}{R_1}+\xi_3\frac {y_2^2}{R_2}+O\Bigl(\sigma^3\log^3(\tfrac 1{\eps})\cdot \frac{|\xi|^2}{|\xi_3|}\Bigr).
\end{align*}
Combining this with \eqref{xi3y3}, \eqref{y3}, and Lemma~\ref{L:xc}, we deduce
\begin{align}\label{B cancel}
\Bigl|(x-x_c)\cdot(\xi-\eta)+\tfrac14 (x-x(t))^T B(x-x(t))\Bigr|
	&\lesssim\sigma^3\log^3(\tfrac 1\eps)\frac{|\xi|^2}{|\xi_3|} \notag \\
&\lesssim \eps^{\frac 12}\delta^{\frac 32}\log^7(\tfrac 1\eps).
\end{align}

This is the missing piece in our estimate of $|\partial_t F_3|$.  Putting everything together yields
$$
|\partial_t A(t,x)| \lesssim \sigma^{-\frac32} |\partial_t F(t,x)| \lesssim \sigma^{-\frac32} \eps^{-1}\delta^{-1} \log^{10}(\tfrac1\eps)
	\lesssim \eps^{-\frac74}\delta^{-\frac74} \log^{9}(\tfrac1\eps),
$$
which proves the first half of \eqref{A}.

This bound on the time derivative of $A$ allows us to deduce \eqref{A} by just checking its validity at $t=t_c$.  Note that
both $F_1$ and $F_2$ vanish at this point and so, by the fundamental theorem of calculus, we have
\begin{align*}
|A(t,x)| &\lesssim |t-t_c| \eps^{-\frac74}\delta^{-\frac74} \log^{9}(\tfrac1\eps) + \sigma^{-\frac32} |F_3(t_c,x)| \\
&\lesssim \eps^{-\frac14}\delta^{-\frac54} \log^{12}(\tfrac1\eps)  + \sigma^{-\frac32} \bigl| (x-x_c)(\eta-\xi) + \tfrac14(x-x_c)^T B (x-x_c) \bigr|.
\end{align*}
Combining this with \eqref{B cancel} yields \eqref{A}.

It remains to estimate the spatial derivatives of $A$.  Notice that this corresponds to derivatives of $F$ in directions parallel to $\partial\Omega$.
To compute these, we need to determine the unit normal $\vec\nu_x$ to $\partial\Omega$ at a point $x\in\partial\Omega$;  indeed, the projection matrix onto the tangent space at $x$ is given by $\Id - \vec\nu_x^{\vphantom{T}} \vec\nu_x^T$.   Writing $y=x-x_c$ as in \eqref{y1} and \eqref{y3}, we have
\begin{equation}\label{nu_x}
\vec\nu_x = \begin{pmatrix} y_1/R_1 \\  y_2/R_2 \\ 1 \end{pmatrix} + |y|^2 \vec \psi(y),
\end{equation}
where $\vec \psi$ is a smooth function with all derivatives bounded.

However, the Laplacian of $A$ does not involve only the tangential derivatives of $F$; due to the curvature of the obstacle, the normal derivative of $F$ also enters:
$$
|\Delta A| \lesssim \sigma^{-\frac32} \Bigl\{ |\nabla F|_{\R^3} + |\partial^2 F|_{T_x\partial\Omega} \Bigr\}.
$$
Here $\partial^2 F$ denotes the full matrix of second derivatives of $F$, while the subscript $T_x\partial\Omega$ indicates that only the tangential components are considered; no subscript or $\R^3$ will be used to indicate that all components are considered.  In this way, verifying \eqref{deriv A} and the remaining part of \eqref{laplace A} reduces to proving
\begin{equation}\label{E:lap A needs}
\begin{gathered}
|\nabla F|_{T_x\partial\Omega} \lesssim \delta^{-1} \log^{13}(\tfrac1\eps)
	\qtq{and}  |\nabla F| + |\partial^2 F|_{T_x\partial\Omega} \lesssim \eps^{-1}\delta^{-1} \log^{10}(\tfrac1\eps).
\end{gathered}
\end{equation}

Again we decompose $F$ into the three parts $F_1$, $F_2$, and $F_3$.  The first two are easy to estimate; indeed, we do not even need to
consider normal and tangential components separately:
\begin{align*}
|\nabla F_1(t,x)| \lesssim \frac{|x-2\xi t|}{\sigma^2} \frac{|t-t_c|}{\sigma^2} \lesssim \delta^{-1} \llogeps
\end{align*}
and similarly, using \eqref{sig2},
\begin{align*}
|\nabla F_2(t,x)| \lesssim \frac{|x-2\xi t|}{\sigma^2} \eps^{\frac12}\delta^{-\frac12} \log^3(\tfrac1\eps) \lesssim \delta^{-1} \log^3(\tfrac1\eps).
\end{align*}
These are both consistent with the needs of \eqref{E:lap A needs}.

We can bound the second derivatives of $F_1$ and $F_2$ in a similar manner:
\begin{align*}
|\partial^2 F_1(t,x)| &\lesssim \Bigl[ \sigma^{-2} + \frac{|x-2\xi t|^2}{\sigma^4} \Bigr] \frac{|t-t_c|}{\sigma^2} \lesssim \eps^{-\frac12}\delta^{-\frac32} \llogeps  \\
|\partial^2 F_2(t,x)| &\lesssim \Bigl[ \sigma^{-2} + \frac{|x-2\xi t|^2}{\sigma^4} \Bigr] \eps^{\frac12}\delta^{-\frac12} \log^3(\tfrac1\eps)
	\lesssim \eps^{-\frac12}\delta^{-\frac32} \log^3(\tfrac1\eps).
\end{align*}
Both are acceptable for \eqref{E:lap A needs}.

This leaves us to estimate the derivatives of $F_3$; now it will be important to consider tangential derivatives separately.  We have
\begin{align*}
|\nabla F_3(t,x)|_{T_x\partial\Omega} &\lesssim \frac{|x-2\xi t|}{\sigma^2} |F_3(t,x)| \\
	& \quad  + \biggl| \frac{x-2\xi t}{2(\sigma^2+it)} - i(\xi-\eta) - \tfrac12 (\Sigma +i(t-t_c))^{-1}(x-x(t)) \biggr|_{T_x\partial\Omega}.
\end{align*}
From the proof of \eqref{A}, we know that $|F_3| \lesssim \eps^{1/2}\delta^{-1/2}\log^{13}(\frac1\eps)$.  To estimate the second line we begin by
simplifying it.  Using \eqref{E:bounds1} and \eqref{sig1}, we have
\begin{align}
|\nabla F_3(t,x)|_{T_x\partial\Omega} &\lesssim \frac{\sigma\log(\frac1\eps)}{\sigma^2} \eps^{\frac12}\delta^{-\frac12}\log^{13}(\tfrac1\eps) \notag\\
&\quad + \frac{|t-t_c|}{\sigma^{4}} |x-2\xi t| + \|(\Sigma +i(t-t_c))^{-1}-\Sigma^{-1}\|_{HS}|x-x(t)| \notag\\
& \quad + \biggl| \frac{x-2\xi t}{2(\sigma^2+it_c)}-i(\xi-\eta) - \tfrac12 \Sigma^{-1}(x-x(t)) \biggr|_{T_x\partial\Omega} \notag\\
& \lesssim \delta^{-1}\log^{13}(\tfrac1\eps)+ \biggl| \frac{(\xi-\eta)(t-t_c)}{\sigma^2+it_c} \biggr|_{T_x\partial\Omega} + \biggl| \xi-\eta + \tfrac12 B (x-x_c) \biggr|_{T_x\partial\Omega}. \label{nab F3}
\end{align}
Thus far, we have not used the restriction to tangential directions.  Thus, using \eqref{B norm} we may pause to deduce
$$
|\nabla F_3(t,x)|_{\R^3} \lesssim \delta^{-1}\log^{13}(\tfrac1\eps) + \sigma^{-1}\logeps + |\xi| \lesssim \eps^{-1}\llogeps,
$$
which is consistent with \eqref{E:lap A needs}.

We now return to \eqref{nab F3}.  To estimate the last two summands we write $x-x_c=y$ and use \eqref{nu_x} to obtain
\begin{equation*}
\bigl(\Id - \vec\nu_x^{\vphantom{T}} \vec\nu_x^T\bigr) (\xi-\eta) = - \begin{pmatrix} 2\xi_3y_1/R_1\\ 2\xi_3y_2/R_2 \\ 0 \end{pmatrix} + O(|\xi| \,  |y|^2).
\end{equation*}
Similarly,
\begin{equation*}
\tfrac12 \bigl(\Id - \vec\nu_x^{\vphantom{T}} \vec\nu_x^T\bigr) B (x-x_c)= \begin{pmatrix} 2\xi_3y_1/R_1\\ 2\xi_3y_2/R_2 \\ 0 \end{pmatrix} + O(\|B\|_{\max} |y|^2).
\end{equation*}
Using \eqref{B norm}, this allows us to deduce that
\begin{equation}\label{E:T cancel}
\bigl| \xi-\eta + \tfrac12 B (x-x_c) \bigr|_{T_x\partial\Omega} \lesssim \|B\|_{\max} |y|^2 + |\xi| \,  |y|^2 \lesssim \delta\log^5(\tfrac1\eps).
\end{equation}
Therefore,
$$
|\nabla F_3(t,x)|_{T_x\partial\Omega} \lesssim \delta^{-1}\log^{13}(\tfrac1\eps) + \log^{2}(\tfrac1\eps) + \delta\log^{5}(\tfrac1\eps)
\lesssim \delta^{-1}\log^{13}(\tfrac1\eps).
$$
This is consistent with \eqref{E:lap A needs}, thereby completing the proof of \eqref{deriv A}.

Estimating the second order derivatives of $F_3$ is very messy.  We get
\begin{align*}
\partial_k \partial_l e^{-\frac{|x-2\xi t|^2}{4(\sigma^2+it)}}
&=\biggl[ \frac{-\delta_{kl}}{2(\sigma^2+it)} + \frac{(x-2\xi t)_k(x-2\xi t)_l}{4(\sigma^2+it)^2}\biggr] e^{-\frac{|x-2\xi t|^2}{4(\sigma^2+it)}}  \\
&=\biggl[ \frac{-\delta_{kl}}{2(\sigma^2+it_c)} + \frac{(x-2\xi t)_k(x-2\xi t)_l}{4(\sigma^2+it_c)^2}\biggr] e^{-\frac{|x-2\xi t|^2}{4(\sigma^2+it)}}
	+ O\biggl(\frac{\log\log(\frac1\eps)}{\eps^{\frac12}\delta^{\frac32}}\biggr).
\end{align*}
Proceeding similarly and using \eqref{sig1} and \eqref{sig} yields
\begin{align*}
& \partial_k \partial_l e^{\Phi(t,x)} = \partial_k \partial_l  e^{i(x-x_c)(\eta-\xi)-\frac14(x-x(t))^T(\Sigma +i(t-t_c))^{-1}(x-x(t))} \\
&=\biggl[ -\tfrac12\Sigma^{-1}_{kl} + \Bigl\{ i(\eta-\xi) - \tfrac12\Sigma^{-1}(x-x(t))\Bigr\}_k \Bigl\{ i(\eta-\xi) - \tfrac12\Sigma^{-1}(x-x(t))\Bigr\}_l\biggr]  e^{\Phi(t,x)}\\
&\quad + O\bigl(\eps^{-1}\delta^{-1}\log^6(\tfrac1\eps)\bigr).
\end{align*}

We now combine these formulas, using \eqref{B norm}, \eqref{E:T cancel}, and the definition of $\Sigma^{-1}$ in the process.  This yields
\begin{align*}
|\partial^2 F_3|_{T_x\partial\Omega} &\lesssim \frac{\log^2(\frac1\eps)}{\sigma^2} |F_3| + |B|_{T_x\partial\Omega}
	+ \eps^{-\frac12}\delta^{\frac12}\log^5(\tfrac1\eps) +\eps^{-1}\delta^{-1}\log^6(\tfrac1\eps)\\
&\lesssim \eps^{-\frac12}\delta^{-\frac32}\log^{13}(\tfrac1\eps) + \eps^{-1} \log(\tfrac1\eps) + \eps^{-\frac12}\delta^{\frac12}\log^5(\tfrac1\eps) +\eps^{-1}\delta^{-1}\log^6(\tfrac1\eps)\\
&\lesssim \eps^{-1}\delta^{-1} \log^{6}(\tfrac1\eps).
\end{align*}
This completes the proof of \eqref{E:lap A needs} and so that of \eqref{laplace A}.
\end{proof}

With all these preparations, we are ready to begin estimating the contribution of the wave packets that enter the obstacle.  In view of
Proposition~\ref{P:short times}, it suffices to prove the following

\begin{prop}[The long time contribution of entering rays]\label{P:long times}
We have
\begin{equation}\label{enter}
\Bigl\|\sum_{n\in \mathcal E} c_n^{\eps}\bigl[\propagateomega (1_\Omega\gamma_n)-u_n\bigr]\Bigr\|_{L_{t,x}^{\frac {10}3}([T,\infty)\times\R^3)}=o(1) \qtq{as} \eps\to 0,
\end{equation}
where $T=\frac1{10}\eps\delta[\llogeps]^{-1}$.
\end{prop}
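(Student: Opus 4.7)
The plan is to construct, for each $n\in\mathcal E$, a parametrix $\tilde u_n(t,x)$ on $[T,\infty)\times\Omega$ that approximately solves the Dirichlet problem with the right initial condition at $t=T$, and then use Strichartz to compare $e^{it\Delta_\Omega}(1_\Omega\gamma_n)$ to $\tilde u_n$. The natural candidate is
$$
\tilde u_n(t,x) := u_n(t,x)-v_n(t,x)-w_n(t,x),
$$
where $v_n$ is the reflected Gaussian defined in~\eqref{forv}, and $w_n=w_n^{(3)}$ is a boundary--correcting term, localized to a spacetime neighborhood of the collision point $(t_c,x_c)$ and carrying an additional oscillatory phase $\exp\{it|\xi_n|^2-i\xi_n\cdot(x_*-x_c)\}$ (cf.~the definition of $A_n$ in Lemma~\ref{L:uv match}). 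The role of this extra phase is to make the source term $(i\partial_t+\Delta)w_n$ non-resonant, so that integration by parts in time gains a power of $\eps$ when we estimate it in $L^1_tL^2_x$; without this twist, the forcing is of size $\sim|\xi_n|^2\sim\eps^{-2}$ and overwhelms all smallness.

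With this parametrix in hand, Theorem~\ref{T:Strichartz} applied to the difference $R_n:=e^{it\Delta_\Omega}(1_\Omega\gamma_n)-\tilde u_n$ on $[T,\infty)\times\Omega$ reduces the proof of \eqref{enter} to controlling, uniformly in $n\in\mathcal E$, the following four quantities summed against $|c_n^\eps|$:
\begin{SL}
\item the initial mismatch $\|R_n(T)\|_{L^2(\Omega)}$, which is small because Proposition~\ref{P:short times} already shows $e^{iT\Delta_\Omega}(1_\Omega\gamma_n)$ is close to $u_n(T)$ in Strichartz norm, and at time $T<t_c$ both $v_n$ and $w_n$ are negligible;
\item the boundary mismatch $\tilde u_n|_{\partial\Omega}$, which after pulling out the common oscillatory phase is exactly $A_n$ modulo the correction $w_n$, so the decay estimates \eqref{A}--\eqref{laplace A} in Lemma~\ref{L:uv match} give smallness inside the reflection window $|t-t_c|\le 4\sigma\log(\tfrac1\eps)/|\xi_n|$, while outside this window $u_n$ and $v_n$ are exponentially small on $\partial\Omega$;
\item the source term $(i\partial_t+\Delta)\tilde u_n$ on the interior of $\Omega$, which is supported where the cutoffs in $w_n$ have derivatives, and is controlled by \eqref{deriv A} and \eqref{laplace A} together with the non-resonance gain from the phase;
\item the $L^{10/3}_{t,x}$ norm of $v_n$ itself summed against $c_n^\eps$, for which one uses the explicit Gaussian form of $v_n$ together with the dispersive estimates provided by Lemma~\ref{L:matrix}, in particular \eqref{sig3}.
\end{SL}

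For each individual $n$ the errors in (i)--(iii) will be $O(\eps^{\alpha})$ for some small $\alpha>0$ (in fact polynomial in $\eps,\delta$ modulated by $\log$ factors), arising directly from the estimates in Lemmas~\ref{L:xc}, \ref{L:matrix}, and~\ref{L:uv match}; combined with the crude bound $|c_n^\eps|\lesssim (\sigma\eps)^{3/2}L^{-3}$ from~\eqref{bdforc} and the bound $\#\mathcal E\lesssim (L/\eps)^3$, we get the desired $o(1)$ uniformly. The more delicate piece is (iv): a naive $\ell^1$ summation of $\|v_n\|_{L^{10/3}_{t,x}}$ is lossy, so we exploit Lemma~\ref{L:diverging rays}: the reflected wave packets have distinct momenta $\eta_n$ and their centers $x(t)=x_c+2\eta_n(t-t_c)$ diverge linearly in $t-t_c$. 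This gives almost--orthogonality in $L^2_x$ at each large $t$, and interpolation with the pointwise Gaussian bounds turns the $L^{10/3}_{t,x}$ norm of $\sum_n c_n^\eps v_n$ into essentially $\bigl(\sum_n |c_n^\eps|^2 \|v_n\|_{L^{10/3}_{t,x}}^{10/3}\bigr)^{3/10}$ times logarithmic losses, which is $o(1)$ by Lemma~\ref{decomposition} and the $\ell^2$ bound $\sum|c_n^\eps|^2\lesssim\|\psi_\eps\|_{L^2}^2=\|\psi\|_{L^2}^2$.

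The main obstacle will be step (iii): the parametrix $v_n$ is a Gaussian whose covariance $\Sigma$ must be chosen precisely, as in \eqref{E:Sigma defn}, so that the $O(|y|^2)$ terms in the boundary expansion cancel (as seen in \eqref{B cancel} and \eqref{E:T cancel}). Any less accurate choice leaves a residual matching error on $\partial\Omega$ of order $\sigma^{-3/2}|\xi_n|\cdot|y|^2$, which after integrating in time across the reflection window is too large by a power of $\log(1/\eps)$. It is exactly the fine tuning of $B$ to the principal curvatures of the obstacle at $x_c$ that makes Lemma~\ref{L:uv match} give boundary decay fast enough to beat the $\eps^{-2}$ size of the source term once the non-resonant phase in $w_n$ is integrated by parts. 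Strict convexity enters crucially here: the radii $R_1,R_2$ are finite, guaranteeing both that $\Sigma$ is well-defined and that the reflected packets genuinely disperse rather than refocus, which is what underwrites the orthogonality step (iv).
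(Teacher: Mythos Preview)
Your outline captures the correct architecture—the reflected Gaussian $v_n$, the phase-twisted boundary correction $w_n^{(3)}$, the use of Lemma~\ref{L:uv match}, and the almost-orthogonality of the $v_n$ via Lemma~\ref{L:diverging rays}—and these are exactly the ingredients the paper uses. But there is a genuine structural gap.

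You take $\tilde u_n = u_n - v_n - w_n^{(3)}$ on all of $[T,\infty)$. The difficulty is that the free packet $u_n$, after striking $\partial\Omega$ at $x_c$, continues through $\Omega^c$ and exits on the far side at some time $t_c' \approx t_c + O(\eps)$; at that moment $u_n$ is centred on $\partial\Omega$ with amplitude $\sim\sigma^{-3/2}$. Your claim in item~(ii) that ``outside this window $u_n$ and $v_n$ are exponentially small on $\partial\Omega$'' is therefore false for $t>t_c$: the exit creates a boundary trace of unit order, far from $x_c$, which is not corrected by $w_n^{(3)}$ and which your scheme has no mechanism to absorb. The paper handles this by introducing smooth temporal cutoffs $\chi^{u_n}$ and $\chi^{v_n}$, replacing $u_n,v_n$ by $\tilde u_n=\chi^{u_n}u_n$ and $\tilde v_n=\chi^{v_n}v_n$; the cutoff $\chi^{u_n}$ kills $u_n$ once it is deep inside $\Omega^c$ (so it never exits), and the resulting source terms $u_n\partial_t\chi^{u_n}$, $v_n\partial_t\chi^{v_n}$ are $O(\eps^{100})$ because at those times the packets are far from $\Omega$. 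This also forces a separate estimate (the paper's Lemma~\ref{L:small u}) on $\sum c_n^\eps(u_n-1_\Omega\tilde u_n)$, which is where the $L^\infty_tL^2_x$--$L^{8/3}_tL^4_x$ interpolation you describe for item~(iv) is actually deployed.

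A second, more minor point: Strichartz for $e^{it\Delta_\Omega}$ requires the remainder $R_n$ to satisfy \emph{exact} Dirichlet conditions, so listing ``boundary mismatch'' as a separate error in item~(ii) is not a legitimate move. One must extend the full boundary trace into $\Omega$ and estimate the extension; the paper does this with three pieces $w_n^{(1)}+w_n^{(2)}+w_n^{(3)}$ covering, respectively, $\{|x_*-x_c|\gtrsim\sigma\log(1/\eps)\}$, $\{|t-t_c|\gtrsim\sigma\log(1/\eps)/|\xi_n|\}$, and the collision window itself. With the temporal cutoffs in place the first two pieces are indeed $O(\eps^{90})$, but they must be written down so that $r_n$ genuinely vanishes on $\partial\Omega$.
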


Now fix $n\in \mathcal E$. We denote by $\chi^{u_n}(t)$ a smooth time cutoff such that
$$
\chi^{u_n}(t)=1 \qtq{for} t\in \bigl[0, t_c+2\tfrac{\sigma\log(\frac1\eps)}{|\xi_n|}\bigr] \qtq{and} \chi^{u_n}(t)=0 \qtq{for} t\ge t_c+4\tfrac{\sigma\log(\frac1\eps)}{|\xi_n|}.
$$
Denote by $\chi^{v_n}(t)$ a smooth time cutoff such that
$$
\chi^{v_n}(t)=1 \qtq{for} t\ge t_c-2\tfrac{\sigma\log(\frac1\eps)}{|\xi_n|} \qtq{and} \chi^{v_n}(t)=0 \qtq{for} t\in\bigl[0,t_c- 4\tfrac{\sigma\log(\frac1\eps)}{|\xi_n|}\bigr].
$$
We then define
\begin{align*}
\tilde u_n(t,x) :=\chi^{u_n}(t) u_n(t,x) \qtq{and} \tilde v_n(t,x):=\chi^{v_n}(t)v_n(t,x).
\end{align*}
The cutoff $\chi^{u_n}$ kills $u_n$ shortly after it enters the obstacle; the additional time delay (relative to $t_c$) guarantees that the bulk of the wave packet is deep inside $\Omega^c$ when the truncation occurs.  Note that the cutoff also ensures that $u_n$ does not exit the obstacle.  Analogously,
$\chi^{v_n}$ turns on the reflected wave packet shortly before it leaves the obstacle.

By the triangle inequality,
\begin{align}
\text{LHS}\eqref{enter}
&\lesssim \Bigl\|\sum_{n\in \mathcal E} c_n^{\eps}(u_n-1_\Omega\tilde u_n)\Bigr\|_{L_{t,x}^{\frac{10}3}([T,\infty)\times\R^3)}
+ \Bigl\|\sum_{n\in \mathcal E} c_n^{\eps}\tilde v_n\Bigr\|_{L_{t,x}^{\frac {10}3}([T,\infty)\times\Omega)}\notag\\
&\quad + \Bigl\|\sum_{ n\in \mathcal E} c_n^{\eps}\bigl[\propagateomega(1_\Omega\gamma_n)-(\tilde u_n -\tilde v_n)\bigr]\Bigr\|_{L_{t,x}^{\frac
{10}3}([T,\infty)\times\Omega)}.\label{E:control 462}
\end{align}
We prove that the first two summands are $o(1)$ in Lemmas~\ref{L:small u} and \ref{L:bdfv}, respectively.  Controlling the last summand is a much lengthier enterprise and follows from Lemma~\ref{L:rem}.  This will complete the proof of Proposition~\ref{P:long times}, which together with Propositions~\ref{P:ng}, \ref{P:missing}, and \ref{P:short times} yields Theorem~\ref{T:LF3}.

\begin{lem}\label{L:small u}
We have
\begin{align*}
\Bigl\|\sum_{n\in \mathcal E} c_n^{\eps}(u_n-1_\Omega\tilde u_n)\Bigr\|_{L_{t,x}^{\frac{10}3}([T,\infty)\times\R^3)}=o(1) \qtq{as} \eps\to 0.
\end{align*}
\end{lem}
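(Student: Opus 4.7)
Write
\[
u_n - 1_\Omega \tilde u_n = (1-\chi^{u_n})u_n\cdot 1_\Omega + u_n\cdot 1_{\Omega^c}
\]
and handle the two pieces separately. The second piece is the delicate one.

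\emph{First piece.} The factor $(1-\chi^{u_n})$ is supported in $t\ge t_c^{(n)}+2\sigma\log(\tfrac1\eps)/|\xi_n|$. For $n\in\mathcal E$ the packet center $2\xi_n t$ lies inside $\Omega^c$ at distance $\ge 2|\xi_{n,3}|(t-t_c^{(n)})$ from $\partial\Omega$, and since $|\xi_{n,3}|\ge |\xi_n|/[\log\log(\tfrac1\eps)]^4$, this distance exceeds $\sigma\log(\tfrac1\eps)/[\log\log(\tfrac1\eps)]^4$ throughout the support. This far exceeds the Gaussian width $\sqrt{(\sigma^4+t^2)/\sigma^2}$, so on $\Omega$ the Gaussian profile $u_n$ is pointwise dominated by $\sigma^{-3/2}\eps^{100}$ (for $t\le\sigma^2$, an even stronger decay occurs when $t\gg\sigma^2$ because the center recedes linearly in $t$). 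Integration gives $\|(1-\chi^{u_n})u_n\cdot 1_\Omega\|_{L^{10/3}_{t,x}(\R\times\R^3)}\lesssim\eps^{100}$ per $n$, and summing against $\sum_{n\in\mathcal E}|c_n^\eps|\lesssim\eps^{-3/4}\delta^{3/4}\log^{3/2}(\tfrac1\eps)$ still yields $o(1)$.

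\emph{Second piece.} Here a termwise bound $\|u_n\cdot 1_{\Omega^c}\|_{L^{10/3}_{t,x}}\lesssim\sigma^{-3/5}\eps^{3/10}\sim\delta^{-3/10}\log^{-3/5}(\tfrac1\eps)$ (from the Gaussian profile over the transit window of duration $\lesssim \diam/|\xi_n|$) is too crude: summing against $\sum|c_n^\eps|$ produces a divergent $\eps^{-3/4}\delta^{9/20}$. The fix is to exploit the linearity and identify
\[
\sum_{n\in\mathcal E}c_n^\eps\, u_n \;=\; e^{it\Delta_{\R^3}}\phi^{\mathcal E}_\eps, \qquad \phi^{\mathcal E}_\eps:=\sum_{n\in\mathcal E}c_n^\eps \gamma_n,
\]
so that it suffices to show $\|e^{it\Delta_{\R^3}}\phi^{\mathcal E}_\eps\|_{L^{10/3}_{t,x}([T,\infty)\times\Omega^c)}=o(1)$. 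Using the almost-orthogonality of the $\gamma_n$ (via the Schur bound applied to the kernel in \eqref{E:gamma inner prod}) one gets $\|\phi^{\mathcal E}_\eps\|_{L^2}\lesssim 1$, and by matching $\phi^{\mathcal E}_\eps$ to its reconstruction of $\psi_\eps$ in physical space one obtains $\|\phi^{\mathcal E}_\eps\|_{L^1}\lesssim\eps^{3/2}\cdot\text{polylog}(\tfrac1\eps)$. The dispersive estimate then gives $\|e^{it\Delta_{\R^3}}\phi^{\mathcal E}_\eps\|_{L^\infty_x}\lesssim t^{-3/2}\eps^{3/2}\text{polylog}$, and interpolating with $\|e^{it\Delta_{\R^3}}\phi^{\mathcal E}_\eps\|_{L^2(\Omega^c)}\lesssim 1$ yields $\|e^{it\Delta_{\R^3}}\phi^{\mathcal E}_\eps\|_{L^{10/3}_x(\Omega^c)}\lesssim t^{-3/5}\eps^{3/5}\text{polylog}$. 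Integrating over $t\ge T=\eps\delta/[10\log\log(\tfrac1\eps)]$ produces a bound of order $(\eps/\delta)^{3/10}\text{polylog}$, which is $o(1)$ under the standing assumption $\delta\ge\eps^{6/7}$ (since then $\eps/\delta\le\eps^{1/7}$).

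\textbf{Main obstacle.} The hard step is controlling $u_n\cdot 1_{\Omega^c}$: term-by-term estimation is hopeless because each wave packet deposits a non-negligible amount of mass inside the obstacle during its transit. One has to recognize the sum as a single free Schr\"odinger evolution and squeeze a useful $L^1$ (equivalently, dispersive) bound out of the reconstructed wave packet $\phi^{\mathcal E}_\eps$---the naive $\|\gamma_n\|_{L^1}\sim\sigma^{3/2}$ summed against $|c_n^\eps|$ overshoots by a factor of $\delta^{3/2}/\eps^{3/2}$, so one must instead exploit that $\phi^{\mathcal E}_\eps$ still resembles $\psi_\eps$ at scale $\eps$ near the origin.
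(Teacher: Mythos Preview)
Both pieces of your argument have genuine gaps.

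\textbf{First piece.} Your claim that the center $2\xi_n t$ remains inside $\Omega^c$ throughout the support of $(1-\chi^{u_n})$ is false. The obstacle is bounded (diameter $\sim 1$), so the ray exits on the far side after a transit time of order $\eps$ (up to $\llogeps$ factors); but the support of $(1-\chi^{u_n})$ is the entire half-line $t\ge t_c^{(n)}+2\sigma\logeps/|\xi_n|$, which extends far past this exit time. For all times after exit the packet lives in $\Omega$, so $u_n\cdot 1_\Omega$ is essentially the full Gaussian, not $\eps^{100}$-small. When $\delta\sim 1$ (so that $t_{\mathrm{exit}}\sim\eps\ll\sigma^2$) this contributes $\|u_n\|_{L^{10/3}_{t,x}([t_{\mathrm{exit}},\infty)\times\Omega)}\sim 1$ per term, and summing against $\sum_{n}|c_n^\eps|\sim\eps^{-3/4}\delta^{3/4}\log^{3/2}(\tfrac1\eps)$ diverges.

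\textbf{Second piece.} The bound $\|\phi^{\mathcal E}_\eps\|_{L^1}\lesssim\eps^{3/2}\cdot\text{polylog}$ is not justified. The function $\phi^{\mathcal E}_\eps$ is a frequency projection of the wave-packet reconstruction onto the cone of directions $\mathcal E$; such conical Fourier projections are not bounded on $L^1$, so the spatial localization of $\psi_\eps$ at scale $\eps$ need not survive. The bound you get for free (Cauchy--Schwarz on the Gaussian envelope of radius $\sim\sigma$) is only $\|\phi^{\mathcal E}_\eps\|_{L^1}\lesssim\sigma^{3/2}\cdot\text{polylog}$, and plugging that into your dispersion argument yields merely $(\sigma^2/T)^{3/10}\sim\log^{3/5}(\tfrac1\eps)$, not $o(1)$.

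The paper avoids both issues by abandoning the inside/outside decomposition entirely. One uses the trivial pointwise bound $|u_n-1_\Omega\tilde u_n|\le|u_n|$, enlarges the sum to all of $\mathcal S$, and shows directly that
\[
\Bigl\|\sum_{n\in\mathcal S}|c_n^\eps|\,|u_n|\Bigr\|_{L^{10/3}_{t,x}([T,\infty)\times\R^3)}\lesssim \eps^{3/10}\delta^{-3/10}\cdot\text{polylog}=o(1)
\]
by interpolating between $L^\infty_tL^2_x$ (bounded by $\text{polylog}$ via the near-orthogonality of the moving Gaussians once $t\ge T$) and $L^{8/3}_tL^4_x$ (where one expands the fourth power and splits according to whether all $|n_j-n_k|\le\log^3(\tfrac1\eps)$ or not, obtaining $\eps^{3/8}\delta^{-3/8}\cdot\text{polylog}$). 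The smallness comes purely from the separation of packet centers for $t\ge T\sim\eps\delta\gg\eps^2$ together with the $t^{-3/4}$ decay of each packet in $L^4_x$; no information about the location of the obstacle enters, and no $L^1$ bound on any partial sum is needed.
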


\begin{proof}
By the triangle inequality and H\"older,
\begin{align}
\Bigl\|\sum_{n\in \mathcal E} c_n^\eps(u_n-1_\Omega\tilde u_n)\Bigr\|_{L_{t,x}^{\frac{10}3}}
&\lsm\Bigl\|\sum_{n\in \mathcal S}|c_n^\eps||u_n|\Bigr\|_{L_{t,x}^{\frac{10}3}}\label{interp}\notag\\
&\lsm \Bigl\|\sum_{n\in \mathcal S}|c_n^\eps| |u_n|\Bigr\|_{L_t^\infty L_x^2}^{\frac15}
	\Bigl\|\sum_{n\in \mathcal S}|c_n^\eps||u_n|\Bigr\|_{L_t^{\frac83}L_x^4}^{\frac 45},
\end{align}
where all spacetime norms are over $[T,\infty)\times\R^3$.  To estimate the first factor on the right-hand side of \eqref{interp}, we use
\begin{align*}
|x-2t\xi_n|^2+|x-2t\xi_m|^2=2|x-(\xi_n+\xi_m)t|^2+2t^2|\xi_n-\xi_m|^2
\end{align*}
together with \eqref{bdforc} to get
\begin{align*}
\Bigl\|\sum_{n\in \mathcal S}& |c_n^\eps||u_n|\Bigr\|_{L_x^\infty L_x^2([T,\infty)\times\R^3)}^2\\
&\lsm \biggl\|\sum_{n,m\in \mathcal S}\frac{(\sigma\eps)^3}{L^6}\biggl(\frac{\sigma^2}{\sigma^4+t^2}\biggr)^{\frac32}
	\int_{\R^3} e^{-\frac{\sigma^2|x-2t\xi_n|^2}{4(\sigma^4+t^2)}-\frac{\sigma^2|x-2t\xi_m|^2}{4(\sigma^4+t^2)}}\,dx\biggr\|_{L_t^{\infty}([T,\infty))}\\
&\lsm \biggl\|\sum_{n, m\in \mathcal S}\frac{(\sigma\eps)^3}{L^6}\exp\Bigl\{-\frac{\sigma^2 t^2|\xi_n-\xi_m|^2}{2(\sigma^4+t^2)}\Bigr\}\biggr\|_{L_t^{\infty}([T,\infty))}.
\end{align*}
For $t\geq T$ we have
\begin{align*}
\frac{\sigma^2 t^2|\xi_n-\xi_m|^2}{2(\sigma^4+t^2)}
&\ge\frac{\sigma^2|n-m|^2 T^2}{2L^2(\sigma^4+T^2)}\ge\frac{|n-m|^2T^2}{4\sigma^4[\llogeps]^2}\ge\frac{|n-m|^2}{\log^5(\tfrac 1\eps)},
\end{align*}
and so derive
\begin{align*}
\Bigl\|\sum_{n\in \mathcal S}|c_n^\eps||u_n|\Bigr\|_{L_t^\infty L_x^2([T,\infty)\times\R^3)}^2
&\lsm \sum_{n, m\in \mathcal S}\frac{(\sigma\eps)^3}{L^6}\exp\Bigl\{-\frac{|n-m|^2}{\log^5(\tfrac 1\eps)}\Bigr\}\\
&\lsm \sum_{n\in \mathcal S}\frac{(\sigma\eps)^3}{L^6}\log^{\frac{15}2}(\tfrac 1\eps)\\
&\lsm\frac{(\sigma\eps)^3}{L^6}\log^{\frac {15}2}(\tfrac1\eps)\cdot\biggl(\frac L\eps\llogeps\biggr)^3\\
&\lsm \log^{\frac{15}2}(\tfrac 1\eps).
\end{align*}

We now turn to the second factor on the right-hand side of \eqref{interp}.  As
\begin{align*}
\sum_{j=1}^4\frac 14 |x-2\xi_{n_j} t|^2&= \biggl|x-\frac {\sum_{j=1}^4\xi_{n_j}}2 t\biggr|^2+\frac {t^2}4\sum_{j<k}|\xi_{n_j} -\xi_{n_k}|^2,
\end{align*}
we have
\begin{align}\label{129}
\Bigl\|\sum_{n\in \mathcal S}&|c_n^{\eps}| |u_n|\Bigr\|_{L_x^4(\R^3)}^4\notag\\
&\lesssim \sum_{n_1,\cdots,n_4\in \mathcal S}\bigl|c_{n_1}^{\eps}c_{n_2}^{\eps} c_{n_3}^{\eps}c_{n_4}^\eps\bigr|
\biggl(\frac {\sigma^2}{\sigma^4+t^2}\biggr)^3\int_{\R^3} \exp\Bigl\{-\sum_{j=1}^4 \frac{\sigma^2|x-2\xi_{n_j} t|^2}{4(\sigma^4+t^2)}\Bigr\}\,dx\notag\\
&\lsm \sum_{n_1,\cdots,n_4\in \mathcal S} \frac{(\sigma\eps)^6}{L^{12}}\biggl(\frac{\sigma^2}{\sigma^4+t^2}\biggr)^{\frac 32} \exp\Bigl\{-\frac
{\sigma^2t^2}{4(\sigma^4+t^2)}\sum_{j<k}|\xi_{n_j}-\xi_{n_k}|^2\Bigr\}.
\end{align}

To estimate the sum in \eqref{129} we divide it into two parts.  Let $N:=\log^3(\frac1{\eps}).$

\textbf{Part 1:} $|n_j-n_k|\ge N$ for some $1\le j\neq k\le 4$.  We estimate the contribution of the summands conforming to this case to LHS\eqref{129} by
\begin{align*}
\frac{(\sigma\eps)^6}{L^{12}}\biggl(\frac L{\eps}\log\log(\tfrac 1{\eps})\biggr)^{12}& \biggl(\frac {\sigma^2}{\sigma^4+t^2}\biggr)^{\frac 32}
	\exp\Bigl\{-\frac {\sigma^2N^2t^2}{4L^2(\sigma^4+t^2)} \Bigr\}\\
&\lsm t^{-3} \sigma^9\eps^{-6}[\llogeps]^{12}\exp\Bigl\{-\frac{\sigma^2N^2t^2}{4L^2(\sigma^4+t^2)}\Bigr\}.
\end{align*}
For $T\le t\le \sigma^2$, we estimate
\begin{align*}
\exp\Bigl\{-\frac{\sigma^2N^2t^2}{4L^2(\sigma^4+t^2)}\Bigr\}\le\exp\Bigl\{-\frac{T^2N^2}{8\sigma^2L^2}\Bigr\}\le\eps^{100}
\end{align*}
while for $t\ge \sigma^2$,
\begin{align*}
\exp\Bigl\{-\frac{\sigma^2N^2t^2}{4L^2(\sigma^4+t^2)}\Bigr\}\le\exp\Bigl\{-\frac{\sigma^2N^2}{8L^2}\Bigr\}\le \eps^{100}.
\end{align*}
Thus the contribution of Part 1 is $O( \eps^{80}t^{-3})$.

\textbf{Part 2:} $|n_j-n_k|\le N$ for all $1\le j\neq k\le 4$.  We estimate the contribution of the summands conforming to this case to LHS\eqref{129} by
\begin{align*}
\frac {(\sigma\eps)^6}{L^{12}}\biggl(\frac {\sigma^2}{\sigma^4+t^2}\biggr)^{\frac 32}N^9 \biggl(\frac L{\eps}\log\log(\tfrac 1{\eps})\biggr)^3
\lesssim \frac{\sigma^9N^9\eps^3}{L^9t^3}[\llogeps]^3\lsm \frac{\eps^3\log^{27}(\tfrac1\eps)}{[\llogeps]^6} t^{-3}.
\end{align*}

Collecting the two parts and integrating in time, we obtain
\begin{align*}
\Bigl\|\sum_{n\in \mathcal S}|c_n^\eps||u_n|\Bigr\|_{L_t^{\frac83}L_x^4([T,\infty)\times\R^3)}
\lesssim \frac{\eps^{\frac34}\log^{\frac{27}4}(\tfrac1\eps)}{[\llogeps]^{\frac 32}}\cdot T^{-\frac 38}
\lsm \eps^{\frac38}\delta^{-\frac38}\log^8(\tfrac 1\eps).
\end{align*}

Putting everything together and invoking \eqref{interp} we get
$$
\Bigl\|\sum_{n\in \mathcal E} c_n^\eps(u_n-1_\Omega\tilde u_n)\Bigr\|_{L_{t,x}^{\frac{10}3}([T, \infty)\times\R^3)}
\lesssim \eps^{\frac3{10}}\delta^{-\frac3{10}}\log(\tfrac1\eps)^{\frac34+\frac{32}5}=o(1) \qtq{as}\eps\to0.
$$
This completes the proof of the lemma.
\end{proof}

\begin{lem}\label{L:bdfv}
We have
$$
\Bigl\|\sum_{n\in \mathcal E} c_n^{\eps} \tilde v_n\Bigr\|_{L_{t,x}^{\frac {10}3}([T,\infty)\times\R^3)}=o(1) \qtq{as}\eps\to0.
$$
\end{lem}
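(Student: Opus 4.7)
The plan is to mirror the proof of Lemma~\ref{L:small u}, using the same interpolation
\begin{align*}
\Bigl\|\sum_{n\in \mathcal E} c_n^\eps \tilde v_n\Bigr\|_{L_{t,x}^{10/3}}
\lesssim \Bigl\|\sum_{n\in \mathcal E} |c_n^\eps|\, |\tilde v_n|\Bigr\|_{L_t^\infty L_x^2}^{1/5}
\Bigl\|\sum_{n\in \mathcal E} |c_n^\eps|\, |\tilde v_n|\Bigr\|_{L_t^{8/3}L_x^4}^{4/5}
\end{align*}
on $[T,\infty)\times\R^3$. The role of the explicit Gaussian form of $u_n$ is played by the bounds of Lemma~\ref{L:matrix}, while the role of the trivial algebraic separation $|x-2\xi_n t|^2 + |x-2\xi_m t|^2 = 2|x - \tfrac12(\xi_n+\xi_m)t|^2 + \tfrac12 t^2|\xi_n-\xi_m|^2$ used in Lemma~\ref{L:small u} is now played by Lemma~\ref{L:diverging rays} on divergence of reflected trajectories.

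Concretely, \eqref{forv} together with \eqref{sig41} and \eqref{sig3} gives the pointwise bound
\begin{align*}
|v_n(t,x)| \lesssim \log^{5/2}(\tfrac 1\eps)\Bigl(\frac{\sigma^2}{\sigma^4+t^2}\Bigr)^{3/4}\exp\Bigl\{-\frac{\sigma^2 |x-x_n(t)|^2}{4[\llogeps]^{25}[\sigma^4 + \log^4(\tfrac 1\eps) t^2]}\Bigr\},
\end{align*}
where $x_n(t) := x_c^{(n)} + 2\eta_n(t-t_c^{(n)})$ denotes the centre of the reflected wave packet. Expanding $\bigl(\sum_n |c_n^\eps|\, |\tilde v_n|\bigr)^2$ and integrating in $x$ yields, for each cross-term $(n,m)$, an exponential factor $\exp\{-c\sigma^2|x_n(t)-x_m(t)|^2/[\llogeps]^{25}[\sigma^4+\log^4(\tfrac 1\eps)t^2]\}$. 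On the sub-interval $t \geq \max(t_c^{(n)},t_c^{(m)})$, Lemma~\ref{L:diverging rays} gives $|x_n(t)-x_m(t)| \geq 2|\xi_n-\xi_m| t = 2|n-m|t/L$. Combined with the coefficient bound \eqref{bdforc}, the estimations of $L_t^\infty L_x^2$ and $L_t^{8/3}L_x^4$ are then in complete parallel with those of Lemma~\ref{L:small u}: the index sums split into a diagonal-like regime (where the number of contributing pairs is bounded by a power of $\log(\tfrac 1\eps)$) and an off-diagonal regime where the exponential factor gains an $\eps^{100}$-type decay.

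A technical wrinkle is that $\tilde v_n$ is supported already on $[t_c^{(n)} - 4\sigma\log(\tfrac 1\eps)/|\xi_n|,\infty)$, slightly before the collision, so Lemma~\ref{L:diverging rays} does not apply directly on the brief pre-reflection window. However, that window has length $O(\sqrt{\eps\delta}\log^2(\tfrac 1\eps)\eps/\llogeps)$, which is a fraction at most $O(\sqrt{\eps/\delta}\log^2(\tfrac 1\eps))$ of $t_c^{(n)}$; since $\delta \geq \eps^{6/7}$ in Case~(v) this fraction is $o(1)$, and its contribution can be absorbed by a crude bound on the short interval. The main obstacle is the book-keeping of logarithmic losses: the widening of the effective Gaussian scale from $\sqrt{\sigma^4+t^2}/\sigma$ (the scale of $u_n$) to $\sqrt{[\llogeps]^{25}[\sigma^4+\log^4(\tfrac 1\eps)t^2]}/\sigma$ (the scale of $v_n$) injects logarithmic losses into every step. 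The final estimate takes the form $\eps^{3/10}\delta^{-3/10}\log^K(\tfrac 1\eps)[\llogeps]^{K'}$ for some fixed $K,K'$, and is $o(1)$ only because $\delta/\eps \geq \eps^{-1/7}$ in Case~(v) provides a small polynomial gain that dominates all the accumulated logarithmic powers.
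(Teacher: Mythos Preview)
Your approach is exactly the paper's: the same interpolation \eqref{E:interp}, the same pointwise bound \eqref{bdfv} on $|v_n|$ derived from \eqref{sig41} and \eqref{sig3}, the same use of Lemma~\ref{L:diverging rays} for trajectory separation, the same diagonal/off-diagonal split of the index sum (the paper's threshold is $N=\log^4(\tfrac1\eps)$), and the same final form $\eps^{3/10}\delta^{-3/10}$ times logarithmic factors.

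One correction on the pre-reflection window: a ``crude bound on the short interval'' does not work for the $L_t^\infty L_x^2$ factor, since that is a supremum over~$t$ and a naive bound without trajectory separation gives $\sum_n|c_n^\eps|\sim(\sigma/\eps)^{3/2}$, which is a negative power of~$\eps$. What the paper does instead --- and what your length estimate for the window is exactly the input for --- is to \emph{extend} the separation inequality \eqref{difray} to the window by a triangle inequality: for $|n-m|\geq\log^4(\tfrac1\eps)$ one compares $|x_n(t)-x_m(t)|$ to the post-collision separation $|x_n(t_c^{\max})-x_m(t_c^{\max})|\geq 2|\xi_n-\xi_m|t_c^{\max}$ guaranteed by Lemma~\ref{L:diverging rays}, and uses that each wave packet moves only $O(\sigma\log(\tfrac1\eps))$ during the window. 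The threshold $|n-m|\geq\log^4(\tfrac1\eps)$ is precisely what makes the separation term dominate the drift, and it coincides with the diagonal/off-diagonal cutoff, so no separate treatment of the window is needed.
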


\begin{proof}
By H\"older's inequality,
\begin{align}\label{E:interp}
\Bigl\|\sum_{n\in \mathcal E} c_n^{\eps} \tilde v_n\Bigr\|_{L_{t,x}^{\frac {10}3}}
\lsm \Bigl\|\sum_{n\in \mathcal E}c_n^\eps \tilde v_n\Bigr\|_{L_t^\infty L_x^2}^{\frac15}
	\Bigl\|\sum_{n\in \mathcal E}c_n^\eps \tilde v_n\Bigr\|_{L_t^{\frac83}L_x^4}^{\frac 45},
\end{align}
where all spacetime norms are over $[T,\infty)\times\R^3$.

First we note that from \eqref{sig41} and \eqref{sig3}, we can bound
\begin{align}\label{bdfv}
|v_n(t,x)|
&\lsm \log^{\frac52}(\tfrac1{\eps})\biggl(\frac{\sigma^2}{\sigma^4+t^2}\biggr)^{\frac34}\exp\Bigl\{-\frac{\sigma^2|x-x_n(t)|^2}{4[\llogeps]^{25}[\sigma^4+t^2\log^4(\frac1\eps)]}\Bigr\}.
\end{align}
Using this bound, we estimate
\begin{align*}
&\int_{\R^3} \! |\tilde v_{n_1}||\tilde v_{n_2}| \,dx
\lesssim \log^5(\tfrac1{\eps})\biggl(\frac{\sigma^2}{\sigma^4+t^2}\biggr)^{\frac 32} \!\!\int_{\R^3}\!\! \exp\Bigl\{-\frac{\sigma^2[|x-x_1(t)|^2+|x-x_2(t)|^2]}{4[\log\log(\frac 1{\eps})]^{25}[\sigma^4+t^2\log^4(\frac 1{\eps})]} \Bigr\} \,dx\\
&\lesssim\log^5(\tfrac1{\eps})\biggl[\frac {[\log\log(\frac1{\eps})]^{25}[\sigma^4+t^2\log^4(\frac1{\eps})]}{\sigma^4+t^2}\biggr]^{\frac 32} \exp\Bigl\{-\frac{\sigma^2|x_1(t)-x_2(t)|^2}{8[\log\log(\frac1{\eps})]^{25}[\sigma^4+t^2\log^4(\frac 1{\eps})]}\Bigr\}\\
&\lesssim \log^{12}(\tfrac 1{\eps})\exp\Bigl\{-\frac{\sigma^2|x_1(t)-x_2(t)|^2}{8[\log\log(\frac1{\eps})]^{25}[\sigma^4+t^2\log^4(\frac 1{\eps})]}\Bigr\},
\end{align*}
where $x_j(t)$ denotes the trajectory of $v_{n_j}$, that is,
$$
x_j(t):=2\xi^{(j)} t_c^{(j)}+2\eta^{(j)} (t-t_c^{(j)})
$$
with $\xi^{(j)}:=\xi_{n_j}$, $\eta^{(j)}:=\eta_{n_j}$, and $t_c^{(j)}$ representing the corresponding collision times.

Therefore,
\begin{align}\label{E:tilde v}
\Bigl\|\sum_{n\in\mathcal  E} c_n^{\eps}\tilde v_n\Bigr\|_{L_t^{\infty}L_x^2}^2
&\lesssim  \sup_t \sum_{ n_1, n_2\in \mathcal E}|c_{n_1}^{\eps}||c_{n_2}^{\eps}|\log^{12}(\tfrac 1{\eps}) e^{-\frac {\sigma^2|x_1(t)-x_2(t)|^2}{8[\log\log(\frac1{\eps})]^{25}[\sigma^4+t^2\log^4(\frac 1{\eps})]}},
\end{align}
where the supremum in $t$ is taken over the region
\begin{align*}
t\ge  \max\biggl\{t_c^{(1)}-4\frac {\sigma \log(\frac1{\eps})}{|\xi^{(1)}|},\ t_c^{(2)}-4\frac {\sigma\log(\frac1{\eps})}{|\xi^{(2)}|}\biggr\}.
\end{align*}

Next we show that for $|n_1-n_2|\geq \log^4(\frac1\eps)$ and all such $t$,
\begin{equation}\label{difray}
|x_1(t)-x_2(t)|\ge |\xi^{(1)}-\xi^{(2)}|t.
\end{equation}
We discuss two cases.  When $t\ge \max\{t_c^{(1)},t_c^{(2)}\}$, this follows immediately from Lemma~\ref{L:diverging rays}.  It remains to prove \eqref{difray} for
$$
\max\biggl\{t_c^{(1)}-4\frac {\sigma\log(\frac1{\eps})}{|\xi^{(1)}|},\ t_c^{(2)}-4\frac {\sigma\log(\frac1{\eps})}{|\xi^{(2)}|}\biggr\}\le t\le \max\bigl\{t_c^{(1)}, \ t_c^{(2)}\bigr\}.
$$
Without loss of generality, we may assume $t_c^{(1)}\ge t_c^{(2)}$.  Using Lemmas~\ref{L:xc} and~\ref{L:diverging rays} and the fact that
$|n_1-n_2|\ge \log^4(\frac 1\eps)$, we estimate
\begin{align*}
|x_1(t)-x_2(t)|&\ge |x_1(t_c^{(1)})-x_2(t_c^{(1)})|-|x_1(t)-x_1(t_c^{(1)})|-|x_2(t)-x_2(t_c^{(1)})|\\
&\ge 2|\xi^{(1)}-\xi^{(2)}|t_c^{(1)}-2|\xi^{(1)}||t-t_c^{(1)}|-2|\xi^{(2)}||t-t_c^{(1)}|\\
&\ge 2\frac {|n_1-n_2|}L t_c^{(1)}-8\sigma \log(\tfrac 1{\eps})-
8\frac {|\xi^{(2)}|\sigma\logeps}{|\xi^{(1)}|}\\
&\ge 2\frac {|n_1-n_2|}{L}t_c^{(1)} -16\sigma\log(\tfrac 1{\eps})[\log\log(\tfrac 1{\eps})]^2\\
&\ge\frac {|n_1-n_2|}{L} t=|\xi^{(1)}-\xi^{(2)}| t.
\end{align*}
This completes the verification of \eqref{difray}.

Using \eqref{bdforc} and \eqref{difray}, \eqref{E:tilde v} implies
\begin{align*}
\Bigl\|\sum_{n\in \mathcal E} c_n^{\eps}\tilde v_n\Bigr\|_{L_x^2}^2
&\lesssim \sum_{|n_1-n_2|\ge \log^4(\frac1{\eps}),\ n_i\in \mathcal E}\frac {(\sigma\eps)^3}{L^6}\log^{12}(\tfrac1{\eps}) e^{-\frac{\sigma^2|n_1-n_2|^2t^2}{8L^2[\log\log(\frac 1{\eps})]^{25}[\sigma^4+t^2\log^4(\frac 1{\eps})]}}\\
&\quad +\sum_{|n_1-n_2|\le\log^4(\frac 1{\eps}),\ n_i\in \mathcal E}\frac{(\sigma\eps)^3}{L^6}\log^{12}(\tfrac 1{\eps})\\
&\lesssim \frac {(\sigma\eps)^3}{L^6}\log^{12}(\tfrac 1{\eps})\biggl(\frac {L\log\log(\tfrac 1{\eps})}{\eps}\biggr)^3 \biggl(\frac{[\log\log(\tfrac 1{\eps})]^{27}[\sigma^4+t^2\log^4(\frac 1{\eps})]}{t^2}\biggr)^{\frac 32}\\
&\quad+\frac{(\sigma\eps)^3}{L^6}\log^{24}(\tfrac 1{\eps})\biggl(\frac {L\log\log(\frac 1{\eps})}{\eps}\biggr)^3\\
&\lesssim \log^{13}(\tfrac1{\eps})\biggl(\frac{\sigma^6}{t^3}+\log^6(\tfrac 1\eps)\biggr) + \log^{25}(\tfrac 1{\eps}).
\end{align*}
Thus,
\begin{align}\label{E:536}
\Bigl\|\sum_{n\in \mathcal E} c_n^{\eps}\tilde v_n\Bigr\|_{L_t^{\infty}L_x^2([T,\infty)\times\R^3)}\lesssim \log^{\frac{25}2}(\tfrac1{\eps}).
\end{align}

We now turn to estimating the second factor on the right-hand side of \eqref{E:interp}.  Combining \eqref{bdfv} with
\begin{align*}
\frac 14\sum_{j=1}^4|x-x_j(t)|^2=\biggl|x-\frac 14\sum_{j=1}^4 x_j(t)\biggr|^2+\frac 1{16}\sum_{j<l}|x_j(t)-x_l(t)|^2,
\end{align*}
we get
\begin{align*}
&\int_{\R^3}|\tilde v_{n_1}||\tilde v_{n_2}||\tilde v_{n_3}||\tilde v_{n_4}|\,dx\\
&\lesssim \log^{10}(\tfrac1{\eps})\biggl(\frac{\sigma^2}{\sigma^4+t^2}\biggr)^3\!\!\int_{\R^3}\!\!\exp\Bigl\{ -\frac {\sigma^2}{4[\log\log(\frac1{\eps})]^{25}[\sigma^4+t^2\log^4(\frac1{\eps})]}\sum_{j=1}^4|x-x_j(t)|^2\Bigr\}\,dx\\
&\lesssim \log^{10}(\tfrac1{\eps})\biggl(\frac {\sigma^2}{\sigma^4+t^2}\biggr)^3\biggl(\frac{[\log\log(\frac 1{\eps})]^{25}[\sigma^4+t^2\log^4(\frac 1{\eps})]}{\sigma^2}\biggr)^{\frac 32}
e^{-\frac{\sigma^2\sum_{j<l}|x_j(t)-x_l(t)|^2}{16[\log\log(\frac 1{\eps})]^{25}[\sigma^4+t^2\log^4(\frac 1{\eps})]}}\\
&\lesssim\sigma^3t^{-3}\log^{17}(\tfrac 1{\eps})e^{-\frac{\sigma^2\sum_{j<l}|x_j(t)-x_l(t)|^2}{16[\log\log(\frac1{\eps})]^{25}[\sigma^4+t^2\log^4(\frac 1{\eps})]}}.
\end{align*}
Combining this with \eqref{bdforc}, we obtain
\begin{align}\label{359}
\Bigl\|\sum_{n\in \mathcal E} &c_n^{\eps} \tilde v_n(t)\Bigr\|_{L_x^4}^4
\lesssim \!\sum_{n_1,\ldots,n_4\in \mathcal E}\! \frac{(\sigma\eps)^6}{L^{12}}\sigma^3t^{-3}\log^{17}(\tfrac1{\eps}) e^{-\frac{\sigma^2\sum_{j<l}|x_j(t)-x_l(t)|^2}{16[\log\log(\frac1{\eps})]^{25}[\sigma^4+t^2\log^4(\frac 1{\eps})]}}.
\end{align}

To estimate the sum above we break it into two parts.  Let $N:= \log^4(\frac 1{\eps})$.

\textbf{Part 1:} $|n_j-n_k|\ge N$ for some $1\le j\neq k\le 4$.  By \eqref{difray}, we have
$$
|x_j(t)-x_k(t)|\ge \frac {|n_j-n_k|}{L} t \qtq{for all} t\in \supp \prod_{l=1}^4 \tilde v_{n_l}.
$$
As $t\geq T$, we estimate the contribution of the summands conforming to this case to LHS\eqref{359} by
\begin{align*}
\frac {(\sigma\eps)^6}{L^{12}}\sigma^3t^{-3}& \log^{17}(\tfrac1{\eps})\biggl[\frac {L\log\log(\frac 1{\eps})}{\eps}\biggr]^{12}
\exp\Bigl\{-\frac{\sigma^2t^2N^2}{16L^2[\log\log(\frac1{\eps})]^{25}[\sigma^4+t^2\log^4(\frac 1{\eps})]}\Bigr\}\\
&\lesssim \frac {\sigma^9}{\eps^6}t^{-3}\log^{18}(\tfrac 1{\eps})\exp\Bigl\{-\frac{N^2}{\log^5(\frac 1{\eps})}\Bigr\}
\le\eps^{100}t^{-3}.
\end{align*}

\textbf{Part 2:} $|n_j-n_k|\le N$ for all $1\le j<k \le 4$.  We estimate the contribution of the summands conforming to this case to LHS\eqref{359} by
\begin{align*}
\frac {(\sigma \eps)^6}{L^{12}}&\sigma^3 t^{-3} \log^{17}(\tfrac1{\eps}) N^9\biggl(\frac {L\log\log(\frac 1{\eps})}{\eps}\biggr)^3
\lesssim \biggl(\frac {\sigma N}L\biggr)^9\eps^3 t^{-3}\log^{18}(\tfrac1{\eps})\le \eps^3 t^{-3}\log^{56}(\tfrac 1{\eps}).
\end{align*}

Combining the estimates from the two cases, we obtain
\begin{align*}
\Bigl\|\sum_{n\in \mathcal E} c_n^\eps \tilde v_n\Bigr\|_{L_t^{\frac83}L_x^4([T,\infty)\times\R^3)}
&\lesssim \bigl[\eps^{25} + \eps^{\frac 34}\log^{14}(\tfrac 1{\eps})\bigr] T^{-\frac 38}
\lesssim \eps^{\frac 38}\delta^{-\frac 38}\log^{15}(\tfrac1\eps).
\end{align*}
Combining this with \eqref{E:interp} and \eqref{E:536} completes the proof of Lemma~\ref{L:bdfv}.
\end{proof}

To complete the proof of Proposition~\ref{P:long times}, we are left to estimate the last term on RHS\eqref{E:control 462}.  For each $n\in \mathcal E$,
we write
\begin{equation}\label{E:259}
\propagateomega (1_\Omega\gamma_n)-(\tilde u_n -\tilde v_n)=-(w_n+r_n),
\end{equation}
where $w_n$ is chosen to agree with $\tilde u_n-\tilde v_n$ on $\partial \Omega$ and $r_n$ is the remainder.  More precisely, let
$\phi\in C_c^{\infty}([0,\infty))$ with $\phi\equiv 1$ on $[0,\frac 12]$ and $\phi\equiv 0$ on $[1,\infty)$.  For each $x\in \Omega$, let $x_*\in\partial\Omega$ denote the point obeying $|x-x_*|=\dist(x,\Omega^c)$.  Now we define
$$
w_n(t,x):=w_n^{(1)}(t,x)+w_n^{(2)}(t,x)+w_n^{(3)}(t,x)
$$
and
\begin{align*}
w_n^{(j)}(t,x):=(\tilde u_n-\tilde v_n)(t,x_*)\phi\bigl(\tfrac{|x-x_*|}{\sigma}\bigr)
\!\begin{cases}
(1-\phi)\bigl(\frac {|x_*-x_c^{(n)}|}{\sigma\log(\frac 1{\eps})}\bigr), & j=1,\\[2mm]
\phi\bigl(\frac {|x_*-x_c^{(n)}|}{\sigma\log(\frac 1{\eps})}\bigr)(1-\phi)\bigl(\frac {|t-t_c^{(n)}||\xi_n|}{2\sigma\log(\frac 1{\eps})}\bigr), &j=2,\\[2mm]
\phi\bigl(\frac {|x_*-x_c^{(n)}|}{\sigma\log(\frac 1{\eps})}\bigr)     \phi\bigl(\frac {|t-t_c^{(n)}||\xi_n|}{2\sigma\log(\frac 1{\eps})}\bigr)e^{i\xi\cdot (x-x_*)},\!\! &j=3.
\end{cases}
\end{align*}
We will estimate $w_n$ by estimating each $w_n^{(j)}$ separately. Note that $w_n^{(3)}$ is the most significant of the three;
spatial oscillation has been introduced into this term to ameliorate the temporal oscillation of $\tilde u_n-\tilde v_n$.  This subtle modification is essential to achieve satisfactory estimates.

To estimate $r_n$, we use \eqref{E:259} to write
$$
0=(i\partial_t +\Delta_\Omega)(\tilde u_n-\tilde v_n-w_n-r_n)=(i\partial_t+\Delta)(\tilde u_n-\tilde v_n-w_n)-(i\partial_t+\Delta_\Omega) r_n,
$$
which implies
$$
(i\partial_t+\Delta_\Omega) r_n=iu_n\partial_t \chi^{u_n} -iv_n\partial_t \chi^{v_n} -(i\partial_t+\Delta)w_n.
$$
Using the Strichartz inequality, we estimate
\begin{align*}
\Bigl\|\sum_{n\in \mathcal E} c_n^{\eps}r_n\Bigr\|_{L_{t,x}^{\frac{10}3}([T,\infty)\times\Omega)}
&\lesssim \Bigl\|\sum_{n\in \mathcal E}c_n^{\eps}\bigl[u_n\partial_t\chi^{u_n}-v_n\partial_t\chi^{v_n}\bigr]\Bigr\|_{L_t^1L_x^2([T,\infty)\times\Omega)}\\
&\quad + \Bigl\|\sum_{n\in \mathcal E}c_n^{\eps}(i\partial_t+\Delta)w_n\Bigr\|_{L_t^1L_x^2([T,\infty)\times\Omega)}.
\end{align*}

Putting everything together, we are thus left to prove the following

\begin{lem}\label{L:rem} As $\eps\to0$, we have
\begin{align}
&\Bigl\|\sum_{n\in \mathcal E} c_n^{\eps} u_n\partial_t \chi^{u_n}\Bigr\|_{L_t^1L_x^2([T,\infty)\times \Omega)}
+\Bigl\|\sum_{n\in \mathcal E} c_n^{\eps} v_n \partial_t \chi^{v_n}\Bigr\|_{L_t^1L_x^2([T,\infty)\times \Omega)}=o(1)\label{rem1}\\
&\Bigl\|\sum_{n\in \mathcal E} c_n^{\eps}w_n^{(j)}\Bigr\|_{L_{t,x}^{\frac{10}3}([T,\infty)\times\Omega)}
+\Bigl\|\sum_{n\in \mathcal E} c_n^{\eps}(i\partial_t+\Delta)w_n^{(j)}\Bigr\|_{L_t^1L_x^2([T,\infty)\times\Omega)}=o(1),\label{rem2}
\end{align}
for each $j=1,2,3$.  As previously, $T=\frac1{10}\eps\delta[\log\log(\frac1\eps)]^{-1}$.
\end{lem}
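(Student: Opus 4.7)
\emph{Plan.} The three estimates fit into a common template: each $w_n^{(j)}$ is concentrated in a thin boundary slab of thickness $O(\sigma)$, and we must control its pointwise size together with its failure to solve the free Schr\"odinger equation. What distinguishes the three summands is the mechanism supplying smallness, so we plan to treat them in order of increasing subtlety.

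To establish \eqref{rem1}, we will exploit that on $\supp(\partial_t\chi^{u_n})\cap\Omega$ one has $t-t_c^{(n)}\ge 2\sigma\log(\tfrac1\eps)/|\xi_n|$, so the centre $2\xi_n t$ of $u_n$ has passed through $x_c^{(n)}$ by at least $4\sigma\log(\tfrac1\eps)$ and lies deep inside $\Omega^c$. The surviving Gaussian tail on $\Omega$ is then pointwise $\lesssim \exp\{-c\log^2(\tfrac1\eps)\}\le\eps^{100}$, and multiplying by $|\partial_t\chi^{u_n}|\lesssim|\xi_n|\sigma^{-1}\log^{-1}(\tfrac1\eps)$, integrating over the boundary slab, and invoking \eqref{bdforc} to sum over $\mathcal E$ yields the first half of \eqref{rem1}. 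The term $v_n\partial_t\chi^{v_n}$ is treated identically using the Gaussian lower bound~\eqref{sig41} for the covariance of $v_n$.

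The cases $j=1,2$ of \eqref{rem2} rest on the same idea. For $w_n^{(1)}$ the spatial cutoff forces $|x_*-x_c^{(n)}|\gtrsim\sigma\log(\tfrac1\eps)$, so at every $t$ in its support the evaluation point $x_*$ lies at distance $\gtrsim\sigma\log(\tfrac1\eps)$ from both wave packet centres $2\xi_n t$ and $x_c^{(n)}+2\eta_n(t-t_c^{(n)})$; for $w_n^{(2)}$ the temporal cutoff $|t-t_c^{(n)}||\xi_n|\gtrsim\sigma\log(\tfrac1\eps)$ does the same job. Lemma~\ref{L:matrix} then yields a pointwise bound of the form $\log^{O(1)}(\tfrac1\eps)\exp\{-c\log^2(\tfrac1\eps)\}$ on $u_n(t,x_*)-v_n(t,x_*)$. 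Derivatives falling on the cutoffs or on the phases of $u_n$ and $v_n$ cost at most polynomial powers of $\eps^{-1}$, which are absorbed by the exponential gain; summation over $\mathcal E$ via \eqref{bdforc} then gives the $L_t^1L_x^2$-bound on $(i\partial_t+\Delta)w_n^{(j)}$, while the $L^{10/3}_{t,x}$ bound on $w_n^{(j)}$ itself follows either by the same pointwise estimate and H\"older, or by the near-orthogonality argument of Lemma~\ref{L:bdfv}.

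The substantive case is $w_n^{(3)}$, where neither cutoff supplies smallness and the required cancellation must come from the near-agreement of $u_n$ and $v_n$ on $\partial\Omega$ quantified by Lemma~\ref{L:uv match}. The plan hinges on rewriting
\begin{equation*}
w_n^{(3)}(t,x)=e^{i\xi_n\cdot(x-x_c^{(n)})-it|\xi_n|^2}\,A_n(t,x)\,\Phi_n(t,x),
\end{equation*}
where $A_n$ is as in Lemma~\ref{L:uv match} and $\Phi_n$ is the product of the three cutoffs; this factorization is exactly what the phase $e^{i\xi_n\cdot(x-x_*)}$ in the definition of $w_n^{(3)}$ was designed to produce. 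Since the plane wave is a free-space solution, one computes
\begin{equation*}
(i\partial_t+\Delta)w_n^{(3)}=e^{i\xi_n\cdot(x-x_c^{(n)})-it|\xi_n|^2}\bigl[(i\partial_t+\Delta)(A_n\Phi_n)+2i\xi_n\cdot\nabla(A_n\Phi_n)\bigr],
\end{equation*}
so the would-be catastrophic $|\xi_n|^2$ contribution from $\partial_t u_n$ is replaced by a single power of $|\xi_n|$ falling on the slowly varying amplitude $A_n\Phi_n$. Combining \eqref{A}--\eqref{laplace A} with $|\xi_n|\lesssim\eps^{-1}\llogeps$ and the cutoff bounds $|\nabla\Phi_n|\lesssim\sigma^{-1}$, $|\Delta\Phi_n|\lesssim\sigma^{-2}$, $|\partial_t\Phi_n|\lesssim|\xi_n|\sigma^{-1}\log^{-1}(\tfrac1\eps)$ yields a pointwise bound on each summand; one then integrates over the support of $w_n^{(3)}$ (spatial volume $\sim\sigma^3\log^2(\tfrac1\eps)$, temporal length $\sim\sigma\log(\tfrac1\eps)/|\xi_n|$) and sums over $n\in\mathcal E$ by combining \eqref{bdforc} with the near-orthogonality of collision neighbourhoods supplied by Lemma~\ref{L:diverging rays}, exactly as in Lemma~\ref{L:bdfv}. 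The $L^{10/3}_{t,x}$ estimate on $w_n^{(3)}$ itself follows from \eqref{A} and the same orthogonality argument. The principal obstacle is confirming that, after paying for the factor $|\xi_n|$ and for the measure of the support, the resulting estimate still closes in the endpoint regime $\delta\sim\eps^{6/7}$; this is precisely why the sharp $\log^{O(1)}(\tfrac1\eps)$ accounting in Lemma~\ref{L:uv match}, and in particular the cancellation \eqref{B cancel} underlying the bound on $A_n$, is indispensable.
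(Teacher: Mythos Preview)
Your plan is correct and follows essentially the same route as the paper: exponential Gaussian-tail decay handles \eqref{rem1} and the cases $j=1,2$ of \eqref{rem2}, while the factorization $w_n^{(3)}=C_n\cdot A_n\cdot\Phi_n$ with $C_n$ a free solution, together with the bounds of Lemma~\ref{L:uv match}, handles $j=3$.

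One point deserves clarification. For the summation over $n\in\mathcal E$ in the $w_n^{(3)}$ estimates you invoke ``near-orthogonality of collision neighbourhoods supplied by Lemma~\ref{L:diverging rays}''. This is not quite the right citation: Lemma~\ref{L:diverging rays} controls the \emph{post-collision} separation of reflected rays, whereas what you need here is that the collision neighbourhoods $\{|x_*-x_c^{(n)}|\lesssim\sigma\log(\tfrac1\eps),\ |t-t_c^{(n)}|\lesssim\sigma\log(\tfrac1\eps)/|\xi_n|\}$ themselves have bounded overlap. The paper isolates this as a separate lemma (Lemma~\ref{L:disjoint w3}): for fixed $(t,x)$ at most $\log^{12}(\tfrac1\eps)$ of the $w_n^{(3)}$ are nonzero. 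Its proof is elementary and does not use Lemma~\ref{L:diverging rays}; one simply observes that if $w_n^{(3)}$ and $w_m^{(3)}$ overlap then $|t_c^{(n)}-t_c^{(m)}|$ and $|x_c^{(n)}-x_c^{(m)}|$ are both $O(\sigma\log(\tfrac1\eps))$, and combining this with $x_c^{(n)}=2\xi_n t_c^{(n)}$ and Lemma~\ref{L:xc} forces $|n-m|\lesssim\log^4(\tfrac1\eps)$. With this in hand, the paper estimates $\sum c_n^\eps(i\partial_t+\Delta)w_n^{(3)}$ first in $L^2_{t,x}$ and then passes to $L^1_tL^2_x$ via H\"older in time, using that the total temporal support of the sum has length $\lesssim\eps\delta[\llogeps]^9$; you should make this step explicit as well.
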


\begin{proof}
We first prove the estimate \eqref{rem1} for $u_n$. Recall the following bound for $u_n$:
\begin{align*}
|u_n(t,x)|\lesssim \biggl(\frac{\sigma^2}{\sigma^4+t^2}\biggr)^{\frac 34} \exp\Bigl\{-\frac{\sigma^2|x-2\xi_n t|^2}{4(\sigma^4+t^2)}\Bigr\}.
\end{align*}
Also, for $t\in \supp \partial_t \chi^{u_n}=[t_c^{(n)}+\frac{2\sigma\log(\frac 1{\eps})}{|\xi_n|},t_c^{(n)}+\frac{4\sigma\log(\frac 1{\eps})}{|\xi_n|}]$ we have
$t\le \sigma^2$ and, by the definition of $\mathcal E$,
\begin{align*}
\dist(2\xi_n t,\Omega)\gtrsim \frac{|\xi_n||t-t_c^{(n)}|}{[\llogeps]^4}\ge\frac {\sigma\log(\frac1{\eps})}{[\log\log(\frac 1{\eps})]^5}.
\end{align*}
Thus,
\begin{align*}
|\partial_t\chi^{u_n}|^2\int_{\Omega}|u_n(t,x)|^2\,dx
&\lesssim\biggl(\frac {\sigma^2}{\sigma^4+t^2}\biggr)^{\frac 32}|\partial_t\chi^{u_n}|^2\int_{\Omega}\exp\Bigl\{-\frac {\sigma^2|x-2\xi_{n}t|^2}{2(\sigma^4+t^2)}\Bigr\}\,dx\\
&\lesssim \sigma^{-3}\biggl(\frac{|\xi_n|}{\sigma\logeps}\biggr)^2 \int_{|y|\ge\frac{\sigma\log(\frac1{\eps})}{[\log\log(\frac 1{\eps})]^5}}e^{-\frac {|y|^2}{4\sigma^2}}\,dy\\
&\lesssim \eps^{200}.
\end{align*}
Summing in $n$ and using \eqref{bdforc}, we obtain
\begin{align*}
\Bigl\|\sum_{n\in \mathcal E} c_n^{\eps}u_n\partial_t\chi^{u_n}\Bigr\|_ {L_t^1L_x^2([T,\infty)\times\Omega)}
&\lesssim \sum_{n\in \mathcal E} \frac {(\sigma\eps)^{\frac 32}}{L^3}\eps^{100} \frac{\sigma\log(\frac1{\eps})}{|\xi_n|}\leq \eps^{90}.
\end{align*}

The estimate for $v_n$ is similar.  Note that by the definition of $\mathcal E$, for $t\in\supp \partial_t \chi^{v_n}=[t_c^{(n)}-4\frac {\sigma \log(\frac1{\eps})}{|\xi_n|},t_c^{(n)}-2\frac {\sigma\log(\frac1{\eps})}{|\xi_n|}]$ we have
\begin{align}\label{124}
\dist(x_n(t), \Omega)\gtrsim\frac{|\xi_n||t-t_c^{(n)}|}{[\llogeps]^4}\ge \frac {\sigma\log(\frac1{\eps})} {[\log\log(\frac 1{\eps})]^5}
\end{align}
and, by Lemma \ref{L:xc},
\begin{align*}
t\leq t_c^{(n)}\lsm \frac{\delta}{|\xi_n|}[\llogeps]^8 \qtq{and} t^2\log^4(\tfrac1\eps)\le\sigma^4[\llogeps]^{19}.
\end{align*}
Therefore, using \eqref{bdfv} we get
\begin{align*}
|(\partial_t\chi^{v_n})v_n(t,x)|^2
&\lsm \log^5(\tfrac1{\eps})\biggl(\frac{\sigma^2}{\sigma^4+t^2}\biggr)^{\frac32}\exp\biggl\{-\frac{\sigma^2|x-x_n(t)|^2}{2[\llogeps]^{25}[\sigma^4+t^2\log^4(\tfrac
1\eps)]}\biggr\}\cdot\frac{|\xi_n|^2}{[\sigma\logeps]^2}\\
&\lsm \sigma^{-5}|\xi_n|^2\log^{3}(\tfrac1\eps)\exp\biggl\{-\frac{|x-x_n(t)|^2}{4\sigma^2[\llogeps]^{44}}\biggr\}.
\end{align*}
Using \eqref{124} and computing as for $u_n$, we obtain
\begin{align*}
\int_{\Omega} |\partial _t\chi^{v_n} v_n(t,x)|^2 \,dx\lsm \eps^{200}
\end{align*}
and then
$$
\Bigl\|\sum_{n\in \mathcal E} c_n^{\eps}v_n\partial_t\chi^{v_n}\Bigr\|_{L_t^1L_x^2([T,\infty)\times\Omega)} \le \eps^{90}.
$$
This completes the proof of \eqref{rem1}.

We now turn to estimating \eqref{rem2}.  We begin with the contribution from $w_n^{(1)}$.  Using the definitions of $\tilde u_n(t,x)$ and $\tilde v_n(t,x)$, as well as \eqref{sig42}, \eqref{sig3}, and the fact that $\partial_t[\det M(t)]^{-1/2} = -\frac12 [\det M(t)]^{-1/2} \Tr[M(t)^{-1}\partial_t M(t)]$, we estimate
\begin{align}
|w_n^{(1)}(t,x)|&+|(i\partial_t+\Delta)w_n^{(1)}(t,x)|\notag\\
&\lsm \Bigl[\sigma^{-2}+|\xi_n|^2+\frac{|x_*-2\xi_nt|^2}{\sigma^4}\Bigr]| u_n(t,x_*)| \chi_1(t,x)\label{cun}\\
&\quad+\Bigl[\sigma^{-2}+|\xi_n|^2+\frac{\log^{10}{(\frac1\eps)}}{\sigma^4+t^2}|x_*-x_n(t)|^2\Bigr]|v_n(t,x_*)| \chi_2(t,x)\label{cvn},
\end{align}
where $\chi_1(t,x)$ is a cutoff to the spacetime region
\begin{align*}
\biggl\{(t,x)\in [0,\infty) \times\Omega: \, |x-x_*|\le \sigma, \ |x_*-x_c^{(n)}|\geq\tfrac12 \sigma\logeps,\ t\leq t_c^{(n)}+4\frac{\sigma\logeps}{|\xi_n|}\biggr\}
\end{align*}
and $\chi_2(t,x)$ is a cutoff to the spacetime region
\begin{align*}
\biggl\{(t,x)\in [0,\infty) \times\Omega: \, |x-x_*|\le \sigma, \ |x_*-x_c^{(n)}|\geq\tfrac12\sigma\logeps,\ t\ge t_c^{(n)}-4\frac{\sigma\logeps}{|\xi_n|}\biggr\}.
\end{align*}
Note that
\begin{align}\label{E:chi}
\int_{\Omega} \chi_1(t,x) + \chi_2(t,x) \, dx\lesssim \sigma \qtq{for all} t\geq 0.
\end{align}

To estimate the contribution from \eqref{cun}, we note that on the spacetime support of this term we have $t\leq \sigma^2$ and,
by the definition of $\mathcal E$,
\begin{align*}
\frac{|x_*-x_c^{(n)}|}{[\llogeps]^4}&\lsm |x_*-2\xi_n t|\leq |x_*|  + |x_c^{(n)}| + 2|\xi_n(t-t_c^{(n)})|\lesssim 1.
\end{align*}
Thus we can estimate
\begin{align}
\eqref{cun}
&\lsm\bigl[\sigma^{-2}+|\xi_n|^2+\sigma^{-4}\bigr]|u_n(t,x_*)|\chi_1(t,x)\notag\\
&\lsm \eps^{-2}\delta^{-2}[\llogeps]^2 \sigma^{-\frac32}\exp\Bigl\{-\frac{c\sigma^2|x_*-x_c^{(n)}|^2}{\sigma^4[\llogeps]^8}\Bigr\}\chi_1(t,x)\notag\\
&\lsm \eps^{-2}\delta^{-2}\sigma^{-\frac32}[\llogeps]^2\exp\Bigl\{-\frac{\log^2(\tfrac1\eps)}{[\llogeps]^9}\Bigr\}\chi_1(t,x)\notag\\
&\le \eps^{100}\chi_1(t,x).\label{E:cun}
\end{align}

To estimate the contribution from \eqref{cvn}, we discuss long and short times separately.  If $t\geq \delta[\llogeps]^{10}|\xi_n|^{-1}$, then $t\gg t_c^{(n)}$ and so $2 |t-t_c^{(n)}|\geq t$.  Using the definition of $\mathcal E$, we thus obtain
\begin{align*}
|x_*-x_n(t)|\ge \dist(x_n(t), \partial\Omega)\gtrsim \frac{|2\xi_n(t-t_c^{(n)})|}{[\llogeps]^4}\geq \frac{|\xi_nt|}{[\llogeps]^5}.
\end{align*}
Noting also that $\sigma^4\le t^2\log^4(\tfrac 1\eps)$, we estimate
\begin{align}\label{l2}
\frac{\sigma^2|x_*-x_n(t)|^2}{4[\llogeps]^{25}[\sigma^4+t^2\log^4(\tfrac1\eps)]}
&\ge \frac{\sigma^2 |\xi_n|^2t^2}{8[\llogeps]^{35}t^2\log^4(\tfrac 1\eps)}\ge \frac\delta{\eps\log^3(\frac 1\eps)}.
\end{align}
Using the crude upper bound
$$
|x_*-x_n(t)|\leq |x_*|+|x_c^{(n)}| +2|\xi_n|(t-t_c^{(n)})\lesssim 1 + |\xi_n|t,
$$
together with \eqref{bdfv} and \eqref{l2}, we obtain
\begin{align*}
\eqref{cvn}
&\lesssim \log^{10}{(\tfrac1\eps)}\eps^{-2}\delta^{-2}[\llogeps]^2 \log^{\frac52}{(\tfrac1\eps)}\sigma^{\frac 32}t^{-\frac32}\exp\Bigl\{-\frac \delta{\eps\log^3(\tfrac1\eps)}\Bigr\}\chi_2(t,x)\\
&\le t^{-\frac 32}\eps^{100}\chi_2(t,x)
\end{align*}
for $t\geq \delta[\llogeps]^{10}|\xi_n|^{-1}$.

Now consider the regime $t_c^{(n)}-4\sigma\logeps|\xi_n|^{-1}\leq t\leq \delta[\llogeps]^{10}|\xi_n|^{-1}$.  By the definition of $\mathcal E$, we have
\begin{align}\label{E:515}
|x_*-x_n(t)|\gtrsim \frac{|x_*-x_c^{(n)}|}{[\llogeps]^4}\ge\frac{\sigma\logeps}{[\llogeps]^5}.
\end{align}
For the times under consideration,
\begin{align*}
\sigma^4+t^2\log^4(\tfrac 1\eps)\le \sigma^4+\delta^2\eps^2\log^4(\tfrac 1\eps)[\llogeps]^{22}\le \sigma^4[\llogeps]^{23},
\end{align*}
and so we obtain
\begin{align}\label{l1}
\frac{\sigma^2|x_*-x_n(t)|^2}{4[\llogeps]^{25}[\sigma^4+t^2\log^4(\tfrac1\eps)]}&\geq\frac{\log^2(\tfrac 1\eps)}{[\llogeps]^{60}}.
\end{align}
Using the crude upper bound
\begin{align*}
|x_*-x_n(t)|\lesssim |x_*|+|x_c^{(n)}|+|\xi_n t|\lsm [\llogeps]^{10}
\end{align*}
together with \eqref{bdfv} and \eqref{l1}, we obtain
\begin{align*}
\eqref{cvn}
&\lesssim \eps^{-2}\delta^{-2}\log^6(\tfrac1{\eps})[\llogeps]^{20} \log^{\frac52}{(\tfrac1\eps)}\sigma^{-\frac 32}\exp\Bigl\{-\frac{\log^2(\tfrac 1\eps)}{[\llogeps]^{60}}\Bigr\}\chi_2(t,x)\\
&\le \eps^{100}\chi_2(t,x)
\end{align*}
in the short time regime.

Collecting our estimates for long and short times, we get
\begin{align*}
\eqref{cvn}\lsm \langle t\rangle^{-\frac32}\eps^{100} \chi_2(t,x).
\end{align*}
Combining this with \eqref{bdforc}, \eqref{E:chi}, and the bound \eqref{E:cun} for \eqref{cun}, we obtain
\begin{align*}
\Bigl\|\sum_{n\in\mathcal E} c_n^{\eps}w_n^{(1)}\Bigr\|_{L_{t,x}^{\frac{10}3}([T,\infty)\times\Omega)}+\Bigl\|\sum_{n\in\mathcal E} c_n^{\eps}(i\partial_t+\Delta) w_n^{(1)}\Bigr\|_{L_t^1L_x^2([T,\infty)\times\Omega)}=o(1).
\end{align*}
This proves \eqref{rem2} for $w_n^{(1)}$.

Next we consider the term $w_n^{(2)}$.  Just as for $w_n^{(1)}$, we have the following pointwise bound:
\begin{align*}
|w_n^{(2)}(t,x)| & +|(i\partial_t+\Delta )w_n^{(2)}(t,x)|\lsm \biggl\{\Bigl[\sigma^{-2}+|\xi_n|^2+\frac{|x_*-2\xi_nt|^2}{\sigma^4}\Bigr]|\tilde u_n(t,x_*)|\\
&\qquad+\Bigl[\sigma^{-2}+|\xi_n|^2+\frac{\log^{10}(\frac1\eps)}{\sigma^4+t^2}|x_*-x_n(t)|^2\Bigr]|\tilde v_n(t,x_*)|\biggr\}\cdot \chi(t,x),
\end{align*}
where $\chi(t,x)$ is a cutoff to the spacetime region
\begin{align*}
\biggl\{(t,x)\in [0,\infty)\times\Omega: \, |x-x_*|\le \sigma, \ |x_*-x_c^{(n)}|\le \sigma\logeps,\ |t-t_c^{(n)}|\ge \frac{\sigma\logeps}{|\xi_n|}\biggr\}.
\end{align*}

On the support of $\tilde u_n(t,x_*) \chi(t,x)$ we have $t\le \sigma^2$ and
\begin{align*}
|x_*-2\xi_n t|\ge |2\xi_n(t-t_c^{(n)})|-|x_*-x_c^{(n)}|\ge \sigma\logeps.
\end{align*}
Hence
\begin{align*}
|\tilde u_n(t,x_*)|\chi(t,x)&\lsm \biggl(\frac{\sigma^2}{\sigma^4+t^2}\biggr)^{\frac34}\exp\Bigl\{-\frac{\sigma^2|x_*-2\xi_nt|^2}{4(\sigma^4+t^2)}\Bigr\}
\lsm \sigma^{-\frac 32}\exp\bigl\{-\tfrac 18 \log^2(\tfrac 1\eps)\bigr\}\\
&\le\eps^{100}.
\end{align*}
As before, this estimate is good enough to deduce
\begin{align*}
\Bigl\|\Bigl[\sigma^{-2}+|\xi_n|^2+\frac{|x_*-2\xi_nt|^2}{\sigma^4}\Bigr]|\tilde u_n(t,x_*)|\chi(t,x)\Bigr\|_{L_{t,x}^{\frac{10}3}\cap L_t^1L_x^2([T,\infty)\times\Omega)}\le \eps^{90}.
\end{align*}

To estimate the contribution of the $\tilde v_n$ term, we split into short and long times as in the treatment of the corresponding term in $w_n^{(1)}$.  Indeed, the treatment of the regime $t\ge \delta[\llogeps]^{10}|\xi_n|^{-1}$ follows verbatim as there.  For the complementary set of times $t_c^{(n)}-4\sigma\logeps|\xi_n|^{-1}\leq t\leq \delta[\llogeps]^{10}|\xi_n|^{-1}$, we estimate
\begin{align*}
|x_*-x_n(t)|&\ge|x_c^{(n)}-x_n(t)|-|x_c^{(n)}-x_*|=2|\xi_n(t-t_c^{(n)})|-|x_c^{(n)}-x_*|\ge \sigma\logeps.
\end{align*}
This plays the role of \eqref{E:515}; indeed, it is a stronger bound.  With this in place, arguing as for $w_n^{(1)}$ we obtain
\begin{align*}
\Bigl\|\Bigl[\sigma^{-2}+|\xi_n|^2+\frac{\log^{10}(\frac1\eps)}{\sigma^4+t^2}|x_*-x_n(t)|^2\Bigr]|\tilde v_n(t,x_*)|\chi(t,x)\Bigr\|_{L_{t,x}^{\frac{10}3}\cap L_t^1L_x^2([T,\infty)\times\Omega)}\le \eps^{90}.
\end{align*}

Combining the two estimates and using \eqref{bdforc} yields \eqref{rem2} for $w_n^{(2)}$.

It remains to prove \eqref{rem2} for $w_n^{(3)}$, which is the most subtle of all.

\begin{lem}[Almost disjointness of the $w^{(3)}_n$]\label{L:disjoint w3}  Fix $(t,x)\in\R\times\Omega$.  Then
\begin{equation}\label{E:disjoint w3}
\# \big\{ n\in \mathcal E : w^{(3)}_n(t,x) \neq 0 \bigr\}  \lesssim
\logeps^{12}.
\end{equation}
\end{lem}

\begin{proof}
From the definition of $w^{(3)}_n$ we see that if $w^{(3)}_n(t,x) \cdot w^{(3)}_m(t,x) \neq 0$, then
\begin{align*}
|t_c^{(n)}-t_c^{(m)}| \leq |t_c^{(n)}-t| + |t-t_c^{(m)}| \le 2\bigl( |\xi_n|^{-1}+ |\xi_m|^{-1} \bigr)\sigma\logeps
\end{align*}
and
\begin{align*}
|x_c^{(n)}-x_c^{(m)}| &\leq |x_c^{(n)}-x_*| + |x_*-x_c^{(m)}| \le 2\sigma\logeps .
\end{align*}
Combining these with
\begin{align*}
|x_c^{(n)}-x_c^{(m)}| &= 2 |\xi_n t_c^{(n)}-\xi_m t_c^{(m)} | \\
&= \bigl| (\xi_n+\xi_m) (t_c^{(n)}-t_c^{(m)}) + (\xi_n-\xi_m)(t_c^{(n)}+t_c^{(m)}) \bigr|
\end{align*}
and $\eps^{-1} [\llogeps]^{-1} \leq |\xi_n|,|\xi_m|\leq \eps^{-1}\llogeps$ yields
\begin{align*}
|\xi_n-\xi_m| \, (t_c^{(n)}+t_c^{(m)}) \lesssim  \sigma\logeps +
\sigma\logeps[\llogeps]^{2} \lesssim  \sigma\logeps[\llogeps]^{2}.
\end{align*}
From Lemma~\ref{L:xc} we have $t_c^{(n)}+t_c^{(m)} \geq \delta\eps[\llogeps]^{-1}$ and so
$$
|n-m| = L |\xi_n-\xi_m| \lesssim \frac{L\sigma}{\delta\eps}\logeps[\llogeps]^3 = [\logeps]^3[\llogeps]^4 \leq [\logeps]^4.
$$
The lemma now follows; RHS\eqref{E:disjoint w3} bounds the number of lattice points in a ball of this radius.
\end{proof}

To continue, we note that on the support of $w_n^{(3)}$ we have $\tilde u_n(t,x)=u_n(t,x)$ and $\tilde v_n(t,x)=v_n(t,x)$.
We rewrite $w_n^{(3)}$ as follows:
\begin{align*}
w^{(3)}_n(t,x)&=\exp\{it|\xi_n|^2-i\xi_n\cdot(x_*-x_c^{(n)})\}\bigl[u_n(t,x_*)-v_n(t,x_*)\bigr]\\
&\qquad\quad \cdot
\phi\biggl(\frac{|x-x_*|}{\sigma}\biggr)\phi\biggl(\frac{|x_*-x_c^{(n)}|}{\sigma\logeps}\biggr) \phi\biggl(\frac{|t-t_c^{(n)}| |\xi_n|}{2\sigma\logeps}\biggr)\\
&\qquad\quad \cdot\exp\{-it|\xi_n|^2+i\xi_n\cdot(x-x_c^{(n)})\}\\
&=:A_n(t,x)\cdot B_n(t,x)\cdot C_n(t,x).
\end{align*}
We have the following pointwise bounds on $A_n,B_n,C_n$, and their derivatives that are uniform in $n$:
\begin{align*}
&\begin{cases}
|C_n(t,x)|\le1,\quad  |\nabla C_n(t,x)|\le |\xi_n|\lsm \eps^{-1}\llogeps,\\
(i\partial_t+\Delta) C_n(t,x)=0,
\end{cases}\\
&\begin{cases}
|B_n(t,x)|\le 1, \ |\nabla B_n(t,x)|\lsm \sigma^{-1}+[\sigma\log(\tfrac 1\eps)]^{-1}\lsm \eps^{-\frac12}\delta^{-\frac12}, \\
|(i\partial_t+\Delta)B_n(t,x)|\lsm \sigma^{-2} +[\sigma\logeps]^{-2}+\frac{|\xi_n|}{\sigma\log(\frac1\eps)}\lsm \eps^{-\frac32}\delta^{-\frac12},
\end{cases}\\
&\begin{cases}
|A_n(t,x)|\lesssim \eps^{-\frac 14}\delta^{-\frac54}\log^{12}(\tfrac 1\eps), \ |\nabla A_n(t,x)|\lsm \eps^{-\frac34}\delta^{-\frac 74}\log^{12}(\tfrac 1\eps),\\
|(i\partial_t+\Delta) A_n(t,x)|\lesssim \eps^{-\frac 74}\delta^{-\frac74}\log^9(\tfrac 1\eps),
\end{cases}
\end{align*}
on the support of $w_n^{(3)}$.  Indeed, the bounds on $C_n$ and $B_n$ follow from direct computations, while the bounds on $A_n$ were proved in Lemma~\ref{L:uv match}.  Using these bounds we immediately get
\begin{align}
\bigl\|w_n^{(3)}\bigr\|_{L_{t,x}^\infty([T,\infty)\times\Omega)}&\lsm\eps^{-\frac 14}\delta^{-\frac 54}\log^{12}(\tfrac 1\eps)\label{E:w3}\\
\bigl\|(i\partial_t+\Delta)w_n^{(3)}\bigr\|_{L_{t,x}^\infty([T,\infty)\times\Omega)}&\lsm\eps^{-\frac 74}\delta^{-\frac 74}\log^{13}(\tfrac 1\eps),\label{E:laplace w3}
\end{align}
uniformly for $n\in \mathcal E$.  Additionally, the spacetime support of $w_n^{(3)}$ has measure
$$
\bigl|\supp w_n^{(3)}\bigr| \lsm \bigl[\sigma \log(\tfrac1\eps) \eps\llogeps] \sigma \bigl[\sigma\log(\tfrac1\eps)\bigr]^2 \lsm \sigma^4\eps\log^3(\tfrac1\eps)\llogeps.  
$$
Using this together with \eqref{bdforc}, Lemma~\ref{L:disjoint w3}, \eqref{E:w3}, and H\"older's inequality, we estimate
\begin{align*}
&\Bigl\|\sum_{n\in \mathcal E} c_n^\eps w_n^{(3)}\Bigr\|_{L_{t,x}^{\frac{10}3}([T,\infty)\times\Omega)}^{\frac{10}3}\\
&\lsm \sum_{n_1, \ldots, n_4\in \mathcal E} |c_{n_1}^\eps|^{\frac56} \cdot \ldots \cdot |c_{n_4}^\eps|^{\frac56} \int_T^\infty\int_\Omega |w_{n_1}^{(3)}(t,x)|^{\frac56}\cdot\ldots\cdot |w_{n_4}^{(3)}(t,x)|^{\frac56}\, dx\, dt\\
&\lsm \frac{(\sigma \eps)^5}{L^{10}}\log^{36}(\tfrac1\eps)\Bigl[\frac L\eps\llogeps\Bigr]^3 \bigl[\eps^{-\frac 14}\delta^{-\frac 54}\log^{12}(\tfrac 1\eps)\bigr]^{\frac{10}3} \sigma^4\eps\log^3(\tfrac1\eps)\llogeps\\
&\lsm \eps^{\frac{19}6}\delta^{-\frac{19}6}\log^{82}(\tfrac1\eps) = o(1) \qtq{as} \eps\to 0.
\end{align*}

Arguing similarly and using \eqref{E:laplace w3} in place of \eqref{E:w3}, we obtain
\begin{align*}
&\Bigl\|\sum_{n\in \mathcal E} c_n^\eps (i\partial_t+\Delta)w_n^{(3)}\Bigr\|_{L_{t,x}^2([T,\infty)\times\Omega)}^2\\
&\lsm \sum_{n_1,n_2\in \mathcal E} |c_{n_1}^\eps||c_{n_2}^\eps|\int_T^\infty\int_\Omega \bigl| (i\partial_t+\Delta)w_{n_1}^{(3)}(t,x)\bigr|\bigl| (i\partial_t+\Delta)w_{n_2}^{(3)}(t,x)\bigr|\, dx\, dt\\
&\lsm \frac{(\sigma \eps)^3}{L^6}\log^{12}(\tfrac1\eps)\Bigl[\frac L\eps\llogeps\Bigr]^3 \bigl[\eps^{-\frac 74}\delta^{-\frac 74}\log^{13}(\tfrac 1\eps)\bigr]^2 \sigma^4\eps\log^3(\tfrac1\eps)\llogeps\\
&\lsm \eps^{-\frac12}\delta^{-\frac32}\log^{46}(\tfrac1\eps).
\end{align*}
To convert this to a bound in $L^1_tL^2_x$, we need the following consequence of Lemma~\ref{L:xc}:
\begin{align*}
\bigl| \bigl\{ t : {\textstyle\sum_{n\in \mathcal E}} c_n^\eps w_n^{(3)}(t,x)\not\equiv 0\bigr\} \bigr|
&\leq \max_{n,m\in\mathcal E} |t_c^{(n)}- t_c^{(m)}| +\tfrac{2\sigma\log(\frac1\eps)}{|\xi_n|} + \tfrac{2\sigma\log(\frac1\eps)}{|\xi_m|}\\
&\lesssim \eps\delta[\llogeps]^9.
\end{align*}
Applying H\"older's inequality in the time variable, we get
\begin{align*}
\Bigl\|\sum_{n\in\mathcal E} c_n^\eps (i\partial_t+\Delta) & w_n^{(3)}\Bigr\|_{L_t^1L_x^2([T,\infty)\times\Omega)}\\
&\lesssim \bigl[\eps\delta\log\log^9(\tfrac1\eps)\bigr]^{\frac12}\Bigl\|\sum_{n\in \mathcal E} c_n^\eps (i\partial_t+\Delta)w_n^{(3)}\Bigr\|_{L_{t,x}^2([T,\infty)\times\Omega)}\\
&\lesssim \eps^{\frac14}\delta^{-\frac 14}\log^{24}(\tfrac 1\eps) = o(1) \qtq{as} \eps\to 0.
\end{align*}

This proves \eqref{rem2} for $w_n^{(3)}$ and so completes the proof of Lemma~\ref{L:rem}.
\end{proof}

Combining Lemmas~\ref{L:small u}, \ref{L:bdfv}, and \ref{L:rem} yields Proposition~\ref{P:long times}, which controls the contribution for large times of rays that enter the obstacle.  The contribution from short times was estimated in Proposition~\ref{P:short times}, while the contributions of near-grazing rays and rays that miss the obstacle were estimated in Propositions~\ref{P:ng} and \ref{P:missing}, respectively.  Putting everything together completes the proof of Theorem~\ref{T:LF3} and so the discussion of Case~(v).

\section{Linear profile decomposition}\label{S:LPD}

The purpose of this section is to prove a linear profile decomposition for the propagator $e^{it\Delta_\Omega}$
for data in $\dot H^1_D(\Omega)$; see Theorem~\ref{T:LPD}.  As we will see below, the profiles can live
in different limiting geometries; this is one of the principal differences relative to previous analyses.

Throughout this section, $\Theta:\R^3\to[0,1]$ denotes a smooth function such that
\begin{align*}
\Theta(x)=\begin{cases} 0, & |x|\le \frac 14,\\1, & |x| \ge \frac
12.\end{cases}
\end{align*}
We also write $\Theta^c(x):=1-\Theta(x)$ and $d(x):=\dist(x,\Omega^c)$.

\begin{lem}[Refined Strichartz estimate]\label{lm:refs}
Let $f\in \hd$. Then we have
\begin{align*}
\|\prd f\|_{\lt}\lsm \|f\|_{\hd}^{\frac 15} \sup_{N\in \tz}\|\prd f_N\|_{\lt}^{\frac45}.
\end{align*}
\end{lem}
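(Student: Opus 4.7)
Plan. The approach is to refine the basic Strichartz bound $\|u\|_{\lt}\lsm\|f\|_{\hd}$ for $u=\prd f$ by exploiting frequency concentration through the square function. First I would apply Lemma~\ref{sq} at each time $t$ to recast
$$
\|u\|_{\lt}^{10}\sim\int_\R\!\int_\Omega\Big(\sum_{N\in 2^\Z}|u_N(t,x)|^2\Big)^{\!5}dx\,dt,\qquad u_N:=\po_N u.
$$
Next, the elementary algebraic bound $\sum_N a_N^2\le(\sup_N a_N)^{8/5}\sum_N a_N^{2/5}$, obtained from $a_N^2=a_N^{8/5}a_N^{2/5}\le(\sup_M a_M)^{8/5}a_N^{2/5}$, followed by taking fifth powers yields the pointwise inequality $\bigl(\sum_N|u_N|^2\bigr)^5\le\bigl(\sup_N|u_N|\bigr)^{8}\bigl(\sum_N|u_N|^{2/5}\bigr)^5$.

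Then I would integrate in $(t,x)$ and apply H\"older with dual exponents $5/4$ and $5$, followed by Minkowski in the Littlewood--Paley index (using $\||u_N|^{2/5}\|_{L^{25}_{t,x}}=\|u_N\|_{\lt}^{2/5}$), to obtain
$$
\|u\|_{\lt}^{10}\lsm\bigl\|\sup_N|u_N|\bigr\|_{\lt}^{8}\Big(\sum_N\|u_N\|_{\lt}^{2/5}\Big)^{\!5}.
$$
The vector-valued Fefferman--Stein maximal inequality for the Dirichlet-Laplacian projections $\po_N$---a consequence of the Mikhlin-type multiplier theorem underlying Lemma~\ref{sq}---controls $\|\sup_N|u_N|\|_{\lt}\lsm\|u\|_{\lt}$, and absorbing this factor on the left leaves the intermediate bound $\|u\|_{\lt}^{2}\lsm\bigl(\sum_N\|u_N\|_{\lt}^{2/5}\bigr)^{5}$.

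Finally, I would convert this $\ell^{2/5}$ sum into the desired product $\|f\|_{\hd}^{1/5}\sup_N\|u_N\|_{\lt}^{4/5}$ by splitting $\|u_N\|_{\lt}^{2/5}$ according to whether $\|u_N\|_{\lt}$ is comparable to the supremum. Frequencies with large $\|u_N\|_{\lt}$ are controlled via a Chebyshev-style estimate against the $\dot H^1_D$-mass, using the Strichartz bound $\|u_N\|_{\lt}\lsm N\|f_N\|_{L^2}$ (Theorem~\ref{T:Strichartz}) together with the identity $\sum_N N^2\|f_N\|_{L^2}^2\sim\|f\|_{\hd}^2$, while the remaining frequencies contribute an $\ell^{2/5}$-tail that is absorbed into the supremum. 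The main obstacle is executing this dyadic summation cleanly: since $\sum_{N\in 2^\Z}N^{-a}$ diverges for every real $a$, a naive weighted Cauchy--Schwarz will not work, and one must instead perform the split-and-Chebyshev argument separately on the ranges where $\|u_N\|_{\lt}$ dominates and where it is dominated by the supremum.
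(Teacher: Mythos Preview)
Your first three paragraphs are fine up to the intermediate inequality
\[
\|u\|_{\lt}^{2}\lsm\Bigl(\sum_{N}\|u_N\|_{\lt}^{2/5}\Bigr)^{5},
\]
but this bound is too weak to recover the lemma, and the proposed ``split-and-Chebyshev'' step cannot close it. The only control available on the sequence $a_N:=\|u_N\|_{\lt}$ is $a_N\le S:=\sup_M a_M$ together with $\sum_N a_N^2\lsm A^2:=\|f\|_{\hd}^2$ (from Strichartz and almost-orthogonality). These two facts do \emph{not} bound $\sum_N a_N^{2/5}$: take $a_{2^n}=|n|^{-1}$, for which $\sum a_N^2<\infty$ and $\sup a_N<\infty$ while $\sum a_N^{2/5}=\infty$. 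Your Chebyshev idea handles the large terms via $\#\{N:a_N\ge\lambda\}\lsm A^2/\lambda^2$, but the tail of small $a_N$ still contributes, through the layer-cake formula, an integral $\int_0^S\lambda^{-3/5}\cdot A^2\lambda^{-2}\,d\lambda$ that diverges at $0$. No choice of exponent in the splitting $a_N^2\le(\sup a_M)^{2\theta}a_N^{2(1-\theta)}$ fixes this: you always land on an $\ell^{2(1-\theta)}$ sum with $2(1-\theta)<2$.

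The paper's argument avoids this by \emph{not} collapsing to a sup and an $\ell^p$ sum. Instead it expands $\bigl(\sum_N|u_N|^2\bigr)^5$ as a sum over ordered tuples $N_1\le\cdots\le N_5$, places the eight middle factors in $L^{10}_{t,x}$ (giving the $\sup$), and puts the two extremal pieces in \emph{different} norms: one factor of $u_{N_1}$ into $L^\infty_{t,x}$ and one factor of $u_{N_5}$ into $L^5_{t,x}$. Bernstein then converts these to $N_1^{1/2}\|f_{N_1}\|_{\hd}$ and $N_5^{-1/2}\|f_{N_5}\|_{\hd}$, producing an off-diagonal gain $(N_1/N_5)^{1/2}$ that makes the double sum over $N_1\le N_5$ converge by Schur (even against the logarithmic loss from the inner three sums) and yields the factor $\|f\|_{\hd}^2$. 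It is precisely this off-diagonal decay between the lowest and highest active frequencies that your pointwise inequality discards.
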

\begin{proof}
From the square function estimate Lemma~\ref{sq}, Bernstein, and Strichartz inequalities,
\begin{align*}
\|\prd & f\|_{L^{10}_{t,x}}^{10} \lsm \iint_{\R\times\Omega} \Bigl(\sum_{N\in \tz}|\prd f_N|^2 \Bigr)^5 \,dx \,dt\\
&\lsm \sum_{N_1\le \cdots\le N_5}\iint_{\R\times\Omega} |\prd f_{N_1}|^2 \cdots |\prd f_{N_5}|^2 \,dx\,dt\\
&\lsm \sum_{N_1\le\cdots\le N_5} \|\prd f_{N_1}\|_{L^\infty_{t,x}}\|\prd f_{N_1}\|_{L^{10}_{t,x}}
	\prod_{j=2}^4\|\prd f_{N_j}\|_{L^{10}_{t,x}}^2 \\
&\qquad\qquad \qquad\cdot\|\prd f_{N_5}\|_{L^{10}_{t,x}}\|\prd f_{N_5}\|_{L^5_{t,x}}\\
&\lsm \sup_{N\in \tz}\|\prd f_N\|_{L^{10}_{t,x}}^8 \sum_{N_1\le N_5} \bigr[1+\log\bigl(\tfrac {N_5}{N_1}\bigr)\bigr]^3 N_1^{\frac32}\| \prd f_{N_1}\|_{L^\infty_t L^2_x}\\
&\qquad\qquad\qquad\cdot N_5^{\frac 12}\|\prd f_{N_5}\|_{L^5_t L^{\frac{30}{11}}_x} \\
&\lsm \sup_{N\in \tz}\|\prd f_N\|_{L^{10}_{t,x}}^8 \sum_{N_1\le N_5} \bigr[1+\log\bigl(\tfrac {N_5}{N_1}\bigr)\bigr]^3 \bigl(\tfrac{N_1}{N_5}\bigr)^{\frac 12}
	\|f_{N_1}\|_{\hd} \|f_{N_5}\|_{\hd}\\
&\lsm \sup_{N\in \tz}\|\prd f_N\|_{L^{10}_{t,x}}^8 \|f\|_{\hd}^2,
\end{align*}
where all spacetime norms are over $\R\times\Omega$.  Raising this to the power $\frac 1{10}$ yields the lemma.
\end{proof}

The refined Strichartz inequality shows that linear solutions with non-trivial spacetime norm must concentrate on at least one frequency annulus.
The next proposition goes one step further and shows that they contain a bubble of concentration around some point in spacetime.  A novelty in our setting is that the bubbles of concentration may live in one of the limiting geometries identified earlier.

\begin{prop}[Inverse Strichartz inequality]\label{P:inverse Strichartz}
Let $\{f_n\}\subset \hd$. Suppose that
\begin{align*}
\lim_{n\to \infty}\|f_n\|_{\hd}=A < \infty \qtq{and}  \lim_{n\to\infty}\|\prd f_n\|_{\lt}=\eps >0.
\end{align*}
Then there exist a subsequence in $n$, $\{\phi_n\}\subset \hd$, $\{N_n\}\subset \tz$, $\{(t_n, x_n)\}\subset \R\times\Omega$
conforming to one of the four cases listed below such that
\begin{gather}
\liminf_{n\to\infty}\|\phi_n\|_{\hd}\gtrsim \eps(\tfrac{\eps}A)^{\frac 78}, \label{nontri}\\
\liminf_{n\to\infty}\Bigl\{ \|f_n\|_{\hd}^2-\|f_n-\phi_n\|_{\hd}^2\Bigr\} \gtrsim A^2 (\tfrac\eps A)^{\frac{15}4},\label{dech}\\
\liminf_{n\to\infty}\Bigl\{ \|\prd f_n\|_{\lt}^{10}-\|\prd (f_n-\phi_n)\|_{\lt}^{10}\Bigr\} \gtrsim \eps^{10}(\tfrac\eps A)^{\frac{35}4}.\label{dect}
\end{gather}
The four cases are:
\begin{CI}
\item Case 1: $N_n\equiv N_\infty \in 2^\Z$ and $x_n\to x_{\infty}\in \Omega$.  In this case, we choose $\phi\in \hd$ and the subsequence
so that $e^{it_n\ld}f_n\rightharpoonup \phi$ weakly in $\hd$ and we set $\phi_n:=e^{-it_n\ld}\phi$.

\item Case 2: $N_n\to 0$ and $-N_n x_n\to x_\infty\in \R^3$.  In this case, we choose $\tildphi\in \hr$ and the subsequence so that
$$
g_n(x) :=N_n^{-\frac 12}(e^{it_n\ld}f_n)(N_n^{-1}x+x_n) \rightharpoonup \tildphi(x) \quad\text{weakly in} \quad \dot H^1(\R^3)
$$
and we set
$$
\phi_n(x):=N_n^{\frac 12} e^{-it_n\ld}[(\chi_n\tilde\phi)(N_n(x-x_n))],
$$
where $\chi_n(x)=\chi(N_n^{-1}x+x_n)$ and  $\chi(x)=\Theta(\frac{d(x)}{\diam (\Omega^c)})$.

\item Case 3: $N_n d(x_n)\to\infty$.  In this case, we choose $\tilde\phi\in \hr$ and the subsequence so that
$$
g_n(x) :=N_n^{-\frac 12}(e^{it_n\ld}f_n)(N_n^{-1}x+x_n) \rightharpoonup \tildphi(x) \quad\text{weakly in} \quad \dot H^1(\R^3)
$$
and we set
$$
\phi_n(x) :=N_n^{\frac12}e^{-it_n\ld}[(\chi_n\tilde\phi)(N_n(x-x_n))],
$$
where $\chi_n(x)=1-\Theta(\frac{|x|}{N_n d(x_n)})$.

\item Case 4: $N_n\to \infty$ and  $N_n d(x_n)\to d_{\infty}>0$.  In this case, we choose $ \tilde\phi \in \dot H^1_D(\HH)$ and the subsequence so that
$$
g_n(x) := N_n^{-\frac12}(e^{it_n\ld}f_n)(N_n^{-1}R_nx+x^*_n)\rightharpoonup \tildphi(x) \quad\text{weakly in} \quad \hr
$$
and we set
$$
\phi_n(x) :=N_n^{\frac 12} e^{-it_n\ld}[\tilde\phi(N_nR_n^{-1}(\cdot-x^*_n))],
$$
where $R_n\in SO(3)$ satisfies $R_n e_3 = \frac{x_n-x^*_n}{|x_n-x^*_n|}$ and $x^*_n\in \partial \Omega$ is chosen so that $d(x_n)=|x_n-x^*_n|$.
\end{CI}
\end{prop}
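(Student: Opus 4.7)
The plan is to follow the standard concentration-compactness recipe, but with four distinct limiting geometries, and to rely on the machinery developed in Sections~\ref{S:Domain Convergence}--\ref{S:Linear flow convergence} to interface between them. First, I would apply the refined Strichartz estimate (Lemma~\ref{lm:refs}) to the sequence $f_n$: this furnishes a sequence of frequencies $N_n\in 2^{\Z}$ so that, up to a subsequence,
\[
\|\prd (P^\Omega_{N_n} f_n)\|_{\lt} \gtrsim \eps\,(\eps/A)^{\frac14}.
\]
Using H\"older between $L^{10}_{t,x}$, $L^\infty_{t,x}$, and $L^{10/3}_{t,x}$, Bernstein's inequality, and the Strichartz inequality, this lower bound upgrades to the existence of a point $(t_n,x_n)\in\R\times\Omega$ at which
\[
\bigl| (\prdn P^\Omega_{N_n} f_n)(x_n) \bigr| \gtrsim N_n^{1/2}\, \eps\,(\eps/A)^{\gamma}
\]
for some fixed $\gamma>0$. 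The frequency $N_n$ identifies a characteristic scale and $(t_n,x_n)$ the centre of the bubble, so the natural rescaled sequence is $g_n(y):= N_n^{-1/2}(\prdn f_n)(N_n^{-1}y + x_n)$ (modified by a rotation in Case~4).

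Next, I would pass to a subsequence along which $(N_n, d(x_n))$ settles into one of the five scenarios discussed in Section~\ref{S:Domain Convergence}, and preclude the seemingly missing scenario $N_n\gtrsim 1$, $N_n d(x_n)\to 0$. The exclusion follows because $f_n\in \dot H^1_D(\Omega)$ forces the bubble profile to vanish in the limit: a ball of radius $N_n^{-1}$ centred at $x_n$ sits within a distance $o(N_n^{-1})$ of $\partial\Omega$, so Hardy's inequality for the Dirichlet Laplacian on $\Omega$ (combined with the uniform $\hd$-bound on $f_n$) forces the rescaled sequence $g_n$ to converge weakly to zero, contradicting the pointwise lower bound at the origin. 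In the remaining four cases, Proposition~\ref{P:convdomain} gives $\Omega_n\to\Omega_\infty\in\{\Omega,\R^3,\HH\}$, and Lemma~\ref{L:n3} ensures that along a subsequence $g_n$ converges weakly in $\dot H^1(\R^3)$ to a limit $\tildphi$ supported in $\tlim\Omega_n$. The nontriviality \eqref{nontri} follows by testing the weak limit against an appropriate Helffer--Sj\"ostrand-localized Dirac mass (Proposition~\ref{P:converg}) at the origin, which turns the pointwise lower bound into a lower bound on $\|\tildphi\|_{\dot H^1}$ and hence on $\|\phi_n\|_{\hd}$ via the cutoff-and-rescale definitions given in the statement.

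The linear decoupling \eqref{dech} is an application of the Br\'ezis--Lieb lemma (Lemma~\ref{lm:rf}) at the level of $\|(-\Delta_{\Omega})^{1/2}(\cdot)\|_{L^2}^2$, once we have shown that, after undoing the symmetries, $(-\Delta_{\Omega})^{1/2}[\prdn f_n]$ converges weakly in $L^2$ to $(-\Delta_{\Omega_\infty})^{1/2}\tildphi$ (again via Lemma~\ref{L:n3} and the definition of $\phi_n$). For the nonlinear decoupling \eqref{dect}, I would combine Lemma~\ref{lm:rf} with the convergence of linear flows in $L^{10}_{t,x}$ proved in Corollary~\ref{C:LF}: this transfers the weak convergence of initial data into a.e.\ convergence of the free evolutions $e^{it\Delta_{\Omega_n}} g_n \to e^{it\Delta_{\Omega_\infty}}\tildphi$, on which the refined Fatou lemma can be run.

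The main obstacle, and the reason the statement of Proposition~\ref{P:inverse Strichartz} has four branches instead of one, is precisely the handling of the nonlinear decoupling when the profile lives in a different geometry than $\Omega$. Pointwise convergence of the free evolutions in $L^{10}_{t,x}$ after symmetry removal is exactly what Section~\ref{S:Linear flow convergence} is designed to provide, and Corollary~\ref{C:LF} is the point at which Theorem~\ref{T:LF} is consumed. Once this is in hand, \eqref{dect} falls out of the refined Fatou lemma, while the interpolation
\[
\|\prd(f_n-\phi_n)\|_{\lt}^{10} \le \|\prd(f_n-\phi_n)\|_{L^\infty_{t,x}}^{8}\,\|\prd(f_n-\phi_n)\|_{L^2_{t,x}\cdot L^{10/3}_{t,x}\text{-type}}^2
\]
together with the Bernstein-type bounds on $\phi_n$ produces the powers of $\eps/A$ advertised in \eqref{dect}, completing the proof.
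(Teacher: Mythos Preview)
Your overall architecture is right: refined Strichartz $\Rightarrow$ frequency $N_n$, H\"older/Bernstein $\Rightarrow$ pointwise concentration at $(t_n,x_n)$, case split on the geometry of $(N_n,d(x_n))$, then weak limits, nontriviality by pairing, and decoupling. But there are two genuine gaps.

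\textbf{Exclusion of the missing scenario.} Your Hardy argument does not close. Saying that $g_n\rightharpoonup 0$ weakly in $\dot H^1$ does not by itself contradict the pointwise lower bound $|(e^{it_n\Delta_\Omega}P^\Omega_{N_n}f_n)(x_n)|\gtrsim N_n^{1/2}\eps(\eps/A)^{7/8}$, because the functional you are testing against (the kernel of $P^\Omega_{N_n}$ at $x_n$) is $n$-dependent. The paper instead uses the heat kernel bound of Theorem~\ref{T:heat}: writing $P^\Omega_{N_n}=e^{\Delta_\Omega/N_n^2}\circ(\text{bounded multiplier})$ and using $|e^{\Delta_\Omega/N_n^2}(x_n,y)|\lesssim [N_n d(x_n)][N_n d(x_n)+N_n|x_n-y|]N_n^3 e^{-cN_n^2|x_n-y|^2}$ together with Cauchy--Schwarz, one gets a \emph{quantitative} upper bound on the pointwise value that carries the factor $N_n d(x_n)$. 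Comparing with the lower bound yields \eqref{lb}, which is what rules out $N_n\gtrsim 1$ with $N_nd(x_n)\to 0$.

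\textbf{The lower bound in \eqref{dect}.} Your final displayed interpolation is not the mechanism, and as written it does not make sense (it bounds the wrong quantity from above, and the norm on the right is undefined). After Br\'ezis--Lieb you have $\text{LHS}\eqref{dect}=\|e^{it\Delta_{\Omega_\infty}}\tilde\phi\|_{L^{10}_{t,x}}^{10}$, and what remains is a \emph{lower} bound on this. The paper obtains it by pairing: for each fixed $|t|\le N_\infty^{-2}$ (or $|t|\le 1$ after rescaling), the Mikhlin theorem for $e^{it\Delta_{\Omega_\infty}}P_{\le 2N_\infty}^{\Omega_\infty}$ and Bernstein give
\[
\eps(\tfrac\eps A)^{7/8}\lesssim N_\infty^{-1/2}|\langle\tilde\phi,h\rangle|
=\;N_\infty^{-1/2}|\langle e^{it\Delta_{\Omega_\infty}}\tilde\phi,e^{it\Delta_{\Omega_\infty}}h\rangle|
\lesssim N_\infty^{-1/5}\|e^{it\Delta_{\Omega_\infty}}\tilde\phi\|_{L^{10}_x},
\]
with $h=P^{\Omega_\infty}_{N_\infty}\delta_{x_\infty}$ (or the obvious analogue in Cases~2--4). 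Integrating over the short time window produces \eqref{want} and hence \eqref{dect}; no Bernstein bounds on $\phi_n$ are involved, since $\phi_n$ is not frequency-localized.

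Two smaller points. For \eqref{dech} you do not need Br\'ezis--Lieb at all: $\dot H^1_D(\Omega)$ is a Hilbert space, so $\|f_n\|^2-\|f_n-\phi_n\|^2=2\Re\langle f_n,\phi_n\rangle-\|\phi_n\|^2\to\|\tilde\phi\|^2$ directly from weak convergence (plus \eqref{E:no deform}/\eqref{E:no deform 3} in Cases~2--3). For the a.e.\ convergence feeding Br\'ezis--Lieb in \eqref{dect}, Corollary~\ref{C:LF} only handles the profile piece $e^{it\Delta_{\Omega_n}}(\chi_n\tilde\phi)\to e^{it\Delta_{\Omega_\infty}}\tilde\phi$; for the remainder $e^{it\Delta_{\Omega_n}}(g_n-\chi_n\tilde\phi)$ you need the Rellich argument via $(i\partial_t)^{1/2}e^{it\Delta_{\Omega_n}}=(-\Delta_{\Omega_n})^{1/2}e^{it\Delta_{\Omega_n}}$ to upgrade weak $\dot H^1$-convergence to local $L^2_{t,x}$ and thence a.e.\ convergence.
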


\begin{rem}
The analogue of $\tilde \phi$ in Case 1 is related to $\phi$ via $\phi(x)= N_\infty^{\frac12} \tilde \phi(N_\infty (x-x_\infty))$; see \eqref{1converg}.
\end{rem}

\begin{proof}
From Lemma \ref{lm:refs} and the conditions on $f_n$, we know that for each $n$ there exists $N_n\in \tz$ such that
\begin{align*}
\|\prd \pnno f_n\|_{\lt}\gtrsim \eps^{\frac 54}A^{-\frac 14}.
\end{align*}
On the other hand, from the Strichartz and Bernstein inequalities we get
\begin{align*}
\|\prd \pnno f_n\|_{L_{t,x}^{\frac {10}3}(\R\times\Omega)}\lsm  \| \pnno f_n\|_{L_x^2(\Omega)} \lesssim N_n^{-1} A.
\end{align*}
By H\"older's inequality, these imply
\begin{align*}
A^{-\frac 14}\eps^{\frac 54}&\lsm \|\prd\pnno f_n\|_{\lt}\\
&\lsm \|\prd\pnno f_n\|_{L_{t,x}^{\frac{10}3}(\R\times\Omega)}^{\frac 13}\|\prd\pnno f_n\|_{L_{t,x}^{\infty}(\R\times\Omega)}^{\frac 23} \\
&\lsm N_n^{-\frac 13}A^{\frac 13}\|\prd \pnno f_n\|_{L_{t,x}^{\infty}(\R\times\Omega)}^{\frac 23},
\end{align*}
and so
\begin{align*}
\|\prd \pnno f_n\|_{L_{t,x}^{\infty}(\R\times\Omega)} \gtrsim N_n^{\frac 12}\eps (\tfrac \eps A)^{\frac78}.
\end{align*}
Thus there exist $(t_n,x_n)\in \R\times \Omega$ such that
\begin{align}\label{cncen}
\Bigl|(e^{it_n\ld}\pnno f_n)(x_n)\Bigr|\gtrsim N_n^{\frac12}\eps (\tfrac{\eps}A)^{\frac 78}.
\end{align}

The cases in the statement of the proposition are determined solely by the behaviour of $x_n$ and $N_n$.  We will now show
\begin{align}\label{lb}
N_n d(x_n)\gtrsim (\tfrac\eps A)^{\frac{15}8} \qtq{whenever} N_n \gtrsim 1,
\end{align}
which explains the absence of the scenario $N_n \gtrsim 1$ with $N_nd(x_n)\to0$.     The proof of \eqref{lb} is based on Theorem~\ref{T:heat}, which implies
\begin{align*}
\int_\Omega \bigl| e^{\Delta_\Omega / N_n^2}(x_n,y) \bigr|^2\,dy &\lesssim N_n^{6} \int_\Omega \Bigl| \bigl[N_n d(x_n)\bigr]\bigl[N_n d(x_n)+N_n|x_n-y| \bigr] e^{-c N_n^2|x_n-y|^2} \Bigr|^2 \,dy \\
&\lesssim  [N_n d(x_n)]^2[N_nd(x_n) + 1]^2 N_n^3,
\end{align*}
whenever $N_n\gtrsim 1$.  Writing
$$
(e^{it_n\ld} \pnno f_n)(x_n) = \int_\Omega e^{\Delta_\Omega / N_n^2}(x_n,y) \, \bigl[ P^\Omega_{\leq 2 N_n} e^{ - \Delta_\Omega / N_n^2} e^{it_n\ld} \pnno f_n \bigr](y) \,dy
$$
and using \eqref{cncen} and Cauchy--Schwarz gives
\begin{align*}
 N_n^{\frac12}\eps (\tfrac{\eps}A)^{\frac 78} &\lesssim \bigl[N_n d(x_n)\bigr] \bigl[N_nd(x_n) + 1\bigr] N_n^{\frac32}
	\bigl\| P^\Omega_{\leq 2 N_n} e^{ - \Delta_\Omega / N_n^2} e^{it_n\ld} \pnno f_n \bigr\|_{L^2_x} \\
& \lesssim \bigl[N_n d(x_n)\bigr] \bigl[N_nd(x_n) + 1\bigr] N_n^{\frac12}  \|  f_n \|_{\hd}.
\end{align*}	
The inequality \eqref{lb} now follows.

Thanks to the lower bound \eqref{lb}, after passing to a subsequence, we only need to consider the four cases below, which
correspond to the cases in the statement of the proposition.

\textbf{Case 1:} $N_n\sim 1$ and $N_n d(x_n)\sim 1$.

\textbf{Case 2:} $N_n\to 0$ and $N_n d(x_n) \lsm 1$.

\textbf{Case 3:} $N_n d(x_n)\to \infty$ as $n\to \infty$.

\textbf{Case 4:} $N_n\to \infty$ and $N_n d(x_n)\sim 1$.

We will address these cases in order.  The geometry in Case~1 is simplest and it allows us to introduce the basic framework for the argument.  The main new difficulty in the remaining cases is the variable geometry, which is where Proposition~\ref{P:converg} and Corollary~\ref{C:LF} play a crucial role.  Indeed, as we will see below, the four cases above reduce to the ones discussed in Sections~\ref{S:Domain Convergence} and~\ref{S:Linear flow convergence} after passing to a further subsequence.

With Proposition~\ref{P:converg} and Corollary~\ref{C:LF} already in place, the arguments in the four cases parallel each other rather closely.  There are four basic steps.  The most important is to embed the limit object $\tildphi$ back inside $\Omega$ in the form of $\phi_n$ and to show that $f_n-\phi_n$ converges to zero in suitable senses.  The remaining three steps use this information to prove the three estimates \eqref{nontri}, \eqref{dech}, and \eqref{dect}.

\textbf{Case 1:} Passing to a subsequence, we may assume
\begin{align*}
N_n\equiv N_\infty\in \tz \quad\text{and}\quad x_n\to x_\infty\in \Omega.
\end{align*}

To prefigure the treatment of the later cases we set
$$
g_n(x) :=N_n^{-\frac 12}(e^{it_n\ld}f_n)(N_n^{-1}x+x_n),
$$
even though the formulation of Case~1 does not explicitly include this sequence.  As $f_n$ is supported in $\Omega$, so $g_n$ is supported in
$\Omega_n :=N_n(\Omega-\{x_n\})$.  Moreover,
$$
\|g_n\|_{\dot H^1_D(\Omega_n)}=\|f_n\|_{\dot H^1_D(\Omega)}\lesssim A.
$$
Passing to a subsequence, we can choose $\tilde \phi$ so that $g_n\rightharpoonup \tilde\phi$ weakly in $\hr$.  Rescaling the relation $g_n\rightharpoonup \tilde\phi$ yields
\begin{align}\label{1converg}
(e^{it_n\ld}f_n)(x)\rightharpoonup \phi(x) :=N_\infty^{\frac 12}\tilde \phi(N_\infty(x-x_\infty)) \quad \text{weakly in} \quad \hd.
\end{align}
To see that $\phi\in\hd$ when defined in this way, we note that $\hd$ is a weakly closed subset of $\hr$; indeed, a convex set is weakly closed if and only if it is norm closed.

The next step is to prove \eqref{nontri} by showing that $\phi$ is non-trivial.  Toward this end, let $h:=P^{\Omega}_{N_\infty}\delta(x_\infty)$. Then from the Bernstein inequality we have
\begin{align}\label{h bd}
\|(-\ld)^{-\frac 12}h\|_{L^2(\Omega)}=\|(-\ld)^{-\frac 12}P_{N_\infty}^\Omega\delta(x_\infty)\|_{L^2(\Omega)}\lsm N_\infty^{\frac 12}.
\end{align}
In particular, $h\in \dot H^{-1}_D(\Omega)$. On the other hand, we have
\begin{align}\label{h meets phi}
\langle \phi,h\rangle
&=\lim_{n\to\infty}\langle e^{it_n\ld}f_n,h\rangle=\lim_{n\to\infty}\langle e^{it_n\ld} f_n,P_{N_\infty}^\Omega\delta(x_\infty)\rangle  \notag \\
&=\lim_{n\to\infty}(e^{it_n\ld}\pnno f_n)(x_n)+\lim_{n\to\infty}\langle e^{it_n\ld}f_n, P_{N_\infty}^{\Omega}[\delta({x_\infty})-\delta({x_n})]\rangle.
\end{align}

The second limit in \eqref{h meets phi} vanishes.  Indeed, basic elliptic theory shows that
\begin{align}\label{elliptic est}
\| \nabla v \|_{L^\infty(\{|x|\leq R\})} \lesssim  R^{-1} \| v \|_{L^\infty(\{|x|\leq 2R\})}  + R \| \Delta v \|_{L^\infty(\{|x|\leq 2R\})},
\end{align}
which we apply to $v(x) = (P_{N_\infty}^{\Omega} e^{it_n\ld}f_n )(x+x_n)$ with $R=\frac12 d(x_n)$.  By hypothesis, $d(x_n) \sim 1$,
while by the Bernstein inequalities,
$$
\| P_{N_\infty}^{\Omega} e^{it_n\ld}f_n \|_{L^\infty_x} \lesssim N_\infty^{\frac12} A  \qtq{and}
	\| \Delta P_{N_\infty}^{\Omega} e^{it_n\ld}f_n \|_{L^\infty_x} \lesssim N_\infty^{\frac52} A.
$$
Thus by the fundamental theorem of calculus and \eqref{elliptic est}, for $n$ sufficiently large,
\begin{align}\label{6:37}
|\langle e^{it_n\ld} f_n,P_{N_\infty}^{\Omega}[\delta(x_\infty)-\delta(x_n)]\rangle| &\lsm  |x_\infty-x_n| \, \| \nabla P_{N_\infty}^{\Omega} e^{it_n\ld} f_n\|_{L^\infty(\{|x|\leq R\})}\notag\\
&\lsm A \bigl[\tfrac{N_\infty^{\frac12}}{d(x_n)} + N_\infty^{\frac52} d(x_n)\bigr] |x_\infty-x_n|,
\end{align}
which converges to zero as $n\to \infty$.

Therefore, using \eqref{cncen}, \eqref{h bd}, and \eqref{h meets phi}, we have
\begin{align}
N_\infty^{\frac12} \eps \bigl(\tfrac{\eps}A\bigr)^{\frac78} \lesssim |\langle \phi, h\rangle| \lesssim  \|\phi\|_{\hd}\|h\|_{\dot H^{-1}_D(\Omega)}
\lesssim N_\infty^{\frac12}\|\phi\|_{\hd}.\label{lbf}
\end{align}
As $\prdn$ is unitary on $\hd$ we have $\|\phi_n\|_{\hd}=\|\phi\|_{\hd}$, and so \eqref{lbf} yields \eqref{nontri}.

Claim \eqref{dech} follows immediately from \eqref{nontri} and \eqref{1converg} since $\hd$ is a Hilbert space.

The only remaining objective is to prove decoupling for the $L_{t,x}^{10}$ norm. Note
\begin{align*}
(i\partial_t)^{\frac 12} \prd =(-\ld)^{\frac 12} \prd.
\end{align*}
Thus, by H\"older, on any compact domain $K$ in $\R\times\R^3$ we have
\begin{align*}
\|\prd \prdn f_n\|_{H^{\frac 12}_{t,x}(K)}\lsm \| \langle-\ld\rangle ^{\frac12} e^{i(t+t_n)\Delta_\Omega}f_n \|_{L^2_{t,x}(K)}\lsm_K A.
\end{align*}
From Rellich's Lemma, passing to a subsequence, we get
\begin{align*}
\prd \prdn f_n \to \prd \phi \qtq{strongly in} L_{t,x}^{2}(K)
\end{align*}
and so, passing to a further subsequence, $\prd\prdn f_n(x)\to \prd \phi(x)$ a.e. on $K$.   Using a diagonal argument
and passing again to a subsequence, we obtain
\begin{align*}
\prd\prdn f_n(x)\to \prd \phi(x) \quad\text{a.e. in $\R\times \R^3$}.
\end{align*}
Using the Fatou Lemma of Br\'ezis and Lieb (cf. Lemma~\ref{lm:rf}) and a change of variables, we get
\begin{align*}
\lim_{n\to \infty}\Bigl\{\|\prd f_n\|_{\lt}^{10}-\|\prd (f_n-\phi_n)\|_{\lt}^{10}\Bigr\} = \|\prd \phi\|_{\lt}^{10},
\end{align*}
from which \eqref{dect} will follow once we prove
\begin{align}\label{want}
\|\prd \phi\|_{\lt}\gtrsim \eps (\tfrac{\eps}A)^{\frac{7}{8}}.
\end{align}
To see this, we use \eqref{lbf}, the Mikhlin multiplier theorem (for $e^{it\Delta_\Omega} P^\Omega_{\leq 2N_\infty}$), and Bernstein to estimate
\begin{align*}
N_\infty^{\frac12} \eps \bigl(\tfrac{\eps}A\bigr)^{\frac78}\lesssim |\langle \phi, h\rangle|
&=|\langle e^{it\Delta_\Omega} \phi, e^{it\Delta_\Omega} h\rangle|
\lesssim  \|e^{it\Delta_\Omega}\phi\|_{L_x^{10}}\|e^{it\Delta_\Omega}h\|_{L_x^{\frac{10}9}}\\
&\lesssim  \|e^{it\Delta_\Omega}\phi\|_{L_x^{10}} \|h\|_{L_x^{\frac{10}9}}
\lesssim N_\infty^{\frac3{10}} \|e^{it\Delta_\Omega}\phi\|_{L_x^{10}},
\end{align*}
for each $|t|\le N_{\infty}^{-2}$.
Thus
$$
\|e^{it\Delta_\Omega}\phi\|_{L_x^{10}} \gtrsim N_\infty^{\frac15} \eps \bigl(\tfrac{\eps}A\bigr)^{\frac78},
$$
uniformly in $|t|\le N_{\infty}^{-2}$.  Integrating in $t$ leads to \eqref{want}.

\textbf{Case 2:}  As $N_n\to 0$, the condition $N_nd(x_n)\lsm 1$ guarantees that $\{N_nx_n\}_{n\geq 1}$ is a bounded sequence; thus, passing to a subsequence, we may assume $-N_n x_n\to x_\infty\in \R^3$.  As in Case 1, we define $\Omega_n :=N_n(\Omega-\{x_n\})$.  Note that the rescaled obstacles $\Omega_n^c$ shrink to $x_\infty$ as $n\to \infty$; this is the defining characteristic of Case~2.

As $f_n$ is bounded in $\hd$, so the sequence $g_n$ is bounded in $\dot H^1_D(\Omega_n)\subseteq\hr$.  Thus, passing to a subsequence, we
can choose $\tildphi$ so that $g_n \rightharpoonup \tildphi$ in $\hr$.

We cannot expect $\tildphi$ to belong to $\dot H^1_D(\Omega_n)$, since it has no reason to vanish on $\Omega_n^c$.
This is the role of $\chi_n$ in the definition of $\phi_n$.  Next we show that this does not deform $\tildphi$ too gravely; more precisely,
\begin{align}\label{E:no deform}
\chi_n\tildphi \to \tildphi, \qtq{or equivalently,} \bigl[ 1 - \chi(N_n^{-1}x+x_n)\bigr]\tildphi(x) \to 0 \quad \text{in $\hr$.}
\end{align}
Later, we will also need to show that the linear evolution of $\chi_n\tildphi$ in $\Omega_n$ closely approximates the whole-space linear evolution of $\tildphi$.

To prove \eqref{E:no deform} we first set $B_n:=\{ x\in \R^3 : \dist(x,\Omega_n^c) \leq \diam(\Omega_n^c)\}$, which contains $\supp(1-\chi_n)$ and $\supp(\nabla\chi_n)$.  Note that because $N_n\to0$, the measure of $B_n$ shrinks to zero as $n\to\infty$.  By H\"older's inequality,
\begin{align*}
\bigl\| [ 1 &- \chi(N_n^{-1}x+x_n)]\tildphi(x) \bigr\|_{\hr} \\
&\lesssim \bigl\| [ 1 - \chi(N_n^{-1}x+x_n)]\nabla \tildphi(x) \bigr\|_{L^2(\R^3)}
    + \bigl\| N_n^{-1} \bigl(\nabla\chi\bigr)(N_n^{-1}x+x_n) \tildphi(x) \bigr\|_{L^2(\R^3)} \\
&\lesssim \| \nabla \tildphi \|_{L^2(B_n)} + \| \tildphi \|_{L^6(B_n)},
\end{align*}
which converges to zero by the dominated convergence theorem.

With \eqref{E:no deform} in place, the proofs of \eqref{nontri} and \eqref{dech} now follow their Case~1 counterparts very closely; this will rely on key inputs from Section~\ref{S:Domain Convergence}.  We begin with the former.

Let $h:=P_1^{\R^3} \delta(0)$; then
\begin{align*}
\lng \tilde \phi, h\rng=\lim_{n\to\infty} \lng g_n, h\rng=\lim_{n\to\infty}\lng g_n, P_1^{\on} \delta(0)\rng+\lim_{n\to\infty}\lng g_n, (P_1^{\R^3}- P_1^{\on})\delta(0)\rng.
\end{align*}
The second term vanishes due to Proposition~\ref{P:converg} and the uniform boundedness of $\|g_n\|_{\hr}$. Therefore,
\begin{align}
|\lng\tilde\phi, h\rng|&=\Bigl|\lim_{n\to \infty}\lng g_n,P_1^{\on}\delta(0)\rng\Bigr|\notag\\
&=\Bigl|\lim_{n\to\infty} \lng\prdn f_n, N_n^{\frac52} (P_1^{\on}\delta(0))(N_n(x-x_n))\rng\Bigr|\notag\\
&=\Bigl|\lim_{n\to\infty}\lng e^{it_n\ld}f_n, N_n^{-\frac12}P_{N_n}^{\Omega}\delta(x_n)\rng\Bigr|\gtrsim\eps (\tfrac{\eps}A)^{\frac 78},\label{256}
\end{align}
where the last inequality follows from \eqref{cncen}.  Thus,
\begin{align*}
\|\tildphi\|_{\hr}\gtrsim \eps (\tfrac{\eps}A)^{\frac 78}
\end{align*}
as in \eqref{lbf}.  Combining this with \eqref{E:no deform}, for $n$ sufficiently large we obtain
\begin{align*}
\|\phi_n\|_{\hd}= \| \chi_n \tilde \phi\|_{\dot H^1_D(\Omega_n)}\gtrsim \eps (\tfrac{\eps}A)^{\frac 78}.
\end{align*}
This proves \eqref{nontri}.

To prove the decoupling in $\hd$, we write 
\begin{align*}
&\|f_n\|_{\hd}^2 -\|f_n-\phi_n\|_{\hd}^2 = 2\lng f_n, \phi_n\rng_{\hd}-\|\phi_n\|_{\hd}^2\\
&\quad=2\Bigl\lng N_n^{-\frac 12} (\prdn f_n)(N_n^{-1} x+x_n),\, \tildphi(x)\chi_n(x)\Bigr\rng_{\dot H^1_D(\on)}-\| \chi_n \tilde \phi\|_{\dot H^1_D(\Omega_n)}^2\\
&\quad=2\lng g_n, \tilde \phi\rng_{\hr}-2\bigl\lng g_n, \tildphi (1-\chi_n) \bigr\rng_{\hr} -\| \chi_n \tilde \phi\|_{\dot H^1_D(\Omega_n)}^2.
\end{align*}
From the weak convergence of $g_n$ to $\tildphi$, \eqref{E:no deform}, and \eqref{nontri}, we deduce
\begin{align*}
\lim_{n\to\infty}\Bigl\{\|f_n\|_{\hd}^2-\|f_n-\phi_n\|_{\hd}^2\Bigr\}=\|\tilde\phi\|_{\hr}^2 \gtrsim \eps^2 (\tfrac{\eps}A)^{\frac 74}.
\end{align*}
This completes the verification of \eqref{dech}.

We now turn to proving decoupling of the $\lt$ norm, which we will achieve by showing that
\begin{align}\label{305}
\liminf_{n\to\infty}\biggl\{\|e^{it\Delta_\Omega} f_n\|_{\ltr}^{10}-\|e^{it\Delta_\Omega}(f_n-\phi_n)&\|_{\lt}^{10}\biggr\} = \|e^{it\Delta_{\R^3}} \tilde\phi\|_{\ltr}^{10}.
\end{align}
Notice that \eqref{dect} then follows from the lower bound
\begin{align}\label{328}
\|e^{it\Delta_{\R^3}} \tilde \phi\|_{\ltr}^{10}\gtrsim \eps^{10} (\tfrac \eps A)^{\frac{35}4},
\end{align}
which we prove in much the same way as in Case~1: From \eqref{256} and the Mikhlin multiplier theorem, we have
\begin{align*}
\eps (\tfrac{\eps}A)^{\frac 78}&\lsm |\lng \tilde\phi, h\rng|\lsm
\|e^{it\Delta_{\R^3}} \tilde\phi\|_{L^{10}(\R^3)}\|e^{it\Delta_{\R^3}} P_1^{\R^3}\delta(0)\|_{L^{\frac
{10}9}(\R^3)}\lsm \|e^{it\Delta_{\R^3}} \tilde\phi\|_{L^{10}(\R^3)}
\end{align*}
uniformly for $|t|\leq 1$. Integrating in time yields \eqref{328} and then plugging this into \eqref{305} leads to \eqref{dect} in Case 2.

To establish \eqref{305} we need two ingredients: The first ingredient is
\begin{align}\label{c2i1}
e^{it\lon}[g_n-\chi_n\tilde\phi]\to 0 \quad \text{a.e. in } \R\times\R^3,
\end{align}
while the second ingredient is
\begin{align}\label{c2i2}
\|e^{it\lon}[\chi_n\tilde \phi]-e^{it\Delta_{\R^3}}\tilde\phi\|_{\ltr}\to 0.
\end{align}
Combining these and passing to a subsequence if necessary we obtain
$$
e^{it\lon}g_n-e^{it\Delta_{\R^3}}\tilde\phi\to 0 \quad \text{a.e. in } \R\times\R^3,
$$
which by the Fatou Lemma of Br\'ezis and Lieb (cf. Lemma~\ref{lm:rf}) yields
\begin{align*}
\liminf_{n\to\infty}\Bigl\{\|e^{it\lon}g_n\|_{\ltr}^{10}-\|e^{it\lon}g_n-e^{it\Delta_{\R^3}} &\tilde\phi\|_{\ltr}^{10}\Bigr\}\\
&= \|e^{it\Delta_{\R^3}} \tilde\phi\|_{\ltr}^{10}.
\end{align*}
Combining this with \eqref{c2i2} and rescaling yields \eqref{305}.

We start with the first ingredient \eqref{c2i1}.  Using the definition of $\tilde \phi$ together with \eqref{E:no deform}, we deduce
\begin{align*}
g_n-\chi_n\tilde \phi \rightharpoonup 0 \quad \text{weakly in} \quad \dot H^1(\R^3).
\end{align*}
Thus, by Proposition~\ref{P:converg},
\begin{align*}
e^{it\lon}[g_n-\chi_n\tilde \phi]\rightharpoonup 0 \quad \text{weakly in} \quad \dot H^1(\R^3)
\end{align*}
for each $t\in \R$.  By the same argument as in Case 1, using the fact that $( i\partial_t)^{1/2} e^{it\lon}=(-\lon)^{1/2} e^{it\lon}$ and passing to a subsequence, we obtain \eqref{c2i1}.

To establish \eqref{c2i2} we will make use of Corollary~\ref{C:LF}.  Note that $\tlim \Omega_n=\R^3\setminus\{x_\infty\}$ and by Lemma~\ref{L:dense}, $\tilde \phi$ can be well approximated in $\dot H^1(\R^3)$ by $\psi\in C^\infty_c(\tlim \Omega_n)$.  By \eqref{E:no deform}, for $n$ sufficiently large,
$\chi_n\tilde \phi$ are also well approximated in $\dot H^1(\R^3)$ by the same $\psi\in C^\infty_c(\tlim \Omega_n)$.  Thus, \eqref{c2i2} follows by combining Corollary~\ref{C:LF} with the Strichartz inequality.

\textbf{Case 3:} The defining characteristic of this case is that the rescaled obstacles $\Omega_n^c$ march off to infinity; specifically,
$\dist(0,\Omega_n^c)=N_nd(x_n)\to \infty$, where $\Omega_n:=N_n(\Omega-\{x_n\})$.

The treatment of this case parallels that of Case~2.  The differing geometry of the two cases enters only in the use of Proposition~\ref{P:converg}, Corollary~\ref{C:LF}, and the analogue of the estimate \eqref{E:no deform}.  As these first two inputs have already been proven in all cases, our only obligation is to prove
\begin{align}\label{E:no deform 3}
\chi_n\tildphi \to \tildphi, \qtq{or equivalently,} \Theta\bigl(\tfrac{|x|}{\dist(0,\Omega_n^c)}\bigr) \tildphi(x) \to 0 \quad \text{in $\hr$}.
\end{align}
To this end, let $B_n:= \{x\in \R^3: \, |x|\geq \frac14\dist(0, \Omega_n^c)\}$.  Then by H\"older,
\begin{align*}
\bigl\| \Theta\bigl(\tfrac{|x|}{\dist(0,\Omega_n^c)}\bigr)  \tildphi(x)\bigr\|_{\hr} \lesssim \|\nabla \tildphi(x) \|_{L^2(B_n)}+ \| \tildphi \|_{L^6(B_n)}.
\end{align*}
As $1_{B_n} \to 0$ almost everywhere, \eqref{E:no deform 3} follows from the dominated convergence theorem.

\textbf{Case 4:}  Passing to a subsequence, we may assume $N_nd(x_n)\to d_\infty>0$.  By weak sequential compactness of balls in $\dot H^1(\R^3)$, we are guaranteed that we can find a subsequence and a $\tildphi\in \dot H^1(\R^3)$ so that
$g_n \rightharpoonup \tildphi$ weakly in this space.  However, the proposition claims that $\tildphi\in \dot H^1_D(\HH)$.  This is a closed subspace isometrically
embedded in $\dot H^1(\R^3)$; indeed,
$$
\dot H^1_D(\HH) = \bigl\{ g\in\dot H^1(\R^3) : {\textstyle\int_{\R^3}} g(x)\psi(x) \,dx = 0 \text{ for all } \psi\in C^\infty_c(-\HH) \bigr\}.
$$
Using this characterization, it is not difficult to see that $\tildphi\in \dot H^1_D(\HH)$ since for any compact set $K$ in the left halfplane,
$K\subset \Omega_n^c$ for $n$ sufficiently large.  Here $\Omega_n:=N_n R_n^{-1}(\Omega-\{x_n^*\})$, which is where $g_n$ is supported.

As $\tildphi\in \dot H^1_D(\HH)$ we have $\phi_n \in \dot H^1_D(\Omega)$, as is easily seen from
$$
x\in\HH \iff N_n^{-1} R_n^{} x + x^*_n \in \HH_n := \{ y : (x_n - x_n^*)\cdot (y-x_n^*) >0 \} \subseteq \Omega;
$$
indeed, $\partial \HH_n$ is the tangent plane to $\partial\Omega$ at $x_n^*$.  This inclusion further shows that
\begin{align}\label{6:20}
\bigl\| \tildphi \bigr\|_{\dot H^1_D(\HH)} = \bigl\| \phi_n \bigr\|_{\dot H^1_D(\HH_n)} = \bigl\| \phi_n \bigr\|_{\dot H^1_D(\Omega)}.
\end{align}

To prove claim \eqref{nontri} it thus suffices to show a lower bound on $\| \tildphi \|_{\dot H^1_D(\HH)}$.  To this end, let
$h:=P_1^{\HH}\delta_{d_\infty e_3}$.  From the Bernstein inequality we have
\begin{align}\label{6:40}
\|(-\Delta_{\HH})^{-\frac 12}h\|_{L^2(\Omega)}\lsm 1.
\end{align}
In particular, $h\in \dot H^{-1}_D(\HH)$.  Now let $\tilde x_n:= N_nR_n^{-1}(x_n-x_n^*)$; by hypothesis, $\tilde x_n \to d_\infty e_3$.  Using Proposition~\ref{P:converg} we obtain
\begin{align*}
\langle \tilde \phi,h\rangle
&=\lim_{n\to\infty}\Bigl\{\langle g_n,P_1^{\Omega_n}\delta_{\tilde x_n}\rangle + \langle g_n,[P_{1}^{\HH}-P_1^{\Omega_n}]\delta_{d_\infty e_3}\rangle
+ \langle g_n,P_1^{\Omega_n}[\delta_{d_\infty e_3} - \delta_{\tilde x_n}]\rangle\Bigr\}\\
&=\lim_{n\to\infty}\Bigl\{N_n^{-\frac12}(e^{it_n\ld}\pnno f_n)(x_n)+\langle g_n,P_1^{\Omega_n}[\delta_{d_\infty e_3} - \delta_{\tilde x_n}]\rangle\Bigr\}.
\end{align*}
Arguing as in the treatment of \eqref{6:37} and applying \eqref{elliptic est} to $v(x)=(P_1^{\Omega_n}g_n)(x+\tilde x_n)$ with $R=\frac12 N_nd(x_n)$, for $n$ sufficiently large we obtain
\begin{align*}
|\langle g_n,P_1^{\Omega_n}[\delta_{d_\infty e_3} - \delta_{\tilde x_n}]\rangle|&\lsm A \bigl(d_\infty^{-1}+d_\infty\bigr) |d_\infty e_3- \tilde x_n|\to 0\qtq{as} n\to \infty.
\end{align*}
Therefore, we have
\begin{align*}
|\langle \tilde \phi,h\rangle|\gtrsim \eps (\tfrac{\eps}A)^{\frac78},
\end{align*}
which together with \eqref{6:20} and \eqref{6:40} yields \eqref{nontri}.

Claim \eqref{dech} is elementary; indeed,
\begin{align*}
\|f_n\|_{\hd}^2 -\|f_n-\phi_n\|_{\hd}^2 &= 2\lng f_n, \phi_n\rng_{\hd}-\|\phi_n\|_{\hd}^2\\
&= 2\lng g_n,\, \tildphi\rng_{\dot H^1_D(\Omega_n)}-\|\tilde\phi\|_{\dot H^{1}_D(\HH)}^2 \to \|\tilde\phi\|_{\dot H^{1}_D(\HH)}^2.
\end{align*}

The proof of \eqref{dect} differs little from the cases treated previously: One uses the Rellich Lemma and Corollary~\ref{C:LF} to
show $e^{it\Delta_{\Omega_n}} g_n\to e^{it\Delta_\HH}\tildphi$ almost everywhere and then the Fatou Lemma of Br\'ezis and Lieb to
see that
$$
\text{LHS\eqref{dect}} = \| e^{it\Delta_\HH} \tildphi \|_{L^{10}(\R\times\HH)}^{10}.
$$
The lower bound on this quantity comes from pairing with $h$; see Cases~1 and~2.
\end{proof}

To prove a linear profile decomposition for the propagator $e^{it\Delta_\Omega}$ we will also need the following weak convergence results.

\begin{lem}[Weak convergence]\label{L:converg}
Assume $\Omega_n\equiv\Omega$ or $\{\Omega_n\}$ conforms to one of the three scenarios considered in Proposition~\ref{P:convdomain}.  Let $f\in C_c^\infty(\tlim \Omega_n)$ and let $\{(t_n,x_n)\}_{n\geq 1}\subset\R\times\R^3$.  Then
\begin{align}\label{lc}
e^{it_n\Delta_{\Omega_n}}f(x+x_n) \rightharpoonup 0 \quad \text{weakly in } \dot H^1(\R^3) \quad \text{as } n\to \infty
\end{align}
whenever $|t_n|\to \infty$ or $|x_n|\to \infty$.
\end{lem}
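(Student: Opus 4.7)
The plan is to verify the weak convergence via its dual characterization. Writing $v_n(x) := e^{it_n\Delta_{\Omega_n}}f(x + x_n)$, this sequence is uniformly bounded in $\dot H^1(\R^3)$ because the Dirichlet propagator is a $\dot H^1_D(\Omega_n)$-isometry and $\dot H^1_D(\Omega_n)$ embeds isometrically into $\dot H^1(\R^3)$ via extension by zero. Since $C_c^\infty(\R^3)$ is dense in $\dot H^{-1}(\R^3)$, it suffices to prove $\int v_n \bar\psi\,dx \to 0$ for every $\psi \in C_c^\infty(\R^3)$. A change of variables combined with the $L^2$-unitarity of $e^{it_n\Delta_{\Omega_n}}$ yields
\[
\int v_n \bar\psi\,dx = \bigl\langle f,\, e^{-it_n\Delta_{\Omega_n}}[\psi(\cdot - x_n) 1_{\Omega_n}]\bigr\rangle_{L^2(\Omega_n)},
\]
so by Cauchy--Schwarz the task reduces to proving $\|1_K\,w_n\|_{L^2(\R^3)} \to 0$, where $w_n := e^{-it_n\Delta_{\Omega_n}}[\psi(\cdot - x_n)1_{\Omega_n}]$ and $K := \supp f$ (compact).

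I would then split into cases according to the asymptotics of $(t_n, x_n)$. When $|x_n|\to\infty$ and $\{t_n\}$ is bounded, extract $t_n \to t_\infty$ and observe that $\psi(\cdot - x_n)1_{\Omega_n} \rightharpoonup 0$ weakly in $L^2(\R^3)$; Proposition~\ref{P:converg} together with strong $L^2$-continuity of the group then gives $w_n \rightharpoonup 0$ weakly in $L^2$, and a Rellich upgrade (after a preliminary Littlewood--Paley regularization to get $H^1_{\mathrm{loc}}$-boundedness, the truncation error being uniformly small by Proposition~\ref{P:converg} and Lemma~\ref{L:n3}) promotes this to strong $L^2_{\mathrm{loc}}$-convergence to zero, hence the desired vanishing on $K$. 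When $|t_n|\to\infty$, I would invoke Theorem~\ref{T:LF} to reduce the $\Omega_n$-propagator to the limiting-domain propagator: for $\Omega_\infty\in\{\R^3,\HH\}$ the explicit dispersive estimate $\|e^{-it\Delta_{\Omega_\infty}}g\|_{L^\infty}\lesssim |t|^{-3/2}\|g\|_{L^1}$ (valid on $\HH$ by odd reflection across $\partial\HH$) immediately yields $\|1_K w_n\|_{L^2}\lesssim |t_n|^{-3/2}|K|^{1/2}\|\psi\|_{L^1}\to 0$; for $\Omega_\infty = \Omega$, pass to $x_n \to x_\infty$ if $\{x_n\}$ is bounded and apply Riemann--Lebesgue to the purely absolutely continuous spectral measure of $-\Delta_\Omega$ (a consequence of the absence of bound states and Lemma~\ref{L:local smoothing}).

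The principal obstacle is the simultaneous-divergence regime $|t_n|,|x_n|\to\infty$ in the case $\Omega_n\equiv\Omega$, since no dispersive estimate is available on the exterior domain and the $L^2$-data $\psi(\cdot - x_n)$ now varies with $n$, so a naive weak-convergence argument is unavailable. The plan here is to argue by contradiction: extract a subsequence along which $w_n \to w_\infty$ strongly in $L^2_{\mathrm{loc}}$ with $\|1_K w_\infty\|_{L^2}\ge \delta>0$, and derive a contradiction by combining the virial identity $\|x\,e^{it\Delta_\Omega}g\|_{L^2}\lesssim \|xg\|_{L^2}+|t|\|\nabla g\|_{L^2}$ (which, after passing through the adjoint, confines the evolved mass to a ball of radius $\sim|t_n|$) with the local Strichartz smoothing of Corollary~\ref{C:Keraani3.7} (controlling the high-frequency contribution on moving time windows). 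Together these force $w_\infty \equiv 0$ on $K$, yielding the required contradiction and completing the proof in all cases.
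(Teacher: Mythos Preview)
Your adjoint manoeuvre creates the very difficulty you identify as the ``principal obstacle.''  By writing the pairing as $\langle f, e^{-it_n\Delta_{\Omega_n}}[\psi(\cdot-x_n)1_{\Omega_n}]\rangle$, you are applying the propagator to data that (a) varies with $n$ when $|x_n|\to\infty$ and (b) has a jump discontinuity on $\partial\Omega_n$ from the truncation $1_{\Omega_n}$, so is not in $C_c^\infty(\tlim\Omega_n)$.  Theorem~\ref{T:LF} therefore does not apply.  Your fallback arguments (Rellich after Littlewood--Paley regularization, Riemann--Lebesgue, the contradiction sketch combining virial with Corollary~\ref{C:Keraani3.7}) do not close this gap: the regularization error is not shown to be uniformly small for the rough, translated data, and the contradiction argument is too vague to assess.

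The paper avoids all of this by keeping the propagator on $f$, which \emph{is} a fixed element of $C_c^\infty(\tlim\Omega_n)$.  For $t_n\to\infty$ one sets $F_n(t):=\langle e^{it\Delta_{\Omega_n}}f(\cdot+x_n),\psi\rangle_{\dot H^1}$ and observes two things: $|\partial_tF_n(t)|\lesssim\|f\|_{\dot H^2}\|\psi\|_{\dot H^2}$ uniformly, and
\[
\|F_n\|_{L_t^{10/3}([t_n,\infty))}\lesssim_\psi \|e^{it\Delta_{\Omega_n}}f\|_{L_{t,x}^{10/3}([t_n,\infty)\times\R^3)}
\le \|[e^{it\Delta_{\Omega_n}}-e^{it\Delta_{\Omega_\infty}}]f\|_{L_{t,x}^{10/3}}+\|e^{it\Delta_{\Omega_\infty}}f\|_{L_{t,x}^{10/3}([t_n,\infty))},
\]
which tends to zero by Theorem~\ref{T:LF} (applied to the fixed $f$) and Strichartz plus dominated convergence.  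A Lipschitz function with vanishing $L^{10/3}$ norm on $[t_n,\infty)$ must satisfy $F_n(t_n)\to0$.  The translation $x_n$ is completely harmless here because the spatial $L^{10/3}_x$ norm is translation invariant --- so the ``simultaneous divergence'' case you single out as the principal obstacle simply does not arise.  The remaining case ($\{t_n\}$ bounded, $|x_n|\to\infty$) is then handled directly by passing to $t_n\to t_\infty$ and using Proposition~\ref{P:converg} at the fixed time $t_\infty$, again with $f$ as the argument of the propagator.
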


\begin{proof}
By the definition of $\tlim\Omega_n$, we have $f\in C^\infty_c(\Omega_n)$ for $n$ sufficiently large.  Let $\Omega_\infty$ denote the limit of $\Omega_n$ in the sense of Definition~\ref{D:converg}.

We first prove \eqref{lc} when $t_n\to \infty$; the proof when $t_n\to-\infty$ follows symmetrically.  Let $\psi\in C_c^\infty(\R^3)$ and let
$$
F_n(t):=\langle e^{it\Delta_{\Omega_n}}f(x+x_n), \psi\rangle_{\dot H^1(\R^3)}.
$$
To establish \eqref{lc}, we need to show
\begin{align}\label{lc1}
F_n(t_n)\to 0 \qtq{as} n\to \infty.
\end{align}

We compute
\begin{align*}
|\partial_t F_n(t)|&= \bigl|\langle i\Delta_{\Omega_n} e^{it\Delta_{\Omega_n}}f(x+x_n), \psi\rangle_{\dot H^1(\R^3)} \bigr|\\
&= \bigl|\langle \Delta_{\Omega_n} e^{it\Delta_{\Omega_n}}f(x+x_n), \Delta \psi\rangle_{L^2(\R^3)} \bigr|
	\lesssim \|f\|_{\dot H^2} \|\psi\|_{\dot H^2}\lesssim_{f,\psi}1.
\end{align*}
On the other hand,
\begin{align*}
\|F\|_{L_t^{\frac{10} 3}([t_n,\infty))}
&\lsm \|e^{it\Delta_{\Omega_n}}f\|_{L_{t,x}^{\frac{10}3}([t_n,\infty)\times\R^3)}\|\Delta \psi\|_{L_x^{\frac{10}7}(\R^3)}\\
&\lsm_\psi \|[e^{it\Delta_{\Omega_n}}-e^{it\Delta_{\Omega_\infty}}]f\|_{L_{t,x}^{\frac{10}3}([0,\infty)\times\R^3)}
+\|e^{it\Delta_{\Omega_\infty}}f\|_{L_{t,x}^{\frac{10}3}([t_n,\infty)\times\R^3)}.
\end{align*}
The first term converges to zero as $n\to \infty$ by Theorem~\ref{T:LF}, while convergence to zero of the second term follows from the Strichartz inequality combined with the dominated convergence theorem.  Putting everything together, we derive \eqref{lc1} and so \eqref{lc} when $t_n\to \infty$.

Now assume $\{t_n\}_{n\geq 1}$ is bounded, but $|x_n|\to \infty$ as $n\to \infty$.  Without loss of generality, we may assume $t_n\to t_\infty\in \R$
as $n\to \infty$.  Let $\psi\in C_c^\infty(\R^3)$ and $R>0$ such that $\supp\psi\subseteq B(0,R)$.  We write
\begin{align*}
\langle e^{it_n\Delta_{\Omega_n}}f(x+x_n), \psi\rangle_{\dot H^1(\R^3)}
&=\langle e^{it_\infty\Delta_{\Omega_\infty}}f(x+x_n), \psi\rangle_{\dot H^1(\R^3)} \\
&\quad +\langle [e^{it_\infty\Delta_{\Omega_n}}-e^{it_\infty\Delta_{\Omega_\infty}}]f(x+x_n), \psi\rangle_{\dot H^1(\R^3)}\\
&\quad +\langle [e^{it_n\Delta_{\Omega_n}}-e^{it_\infty\Delta_{\Omega_n}}]f(x+x_n), \psi\rangle_{\dot H^1(\R^3)}.
\end{align*}
By the Cauchy--Schwarz inequality,
\begin{align*}
\bigl|\langle e^{it_\infty\Delta_{\Omega_\infty}}f(x+x_n), \psi\rangle_{\dot H^1(\R^3)}\bigr|
\lsm \|e^{it_\infty\Delta_{\Omega_{\infty}}}f\|_{L^2(|x|\ge |x_n|-R)} \|\Delta \psi\|_{L^2(\R^3)},
\end{align*}
which converges to zero as $n\to \infty$, by the monotone convergence theorem.  By duality and Proposition~\ref{P:converg},
\begin{align*}
\bigl\langle [e^{it_\infty\Delta_{\Omega_n}}-&e^{it_\infty\Delta_{\Omega_\infty}}]f(x+x_n), \psi\rangle_{\dot H^1(\R^3)}\bigr|\\
&\lsm \|[e^{it_\infty\Delta_{\Omega_n}}-e^{it_\infty\Delta_{\Omega_\infty}}]f\|_{\dot H^{-1}(\R^3)} \|\Delta\psi\|_{\dot H^1(\R^3)} \to 0 \qtq{as} n\to \infty.
\end{align*}
Finally, by the fundamental theorem of calculus,
\begin{align*}
\bigl|\langle [e^{it_n\Delta_{\Omega_n}}-e^{it_\infty\Delta_{\Omega_n}}]f(x+x_n), \psi\rangle_{\dot H^1(\R^3)}\bigr|
\lsm |t_n-t_\infty| \|\Delta_{\Omega_n} f\|_{L^2} \|\Delta \psi\|_{L^2},
\end{align*}
which converges to zero as $n\to \infty$.  Putting everything together we deduce
$$
\langle e^{it_n\Delta_{\Omega_n}}f(x+x_n), \psi\rangle_{\dot H^1(\R^3)} \to 0 \qtq{as} n\to \infty.
$$
This completes the proof of the lemma.
\end{proof}

\begin{lem}[Weak convergence]\label{L:compact} Assume $\Omega_n\equiv\Omega$ or $\{\Omega_n\}$ conforms to one of the three scenarios considered in  Proposition~\ref{P:convdomain}. Let $f_n\in \dot H_D^1(\Omega_n)$ be such that $f_n\rightharpoonup 0$ weakly in $\dot H^1(\R^3)$ and let $t_n\to t_\infty\in \R$. Then
\begin{align*}
e^{it_n\Delta_{\Omega_n}} f_n\rightharpoonup 0 \qtq{weakly in} \dot H^1(\R^3).
\end{align*}
\end{lem}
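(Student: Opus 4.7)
Since the propagators are unitary on $\dot H^1_D(\Omega_n)\hookrightarrow \dot H^1(\R^3)$, the sequence $v_n:=e^{it_n\Delta_{\Omega_n}}f_n$ is bounded in $\dot H^1(\R^3)$. By Banach--Alaoglu, to prove $v_n\rightharpoonup 0$ it suffices to show that every subsequential weak limit $v_\infty$ vanishes; fix such a limit and relabel so that $v_n\rightharpoonup v_\infty$. The plan is to pass to the limit in the weak form of the Schr\"odinger equation, using Proposition~\ref{P:converg} to handle the varying domains.

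Pick a test function $\psi\in C_c^\infty(\tlim\Omega_n)$; for all $n$ sufficiently large, $\psi\in C_c^\infty(\Omega_n)$ and $\Delta\psi=\Delta_{\Omega_n}\psi$. Differentiating $\int\bar\psi\,e^{is\Delta_{\Omega_n}}f_n\,dx$ in $s$ and integrating by parts (valid since $\psi$ has compact support strictly inside $\Omega_n$) yields
\begin{align*}
\int\bar\psi\,v_n\,dx-\int\bar\psi\,f_n\,dx=i\int_0^{t_n}\int(\Delta\bar\psi)\,e^{is\Delta_{\Omega_n}}f_n\,dx\,ds.
\end{align*}
As $n\to\infty$, the left-hand side converges to $\int\bar\psi\,v_\infty\,dx$, because $v_n\rightharpoonup v_\infty$ and $f_n\rightharpoonup 0$ in $\dot H^1(\R^3)\hookrightarrow L^6(\R^3)$, paired against $\bar\psi\in L^{6/5}$. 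For the right-hand side, set $G_n(s):=\int(\Delta\bar\psi)e^{is\Delta_{\Omega_n}}f_n\,dx$; H\"older and Sobolev give $|G_n(s)|\lesssim_\psi \|f_n\|_{\dot H^1(\R^3)}$ uniformly in $n,s$, so the portion of the time integral over $[t_\infty,t_n]$ contributes $O(|t_n-t_\infty|)=o(1)$. On the compact interval with endpoints $0$ and $t_\infty$, we invoke dominated convergence after verifying $G_n(s)\to 0$ for each fixed $s$: unitarity rewrites $G_n(s)=\langle f_n,\,e^{-is\Delta_{\Omega_n}}\Delta\psi\rangle_{\dot H^1\times\dot H^{-1}}$, Proposition~\ref{P:converg} (applied to $\Delta\psi\in C_c^\infty(\tlim\Omega_n)$) supplies the strong convergence $e^{-is\Delta_{\Omega_n}}\Delta\psi\to e^{-is\Delta_{\Omega_\infty}}\Delta\psi$ in $\dot H^{-1}(\R^3)$, and pairing this strong limit against the weakly vanishing $f_n$ yields $G_n(s)\to 0$.

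Consequently $\int\bar\psi\,v_\infty\,dx=0$ for every $\psi\in C_c^\infty(\tlim\Omega_n)$, which forces $v_\infty=0$ almost everywhere on $\tlim\Omega_n$. The complement $\R^3\setminus\tlim\Omega_n$ differs from $\Omega_\infty^c$ by a finite set, and any compact subset of its interior lies in $\Omega_n^c$ for all large $n$ (directly checkable in each of the scenarios of Proposition~\ref{P:convdomain}), so $v_n=0$ there and weak $L^6$ convergence transfers this vanishing to $v_\infty$. Hence $v_\infty\equiv 0$, completing the proof. The main technical subtlety is the dominated convergence argument for $G_n(s)$, which relies on both the uniform $\dot H^1$-based bound and the strong $\dot H^{-1}$ convergence of the propagated test function provided by Proposition~\ref{P:converg}; the varying time $t_n\to t_\infty$ is absorbed into the trivial tail estimate.
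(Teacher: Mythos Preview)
Your proof is correct and reaches the same conclusion via a genuinely different route from the paper. Both arguments hinge on Proposition~\ref{P:converg} to pass from the $\Omega_n$ propagator to the limiting one, but they differ in how the moving time $t_n\to t_\infty$ is handled and in how the test-function reduction is organized. The paper first freezes the time by the spectral-calculus bound $|e^{it_n\lambda}-e^{it_\infty\lambda}|\lesssim|t_n-t_\infty|^{1/2}\lambda^{1/2}$, then reduces by density from $\psi\in C_c^\infty(\R^3)$ to $\psi\in C_c^\infty(\tlim\Omega_n)$, moves the propagator onto $\psi$ by unitarity, and finally invokes Proposition~\ref{P:converg} together with a separate Mikhlin/Bernstein verification that $e^{-it_\infty\Delta_{\Omega_\infty}}\psi\in\dot H^{-1}(\R^3)$. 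You instead write the Duhamel/fundamental-theorem-of-calculus identity in time, absorb $t_n\to t_\infty$ into a trivial tail bound, and use dominated convergence on $[0,t_\infty]$ with Proposition~\ref{P:converg} supplying the pointwise-in-$s$ limit; the price is that you must separately dispose of the complement of $\tlim\Omega_n$ at the end. Your approach is a bit more elementary (no spectral multiplier estimate is needed), while the paper's is shorter once those tools are in hand. One small remark: your rewriting $G_n(s)=\langle f_n,e^{-is\Delta_{\Omega_n}}\Delta\psi\rangle_{\dot H^1\times\dot H^{-1}}$ is correct but the $\dot H^{-1}$ membership of $e^{-is\Delta_{\Omega_n}}\Delta\psi$ deserves a word---it is cleanest to integrate by parts first to get $G_n(s)=-\langle e^{is\Delta_{\Omega_n}}f_n,\psi\rangle_{\dot H^1}=-\langle f_n,e^{-is\Delta_{\Omega_n}}\psi\rangle_{\dot H^1}$ and then observe that $e^{-is\Delta_{\Omega_n}}\psi\to e^{-is\Delta_{\Omega_\infty}}\psi$ strongly in $\dot H^1(\R^3)$ (weak limit from Proposition~\ref{P:converg} plus equality of norms from unitarity), which immediately gives $G_n(s)\to 0$.
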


\begin{proof} For any $\psi\in C_c^{\infty}(\R^3)$,
\begin{align*}
\bigl|\langle [e^{it_n\Delta_{\Omega_n}}-e^{it_\infty\Delta_{\Omega_n}}]f_n, \psi\rangle_{\dot H^1(\R^3)}\bigr|
&\lsm \|[e^{it_n\Delta_{\Omega_n}}-e^{it_\infty\Delta_{\Omega_n}}]f_n\|_{L^2} \|\Delta\psi\|_{L^2}\\
&\lsm |t_n-t_\infty|^{\frac12} \|(-\Delta_{\Omega_n})^{\frac12}f_n\|_{L^2} \|\Delta\psi\|_{L^2},
\end{align*}
which converges to zero as $n\to \infty$.  To obtain the last inequality above, we have used the spectral theorem together with the elementary inequality $|e^{it_n\lambda}-e^{it_\infty\lambda}|\lsm |t_n-t_\infty|^{1/2}\lambda^{1/2}$ for $\lambda\geq 0$.  Thus, we are left to prove
\begin{align*}
\int_{\R^3} \nabla \bigl[e^{it_\infty\Delta_{\Omega_n}} f_n\bigr](x) \nabla \bar\psi(x)\,dx
= \int_{\R^3} e^{it_\infty\Delta_{\Omega_n}} f_n (x) (-\Delta\bar\psi)(x)\, dx \to 0 \qtq{as} n\to \infty
\end{align*}
for all $\psi\in C_c^\infty(\R^3)$.  By Sobolev embedding,
$$
\|e^{it_\infty\Delta_{\Omega_n}} f_n\|_{L^6}\lsm \|f_n\|_{\dot H^1(\R^3)}\lsm 1 \qtq{uniformly in} n\geq 1,
$$
and so using a density argument and the dominated convergence theorem (using the fact that the measure of $\Omega_n\triangle(\tlim \Omega_n)$ converges to zero), it suffices to show
\begin{align}\label{9:38am}
\int_{\R^3} e^{it_\infty\Delta_{\Omega_n}} f_n (x) \bar\psi(x)\, dx \to 0 \qtq{as} n\to \infty
\end{align}
for all $\psi\in C_c^\infty(\tlim \Omega_n)$.  To see that \eqref{9:38am} is true, we write
\begin{align*}
\langle e^{it_\infty\Delta_{\Omega_n}} f_n, \psi \rangle =\langle f_n, [e^{-it_\infty\Delta_{\Omega_n}} -e^{-it_\infty\Delta_{\Omega_\infty}}]\psi \rangle
+ \langle f_n,e^{-it_\infty\Delta_{\Omega_\infty}}\psi \rangle,
\end{align*}
where $\Omega_\infty$ denotes the limit of $\Omega_n$ in the sense of Definition~\ref{D:converg}.  The first term converges to zero by Proposition~\ref{P:converg}.  As $f_n\rightharpoonup 0$ in $\dot H^1(\R^3)$, to see that the second term converges to zero, we merely need to prove that $e^{-it_\infty\Delta_{\Omega_\infty}}\psi\in \dot H^{-1}(\R^3)$ for all $\psi\in C_c^\infty(\tlim \Omega_n)$.  Toward this end, we use the Mikhlin multiplier theorem and Bernstein's inequality to estimate
\begin{align*}
\|e^{-it_\infty\Delta_{\Omega_\infty}}\psi\|_{\dot H^{-1}(\R^3)}
&\lsm\|e^{-it_\infty\Delta_{\Omega_\infty}}P_{\leq 1}^{\Omega_\infty} \psi\|_{L^{\frac65}}+\sum_{N\geq 1}\|e^{-it_\infty\Delta_{\Omega_\infty}}P_N^{\Omega_\infty}\psi\|_{L^{\frac65}}\\
&\lsm\|\psi\|_{L^{\frac65}}+\sum_{N\geq 1} \langle N^2t_\infty\rangle^2\|P_N^{\Omega_\infty}\psi\|_{L^{\frac65}}\\
&\lsm \|\psi\|_{L^{\frac65}} + \|(-\Delta_{\Omega_\infty})^3\psi\|_{L^{\frac65}}\lsm_\psi 1.
\end{align*}
This completes the proof of the lemma.
\end{proof}

Finally, we turn to the linear profile decomposition for the propagator $e^{it\Delta_\Omega}$ in $\dot H^1_D(\Omega)$.  This is proved by the inductive application of Proposition~\ref{P:inverse Strichartz}. To handle the variety of cases in as systematic a way as possible, we introduce operators $G_n^j$ that act unitarily in $\dot H^1(\R^3)$.

\begin{thm}[$\dot H^1_D(\Omega)$ linear profile decomposition]\label{T:LPD}
Let $\{f_n\}$ be a bounded sequence in $\dot H^1_D(\Omega)$.  After passing to a subsequence, there exist $J^*\in \{0, 1, 2, \ldots,\infty\}$,
$\{\phi_n^j\}_{j=1}^{J^*}\subset \dot H_D^1(\Omega)$, $\{\lambda_n^j\}_{j=1}^{J^*}\subset(0,\infty)$, and $\{(t_n^j,x_n^j)\}_{j=1}^{J^*}\subset \R\times \Omega$
conforming to one of the following four cases for each $j$:
\begin{CI}
\item Case 1: $\lambda_n^j\equiv \lambda_\infty^j$, $x_n^j\to x_\infty^j$, and there is a $\phi^j\in \dot H^1_D(\Omega)$ so that
\begin{align*}
\phi_n^j = e^{it_n^j (\lambda_n^j)^2\Delta_\Omega}\phi^j.
\end{align*}
We define $[G_n^j f] (x) :=  (\lambda_n^j)^{-\frac 12} f\bigl(\tfrac{x-x_n^j}{\lambda_n^j} \bigr)$ and $\Omega_n^j:=(\lambda_n^j)^{-1}(\Omega - \{x_n^j\})$.

\item Case 2: $\lambda_n^j\to \infty$, $-(\lambda_n^j)^{-1}x_n^j \to x^j_\infty\in\R^3$, and there is a $\phi^j\in \dot H^1(\R^3)$ so that
\begin{align*}
\phi_n^j(x)= G_n^j \bigl[e^{it_n^j \Delta_{\Omega_n^j}} (\chi_n^j \phi^j)\bigr] (x) \qtq{with} [G_n^j f] (x) :=  (\lambda_n^j)^{-\frac 12} f\bigl(\tfrac{x-x_n^j}{\lambda_n^j} \bigr),
\end{align*}
$\Omega_n^j := (\lambda_n^j)^{-1}(\Omega - \{x_n^j\})$, $\chi_n^j(x)=\chi(\lambda_n^jx+x_n^j)$, and $\chi(x)=\Theta(\tfrac{d(x)}{\diam(\Omega^c)})$.

\item Case 3: $\frac{d(x_n^j)}{\lambda_n^j}\to \infty$ and there is a $\phi^j\in \dot H^1(\R^3)$ so that
\begin{align*}
\phi_n^j(x)= G_n^j \bigl[e^{it_n^j \Delta_{\Omega_n^j}} (\chi_n^j \phi^j)\bigr] (x) \qtq{with} [G_n^j f] (x) :=  (\lambda_n^j)^{-\frac 12} f\bigl(\tfrac{x-x_n^j}{\lambda_n^j} \bigr),
\end{align*}
$\Omega_n^j := (\lambda_n^j)^{-1}(\Omega - \{x_n^j\})$, and $\chi_n^j(x)=1-\Theta(\tfrac{\lambda_n^j|x|}{d(x_n^j)})$.

\item Case 4: $\lambda_n^j\to 0$, $\frac{d(x_n^j)}{\lambda_n^j}\to d_\infty^j>0$, and there is a $\phi^j\in \dot H^1_D(\HH)$ so that
\begin{align*}
\phi_n^j(x)= G_n^j \bigl[ e^{it_n^j \Delta_{\Omega_n^j}} \phi^j\bigr] (x) \qtq{with} [G_n^j f](x) := (\lambda_n^j)^{-\frac12} f\bigl(\tfrac{(R_n^j)^{-1}(x-(x_n^j)^*)}{\lambda_n^j}\bigr),
\end{align*}
$\Omega_n^j := (\lambda_n^j)^{-1}(R_n^j)^{-1}(\Omega - \{(x_n^j)^*\})$, $(x_n^j)^*\in \partial\Omega$ is defined by $d(x_n^j)=|x_n^j-(x_n^j)^*|$,
and $R_n^j\in SO(3)$ satisfies $R_n^j e_3=\tfrac{x_n^j-(x_n^j)^*}{|x_n^j-(x_n^j)^*|}$.
\end{CI}
Further, for any finite $0\le J\le J^*$, we have the decomposition
\begin{align*}
f_n=\sum_{j=1}^ J \phi_n^j +w_n^J,
\end{align*}
with $w_n^J\in \dot H^1_D(\Omega)$ satisfying
\begin{gather}
\lim_{J\to J^*} \limsup_{n\to\infty} \|e^{it\Delta_{\Omega}}w_n^J\|_{L_{t,x}^{10}(\R\times\Omega)}=0,  \label{E:LP1}\\
\lim_{n\to\infty}\Bigl\{\|f_n\|_{\dot H^1_D(\Omega)}^2-\sum_{j=1}^J\|\phi_n^j\|_{\dot H_D^1(\Omega)}^2-\|w_n^J\|_{\dot H^1_D(\Omega)}^2\Bigr\}=0, \label{E:LP2}\\
\lim_{n\to\infty}\Bigl\{\|f_n\|_{L^6(\Omega)}^6-\sum_{j=1}^J \|\phi_n^j\|_{L^6(\Omega)}^6-\|w_n^J\|_{L^6(\Omega)}^6\Bigr\}=0, \label{E:LP3}\\
e^{it_n^J\Delta_{\Omega_n^J}}(G_n^J)^{-1}w_n^J\rightharpoonup 0 \qtq{weakly in} \dot H^1(\R^3), \label{E:LP4}
\end{gather}
and for all $j\neq k$ we have the asymptotic orthogonality property
\begin{align}\label{E:LP5}
\lim_{n\to\infty} \ \frac{\lambda_n^j}{\lambda_n^k}+\frac{\lambda_n^k}{\lambda_n^j}+
 	\frac{|x_n^j-x_n^k|^2}{\lambda_n^j\lambda_n^k}+\frac{|t_n^j(\lambda_n^j)^2-t_n^k(\lambda_n^k)^2|}{\lambda_n^j\lambda_n^k}=\infty.
\end{align}
Lastly, we may additionally assume that for each $j$ either $t_n^j\equiv 0$ or $t_n^j\to \pm \infty$.
\end{thm}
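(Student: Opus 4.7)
The plan is to construct the decomposition by iteratively applying the inverse Strichartz inequality (Proposition~\ref{P:inverse Strichartz}), and then to verify the orthogonality and decoupling assertions separately.

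\emph{Iterative extraction and termination.} Set $w_n^0 := f_n$, and let $A_J := \limsup_{n\to\infty}\|w_n^J\|_{\dot H^1_D(\Omega)}$ and $\eps_J := \limsup_{n\to\infty}\|e^{it\Delta_\Omega}w_n^J\|_{L^{10}_{t,x}(\R\times\Omega)}$. If $\eps_J = 0$, I terminate and set $J^* = J$. Otherwise I pass to a subsequence and apply Proposition~\ref{P:inverse Strichartz} to $w_n^J$, which produces parameters $(N_n^{J+1}, t_n^{J+1}, x_n^{J+1})$ falling into one of the four cases there. Setting $\lambda_n^{J+1} := (N_n^{J+1})^{-1}$, I define the frame $G_n^{J+1}$, the cutoff $\chi_n^{J+1}$, and the profile $\phi^{J+1}$ exactly as in the corresponding Case of the proposition; then $\phi_n^{J+1}$ is the embedding prescribed in the theorem statement, and $w_n^{J+1} := w_n^J - \phi_n^{J+1}$. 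From \eqref{dech} one reads off $A_J^2 - A_{J+1}^2 \gtrsim A_J^{-7/4}\eps_J^{15/4} \ge A_0^{-7/4}\eps_J^{15/4}$, so $\sum_J \eps_J^{15/4}<\infty$ and hence $\eps_J \to 0$; combined with \eqref{dect} this delivers \eqref{E:LP1} as $J \to J^*$. Iterating \eqref{dech} gives \eqref{E:LP2}.

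\emph{Weak convergence and $L^6$ decoupling.} The weak convergence \eqref{E:LP4} is built into the construction, since $\phi^J$ is extracted as the weak $\dot H^1(\R^3)$-limit of a rescaled propagation of $w_n^{J-1}$, and the extra cutoff error of the form $(\chi_n^J - 1)\phi^J$ tends to zero strongly in $\dot H^1(\R^3)$ by the argument already used in the proof of Proposition~\ref{P:inverse Strichartz} (cf. \eqref{E:no deform} and \eqref{E:no deform 3}). For \eqref{E:LP3}, I would pass to the rescaled frame via $G_n^j$, use Rellich compactness together with the $(i\partial_t)^{1/2} = (-\Delta)^{1/2}$ trick (as in Case~1 of Proposition~\ref{P:inverse Strichartz}) and Corollary~\ref{C:LF} to produce a.e.\ pointwise convergence of the rescaled profiles in the limiting geometry identified by Proposition~\ref{P:convdomain}, and then apply the Br\'ezis--Lieb lemma (Lemma~\ref{lm:rf}) inductively one profile at a time; the cross-terms between distinct profiles vanish once \eqref{E:LP5} is established.

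\emph{Asymptotic orthogonality.} This is the main obstacle, and it is complicated by the fact that distinct profiles may live in different limiting geometries. I would argue by strong induction on $J$ and by contradiction: suppose \eqref{E:LP5} holds for all pairs of indices less than $J$ but fails for some pair $(j,J)$ with $j<J$. After passing to a subsequence, $\lambda_n^J/\lambda_n^j$ converges to some $\mu\in(0,\infty)$ while the translation and time parameters are bounded in the natural rescaled variables. The strategy is to analyze the composition $(G_n^j)^{-1}G_n^J$ and the intertwined propagator $(G_n^j)^{-1}e^{-i[t_n^j(\lambda_n^j)^2 - t_n^J(\lambda_n^J)^2]\Delta_\Omega}G_n^J$ applied to suitable test functions: Proposition~\ref{P:convdomain} identifies a common limiting geometry, and Theorem~\ref{T:LF}, Corollary~\ref{C:LF}, and Proposition~\ref{P:converg} show that this intertwiner converges to a composition of a fixed symmetry with the corresponding free propagator on that geometry. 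Applying the intertwiner to $w_n^{J-1}$ and invoking the fact that $\phi^J$ is a nontrivial weak limit, one identifies the same weak limit as a nontrivial translate/rescaling of $\phi^j$, which by the weak convergence \eqref{E:LP4} at stage $J-1$ together with Lemma~\ref{L:compact} must instead vanish; this contradicts the nontriviality of $\phi^J$ provided by \eqref{nontri}. When instead the geometries at scales $\lambda_n^j$ and $\lambda_n^J$ are incompatible (for example, $\lambda_n^j \to 0$ but $\lambda_n^J\equiv \lambda_\infty^J$, or the obstacles in the two rescaled frames escape in different directions), Lemma~\ref{L:converg} forces the relevant weak limit to vanish automatically, and the contradiction is obtained in the same way.

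\emph{Time normalization.} Finally, a diagonal extraction arranges that for each $j$ the sequence $t_n^j$ is either identically zero---obtained, when $t_n^j$ is bounded, by passing to a subsequential limit $t_n^j\to t_\infty^j$ and absorbing $e^{it_\infty^j\Delta_{\Omega_n^j}}$ into the profile $\phi^j$, with Lemma~\ref{L:compact} preserving \eqref{E:LP4}---or else satisfies $t_n^j\to\pm\infty$.
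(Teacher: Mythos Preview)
Your overall approach---iterative extraction via Proposition~\ref{P:inverse Strichartz}, termination via the telescoping bound from \eqref{dech}, and orthogonality by contradiction through the intertwiner $(G_n^j)^{-1}G_n^J$ together with Lemmas~\ref{L:compact} and~\ref{L:converg}---matches the paper's. However, the orthogonality argument as you describe it has a gap. You invoke ``\eqref{E:LP4} at stage $J-1$'' to conclude that the weak limit of $w_n^{J-1}$ in frame $j$ vanishes, but \eqref{E:LP4} at stage $J-1$ concerns only frame $J-1$; when $j<J-1$ these two frames are asymptotically orthogonal by your own strong-induction hypothesis, so Lemma~\ref{L:compact} cannot carry one to the other. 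The missing step, which the paper makes explicit, is the telescoping identity
\[
w_n^{J-1} = w_n^{j} - \sum_{l=j+1}^{J-1}\phi_n^l.
\]
One applies $e^{-it_n^J\Delta_{\Omega_n^J}}(G_n^J)^{-1}$ to each summand separately: the contribution of $w_n^j$ vanishes by \eqref{E:LP4} at stage $j$ combined with Lemma~\ref{L:compact} (the intertwiner between frames $j$ and $J$ converges precisely because $(j,J)$ fails \eqref{E:LP5}); each intermediate $\phi_n^l$ vanishes by the orthogonality of $(j,l)$---now available from strong induction---combined with Lemma~\ref{L:converg}, after approximating $\phi^l$ by $C_c^\infty(\tlim\Omega_n^l)$ functions. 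The paper's sub-cases a)--d), matching the possibly different geometries of frames $j$ and $l$, arise precisely in this last step; your remark about ``incompatible geometries at scales $\lambda_n^j$ and $\lambda_n^J$'' is therefore misplaced, since failure of \eqref{E:LP5} for $(j,J)$ forces $\lambda_n^j/\lambda_n^J\to\mu\in(0,\infty)$ and hence a common limiting geometry.

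Two minor points: the phrase ``translate/rescaling of $\phi^j$'' should presumably read $\phi^J$; and the $(i\partial_t)^{1/2}$ trick you cite for \eqref{E:LP3} is a spacetime device used for \eqref{dect}, whereas the $L^6_x$ decoupling at fixed time follows more directly from weak $\dot H^1$ convergence, Rellich--Kondrachov, and Lemma~\ref{lm:rf} applied in the rescaled frame.
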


\begin{proof} We will proceed inductively and extract one bubble at a time. To start with, we set $w_n^0:=f_n$.  Suppose
we have a decomposition up to level $J\geq 0$ obeying \eqref{E:LP2} through \eqref{E:LP4}.  (Note that conditions \eqref{E:LP1}
and \eqref{E:LP5} will be verified at the end.)  Passing to a subsequence if necessary, we set
\begin{align*}
A_J:=\lim_{n\to\infty} \|w_n^J\|_{\dot H^1_D(\Omega)} \qtq{and} \eps_J:=\lim_{n\to \infty} \|e^{it\Delta_{\Omega}}w_n^J\|_{L_{t,x}^{10}(\R\times\Omega)}.
\end{align*}
If $\eps_J=0$, we stop and set $J^*=J$. If not, we apply the inverse Strichartz inequality Proposition \ref{P:inverse Strichartz} to
$w_n^J$. Passing to a subsequence in $n$ we find $\{\phi_n^{J+1}\}\subset \dot H^1_D(\Omega)$, $\{\lambda_n^{J+1}\}\subset 2^{\mathbb Z}$, and
$\{(t_n^{J+1}, x_n^{J+1})\}\subset\R\times\Omega$, which conform to one of the four cases listed in the theorem.  Note that we rename
the parameters given by Proposition~\ref{P:inverse Strichartz} as follows: $\lambda_n^{J+1} := N_n^{-1}$ and $t_n^{J+1} := - N_n^{2} t_n$.

The profiles are defined as weak limits in the following way:
\begin{align*}
\tilde \phi^{J+1}=\wlim_{n\to\infty}(G_n^{J+1})^{-1} \bigl[ e^{-it_n^{J+1}(\lambda_n^{J+1})^2\Delta_\Omega}w_n^J\bigr]
=\wlim_{n\to\infty}e^{-it_n^{J+1}\Delta_{\Omega_n^{J+1}}}[(G_n^{J+1})^{-1}w_n^J],
\end{align*}
where $G_n^{J+1}$ is defined in the statement of the theorem.  In Cases 2, 3, 4, we define $\phi^{J+1}:=\tilde \phi^{J+1}$, while in Case 1,
$$
\phi^{J+1}(x):= G_\infty^{J+1}\tilde \phi^{J+1}(x):=(\lambda_\infty^{J+1})^{-\frac12} \tilde \phi^{J+1}\bigl(\tfrac{x-x_\infty^{J+1}}{\lambda_\infty^{J+1}} \bigr).
$$
Finally, $\phi_n^{J+1}$ is defined as in the statement of the theorem.  Note that in Case 1, we can rewrite this definition as
$$
\phi_n^{J+1}=e^{it_n^{J+1}(\lambda_n^{J+1})^2\Delta_\Omega}\phi^{J+1}=G_\infty^{J+1}e^{it_n^{J+1}\Delta_{\Omega_\infty^{J+1}}}\tilde \phi^{J+1},
$$
where $\Omega_\infty^{J+1}:=(\lambda_\infty^{J+1})^{-1}(\Omega - \{x_\infty^{J+1}\})$.  Note that in all four cases,
\begin{align}\label{strong}
\lim_{n\to \infty}\|e^{-it_n^{J+1}\Delta_{\Omega_n^{J+1}}}(G_n^{J+1})^{-1}\phi_n^{J+1}-\tilde \phi^{J+1}\|_{\dot H^1(\R^3)}=0;
\end{align}
see also \eqref{E:no deform} and \eqref{E:no deform 3} for Cases 2 and 3.

Now define $w_n^{J+1}:=w_n^J-\phi_n^{J+1}$.  By \eqref{strong} and the construction of $\tilde \phi^{J+1}$ in each case,
\begin{align*}
e^{-it_n^{J+1}\Delta_{\Omega_n^{J+1}}}(G_n^{J+1})^{-1}w_n^{J+1} \rightharpoonup 0 \quad \text{weakly in }\dot H^1(\R^3).
\end{align*}
This proves \eqref{E:LP4} at the level $J+1$.  Moreover, from Proposition \ref{P:inverse Strichartz} we also have
\begin{align*}
\lim_{n\to\infty}\Bigl\{\|w_n^J\|_{\dot H^1_D(\Omega)}^2-\|\phi_n^{J+1}\|_{\dot H^1_D(\Omega)}^2-\|w_n^{J+1}\|_{\dot H^1_D(\Omega)}^2\Bigr\}=0.
\end{align*}
This together with the inductive hypothesis give \eqref{E:LP2} at the level $J+1$.  A similar argument establishes \eqref{E:LP3} at the same level.

From Proposition~\ref{P:inverse Strichartz}, passing to a further subsequence we have
\begin{equation}\label{new a,eps}
\begin{aligned}
&A_{J+1}^2=\lim_{n\to\infty}\|w_n^{J+1}\|_{\dot H^1_D(\Omega)}^2\le A_J^2\Bigl[1 -C \bigl(\tfrac{\eps_J}{A_J}\bigr)^{\frac{15}4}\Bigr]\leq A_J^2\\
&\eps_{J+1}^{10}=\lim_{n\to\infty}\|e^{it\Delta_{\Omega}}w_n^{J+1}\|_{L_{t,x}^{10}(\R\times\Omega)}^{10}\le\eps_J^{10}\Bigl[1-C\bigl(\tfrac{\eps_J}{A_J}\bigr)^{\frac{35}4}\Bigr].
\end{aligned}
\end{equation}
If $\eps_{J+1}=0$ we stop and set $J^*=J+1$; moreover, \eqref{E:LP1} is automatic.  If $\eps_{J+1}>0$ we continue the induction.  If the algorithm does not terminate in finitely many steps, we set  $J^*=\infty$; in this case, \eqref{new a,eps} implies $\eps_J\to 0$ as $J\to \infty$ and so \eqref{E:LP1} follows.

Next we verify the asymptotic orthogonality condition \eqref{E:LP5}. We argue by contradiction.  Assume \eqref{E:LP5} fails to be true for some pair $(j,k)$.  Without loss of generality, we may assume $j<k$ and \eqref{E:LP5} holds for all pairs $(j,l)$ with $j<l<k$.  Passing to a subsequence, we may assume
\begin{align}\label{cg}
\frac{\lambda_n^j}{\lambda_n^k}\to \lambda_0\in (0,\infty), \quad \frac{x_n^j-x_n^k}{\sqrt{\lambda_n^j\lambda_n^k}}\to x_0, \qtq{and}
\frac{t_n^j(\lambda_n^j)^2-t_n^k(\lambda_n^k)^2}{\lambda_n^j\lambda_n^k}\to t_0.
\end{align}

From the inductive relation
\begin{align*}
w_n^{k-1}=w_n^j-\sum_{l=j+1}^{k-1}\phi_n^l
\end{align*}
and the definition for $\tilde \phi^k$, we obtain
\begin{align}
\tilde \phi^k&=\wlim_{n\to\infty}e^{-it_n^k\Delta_{\Omega_n^k}}[(G_n^k)^{-1}w_n^{k-1}]\notag\\
&=\wlim_{n\to\infty}e^{-it_n^k\Delta_{\Omega_n^k}}[(G_n^k)^{-1}w_n^j]\label{tp1}\\
&\quad-\sum_{l=j+1}^{k-1} \wlim_{n\to \infty}e^{-it_n^k\Delta_{\Omega_n^k}}[(G_n^k)^{-1}\phi_n^l]\label{tp2}.
\end{align}
We will prove that these weak limits are zero and so obtain a contradiction to the nontriviality of $\tilde \phi^k$.

We rewrite \eqref{tp1} as follows
\begin{align*}
e^{-it_n^k\Delta_{\Omega_n^k}}[(G_n^k)^{-1}w_n^j]
&=e^{-it_n^k\Delta_{\Omega_n^k}}(G_n^k)^{-1}G_n^je^{it_n^j\Delta_{\Omega_n^j}}[e^{-it_n^j\Delta_{\Omega_n^j}}(G_n^j)^{-1}w_n^j]\\
&=(G_n^k)^{-1}G_n^je^{i\bigl(t_n^j-t_n^k\tfrac{(\lambda_n^k)^2}{(\lambda_n^j)^2}\bigr)\Delta_{{\Omega_n^j}}}[e^{-it_n^j\Delta_{\Omega_n^j}}(G_n^j)^{-1}w_n^j].
\end{align*}
Note that by \eqref{cg},
\begin{align*}
t_n^j-t_n^k\frac{(\lambda_n^k)^2}{(\lambda_n^j)^2}=\frac{t_n^j(\lambda_n^j)^2-t_n^k(\lambda_n^k)^2}{\lambda_n^j\lambda_n^k}\cdot\frac{\lambda_n^k}
{\lambda_n^j}\to \frac{t_0}{\lambda_0}.
\end{align*}
Using this together with \eqref{E:LP4}, Lemma~\ref{L:compact}, and the fact that the adjoints of the unitary operators $(G_n^k)^{-1}G_n^j$ converge strongly, we obtain $\eqref{tp1}=0$.

To complete the proof of \eqref{E:LP5}, it remains to show $\eqref{tp2}=0$.  For all $j<l<k$ we write
\begin{align*}
e^{-it_n^k{\Delta_{\Omega_n^k}}}(G_n^k)^{-1}\phi_n^l
=(G_n^k)^{-1}G_n^je^{i\bigl(t_n^j-t_n^k\tfrac{(\lambda_n^k)^2}{(\lambda_n^j)^2}\bigr)\Delta_{{\Omega_n^j}}}[e^{-it_n^j\Delta_{\Omega_n^j}}(G_n^j)^{-1}\phi_n^l].
\end{align*}
Arguing as for \eqref{tp1}, it thus suffices to show
\begin{align*}
e^{-it_n^j\Delta_{\Omega_n^j}}(G_n^j)^{-1}\phi_n^l\rightharpoonup 0 \qtq{weakly in} \dot H^1(\R^3).
\end{align*}
Using a density argument, this reduces to
\begin{align}\label{need11}
I_n:=e^{-it_n^j\Delta_{\Omega_n^j}}(G_n^j)^{-1}G_n^le^{it_n^l\Delta_{\Omega_n^l}}\phi\rightharpoonup 0 \qtq{weakly in} \dot H^1(\R^3),
\end{align}
for all $\phi\in C_c^\infty(\tlim \Omega_n^l)$.  In Case 1, we also used the fact that $(G_n^l)^{-1} G_\infty^l$ converges strongly to the identity.

Depending on which cases $j$ and $l$ fall into, we can rewrite $I_n$ as follows:
\begin{CI}
\item Case a): If both $j$ and $l$ conform to Case 1, 2, or 3, then
\begin{align*}
I_n=\biggl(\frac{\lambda_n^j}{\lambda_n^l}\biggr)^{\frac12}\biggl[e^{i\bigl(t_n^l-t_n^j\bigl(\frac{\lambda_n^j}
{\lambda_n^l}\bigr)^2\bigr)\Delta_{\Omega_n^l}}\phi\biggr]\biggl(\frac{\lambda_n^j x+x_n^j- x_n^l}{\lambda_n^l}\biggr).
\end{align*}

\item Case b): If $j$ conforms to Case 1, 2, or 3 and $l$ to Case 4, then
\begin{align*}
I_n=\biggl(\frac{\lambda_n^j}{\lambda_n^l}\biggr)^{\frac12}\biggl[e^{i\bigl(t_n^l-t_n^j\bigl(\frac{\lambda_n^j}
{\lambda_n^l}\bigr)^2\bigr) \Delta_{\Omega_n^l}}\phi\biggr]\biggl(\frac{(R_n^l)^{-1}(\lambda_n^j x+x_n^j-(x_n^l)^*)}{\lambda_n^l}\biggr).
\end{align*}

\item Case c): If $j$ conforms to Case 4 and $l$ to Case 1, 2, or 3, then
\begin{align*}
I_n=\biggl(\frac{\lambda_n^j}{\lambda_n^l}\biggr)^{\frac12}\biggl[e^{i\bigl(t_n^l-t_n^j\bigl(\frac{\lambda_n^j}
{\lambda_n^l}\bigr)^2\bigr) \Delta_{\Omega_n^l}}\phi\biggr]\biggl(\frac{R_n^j\lambda_n^j x+(x_n^j)^*-x_n^l}{\lambda_n^l}\biggr).
\end{align*}

\item Case d): If both $j$ and $l$ conform to Case 4, then
\begin{align*}
I_n=\biggl(\frac{\lambda_n^j}{\lambda_n^l}\biggr)^{\frac12}\biggl[e^{i\bigl(t_n^l-t_n^j\bigl(\frac{\lambda_n^j}
{\lambda_n^l}\bigr)^2\bigr) \Delta_{\Omega_n^l}}\phi\biggr]\biggl(\frac{(R_n^l)^{-1}(R_n^j\lambda_n^j x+(x_n^j)^*-(x_n^l)^*)}{\lambda_n^l}\biggr).
\end{align*}
\end{CI}

We first prove \eqref{need11} when the scaling parameters are not comparable, that is,
\begin{align*}
\lim_{n\to\infty}\frac{\lambda_n^j}{\lambda_n^l}+\frac{\lambda_n^l}{\lambda_n^j}=\infty.
\end{align*}
We treat all cases simultaneously.  By Cauchy--Schwarz,
\begin{align*}
\bigl|\langle I_n, \psi\rangle_{\dot H^1(\R^3)}\bigr|
&\lsm \min\Bigl\{\|\Delta I_n\|_{L^2(\R^3)}\|\psi\|_{L^2(\R^3)}, \|I_n\|_{L^2(\R^3)}\|\Delta\psi\|_{L^2(\R^3)}\Bigr\}\\
&\lsm \min\biggl\{\frac{\lambda_n^j}{\lambda_n^l}\|\Delta\phi\|_{L^2(\R^3)}\|\psi\|_{L^2(\R^3)}, \frac{\lambda_n^l}{\lambda_n^j}\|\phi\|_{L^2(\R^3)}\|\Delta\psi\|_{L^2(\R^3)}\biggr\},
\end{align*}
which converges to zero as $n\to \infty$, for all $\psi\in C_c^\infty(\R^3)$.  Thus, in this case $\eqref{tp2}=0$ and we get the desired contradiction.

Henceforth we may assume
\begin{align*}
\lim_{n\to \infty}\frac{\lambda_n^j}{\lambda_n^l}=\lambda_0\in (0,\infty).
\end{align*}
We now suppose the time parameters diverge, that is,
\begin{align*}
\lim_{n\to \infty}\frac{|t_n^j(\lambda_n^j)^2-t_n^l(\lambda_n^l)^2|}{\lambda_n^j\lambda_n^l}=\infty;
\end{align*}
then we also have
\begin{align*}
\biggl|t_n^l-t_n^j\biggl(\frac{\lambda_n^j}{\lambda_n^l}\biggr)^2\biggr|
=\frac{|t_n^l(\lambda_n^l)^2-t_n^j(\lambda_n^j)^2|}{\lambda_n^l\lambda_n^j}\cdot\frac{\lambda_n^j}{\lambda_n^l}\to \infty \qtq{as} n\to\infty.
\end{align*}
We first discuss Case a). Under the above condition, \eqref{need11} follows from
\begin{align*}
\lambda_0^{\frac 12}\biggl(e^{i\bigl(t_n^l-t_n^j\bigl(\frac{\lambda_n^j}{\lambda_n^l}\bigr)^2\bigr)\Delta_{\Omega_n^l}}\phi\biggr)\bigl(\lambda_0 x+(\lambda_n^l)^{-1}(x_n^j-x_n^l)\bigr)\rightharpoonup 0 \qtq{weakly in} \dot H^1(\R^3),
\end{align*}
which is an immediate consequence of Lemma \ref{L:converg}.  In Cases b), c), and d), the proof proceeds similarly since $SO(3)$ is a compact group; indeed, passing to a subsequence we may assume that $R_n^j\to R_0$ and $R_n^l\to R_1$, which places us in the same situation as in Case a).

Finally, we deal with the situation when
\begin{align}\label{cdition}
\frac{\lambda_n^j}{\lambda_n^l}\to \lambda_0, \quad \frac{t_n^l(\lambda_n^l)^2-t_n^j(\lambda_n^j)^2}{\lambda_n^j\lambda_n^l}\to t_0,
\qtq{but} \frac{|x_n^j-x_n^l|^2}{\lambda_n^j\lambda_n^l}\to \infty.
\end{align}
Then we also have $t_n^l-t_n^j(\lambda_n^j)^2/(\lambda_n^l)^2\to \lambda_0t_0$. Thus, in Case a) it suffices to show
\begin{align}\label{524}
\lambda_0^{\frac 12}
e^{it_0\lambda_0\Delta_{\Omega_n^l}}\phi(\lambda_0x+y_n)\rightharpoonup0 \qtq{weakly in} \dot H^1(\R^3),
\end{align}
where
\begin{align*}
y_n:=\frac{x_n^j-x_n^l}{\lambda_n^l}=\frac{x_n^j-x_n^l}{\sqrt{\lambda_n^l\lambda_n^j}}\sqrt{\frac{\lambda_n^j}{\lambda_n^l}}\to \infty \qtq{as} n\to \infty.
\end{align*}
The desired weak convergence \eqref{524} follows from Lemma \ref{L:converg}.

As $SO(3)$ is a compact group, in Case b) we can proceed similarly if we can show
\begin{align*}
\frac{|x_n^j-( x_n^l)^*|}{\lambda_n^l}\to \infty \qtq{as} n\to \infty.
\end{align*}
But this is immediate from an application of the triangle inequality: for $n$ sufficiently large,
\begin{align*}
\frac{|x_n^j-(x_n^l)^*|}{\lambda_n^l}\ge\frac{|x_n^j-x_n^l|}{\lambda_n^l}-\frac{|x_n^l-(x_n^l)^*|}{\lambda_n^l}
\ge\frac{|x_n^j-x_n^l|}{\lambda_n^l}-2d_\infty^l\to \infty.
\end{align*}
Case c) can be treated symmetrically. Finally, in Case d) we note that for $n$ sufficiently large,
\begin{align*}
\frac{|(x_n^j)^*-(x_n^l)^*|}{\lambda_n^l}&\ge\frac{|x_n^j-x_n^l|}{\lambda_n^l}-\frac{|x_n^j-(x_n^j)^*|}{\lambda_n^l}-\frac{|x_n^l-(x_n^l)^*|}{\lambda_n^l}\\
&\ge\frac{|x_n^j-x_n^l|}{\sqrt{\lambda_n^j\lambda_n^l}}\sqrt{\frac{\lambda_n^j}{\lambda_n^l}}-\frac{d(x_n^j)}{\lambda_n^j}\frac{\lambda_n^j}{\lambda_n^l}-\frac{d(x_n^l)}{\lambda_n^l}\\
&\ge \frac12\sqrt{\lambda_0}\frac{|x_n^j-x_n^l|}{\sqrt{\lambda_n^j\lambda_n^l}}-2\lambda_0d_\infty^j-2d_\infty^l\to \infty \qtq{as} n\to \infty.
\end{align*}
The desired weak convergence follows again from Lemma \ref{L:converg}.

Finally, we prove the last assertion in the theorem regarding the behaviour of $t_n^j$.  For each $j$, by passing to a subsequence we may assume $t_n^j\to t^j\in [-\infty, \infty]$. Using a standard diagonal argument, we may assume that the limit exists for all $j\ge 1$.

Given $j$, if $t^j=\pm\infty$, there is nothing more to be proved; thus, let us suppose that $t^j\in (-\infty, \infty)$.
We claim that we may redefine $t_n^j\equiv 0$, provided we replace the original profile $\phi^j$ by $\exp\{it^j\Delta_{\Omega^j_\infty}\} \phi^j$,
where $\Omega^j_\infty$ denotes the limiting geometry dictated by the case to which $j$ conforms.  Underlying this claim is the
assertion that the errors introduced by these changes can be incorporated into $w_n^J$.  The exact details of proving this depend
on the case to which $j$ conforms; however, the principal ideas are always the same.  Let us give the details in Case~2 alone (for which
$\Omega_\infty^j=\R^3$).  Here, the claim boils down to the assertion that
\begin{align}\label{s1}
\lim_{n\to\infty} \bigl\|e^{it_n^j(\lambda_n^j)^2\Delta_\Omega}[G_n^j(\chi_n^j\phi^j)]
	-  G_n^j(\chi_n^j e^{it^j\Delta_{\R^3}} \phi^j) \bigr\|_{\dot H^1_D(\Omega)} = 0.
\end{align}

To prove \eqref{s1} we first invoke Lemma~\ref{L:dense} to replace $\phi^j$ by a function $\psi\in C^\infty_c(\tlim \Omega^j_n)$.
Moreover, for such functions $\psi$ we have $\chi_n\psi=\psi$ for $n$ sufficiently large.  Doing this and also changing variables, we reduce \eqref{s1} to
\begin{align}\label{s1'}
\lim_{n\to\infty} \bigl\|e^{it_n^j\Delta_{\Omega_n^j}} \psi
	-  \chi_n^j e^{it^j\Delta_{\R^3}} \psi \bigr\|_{\dot H^1_D(\Omega_n^j)} = 0.
\end{align}
We prove this by breaking it into three pieces.

First, by taking the time derivative, we have
$$
 \bigl\|e^{it_n^j\Delta_{\Omega_n^j}} \psi - e^{it^j\Delta_{\Omega_n^j}}\psi \bigr\|_{\dot H^1(\R^3)} \leq | t_n^j - t^j | \| \Delta \psi \|_{\dot H^1(\R^3)},
$$
which converges to zero since $t_n^j \to t^j$.  Secondly, we claim that
$$
e^{it^j\Delta_{\Omega_n^j}}\psi \to e^{it^j\Delta_{\R^3}}\psi \qtq{strongly in} \dot H^1(\R^3) \qtq{as} n\to\infty.
$$
Indeed, the $\dot H^1(\R^3)$ norms of both the proposed limit and all terms in the sequence are the same, namely, $\|\psi\|_{\dot H^1(\R^3)}$.
Thus, strong convergence can be deduced from weak convergence, which follows from Proposition~\ref{P:converg}.  The third
and final part of \eqref{s1'}, namely,
\begin{align*}
\bigl\| (1 - \chi_n^j) e^{it_j\Delta_{\R^3}} \psi \bigr\|_{\dot H^1(\R^3)} \to 0 \qtq{as} n\to\infty,
\end{align*}
can be shown by direct computation using that $\lambda_n^j\to\infty$; see the proof of \eqref{E:no deform}.

This completes the proof of the Theorem~\ref{T:LPD}.
\end{proof}

\section{Embedding of nonlinear profiles}\label{S:Nonlinear Embedding}

The next step in the proof of Theorem~\ref{T:main} is to use the linear profile decomposition obtained in the previous section to derive a Palais--Smale condition for minimizing sequences of blowup solutions to \eqref{nls}.  This essentially amounts to proving a nonlinear profile decomposition for solutions to $\text{NLS}_\Omega$; in the next section, we will prove this decomposition and then combine it with the stability result Theorem~\ref{T:stability} to derive the desired compactness for minimizing sequences of solutions.  This leads directly to the Palais--Smale condition.

In order to prove a nonlinear profile decomposition for solutions to \eqref{nls}, we have to address the possibility that the nonlinear profiles we will extract are solutions to the energy-critical equation in \emph{different} limiting geometries.  In this section, we will see how to embed these nonlinear profiles corresponding to different limiting geometries back inside $\Omega$.  Specifically, we need to approximate these profiles \emph{globally in time} by actual solutions to \eqref{nls} that satisfy \emph{uniform} spacetime bounds.  This section contains three theorems, one for each of the Cases 2, 3, and 4 discussed in the previous sections.

As in Section~\ref{S:LPD}, throughout this section $\Theta:\R^3\to [0,1]$ denotes a smooth function such that
\begin{align*}
\Theta(x)=\begin{cases}0, \ & |x|\le \frac 14, \\1, \ & |x|\geq \frac 12.
\end{cases}
\end{align*}
We will also use the following notation:
$$
\dot X^1(I\times\Omega):=L_{t,x}^{10}(I\times\Omega)\cap L_t^5\dot H^{1,\frac{30}{11}}_D(I\times\Omega).
$$

Our first result in this section concerns the scenario when the rescaled obstacles $\Omega_n^c$ are shrinking to a point (cf. Case 2 in Theorem~\ref{T:LPD}).

\begin{thm}[Embedding nonlinear profiles for shrinking obstacles]\label{T:embed2} Let $\{\lambda_n\}\subset 2^{\mathbb Z}$ be such that $\lambda_n\to \infty$. Let $\{t_n\}\subset\R$ be such that $t_n\equiv0$ or $t_n\to \pm\infty$.  Let $\{x_n\}\subset \Omega$ be such that
$-\lambda_n^{-1}x_n\to x_\infty\in \R^3$.  Let
$\phi\in \dot H^1(\R^3)$ and
\begin{align*}
\phi_n(x)=\lambda_n^{-\frac12}e^{it_n\lambda_n^2\Delta_\Omega}\bigl[(\chi_n\phi)\bigl(\tfrac{x-x_n}{\lambda_n}\bigr)\bigr],
\end{align*}
where $\chi_n(x)=\chi(\lambda_n x+x_n)$ with $\chi(x)=\Theta(\tfrac{d(x)}{\diam(\Omega^c)})$.  Then for $n$ sufficiently large there exists a global solution $v_n$ to $\text{NLS}_{\Omega}$ with initial data $v_n(0)=\phi_n$ which satisfies
\begin{align*}
\|v_n\|_{L_{t,x}^{10}(\R\times\Omega)}\lsm 1,
\end{align*}
with the implicit constant depending only on $\|\phi\|_{\dot H^1}$.  Furthermore, for every $\eps>0$ there exists $N_\eps\in \N$ and
$\psi_\eps\in C_c^{\infty}(\R\times\R^3)$ such that for all $n\geq N_\eps$ we have
\begin{align}\label{dense2}
\|v_n(t-\lambda_n^2 t_n,x+x_n)-\lambda_n^{-\frac12}\psi_\eps(\lambda_n^{-2}t,\lambda_n^{-1} x)\|_{\dot X^1(\R\times\R^3)}<\eps.
\end{align}
\end{thm}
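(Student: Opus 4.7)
The plan is to produce $v_n$ via the stability theorem (Theorem~\ref{T:stability}), taking as approximate solution a rescaled version of an almost-extremal solution to the Euclidean energy-critical NLS. Write $\Omega_n := \lambda_n^{-1}(\Omega - \{x_n\})$; by Proposition~\ref{P:convdomain}, $\Omega_n \to \R^3$ in the sense of Definition~\ref{D:converg}, with $\Omega_n^c$ shrinking to $\{x_\infty\}$. By the scaling invariance of the energy-critical equation, it is equivalent to construct a global solution $U_n:\R \times \Omega_n \to \C$ to $\text{NLS}_{\Omega_n}$ satisfying $U_n(t_n) = e^{it_n\Delta_{\Omega_n}}(\chi_n\phi)$, $\|U_n\|_{L^{10}_{t,x}}\lsm 1$, and $\|U_n - \psi_\eps\|_{\dot X^1(\R\times\R^3)}<\eps$; undoing the rescaling then yields $v_n$ and \eqref{dense2}.

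We first build the limiting profile. Theorem~\ref{T:gopher} produces a global solution $w:\R\times\R^3 \to \C$ of $\text{NLS}_{\R^3}$ with $\|w\|_{L^{10}_{t,x}(\R\times\R^3)}\leq C(E(\phi))$, matching $w(0) = \phi$ when $t_n\equiv 0$, or $\|w(t) - e^{it\Delta_{\R^3}}\phi\|_{\dot H^1(\R^3)} \to 0$ as $t\to\pm\infty$ when $t_n\to\pm\infty$ (the standard wave-operator construction via Strichartz estimates). Using Lemma~\ref{lm:persistencer3} and density, we approximate $w$ to within $\eps$ in $\dot X^1(\R\times\R^3)$ by $\psi_\eps\in C_c^\infty(\R\times\R^3)$; since a single point in $\R^3$ has zero $\dot H^1$-capacity, we may additionally arrange $\supp_y \psi_\eps\subset \R^3\setminus B(x_\infty, 2\eta)$ for some $\eta>0$. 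For $n$ large, $\Omega_n^c \subset B(x_\infty, \eta)$, so $\supp \psi_\eps \subset \Omega_n$ at uniform positive distance from $\partial\Omega_n$; in particular $\psi_\eps$ satisfies the Dirichlet condition on $\partial\Omega_n$ trivially and $\Delta_{\Omega_n}\psi_\eps = \Delta_{\R^3}\psi_\eps$ on its support.

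Taking $\tilde U_n := \psi_\eps$ as the approximate solution, the PDE error $e_n := (i\partial_t + \Delta_{\Omega_n})\psi_\eps - |\psi_\eps|^4\psi_\eps$ equals $(i\partial_t+\Delta_{\R^3})(\psi_\eps - w) + (|w|^4 w - |\psi_\eps|^4\psi_\eps)$ on the support of $\psi_\eps$, which is $O(\eps)$ in $\dot N^1$ by Theorem~\ref{T:Sob equiv}, the fractional product rule (Lemma~\ref{lm:product}), and Strichartz. At time $t_n$ the datum $\psi_\eps(t_n)$ is $\dot H^1$-close to $w(t_n)$. Matching it with $e^{it_n\Delta_{\Omega_n}}(\chi_n\phi)$: for $t_n\equiv 0$, $w(0)=\phi$ and $\chi_n\phi\to\phi$ in $\dot H^1$ (since $\supp(1-\chi_n)$ collapses to $\{x_\infty\}$); for $t_n\to\pm\infty$, the scattering of $w$ reduces the comparison to the linear-flow difference $e^{it_n\Delta_{\Omega_n}}(\chi_n\phi)-e^{it_n\Delta_{\R^3}}\phi$, whose smallness in the relevant Strichartz norm is to be extracted from Corollary~\ref{C:LF} and Theorem~\ref{T:LF1}. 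With these inputs in hand, Theorem~\ref{T:stability} furnishes the required $U_n$.

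The hard part---the one flagged in the introduction as ``Step~2'' requiring a non-resonant argument---is the final initial-data comparison when $t_n\to\pm\infty$. A natural decomposition of $e^{it_n\Delta_{\Omega_n}}(\chi_n\phi) - e^{it_n\Delta_{\R^3}}\phi$ produces an error term that is not small pointwise, and whose Strichartz contribution vanishes only after integration against an oscillatory phase. We expect the mechanism to be stationary phase or integration by parts applied to the Duhamel representation for the difference of the two linear flows, exploiting the long-time dispersion of $e^{it\Delta_{\R^3}}\phi$ together with the quantitative domain convergence $\Omega_n\to\R^3$; this step is what distinguishes the nonlinear embedding in Case~2 from the purely linear comparison already handled by Theorem~\ref{T:LF1}.
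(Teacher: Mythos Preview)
Your approximate-solution step does not close. You take a $C_c^\infty$ function $\psi_\eps$ with $\|\psi_\eps - w\|_{\dot X^1} < \eps$ and assert that the PDE error
\[
e_n = (i\partial_t+\Delta)(\psi_\eps - w) + \bigl(|w|^4 w - |\psi_\eps|^4\psi_\eps\bigr)
\]
is $O(\eps)$ in $\dot N^1$. The nonlinear difference is indeed $O(\eps)$ via the product rule and the $\dot X^1$ bound, but the linear term $(i\partial_t+\Delta)(\psi_\eps - w)$ is not controlled by $\|\psi_\eps - w\|_{\dot X^1}$: Strichartz estimates run the other way. A generic $C_c^\infty$ approximant to $w$ in $\dot X^1$ has $(i\partial_t+\Delta)\psi_\eps$ bounded but in no sense close to $|w|^4 w$. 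One cannot sidestep the spatial cutoff simply by choosing $\psi_\eps$ supported away from $x_\infty$; any construction that makes $\psi_\eps$ an honest approximate NLS solution (e.g.\ truncating an actual solution) forces you back to a spatial cutoff near the shrinking obstacle and the attendant commutator error.

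You have also mislocated the ``non-resonant'' step flagged in the introduction. The initial-data matching when $t_n\to\pm\infty$ is soft: it follows from Strichartz, Theorem~\ref{T:LF}, Lemma~\ref{L:n3}, and monotone convergence, with no oscillatory-integral argument needed. The genuinely large error arises in the approximate-solution verification. The paper takes $\chi_n w_n$ (with $w_n$ the Euclidean solution launched from frequency-truncated data $\phi_{\le\lambda_n^\theta}$) and computes the commutator $\Delta\chi_n\, w_n + 2\nabla\chi_n\cdot\nabla w_n$; after taking a gradient for the $\dot N^1$ norm, the term $\nabla\Delta\chi_n\cdot w_n$ is of size $\lambda_n^{+}$ in $L^{10/7}_{t,x}$ and does \emph{not} vanish. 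The cure is to add the correction
\[
z_n(t)=i\int_0^t e^{i(t-s)\Delta_{\Omega_n}}(\Delta\chi_n)\,w_n\bigl(s,-\lambda_n^{-1}x_n\bigr)\,ds,
\]
so that the residual commutator sees $w_n(t,x)-w_n(t,-\lambda_n^{-1}x_n)$, which by the mean value theorem gains a factor $\lambda_n^{-1}\|\nabla w_n\|_{L^\infty}$. The bound on $z_n$ itself comes from integrating by parts in $s$, writing $e^{i(t-s)\Delta_{\Omega_n}}\Delta\chi_n = i\partial_s\bigl[e^{i(t-s)\Delta_{\Omega_n}}\chi_n\bigr]$ and exploiting that $\partial_s w_n$ is slow relative to the spatial frequency $\lambda_n$ of $\chi_n$. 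That time integration by parts is the non-resonant mechanism.
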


\begin{proof}
The proof contains five steps.  In the first step we construct global solutions to the energy-critical NLS in the limiting geometry $\R^3$ and we record some of their properties.  In the second step we construct a candidate for the sought-after solution to $\text{NLS}_\Omega$.  In the third step we prove that our candidate asymptotically matches the initial data $\phi_n$, while in the fourth step we prove that it is an approximate solution to \eqref{nls}.  In the last step we invoke the stability result Theorem~\ref{T:stability} to find $v_n$ and then prove the approximation result \eqref{dense2}.

To ease notation, throughout the proof we will write $-\Delta=-\Delta_{\R^3}$.

\textbf{Step 1:} Constructing global solutions to $\text{NLS}_{\R^3}$.

Let $\theta:=\frac 1{100}$.  The construction of the solutions to $\text{NLS}_{\R^3}$ depends on the behaviour of $t_n$.  If $t_n\equiv0$, let $w_n$ and
$w_\infty$ be solutions to $\text{NLS}_{\R^3}$ with initial data $w_n(0)=\phi_{\le \lambda_n^{\theta}}$ and $w_\infty(0)=\phi$.

If instead $t_n\to \pm\infty$, let $w_n$ be the solution to $\text{NLS}_{\R^3}$ such that
\begin{align*}
\|w_n(t)-e^{it\Delta}\phi_{\le \lambda_n^{\theta}}\|_{\dot H^1(\R^3)}\to 0 \qtq{as} t\to \pm\infty.
\end{align*}
Similarly, we define $w_\infty$ as the solution to $\text{NLS}_{\R^3}$ such that
\begin{align}\label{n24}
\|w_\infty(t)-e^{it\Delta}\phi\|_{\dot H^1(\R^3)}\to 0 \qtq{as} t\to \pm\infty.
\end{align}

By \cite{CKSTT:gwp}, in all cases $w_n$ and $w_\infty$ are global solutions and satisfy
\begin{align}\label{258}
\|w_n\|_{\dot S^1(\R\times\R^3)}+\|w_\infty\|_{\dot S^1(\R\times\R^3)}\lsm 1,
\end{align}
with the implicit constant depending only on $\|\phi\|_{\dot H^1}$.  Moreover, by the perturbation theory described in that paper,
\begin{align}\label{258'}
\lim_{n\to \infty}\|w_n-w_\infty\|_{\dot S^1(\R\times\R^3)}=0.
\end{align}

By Bernstein's inequality,
\begin{align*}
\|\phi_{\le \lambda_n^{\theta}}\|_{\dot H^s(\R^3)}\lsm \lambda_n^{\theta(s-1)} \qtq{for any} s\ge 1,
\end{align*}
and so the persistence of regularity result Lemma~\ref{lm:persistencer3} gives
\begin{align}\label{persist2}
\||\nabla|^s w_n\|_{\dot S^1(\R\times\R^3)}\lsm \lambda_n^{\theta s}  \qtq{for any} s\ge 0,
\end{align}
with the implicit constant depending solely on $\|\phi\|_{\dot H^1}$.  Combining this with the Gagliardo--Nirenberg inequality
\begin{align*}
\|f\|_{L^\infty_x}\lsm \|\nabla f\|_{L^2_x}^{\frac 12}\|\Delta f\|_{L^2_x}^{\frac12},
\end{align*}
we obtain
\begin{align}\label{259}
\||\nabla|^s w_n\|_{L_{t,x}^{\infty}(\R\times\R^3)}\lsm \lambda_n^{\theta(s+\frac12)},
\end{align}
for all $s\geq 0$.  Finally, using the equation we get
\begin{align}\label{260}
\|\partial_t w_n\|_{L_{t,x}^{\infty}(\R\times\R^3)}\le \|\Delta w_n\|_{L_{t,x}^{\infty}(\R\times\R^3)}+\|w_n\|_{L_{t,x}^{\infty}(\R\times\R^3)}^5
\lsm\lambda_n^{\frac 52\theta}.
\end{align}

\textbf{Step 2:} Constructing the approximate solution to $\text{NLS}_\Omega$.

As previously in this scenario, let $\Omega_n:=\lambda_n^{-1}(\Omega-\{x_n\})$.  The most naive way to embed $w_n(t)$ into $\Omega_n$
is to choose $\tilde v_n(t) = \chi_n w_n(t)$; however, closer investigation reveals that this is \emph{not} an approximate solution to $\text{NLS}_\Omega$,
unless one incorporates  some high-frequency reflections off the obstacle, namely,
\begin{align*}
z_n(t):=i\int_0^t e^{i(t-s)\Delta_{\Omega_n}}(\Delta_{\Omega_n}\chi_n)w_n(s,-\lambda_n^{-1}x_n)\,ds.
\end{align*}
The source of these waves is nonresonant in spacetime due to the slow variation in time when compared to the small spatial scale involved.
This allows us to estimate these reflected waves; indeed, we have the following lemma:

\begin{lem} For any $T>0$, we have
\begin{align}
\limsup_{n\to\infty}\|z_n\|_{\dot X^1([-T,T]\times\Omega_n)}&=0\label{209}\\
\|(-\Delta_{\Omega_n})^{\frac s2}z_n\|_{L_t^\infty L_x^2([-T,T]\times\Omega_n)}&\lsm \lambda_n^{s-\frac 32+\frac52\theta}(T+\lambda_n^{-2\theta}) \qtq{for all} 0\le s<\tfrac 32.\label{209'}
\end{align}
\end{lem}

\begin{proof}
Throughout the proof, all spacetime norms will be over $[-T,T]\times\Omega_n$.  We write
\begin{align*}
z_n(t)&=-\int_0^t [e^{it\Delta_{\Omega_n}}\partial_se^{-is\Delta_{\Omega_n}}\chi_n]w_n(s,-\lambda_n^{-1}x_n) \,ds\\
&=-\chi_nw_n(t,-\lambda_n^{-1}x_n)+e^{it\Delta_{\Omega_n}}[\chi_nw_n(0,-\lambda_n^{-1}x_n)]\\
&\quad +\int_0^t [e^{i(t-s)\Delta_{\Omega_n}}\chi_n]\partial_sw_n(s,-\lambda_n^{-1}x_n)\,ds.
\end{align*}

We first estimate the $L_t^5\dot H^{1,\frac{30}{11}}_D$ norm of $z_n$.  Using the Strichartz inequality, the equivalence of Sobolev spaces
Theorem~\ref{T:Sob equiv}, \eqref{259}, and \eqref{260}, we get
\begin{align*}
\|z_n\|_{L_t^5\dot H_D^{1,\frac{30}{11}}}
&\lsm \|\nabla\chi_n(x) w_n(t, -\lambda_n^{-1}x_n)\|_{L_t^5L_x^\frac{30}{11}} +\|\nabla \chi_n(x)w_n(0,-\lambda_n^{-1}x_n)\|_{L^2_x}\\
&\quad+\|\nabla \chi_n(x) \partial_t w_n(t,-\lambda_n^{-1}x_n)\|_{L_t^1L_x^2}\\
&\lsm T^{\frac 15}\|\nabla \chi_n\|_{L^{\frac{30}{11}}_x}\|w_n\|_{L_{t,x}^\infty}+\|\nabla\chi_n\|_{L^2_x}\|w_n\|_{L_{t,x}^\infty}
	+T\|\nabla\chi_n\|_{L^2_x}\|\partial_t w_n\|_{L_{t,x}^\infty}\\
&\lsm T^{\frac15}\lambda_n^{-\frac{1}{10}+\frac{\theta}2}+\lambda_n^{-\frac12+\frac{\theta}2}+T\lambda_n^{-\frac 12+\frac 52\theta}\to 0\qtq{as} n\to \infty.
\end{align*}
Similarly, using also Sobolev embedding we obtain
\begin{align}\label{zn in L10}
\|z_n\|_{L_{t,x}^{10}}&\lsm\|(-\Delta_{\Omega_n})^{\frac 12}z_n\|_{L_t^{10}L_x^{\frac{30}{13}}}\notag\\
&\lsm \|\nabla \chi_n(x) w_n(t,-\lambda_n^{-1}x_n)\|_{L_t^{10}L_x^{\frac{30}{13}}}+\|\nabla\chi_n(x)w_n(0,-\lambda_n^{-1}x_n)\|_{L^2_x}\notag\\
&\quad+\|\nabla\chi_n(x)\partial_tw_n(t,-\lambda_n^{-1}x_n)\|_{L_t^1L_x^2}\notag\\
&\lsm T^{\frac 1{10}}\|\nabla\chi_n\|_{L_x^{\frac{30}{13}}}\|w_n\|_{L_{t,x}^\infty}+\|\nabla\chi_n\|_{L_x^2}\|w_n\|_{L_{t,x}^\infty}
+T\|\nabla\chi_n\|_{L_x^2}\|\partial_t w_n\|_{L_{t,x}^\infty}\notag\\
&\lsm T^{\frac1{10}}\lambda_n^{-\frac{3}{10}+\frac{\theta}2}+\lambda_n^{-\frac12+\frac{\theta}2}+T\lambda_n^{-\frac 12+\frac 52\theta} \to 0\qtq{as} n\to \infty.
\end{align}
This proves \eqref{209}.

To establish \eqref{209'}, we argue as before and estimate
\begin{align*}
\|(-\Delta_{\Omega_n})^{\frac s2}z_n\|_{L_t^\infty L_x^2}
&\lsm \|(-\Delta)^{\frac s2}\chi_n w_n(t,-\lambda_n^{-1}x_n)\|_{L_t^{\infty}L_x^2}+\|(-\Delta)^{\frac s2}\chi_nw_n(0,-\lambda_n^{-1}x_n)\|_{L_x^2}\\
&\quad+\|(-\Delta)^{\frac s2}\chi_n\partial_tw_n(t,-\lambda_n^{-1}x_n)\|_{L_t^1L_x^2}\\
&\lsm \|(-\Delta)^{\frac s2}\chi_n\|_{L_x^2}\|w_n\|_{L_{t,x}^\infty}+T\|(-\Delta)^{\frac s2}\chi_n\|_{L_x^2}\|\partial_t w_n\|_{L_{t,x}^\infty}\\
&\lsm \lambda_n^{s-\frac 32+\frac{\theta}2}+T\lambda_n^{s-\frac 32+\frac 52\theta}\\
&\lsm \lambda_n^{s-\frac 32+\frac 52\theta}(T+\lambda_n^{-2\theta}).
\end{align*}
This completes the proof of the lemma.
\end{proof}

We are now in a position to introduce the approximate solution
\begin{align*}
\tilde v_n(t,x):=\begin{cases}
\lambda_n^{-\frac12}(\chi_nw_n+z_n)(\lambda_n^{-2} t, \lambda_n^{-1}(x-x_n)), &|t|\le\lambda_n^2 T,\\
e^{i(t-\lambda_n^2 T)\Delta_{\Omega}}\tilde v_n(\lambda_n^2 T,x), & t>\lambda_n^2 T, \\
e^{i(t+\lambda_n^2 T)\Delta_\Omega}\tilde v_n(-\lambda_n^2 T,x), & t<-\lambda_n^2 T,
\end{cases}
\end{align*}
where $T>0$ is a parameter to be chosen later.  Note that $\tilde v_n$ has finite scattering size.  Indeed, using a change of variables, the Strichartz inequality, \eqref{258}, \eqref{209'}, and \eqref{zn in L10}, we get
\begin{align}\label{tildevn2}
\|\tilde v_n\|_{L_{t,x}^{10}(\R\times\Omega)}
&\lsm \|\chi_nw_n +z_n\|_{L_{t,x}^{10}([-T,T]\times\Omega_n)}+\|(\chi_nw_n+z_n)(\pm T)\|_{\dot H^1_D(\Omega_n)}\notag\\
&\lsm \|w_n\|_{L_{t,x}^{10}(\R\times\R^3)}+\|z_n\|_{L_{t,x}^{10}([-T,T]\times\Omega_n)}+\|\chi_n\|_{L_x^\infty}\|\nabla w_n\|_{L_t^\infty L_x^2(\R\times\R^3)}\notag\\
&\quad+\|\nabla \chi_n\|_{L_x^3}\|w_n\|_{L_t^\infty L_x^6(\R\times\R^3)}+\|(-\Delta_{\Omega_n})^{\frac12}z_n\|_{L_t^\infty L_x^2([-T,T]\times\Omega_n)}\notag\\
&\lsm 1+ T^{\frac1{10}}\lambda_n^{-\frac{3}{10}+\frac{\theta}2}+\lambda_n^{-\frac12+\frac{\theta}2}+T\lambda_n^{-\frac 12+\frac 52\theta} .
\end{align}

\textbf{Step 3:} Asymptotic agreement of the initial data.

In this step, we show (cf. the smallness hypothesis in Theorem~\ref{T:stability})
\begin{align}\label{match2}
\lim_{T\to \infty}\limsup_{n\to\infty}\|(-\Delta_\Omega)^{\frac 12}e^{it\Delta_\Omega}[\tilde v_n(\lambda_n^2t_n)-\phi_n]\|_{L_t^{10}L_x^{\frac{30}{13}}(\R\times\Omega)}=0.
\end{align}

We first prove \eqref{match2} in the case when $t_n\equiv0$.  Using the Strichartz inequality, a change of variables, and H\"older, we estimate
\begin{align*}
\|(-\Delta_\Omega)^{\frac 12} e^{it\Delta_\Omega}&[\tilde v_n(0)-\phi_n]\|_{L_t^{10}L_x^{\frac{30}{13}}(\R\times\Omega)}\\
&\lsm \|(-\Delta_\Omega)^{\frac 12}[\tilde v_n(0)-\phi_n]\|_{L_x^2}\\
&\lsm \|\nabla[\chi_n\phi_{\le\lambda_n^{\theta}}-\chi_n\phi]\|_{L_x^2}\\
&\lsm \|\nabla\chi_n\|_{L_x^3}\|\phi_{\le\lambda_n^{\theta}}-\phi\|_{L_x^6}+\|\chi_n\|_{L_x^\infty}\|\nabla[\phi_{\le\lambda_n^\theta}-\phi]\|_ {L_x^2},
\end{align*}
which converges to zero as $n\to \infty$.

It remains to prove \eqref{match2} in the case $t_n\to \infty$; the case $t_n\to -\infty$ can be treated similarly.  As $T>0$ is fixed, for sufficiently large $n$ we have $t_n>T$ and so
\begin{align*}
\tilde v_n(\lambda_n^2t_n,x)&=e^{i(t_n-T)\lambda_n^2\Delta_\Omega}\tilde v_n(\lambda_n^2 T,x)
=e^{i(t_n-T)\lambda_n^2\Delta_\Omega}\bigl[\lambda_n^{-\frac12}\bigl(\chi_nw_n+z_n\bigr)\bigl(T,\tfrac{x-x_n}{\lambda_n}\!\bigl)\bigr] .
\end{align*}
Thus by a change of variables and the Strichartz inequality,
\begin{align*}
\|(-\Delta_\Omega)^{\frac 12}& e^{it\Delta_\Omega}[\tilde v_n(\lambda_n^2 t_n)-\phi_n]\|_{L_t^{10}L_x^{\frac{30}{13}}(\R\times\Omega)}\\
&=\|(-\Delta_{\Omega_n})^{\frac 12}\{e^{i(t-T)\Delta_{\Omega_n}}(\chi_nw_n+z_n)(T)-e^{it\Delta_{\Omega_n}}(\chi_n\phi)\}\|_{L_t^{10}L_x^{\frac{30}{13}}(\R\times\Omega_n)}\\
&\lsm \|(-\Delta_{\Omega_n})^{\frac 12}z_n(T)\|_{L_x^2}+\|(-\Delta_{\Omega_n})^{\frac 12}[\chi_n(w_n-w_\infty)(T)]\|_{L_x^2}\\
&\quad+\|(-\Delta_{\Omega_n})^{\frac 12}e^{it\Delta_{\Omega_n}}[e^{-iT\Delta_{\Omega_n}}(\chi_nw_\infty(T))-\chi_n\phi]\|
_{L_t^{10}L_x^{\frac{30}{13}}(\R\times\Omega_n)}.
\end{align*}
Using \eqref{258'} and \eqref{209'}, we see that
\begin{align*}
&\|(-\Delta_{\Omega_n})^{\frac 12}z_n(T)\|_{L_x^2}+\|(-\Delta_{\Omega_n})^{\frac 12}[\chi_n(w_n-w_\infty)(T)]\|_{L_x^2}\\
&\lsm \lambda_n^{-\frac 12+\frac 52\theta}(T+\lambda_n^{-2\theta})+\|\nabla\chi_n\|_{L_x^3}\|w_n-w_\infty\|_{L_t^\infty L_x^6}+\|\chi_n\|_{L_x^\infty}\|\nabla(w_n- w_\infty)\|_{L_t^\infty L_x^2},
\end{align*}
which converges to zero as $n\to \infty$.  Thus, to establish \eqref{match2} we are left to prove
\begin{align}\label{n23}
\lim_{T\to \infty}\limsup_{n\to\infty}\|(-\Delta_{\Omega_n})^{\frac12}e^{it\Delta_{\Omega_n}}[e^{-iT\Delta_{\Omega_n}}(\chi_nw_\infty(T))-\chi_n\phi]\|
_{L_t^{10}L_x^{\frac{30}{13}}(\R\times\Omega_n)}=0.
\end{align}
Using the triangle and Strichartz inequalities, we obtain
\begin{align*}
\|(-\Delta_{\Omega_n})^{\frac12}&e^{it\Delta_{\Omega_n}}[e^{-iT\Delta_{\Omega_n}}(\chi_nw_\infty(T))-\chi_n\phi]\|_{L_t^{10}L_x^{\frac{30}{13}}(\R\times\Omega_n)}\\
&\lsm \|(-\Delta_{\Omega_n})^{\frac 12}(\chi_n w_\infty(T))-\chi_n(-\Delta)^{\frac 12}w_\infty(T)\|_{L_x^2}\\
&\quad+\|[e^{i(t-T)\Delta_{\Omega_n}}-e^{i(t-T)\Delta}][\chi_n(-\Delta)^{\frac12}w_{\infty}(T)]\|_{L_t^{10}L_x^{\frac{30}{13}}(\R\times\Omega_n)}\\
&\quad+\|e^{-iT\Delta}[\chi_n(-\Delta)^{\frac12}w_\infty(T)]-\chi_n(-\Delta)^{\frac12}\phi\|_{L_x^2}\\
&\quad+\|[e^{it\Delta_{\Omega_n}}-e^{it\Delta}][\chi_n(-\Delta)^{\frac12}\phi]\|_{L_t^{10}L_x^{\frac{30}{13}}(\R\times\Omega_n)}\\
&\quad+ \|(-\Delta_{\Omega_n})^{\frac12}(\chi_n\phi)-\chi_n(-\Delta)^{\frac12}\phi\|_{L_x^2}.
\end{align*}
The fact that the second and fourth terms above converge to zero as $n\to \infty$ follows from Theorem~\ref{T:LF} and the density in $L_x^2$ of $C_c^{\infty}$ functions supported in $\R^3$ minus a point.  To see that the first and fifth terms above converge to zero, we note that for any $f\in \dot H^1(\R^3)$,
\begin{align*}
\|(-\Delta_{\Omega_n})^{\frac 12}(\chi_n f)-\chi_n(-\Delta)^{\frac12}f\|_{L_x^2}
&\le \|(1-\chi_n)(-\Delta)^{\frac 12}f\|_{L_x^2}+\|(-\Delta)^{\frac12}[(1-\chi_n)f]\|_{L_x^2}\\
&\quad+\|(-\Delta_{\Omega_n})^{\frac 12}(\chi_n f)-(-\Delta)^{\frac12}(\chi_n f)\|_{L_x^2}\to 0
\end{align*}
as $n\to \infty$ by Lemma~\ref{L:n3} and the monotone convergence theorem.  Finally, for the third term we use \eqref{n24} and the monotone convergence theorem to obtain
\begin{align*}
\|e^{-iT\Delta}[\chi_n(-\Delta)^{\frac12}&w_\infty(T)]-\chi_n(-\Delta)^{\frac 12}\phi\|_{L_x^2}\\
&\lsm \|(1-\chi_n)(-\Delta)^{\frac12}w_\infty(T)\|_{L^2_x}+\|(1-\chi_n)(-\Delta)^{\frac 12}\phi\|_{L_x^2}\\
&\quad+\|e^{-iT\Delta}(-\Delta)^{\frac12}w_\infty(T)-(-\Delta)^{\frac 12}\phi\|_{L_x^2} \to 0,
\end{align*}
by first taking $n\to \infty$ and then $T\to \infty$.  This completes the proof of \eqref{n23} and so the proof of \eqref{match2}.

\textbf{Step 4:} Proving that $\tilde v_n$ is an approximate solution to $\text{NLS}_{\Omega}$ in the sense that
\begin{align*}
i\partial_t \tilde v_n+\Delta_\Omega\tilde v_n=|\tilde v_n|^4\tilde v_n+e_n
\end{align*}
with
\begin{align}\label{error2}
\lim_{T\to\infty}\limsup_{n\to\infty}\|e_n\|_{\dot N^1(\R\times\Omega)}=0.
\end{align}

We start by verifying \eqref{error2} on the large time interval $t>\lambda_n^2 T$; symmetric arguments can be used to treat $t<-\lambda_n^2 T$.  By the definition of $\tilde v_n$, in this regime we have
$e_n=-|\tilde v_n|^4\tilde v_n$.  Using the equivalence of Sobolev spaces, Strichartz, and \eqref{209'}, we estimate
\begin{align*}
\|e_n\|_{\dot N^1(\{t>\lambda_n^2 T\}\times\Omega)}
&\lsm \|(-\Delta_\Omega)^{\frac12}(|\tilde v_n|^4\tilde v_n)\|_{L_t^{\frac53} L_x^{\frac{30}{23}}(\{t>\lambda_n^2 T\}\times\Omega)}\\
&\lsm \|(-\Delta_\Omega)^{\frac12}\tilde v_n\|_{L_t^5 L_x^{\frac{30}{11}}(\{t>\lambda_n^2 T\}\times\Omega)} \|\tilde v_n\|^4_{L_{t,x}^{10}(\{t>\lambda_n^2 T\}\times\Omega)}\\
&\lsm \|(-\Delta_{\Omega_n})^{\frac12}[\chi_n w_n (T)+z_n(T)] \|_{L_x^2}\|\tilde v_n\|^4_{L_{t,x}^{10}(\{t>\lambda_n^2 T\}\times\Omega)}\\
&\lsm \bigl[ 1+ \lambda_n^{-\frac12+\frac52\theta}(T+\lambda_n^{-2\theta}) \bigr] \|\tilde v_n\|^4_{L_{t,x}^{10}(\{t>\lambda_n^2 T\}\times\Omega)}.
\end{align*}
Thus, to establish \eqref{error2} it suffices to show
\begin{align}\label{largetime2}
\lim_{T\to\infty}\limsup_{n\to\infty}\|e^{i(t-\lambda_n^2 T)\Delta_{\Omega}}\tilde v_n(\lambda_n^2T)\|_{L_{t,x}^{10}(\{t>\lambda_n^2 T\}\times\Omega)}=0,
\end{align}
to which we now turn.

As a consequence of the spacetime bounds \eqref{258}, the global solution $w_\infty$ scatters.  Let $w_+$ denote the forward asymptotic state, that is,
\begin{align}\label{as2}
\|e^{-it\Delta}w_\infty(t)- w_+\|_{\dot H^1(\R^3)}\to 0 \qtq{as} t\to \infty.
\end{align}
(Note that in the case when $t_n\to \infty$, from the definition of $w_\infty$ we have $w_+=\phi$.)  Using a change of variables, \eqref{209'}, 	the Strichartz and H\"older inequalities, and Sobolev embedding, we obtain
\begin{align*}
\|&e^{i(t-\lambda_n^2 T)\Delta_\Omega}\tilde v_n(\lambda_n^2T)\|_{L_{t,x}^{10}((\lambda_n^2T,\infty)\times\Omega)}\\
&=\|e^{it\Delta_{\Omega_n}}(\chi_nw_n(T)+z_n(T))\|_{L_{t,x}^{10}([0,\infty)\times\Omega_n)}\\
&\lsm \|(-\Delta_{\Omega_n})^{\frac12}z_n(T)\|_{L_x^2}+\|(-\Delta_{\Omega_n})^{\frac12}[\chi_n(w_n(T)-w_\infty(T))]\|_{L_x^2}\\
&\quad+\|(-\Delta_{\Omega_n})^{\frac12}[\chi_n(w_{\infty}(T)-e^{iT\Delta}w_+)]\|_{L_x^2}+\|e^{it\Delta_{\Omega_n}}[\chi_ne^{iT\Delta}w_+]\|_{L_{t,x}^{10}([0,\infty)\times\Omega_n)}\\
&\lsm \lambda_n^{-\frac12+\frac 52\theta}(T+\lambda_n^{-2\theta})+\|w_n(T)-w_\infty(T)\|_{\dot H^1_x}+\|w_\infty(T)-e^{iT\Delta}w_+\|_{\dot H^1_x}\\
&\quad+\|[e^{it\Delta_{\Omega_n}}-e^{it\Delta}][\chi_ne^{iT\Delta}w_+]\|_{L_{t,x}^{10}([0,\infty)\times\R^3)} +\|\nabla [(1-\chi_n)e^{iT\Delta}w_+]\|_{L_x^2}\\
&\quad+\|e^{it\Delta}w_+\|_{L_{t,x}^{10}((T,\infty)\times\R^3)},
\end{align*}
which converges to zero by first letting $n\to \infty$ and then $T\to \infty$ by \eqref{258'}, \eqref{as2}, Corollary~\ref{C:LF}, and the monotone convergence theorem.

We are left to prove \eqref{error2} on the middle time interval $|t|\leq \lambda_n^2T$.   For these values of time, we compute
\begin{align*}
e_n(t,x)&=[(i\partial_t+\Delta_\Omega )\tilde v_n- |\tilde v_n|^4\tilde v_n](t,x)\\
&=-\lambda_n^{-\frac52}[\Delta\chi_n](\lambda_n^{-1}(x-x_n))w_n(\lambda_n^{-2}t,-\lambda_n^{-1}x_n)\\
&\quad+\lambda_n^{-\frac 52}[\Delta\chi_n w_n](\lambda_n^{-2}t,\lambda_n^{-1}(x-x_n))\\
&\quad+2\lambda_n^{-\frac 52}(\nabla\chi_n\cdot\nabla w_n)(\lambda_n^{-2}t, \lambda_n^{-1}(x-x_n))\\
&\quad+\lambda_n^{-\frac52}[\chi_n|w_n|^4w_n-|\chi_nw_n+z_n|^4(\chi_nw_n+z_n)](\lambda_n^{-2}t,\lambda_n^{-1}(x-x_n)).
\end{align*}
Thus, using a change of variables and the equivalence of Sobolev norms Theorem~\ref{T:Sob equiv}, we estimate
\begin{align}
\|e_n\|_{\dot N^1(\{|t|\leq\lambda_n^2 T\}\times\Omega)}
&\lsm \|(-\Delta_\Omega)^{\frac12} e_n\|_{L_{t,x}^{\frac{10}7}(\{|t|\le\lambda_n^2T\}\times\Omega)}\notag\\
&\lsm\|\nabla[\Delta\chi_n(w_n(t,x)-w_n(t,-\lambda_n^{-1}x_n))]\|_{L_{t,x}^{\frac{10}7}([-T,T]\times\Omega_n)}\label{51}\\
&\quad+\|\nabla[\nabla\chi_n\cdot\nabla w_n]\|_{L_{t,x}^{\frac{10}7}([-T,T]\times\Omega_n)}\label{52}\\
&\quad+\|\nabla[\chi_n|w_n|^4 w_n-|\chi_nw_n+z_n|^4(\chi_nw_n+z_n)]\|_{L_{t,x}^{\frac{10}7}([-T,T]\times\Omega_n)}.\label{53}
\end{align}

Using H\"older, the fundamental theorem of calculus, and \eqref{259}, we estimate
\begin{align*}
\eqref{51}&\lsm T^{\frac 7{10}}\|\Delta\chi_n\|_{L_x^{\frac{10}7}}\|\nabla w_n\|_{L_{t,x}^\infty}\\
&\quad+T^{\frac7{10}}\|\nabla\Delta\chi_n\|_{L_x^\frac{10}7}\|w_n(t,x)-w_n(t,-\lambda_n^{-1}x_n)\|_{L_{t,x}^\infty(\R\times\supp\Delta\chi_n)}\\
&\lsm T^{\frac 7{10}}\lambda_n^{-\frac 1{10}+ \frac32\theta}+T^{\frac7{10}}\lambda_n^{\frac 9{10}}\lambda_n^{-1}\|\nabla w_n\|_{L_{t,x}^\infty}\\
&\lsm T^{\frac 7{10}}\lambda_n^{-\frac 1{10}+\frac 32\theta} \to 0 \qtq{as} n\to \infty.
\end{align*}
Notice that the cancellations induced by the introduction of $z_n$ were essential in order to control this term.  Next,
\begin{align*}
\eqref{52}
&\le T^{\frac7{10}}\bigl[\|\Delta\chi_n\|_{L_x^{\frac{10}7}}\|\nabla w_n\|_{L_{t,x}^\infty}+\|\nabla \chi_n\|_{L_x^{\frac{10}7}}\|\Delta w_n\|_{L_{t,x}^\infty}\bigr]\\
&\le T^{\frac 7{10}}[\lambda_n^{-\frac 1{10}+\frac 32\theta}+\lambda_n^{-\frac{11}{10}+\frac 52\theta}]\to 0 \qtq{as} n\to \infty.
\end{align*}

Finally, we turn our attention to \eqref{53}.  A simple algebraic computation yields
\begin{align*}
\eqref{53}&\lsm T^{\frac7{10}}\Bigl\{ \|\nabla[(\chi_n-\chi_n^5)w_n^5] \|_{L_t^\infty L_x^{\frac{10}7}} + \|z_n^4\nabla z_n\|_{L_t^\infty L_x^{\frac{10}7}}\\
&\quad+\sum_{k=1}^4\Bigl[ \|w_n^{k-1}z_n^{5-k}\nabla (\chi_n w_n) \|_{L_t^\infty L_x^{\frac{10}7}}+
\|w_n^k z_n^{4-k}\nabla z_n\|_{L_t^\infty L_x^{\frac{10}7}}\Bigr]\Bigr\},
\end{align*}
where all spacetime norms are over $[-T,T]\times\Omega_n$.  Using H\"older and \eqref{259}, we estimate
\begin{align*}
\|\nabla[(\chi_n-\chi_n^5)w_n^5] \|_{L_t^\infty L_x^{\frac{10}7}}
&\lesssim  \|\nabla \chi_n\|_{L_x^{\frac{10}7}}\|w_n\|_{L_{t,x}^\infty}^5 + \|\chi_n-\chi_n^5\|_{L_x^{\frac{10}7}}\|w_n\|_{L_{t,x}^\infty}^4\|\nabla w_n\|_{L_{t,x}^\infty} \\
&\lesssim  \lambda_n^{-\frac{11}{10}+\frac 52\theta} + \lambda_n^{-\frac{21}{10}+\frac 72\theta}.
\end{align*}
Using also \eqref{209'}, Sobolev embedding, and Theorem~\ref{T:Sob equiv}, we obtain
\begin{align*}
\|z_n^4\nabla z_n\|_{L_t^{\infty}L_x^{\frac{10}7}}
\lsm\|\nabla z_n\|_{L_t^\infty L_x^2}\|z_n\|_{L_t^\infty L_x^{20}}^4
&\lsm \|\nabla z_n\|_{L_t^\infty L_x^2}\||\nabla |^{\frac{27}{20}} z_n\|_{L_t^\infty L_x^2}^4\\
&\lsm \lambda_n^{-\frac{11}{10}+\frac{25}2\theta}(T+\lambda_n^{-2\theta})^5.
\end{align*}
Similarly,
\begin{align*}
\| & w_n^{k-1}z_n^{5-k}\nabla (\chi_n w_n) \|_{L_t^\infty L_x^{\frac{10}7}} \\
&\lesssim \|\nabla\chi_n\|_{L_x^3}\|w_n\|_{L_t^\infty L_x^{\frac{150}{11}}}^k \|z_n\|_{L_t^\infty L_x^{\frac{150}{11}}}^{5-k}
	+ \|\nabla w_n\|_{L_t^\infty L_x^2}\|w_n\|_{L_t^\infty L_x^{20}}^{k-1}\|z_n\|_{L_t^\infty L_x^{20}}^{5-k} \\
&\lesssim \||\nabla|^{\frac{32}{25}}w_n\|_{L_t^\infty L_x^2}^k\||\nabla|^{\frac{32}{25}}z_n\|_{L_t^\infty L_x^2}^{5-k}
	+ \||\nabla|^{\frac{27}{20}}w_n\|_{L_t^\infty L_x^2}^{k-1}\||\nabla|^{\frac{27}{20}}z_n\|_{L_t^\infty L_x^2}^{5-k} \\
&\lesssim \lambda_n^{\frac7{25}\theta k+(-\frac{11}{50}+\frac 52\theta)(5-k)}(T+\lambda_n^{-2\theta})^{5-k}
	+  \lambda_n^{\frac 7{20}\theta(k-1)}\lambda_n^{(-\frac 3{20}+\frac 52\theta)(5-k)}(T+\lambda_n^{-2\theta})^{5-k}
\end{align*}
and
\begin{align*}
\|w_n^k z_n^{4-k}\nabla z_n\|_{L_t^\infty L_x^{\frac{10}7}}
&\lsm \|\nabla z_n\|_{L_t^\infty L_x^2}\|w_n\|_{L_t^\infty L_x^{20}}^k\|z_n\|_{L_t^\infty L_x^{20}}^{4-k}\\
&\lsm \lambda_n^{-\frac 12+\frac 52\theta}\lambda_n^{\frac7{20}\theta k}\lambda_n^{(-\frac3{20}+\frac 52\theta)(4-k)}(T+\lambda_n^{-2\theta})^{5-k}.
\end{align*}
Putting everything together and recalling $\theta=\frac1{100}$, we derive
\begin{align*}
\eqref{53}\to 0 \qtq{as} n\to \infty.
\end{align*}

Therefore,
\begin{align*}
\lim_{T\to\infty}\limsup_{n\to\infty}\|e_n\|_{\dot N^1(\{|t|\leq \lambda_n^2T\}\times\Omega)}=0,
\end{align*}
which together with \eqref{largetime2} gives \eqref{error2}.

\textbf{Step 5} Constructing $v_n$ and approximation by $C_c^\infty$ functions.

Using \eqref{tildevn2}, \eqref{match2}, and \eqref{error2}, and invoking the stability result Theorem~\ref{T:stability},  for $n$ (and $T$) sufficiently large we obtain a global solution $v_n$ to $\text{NLS}_\Omega$ with initial data $v_n(0)=\phi_n$ and
\begin{align*}
\|v_n\|_{L_{t,x}^{10}(\R\times\Omega)}\lsm 1.
\end{align*}
Moreover,
\begin{align}\label{vncase2}
\lim_{T\to\infty}\limsup_{n\to\infty}\|v_n(t-\lambda_n^2 t_n)-\tilde v_n(t)\|_{\dot S^1(\R\times\Omega)}=0.
\end{align}

To complete the proof of the theorem, it remains to prove the approximation result \eqref{dense2}, to which we now turn.  From the density of
$C_c^\infty(\R\times\R^3)$ in $\dot X^1(\R\times\R^3)$, for any $\eps>0$ there exists $\psi_\eps\in C_c^\infty(\R\times\R^3)$ such that
\begin{align}\label{approxwinfty2}
\|w_\infty-\psi_\eps\|_{\dot X^1(\R\times\R^3)}<\tfrac \eps 3.
\end{align}
Using a change of variables, we estimate
\begin{align*}
\|v_n(t-\lambda_n^2 t_n, x+x_n)&-\lambda_n^{-\frac12}\psi_\eps(\lambda_n^{-2}t, \lambda_n^{-1}x)\|_{\dot X^1(\R\times\R^3)}\\
&\le \|v_n(t-\lambda_n^2 t_n)-\tilde v_n(t)\|_{\dot X^1(\R\times\R^3)}+\|w_\infty-\psi_\eps\|_{\dot X^1(\R\times\R^3)}\\
&\quad+\|\tilde v_n(t,x)-\lambda_n^{-\frac12}w_\infty(\lambda_n^{-2},\lambda_n^{-1}(x-x_n))\|_{\dot X^1(\R\times\R^3)}.
\end{align*}
In view of \eqref{vncase2} and \eqref{approxwinfty2}, proving \eqref{dense2} reduces to showing
\begin{align}\label{remaincase2}
\|\tilde v_n(t,x)-\lambda_n^{-\frac 12}w_\infty(\lambda_n^{-2}t,\lambda_n^{-1}(x-x_n))\|_{\dot X^1(\R\times\R^3)}< \tfrac \eps 3
\end{align}
for sufficiently large $n$ and $T$.

To prove \eqref{remaincase2} we discuss two different time regimes.  On the middle time interval $|t|\leq \lambda_n^2 T$, we have
\begin{align*}
&\|\tilde v_n(t,x)-\lambda_n^{-\frac 12}w_\infty(\lambda_n^{-2}t,\lambda_n^{-1}(x-x_n))\|_{\dot X^1(\{|t|\leq \lambda_n^2T\}\times\R^3)}\\
&\lsm\|\chi_n w_n+z_n-w_\infty\|_{\dot X^1([-T,T]\times\R^3)}\\
&\lsm \|(1-\chi_n)w_\infty\|_{\dot X^1([-T,T]\times\R^3)}+\|\chi_n(w_n-w_\infty)\|_{\dot X^1([-T,T]\times\R^3)}+\|z_n\|_{\dot X^1([-T,T]\times\R^3)},
\end{align*}
which converges to zero by \eqref{258}, \eqref{258'}, and \eqref{209}.

We now consider $|t|> \lambda_n^2 T$; by symmetry, it suffices to control the contribution of positive times.  Using the Strichartz inequality, we estimate
\begin{align*}
\|\tilde v_n(t,x)&-\lambda_n^{-\frac 12}w_\infty(\lambda_n^{-2}t,\lambda_n^{-1}(x-x_n))\|_{\dot X^1((\lambda_n^2T, \infty)\times\R^3)}\\
&=\|e^{i(t-T)\Delta_{\Omega_n}}[\chi_nw_n(T)+z_n(T)]-w_\infty\|_{\dot X^1((T,\infty)\times\R^3)}\\
&\lsm \|z_n(T)\|_{\dot H^1_D(\Omega_n)}+\|\nabla[\chi_n(w_\infty-w_n)]\|_{L_x^2} +\|w_\infty\|_{\dot X^1((T,\infty)\times\R^3)}\\
&\quad +\|e^{i(t-T)\Delta_{\Omega_n}}[\chi_nw_\infty(T)]\|_{\dot X^1((T,\infty)\times\R^3)}\\
&=o(1) +\|e^{i(t-T)\Delta_{\Omega_n}}[\chi_nw_\infty(T)]\|_{\dot X^1((T,\infty)\times\R^3)} \qtq{as} n, T\to \infty
\end{align*}
by \eqref{209'}, \eqref{258'}, and the monotone convergence theorem.  Using the triangle and Strichartz inequalities, we estimate the last term as follows:
\begin{align*}
\|&e^{i(t-T)\Delta_{\Omega_n}}[\chi_n w_\infty(T)]\|_{\dot X^1((T,\infty)\times\R^3)}\\
&\lsm\|[e^{i(t-T)\Delta_{\Omega_n}}-e^{i(t-T)\Delta}][\chi_nw_\infty(T)]\|_{\dot X^1((T,\infty)\times\R^3)}+\|\nabla[(1-\chi_n)w_\infty]\|_{L_x^2}\\
&\quad+\|\nabla[e^{-iT\Delta}w_\infty(T)-w_+]\|_{L_x^2} + \|e^{it\Delta}w^+\|_{\dot X^1((T,\infty)\times\R^3)},
\end{align*}
which converges to zero by letting $n\to \infty$ and then $T\to \infty$ by Theorem~\ref{T:LF}, \eqref{as2}, and the monotone convergence theorem.

Putting everything together we obtain \eqref{remaincase2} and so \eqref{dense2}.  This completes the proof of Theorem~\ref{T:embed2}.
\end{proof}

Our next result concerns the scenario when the rescaled obstacles $\Omega_n^c$ are retreating to infinity (cf. Case 3 in Theorem~\ref{T:LPD}).

\begin{thm}[Embedding nonlinear profiles for retreating obstacles]\label{T:embed3}
Let $\{t_n\}\subset \R$ be such that $t_n\equiv0$ or $t_n\to \pm\infty$. Let $\{x_n\}\subset \Omega$ and $\{\lambda_n\}\subset 2^{\Z}$ be such that $\frac{d(x_n)}{\lambda_n}\to \infty$. Let $\phi\in\dot H^1(\R^3)$ and define
\begin{align*}
\phi_n(x)=\lambda_n^{-\frac 12}e^{i\lambda_n^2 t_n\Delta_\Omega}\bigl[(\chi_n\phi)\bigl(\tfrac{x-x_n}{\lambda_n}\bigr)\bigr],
\end{align*}
where $\chi_n(x)=1-\Theta(\lambda_n|x|/d(x_n))$.  Then for $n$ sufficiently large there exists a global solution $v_n$ to $\text{NLS}_\Omega$ with initial data $v_n(0)=\phi_n$ which satisfies
\begin{align*}
\|v_n\|_{L_{t,x}^{10}(\R\times\Omega)}\lsm 1,
\end{align*}
with the implicit constant depending only on $\|\phi\|_{\dot H^1}$.  Furthermore, for every $\eps>0$ there exist $N_\eps\in \N$ and
$\psi_\eps\in C_c^{\infty}(\R\times\R^3)$ such that for all $n\ge N_\eps$ we have
\begin{align}\label{apcase3}
\|v_n(t-\lambda_n^2 t_n, x+x_n)-\lambda_n^{-\frac12}\psi_\eps(\lambda_n^{-2}t, \lambda_n^{-1} x)\|_{\dot X^1(\R\times\R^3)}<\eps.
\end{align}
\end{thm}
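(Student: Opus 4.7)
The proof follows the template of Theorem \ref{T:embed2}: build an approximate solution $\tilde v_n$ from a global solution to $\text{NLS}_{\R^3}$, verify it asymptotically matches $\phi_n$ and nearly satisfies \eqref{nls}, and invoke the stability theorem. Both settings fit the regime $\Omega_n\to\R^3$, but here the rescaled obstacles retreat to infinity rather than shrink to a point: $\|\nabla^k\chi_n\|_{L^\infty}\lesssim(\lambda_n/d(x_n))^k\to 0$, so no corrector analogous to $z_n$ is needed, but $\supp(\nabla\chi_n)$ has divergent volume $\sim(d(x_n)/\lambda_n)^3$, which invalidates the direct H\"older--Leibnitz estimates of Theorem \ref{T:embed2}.

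Let $w_\infty$ be the global solution to $\text{NLS}_{\R^3}$ with data $\phi$ (if $t_n\equiv 0$) or scattering to $e^{it\Delta}\phi$ as $t\to\pm\infty$ (if $t_n\to\pm\infty$); Theorem \ref{T:gopher} supplies $\|w_\infty\|_{\dot S^1(\R\times\R^3)}\lesssim 1$. For $T>0$ to be chosen, define
\begin{align*}
\tilde v_n(t,x):=\begin{cases}\lambda_n^{-1/2}(\chi_n w_\infty)(\lambda_n^{-2}t,\lambda_n^{-1}(x-x_n)), & |t|\leq\lambda_n^2 T,\\ e^{i(t\mp\lambda_n^2 T)\Delta_\Omega}\tilde v_n(\pm\lambda_n^2 T), & \pm t>\lambda_n^2 T.\end{cases}
\end{align*}
The support condition $\supp(\chi_n w_\infty)\subseteq\{|y|\leq d(x_n)/(2\lambda_n)\}\subseteq\Omega_n$ ensures $\tilde v_n\in\dot H^1_D(\Omega)$, and Strichartz gives $\|\tilde v_n\|_{L^{10}_{t,x}(\R\times\Omega)}\lesssim 1$. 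The matching of initial data,
\begin{align*}
\lim_{T\to\infty}\limsup_{n\to\infty}\bigl\|(-\Delta_\Omega)^{1/2}e^{it\Delta_\Omega}[\tilde v_n(\lambda_n^2 t_n)-\phi_n]\bigr\|_{L^{10}_t L^{30/13}_x(\R\times\Omega)}=0,
\end{align*}
proceeds as in Step 3 of Theorem \ref{T:embed2}, using Corollary \ref{C:LF} for $\Omega_n\to\R^3$, Lemma \ref{L:n3}, scattering of $w_\infty$, and monotone convergence.

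The error estimate, proving $\|e_n\|_{\dot N^1(\R\times\Omega)}\to 0$ with $e_n:=(i\partial_t+\Delta_\Omega)\tilde v_n-|\tilde v_n|^4\tilde v_n$, is the crux. For $|t|>\lambda_n^2 T$, $e_n=-|\tilde v_n|^4\tilde v_n$, and smallness follows from scattering plus Corollary \ref{C:LF}. For $|t|\leq\lambda_n^2 T$, rescaling yields
\begin{align*}
e_n^{\mathrm{sc}}=(\chi_n-\chi_n^5)|w_\infty|^4 w_\infty+2\nabla\chi_n\cdot\nabla w_\infty+(\Delta\chi_n)w_\infty;
\end{align*}
the bulk term vanishes by dominated convergence since $\chi_n\to 1$ pointwise and $|w_\infty|^4\nabla w_\infty$ is uniformly bounded in $L^{10/7}_{t,x}$. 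For the cutoff-derivative terms, approximate $\phi$ by $\psi\in C_c^\infty(\R^3)$ in $\dot H^1$ and let $u_\psi$ be the corresponding NLS solution. Persistence of regularity (Lemma \ref{lm:persistencer3}) together with iterated virial estimates $\tfrac{d^2}{dt^2}\!\int|y|^{2k}|u_\psi|^2\leq C_k$ yield weighted bounds $\||y|^k\nabla^j u_\psi\|_{L^\infty_t L^2_y([-T,T])}\leq C(T,j,k,\psi)$ for all $j,k\geq 0$. Since $\supp\nabla^j\chi_n\subseteq\{|y|\sim d(x_n)/\lambda_n\}$, this gives $\|\nabla^j u_\psi\|_{L^\infty_t L^2_y(\supp\nabla^j\chi_n)}\lesssim(d(x_n)/\lambda_n)^{-k}$ for any $k$, dominating the factor $\|\nabla^j\chi_n\|_{L^\infty}\lesssim(\lambda_n/d(x_n))^j$; the perturbation $w_\infty-u_\psi$ is controlled in $\dot S^1$ by the stability theorem applied to the initial-data perturbation $\phi-\psi$.

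Finally, Theorem \ref{T:stability} produces $v_n$ with the claimed bounds, and the approximation \eqref{apcase3} is established by density of $C_c^\infty(\R\times\R^3)$ in $\dot X^1(\R\times\R^3)$ combined with Strichartz estimates and Corollary \ref{C:LF}, closely following Step 5 of Theorem \ref{T:embed2}. The main obstacle is the error estimate on the middle interval: the volumetric Leibnitz estimates of Theorem \ref{T:embed2} break down because $\supp(\nabla\chi_n)$ is now a large annulus rather than a small ball, and closing the argument requires extracting spatial localization of the underlying nonlinear profile via an auxiliary Schwartz approximation of $\phi$ -- whose solution has arbitrary polynomial spatial decay on bounded time intervals by iterated virial identities -- while absorbing the resulting approximation error through the stability theorem.
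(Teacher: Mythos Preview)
Your overall architecture matches the paper: build $\tilde v_n$ from a global $\text{NLS}_{\R^3}$ solution cut off by $\chi_n$, verify asymptotic initial-data agreement via Corollary~\ref{C:LF} and Lemma~\ref{L:n3}, check the approximate-solution property on $|t|\le\lambda_n^2T$ and $|t|>\lambda_n^2T$ separately, and close with Theorem~\ref{T:stability}. The paper, however, does \emph{not} build $\tilde v_n$ from $w_\infty$: it uses an $n$-dependent solution $w_n$ launched from the frequency-truncated data $\phi_{\le (d(x_n)/\lambda_n)^\theta}$ with $\theta=\tfrac1{100}$. Persistence of regularity then yields $\|\Delta w_n\|_{L^\infty_tL^2_x}\lesssim (d(x_n)/\lambda_n)^\theta$, so the dangerous term $\|\nabla\chi_n\|_{L^\infty}\|\Delta w_n\|_{L^\infty_tL^2_x}\lesssim (d(x_n)/\lambda_n)^{\theta-1}\to 0$, while $w_n\to w_\infty$ in $\dot S^1$ keeps the initial-data matching and \eqref{apcase3} intact.

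Your alternative---fixing $\tilde v_n$ with $w_\infty$ and splitting $w_\infty=u_\psi+(w_\infty-u_\psi)$ only inside the error estimate---has a gap at exactly this term. Expanding $\nabla[\nabla\chi_n\cdot\nabla w_\infty]$ produces $\nabla\chi_n\cdot\partial^2 w_\infty$; for the $u_\psi$ piece persistence of regularity alone (no virial needed) gives $\|\nabla\chi_n\cdot\partial^2 u_\psi\|_{L^1_tL^2_x}\lesssim T(d(x_n)/\lambda_n)^{-1}C(\psi)\to 0$, but for the remainder you only have $\|w_\infty-u_\psi\|_{\dot S^1}\lesssim\|\phi-\psi\|_{\dot H^1}$, which gives no control whatsoever on $\partial^2(w_\infty-u_\psi)$. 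No choice of dual Strichartz norm, fractional product rule, or use of the equation avoids needing two derivatives on this difference. The fix is either to build $\tilde v_n$ from $u_\psi$ directly (absorbing the $O(\|\phi-\psi\|_{\dot H^1})$ initial-data mismatch through Theorem~\ref{T:stability}, which is legitimate since the smallness threshold depends only on uniformly bounded quantities), or---as the paper does and as your own closing paragraph hints---to let the approximating data depend on $n$; the frequency truncation $\phi_{\le (d(x_n)/\lambda_n)^\theta}$ is precisely such an $n$-dependent Schwartz-class approximation with quantified $\dot H^2$ growth. Your iterated-virial decay estimates are more than what is needed once this is set up correctly.
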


\begin{proof} The proof of this theorem follows the general outline of the proof of Theorem~\ref{T:embed2}.  It consists of the same five steps.  Throughout the proof we will write $-\Delta=-\Delta_{\R^3}$.

\textbf{Step 1:} Constructing global solutions to $\text{NLS}_{\R^3}$.

Let $\theta:=\frac 1{100}$.  As in the proof of Theorem~\ref{T:embed2}, the construction of the solutions to $\text{NLS}_{\R^3}$ depends on the behaviour of $t_n$.  If $t_n\equiv0$, we let $w_n$ and $w_\infty$ be solutions to $\text{NLS}_{\R^3}$ with initial data $w_n(0)=\phi_{\le(d(x_n)/\lambda_n)^{\theta}}$
and $w_\infty(0)=\phi$.  If $t_n\to \pm\infty$, we let $w_n$ and $w_\infty$ be solutions to $\text{NLS}_{\R^3}$ satisfying
\begin{align*}
\|w_n(t)-e^{it\Delta}\phi_{\le (d(x_n)/\lambda_n)^{\theta}}\|_{\dot H^1(\R^3)}\to0 \qtq{and} \|w_\infty(t)-e^{it\Delta}\phi\|_{\dot H^1(\R^3)}\to 0
\end{align*}
as $t\to \pm \infty$.

In all cases, \cite{CKSTT:gwp} implies that $w_n$ and $w_\infty$ are global solutions obeying global spacetime norms.  Moreover, arguing as in the proof of Theorem~\ref{T:embed2} and invoking perturbation theory and the persistence of regularity result Lemma \ref{lm:persistencer3}, we see that $w_n$ and $w_\infty$ satisfy the following:
\begin{equation}\label{cond3}
\left\{ \quad \begin{aligned}
&\|w_n\|_{\dot S^1(\R\times\R^3)}+\|w_\infty\|_{\dot S^1(\R\times\R^3)}\lsm 1,\\
&\lim_{n\to \infty}\|w_n-w_\infty\|_{\dot S^1(\R\times\R^3)}= 0,\\
&\||\nabla|^s w_n\|_{\dot S^1(\R\times\R^3)}\lsm \bigl(\tfrac{d(x_n)}{\lambda_n}\bigr)^{s\theta} \qtq{for all} s\ge 0.
\end{aligned} \right.
\end{equation}

\textbf{Step 2:} Constructing the approximate solution to $\text{NLS}_{\Omega}$.

Fix $T>0$ to be chosen later.  We define
\begin{align*}
\tilde v_n(t,x):=\begin{cases} \lambda_n^{-\frac12}[\chi_nw_n](\lambda_n^{-2}t, \lambda_n^{-1}(x-x_n)), & |t|\le \lambda_n^2 T, \\
e^{i(t-\lambda_n^2 T)\Delta_\Omega}\tilde v_n(\lambda_n^2 T,x), & t>\lambda_n^2 T, \\
e^{i(t+\lambda_n^2 T)\Delta_\Omega}\tilde v_n(-\lambda_n^2 T,x), & t<-\lambda_n^2 T.
\end{cases}
\end{align*}
Note that $\tilde v_n$ has finite scattering size; indeed, using a change of variables, the Strichartz inequality, H\"older, Sobolev embedding, and \eqref{cond3}, we get
\begin{align}\label{tildevn3}
\|\tilde v_n\|_{L_{t,x}^{10}(\R\times\Omega)}
&\lsm \|\chi_n w_n\|_{L_{t,x}^{10}([-T,T]\times\Omega_n)}+\|\chi_n w_n(\pm T)\|_{\dot H^1_D(\Omega_n)} \notag\\
&\lsm \|w_n\|_{\dot S^1(\R\times\R^3)} + \|\nabla\chi_n\|_{L_x^3} \|w_n(\pm T)\|_{L_x^6} + \|\chi_n\|_{L_x^\infty} \|\nabla w_n(\pm T)\|_{L_x^2}\notag\\
&\lsm 1,
\end{align}
where $\Omega_n:=\lambda_n^{-1}(\Omega-\{x_n\})$.

\textbf{Step 3:}  Asymptotic agreement of the initial data:
\begin{align}\label{n0}
\lim_{T\to\infty}\limsup_{n\to\infty}\|(-\Delta_\Omega)^{\frac12}e^{it\Delta_\Omega}[\tilde v_n(\lambda_n^2t_n)-\phi_n]\|_{L_t^{10}L_x^{\frac{30}{13}}(\R\times\Omega)}=0.
\end{align}

We first consider the case when $t_n\equiv0$.  By Strichartz and a change of variables,
\begin{align*}
\|&(-\Delta_{\Omega})^{\frac 12}e^{it\Delta_\Omega}[\tilde v_n(0)-\phi_n]\|_{L_t^{10}L_x^{\frac{30}{13}}(\R\times\Omega)}\\
&\lsm \|(-\Delta_{\Omega_n})^{\frac 12}[\chi_n\phi_{>(d(x_n)/\lambda_n)^{\theta}}]\|_{L^2_x(\Omega_n)}\\
&\lsm \|\nabla \chi_n\|_{L_x^3}\|\phi_{>(d(x_n)/\lambda_n)^{\theta}}\|_{L_x^6}+\|\chi_n\|_{L_x^\infty}\|\nabla \phi_{>(d(x_n)/\lambda_n)^{\theta}}\|_{L_x^2}\to0\qtq{as} n\to \infty.
\end{align*}

It remains to prove \eqref{n0} when $t_n\to \infty$; the case $t_n\to-\infty$ can be treated similarly.  As $T$ is fixed, for sufficiently large $n$ we have $t_n>T$ and so
\begin{align*}
\tilde v_n(\lambda_n^2t_n,x)=e^{i(t_n-T)\lambda_n^2\Delta_{\Omega}}\bigl[\lambda_n^{-\frac 12}(\chi_nw_n(T))\bigl(\tfrac{x-x_n}{\lambda_n}\bigr)\bigr].
\end{align*}
Thus, by a change of variables and the Strichartz inequality,
\begin{align}
\|(-\Delta_{\Omega})^{\frac 12}& e^{it\Delta_{\Omega}}[\tilde v_n(\lambda_n^2 t_n)-\phi_n]\|_{L_t^{10}L_x^{\frac{30}{13}}(\R\times\Omega)}\notag\\
&=\|(-\Delta_{\Omega_n})^{\frac 12}e^{it\Delta_{\Omega_n}}[e^{-iT\Delta_{\Omega_n}}(\chi_nw_n(T))-\chi_n\phi)]\|_{L_t^{10}L_x^{\frac{30}{13}}(\R\times\Omega_n)}\notag\\
&\lsm \|(-\Delta_{\Omega_n})^{\frac 12}[\chi_n(w_n(T)-w_\infty(T))]\|_{L^2(\Omega_n)}\label{n1}\\
&\quad +\|(-\Delta_{\Omega_n})^{\frac12}e^{it\Delta_{\Omega_n}}[e^{-iT\Delta_{\Omega_n}}(\chi_nw_\infty(T))-\chi_n\phi]\|_{L_t^{10}L_x^{\frac{30}{13}}(\R\times\Omega_n)}\label{n2}.
\end{align}
Using \eqref{cond3} and Sobolev embedding, we see that
\begin{align*}
\eqref{n1}\lsm \|\nabla\chi_n\|_{L_x^3}\|w_n(T)-w_\infty(T)\|_{L_x^6}+\|\chi_n\|_{L_x^\infty}\|\nabla[w_n(T)-w_\infty(T)]\|_{L_x^2} \to 0
\end{align*}
as $n\to \infty$.  The proof of
$$
\lim_{T\to\infty}\limsup_{n\to\infty}\eqref{n2}=0
$$
is identical to the proof of \eqref{n23} in Theorem~\ref{T:embed2} and we omit it.  This completes the proof of \eqref{n0}.

\textbf{Step 4:}  Proving that $\tilde v_n$ is an approximate solution to $\text{NLS}_{\Omega}$ in the sense that
\begin{align}\label{n6}
\lim_{T\to\infty}\limsup_{n\to\infty}\|(-\Delta_\Omega)^{\frac12}[(i\partial_t+\Delta_\Omega)\tilde v_n-|\tilde v_n|^4\tilde v_n]\|_{\dot N^0(\R\times\Omega)}=0.
\end{align}

We first verify \eqref{n6} for $|t|>\lambda_n^2 T$.  By symmetry, it suffices to consider positive times.  Arguing as in the proof of Theorem~\ref{T:embed2}, we see that in this case \eqref{n6} reduces to
\begin{align}\label{n7}
\lim_{T\to \infty}\limsup_{n\to\infty}\|e^{i(t-\lambda_n^2T)\Delta_{\Omega}}\tilde v_n(\lambda_n^2T)\|_{L_{t,x}^{10}((\lambda_n^2T,\infty)\times\Omega)}=0.
\end{align}
Let $w_+$ denote the forward asymptotic state of $w_\infty$.  Using a change of variables and the Strichartz inequality, we get
\begin{align*}
&\|e^{i(t-\lambda_n^2 T)\Delta_\Omega}\tilde v_n(\lambda_n^2T)\|_{L_{t,x}^{10}((\lambda_n^2T,\infty)\times\Omega)}\\
&=\|e^{it\Delta_{\Omega_n}}[\chi_nw_n(T)]\|_{L_{t,x}^{10}((0,\infty)\times\Omega_n)}\\
&\lsm \|e^{it\Delta_{\Omega_n}}[\chi_ne^{iT\Delta}w_+]\|_{L_{t,x}^{10}((0,\infty)\times\Omega_n)}+\|\chi_n[w_\infty(T)-e^{iT\Delta}w_+]\|_{\dot H^1(\R^3)}\\
&\quad+\|\chi_n[w_\infty(T)-w_n(T)]\|_{\dot H^1(\R^3)}\\
&\lsm \|[e^{it\Delta_{\Omega_n}}-e^{it\Delta}][\chi_n e^{iT\Delta}w_+]\|_{L_{t,x}^{10}((0,\infty)\times\R^3)}+\|(1-\chi_n)e^{iT\Delta}w_+\|_{\dot H^1(\R^3)}\\
&\quad +\|e^{it\Delta}w_+\|_{L_{t,x}^{10}((T,\infty)\times\R^3)}+\|w_\infty(T) -e^{iT\Delta}w_+\|_{\dot H^1(\R^3)}+\|w_\infty(T)-w_n(T)\|_{\dot H^1(\R^3)},
\end{align*}
which converges to zero by letting $n\to \infty$ and then $T\to \infty$ in view of Corollary~\ref{C:LF} (and the density of $C_c^\infty(\R^3)$ functions in
$\dot H^1(\R^3)$), \eqref{cond3}, the definition of $w_+$, and the monotone convergence theorem.

Next we show \eqref{n6} on the middle time interval $|t|\le \lambda_n^2 T$.  We compute
\begin{align*}
[(i\partial_t+\Delta_{\Omega})\tilde v_n-|\tilde v_n|^4\tilde v_n](t,x)
&=\lambda_n^{-\frac 52}[(\chi_n-\chi_n^5)|w_n|^4w_n](\lambda_n^{-2}t,\lambda_n^{-1}(x-x_n))\\
&\quad+2\lambda_n^{-\frac 52}[\nabla\chi_n \cdot\nabla w_n](\lambda_n^{-2} t, \lambda_n^{-1}(x-x_n))\\
&\quad+\lambda_n^{-\frac 52}[\Delta\chi_nw_n](\lambda_n^{-2} t,\lambda_n^{-1}(x-x_n)).
\end{align*}
Thus, using a change of variables and the equivalence of Sobolev spaces we obtain
\begin{align}
\|(-\Delta_\Omega)^{\frac 12}&[(i\partial_t+\Delta)\tilde v_n-|\tilde v_n|^4\tilde v_n]\|_{\dot N^0((|t|\le \lambda_n^2 T)\times\Omega)}\notag\\
&\lsm \|\nabla[(\chi_n-\chi_n^5)|w_n|^4 w_n]\|_{\dot N^0([-T,T]\times\Omega_n)}\label{n9}\\
&\quad+\|\nabla(\nabla\chi_n\cdot \nabla w_n)\|_{\dot N^0([-T,T]\times\Omega_n)}+\|\nabla (\Delta\chi_nw_n)\|_{\dot N^0([-T,T]\times\Omega_n)}.\label{n10}
\end{align}
Using H\"older, we estimate the contribution of \eqref{n9} as follows:
\begin{align*}
\eqref{n9}&\lsm \|(\chi_n-\chi_n^5)|w_n|^4 \nabla w_n\|_{L_{t,x}^{\frac{10}7}}+\|\nabla \chi_n(1-5\chi_n^4)w_n^5\|_{L_t^{\frac 53} L_x^{\frac{30}{23}}}\\
&\lsm\|\nabla w_n\|_{L_{t,x}^{\frac{10}3}}\Bigl[\|w_n-w_\infty\|_{L_{t,x}^{10}}^4+\|1_{|x|\sim \frac{d(x_n)}{\lambda_n}} w_\infty\|_{L_{t,x}^{10}}^4\Bigr]\\
&\quad+\|w_n\|_{L_t^5 L_x^{30}}\|\nabla \chi_n\|_{L_x^3}\Bigl[\|w_n-w_\infty\|_{L_{t,x}^{10}}^4+\|1_{|x|\sim\frac{d(x_n)}{\lambda_n}}w_\infty\|_{L_{t,x}^{10}}^4\Bigr]\to0,
\end{align*}
by the dominated convergence theorem and \eqref{cond3}.  Similarly,
\begin{align*}
\eqref{n10}&\lsm T \Bigl[\|\Delta\chi_n\|_{L_x^\infty} \|\nabla w_n\|_{L_t^\infty L_x^2} +\|\nabla \chi_n\|_{L_x^\infty}\|\Delta w_n\|_{L_t^\infty L_x^2}
+\|\nabla\Delta\chi_n\|_{L_x^3}\|w_n\|_{L_t^\infty L_x^6}\Bigr]\\
&\lsm T\Bigl[\bigl(\tfrac{d(x_n)}{\lambda_n}\bigr)^{-2}+\bigl(\tfrac{d(x_n)}{\lambda_n}\bigr)^{\theta-1}\Bigr] \to 0 \qtq{as} n\to \infty.
\end{align*}
This completes the proof of \eqref{n6}.

\textbf{Step 5:} Constructing $v_n$ and approximation by $C_c^\infty$ functions.

Using \eqref{tildevn3}, \eqref{n0}, and \eqref{n6}, and invoking the stability result Theorem~\ref{T:stability}, for $n$ sufficiently large we obtain a global solution $v_n$ to $\text{NLS}_\Omega$ with initial data $v_n(0)=\phi_n$ which satisfies
\begin{align*}
\|v_n\|_{L_{t,x}^{10}(\R\times\Omega)}\lsm 1
\qtq{and}
\lim_{T\to\infty}\limsup_{n\to \infty}\|v_n(t-\lambda_n^2t_n)-\tilde v_n (t)\|_{\dot S^1(\R\times\Omega)}=0.
\end{align*}
It remains to prove the approximation result \eqref{apcase3}.

From the density of $C_c^\infty(\R\times\R^3)$ in $\dot X^1(\R\times\R^3)$, for any $\eps>0$ we can find $\psi_\eps\in C_c^\infty(\R\times\R^3)$ such that
\begin{align*}
\|w_\infty-\psi_\eps\|_{\dot X^1(\R\times\R^3)}< \tfrac \eps 3.
\end{align*}
Thus, to prove \eqref{apcase3} it suffices to show
\begin{align}\label{n11}
\|\tilde v_n(t,x)-\lambda_n^{-\frac 12}w_\infty(\lambda_n^{-2}t, \lambda_n^{-1}(x-x_n))\|_{\dot X^1(\R\times\R^3)}<\tfrac \eps 3
\end{align}
for $n, T$ sufficiently large.  A change of variables gives
\begin{align*}
\text{LHS}\eqref{n11}
&\le \|\chi_n w_n-w_\infty\|_{\dot X^1([-T,T]\times\R^3)}+\|e^{i(t-T)\Delta_{\Omega_n}}[\chi_nw_n(T)]-w_\infty\|_{\dot X^1((T,\infty)\times\R^3)}\\
&\quad+\|e^{i(t+T)\Delta_{\Omega_n}}[\chi_nw_n(-T)]-w_\infty\|_{\dot X^1((-\infty,-T)\times\R^3)}.
\end{align*}
We estimate the contribution from each term separately. For the first term we use the monotone convergence theorem and \eqref{cond3} to see that
\begin{align*}
\|\chi_n w_n-w_\infty\|_{\dot X^1([-T,T]\times\R^3)}&\lsm \|(1-\chi_n)w_\infty\|_{\dot X^1(\R\times\R^3)}+\|w_n-w_\infty\|_{\dot X^1(\R\times\R^3)}\to 0,
\end{align*}
as $n\to \infty$. For the second term we use Strichartz to get
\begin{align*}
&\|e^{i(t-T)\Delta_{\Omega_n}}[\chi_n w_n(T)]-w_\infty\|_{\dot X^1((T,\infty)\times\R^3)}\\
&\lsm \|w_\infty\|_{\dot X^1((T,\infty)\times\R^3)}+\|\chi_n[w_\infty(T)-w_n(T)]\|_{\dot H^1(\R^3)}\\
&\quad+\|e^{i(t-T)\Delta_{\Omega_n}}[\chi_n w_\infty(T)]\|_{\dot X^1((T,\infty))}\\
&\lsm \|w_\infty\|_{\dot X^1((T,\infty)\times\R^3)}+\|w_\infty(T)-w_n(T)\|_{\dot H^1(\R^3)}+\|(1-\chi_n)w_\infty(T)\|_{\dot H^1(\R^3)}\\
&\quad+\|[e^{i(t-T)\Delta_{\Omega_n}}-e^{i(t-T)\Delta}][\chi_nw_\infty(T)]\|_{\dot X^1(\R\times\R^3)}+\|e^{it\Delta}w_+\|_{\dot X^1((T,\infty)\times\R^3)}\\
&\quad+\|w_+-e^{-iT\Delta}w_\infty(T)\|_{\dot H^1(\R^3)}\to 0 \qtq{as} n\to \infty \qtq{and then} T\to \infty
\end{align*}
by Theorem~\ref{T:LF}, \eqref{cond3}, the definition of the asymptotic state $w_+$, and the monotone convergence theorem.
The third term can be treated analogously to the second term.

This completes the proof of \eqref{n11} and with it, the proof of Theorem~\ref{T:embed3}.
\end{proof}

Our final result in this section treats the case when the obstacle expands to fill a halfspace (cf. Case~4 in Theorem~\ref{T:LPD}).

\begin{thm}[Embedding $\text{NLS}_{\HH}$ into $\text{NLS}_{\Omega}$]\label{T:embed4}
Let $\{t_n\}\subset \R$ be such that $t_n\equiv0$ or $t_n\to\pm\infty$. Let $\{\lambda_n\}\subset 2^{\Z}$ and $\{x_n\}\subset \Omega$ be such that
\begin{align*}
\lambda_n\to 0 \qtq{and} \tfrac{d(x_n)}{\lambda_n}\to d_\infty>0.
\end{align*}
Let $x_n^*\in \partial\Omega$ be such that $|x_n-x_n^*|=d(x_n)$ and let $R_n\in SO(3)$ be such that $R_n e_3=\frac{x_n-x_n^*}{|x_n-x_n^*|}$.
Finally, let $\phi\in \dot H^1_D(\HH)$ and define
\begin{align*}
\phi_n(x)=\lambda_n^{-\frac 12}e^{i\lambda_n^2t_n\Delta_\Omega}\bigl[\phi\bigl(\tfrac{R_n^{-1}(x-x_n^*)}{\lambda_n}\bigr)\bigr].
\end{align*}
Then for $n$ sufficiently large there exists a global solution $v_n$ to $\text{NLS}_\Omega$ with initial data $v_n(0)=\phi_n$ which satisfies
\begin{align*}
\|v_n\|_{L_{t,x}^{10}(\R\times\Omega)}\lsm 1,
\end{align*}
with the implicit constant depending only on $\|\phi\|_{\dot H^1}$.  Furthermore, for every $\eps>0$ there exist $N_\eps\in \N$ and
$\psi_\eps\in C_c^\infty(\R\times\HH)$ such that for all $n\geq N_\eps$ we have
\begin{align}\label{ap4}
\|v_n(t-\lambda_n^2 t_n, R_nx+x_n^*)-\lambda_n^{-\frac12}\psi_\eps(\lambda_n^{-2}t, \lambda_n^{-1}x)\|_{\dot X^1(\R\times\R^3)}<\eps.
\end{align}
\end{thm}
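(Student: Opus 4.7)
The plan is to follow the five-step strategy developed for Theorems~\ref{T:embed2} and~\ref{T:embed3}, with the limiting geometry replaced by the halfspace $\HH$. First I would construct the limiting nonlinear solutions: with $\theta = \frac{1}{100}$, set $w_n(0) = P^{\HH}_{\leq \lambda_n^{-\theta}}\phi$ and $w_\infty(0) = \phi$ when $t_n \equiv 0$, and prescribe analogous asymptotic states when $t_n\to\pm\infty$. Global spacetime bounds are obtained by regarding $\HH$-solutions as odd $\R^3$-solutions and invoking Theorem~\ref{T:gopher}; then Lemma~\ref{lm:persistenceh} together with perturbation theory yields $\||\nabla|^s w_n\|_{\dot S^1(\R\times\HH)} \lesssim \lambda_n^{-\theta(s-1)}$ for $s\geq 1$ and $\|w_n - w_\infty\|_{\dot S^1(\R\times\HH)} \to 0$.

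The approximate solution would be defined in rescaled coordinates as
\begin{align*}
\tilde v_n(t,x) := \begin{cases}
\lambda_n^{-1/2}\, w_n\bigl(\lambda_n^{-2}t + t_n,\ \lambda_n^{-1} R_n^{-1}(x-x_n^*)\bigr), & x\in\HH_n,\ |t+\lambda_n^2 t_n|\leq \lambda_n^2 T,\\
0, & x\in\Omega\setminus\HH_n,\ |t+\lambda_n^2 t_n|\leq \lambda_n^2 T,\\
e^{i(t+\lambda_n^2 t_n \mp \lambda_n^2 T)\Delta_\Omega}\tilde v_n(\mp\lambda_n^2 T), & \pm(t+\lambda_n^2 t_n) > \lambda_n^2 T,
\end{cases}
\end{align*}
where $\HH_n := x_n^* + \lambda_n R_n\HH$; the strict convexity of $\Omega^c$ guarantees $\HH_n \subseteq \Omega$ for large $n$, and the Dirichlet condition $w_n|_{\partial\HH}=0$ ensures the extension lies in $\dot H^1_D(\Omega)$. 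Asymptotic agreement of initial data, namely
\begin{align*}
\lim_{T\to\infty}\limsup_{n\to\infty}\|(-\Delta_\Omega)^{1/2} e^{it\Delta_\Omega}[\tilde v_n(-\lambda_n^2 t_n)-\phi_n]\|_{L_t^{10}L_x^{30/13}(\R\times\Omega)} = 0,
\end{align*}
then follows along the lines of Step 3 of Theorem~\ref{T:embed2}, substituting Theorem~\ref{T:LF2} (convergence of linear flows, Case~(iv)) for Theorem~\ref{T:LF1}, and using Lemma~\ref{L:dense} and Lemma~\ref{L:n3} to pass to the halfspace limit. Similarly, the final Step~5 --- applying the stability theorem to produce $v_n$ and then approximating it by a $C_c^\infty$ test function $\psi_\eps$ close to $w_\infty$ in $\dot X^1(\R\times\HH)$ --- should translate almost verbatim.

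The main obstacle will be Step~4: proving $\|(i\partial_t+\Delta_\Omega)\tilde v_n - |\tilde v_n|^4\tilde v_n\|_{\dot N^1(\R\times\Omega)} \to 0$ in the middle time window. Rescaling, this reduces to estimating $(\Delta_{\Omega_n} - \Delta_\HH)w_n$, where $\Omega_n \to \HH$ but $\Omega_n\setminus\HH$ is a non-empty curved region whose $y_3$-extent at horizontal distance $O(1)$ is $O(\lambda_n)$ by the curvature of $\partial\Omega$. The subtlety is that the zero-extension of $w_n$ from $\HH_n$ to $\Omega$ carries a jump in its normal derivative along $\partial\HH \cap \Omega$ (since $w_n|_{\partial\HH} = 0$ but $\partial_3 w_n|_{\partial\HH}$ is generally nonzero), so the extension fails to lie in $H^2$. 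To handle this, I would introduce a smooth cutoff $\eta_n$ that interpolates between $\tilde v_n$ well inside $\HH_n$ and zero on $\partial\Omega_n$ at a scale $\lambda_n^\alpha$ (for some small $\alpha>0$), add a small Duhamel correction analogous to $z_n$ in Theorem~\ref{T:embed2} to absorb the main singular contribution (itself controlled through non-resonance in time), and use the regularity bounds from Step~1 to control the residual spacetime integrals. The estimates for the outer time regions then follow by the usual scattering / monotone convergence argument combined with Corollary~\ref{C:LF}, exactly as in Step~4 of Theorem~\ref{T:embed2}.
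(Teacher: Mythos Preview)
Your Steps~1, 3, and~5 are essentially correct and match the paper's argument.  The substantive divergence is in Steps~2 and~4.  The paper does \emph{not} use the zero-extension you propose.  Instead, it introduces a boundary-straightening diffeomorphism $\Psi_n:\Omega_n\cap\{|x^\perp|\le L_n\}\to\HH$, with $L_n=\lambda_n^{-2\theta}$, defined by $\Psi_n(x)=(x^\perp,x_3+\psi_n(x^\perp))$ where $(x^\perp,-\psi_n(x^\perp))$ parametrizes $\partial\Omega_n$.  The approximate solution on the middle interval is $\lambda_n^{-1/2}(\chi_n w_n)\!\circ\!\Psi_n$ with $\chi_n$ a cutoff at the \emph{large} scale $L_n$.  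Because $\Psi_n$ sends $\partial\Omega_n$ to $\partial\HH$, the Dirichlet condition is satisfied exactly and no normal-derivative jump appears.  The errors in Step~4 then arise only from the chain rule (terms carrying $\partial\psi_n$, $\partial^2\psi_n$, with $|\partial\psi_n|\lesssim\lambda_n^{1-2\theta}$) and from the large-scale cutoff $\chi_n$ (harmless, since $w_n$ is small there); both are controlled directly in $L^1_tL^2_x$ using the regularity bounds from Step~1.  No Duhamel correction is needed.

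Your zero-extension route does create a genuine obstacle, and the fix you sketch is not adequate as stated.  A cutoff $\eta_n$ at scale $\lambda_n^\alpha$ near $\partial\HH$ produces error terms such as $\partial_3^2\eta_n\,\partial_3 w_n$ and $\partial_3^3\eta_n\,w_n$; since $\partial_3 w_n$ does not vanish on $\partial\HH$, the trace contribution makes these blow up in $\dot N^1$ for any $\alpha>0$.  The $z_n$-trick from Theorem~\ref{T:embed2} does not transfer in an obvious way: there the singular source was $(\Delta\chi_n)(x)\cdot w_n(t,x_0)$, a product of a fixed spatial bump and a function of $t$ alone, which is precisely what enables the time-integration-by-parts.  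Here the dominant error is supported on a two-dimensional surface and carries the trace $\partial_3 w_n(t,x^\perp,0)$, which depends on $x^\perp$; the analogous correction is no longer a simple antiderivative of the propagator acting on a fixed profile, and controlling it would require substantially more than ``non-resonance in time.''  The diffeomorphism construction sidesteps all of this.
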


\begin{proof}
Again, the proof follows the outline of the proofs of Theorems~\ref{T:embed2} and \ref{T:embed3}.

\textbf{Step 1:} Constructing global solutions to $\text{NLS}_{\HH}$.

Let $\theta:=\frac 1{100}$.  If $t_n\equiv0$, let $w_n$ and $w_\infty$ be solutions to $\text{NLS}_{\HH}$ with initial data
$w_n(0)=\phi_{\le \lambda_n^{-\theta}}$ and $w_\infty(0)=\phi$.  If $t_n\to \pm\infty$, let $w_n$ and $w_\infty$ be solutions to $\text{NLS}_{\HH}$ that satisfy
\begin{align}\label{m12}
\|w_n(t)-e^{it\Delta_{\HH}}\phi_{\le \lambda_n^{-\theta}}\|_{\dot H^1_D(\HH)}\to 0 \qtq{and} \|w_\infty(t)-e^{it\Delta_{\HH}}\phi\|_{\dot H^1_D(\HH)}\to 0,
\end{align}
as $t\to \pm \infty$.

In all cases, \cite{CKSTT:gwp} implies that $w_n$ and $w_\infty$ are global solutions and obey
\begin{align*}
\|w_n\|_{\dot S^1(\R\times\HH)}+\|w_\infty\|_{\dot S^1(\R\times\HH)}\lsm 1,
\end{align*}
with the implicit constant depending only on $\|\phi\|_{\dot H^1_D(\HH)}$.  Indeed, we may interpret such solutions as solutions to $\text{NLS}_{\R^3}$ that are odd under reflection in $\partial\HH$.  Moreover, arguing as in the proof of Theorems~\ref{T:embed2} and \ref{T:embed3} and using the stability result from \cite{CKSTT:gwp} and the persistence of regularity result Lemma~\ref{lm:persistenceh}, we have
\begin{align}\label{cond4}
\begin{cases}
\lim_{n\to \infty}\|w_n-w_\infty\|_{\dot S^1(\R\times\HH)}=0,\\
\|(-\Delta_\HH)^{\frac k2}w_n\|_{L_t^\infty L_x^2(\R\times\HH)}\lsm\lambda_n^{-\theta(k-1)} \qtq{for} k=1,2,3.
\end{cases}
\end{align}

\textbf{Step 2:} Constructing approximate solutions to $\text{NLS}_\Omega$.

Let $\Omega_n:=\lambda_n^{-1}R_n^{-1}(\Omega-\{x_n^*\})$ and let $T>0$ to be chosen later.  On the middle time interval $|t|<\lambda_n^2 T$, we embed
$w_n$ by using a boundary straightening diffeomorphism $\Psi_n$ of a neighborhood of zero in $\Omega_n$ of size $L_n:=\lambda_n^{-2\theta}$ into a corresponding neighborhood in $\HH$.

To this end, we define a smooth function $\psi_n$ on the set $|x^\perp|\le L_n$ so that $x^\perp\mapsto (x^\perp, -\psi_n(x^\perp))$ traces out $\partial\Omega_n$.  Here and below we write $x\in \R^3$ as $x=(x^\perp, x_3)$.  By our choice of $R_n$, $\partial \Omega_n$ has unit normal $e_3$ at zero.  Moreover, $\partial\Omega_n$ has curvatures that are $O(\lambda_n)$.  Thus,
$\psi_n$ satisfies the following:
\begin{align}\label{psin}
\begin{cases}
&\psi_n(0)=0, \quad \nabla\psi_n(0)=0, \quad |\nabla\psi_n(x^\perp)|\lsm \lambda_n^{1-2\theta},\\
&|\partial^{\alpha}\psi_n(x^\perp)|\lsm\lambda_n^{|\alpha|-1} \qtq{for all}  |\alpha|\ge 2.
\end{cases}
\end{align}

We now define the map $\Psi_n: \Omega_n\cap\{|x^\perp|\le L_n\}\to \HH$ and a cutoff $\chi_n:\R^3\to[0,1]$ via
\begin{align*}
\Psi_n(x):=(\xp, x_3+\psi_n(x^\perp)) \qtq{and} \chi_n(x):=1-\Theta\bigl(\tfrac{x}{L_n}\bigr).
\end{align*}
Note that on the domain of $\Psi_n$, which contains $\supp\chi_n$, we have
\begin{align}\label{detpsin}
|\det(\partial \Psi_n)|\sim 1 \qtq{and} |\partial\Psi_n|\lsm 1.
\end{align}

We are now ready to define the approximate solution. Let $\tilde w_n:=\chi_nw_n$ and define
\begin{align*}
\tilde v_n(t,x):=\begin{cases} \lambda_n^{-\frac12}[\tilde
w_n(\lambda_n^{-2}t)\circ\Psi_n](\lambda_n^{-1}R_n^{-1}(x-x_n^*)), &|t|\le \lambda_n^2 T, \\
e^{i(t-\lambda_n^2 T)\Delta_\Omega}\tilde v_n(\lambda_n^2 T,x), &t>\lambda_n^2 T,\\
e^{i(t+\lambda_n^2 T)\Delta_\Omega}\tilde v_n(-\lambda_n^2T,x), &t<-\lambda_n^2 T .
\end{cases}
\end{align*}
We first prove that $\tilde v_n$ has finite scattering size.  Indeed, by the Strichartz inequality, a change of variables, and \eqref{detpsin},
\begin{align}\label{tildevn4}
\|\tilde v_n\|_{L_{t,x}^{10}(\R\times\Omega)}
&\lsm \|\tilde w_n\circ\Psi_n\|_{L_{t,x}^{10}(\R\times\Omega_n)}+\|\tilde w_n(\pm T)\circ\Psi_n\|_{\dot H^1_D(\Omega_n)}\notag\\
&\lsm \|\tilde w_n\|_{L_{t,x}^{10}(\R\times\HH)} + \|\tilde w_n(\pm T)\|_{\dot H^1_D(\HH)}\lsm 1.
\end{align}

\textbf{Step 3:}  In this step we prove asymptotic agreement of the initial data, namely,
\begin{align}\label{match4}
\lim_{T\to\infty}\limsup_{n\to \infty}\|(-\Delta_\Omega)^{\frac12}e^{it\Delta_\Omega}[\tilde v_n(\lambda_n^2 t_n)-\phi_n]\|_{L_t^{10} L_x^{\frac{30}{13}}(\R\times\Omega)}=0.
\end{align}

We discuss two cases. If $t_n\equiv0$, then by Strichartz and a change of variables,
\begin{align*}
\| & (-\Delta_{\Omega})^{\frac 12} e^{it\Delta_\Omega}[\tilde v_n(0)-\phi_n]\|_{L_t^{10} L_x^{\frac{30}{13}}(\R\times\Omega)}\\
&\lsm \|(\chi_n\phi_{\le \lambda_n^{-\theta}})\circ\Psi_n-\phi\|_{\dot H^1_D(\Omega_n)}\\
&\lsm \|\nabla[(\chi_n\phi_{>\lambda_n^{-\theta}})\circ\Psi_n]\|_{L^2_x}+\|\nabla[(\chi_n\phi)\circ\Psi_n-\chi_n\phi]\|_{L^2_x}+\|\nabla[(1-\chi_n)\phi]\|_{L^2_x}.
\end{align*}
As $\lambda_n\to 0$ we have $\|\nabla \phi_{>\lambda_n^{-\theta}}\|_{L^2_x}\to 0$ as $n\to \infty$; thus, using \eqref{detpsin} we see that the first
term converges to $0$.  For the second term, we note that $\Psi_n(x)\to x$ in $C^1$; thus, approximating $\phi$ by $C_c^\infty(\HH)$ functions we see that the second term converges to $0$. Finally, by the dominated convergence theorem and $L_n=\lambda_n^{-2\theta}\to \infty$, the last term converges to $0$.

It remains to prove \eqref{match4} when $t_n\to +\infty$; the case when $t_n\to -\infty$ can be treated similarly.
Note that as $T>0$ is fixed, for $n$ sufficiently large we have $t_n>T$ and so
\begin{align*}
\tilde v_n(\lambda_n^2t_n,x)&=e^{i(t_n-T)\lambda_n^2\Delta_\Omega}[\lambda_n^{-\frac12}(\tilde w_n(T)\circ\Psi_n)(\lambda_n^{-1}R_n^{-1}(x-x_n^*))].
\end{align*}
Thus, a change of variables gives
\begin{align}
\|(-\Delta_\Omega)^{\frac12} &e^{it\Delta_\Omega}[\tilde v_n(\lambda_n^2 t_n)-\phi_n]\|_{L_t^{10} L_x^{\frac{30}{13}}(\R\times\Omega)}\notag\\
&\lsm \|(-\Delta_{\Omega_n})^{\frac 12}[\tilde w_n(T)\circ\Psi_n-w_\infty(T)]\|_{L^2_x}\label{n13}\\
&\quad+\|(-\Delta_{\Omega_n})^{\frac 12}[e^{i(t-T)\Delta_{\Omega_n}}w_\infty(T)-e^{it\Delta_{\Omega_n}}\phi]\|_{L_t^{10} L_x^{\frac{30}{13}}(\R\times\Omega_n)}.\label{n12}
\end{align}
Using the triangle inequality,
\begin{align*}
\eqref{n13}
&\lsm\|(-\Delta_{\Omega_n})^{\frac12}[(\chi_nw_\infty(T))\circ\Psi_n-w_\infty(T)]\|_{L^2_x}\\
&\quad+\|(-\Delta_{\Omega_n})^{\frac 12}[(\chi_n(w_n(T)-w_\infty(T)))\circ\Psi_n]\|_{L^2_x},
\end{align*}
which converges to zero as $n\to \infty$ by \eqref{cond4} and the the fact that $\Psi_n(x)\to x$ in $C^1$.  Using Strichartz, Lemma~\ref{L:n3},
Theorem~\ref{T:LF}, and \eqref{m12}, we see that
\begin{align*}
\eqref{n12}
&\lsm \|e^{i(t-T)\Delta_{\Omega_n}}(-\Delta_{\HH})^{\frac12}w_\infty(T)-e^{it\Delta_{\Omega_n}}(-\Delta_{\HH})^{\frac12}\phi\|_{L_t^{10} L_x^{\frac{30}{13}}(\R\times\Omega_n)}\\
&\quad +\|[(-\Delta_{\Omega_n})^{\frac 12}-(-\Delta_{\HH})^{\frac12}]w_\infty(T)\|_{L^2_x}+\|[(-\Delta_{\Omega_n})^{\frac 12}-(-\Delta_{\HH})^{\frac 12}]\phi\|_{L^2_x}\\
&\lsm\|[e^{i(t-T)\Delta_{\Omega_n}}-e^{i(t-T)\Delta_{\HH}}](-\Delta_{\HH})^{\frac 12}w_\infty(T)\|_{L_t^{10} L_x^{\frac{30}{13}}(\R\times\Omega_n)}\\
&\quad+\|[e^{it\Delta_{\Omega_n}}-e^{it\Delta_{\HH}}](-\Delta_{\HH})^{\frac12}\phi\|_{L_t^{10} L_x^{\frac{30}{13}}(\R\times\Omega_n)}\\
&\quad+\|e^{-iT\Delta_{\HH}}w_\infty(T)-\phi\|_{\dot H^1_D(\HH)}+o(1),
\end{align*}
and that this converges to zero by first taking $n\to \infty$ and then $T\to \infty$.

\textbf{Step 4:} In this step we prove that $\tilde v_n$ is an approximate solution to $\text{NLS}_\Omega$ in the sense that
\begin{align}\label{n14}
\lim_{T\to\infty}\limsup_{n\to\infty}\|(-\Delta_\Omega)^{\frac12}[(i\partial_t+\Delta_\Omega)\tilde v_n-|\tilde v_n|^4\tilde v_n]\|_{\dot N^0(\R\times\Omega)}=0.
\end{align}
We first control the contribution of $|t|\ge \lambda_n^2T$.  As seen previously, this reduces to proving
\begin{align}\label{n15}
\lim_{T\to\infty}\limsup_{n\to\infty}\|e^{i(t-\lambda_n^2T)\Delta_{\Omega}}\tilde v_n(\lambda_n^2 T)\|_{L_{t,x}^{10}((\lambda_n^2 T,\infty)\times\Omega)}=0.
\end{align}
and the analogous estimate in the opposite time direction, which follows similarly.

Let $w_+$ denote the forward asymptotic state of $w_\infty$.  Using Strichartz, our earlier estimate on \eqref{n13}, and the monotone convergence theorem, we see that
\begin{align*}
\|&e^{i(t-\lambda_n^2 T)\Delta_{\Omega}}\tilde v_n(\lambda_n^2T)\|_{L_{t,x}^{10}((\lambda_n^2 T,\infty)\times\Omega)}\\
&=\|e^{i(t-T)\Delta_{\Omega_n}}[\tilde w_n(T)\circ \Psi_n]\|_{L_{t,x}^{10}((T,\infty)\times\Omega_n)}\\
&\lsm \|e^{i(t-T)\Delta_{\Omega_n}}[e^{iT\Delta_{\HH}}w_+]\|_{L_{t,x}^{10}((T,\infty)\times\Omega_n)}+\|w_\infty(T)-e^{iT\Delta_{\HH}}w_+\|_{\dot H^1_D(\HH)}\\
&\quad+\|\tilde w_n(T)\circ\Psi_n-w_\infty(T)\|_{\dot H^1_D(\Omega_n)}\\
&\lsm \|[e^{i(t-T)\Delta_{\Omega_n}}-e^{i(t-T)\Delta_{\HH}}][e^{iT\Delta_{\HH}}w_+]\|_{L_{t,x}^{10}((0,\infty)\times\Omega_n)}\\
&\quad+\|e^{it\Delta_\HH}w_+\|_{L_{t,x}^{10} ((T,\infty)\times\HH)}+o(1)
\end{align*}
and that this converges to zero by Theorem~\ref{T:LF} and the monotone convergence theorem by first taking $n\to \infty$ and then $T\to \infty$.  Thus \eqref{n15} is proved.

Next we control the contribution of the middle time interval $\{|t|\le \lambda_n^2 T\}$ to \eqref{n14}. We compute
\begin{align*}
\Delta(\tilde w_n\circ \Psi_n)&=(\partial_k\tilde w_n\circ\Psi_n)\Delta\Psi_n^k+(\partial_{kl}\tilde w_n\circ\Psi_n)\partial_j\Psi_n^l\partial_j\Psi_n^k,
\end{align*}
where $\Psi_n^k$ denotes the $k$th component of $\Psi_n$ and repeated indices are summed.  As $\Psi_n(x)=x+(0,\psi_n(\xp))$, we have
\begin{align*}
&\Delta\Psi_n^k=O(\partial^2\psi_n), \quad \partial_j\Psi_n^l=\delta_{jl}+O(\partial\psi_n), \\
&\partial_j\Psi_n^l\partial_j\Psi_n^k=\delta_{jl}\delta_{jk}+O(\partial\psi_n)+O((\partial\psi_n)^2),
\end{align*}
where we use $O$ to denote a collection of similar terms.  For example, $O(\partial\psi_n)$ contains terms of the form $c_j\partial_{x_j}\psi_n$ for some constants $c_j\in \R$, which may depend on the indices $k$ and $l$ appearing on the left-hand side.  Therefore,
\begin{align*}
(\partial_k\tilde w_n\circ\Psi_n)\Delta\Psi_n^k&=O\bigl((\partial\tilde w_n\circ\Psi_n)(\partial^2\psi_n)\bigr),\\
(\partial_{kl}\tilde w_n\circ\Psi_n)\partial_j\Psi_n^l\partial_j\Psi_n^k
&=\Delta\tilde w_n\circ\Psi_n+O\bigl(\bigl(\partial^2\tilde w_n\circ\Psi_n\bigr)\bigl(\partial\psi_n+(\partial\psi_n)^2\bigr)\bigr)
\end{align*}
and so
\begin{align*}
(i\partial_t+\Delta_{\Omega_n})&(\tilde w_n\circ \Psi_n)-(|\tilde w_n|^4\tilde w_n)\circ\Psi_n\\
&=[(i\partial_t+\Delta_{\HH})\tilde w_n-|\tilde w_n|^4\tilde w_n]\circ \Psi_n \\
&\quad+O\bigl((\partial\tilde w_n\circ\Psi_n)(\partial^2\psi_n)\bigr)+O\bigl(\bigl(\partial^2\tilde w_n\circ\Psi_n\bigr)\bigl(\partial\psi_n+(\partial\psi_n)^2\bigr)\bigr).
\end{align*}
By a change of variables and \eqref{detpsin}, we get
\begin{align}
\|(-\Delta_\Omega)^{\frac 12}&[(i\partial_t+\Delta_\Omega)\tilde v_n-|\tilde v_n|^4\tilde v_n]\|_{L_t^1L_x^2((|t|\le \lambda_n^2T)\times\Omega)}\notag\\
&=\|(-\Delta_{\Omega_n})^{\frac12}[(i\partial_t+\Delta_{\Omega_n})(\tilde w_n\circ\Psi_n)-(|\tilde w_n|^4\tilde w_n)\circ \Psi_n]\|_{L_t^1L_x^2((|t|\le T)\times\Omega_n)}\notag\\
&\lsm \|(-\Delta_{\Omega_n})^{\frac12}[((i\partial_t+\Delta_{\HH})\tilde w_n-|\tilde w_n|^4\tilde w_n)\circ\Psi_n]\|_{L_t^1L_x^2([-T,T]\times\Omega_n)}\notag\\
&\quad+\|(-\Delta_{\Omega_n})^{\frac 12}[(\partial\tilde w_n\circ \Psi_n)\partial^2\psi_n]\|_{L_t^1L_x^2([-T,T]\times\Omega_n)}\notag\\
&\quad+\bigl\|(-\Delta_{\Omega_n})^{\frac 12}\bigl[(\partial^2\tilde w_n\circ\Psi_n)\bigl(\partial\psi_n+(\partial\psi_n)^2\bigr)\bigr]\bigr\|_{L_t^1L_x^2([-T,T]\times\Omega_n)}\notag\\
&\lsm \|\nabla[(i\partial_t+\Delta_{\HH})\tilde w_n -|\tilde w_n|^4\tilde w_n]\|_{L_t^1L_x^2([-T,T]\times\HH)}\label{n18}\\
&\quad+\|\nabla[(\partial \tilde w_n\circ\Psi_n)\partial^2\psi_n]\|_{L_t^1L_x^2([-T,T]\times\Omega_n)}\label{n16}\\
&\quad+\bigl\|\nabla\bigl[(\partial^2 \tilde w_n\circ \Psi_n)\bigl(\partial\psi_n+(\partial\psi_n)^2\bigr)\bigr]\bigr\|_{L_t^1L_x^2([-T,T]\times\Omega_n)}\label{n17}.
\end{align}
Using \eqref{cond4}, \eqref{psin}, and \eqref{detpsin}, we can control the last two terms as follows:
\begin{align*}
\eqref{n16}
&\lsm\|(\partial\tilde w_n\circ\Psi_n)\partial^3\psi_n\|_{L_t^1L_x^2([-T,T]\times\Omega_n)}+\|(\partial^2\tilde w_n\circ\Psi_n)\partial\Psi_n\partial^2\psi_n\|_{L_t^1L_x^2([-T,T]\times\Omega_n)}\\
&\lsm T\lambda_n^2\|\nabla \tilde w_n\|_{L_t^\infty L_x^2}+T\lambda_n\|\partial^2\tilde w_n\|_{L_t^\infty L_x^2}\\
&\lsm T\lambda_n^2\bigl[\|\nabla \chi_n\|_{L^3_x}\|w_n\|_{L_t^\infty L^6_x}+\|\nabla w_n\|_{L_t^\infty L_x^2}\bigr]\\
&\quad+T\lambda_n\bigl[\|\partial^2 \chi_n\|_{L^3_x}\|w_n\|_{L_t^\infty L^6_x}+\|\nabla \chi_n\|_{L_x^\infty}\|\nabla w_n\|_{L_t^\infty L_x^2}
+\|\partial^2w_n\|_{L_t^\infty L_x^2}\bigr]\\
&\lsm T\lambda_n^2+T\lambda_n[L_n^{-1}+\lambda_n^{-\theta}]\to 0\qtq{as} n\to \infty
\end{align*}
and similarly,
\begin{align*}
\eqref{n17}
&\lsm \|(\partial^2 \tilde w_n\circ\Psi_n)(\partial^2\psi_n+\partial\psi_n\partial^2\psi_n)\|_{L_t^1L_x^2([-T,T]\times\Omega_n)}\\
&\quad+ \|(\partial^3\tilde w_n\circ\Psi_n)[\partial\Psi_n(\partial\psi_n+(\partial\psi_n)^2)]\|_{L_t^1L_x^2([-T,T]\times\Omega_n)}\\
&\lsm T[\lambda_n+\lambda_n^{2-2\theta}] \|\partial^2\tilde w_n\|_{L_t^\infty L_x^2}+T[\lambda_n^{1-2\theta}+\lambda_n^{2-4\theta}]\|\partial^3\tilde w_n\|_{L_t^\infty L_x^2}\\
&\lsm T\lambda_n[L_n^{-1}+\lambda_n^{-\theta}]+T\lambda_n^{1-2\theta}\bigl[\|\partial^3\chi_n\|_{L^3_x}\|w_n\|_{L_t^\infty L^6_x}+\|\partial^2\chi_n\|_{L_x^\infty}\|\nabla w_n\|_{L^2_x}\\
&\quad+\|\nabla\chi\|_{L_x^\infty}\|\partial^2w_n\|_{L_t^\infty L^2_x}+\|\partial^3 w_n\|_{L_t^\infty L^2_x}\bigr]\\
&\lsm T\lambda_n[L_n^{-1}+\lambda_n^{-\theta}]+T\lambda_n^{1-2\theta}\bigl[L_n^{-2}+L_n^{-1}\lambda_n^{-\theta}+\lambda_n^{-2\theta}\bigr]\to 0\qtq{as} n\to \infty.
\end{align*}

Finally, we consider \eqref{n18}.  A direct computation gives
\begin{align*}
(i\partial_t+\Delta_{\HH})\tilde w_n-|\tilde w_n|^4\tilde w_n=(\chi_n-\chi_n^5)|w_n|^4w_n+2\nabla\chi_n\cdot\nabla w_n+\Delta\chi_n w_n.
\end{align*}
We then bound each term as follows:
\begin{align*}
\|\nabla(\Delta \chi_n w_n)\|_{L_t^1L_x^2([-T,T]\times\HH)}
&\lsm T\bigl[ \|\partial^3\chi_n\|_{L^3_x}\|w_n\|_{L^\infty_t L^6_x}+\|\partial^2 \chi_n\|_{L^\infty_x} \|\nabla w_n\|_{L^\infty_t L^2_x} \bigr] \\
&\lsm TL_n^{-2} \to 0 \qtq{as} n\to \infty\\
\|\nabla(\nabla\chi_n\cdot \nabla w_n)\|_{L_t^1L_x^2([-T,T]\times\HH)}
&\lsm T\bigl[ \|\partial^2 \chi_n\|_{L^\infty_x} \|\nabla w_n\|_{L^\infty_t L^2_x} + \|\nabla\chi_n\|_{L^\infty_x} \|\partial^2 w_n\|_{L^\infty_t L^2_x}\bigr]\\
&\lsm T[L_n^{-2}+L_n^{-1}\lambda_n^{-\theta}] \to 0 \qtq{as} n\to \infty.
\end{align*}
Finally, for the first term, we have
\begin{align*}
\| & \nabla[(\chi_n-\chi_n^5)|w_n|^4w_n]\|_{\dot N^0([-T,T]\times\HH)}\\
&\lsm \|(\chi_n-\chi_n^5) |w_n|^4\nabla w_n\|_{L_{t,x}^{\frac{10}7}([-T,T]\times\HH)}+\| |w_n|^5\nabla \chi_n\|_{L_t^{\frac53}L_x^{\frac{30}{23}}([-T,T]\times\HH)}\\
&\lsm \|w_n 1_{|x|\sim L_n}\|_{L^{10}_{t,x}}^4 \|\nabla w_n\|_{L^{\frac{10}3}_{t,x}}
	+ \|\nabla \chi_n\|_{L^3_x} \|w_n 1_{|x|\sim L_n}\|_{L^{10}_{t,x}}^4 \|\nabla w_n\|_{L^5_t L^\frac{30}{11}_x}\\
&\lsm \|1_{|x|\sim L_n}w_\infty \|_{L^{10}_{t,x}}^4+\|w_\infty-w_n\|_{L^{10}_{t,x}}^4 \to 0 \qtq{as} n\to \infty.
\end{align*}
This completes the proof of \eqref{n14}.

\textbf{Step 5:}  Constructing $v_n$ and approximating by $C_c^{\infty}$ functions.

Using \eqref{tildevn4}, \eqref{match4}, and \eqref{n14}, and invoking the stability result Theorem~\ref{T:stability}, for $n$ large enough we obtain a global solution $v_n$ to $\text{NLS}_\Omega$ with initial data $v_n(0)=\phi_n$ and
\begin{align*}
\|v_n\|_{L_{t,x}^{10}(\R\times\Omega)}\lsm 1.
\end{align*}
Moreover,
\begin{align}\label{n19}
\lim_{T\to\infty}\limsup_{n\to\infty}\|v_n(t-\lambda_n^2t_n)-\tilde v_n(t)\|_{\dot S^1(\R\times\Omega)}=0.
\end{align}

It remains to prove the approximation result \eqref{ap4}.  By the density of $C_c^{\infty}(\R\times\HH)$ in $\dot X^1(\R\times\HH)$, for
every $\eps>0$ there exists $\psi_\eps\in C_c^{\infty}(\R\times\HH)$ such that
\begin{align*}
\|w_\infty-\psi_\eps\|_{\dot X^1(\R\times\HH)}<\tfrac \eps 3.
\end{align*}
This together with \eqref{n19} reduce matters to showing
\begin{align}\label{c4e3}
\|\tilde v_n(t,x)-\lambda_n^{-\frac 12}w_\infty(\lambda_n^{-2}t, \lambda_n^{-1}R_n^{-1}(x-x_n^*))\|_{\dot X^1(\R\times\R^3)}<\tfrac\eps3
\end{align}
for $n,\ T$ sufficiently large. A change of variables shows that
\begin{align*}
\text{LHS\eqref{c4e3}}&\lsm \|\tilde w_n\circ \Psi_n-w_\infty\|_{\dot X^1([-T,T]\times\R^3)} \\
& \quad +\|e^{i(t-T)\Delta_{\Omega_n}}(\tilde w_n(T)\circ\Psi_n)-w_\infty\|_{\dot X^1((T,\infty)\times\R^3)}\\
&\quad+\|e^{i(t+T)\Delta_{\Omega_n}}(\tilde w_n(-T)\circ\Psi_n)-w_\infty\|_{\dot X^1((-\infty,-T)\times\R^3)}.
\end{align*}
The first term can be controlled as follows:
\begin{align*}
\|\tilde w_n\circ\Psi_n-w_\infty\|_{\dot X^1([-T,T]\times\R^3)}
& \lesssim \| (\chi_n w_\infty)\circ\Psi_n-w_\infty\|_{\dot X^1([-T,T]\times\R^3)}\\
&\quad +\|[\chi_n(w_n-w_\infty)]\circ\Psi_n\|_{\dot X^1([-T,T]\times\R^3)},
\end{align*}
which converges to zero as $n\to\infty$ by \eqref{cond4} and the fact that $\Psi_n(x)\to x$ in $C^1$.
Similarly, we can use the Strichartz inequality to replace $\tilde w_n(T)\circ \Psi_n$ by $w_\infty(T)$ in the second term by making a $o(1)$ error as $n\to \infty$.  Then we can use the convergence of propagators result Theorem~\ref{T:LF} to replace $e^{i(t-T)\Delta_{\Omega_n}}$ by $e^{i(t-T)\Delta_{\HH}}$ with an additional $o(1)$ error.  It then suffices to show
\begin{align*}
\|e^{i(t-T)\Delta_{\HH}}w_\infty(T)-w_\infty\|_{\dot X^1((T,\infty)\times\R^3)}\to 0 \qtq{as}T\to \infty,
\end{align*}
which follows from the fact that $w_\infty$ scatters forward in time, just as in the proofs of Theorem~\ref{T:embed2} and~\ref{T:embed3}.  The treatment of the third term is similar.  This completes the proof of \eqref{ap4} and so the proof of Theorem~\ref{T:embed4}.
\end{proof}

\section{Palais--Smale and the proof of Theorem~ \ref{T:main}}\label{S:Proof}

In this section we prove a Palais--Smale condition for minimizing sequences of blowup solutions to \eqref{nls}.  This will allow us to conclude that failure of Theorem~\ref{T:main} would imply the existence of special counterexamples that are almost periodic.  At the end of this section, we rule out these almost periodic solutions by employing a spatially truncated (one-particle) Morawetz inequality in the style of \cite{borg:scatter}.  This will complete the proof of Theorem~\ref{T:main}.

We first define operators $T_n^j$ on general functions of spacetime.  These act on linear solutions in a manner corresponding to the action of $G_n^j \exp\{it_n^j\Delta_{\Omega_n^j}\}$ on initial data in Theorem~\ref{T:LPD}.  As in that theorem, the exact definition depends on the case to which
the index $j$ conforms.  In Cases~1, 2,~and~3, we define
\begin{align*}
(T_n^j f)(t,x) :=(\lambda_n^j)^{-\frac 12}f\bigl((\lambda_n^j)^{-2} t+t_n^j, (\lambda_n^j)^{-1}(x-x_n^j)\bigr).
\end{align*}
In Case 4, we define
\begin{align*}
(T_n^j f)(t,x):=(\lambda_n^j)^{-\frac12}f\bigl((\lambda_n^j)^{-2}t+t_n^j, (\lambda_n^j)^{-1}(R_n^j)^{-1}(x-(x_n^j)^*)\bigr).
\end{align*}
Here, the parameters $\lambda_n^j, t_n^j, x_n^j, (x_n^j)^*$, and $R_n^j$ are as defined in Theorem~\ref{T:LPD}.  Using the asymptotic orthogonality
condition \eqref{E:LP5}, it is not hard to prove the following

\begin{lem}[Asymptotic decoupling]\label{L:ortho} Suppose that the parameters associated to $j,k$ are orthogonal in the sense
of \eqref{E:LP5}.  Then for any $\psi^j, \psi^k\in C_c^{\infty}(\R\times\R^3)$,
\begin{align*}
\|T_n^j\psi^j T_n^k\psi^k\|_{L_{t,x}^5(\R\times\R^3)}+\|T_n^j\psi^j \nabla(T_n^k\psi^k)\|_{L_{t,x}^{\frac 52}(\R\times\R^3)}
	+\|\nabla(T_n^j\psi^j)\nabla (T_n^k\psi^k)\|_{L_{t,x}^{\frac53}(\R\times\R^3)}
\end{align*}
converges to zero as $n\to\infty$.
\end{lem}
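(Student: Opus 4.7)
The plan is to change variables so as to ``unscale'' the narrower of the two profiles, and then to invoke the orthogonality hypothesis \eqref{E:LP5} to show that either a scale-prefactor tends to zero, or the transformed profiles become disjointly supported.  Without loss of generality, assume $\lambda_n^j \leq \lambda_n^k$; by passing to a subsequence, also suppose $\mu_n := \lambda_n^j/\lambda_n^k \to \mu_\infty \in [0,1]$, and (using compactness of $SO(3)$) that $R_n^j$ and $R_n^k$ converge whenever the corresponding profiles lie in Case~4.  Write $c_n^j := x_n^j$ in Cases~1--3 and $c_n^j := (x_n^j)^*$ in Case~4 (and set $R_n^j := \Id$ outside Case~4), and introduce
\begin{align*}
s=(\lambda_n^j)^{-2}(t-(\lambda_n^j)^2 t_n^j), \qquad y=(\lambda_n^j)^{-1}(R_n^j)^{-1}(x-c_n^j),
\end{align*}
so that $T_n^j\psi^j$ becomes $(\lambda_n^j)^{-1/2}\psi^j(s,y)$, supported in a fixed compact set, while $T_n^k\psi^k$ becomes $(\lambda_n^k)^{-1/2}\widetilde{\psi^k_n}(s,y)$, where
\begin{align*}
\widetilde{\psi^k_n}(s,y)=\psi^k\bigl(\mu_n^2(s-t_n^j)+t_n^k,\; \mu_n \mathcal R_n\, y + (\lambda_n^k)^{-1}(R_n^k)^{-1}(c_n^j-c_n^k)\bigr)
\end{align*}
with $\mathcal R_n := (R_n^k)^{-1} R_n^j$.

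A direct Jacobian computation, using that the change of variables contributes a factor $(\lambda_n^j)^5$, shows that each of the three target norms factors as a positive power of $\mu_n$ times a uniformly bounded integral in $(s,y)$.  Specifically, I expect to obtain
\begin{align*}
\|T_n^j\psi^j\cdot T_n^k\psi^k\|_{L^5_{t,x}} &\lesssim \mu_n^{1/2}, \\
\|T_n^j\psi^j\cdot\nabla(T_n^k\psi^k)\|_{L^{5/2}_{t,x}} &\lesssim \mu_n^{3/2}, \\
\|\nabla(T_n^j\psi^j)\cdot\nabla(T_n^k\psi^k)\|_{L^{5/3}_{t,x}} &\lesssim \mu_n^{3/2},
\end{align*}
with implicit constants depending only on $\psi^j$ and $\psi^k$; here each gradient contributes an additional $(\lambda_n^k)^{-1}$ which is offset by the corresponding Lebesgue exponent.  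If $\mu_\infty=0$, all three claims follow immediately.

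The main obstacle, and the one requiring \eqref{E:LP5}, is the case $\mu_\infty>0$.  Here the prefactors do not help, so I must instead show that $\psi^j(s,y)\,\widetilde{\psi^k_n}(s,y)$ vanishes identically for all large $n$.  Because $\mu_n\sim 1$, the orthogonality hypothesis reduces to
\begin{align*}
\frac{|x_n^j-x_n^k|}{\lambda_n^k}+\frac{|(\lambda_n^j)^2 t_n^j-(\lambda_n^k)^2 t_n^k|}{(\lambda_n^k)^2}\longrightarrow\infty;
\end{align*}
replacing $x_n^j$ by $c_n^j$ (in Case~4) introduces an $O(\lambda_n^j)$ error, which is negligible relative to $\lambda_n^k$ since $\mu_n\sim1$, so the same divergence holds for $c_n^j,c_n^k$.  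In the $(s,y)$ coordinates, the support of $\widetilde{\psi^k_n}$ is contained in a set of bounded diameter centered at a point whose $y$-coordinate has magnitude $\sim \mu_n^{-1}(\lambda_n^k)^{-1}|c_n^j-c_n^k|$ and whose $s$-coordinate has magnitude $\sim \mu_n^{-2}(\lambda_n^k)^{-2}|(\lambda_n^j)^2 t_n^j-(\lambda_n^k)^2 t_n^k|$; at least one of these diverges, while $\psi^j$ remains supported in a fixed bounded set.  Hence for $n$ sufficiently large the two supports are disjoint and each of the three norms is exactly zero, completing the proof.
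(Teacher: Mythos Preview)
Your approach is essentially the same as the paper's: change variables to reduce to $\psi^j \cdot (T_n^j)^{-1}T_n^k\psi^k$, extract a scale-ratio prefactor to handle the case of divergent scale ratio, and then argue disjoint supports (temporal or spatial) when the scales are comparable. The paper carries this out only for the $L^5$ norm and remarks that the others are similar; you compute explicit powers of $\mu_n$ for all three.

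One point needs repair: the assertion ``without loss of generality $\lambda_n^j\le\lambda_n^k$'' is not legitimate for the middle term $\|T_n^j\psi^j\,\nabla(T_n^k\psi^k)\|_{L^{5/2}_{t,x}}$, which is \emph{not} symmetric in $j$ and $k$. Your computation, centering on $j$, yields the bound $\mu_n^{3/2}$ and indeed requires $\mu_n\le 1$; if instead $\lambda_n^j>\lambda_n^k$ one must center on $k$ (the narrower profile), and the same Jacobian calculation then gives the bound $(\lambda_n^k/\lambda_n^j)^{1/2}$, which also tends to zero. The paper sidesteps this asymmetry by applying H\"older with one factor in $L^\infty$, obtaining directly a bound of the form $\min\{(\lambda_n^j/\lambda_n^k)^{1/2},(\lambda_n^k/\lambda_n^j)^{1/2}\}$. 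With this correction your argument is complete and matches the paper's.
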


\begin{proof}
From a change of variables, we get
\begin{align*}
\|T_n^j&\psi^jT_n^k \psi^k\|_{L_{t,x}^5} + \|T_n^j\psi^j\nabla(T_n^k\psi^k)\|_{L_{t,x}^{\frac 52}}
+\|\nabla(T_n^j\psi^j)\nabla (T_n^k\psi^k)\|_{L_{t,x}^{\frac53}}\\
&= \|\psi^j (T_n^j)^{-1}T_n^k\psi^k\|_{L_{t,x}^5}+\|\psi^j\nabla(T_n^j)^{-1}T_n^k\psi^k\|_{L_{t,x}^{\frac 52}}
+\|\nabla \psi^j\nabla (T_n^j)^{-1}T_n^k\psi^k\|_{L_{t,x}^{\frac53}},
\end{align*}
where all spacetime norms are over $\R\times\R^3$.  Depending on the cases to which $j$ and $k$ conform, $(T_n^j)^{-1}T_n^k$ takes one of the following forms:
\begin{CI}
\item Case a): $j$ and $k$ each conform to one of Cases 1, 2, or 3.
\begin{align*}
[(T_n^j)^{-1}T_n^k\psi^k](t,x)=\bigl(\tfrac{\lambda_n^j}{\lambda_n^k}\bigr)^{\frac12}
	\psi^k\Bigl(\bigl(\tfrac{\lambda_n^j}{\lambda_n^k}\bigr)^{2} \bigl(t-\tfrac{t_n^j(\lambda_n^j)^2-t_n^k(\lambda_n^k)^2}{(\lambda_n^j)^2}\bigr),
	\tfrac{\lambda_n^j}{\lambda_n^k}\bigl(  x - \tfrac{x_n^k-x_n^j}{\lambda_n^j}\bigr) \Bigr).
\end{align*}

\item Case b): $j$ conforms to Case 1, 2, or 3 and $k$ conforms to Case 4.
\begin{align*}
[(T_n^j)^{-1}&T_n^k\psi^k](t,x) \\
&=\bigl(\tfrac{\lambda_n^j}{\lambda_n^k}\bigr)^{\frac12}
	\psi^k\Bigl(\bigl(\tfrac{\lambda_n^j}{\lambda_n^k}\bigr)^{2} \bigl(t-\tfrac{t_n^j(\lambda_n^j)^2-t_n^k(\lambda_n^k)^2}{(\lambda_n^j)^2}\bigr),
	\tfrac{\lambda_n^j}{\lambda_n^k} (R_n^k)^{-1}\bigl(  x - \tfrac{(x_n^k)^*-x_n^j}{\lambda_n^j}\bigr) \Bigr).
\end{align*}

\item Case c): $j$ conforms to Case 4 and $k$ to Case 1, 2, or 3.
\begin{align*}
[(T_n^j)^{-1}T_n^k\psi^k](t,x)=\bigl(\tfrac{\lambda_n^j}{\lambda_n^k}\bigr)^{\frac12}
	\psi^k\Bigl(\bigl(\tfrac{\lambda_n^j}{\lambda_n^k}\bigr)^{2} \bigl(t-\tfrac{t_n^j(\lambda_n^j)^2-t_n^k(\lambda_n^k)^2}{(\lambda_n^j)^2}\bigr),
	\tfrac{\lambda_n^j}{\lambda_n^k} \bigl( R_n^j x - \tfrac{x_n^k-(x_n^j)^*}{\lambda_n^j}\bigr) \Bigr).
\end{align*}

\item Case d): Both $j$ and $k$ conform to Case 4.
\begin{align*}
[(T_n^j&)^{-1}T_n^k\psi^k](t,x) \\
&=\bigl(\tfrac{\lambda_n^j}{\lambda_n^k}\bigr)^{\frac12}
	\psi^k\Bigl(\bigl(\tfrac{\lambda_n^j}{\lambda_n^k}\bigr)^{2} \bigl(t-\tfrac{t_n^j(\lambda_n^j)^2-t_n^k(\lambda_n^k)^2}{(\lambda_n^j)^2}\bigr),
	\tfrac{\lambda_n^j}{\lambda_n^k}(R_n^k)^{-1}\bigl( R_n^j x - \tfrac{(x_n^k)^*-(x_n^j)^*}{\lambda_n^j}\bigr) \Bigr).
\end{align*}
\end{CI}

We only present the details for decoupling in the $L_{t,x}^5$ norm; the argument for decoupling in the other norms is very similar.

We first assume $\frac{\lambda_n^j}{\lambda_n^k}+\frac{\lambda_n^k}{\lambda_n^j}\to\infty$. Using H\"older and a change of
variables, we estimate
\begin{align*}
\|\psi^j(T_n^j)^{-1}T_n^k\psi^k\|_{L^5_{t,x}}
&\le\min\bigl\{\|\psi^j\|_{L^\infty_{t,x}}\|(T_n^j)^{-1}T_n^k\psi^k\|_{L^5_{t,x}}+\|\psi^j\|_{L^5_{t,x}}\|(T_n^j)^{-1}T_n^k\psi^k\|_{L^\infty_{t,x}}\bigr\}\\
&\lsm \min\Bigl\{ \bigl(\tfrac{\lambda_n^j}{\lambda_n^k}\bigr)^{-\frac12}, \bigl(\tfrac{\lambda_n^j}{\lambda_n^k}\bigr)^{\frac12}\Bigr\}
 \to 0 \qtq{as} n\to \infty.
\end{align*}
Henceforth, we may assume $\frac{\lambda_n^j}{\lambda_n^k}\to \lambda_0\in (0,\infty)$.

If $\frac{|t_n^j(\lambda_n^j)^2-t_n^k(\lambda_n^k)^2|}{\lambda_n^k\lambda_n^j}\to \infty$, it is easy to see
that the temporal supports of $\psi^j$ and $(T_n^j)^{-1}T_n^k\psi^k$ are disjoint for $n$ sufficiently large.  Hence
\begin{align*}
\lim_{n\to \infty}\|\psi^j(T_n^j)^{-1}T_n^k\psi^k\|_{L^5_{t,x}} = 0.
\end{align*}

The only case left is when
\begin{align}\label{s13}
\tfrac{\lambda_n^j}{\lambda_n^k}\to\lambda_0, \quad \tfrac{t_n^j(\lambda_n^j)^2-t_n^k(\lambda_n^k)^2}{\lambda_n^k\lambda_n^j} \to t_0,
\qtq{and} \tfrac{|x_n^j-x_n^k|}{\sqrt{\lambda_n^j\lambda_n^k}}\to\infty.
\end{align}
In this case we will verify that the spatial supports of $\psi^j$ and $(T_n^j)^{-1}T_n^k\psi^k$ are disjoint for $n$ sufficiently large.
Indeed, in Case a),
\begin{align*}
\tfrac{|x_n^j-x_n^k|}{\lambda_n^j}=\tfrac{|x_n^j-x_n^k|}{\sqrt{\lambda_n^j\lambda_n^k}} \sqrt{\tfrac{\lambda_n^k}{\lambda_n^j}}\to \infty
\qtq{as} n\to\infty.
\end{align*}
In Case b), for $n$ sufficiently large we have
\begin{align*}
\tfrac{|x_n^j-(x_n^k)^*|}{\lambda_n^j}
&\ge \tfrac{|x_n^j-x_n^k|}{\lambda_n^j}-\tfrac{|x_n^k-(x_n^k)^*|}{\lambda_n^j}
\ge\tfrac{|x_n^j-x_n^k|}{\lambda_n^j}-2\tfrac{d^k_\infty}{\lambda_0},
\end{align*}
which converges to infinity as $n\to \infty$.  In Case c), for $n$ sufficiently large we have
\begin{align*}
\tfrac{|(x_n^j)^*-x_n^k|}{\lambda_n^j}
&\ge \tfrac{|x_n^j-x_n^k|}{\lambda_n^j}-\tfrac{|x_n^j-(x_n^j)^*|}{\lambda_n^j}
\ge \tfrac{|x_n^j-x_n^k|}{\lambda_n^j}-2d^j_{\infty},
\end{align*}
which converges to infinity as $n\to \infty$.  Finally, in Case d) for $n$ sufficiently large,
\begin{align*}
\tfrac{|(x_n^j)^*-(x_n^k)^*|}{\lambda_n^j}
&\ge \tfrac{|x_n^j-x_n^k|}{\lambda_n^j}-\tfrac{|x_n^j-(x_n^j)^*|}{\lambda_n^j} -\tfrac{|x_n^k-(x_n^k)^*|}{\lambda_n^j}
\ge \tfrac{|x_n^j-x_n^k|}{\lambda_n^j}-2d_\infty^j-2\tfrac{d^k_\infty}{\lambda_0},
\end{align*}
which converges to infinity as $n\to\infty$. Thus, in all cases,
\begin{align*}
\lim_{n\to \infty}\|\psi^j(T_n^j)^{-1}T_n^k\psi^k\|_{L^5_{t,x}} = 0.
\end{align*}
This completes the proof of the lemma.
\end{proof}

Theorem~\ref{T:main} claims that for any initial data $u_0\in \dot H^1_D(\Omega)$ there is a global solution $u:\R\times\Omega\to \C$ to
\eqref{nls} with $S_\R(u)\leq C(\|u_0\|_{\dot H^1_D(\Omega)})$.  Recall that for a time interval~$I$, the scattering size of $u$ on $I$ is given by
\begin{align*}
S_I(u)=\iint_{\R\times\Omega}|u(t,x)|^{10} \,dx\,dt.
\end{align*}

Supposing that Theorem~\ref{T:main} failed, there would be a critical energy $0<E_c<\infty$ so that
\begin{align}\label{LofE}
L(E)<\infty \qtq{for} E<E_c \qtq{and} L(E)=\infty \qtq{for} E\geq E_c.
\end{align}
Recall from the introduction that $L(E)$ is the supremum of $S_I(u)$ over all solutions $u:I\times\Omega\to \C$ with $E(u)\leq E$ and defined on any interval $I\subseteq \R$.

The positivity of $E_c$ follows from small data global well-posedness.  Indeed, the argument proves the stronger statement
\begin{align}\label{SbyE}
\|u\|_{\dot X^1(\R\times\Omega)}\lesssim E(u_0)^{\frac12} \quad \text{for all data with } E(u_0)\leq \eta_0,
\end{align}
where $\eta_0$ denotes the small data threshold. Recall $\dot X^1= L_{t,x}^{10} \cap L_t^5\dot H^{1,\frac{30}{11}}$.

Using the induction on energy argument together with \eqref{LofE} and the stability result Theorem~\ref{T:stability}, we now prove a compactness result for optimizing sequences of blowup solutions.

\begin{prop}[Palais--Smale condition]\label{P:PS}
Let $u_n: I_n\times\Omega\to \C$ be a sequence of solutions with $E(u_n)\to E_c$, for which there is a sequence of times $t_n\in I_n$
so that
\begin{align*}
\lim_{n\to\infty} S_{\ge t_n}(u_n)=\lim_{n\to\infty}S_{\le t_n}(u_n)=\infty.
\end{align*}
Then the sequence $u_n(t_n)$ has a subsequence that converges strongly in $\dot H^1_D(\Omega)$.
\end{prop}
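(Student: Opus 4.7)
The plan is to apply the linear profile decomposition (Theorem~\ref{T:LPD}) to $\{u_n(t_n)\}$ and then show, via the inductive hypothesis~\eqref{LofE} coupled with the nonlinear embedding theorems of Section~\ref{S:Nonlinear Embedding} and the stability theory, that the decomposition must reduce to a single Case~1 profile with no remainder and with $t_n^1\equiv 0$. Convergence of $u_n(t_n)$ in $\dot H^1_D(\Omega)$ then follows from the Case~1 strong limit. First I would replace $u_n$ by $u_n(\cdot+t_n)$ to reduce to $t_n\equiv 0$, noting that energy, scattering size, and the blowup hypothesis are preserved. Since $E(u_n)\to E_c$ controls $\|u_n(0)\|_{\dot H^1_D(\Omega)}$ (using conservation of energy and Sobolev embedding into $L^6$), Theorem~\ref{T:LPD} applies after passing to a subsequence, producing profiles $\{\phi_n^j\}_{j=1}^{J^*}$ with parameters $(\lambda_n^j, t_n^j, x_n^j)$ conforming to Cases~1--4, together with remainders $w_n^J$ obeying~\eqref{E:LP1}--\eqref{E:LP5}.

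Next, to each profile I attach a nonlinear profile $v_n^j$: in Case~1 this is the NLS$_\Omega$ solution with data matching $e^{it_n^j\Delta_\Omega}\phi^j$ as $t\to t_n^j$ (literal initial data when $t_n^j\equiv 0$, scattering data when $t_n^j\to\pm\infty$), while in Cases~2--4 it is supplied by Theorems~\ref{T:embed2}, \ref{T:embed3}, \ref{T:embed4} respectively. From the energy decoupling~\eqref{E:LP2}--\eqref{E:LP3} and the non-triviality of each profile, one obtains that as $n\to\infty$,
\[
\sum_{j=1}^J E(\phi_n^j) + E(w_n^J) \leq E_c + o(1).
\]
Suppose for contradiction that we are not in the scenario of a single Case~1 profile with $t_n^1\equiv 0$ and vanishing remainder. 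Then either $J^*\geq 2$, or $J^*=1$ with nontrivial remainder, or the sole profile conforms to Cases~2--4 or has $t_n^1\to\pm\infty$. In the first two sub-cases, each $E(\phi^j)$ is bounded away from $E_c$ (using the small-data bound \eqref{SbyE} for the remainder to ensure the deficit is quantitative), so the inductive hypothesis~\eqref{LofE}, combined with the embedding theorems in the non-Case~1 scenarios, yields a uniform scattering bound $\|v_n^j\|_{L^{10}_{t,x}(\R\times\Omega)}\leq L_j$.

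The main obstacle will be to construct
\[
\tilde u_n^J(t,x) := \sum_{j=1}^{J} v_n^j(t,x) + e^{it\Delta_\Omega} w_n^J(x)
\]
and verify the hypotheses of the stability result Theorem~\ref{T:stability}. Three things need to be checked. First, $\|\tilde u_n^J\|_{\dot S^1}$ is bounded uniformly in $n$ and $J$: this follows from the approximation of each $v_n^j$ by $T_n^j\psi_\eps$ with $\psi_\eps\in C^\infty_c$ (given in the embedding theorems), the asymptotic decoupling Lemma~\ref{L:ortho} applied to cross terms, and the Strichartz-and-Sobolev bound on $e^{it\Delta_\Omega} w_n^J$. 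Second, the initial data agree: $\|\tilde u_n^J(0)-u_n(0)\|_{\dot H^1_D(\Omega)}\to 0$ as $n\to\infty$, which follows directly from the matching conditions in the embedding theorems and the definition of $w_n^J$. Third, and this is the delicate point, the error $e_n^J := (i\partial_t+\Delta_\Omega)\tilde u_n^J - |\tilde u_n^J|^4\tilde u_n^J$ must obey
\[
\lim_{J\to J^*}\limsup_{n\to\infty}\bigl\|\sqrt{-\Delta_\Omega}\, e_n^J\bigr\|_{N^0(\R\times\Omega)}=0
\quad\text{and}\quad
\lim_{J\to J^*}\limsup_{n\to\infty}\bigl\|\sqrt{-\Delta_\Omega}\, e^{it\Delta_\Omega}w_n^J\bigr\|_{L^{10}_tL^{30/13}_x}=0.
\]
The first identity is reduced via algebraic expansion of $|\sum v_n^j + e^{it\Delta_\Omega}w_n^J|^4(\sum v_n^j + e^{it\Delta_\Omega}w_n^J) - \sum|v_n^j|^4 v_n^j$ into mixed products; each such product is controlled either by Lemma~\ref{L:ortho} (pairs of distinct profiles) or by the fact that $\|e^{it\Delta_\Omega}w_n^J\|_{L^{10}_{t,x}}\to 0$ as $J\to J^*$, using the density approximations $T_n^j\psi_\eps$ to localize the profiles. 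The second identity is precisely the role of Corollary~\ref{C:Keraani3.7}: local smoothing upgrades the $L^{10}_{t,x}$ smallness of $e^{it\Delta_\Omega} w_n^J$ to smallness in a derivative-involving Strichartz norm, on arbitrary space-time balls, with the uniformity one needs to cover the supports of the $C^\infty_c$-approximating profiles. Once these hypotheses are established, Theorem~\ref{T:stability} yields $\|u_n\|_{L^{10}_{t,x}(\R\times\Omega)}<\infty$ for large $n$, contradicting the assumption that $S_{\ge 0}(u_n)\to\infty$.

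Therefore we must have $J^*=1$, $w_n^1\to 0$ in $\dot H^1_D(\Omega)$, and the sole profile conforms to Case~1. To finish, the remaining possibility that $t_n^1\to\pm\infty$ is ruled out by the two-sided blowup assumption: if, say, $t_n^1\to+\infty$, then applying the stability argument above (with the single profile) gives $S_{\leq 0}(u_n)=o(1)$, contradicting $S_{\leq 0}(u_n)\to\infty$. Consequently $t_n^1\equiv 0$, $\lambda_n^1\equiv\lambda_\infty^1$, $x_n^1\to x_\infty^1$, and $u_n(0) = \phi_n^1 + w_n^1 \to \phi^1$ strongly in $\dot H^1_D(\Omega)$, which is the desired conclusion.
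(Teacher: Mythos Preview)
Your overall architecture matches the paper's proof: linear profile decomposition, nonlinear profiles via the embedding theorems (Cases~2--4) or the inductive hypothesis (Case~1), construction of the approximate solution $\sum v_n^j + e^{it\Delta_\Omega}w_n^J$, and the stability theorem to derive a contradiction. The paper organizes the dichotomy slightly differently---splitting into ``Scenario~I'' ($\sup_j\limsup_n E(\phi_n^j)=E_c$, which forces a single profile and vanishing remainder) versus ``Scenario~II'' (uniform energy gap)---but your case split is equivalent.

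There is, however, a genuine misstatement in your display. The claim
\[
\lim_{J\to J^*}\limsup_{n\to\infty}\bigl\|\sqrt{-\Delta_\Omega}\, e^{it\Delta_\Omega}w_n^J\bigr\|_{L^{10}_tL^{30/13}_x(\R\times\Omega)}=0
\]
is \emph{false} in general and is not what the stability theorem requires. Since your initial data match (Claim~1), the first term in the smallness hypothesis~\eqref{E:stab small} is already zero; only the error $e_n^J$ needs to be small in $\dot N^1$. The genuine difficulty---and the actual role of Corollary~\ref{C:Keraani3.7}---lies inside the error estimate, specifically in controlling the cross term
\[
\bigl\| v_n^j\,\nabla e^{it\Delta_\Omega}w_n^J \bigr\|_{L^{5/2}_{t,x}(\R\times\Omega)}
\]
for each fixed $j$. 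The linear flow of $w_n^J$ is only known to be small in $L^{10}_{t,x}$, not in any derivative norm; Corollary~\ref{C:Keraani3.7} converts this into smallness of $\nabla e^{it\Delta_\Omega}w_n^J$ in $L^{5/2}_{t,x}$ \emph{on the compact spacetime support of} $T_n^j\psi^j_\eps$, which is exactly enough after approximating $v_n^j$ by $T_n^j\psi^j_\eps$. Your prose explanation following the display captures this correctly, but the displayed identity itself should be removed and replaced by the localized $L^{5/2}_{t,x}$ estimate.

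A minor point: in ruling out $t_n^1\to+\infty$, the contradiction is with $S_{\geq 0}(u_n)\to\infty$, not $S_{\leq 0}$; the profile $e^{it_n^1(\lambda_n^1)^2\Delta_\Omega}\phi^1$ with $t_n^1\to+\infty$ makes the \emph{forward} linear evolution small by monotone convergence, and small-data theory then gives $S_{\geq 0}(u_n)\to 0$.
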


\begin{proof}
Using the time translation symmetry of \eqref{nls}, we may take $t_n\equiv0$ for all $n$; thus,
\begin{align}\label{scat diverge}
\lim_{n\to\infty} S_{\ge 0}(u_n)=\lim_{n\to \infty} S_{\le 0} (u_n)=\infty.
\end{align}
Applying Theorem~\ref{T:LPD} to the bounded sequence $u_n(0)$ in $\dot H^1_D(\Omega)$ and passing to a subsequence if necessary, we
obtain the linear profile decomposition
\begin{align}\label{s0}
u_n(0)=\sum_{j=1}^J \phi_n^j+w_n^J
\end{align}
with the properties stated in that theorem. In particular, for any finite $0\leq J \leq J^*$ we have the energy decoupling condition
\begin{align}\label{s01}
\lim_{n\to \infty}\Bigl\{E(u_n)-\sum_{j=1}^J E(\phi_n^j)-E(w_n^J)\Bigr\}=0.
\end{align}

To prove the proposition, we need to show that $J^*=1$, that $w_n^1\to 0$ in $\dot H^1_D(\Omega)$, that the only profile $\phi^1_n$ conforms to Case~1, and that $t_n^1\equiv 0$.  All other possibilities will be shown to contradict \eqref{scat diverge}.  We discuss two scenarios:

\textbf{Scenario I:} $\sup_j \limsup_{n\to \infty} E(\phi_n^j) =E_c$.

From the non-triviality of the profiles, we have $\liminf_{n\to \infty} E(\phi_n^j)>0$ for every finite $1\leq j\leq J^*$; indeed,
$\|\phi_n^j\|_{\dot H^1_D(\Omega)}$ converges to $\|\phi^j\|_{\dot H^1}$.  Thus, passing to a subsequence, \eqref{s01} implies that there is a single profile in the decomposition \eqref{s0} (that is, $J^*=1$) and we can write
\begin{equation}\label{s11}
u_n(0)=\phi_n +w_n \qtq{with} \lim_{n\to \infty} \|w_n\|_{\dot H_D^1(\Omega)}=0.
\end{equation}
If $\phi_n$ conforms to Cases 2, 3, or 4, then by the Theorems~\ref{T:embed2}, \ref{T:embed3}, or \ref{T:embed4}, there are global solutions $v_n$ to $\text{NLS}_\Omega$ with data $v_n(0)=\phi_n$ that admit a uniform spacetime bound.  By Theorem~\ref{T:stability}, this spacetime bound extends to the solutions $u_n$ for $n$ large enough.  However, this contradicts \eqref{scat diverge}.  Therefore, $\phi_n$ must conform to Case~1 and \eqref{s11} becomes
\begin{equation}\label{s11'}
u_n(0)=e^{it_n\lambda_n^2\Delta_\Omega}\phi +w_n \qtq{with} \lim_{n\to \infty} \|w_n\|_{\dot H_D^1(\Omega)}=0
\end{equation}
and $t_n\equiv 0$ or $t_n\to \pm \infty$.  If $t_n\equiv 0$, then we obtain the desired compactness.  Thus, we only need to preclude that $t_n\to\pm\infty$.

Let us suppose $t_n\to \infty$; the case $t_n\to -\infty$ can be treated symmetrically.  In this case, the Strichartz inequality and the monotone convergence theorem yield
\begin{align*}
S_{\ge 0}(e^{it\Delta_\Omega}u_n(0))=S_{\ge 0}(e^{i(t+t_n\lambda_n^2)\Delta_{\Omega}}\phi+e^{it\Delta_{\Omega}}w_n) \to 0 \qtq{as} n\to \infty.
\end{align*}
By the small data theory, this implies that $S_{\geq 0}(u_n)\to 0$, which contradicts \eqref{scat diverge}.

\textbf{Scenario 2:} $\sup_j \limsup_{n\to \infty} E(\phi_n^j) \leq E_c-2\delta$ for some $\delta>0$.

We first observe that for each finite $J\leq J^*$ we have $E(\phi_n^j) \leq E_c-\delta$ for all $1\leq j\leq J$ and $n$ sufficiently large.  This is important for constructing global nonlinear profiles for $j$ conforming to Case~1, via the induction on energy hypothesis~\eqref{LofE}.

If $j$ conforms to Case~1 and $t_n^j\equiv 0$, we define $v^j:I^j\times\Omega\to \C$ to be the maximal-lifespan solution to
\eqref{nls} with initial data $v^j(0)=\phi^j$.  If instead $t_n^j\to \pm \infty$, we define $v^j:I^j\times\Omega\to \C$ to be the
maximal-lifespan solution to \eqref{nls} which scatters to $e^{it\Delta_\Omega}\phi^j$ as $t\to \pm\infty$.
Now define $v_n^j(t,x):=v^j(t+t_n^j(\lambda_n^j)^2,x)$. Then $v_n^j$ is also a solution to \eqref{nls} on the time interval
$I_n^j:=I^j-\{t_n^j(\lambda_n^j)^2\}$.  In particular, for $n$ sufficiently large we have $0\in I_n^j$ and
\begin{align}\label{bb1}
\lim_{n\to\infty}\|v_n^j(0)-\phi_n^j\|_{\dot H^1_D(\Omega)}=0.
\end{align}
Combining this with $E(\phi_n^j) \leq E_c-\delta$ and \eqref{LofE}, we deduce that for $n$ sufficiently large, $v_n^j$ (and also $v^j$) are global solutions that obey
\begin{align*}
S_\R(v^j)=S_\R(v_n^j)\le L(E_c-\delta)<\infty.
\end{align*}
Combining this with the Strichartz inequality shows that all Strichartz norms of $v_n^j$ are finite and, in particular, the $\dot X^1$ norm.
This allows us to approximate $v_n^j$ in $\dot X^1(\R\times\Omega)$ by $C_c^\infty(\R\times\R^3)$ functions.   More precisely, for any $\eps>0$ there exist $N_\eps^j\in \N$ and $\psi^j_\eps\in C_c^\infty(\R\times\R^3)$ so that for $n\geq N_\eps^j$ we have
\begin{align}\label{ap case1}
\|v_n^j - T_n^j\psi^j_\eps\|_{\dot X^1(\R\times\R^3)}<\eps.
\end{align}
Speciffically, choosing $\tilde\psi_\eps^j\in C_c^\infty(\R\times\R^3)$ such that
\begin{align*}
\|v^j-\tilde \psi_\eps^j\|_{\dot X^1(\R\times\R^3)}<\tfrac \eps2, \qtq{we set} \psi_\eps^j(t,x):=(\lambda_\infty^j)^{\frac 12}\tilde\psi_\eps^j
\bigl((\lambda_\infty^j)^2 t, \lambda_\infty^j x+x_\infty^j\bigr).
\end{align*}

When $j$ conforms to Cases 2, 3, or 4, we apply the nonlinear embedding theorems of the previous section to construct the nonlinear profiles.  More precisely, let $v_n^j$ be the global solutions to $\text{NLS}_\Omega$ constructed in Theorems~\ref{T:embed2}, \ref{T:embed3}, or \ref{T:embed4}, as appropriate. In particular, these $v_n^j$ also obey \eqref{ap case1} and $\sup_{n,j} S_\R(v_n^j)<\infty$.

In all cases, we may use \eqref{SbyE} together with our bounds on the spacetime norms of $v_n^j$ and the finiteness of $E_c$ to deduce
\begin{align}\label{s2}
\|v_n^j\|_{\dot X^1(\R\times\Omega)}\lsm_{E_c, \delta} E(\phi_n^j)^{\frac12} \lsm_{E_c, \delta}1.
\end{align}
Combining this with \eqref{s01} we deduce
\begin{align}\label{s2lim}
\limsup_{n\to \infty} \sum_{j=1}^J \|v_n^j\|_{\dot X^1(\R\times\Omega)}^2
\lsm_{E_c, \delta} \limsup_{n\to \infty} \sum_{j=1}^J E(\phi_n^j) \lsm_{E_c,\delta} 1,
\end{align}
uniformly for finite $J\leq J^*$.

The asymptotic orthogonality condition \eqref{E:LP5} gives rise to asymptotic decoupling of the nonlinear profiles.

\begin{lem}[Decoupling of nonlinear profiles] \label{L:npd} For $j\neq k$ we have 
\begin{align*}
\lim_{n\to \infty} \|v_n^j v_n^k\|_{L_{t,x}^5(\R\times\Omega)} +\|v_n^j \nabla v_n^k\|_{L_{t,x}^{\frac52}(\R\times\Omega)}
+\|\nabla v_n^j \nabla v_n^k\|_{L_t^{\frac52} L_x^{\frac{15}{11}}(\R\times\Omega)}=0.
\end{align*}
\end{lem}

\begin{proof}
Recall that for any $\eps>0$ there exist $N_\eps\in \N$ and $\psi_\eps^j,\psi_\eps^k\in C_c^\infty(\R\times\R^3)$ so that
\begin{align*}
\|v_n^j - T_n^j\psi^j_\eps\|_{\dot X^1(\R\times\R^3)} + \|v_n^k - T_n^k\psi^k_\eps\|_{\dot X^1(\R\times\R^3)}<\eps.
\end{align*}
Thus, using \eqref{s2} and Lemma~\ref{L:ortho} we get
\begin{align*}
\|v_n^j v_n^k\|_{L_{t,x}^5} &\leq \|v_n^j(v_n^k-T_n^k\psi_{\eps}^k)\|_{L_{t,x}^5}+\|(v_n^j-T_n^j\psi_\eps^j)T_n^k\psi_{\eps}^k\|_{L_{t,x}^5}
+\|T_n^j\psi_\eps^j\, T_n^k\psi_\eps^k\|_{L_{t,x}^5}\\
&\lsm \|v^j_n\|_{\dot X^1} \|v_n^k - T_n^k\psi^k_\eps\|_{\dot X^1} + \|v_n^j - T_n^j\psi^j_\eps\|_{\dot X^1} \|\psi_\eps^k\|_{\dot X^1} + \|T_n^j\psi_\eps^j\, T_n^k\psi_\eps^k\|_{L_{t,x}^5}\\
&\lsm_{E_c,\delta} \eps + o(1) \qtq{as}n\to \infty.
\end{align*}
As $\eps>0$ was arbitrary, this proves the first asymptotic decoupling statement.

The second decoupling statement follows analogously.  For the third assertion, a little care has to be used to estimate the error terms, due to the asymmetry of the spacetime norm and to the restrictions placed by Theorem~\ref{T:Sob equiv}.  Using the same argument as above and interpolation, we estimate
\begin{align*}
\|\nabla v_n^j \nabla v_n^k\|_{L_t^{\frac52} L_x^{\frac{15}{11}}}
&\leq \|\nabla v_n^j(\nabla v_n^k-\nabla T_n^k\psi_{\eps}^k)\|_{L_t^{\frac52} L_x^{\frac{15}{11}}}+\|(\nabla v_n^j-\nabla T_n^j\psi_\eps^j)\nabla T_n^k\psi_{\eps}^k\|_{L_t^{\frac52} L_x^{\frac{15}{11}}}\\
&\quad +\|\nabla T_n^j\psi_\eps^j\, \nabla T_n^k\psi_\eps^k\|_{L_t^{\frac52} L_x^{\frac{15}{11}}}\\
&\lsm_{E_c,\delta}\eps + \|\nabla T_n^j\psi_\eps^j\, \nabla T_n^k\psi_\eps^k\|_{L_{t,x}^{\frac53}}^{\frac23}\|\nabla T_n^j\psi_\eps^j\, \nabla T_n^k\psi_\eps^k\|_{L_t^\infty L_x^1}^{\frac13}\\
&\lsm_{E_c,\delta}\eps + \|\nabla T_n^j\psi_\eps^j\, \nabla T_n^k\psi_\eps^k\|_{L_{t,x}^{\frac53}}^{\frac23}\|\nabla\psi_\eps^j\|_{L_x^2}^{\frac13}\|\nabla\psi_\eps^k\|_{L_x^2}^{\frac13}\\
&\lsm_{E_c,\delta}\eps + o(1) \qtq{as}n\to \infty,
\end{align*}
where we used Lemma~\ref{L:ortho} in the last step. As $\eps>0$ was arbitrary, this proves the last decoupling statement.
\end{proof}

As a consequence of this decoupling we can bound the sum of the nonlinear profiles in $\dot X^1$, as follows:
\begin{align}\label{sum vnj}
\limsup_{n\to \infty} \Bigl\|\sum_{j=1}^J v_n^j\Bigr\|_{\dot X^1(\R\times\Omega)}\lsm_{E_c,\delta}1 \quad\text{uniformly for finite $J\leq J^*$}.
\end{align}
Indeed, by Young's inequality, \eqref{s2}, \eqref{s2lim}, and Lemma~\ref{L:npd},
\begin{align*}
S_\R\Bigl(\sum_{j=1}^J v_n^j\Bigr)
&\lsm \sum_{j=1}^J S_\R(v_n^j)+J^8\sum_{j\neq k}\iint_{\R\times\Omega}|v_n^j||v_n^k|^9 \,dx\,dt\\
&\lsm_{E_c,\delta}1 + J^8 \sum_{j\neq k}\|v_n^jv_n^k\|_{L_{t,x}^5}\|v_n^k\|_{L_{t,x}^{10}}^8\\
&\lsm_{E_c,\delta}1 + J^8 o(1) \qtq{as} n\to \infty.
\end{align*}
Similarly,
\begin{align*}
\Bigl\|\sum_{j=1}^J \nabla v_n^j\Bigr\|_{L_t^5L_x^{\frac{30}{11}}}^2 &= \Bigl\|\Bigl(\sum_{j=1}^J \nabla v_n^j\Bigr)^2\Bigr\|_{L_t^{\frac52}L_x^{\frac{15}{11}}}\lsm \sum_{j=1}^J \|\nabla v_n^j\|_{L_t^5L_x^{\frac{30}{11}}}^2 + \sum_{j\neq k} \|\nabla v_n^j \nabla v_n^k\|_{L_t^{\frac52} L_x^{\frac{15}{11}}}\\
&\lsm_{E_c,\delta}1 + o(1)\qtq{as} n\to \infty.
\end{align*}
This completes the proof of \eqref{sum vnj}.  The same argument combined with \eqref{s01} shows that given $\eta>0$, there exists $J'=J'(\eta)$ such that
\begin{align}\label{sum vnj tail}
\limsup_{n\to \infty} \Bigl\|\sum_{j=J'}^J v_n^j\Bigr\|_{\dot X^1(\R\times\Omega)}\leq \eta \quad \text{uniformly in $J\geq J'$}.
\end{align}

Now we are ready to construct an approximate solution to $\text{NLS}_\Omega$.  For each $n$ and $J$, we define
\begin{align*}
u_n^J:=\sum_{j=1}^J v_n^j+e^{it\Delta_{\Omega}}w_n^J.
\end{align*}
Obviously $u_n^J$ is defined globally in time.  In order to apply Theorem~\ref{T:stability}, it suffices to verify the following
three claims for $u_n^J$:

Claim 1: $\|u_n^J(0)-u_n(0)\|_{\dot H^1_D(\Omega)}\to 0$ as $n\to \infty$ for any $J$.

Claim 2: $\limsup_{n\to \infty} \|u_n^J\|_{\dot X^1(\R\times\Omega)}\lsm_{E_c, \delta} 1$ uniformly in $J$.

Claim 3: $\lim_{J\to\infty}\limsup_{n\to\infty}\|(i\partial_t+\Delta_{\Omega})u_n^J-|u_n^J|^4u_n^J\|_{\dot N^1(\R\times\Omega)}=0$.

The three claims imply that for sufficiently large $n$ and $J$, $u_n^J$ is an approximate solution to \eqref{nls} with finite scattering size, which asymptotically matches $u_n(0)$ at time $t=0$.  Using the stability result Theorem~\ref{T:stability} we see that for $n, J$ sufficiently large, the solution $u_n$ inherits\footnote{In fact, we obtain a nonlinear profile decomposition for the sequence of solutions $u_n$ with an error that goes to zero in $L^{10}_{t,x}$.}  the spacetime bounds of $u_n^J$, thus contradicting \eqref{scat diverge}.  Therefore, to complete the treatment of the second scenario, it suffices to verify the three claims above.

The first claim follows trivially from \eqref{s0} and \eqref{bb1}.  To derive the second claim, we use \eqref{sum vnj} and the Strichartz inequality, as follows:
\begin{align*}
\limsup_{n\to \infty}\|u_n^J\|_{\dot X^1(\R\times\Omega)}&\lsm \limsup_{n\to \infty}\Bigl\|\sum_{j=1}^J v_n^j\Bigr\|_{\dot X^1(\R\times\Omega)}+\limsup_{n\to \infty}\|w_n^J\|_{\dot H^1_D(\Omega)}\lsm_{E_c,\delta}1.
\end{align*}

Next we verify the third claim.  Adopting the notation $F(z)=|z|^4 z$, a direct computation gives
\begin{align}
(i\partial_t+\Delta_\Omega)u_n^J-F(u_n^J)
&=\sum_{j=1}^JF(v_n^j)-F(u_n^J)\notag\\
&=\sum_{j=1}^J F(v_n^j)-F\biggl(\sum_{j=1}^J v_n^j\biggr)+F\bigl(u_n^J-e^{it\Delta_{\Omega}}w_n^J\bigr)-F(u_n^J)\label{s6}.
\end{align}
Taking the derivative, we estimate
\begin{align*}
\biggl|\nabla\biggl\{ \sum_{j=1}^JF(v_n^j)-F\biggl(\sum_{j=1}^J v_n^j\biggr)\biggr\} \biggr|
	\lesssim_{J} \sum_{j\neq k}|\nabla v_n^j||v_n^k|^4+|\nabla v_n^j||v_n^j|^3|v_n^k|
\end{align*}
and hence, using \eqref{s2} and Lemma~\ref{L:npd},
\begin{align*}
\biggl\|\nabla\biggl\{ \sum_{j=1}^JF(v_n^j)-F\biggl(\sum_{j=1}^J v_n^j\biggr)\bigg\}\biggr \|_{\dot N^0(\R\times \Omega)}
&\lesssim_{J} \sum_{j\neq k} \bigl\| |\nabla v_n^j| |v_n^k|^4 + |\nabla v_n^j||v_n^j|^3|v_n^k| \bigr\|_{L^{\frac {10}7}_{t,x}} \\
&\lesssim_{J} \sum_{j\neq k} \bigl\|\nabla v_n^j v_n^k\bigr\|_{L^{\frac 52}_{t,x}}\bigl[\|v_n^k\|_{L^{10}_{t,x}}^3 +\|v_n^j\|_{L^{10}_{t,x}}^3\bigr]\\
&\lesssim_{J,E_c,\delta} o(1) \qtq{as} n\to \infty.
\end{align*}
Thus, using the equivalence of Sobolev spaces Theorem~\ref{T:Sob equiv}, we obtain
\begin{equation}\label{s7}
\lim_{J\to \infty}\limsup_{n\to \infty} \biggl\| \sum_{j=1}^J F(v_n^j)-F\biggl(\sum_{j=1}^J v_n^j\biggr) \biggr\|_{\dot N^1(\R\times \Omega)}=0.
\end{equation}

We now turn to estimating the second difference in \eqref{s6}.  We will show
\begin{equation}\label{s8}
\lim_{J\to \infty} {\limsup_{n\to \infty}} \bigl\| F(u_n^J-e^{it\Delta_{\Omega}}w_n^J)-F(u_n^J) \bigr\|_{\dot N^1(\R\times \Omega)}=0.
\end{equation}
By the equivalence of Sobolev spaces, it suffices to estimate the usual gradient of the difference in dual Strichartz spaces.  Taking the derivative, we get
\begin{align*}
\bigl|\nabla \bigl\{F\bigl(u_n^J-e^{it\Delta_{\Omega}}w_n^J\bigr)-F(u_n^J)\bigr\}\bigr|
&\lsm \sum_{k=0}^3 |\nabla u_n^J| |u_n^J|^k |e^{it\Delta_{\Omega}}w_n^J|^{4-k} \\
&\quad + \sum_{k=0}^4 |\nabla e^{it\Delta_{\Omega}}w_n^J| |u_n^J|^k |e^{it\Delta_{\Omega}}w_n^J|^{4-k}.
\end{align*}
Using H\"older and the second claim, we obtain
\begin{align*}
\sum_{k=0}^3\bigl\||\nabla u_n^J| |u_n^J|^k |e^{it\Delta_{\Omega}}w_n^J|^{4-k}\bigr\|_{L^{\frac53}_t L^{\frac{30}{23}}_x}
&\lsm \sum_{k=0}^3\|\nabla u_n^J\|_{L^5_tL^{\frac {30}{11}}_x} \|u_n^J\|_{L_{t,x}^{10}}^k \|e^{it\Delta_{\Omega}}w_n^J\|_{L_{t,x}^{10}}^{4-k}\\
&\lsm_{E_c, \delta} \sum_{k=0}^3\|e^{it\Delta_{\Omega}}w_n^J\|_{L_{t,x}^{10}}^{4-k},
\end{align*}
which converges to zero as $n,J\to \infty$ by \eqref{E:LP1}.  The same argument gives
\begin{align*}
\lim_{J\to \infty}\limsup_{n\to \infty} \sum_{k=0}^3\bigl\||\nabla e^{it\Delta_{\Omega}}w_n^J| |u_n^J|^k |e^{it\Delta_{\Omega}}w_n^J|^{4-k}\bigr\|_{L^{\frac53}_t L^{\frac{30}{23}}_x}=0.
\end{align*}

This leaves us to prove
\begin{align}\label{708}
\lim_{J\to \infty}\limsup_{n\to \infty}\||\nabla e^{it\Delta_{\Omega}}w_n^J| |u_n^J |^4\|_{L_{t,x}^{\frac{10}7}}=0.
\end{align}
Using H\"older, the second claim, Theorem~\ref{T:Sob equiv}, and the Strichartz inequality, we get
\begin{align*}
\||\nabla e^{it\Delta_{\Omega}}w_n^J| |u_n^J|^4\|_{L_{t,x}^{\frac{10}7}}
&\lsm \|u_n^J \nabla e^{it\Delta_{\Omega}}w_n^J \|_{L_{t,x}^{\frac52}} \|u_n^J\|_{L_{t,x}^{10}}^3\\
&\lsm_{E_c,\delta}\|e^{it\Delta_{\Omega}}w_n^J\nabla e^{it\Delta_{\Omega}}w_n^J \|_{L_{t,x}^{\frac52}} +\Bigl\|\sum_{j=1}^J v_n^j \nabla e^{it\Delta_{\Omega}}w_n^J \Bigr\|_{L_{t,x}^{\frac52}}\\
&\lsm_{E_c,\delta}\|e^{it\Delta_{\Omega}}w_n^J\|_{L_{t,x}^{10}}^{\frac2{11}}\|e^{it\Delta_{\Omega}}w_n^J\|_{L_t^{\frac92}L_x^{54}}^{\frac9{11}}\|\nabla e^{it\Delta_{\Omega}}w_n^J \|_{L_t^5 L_x^{\frac{30}{11}}} \\
&\quad +\Bigl\|\sum_{j=1}^J v_n^j \nabla e^{it\Delta_{\Omega}}w_n^J \Bigr\|_{L_{t,x}^{\frac52}}\\
&\lsm_{E_c,\delta}\|e^{it\Delta_{\Omega}}w_n^J\|_{L_{t,x}^{10}}^{\frac2{11}}+\Bigl\|\sum_{j=1}^J v_n^j \nabla e^{it\Delta_{\Omega}}w_n^J \Bigr\|_{L_{t,x}^{\frac52}}.
\end{align*}
By \eqref{E:LP1}, the contribution of the first term to \eqref{708} is acceptable.  We now turn to the second term.

By \eqref{sum vnj tail},
\begin{align*}
\limsup_{n\to \infty} \Bigl\| \Bigl(\sum_{j=J'}^J v_n^j\Bigr) \nabla e^{it\Delta_\Omega} w_n^J  \Bigr\|_{L_{t,x}^{\frac52}}
&\lsm\limsup_{n\to \infty}\Bigl\|\sum_{j=J'}^J v_n^j\Bigr\|_{\dot X^1}\|\nabla e^{it\Delta_\Omega} w_n^J\|_{L_t^5L_x^{\frac{30}{11}}}\lsm_{E_c,\delta} \eta,
\end{align*}
where $\eta>0$ is arbitrary and $J'=J'(\eta)$ is as in \eqref{sum vnj tail}.  Thus, proving \eqref{708} reduces to showing
\begin{align}\label{834}
\lim_{J\to\infty} \limsup_{n \to \infty}\| v_n^j \nabla e^{it\Delta_\Omega} w_n^J  \|_{L_{t,x}^{\frac52}}=0  \qtq{for each} 1\leq j\leq J'.
\end{align}

To this end, fix $1\leq j\leq J'$.  Let $\eps>0$, $\psi_\eps^j\in C^\infty_c(\R\times\R^3)$ be as in \eqref{ap case1}, and let $R,T>0$ be such that
$\psi^j_\eps$ is supported in the cylinder $[-T,T]\times\{|x|\leq R\}$.  Then
$$
\supp(T_n^j \psi^j_\eps ) \subseteq [(\lambda_n^j)^2 (-T-t_n^j), (\lambda_n^j)^2 (T-t_n^j)]\times\{|x -x_n^j|\leq \lambda_n^j R\}
$$
and $\|T_n^j \psi^j_\eps\|_{L^\infty_{t,x}} \lesssim (\lambda_n^j)^{-\frac12} \| \psi^j_\eps\|_{L^\infty_{t,x}}$.  If $j$ conforms to Case~4,
then $x_n^j$ above should be replaced by $(x_n^j)^*$.  Thus, using Corollary~\ref{C:Keraani3.7} we deduce that
\begin{align*}
\| (T_n^j \psi^j_\eps) \nabla e^{it\Delta_\Omega} w_n^J  \|_{L_{t,x}^{\frac52}}
&\lesssim T^{\frac{31}{180}} R^{\frac7{45}}  \| \psi^j_\eps\|_{L^\infty_{t,x}} \| e^{it\Delta_\Omega} w_n^J \|_{L^{10}_{t,x}}^{\frac1{18}}
     	\| w_n^J \|_{\dot H^1_D(\Omega)}^{\frac{17}{18}} \\
&\lesssim_{\psi^j_\eps,E_c}  \| e^{it\Delta_\Omega} w_n^J \|_{L^{10}_{t,x}}^{\frac1{18}}.
\end{align*}
Combining this with \eqref{ap case1} and using Theorem~\ref{T:Sob equiv} and Strichartz, we deduce that
\begin{align*}
\| v_n^j \nabla e^{it\Delta_\Omega} w_n^J  \|_{L_{t,x}^{\frac52}}
& \lesssim  \| v^j_n - T_n^j \psi^j_\eps \|_{\dot X^1} \| \nabla e^{it\Delta_\Omega} w_n^J \|_{L_t^5 L_x^{\frac{30}{11}}} +
	C(\psi^j_\eps,E_c) \| e^{it\Delta_\Omega} w_n^J \|_{L^{10}_{t,x}}^{\frac1{18}} \\
&\lesssim \eps E_c +  C(\psi^j_\eps,E_c) \| e^{it\Delta_\Omega} w_n^J \|_{L^{10}_{t,x}}^{\frac1{18}}.
\end{align*}
Using \eqref{E:LP1} we get $\text{LHS\eqref{834}} \lesssim_{E_c} \eps$. As $\eps>0$ was arbitrary, this proves \eqref{834}.

This completes the proof of \eqref{708} and with it, the proof of \eqref{s8}.  Combining \eqref{s7} and \eqref{s8} yields the third claim.
This completes the treatment of the second scenario and so the proof of the proposition.
\end{proof}

As an immediate consequence of the Palais--Smale condition, we obtain that the failure of Theorem~\ref{T:main} implies the existence
of almost periodic counterexamples:

\begin{thm}[Existence of almost periodic solutions]\label{T:mmbs}
Suppose Theorem \ref{T:main} fails to be true. Then there exist a critical energy \/$0<E_c<\infty$ and a global solution $u$ to \eqref{nls} with
$E(u)=E_c$, which blows up in both time directions in the sense that
$$
S_{\ge 0}(u)=S_{\le 0}(u)=\infty,
$$
and whose orbit $\{ u(t):\, t\in \R\}$ is precompact in $\dot H_D^1(\Omega)$.  Moreover, there exists $R>0$ so that
\begin{align}\label{E:unif L6}
\int_{\Omega\cap\{|x|\leq R\}} |u(t,x)|^6\, dx\gtrsim 1 \quad \text{uniformly for } t\in \R.
\end{align}
\end{thm}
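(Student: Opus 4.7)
The plan is to extract the critical solution by applying the Palais--Smale condition of Proposition~\ref{P:PS} to a minimizing sequence of blowup solutions, to establish almost periodicity by a second application of Palais--Smale at arbitrary times, and finally to harvest the uniform $L^6$-concentration from precompactness of the orbit together with conservation of energy.

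\textbf{Extracting the minimal solution.} Since Theorem~\ref{T:main} is assumed to fail, $E_c$ is finite and positive, and there must exist a sequence of solutions $u_n:I_n\times\Omega\to\C$ with $E(u_n)\to E_c$ and $S_{I_n}(u_n)\to\infty$. I would choose $t_n\in I_n$ bisecting the scattering norm, so that $S_{I_n\cap(-\infty,t_n]}(u_n)=S_{I_n\cap[t_n,\infty)}(u_n)\to\infty$, and time-translate to $t_n=0$. Proposition~\ref{P:PS} then furnishes a subsequence along which $u_n(0)\to u_0$ strongly in $\dot H^1_D(\Omega)$ with $E(u_0)=E_c$. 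Letting $u:I\times\Omega\to\C$ denote the corresponding maximal-lifespan solution, the strong data convergence combined with Theorem~\ref{T:stability} yields $S_{I\cap[0,\infty)}(u)=S_{I\cap(-\infty,0]}(u)=\infty$. To conclude $I=\R$, I would run a second Palais--Smale extraction at the boundary of the lifespan: if $T^+<\infty$, the blowup criterion gives $S_{[s,T^+)}(u)=\infty$ for every $s<T^+$, so for $s_n\nearrow T^+$ the translates $u(\cdot+s_n,\cdot)$ satisfy the hypotheses of Proposition~\ref{P:PS}, producing a $\dot H^1_D$-limit $v_\infty$ for $u(s_n)$ along a subsequence; local well-posedness at $v_\infty$ combined with stability would extend $u$ past $s_n+\delta$ for some $\delta>0$ independent of $n$, a contradiction. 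Hence $T^+=\infty$ and symmetrically $T^-=-\infty$.

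\textbf{Almost periodicity and concentration.} For any sequence $\{\tau_n\}\subset\R$, the translates $u(\cdot+\tau_n,\cdot)$ satisfy exactly the hypotheses of Proposition~\ref{P:PS}, so $u(\tau_n)$ has a $\dot H^1_D$-convergent subsequence. Hence the orbit $K:=\{u(t):t\in\R\}$ is precompact in $\dot H^1_D(\Omega)$, and by continuity of the Sobolev embedding $\dot H^1_D(\Omega)\hookrightarrow L^6(\Omega)$ it is also precompact in $L^6(\Omega)$. The lower bound $c_0:=\inf_t\|u(t)\|_{L^6(\Omega)}>0$ follows from $E_c>0$: any subsequential $\dot H^1_D$-limit $v$ with $\|v\|_{L^6}=0$ would vanish in $\dot H^1_D$, contradicting $E(v)=E_c>0$. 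Tightness of precompact subsets of $L^6$ (obtained by covering $K$ with finitely many $L^6$-balls centered at compactly supported functions) then furnishes $R>0$ with $\int_{\{|x|>R\}\cap\Omega}|u(t,x)|^6\,dx<\tfrac12 c_0^6$ uniformly in $t$, immediately yielding \eqref{E:unif L6}.

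\textbf{Main obstacle.} The heavy lifting has already been packaged into Proposition~\ref{P:PS}; in the present argument, the only delicate step is the globality claim, where I must perform a second Palais--Smale extraction at the boundary of the maximal interval and invoke Theorem~\ref{T:stability} to exclude finite-time blowup. The almost-periodicity and concentration statements are then routine consequences of precompactness of the orbit in $\dot H^1_D$ and tightness in $L^6$.
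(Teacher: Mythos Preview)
Your proposal is correct and follows essentially the same route as the paper: extract the critical solution via Proposition~\ref{P:PS} applied to a bisected minimizing sequence, establish precompactness of the orbit by a second application of Proposition~\ref{P:PS} at arbitrary times, and rule out finite maximal lifespan by yet another Palais--Smale extraction near the endpoint combined with local well-posedness and stability. The only cosmetic differences are that the paper proves precompactness (on the maximal interval) before globality rather than after, and that for \eqref{E:unif L6} the paper argues by direct contradiction (taking $R_n\to\infty$ and $t_n$ with vanishing localized $L^6$ mass, then using $u(t_n)\to\phi$ in $L^6$ to force $\phi=0$) rather than via your tightness-of-precompact-sets formulation; these are equivalent and equally short.
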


\begin{proof}
If Theorem~\ref{T:main} fails to be true, there must exist a critical energy $0<E_c<\infty$ and a sequence of solutions $u_n:I_n\times\Omega\to \C$ such that $E(u_n)\to E_c$ and $S_{I_n}(u_n)\to \infty$.  Let $t_n\in I_n$ be such that $S_{\ge t_n}(u_n)=S_{\le t_n}(u_n)=\frac 12 S_{I_n}(u_n)$; then
\begin{align}\label{s12}
\lim_{n\to\infty} S_{\ge t_n}(u_n)=\lim_{n\to\infty}S_{\le t_n}(u_n)=\infty.
\end{align}
Applying Proposition \ref{P:PS} and passing to a subsequence, we find $\phi\in \dot H^1_D(\Omega)$ such that $u_n(t_n)\to \phi$
in $\dot H^1_D(\Omega)$.  In particular, $E(\phi)=E_c$.  We take $u:I\times\Omega\to \C$ to be the maximal-lifespan solution to \eqref{nls} with initial data $u(0)=\phi$.  From the stability result Theorem~\ref{T:stability} and \eqref{s12}, we get
\begin{align}\label{s14}
S_{\ge 0}(u)=S_{\le 0}(u)=\infty.
\end{align}

Next we prove that the orbit of $u$ is precompact in $\dot H_D^1(\Omega)$.  For any sequence $\{t'_n\}\subset I$, \eqref{s14} implies
$S_{\ge t_n'}(u)=S_{\le t_n'}(u)=\infty$.  Thus by Proposition~\ref{P:PS}, we see that $u(t_n')$ admits a subsequence that converges strongly in
$\dot H^1_D(\Omega)$. Therefore, $\{u(t): t\in \R\}$ is precompact in $\dot H^1_D(\Omega)$.

We now show that the solution $u$ is global in time. We argue by contradiction; suppose, for example, that $\sup I<\infty$.  Let $t_n\to \sup I$.  Invoking Proposition~\ref{P:PS} and passing to a subsequence, we find $\phi\in \dot H^1_D(\Omega)$ such that $u(t_n)\to \phi$ in
$\dot H^1_D(\Omega)$.  From the local theory, there exist $T=T(\phi)>0$ and a unique solution $v:[-T,T]\times\Omega\to \C$ to \eqref{nls} with initial data $v(0)=\phi$ such that $S_{[-T,T]}v<\infty$.  By the stability result Theorem~\ref{T:stability}, for $n$ sufficiently large we find a unique solution
$\tilde u_n:[t_n-T,t_n+T]\times\Omega\to \C$ to \eqref{nls} with data $\tilde u_n(t_n)=u(t_n)$ and $S_{[t_n-T,t_n+T]}(\tilde u_n)<\infty$.  From uniqueness of solutions to \eqref{nls}, we must have $\tilde u_n=u$.  Thus taking $n$ sufficiently large, we see that $u$ can be extended beyond $\sup I$, which contradicts the fact that $I$ is the maximal lifespan of $u$.

Finally, we prove the uniform lower bound \eqref{E:unif L6}.  We again argue by contradiction. Suppose there exist sequences $R_n\to \infty$ and
$\{t_n\}\subset \R$ along which
$$
\int_{\Omega\cap\{|x|\leq R_n\}} |u(t_n,x)|^6\, dx \to 0.
$$
Passing to a subsequence, we find $u(t_n)\to \phi$ in $\dot H^1_D(\Omega)$ for some non-zero $\phi\in \dot H^1_D(\Omega)$.
Note that if $\phi$ were zero, then the solution $u$ would have energy less than the small data threshold, which would contradict \eqref{s14}.
By Sobolev embedding, $u(t_n)\to\phi$ in $L^6$, and since $R_n\to\infty$,
$$
\int_\Omega |\phi(x)|^6\,dx = \lim_{n\to\infty}  \int_{\Omega\cap\{|x|\leq R_n\}} |\phi(x)|^6\, dx = \lim_{n\to\infty} \int_{\Omega\cap\{|x|\leq R_n\}} |u(t_n,x)|^6\, dx =0.
$$
This contradicts the fact that $\phi \neq 0$ and  completes the proof of the theorem.
\end{proof}

Finally, we are able to prove the main theorem.

\begin{proof}[Proof of Theorem~\ref{T:main}]
We argue by contradiction.  Suppose Theorem~\ref{T:main} fails.  By Theorem~\ref{T:mmbs}, there exists a minimal energy blowup solution
$u$ that is global in time, whose orbit is precompact in $\dot H_D^1(\Omega)$, and that satisfies
$$
\int_{\Omega\cap\{|x|\leq R\}} |u(t,x)|^6\, dx \gtrsim 1 \quad \text{uniformly for } t\in \R
$$
for some large $R>1$.  Integrating over a time interval of length $|I|\geq 1$, we obtain
\begin{align*}
|I|\lsm R\int_{I}\int_{\Omega\cap\{|x|\leq R\}} \frac{|u(t,x)|^6}{|x|}\,dx\,dt \lsm R\int_{I}\int_{\Omega\cap\{|x|\leq R|I|^{\frac12}\}}\frac{|u(t,x)|^6}{|x|}\,dx\,dt.
\end{align*}

On the other hand, for $R|I|^{\frac12}\geq 1$, the Morawetz inequality Lemma~\ref{L:morawetz} gives
\begin{align*}
\int_{I}\int_{\Omega\cap\{|x|\leq R|I|^{\frac12}\}}\frac{|u(t,x)|^6}{|x|}\,dx\,dt \lsm R|I|^{\frac12},
\end{align*}
with the implicit constant depending only on $E(u)=E_c$.

Taking $I$ sufficiently large depending on $R$ and $E_c$ (which is possible since $u$ is global in time), we derive a contradiction.
This completes the proof of Theorem~\ref{T:main}.
\end{proof}



\begin{thebibliography}{100}
\newcommand{\msn}[1]{\href{http://www.ams.org/mathscinet-getitem?mr=#1}{\sc MR#1}}


\bibitem{Anton08}
R. Anton, \emph{Global existence for defocusing cubic NLS and Gross--Pitaevskii equations in three dimensional exterior domains.}
J. Math. Pures Appl. {\bf 89} (2008), no. 4, 335--354.
\msn{2401142}


\bibitem{BahouriGerard}
H. Bahouri and P. G\'erard, \emph{High frequency approximation of solutions to critical nonlinear wave equations.}
Amer. J. Math. \textbf{121} (1999), 131--175.
\msn{1705001}

\bibitem{BegoutVargas} P. Begout and A. Vargas,
\emph{Mass concentration phenomena for the $L^2$-critical nonlinear Schr\"odinger equation.}
Trans. Amer. Math. Soc. \textbf{359} (2007), 5257--5282.
\msn{2327030}

\bibitem{BFHM:Polygon}
M. D. Blair, G. A. Ford, S. Herr, and J. Marzuola, \emph{Strichartz estimates for the Schr\"odinger equation on polygonal domains.}
J. Geom. Anal. \textbf{22} (2012), no. 2, 339--351.
\msn{2891729} 

\bibitem{BSS:PAMS}
M. D. Blair, H. F. Smith, and C. D. Sogge, \emph{On Strichartz estimates for Schr\"odinger operators in compact manifolds with boundary.}
Proc. Amer. Math. Soc. \textbf{136} (2008), no. 1, 247--256.
\msn{2350410}


\bibitem{BSS:schrodinger}
M. D. Blair, H. F. Smith, and C. D. Sogge, \emph{Strichartz estimates and the nonlinear Schr\"odinger equation on manifolds with boundary.}
Preprint \texttt{arXiv:1004.3976}.

\bibitem{borg:torus}
J. Bourgain, \emph{Fourier transform restriction phenomena for certain lattice subsets and applications to nonlinear evolution equations. I. Schr\"odinger equations.}
Geom. Funct. Anal. \textbf{3} (1993), no. 2, 107--156.
\msn{1209299}

\bibitem{borg:scatter}
J. Bourgain, \emph{Global well-posedness of defocusing 3D critical NLS in the radial case.}
J. Amer. Math. Soc. \textbf{12} (1999), 145--171.
\msn{1626257}

\bibitem{BrezisLieb}
H.~Br\'ezis and E.~Lieb, \emph{A relation between pointwise convergence of functions and convergence of functionals.}
Proc. Amer. Math. Soc. \textbf{88} (1983), 486--490.
\msn{0699419}

\bibitem{BGT:compact}
N. Burq, P. G\'erard, and N. Tzvetkov, \emph{Strichartz inequalities and the nonlinear Schr\"odinger equation on compact manifolds.}
Amer. J. Math. \textbf{126} (2004), no. 3, 569--605.
\msn{2058384}

\bibitem{BGT04}
N. Burq, P. G\'erard, and N. Tzvetkov, \emph{On nonlinear Schr\"odinger equations in exterior domains.}
Ann. Inst. H. Poincar\'e Anal. Non Lin\'eaire \textbf{21} (2004), no. 3, 295--318.
\msn{2068304}

\bibitem{BLP08}
N. Burq, G. Lebeau, and F. Planchon,  \emph{Global existence for energy critical waves in 3-D domains}.
J. Amer. Math. Soc. {\bf 21} (2008), no. 3, 831--845.
\msn{2393429}

\bibitem{CarlesKeraani}
R. Carles and S. Keraani, \emph{On the role of quadratic oscillations in nonlinear Schr\"odinger equation II. The $L^2$-critical
case.} Trans. Amer. Math. Soc. \textbf{359} (2007), 33--62.
\msn{2247881}

\bibitem{cw0}
T. Cazenave and F. B. Weissler, \emph{Some remarks on the nonlinear Schr\"odinger equation in the critical case.}
In ``Nonlinear Semigroups, Partial Differential Equations and Attractors.'' Lecture Notes in Math. \textbf{1394} (1989), 18--29.
\msn{1021011}

\bibitem{cw1}
T. Cazenave and F. B. Weissler, \emph{The Cauchy problem for the critical nonlinear Schr\"odinger equation in $H\sp s$.}
Nonlinear Anal. \textbf{14} (1990), 807--836.
\msn{1055532}

\bibitem{CCT}
M. Christ, J. Colliander, and T. Tao, \emph{Ill-posedness for nonlinear Schr\"odinger and wave equations.}
Preprint \texttt{arXiv:math/0311048}.

\bibitem{ChW:fractional chain rule}
M.~Christ and M.~Weinstein, \emph{Dispersion of small amplitude solutions of the generalized Korteweg--de Vries equation.}
J. Funct. Anal. \textbf{100} (1991), 87--109.
\msn{1124294}

\bibitem{CKSTT:interact}
J. Colliander, M. Keel, G. Staffilani, H. Takaoka, and T. Tao,
\emph{Global existence and scattering for rough solutions of a nonlinear Schr\"odinger equation on $\R^3$.}
Comm. Pure Appl. Math. \textbf{57} (2004), 987--1014.
\msn{2053757}

\bibitem{CKSTT:gwp}
J. Colliander, M. Keel, G. Staffilani, H. Takaoka, and T. Tao,
\emph{Global well-posedness and scattering for the energy-critical nonlinear Schr\"odinger equation in $\R^3$.}
Ann. Math. \textbf{167} (2008), 767--865.
\msn{2415387}

\bibitem{ConsSaut} P.~Constantin and J.-C. Saut,
\emph{Local smoothing properties of dispersive equations.}
J. Amer. Math. Soc. \textbf{1} (1988), 413--439.
\msn{0928265}

\bibitem{Dodson:3+} 
B. Dodson, \emph{Global well-posedness and scattering for the defocusing, $L^{2}$-critical, nonlinear Schr\"odinger equation when $d \geq 3$.}
J. Amer. Math. Soc. \textbf{25} (2012), 429--463.
\msn{2869023}

\bibitem{Dodson:obstacle}
B. Dodson, \emph{Global well-posedness and scattering for the defocusing, energy-critical, nonlinear Schr\"odinger equation in the exterior of a convex obstacle when $d = 4$.}
Preprint \texttt{arXiv:1112.0710}. 

\bibitem{Foschi}
D. Foschi, \emph{Inhomogeneous Strichartz estimates.}
J. Hyperbolic Differ. Equ. \textbf{2} (2005), no. 1, 1--24. 
\msn{2134950}

\bibitem{GinibreVelo}
J. Ginibre and G. Velo, \emph{Scattering theory in the energy space for a class of nonlinear Schr\"odinger equations.}
J. Math. Pures Appl. \textbf{64} (1985), 363--401.
\msn{0839728}

\bibitem{gv:strichartz}
J. Ginibre and G. Velo, \emph{Smoothing properties and retarded estimates for some dispersive evolution equations.}
Comm. Math. Phys. \textbf{144} (1992), 163--188.
\msn{1151250}

\bibitem{HaniPaus}
Z. Hani and B. Pausader, \emph{On scattering for the quintic defocusing nonlinear Schr\"odinger equation on $\R \times \mathbb{T}^2$.}
Preprint \texttt{arXiv:1205.6132}.

\bibitem{HassellTaoWunsch}
A. Hassell, T. Tao, and J. Wunsch, \emph{Sharp Strichartz estimates on nontrapping asymptotically conic manifolds.}
Amer. J. Math. \textbf{128} (2006), no. 4, 963--1024.
\msn{2251591} 
     
\bibitem{HelfferSjostrand}
B. Helffer and J. Sj\"ostrand, \emph{Equation de Schr\"odinger avec champ magn\'etique et \'equation de Harper.}
Schr\"odinger operators (S{\o}nderborg, 1988), 118--197, Lecture Notes in Phys., \textbf{345}, Springer, Berlin, 1989.
\msn{1037319}

\bibitem{Herr:Zoll}
S. Herr, \emph{The quintic nonlinear Schr\"odinger equation on three-dimensional Zoll manifolds.}
Preprint \texttt{arXiv:1101.4565}.

\bibitem{HerrTataruTz:torus}
S. Herr, D. Tataru, and N. Tzvetkov, \emph{Global well-posedness of the energy critical nonlinear Schr\"odinger equation with small initial data in $H^1(\mathbb{T}^3)$.}
Duke Math. J. \textbf{159} (2011), no. 2, 329--349.
\msn{2824485}

\bibitem{HerrTataruTz:mixed}
S. Herr, D. Tataru, and N. Tzvetkov, \emph{Strichartz estimates for partially periodic solutions to Schr\"odinger equations in 4d and applications.}
Preprint \texttt{arXiv:1011.0591}.

\bibitem{Hislop}
P. D. Hislop, \emph{Exponential decay of two-body eigenfunctions: a review.}
Proceedings of the Symposium on Mathematical Physics and Quantum Field Theory (Berkeley, CA, 1999), 265--288.
In Electron. J. Differ. Equ. Conf., \textbf{4}, 2000.
\msn{1785381}

\bibitem{IonPaus1}
A. D. Ionescu and B. Pausader, \emph{Global well-posedness of the energy-critical defocusing NLS on $\R\times\mathbb{T}^3$.}
Comm. Math. Phys. \textbf{312} (2012), no. 3, 781--831.

\bibitem{IonPaus}
A. D. Ionescu and B. Pausader, \emph{The energy-critical defocusing NLS on $\mathbb{T}^3$.}
Duke Math. J. \textbf{161} (2012), no. 8, 1581--1612.

\bibitem{IPS:H3}
A. D. Ionescu, B. Pausader, and G. Staffilani, \emph{On the global well-posedness of energy-critical Schr\"odinger equations in curved spaces.}
Preprint \texttt{arXiv:1008.1237}.


\bibitem{Ivanovici:Strichartz} O. Ivanovici, \emph{On the Schr\"odinger equation outside strictly convex obstacles.}
Anal. PDE \textbf{3} (2010), no. 3, 261--293.  \msn{2672795}

\bibitem{IvanPlanch:square} O.~Ivanovici and F.~Planchon, \emph{Square function and heat flow estimates on domains.}
Preprint \texttt{arXiv:0812.2733}.

\bibitem{IvanPlanch:IHP} O.~Ivanovici and F.~Planchon, \emph{On the energy critical Schr\"odinger equation in 3D non-trapping domains.}
Ann. Inst. H. Poincar\'e Anal. Non Lin\'eaire \textbf{27} (2010), no. 5, 1153--1177.
\msn{2683754}

\bibitem{Kato:pert} T. Kato, \emph{Perturbation theory for linear operators.}
Reprint of the 1980 edition. Classics in Mathematics. Springer-Verlag, Berlin, 1995.
\msn{1335452}

\bibitem{KeelTao}
M. Keel and T. Tao, \emph{Endpoint Strichartz estimates.} Amer. J. Math. \textbf{120} (1998), 955--980.
\msn{1646048}

\bibitem{KenigMerle}
C. E. Kenig and F. Merle, \emph{Global well-posedness, scattering, and blowup for the energy-critical, focusing, non-linear Schr\"odinger equation in the radial case.} Invent. Math. \textbf{166} (2006), 645--675.
\msn{2257393}

\bibitem{keraani-h1}
S. Keraani, \emph{On the defect of compactness for the Strichartz estimates for the Schr\"odinger equations.}
J. Diff. Eq. \textbf{175} (2001), 353--392.
\msn{1855973}

\bibitem{keraani-l2}
S. Keraani, \emph{On the blow-up phenomenon of the critical nonlinear Schr\"odinger equation.}
J. Funct. Anal. \textbf{235} (2006), 171--192.
\msn{2216444}

\bibitem{KKSV:gKdV}
R. Killip, S. Kwon, S. Shao, and M. Visan, \emph{On the mass-critical generalized KdV equation.}	
DCDS-A \textbf{32} (2012), 191--221.
\msn{2837059}

\bibitem{KSV:2DKG}
R. Killip, B. Stovall, and M. Visan, \emph{Scattering for the cubic Klein--Gordon equation in two space dimensions.}
Trans. Amer. Math. Soc. \textbf{364} (2012), 1571--1631.
\msn{2869186}

\bibitem{Berbec}
R. Killip and M. Visan, \emph{The focusing energy-critical nonlinear Schr\"odinger equation in dimensions five and higher.}
Amer. J. Math. \textbf{132} (2010), 361--424.
\msn{2654778}

\bibitem{ClayNotes}
R. Killip and M. Visan, \emph{Nonlinear Schr\"odinger equations at critical regularity.}
To appear in Proceedings of the 2008 Clay summer school, ``Evolution Equations'' held at the ETH, Z\"urich.

\bibitem{KV:gopher}
R. Killip and M. Visan, \emph{Global well-posedness and scattering for the defocusing quintic NLS in three dimensions.}
To appear in Analysis and PDE. 	

\bibitem{KVZ:quadpot}
R. Killip, M. Visan, and X. Zhang, \emph{Energy-critical NLS with quadratic potentials.}
Comm. PDE. \textbf{34} (2009), 1531--1565.
\msn{2581982}

\bibitem{KVZ:HA}
R. Killip, M. Visan, and X. Zhang, \emph{Harmonic analysis outside a convex obstacle.}
Preprint \texttt{arXiv:1205.5784}.

\bibitem{LSZ} D. Li, H. Smith, and X. Zhang, \emph{Global well-posedness and scattering for defocusing energy-critical NLS in the exterior of balls with radial data.}
Preprint \texttt{arxiv:1109.4194}.

\bibitem{LinStrauss}
J.~E.~Lin and W.~A.~Strauss, \emph{Decay and scattering of solutions of a nonlinear Schr\"odinger equation.}
J. Funct. Anal. \textbf{30} (1978), 245--263.
\msn{0515228}

\bibitem{Morawetz75}
C.~S.~Morawetz, \emph{Notes on time decay and scattering for some hyperbolic problems.}
Regional Conference Series in Applied Mathematics, \textbf{19}. Society for Industrial and Applied Mathematics, Philadelphia, PA, 1975.
\msn{0492919}

\bibitem{PlanchVega}
F. Planchon and L. Vega, \emph{Bilinear virial identities and applications.}
Ann. Sci. Ec. Norm. Sup\'er. (4) \textbf{42} (2009), no. 2, 261--290.
\msn{2518079}

\bibitem{RS1} M. Reed and B. Simon, \emph{Methods of modern mathematical physics. I. Functional analysis.}
Second edition. Academic Press, New York, 1980.
\msn{0751959}

\bibitem{RS2} M. Reed and B. Simon, \emph{Methods of modern mathematical physics. II.  Fourier Analysis, self-adjointness.}
Academic Press, New York, 1975.
\msn{0493420}

\bibitem{RS4} M. Reed and B. Simon, \emph{Methods of Modern Mathematical Physics. IV.  Analysis of Operators.}
Academic Press, New York, 1977.
\msn{0493421}

\bibitem{RobZuily}
L. Robbiano and C. Zuily, \emph{Strichartz estimates for Schr\"odinger equations with variable coefficients.}
M\'em. Soc. Math. Fr. (N.S.) No. 101--102 (2005). 
\msn{2193021}

\bibitem{RV}
E.~Ryckman and M.~Visan, \emph{Global well-posedness and scattering for the defocusing energy-critical nonlinear Schr\"odinger equation in $\R^{1+4}$.}
Amer. J. Math. \textbf{129} (2007), 1--60.
\msn{2288737}

\bibitem{Seeley:ICM} R. Seeley, \emph{Fractional powers of boundary problems.}
Actes du Congr\`es International des Math\'ematiciens (Nice, 1970), Tome 2, pp. 795--801. Gauthier-Villars, Paris, 1971.
\msn{0510056}

\bibitem{ShatahStruwe} J.~Shatah and M.~Struwe, \emph{Geometric wave equations.}
Courant Lecture Notes in Mathematics, \textbf{2}.
Courant Institute of Mathematical Sciences, New York, NY; American Mathematical Society, Providence, RI, 1998.
\msn{1674843}

\bibitem{Sjolin87}
P.~Sj\"olin, \emph{Regularity of solutions to Schr\"odinger equations.}
Duke Math J. \textbf{55} (1987), 699--715.
\msn{0904948}


\bibitem{SS10}
H. F. Smith and C. D. Sogge, \emph{On the critical semilinear wave equation outside convex obstacles.}
J.~Amer.~Math.~Soc. {\bf 8} (1995), 879--916.
\msn{1308407}.


\bibitem{StaffTataru} G. Staffilani and D. Tataru, \emph{Strichartz estimates for a Schr\"odinger operator with nonsmooth coefficients.}
Comm. Partial Differential Equations \textbf{27} (2002), no. 7-8, 1337--1372.
\msn{1924470}


\bibitem{tao:radial}
T. Tao, \emph{Global well-posedness and scattering for the higher-dimensional energy-critical non-linear Schr\"odinger equation for radial data.}
New York Journal of Math. \textbf{11} (2005), 57--80.
\msn{2154347}

\bibitem{TaoVisan}
T. Tao and M.~Visan, \emph{Stability of energy-critical nonlinear Schr\"odinger equations in high dimensions.}
Electron. J. Diff. Eqns. \textbf{118} (2005), 1--28.
\msn{2174550}


\bibitem{Tataru:Strichartz}
D. Tataru, \emph{Parametrices and dispersive estimates for Schr\"odinger operators with variable coefficients.}
Amer. J. Math. \textbf{130} (2008), no. 3, 571--634.
\msn{2418923} 

\bibitem{Vega88}
L. Vega, \emph{Schr\"odinger equations: pointwise convergence to the initial data}.
Proc. Amer. Math. Soc \textbf{102} (1988), 874--878.
\msn{0934859}

\bibitem{Vilela}
M. C. Vilela, \emph{Inhomogeneous Strichartz estimates for the Schr\"odinger equation.}
Trans. Amer. Math. Soc. \textbf{359} (2007), no. 5, 2123--2136.
\msn{2276614} 

\bibitem{thesis:art}
M. Visan, \emph{The defocusing energy-critical nonlinear Schr\"odinger equation in higher dimensions.}
Duke Math. J. \textbf{138} (2007), no. 2, 281--374.
\msn{2318286}

\bibitem{Visan:IMRN}
M. Visan, \emph{Global well-posedness and scattering for the defocusing cubic nonlinear Schr\"odinger equation in four dimensions.}
Internat. Math. Res. Notices (2011), rnr051, 31 pages. 

\bibitem{qizhang} Q. S. Zhang, \emph{The global behavior of heat kernels in exterior domains.}
J. Funct. Anal. \textbf{200} (2003), no. 1, 160--176.
\msn{1974093}


\end{thebibliography}
\end{document}